

\documentclass[a4paper,11pt,reqno]{amsart}
\usepackage{hyperref}
\usepackage{nicematrix}
\usepackage{xifthen}
\usepackage{xspace}
\usepackage[new]{old-arrows}
\usepackage{latexsym}
\usepackage{amssymb,amsmath,wasysym}
\usepackage{mathtools, tensor}
\usepackage{tikz}
\usepackage{multicol, pdflscape}
\usetikzlibrary{patterns}
\usepackage{mathdots}
\usepackage{graphics}
\usepackage{url}
\usepackage[enableskew,vcentermath]{youngtab}
  
\usepackage{booktabs} 
\usepackage{xcolor} 
\usepackage[width=4.5in,font=small]{caption}
\usepackage{chngpage}
\usepackage{calc}

\newcommand{\nocontentsline}[3]{}
\newcommand{\tocless}[2]{\bgroup\let\addcontentsline=\nocontentsline#1{#2}\egroup}

\allowdisplaybreaks

\newtheorem{theorem}{Theorem}[section]
\newtheorem{theoremBoth}{TheoremBoth}[section]

\newtheorem*{theorem*}{Theorem}
\newtheorem{corollary}[theorem]{Corollary}
\newtheorem{corollaryBoth}[theoremBoth]{Corollary}

\newtheorem{proposition}[theorem]{Proposition}
\newtheorem{conjecture}[theorem]{Conjecture}

\newtheorem{lemma}[theorem]{Lemma}

\theoremstyle{definition}
\newtheorem{definition}[theorem]{Definition}
\newtheorem{definitionBoth}[theoremBoth]{Definition} 
\newtheorem{notation}[theorem]{Definition} 
\newtheorem{remark}[theorem]{Remark}
\newtheorem{example}[theorem]{Example}

\counterwithin{figure}{section}

\newcommand{\N}{\mathbb{N}}
\newcommand{\Z}{\mathbb{Z}}

\newcommand{\C}{\mathbb{C}}

\renewcommand{\epsilon}{\varepsilon}

\renewcommand{\theta}{\vartheta}


\DeclareMathOperator{\SL}{SL}

\DeclareMathOperator{\sgn}{sgn}

\DeclareMathOperator{\wt}{wt}



\newcommand{\mbf}[1]{\mathbf{#1}}

\newcommand{\Par}{\mathrm{Par}}


\linespread{1.06}

\newcommand{\ts}{\hskip0.8pt}


\newcommand{\mfrac}[2]{{\textstyle\frac{#1}{#2}}}


\newcounter{thmlistcnt}
\newenvironment{thmlist}%
	{\setcounter{thmlistcnt}{0}%
	\begin{list}{\emph{(\roman{thmlistcnt})}}{%
		\usecounter{thmlistcnt}%
		\setlength{\topsep}{0pt}%
		\setlength{\leftmargin}{0pt}%
		\setlength{\itemsep}{0pt}%
		\setlength{\labelwidth}{17pt}
		\setlength{\itemindent}{30pt}}%
	}%
	{\end{list}}%

\newenvironment{defnlistS}%
	{\setcounter{defnlistcnt}{0}%
	\begin{list}{(\alph{defnlistcnt})}{%
		\usecounter{defnlistcnt}%
		\setlength{\topsep}{0pt}%
		\setlength{\leftmargin}{24pt}%
		\setlength{\itemsep}{0pt}%
		\setlength{\labelwidth}{24pt}
		\setlength{\itemindent}{0pt}}%
	}%
	{\end{list}}%
	
\newenvironment{bulletlist}%
	{\begin{list}{$\bullet$}{%
		\setlength{\topsep}{0pt}%
		\setlength{\leftmargin}{24pt}%
		\setlength{\itemsep}{0pt}%
		\setlength{\labelwidth}{24pt}
		\setlength{\itemindent}{0pt}}%
	}%
	{\end{list}}%
	
\newcounter{defnlistcnt}
\newenvironment{defnlist}%
	{\setcounter{defnlistcnt}{0}%
	\begin{list}{(\alph{defnlistcnt})}{%
		\usecounter{defnlistcnt}%
		\setlength{\topsep}{0pt}%
		\setlength{\leftmargin}{32pt}%
		\setlength{\itemsep}{0pt}%
		\setlength{\labelwidth}{32pt}
		\setlength{\itemindent}{0pt}}%
	}%
	{\end{list}}%
	
\newenvironment{defnlistE}%
	{\setcounter{defnlistcnt}{0}%
	\begin{list}{(\alph{defnlistcnt})}{%
		\usecounter{defnlistcnt}%
		\setlength{\topsep}{3pt}%
		\setlength{\leftmargin}{40pt}%
		\setlength{\itemsep}{0pt}%
		\setlength{\labelwidth}{60pt}
		\setlength{\itemindent}{0pt}}%
	}%
	{\end{list}}%

\newcounter{examplelistcnt}
\newenvironment{examplelist}%
	{\setcounter{examplelistcnt}{0}%
	\begin{list}{(\alph{examplelistcnt})}{%
		\usecounter{examplelistcnt}%
		\setlength{\topsep}{3pt}%
		\setlength{\leftmargin}{24pt}%
		\setlength{\itemsep}{3pt}%
		\setlength{\labelwidth}{24pt}
		\setlength{\itemindent}{0pt}}%
	}%
	{\end{list}}%
	
\newcommand{\mus}{{\mu_\star}}

\newcommand{\sigmas}{{\sigma_\star}}
\newcommand{\nus}{{\nu_\star}}
\newcommand{\nud}{{\nu^\dagger}}
\newcommand{\taus}{{\tau_\star}}
\newcommand{\betas}{{\beta_\star}}

\newcommand{\muS}{{\mu/\mus}}	
\newcommand{\sigmaS}{{\sigma/\sigmas}}
\newcommand{\tauS}{{\tau/\taus}}

\newcommand{\AmuSm}{A^\-(\muS)}
\newcommand{\AmuSp}{A^\+(\muS)}

\newcommand{\oM}{\negt{1}} 
\newcommand{\tM}{\negt{2}} 
\newcommand{\dM}{\negt{3}} 
\newcommand{\fM}{\negt{4}} 

\newcommand{\yraised}{\raisebox{1pt}{$y$}}

\newcommand{\negt}[1]{\,\raisebox{1pt}{\scalebox{1}{-}\hskip-0.1pt}#1}
\renewcommand{\negt}[1]{\raisebox{0.2pt}{$\hskip0.5pt\mbf{#1}$}}

\DeclareMathOperator{\mSSYT}{\hskip-1ptSSYT}
\DeclareMathOperator{\SSYT}{SSYT}
\DeclareMathOperator{\YT}{YT}
\DeclareMathOperator{\PYT}{PYT}
\DeclareMathOperator{\mPSSYT}{\hskip-1ptPSSYT}
\DeclareMathOperator{\PSSYT}{PSSYT}
\newcommand{\sSSYT}{\mathrm{SSYT}^\pm}	
\newcommand{\sPSSYT}{\mathrm{PSSYT}^\pm}
\newcommand{\srPSSYT}{\mathrm{PSSYT}^\mp}

\newcommand{\PSSYTw}[4]{%
	\PSSYT\bigl(#1,#2\bigr)_{\hskip-0.5pt\raisebox{1pt}{$\scriptstyle(#3,#4)$}}}
\newcommand{\PSSYTwAdapted}[5]{%
	\PSSYT_{#5}\bigl(#1,#2\bigr)_{\hskip-0.5pt\raisebox{1pt}{$\scriptstyle(#3,#4)$}}}

\newcommand{\PSSYTwk}[5]{%
	\PSSYT_{#1}\bigl(#2,#3\bigr)_{\hskip-0.5pt\raisebox{1pt}{$\scriptstyle(#4,#5)$}}}
\newcommand{\srPSSYTw}[4]{%
	\PSSYT^\mp\bigl(#1,#2\bigr)_{\hskip-0.5pt\raisebox{1pt}{$\scriptstyle(#3,#4)$}}}	

\newcommand{\SSYTw}[3]{%
	\SSYT\bigl(#1\bigr)_{\hskip-0.5pt\raisebox{1pt}{$\scriptstyle(#2,#3)$}}}

\newcommand{\opluss}{\hspace*{1pt}\oplus\hspace*{1pt}}
\newcommand{\sqcups}{\hspace*{1pt}\sqcup\hspace*{1pt}}

\newcounter{i}
\newcounter{j}

\newcommand{\tableauLineWidth}{0.5pt}

\newcommand{\pyoung}[3]{
	\begin{tikzpicture}[line width=0.6pt, x=#1,y=-#2]
		\pyoungInner{#3}
	\end{tikzpicture}
}

\newcommand{\pyoungAnnotated}[4]{
	\begin{tikzpicture}[line width=0.6pt, x=#1,y=-#2]
		\pyoungInnerAnnotated{#3}{#4}
	\end{tikzpicture}
}

\newcommand{\pyoungScaled}[4]{
	\scalebox{#3}{\pyoung{#1}{#2}{#4}}
}

\newcommand{\pyoungInner}[1]{
	\setcounter{i}{0}
	\setcounter{j}{0}
	\foreach \tRow in #1 {
		\addtocounter{i}{1}
		\setcounter{j}{0}
		\foreach \tEntry in \tRow {
			\addtocounter{j}{1}
			\node at (\value{j}+0.5, \value{i}+0.5) {\tEntry};
			\draw[line width = \tableauLineWidth] (\value{j},\value{i})--(\value{j},\value{i}+1);
		}
		\draw[line width = \tableauLineWidth] (\value{j}+1,\value{i})--(\value{j}+1,\value{i}+1);
		\draw[line width = \tableauLineWidth] (1,\value{i})--(\value{j}+1,\value{i});
		\draw[line width = \tableauLineWidth] (1,\value{i}+1)--(\value{j}+1,\value{i}+1);
	}
}

\newcommand{\pyoungInnerAnnotated}[2]{
	\setcounter{i}{0}
	\setcounter{j}{0}
	\foreach \tRow in #1 {
		\addtocounter{i}{1}
		\setcounter{j}{0}
		\foreach \tEntry in \tRow {
			\addtocounter{j}{1}
			\node at (\value{j}+0.5, \value{i}+0.5) {\tEntry};
			\draw[line width = \tableauLineWidth] (\value{j},\value{i})--(\value{j},\value{i}+1);
		}
		\draw[line width = \tableauLineWidth] (\value{j}+1,\value{i})--(\value{j}+1,\value{i}+1);
		\draw[line width = \tableauLineWidth] (1,\value{i})--(\value{j}+1,\value{i});
		\draw[line width = \tableauLineWidth] (1,\value{i}+1)--(\value{j}+1,\value{i}+1);
	}
	\setcounter{j}{0}	
	\foreach \annotation in #2 {
		\addtocounter{j}{1}
		\node at (\value{j}+0.5, 0.5) {\annotation};
	}	
}

\newcommand{\youngBox}[2]{%
	\begin{tikzpicture}[x=#1cm,y=#1cm]%
		\fill[color = #2] (0,0)--(1,0)--(1,1)--(0,1)--(0,0); 
	;\end{tikzpicture}%
}

\newcommand{\tableauBox}[3]{%
	\draw[line width=\tableauLineWidth] (#1,#2)--(#1,#2+1)--(#1-1,#2+1)--(#1-1,#2)--(#1,#2);
	\node at (#1-0.5, #2+0.5) {#3};
}

\newcommand{\tableauBoxDoubleFilled}[4]{%
	\draw[line width=\tableauLineWidth] (#1,#2)--(#1,#2+1)--(#1-2,#2+1)--(#1-2,#2)--(#1,#2);
	\fill[color=#4] (#1,#2)--(#1,#2+1)--(#1-2,#2+1)--(#1-2,#2)--(#1,#2);
	\node at (#1-1, #2+0.5) {#3};
}

\newcommand{\tB}[3]{\tableauBox{#2+1}{#1+1}{#3}}

\newcommand{\tBDF}[4]{\tableauBoxDoubleFilled{#2+1}{#1+1}{#3}{#4}}

\newcommand{\tableauBoxFilled}[4]{%
	\fill[color = #4] (#1,#2)--(#1,#2+1)--(#1-1,#2+1)--(#1-1,#2)--(#1,#2);
	\draw[line width=\tableauLineWidth] (#1,#2)--(#1,#2+1)--(#1-1,#2+1)--(#1-1,#2)--(#1,#2);	
	\node at (#1-0.5, #2+0.5) {#3};
}

\newcommand{\tableauBoxFilledNoOutline}[4]{%
	\fill[color = #4] (#1-0.1,#2+0.1)--(#1-0.1,#2+0.9)--(#1-0.9,#2+0.9)--(#1-0.9,#2+0.1)--(#1-0.1,#2+0.1);
	\node at (#1-0.5, #2+0.5) {#3};
}

\newcommand{\tBF}[4]{\tableauBoxFilled{#2+1}{#1+1}{#3}{#4}}
\newcommand{\tBFN}[4]{\tableauBoxFilledNoOutline{#2+1}{#1+1}{#3}{#4}}

\newcommand{\smallBoxWidth}{0.65cm}

\definecolor{darkgrey}{rgb}{0.6,0.6,0.6}
\definecolor{optdarkgrey}{rgb}{1,1,1}
\definecolor{grey}{rgb}{0.7,0.7,0.7}
\definecolor{lightgrey}{rgb}{0.8,0.8,0.8}
\definecolor{verylightgrey}{rgb}{0.9,0.9,0.9}


\newcommand{\spy}[2]{\, \raisebox{#1}{#2}\ \hskip0.5pt }



\DeclareMathOperator{\supp}{supp}



\newcommand{\youngL}[1]{\protect{\young#1}}


\newcommand{\meet}{\cap} 




\newcommand{\Mn}{M'}

\newcommand{\nuSeq}[1]{{\nu^{(#1)}}}

\newcommand{\muSSeq}[1]{{\muS^{\raisebox{-1pt}{$\!\scriptstyle(#1)$}}}}
\newcommand{\muSSeqs}[1]{{\raisebox{1pt}{$\scriptstyle{\muS^{\raisebox{-2pt}{$\scriptscriptstyle(#1)$}}}$}}}
\newcommand{\PSeq}[1]{{\mathcal{P}^{\raisebox{-1pt}{$\scriptstyle(#1)$}}}}
\newcommand{\RSeq}[1]{{\mathcal{R}^{\raisebox{-1pt}{$\scriptstyle(#1)$}}}}
\newcommand{\PSeqs}[1]{{\mathcal{P}^{\raisebox{-1pt}{$\scriptscriptstyle(#1)$}}}}
\newcommand{\QSeq}[1]{{\mathcal{Q}^{\raisebox{-1pt}{$\scriptstyle(#1)$}}}}

\newcommand{\SM}{\mathcal{M}}
\renewcommand{\SS}{\mathcal{S}}
\newcommand{\SMw}[1]{\mathcal{M}_\swtp{#1}}


\newcommand{\TMw}[1]{T_\swtp{#1}}

\newcommand{\T}{\mathcal{T}}

\newcommand{\A}{\mathcal{A}}

\newcommand{\W}{\mathcal{W}}

\newcommand{\V}{\mathcal{V}}

\let\opthyphen\-

\renewcommand{\-}{{\scriptscriptstyle-}}
\newcommand{\minus}{{\scriptscriptstyle-}}
\newcommand{\+}{{\scriptscriptstyle+}}

\newcommand{\swtp}[1]{{(#1^\-,#1^\+)}}
\newcommand{\swte}[2]{\bigl(#1,#2\bigr)}
\DeclareMathOperator{\swt}{swt}

\newcommand{\unRHDT}{{\unrhd\raisebox{0.25ex}{\hspace*{-8.5pt}$\RHD$}}}
\newcommand{\unLHDT}{{\unlhd\raisebox{0.25ex}{\hspace*{-8.5pt}$\LHD$}}}
\newcommand{\unLHDS}{{\scalebox{0.8}{$\unLHDT$}}}
\newcommand{\unLHD}{{\,\unLHDT\hspace*{4pt}}}
\newcommand{\unRHD}{{\,\unRHDT\hspace*{4pt}}}


\newcommand{\dec}[2]{\bigl\langle#1, #2\bigr\rangle}
\newcommand{\decs}[2]{\langle#1, #2\rangle}
\newcommand{\decss}[2]{\scriptstyle \langle #1, #2\rangle}

\newcommand{\ellp}{p}
\newcommand{\ellpb}{{(\ellp)}}
\newcommand{\ellmb}{{\scriptscriptstyle (\ell^\-)}}


\newcommand\unlhddotT{\unlhd\hspace*{-4.75pt}\raisebox{1pt}{$\cdot$}}
\newcommand{\unlhddot}{{\,\unlhddotT\hspace*{4pt}}}
\newcommand{\unlhddots}{{\,\,\unlhddotT\hspace*{4pt}\,}}

\newcommand\unrhddotT{\unrhd\hspace*{-7pt}\raisebox{1pt}{$\cdot$}}
\newcommand{\unrhddot}{{\,\unrhddotT\hspace*{6pt}}}
\newcommand{\unrhddots}{{\,\,\unrhddotT\hspace*{6pt}\,}}

\newcommand{\unlhddotS}{{\scalebox{0.8}{$\unlhddotT$}\,}}
\newcommand{\notunlhddot}{\not\hskip-7pt\unlhddot}

\newcommand{\notunlhd}{\hbox{$\,\not\hskip-4pt\unlhd\;$}}

\newcommand\ledotT{\le\hspace*{-6.5pt}\raisebox{1pt}{$\cdot$}}
\newcommand\ltdotT{<\hspace*{-7pt}\raisebox{0.1pt}{$\cdot$}}
\newcommand{\ledot}{{\,\ledotT\,}}
\newcommand{\ltdot}{{\,\ltdotT\,}}


\renewcommand{\ss}{\scriptstyle}

\newcommand{\rb}[1]{\rotatebox{90}{$\ss #1$}}

\newcommand{\minusprime}{{-\raisebox{-2pt}{$\scriptstyle '$}}}


\newcommand{\rM}{{r^{\scriptscriptstyle -}}}
\newcommand{\rP}{{r^{\scriptscriptstyle +}}}

\newcommand{\DM}{D^\-}

\newcommand{\decMap}{\!\leftrightarrow\!}

\DeclareMathOperator{\LBound}{L}
\DeclareMathOperator{\LPBound}{LP}
\DeclareMathOperator{\LZBound}{LZ}

\renewcommand{\ts}{\hskip0.5pt}

\newcommand{\pmap}{F}         
\newcommand{\pf}[1]{p^{(#1)}} 
\newcommand{\KM}{K}           

\newcommand{\rx}{\raisebox{1pt}{$x$}}
\newcommand{\ry}{\raisebox{1pt}{$y$}}


\newcommand{\subsubsubsection}[1]{\medskip\noindent\textsc{\small#1}.}


\newcommand{\Shift}{S} 

\newcommand{\emphd}[1]{#1}

\newcommand{\preceqtab}{\, \raisebox{0.5pt}{$\scriptstyle\preceq$}}

\newcommand{\nSmall}{d}

\newcommand{\Fmap}{\mathcal{F}}
\newcommand{\Gmap}{\mathcal{G}}
\newcommand{\Hmap}{\mathcal{H}}

\DeclareRobustCommand\verylongrightarrow
     {\relbar\joinrel\relbar\joinrel\relbar\joinrel\rightarrow}

\newcommand{\names}[1]{\textbf{#1}. }

\newcommand{\KK}{C}

\newcommand{\FC}{\KK} 
\newcommand{\GD}{W}

\newcommand{\rk}{k}

\newcommand{\nurho}{\rho}
\newcommand{\nurhod}{\rho^\dagger}

\newcommand{\KC}{C}

\makeatletter
\newcommand\xlabel[2][]{\phantomsection\def\@currentlabelname{#1}\label{#2}}
\makeatother

\begin{document}
\numberwithin{equation}{section}

\title[]{Two stability theorems on plethysms of Schur functions}
\author{Rowena Paget and Mark Wildon}
\date{\today}

\subjclass[2010]{Primary: 05E05, Secondary: 05E10,  20G05}

\maketitle
\thispagestyle{empty}

\begin{abstract}
The plethysm product of Schur functions corresponds to composing  polynomial representations of infinite general linear groups. Finding the plethysm coefficients $\langle s_\nu \circ s_\mu, s_\lambda\rangle$ that express an arbitrary plethysm $s_\nu \circ s_\mu$ as a sum $\sum_\lambda  \langle s_\nu \circ s_\mu, s_\lambda \rangle s_\lambda$ of Schur functions is a fundamental open problem in algebraic combinatorics. We prove two stability theorems for plethysm coefficients under the operations of adding and/or joining an arbitrary partition to either $\mu$ or $\nu$. In both theorems $\mu$ may be replaced with an arbitrary skew partition. As special cases we obtain all stability results on the plethysm product of two Schur functions in the literature to date. The proofs are entirely combinatorial using plethystic semistandard tableaux with positive and negative entries.
\end{abstract}

\setcounter{tocdepth}{1}
\tableofcontents

\vspace*{-0.35in}

\addtocontents{toc}{\vspace*{3pt}}
\addtocontents{toc}{\textbf{Introduction and overview}}
\section{Introduction}\label{sec:introduction}

\subsection{Background}
Determining the decomposition of an arbitrary plethysm product $s_\nu \circ s_\mu$
into Schur functions was identified
by Richard Stanley in \cite{StanleyPositivity} as a central open problem in algebraic
combinatorics. It is equivalent to decomposing a polynomial representation of 
$\mathrm{GL}_n(\mathbb{C})$
defined 
by a composition of Schur functors into a direct sum of irreducible representations
of $\mathrm{GL}_n(\mathbb{C})$
and to decomposing a representation of a symmetric
group induced from an arbitrary irreducible representation of a
wreath product subgroup into a direct sum of irreducible representations.
The \emph{plethysm coefficients} are the multiplicities in these decompositions.
We refer the reader to the introduction to \cite{PagetWildonGeneralized}
for a full account of these connections.

\subsection{Stability}
In this paper we prove two theorems showing that certain sequences of
plethysm coefficients are ultimately constant, with \emph{explicit} bounds for when stability occurs.
We also give practical sufficient conditions for the stable value to be zero.
These theorems include as special cases all stability results on the plethysm product
of two Schur functions
in the current literature, sometimes with new bounds when none were proved originally.
For instance  a special case of
Theorem~\ref{thm:muStable}, first proved in
\cite[p354]{BrionStability},
is that 
\smash{$\langle s_\nu \circ s_{\mu + M\kappa}, s_{\lambda + M|\nu|\kappa} \rangle$}
is ultimately constant for large $M$, while a special case of
Theorem~\ref{thm:nuStable} is a key motivating result,
proved in \cite{LawOkitaniStability} without an explicit bound, that if~$d$ is even then
\smash{$\bigl\langle s_{\nu + (M)} \circ s_{(m)}, s_{\lambda \,\sqcup\, (d^M) + M(m-d) } \bigr\rangle$}
is ultimately constant for large $M$. Our proofs are entirely
combinatorial, using the plethystic semistandard signed
tableaux defined in Definition~\ref{defn:plethysticSemistandardSignedTableau}~below.

\subsection{Main results}
In both our main theorems,
$\nu$ is a partition, $\muS$ is a skew partition and~$\lambda$ 
is a partition of $|\nu| |\muS|$.
We define
$\muS \opluss (\gamma,\delta) = \bigl( (\mu \sqcups \gamma') \,+\, \delta \bigr) / \mu^\star$
and $\muS \opluss M(\gamma, \delta) = \muS \opluss (M\gamma, M\delta)$,
where $\sqcup$ is the join of partitions, defined formally before~\eqref{eq:oplus}.
The order $\unlhd$ on pairs of partitions is defined in Definition~\ref{defn:ellSignedDominanceOrder}
by reading the pair as a composition
and then applying the dominance order. 
Example~\ref{ex:GFlip} motivates
the conjugation seen when $|\kappa^\-|$ is~odd.

\begin{theorem}[Signed inner stability]\label{thm:muStable}
Let $\nu$ be a partition of $n$
and let $\muS$ be a skew partition.
Let $\kappa^\-$ and $\kappa^\+$ be partitions. 
If $|\kappa^\-|$ is even then set $\nuSeq{M} = \nu$ for all $M$;
if $|\kappa^\-|$ is odd 
then set $\nuSeq{M} = \nu$ if $M$ is even and $\nuSeq{M} = \nu'$ if $M$ is odd.
Then
\[ \bigl\langle s_\nuSeq{M} \circ s_{\muS \opluss M(\kappa^\-, \kappa^\+)}, 
s_{\lambda \opluss nM(\kappa^\-, \kappa^\+)} \bigr\rangle \]
is constant for $M$ at least the explicit bound in Theorem~\ref{thm:muStableSharp}.
Moreover if~$\eta^\-$ and $\eta^\+$ are partitions 
with $\ell(\eta^\-) \le \ell(\kappa^\-)$ and $|\eta^\-| + |\eta^\+| = |\kappa^\-| + |\kappa^\+|$
and $\swtp{\eta} \notunlhd \swtp{\kappa}$
then 
\[ \bigl\langle s_\nuSeq{M} \circ s_{\muS \opluss M(\kappa^\-, \kappa^\+)}, 
s_{\lambda \opluss nM(\eta^\-, \eta^\+)} \bigr\rangle \]
is zero for $M$ greater than the explicit bound in 
Proposition~\ref{prop:muStableZero}. 
\end{theorem}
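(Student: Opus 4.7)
The plan is to reduce the theorem to a combinatorial claim about plethystic semistandard signed tableaux, using the framework set up earlier in the paper, and then to exhibit an explicit stabilization bijection. For each $M$, let $\mathcal{T}_M$ denote the set of PSSYT of outer shape $\lambda \opluss nM(\kappa^\-, \kappa^\+)$ whose inner entries are signed semistandard tableaux of shape $\muS \opluss M(\kappa^\-, \kappa^\+)$ and outer content indexed by $\nuSeq{M}$. By Definition~\ref{defn:plethysticSemistandardSignedTableau} and the associated plethysm formula, the coefficient to be studied equals $|\mathcal{T}_M|$. The parity dependence $\nuSeq{M} \in \{\nu, \nu'\}$ appears because each negative strip attached to the inner shape contributes a contravariant twist on the outer indexing partition, as illustrated by Example~\ref{ex:GFlip}.

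The central step is to construct a natural padding map $\Phi_M \colon \mathcal{T}_M \hookrightarrow \mathcal{T}_{M+1}$ which appends one additional copy of the $(\kappa^\-, \kappa^\+)$-block to both the inner and the outer shape, filled in the unique way consistent with signed row- and column-strictness (the conjugate $\kappa^\-$-strip joined to the right and the $\kappa^\+$-strip placed at the bottom, with the new entries forced to be the minimal available plethystic letters). Injectivity is immediate. I would then prove that for $M$ larger than the explicit bound in Theorem~\ref{thm:muStableSharp}, the map $\Phi_M$ is also surjective: every tableau in $\mathcal{T}_{M+1}$ has its final $(\kappa^\-, \kappa^\+)$-block frozen to the canonical filling, and so lies in the image of $\Phi_M$.

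This forcing step is the main obstacle. Once $M$ is sufficiently large, the inner weight constraint, combined with the signed dominance order of Definition~\ref{defn:ellSignedDominanceOrder} and the semistandardness of both the inner and outer layers, leaves no room for any non-canonical filling of the rightmost column-block or lowermost row-block. The delicate part is to rule out nontrivial swaps of positive and negative entries across the boundary between the old cells and the newly attached strip: these are handled by a pigeonhole argument which quantifies how many cells of each sign can appear in a given column once the total weight is fixed, and it is this step that pins down the explicit bound on $M$. The required parity flip on $\nu$ emerges naturally here, since passing from stage $M$ to stage $M{+}1$ adds one more copy of the negative strip and thereby toggles the contravariant twist whenever $|\kappa^\-|$ is odd.

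For the vanishing statement, suppose $\ell(\eta^\-) \le \ell(\kappa^\-)$ and $\swtp{\eta} \notunlhd \swtp{\kappa}$. Any PSSYT contributing to the coefficient with outer shape $\lambda \opluss nM(\eta^\-, \eta^\+)$ must fit an inner weight distribution derived from $\muS \opluss M(\kappa^\-, \kappa^\+)$. Reading each pair as a composition and comparing under dominance, the failure $\swtp{\eta} \notunlhd \swtp{\kappa}$ supplies an initial segment on which the required number of outer cells strictly exceeds the number of admissible inner entries of sufficiently low weight, once $M$ passes the explicit threshold in Proposition~\ref{prop:muStableZero}. Hence $\mathcal{T}_M$ is empty past this bound, and the coefficient vanishes, completing the proof.
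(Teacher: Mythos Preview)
Your proposal has a fundamental gap at the very first step: the plethysm coefficient $\langle s_{\nu^{(M)}} \circ s_{\muS \oplus M(\kappa^\-,\kappa^\+)}, s_{\lambda \oplus nM(\kappa^\-,\kappa^\+)} \rangle$ is \emph{not} the cardinality of any set of plethystic semistandard signed tableaux. What Proposition~\ref{prop:plethysticSignedKostkaNumbers} gives is $\langle s_\nu \circ s_{\muS}, e_{\alpha^\-}h_{\alpha^\+}\rangle = |\PSSYT(\nu,\muS)_{(\alpha^\-,\alpha^\+)}|$: the inner product with a \emph{product of elementary and homogeneous} symmetric functions counts PSSYT of a fixed signed \emph{weight}. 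The inner product with a Schur function $s_\lambda$ has no such direct tableau interpretation; that gap is exactly why the paper introduces the Signed Weight Lemma and stable partition systems. Your set $\mathcal{T}_M$ also has the roles reversed: in Definition~\ref{defn:plethysticSignedTableau} the outer shape is $\nu$, the inner shape is $\muS$, and $\lambda$ enters only through the signed weight, never as a shape. The outer shape does not grow with $M$ at all (it only conjugates when $|\kappa^\-|$ is odd), so your ``padding map'' that appends a block to the outer shape is operating on the wrong object.

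The paper's argument (Theorem~\ref{thm:muStableSharp}) works as follows. One fixes $\ell^\- = \ell(\kappa^\-)$ and takes the twisted intervals $\PSeq{M} = [\lambda \oplus nM\swtp{\kappa}, \omega^{(n)}_{\ell^\-}(\muS) \oplus nM\swtp{\kappa}]_{\unlhddotS}$, which form a stable partition system by Corollary~\ref{cor:signedIntervalStable}. Condition~(i) of the Signed Weight Lemma follows from the twisted weight bound of Corollary~\ref{cor:twistedWeightBoundInnerGrowing}. For condition~(ii) one must show $|\PSSYT(\nu^{(M)}, \muS \oplus M\swtp{\kappa})_{(\pi^\-,\pi^\+)}|$ stabilizes for \emph{every} $\pi$ in the interval, not just for $\pi = \lambda \oplus nM\swtp{\kappa}$; this is done by the map $\mathcal{G}$ of Definition~\ref{defn:G}, which applies the row/column insertion $\mathcal{F}$ to each \emph{inner} $\muS$-tableau. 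The conjugation on $\nu$ arises because $\mathcal{F}$ reverses the sign of each inner tableau when $|\kappa^\-|$ is odd, forcing a transpose of the outer shape to preserve semistandardness (Lemma~\ref{lemma:FrespectsSemistandard}(iii)). The Signed Weight Lemma then inverts the stable twisted Kostka matrix to pass from stability of the $e_{\pi^\-}h_{\pi^\+}$-coefficients back to the Schur coefficient. This inversion across an entire interval is the missing idea in your outline; a single bijection on one tableau set cannot substitute for it.
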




Our second main theorem requires the strongly maximal signed weights 
defined in Definition~\ref{defn:stronglyMaximalSignedWeight} and first exemplified
in Example~\ref{ex:stronglyMaximalSemistandardSignedTableauFamilies}.
To orient the reader, we remark that, by 
Lemma~\ref{lemma:maximalAndStronglyMaximalSingletonSemistandardSignedTableauFamilies},
$(\varnothing, \mu)$ and  $(\mu', \varnothing)$ 
are strongly maximal signed weights of shape $\mu$ and size~$1$; their signs are 
$1$ and $(-1)^{|\mu|}$, respectively.
The strongly maximal signed weight relevant to the stability
of \smash{$\bigl\langle s_{\nu + (M)} \circ s_{(m)}, s_{\lambda  \,\sqcup\, (d^M) + M(m-d)} \bigr\rangle$}
for even~$d$ is $\bigl( (1^d), (m-d) \bigr)$.
This signed weight has shape $(m)$, size $1$ and sign $(-1)^d$: see Example~\ref{ex:LawOkitaniSignedWeightsAreStronglyMaximal}(i).

\begin{theorem}[Signed outer stability]\label{thm:nuStable}
Let $\nu$ be a partition 
and let $\muS$ be a skew partition.
Let $\swtp{\kappa}$ be a strongly maximal signed weight of shape $\muS$
and size $R \in \N$.
Set $\nu^{(M)} = \nu + (M^R)$
if $\swtp{\kappa}$ has sign $+1$ and $\nu^{(M)} = \nu \sqcups (R^M)$ if 
$\swtp{\kappa}$ has sign $-1$. Then
\[ \bigl\langle s_{\nu^{(M)}} \circ s_\muS, 
s_{\lambda \opluss M\swtp{\kappa}} \bigr \rangle \]
is constant for $M$ at least the explicit bound in Theorem~\ref{thm:nuStableSharp}.
 Moreover if~$\eta^\-$ and $\eta^\+$ are partitions with $\ell(\eta^\-) \le \ell(\kappa^\-)$ and 
 $\swtp{\kappa} \lhd \swtp{\eta}$ then
\[ \bigl\langle s_{\nu^{(M)}} \circ s_\muS, 
s_{\lambda \opluss M\swtp{\eta}} \bigr \rangle \]
is zero for $M$ greater than the explicit bound in Proposition~\ref{prop:nuStableZero}.
\end{theorem}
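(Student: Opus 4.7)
The plan is to use the combinatorial interpretation of plethysm coefficients as signed counts of plethystic semistandard signed tableaux (PSSYT) developed earlier in the paper. Specifically, the coefficient
\[ \bigl\langle s_{\nu^{(M)}} \circ s_{\muS}, s_{\lambda \opluss M\swtp{\kappa}} \bigr\rangle \]
equals a signed sum, with sign determined by the parity of the number of negative entries, over PSSYT of outer shape $\nu^{(M)}$ and inner shape $\muS$ with the appropriate signed content. I would first re-express both sides of the claimed stability identity in this combinatorial form, and set up notation carefully enough to track how the added cells of $\lambda \opluss M\swtp{\kappa}$ must be filled.

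The main step is to exhibit an explicit sign- and weight-preserving bijection between the PSSYT contributing at level $M$ and those at level $M+1$. The strong maximality of $\swtp{\kappa}$ from Definition~\ref{defn:stronglyMaximalSignedWeight} is the crucial hypothesis: it guarantees that the cells added to the outer shape when passing from $M$ to $M+1$ admit an essentially \emph{rigid} canonical filling of signed weight $\swtp{\kappa}$. Concretely, a PSSYT at level $M+1$ should decompose uniquely as a PSSYT at level $M$ together with this rigid piece, and conversely each PSSYT at level $M$ extends in exactly one way. The dichotomy between $\nu + (M^R)$ and $\nu \sqcups (R^M)$ reflects how the rigid piece attaches to the outer shape: when $\swtp{\kappa}$ has sign $+1$ the attachment is horizontal, producing $R$ new columns of length~$M$, whereas for sign $-1$ the conjugation seen in Example~\ref{ex:GFlip} forces a transposed joining, producing $R$ new rows of length~$M$. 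The explicit bound in Theorem~\ref{thm:nuStableSharp} will be exactly what is needed to ensure that the stable region is separated from~$\lambda$ and from the original outer shape of $\nu$, so that no entries in the new cells can interact with existing cells in unwanted ways.

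For the vanishing statement, suppose $\swtp{\kappa} \lhd \swtp{\eta}$ with $\ell(\eta^\-) \le \ell(\kappa^\-)$. Strong maximality of $\swtp{\kappa}$ means, by definition, that no signed weight strictly above $\swtp{\kappa}$ in the dominance order can appear as the content of an admissible block of a PSSYT of inner shape~$\muS$. Consequently, a PSSYT of outer shape $\nu^{(M)}$ and inner shape $\muS$ with content $\lambda \opluss M\swtp{\eta}$ would be forced to accommodate $M$ copies of such an unattainable signed weight, and for $M$ beyond the explicit bound of Proposition~\ref{prop:nuStableZero} this becomes impossible: no admissible tableau exists, so the signed count vanishes.

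The main obstacle, I expect, is twofold. First, one must rigorously verify the rigidity of the canonical extension, which amounts to ruling out spurious fillings of the new cells that might otherwise contribute with opposite signs and fail to cancel correctly; this is precisely where the strong maximality condition, as opposed to mere maximality, pays off. Second, the sign $-1$ case must be handled separately, since there the bijection is not simply \emph{append another column} but involves a transpose-like move foreshadowed by the conjugation in Example~\ref{ex:GFlip}, and tracking the sign through this conjugation requires care. Both steps lean heavily on the detailed combinatorial structure of strongly maximal signed weights established earlier in the paper, and on the explicit bounds being chosen large enough to guarantee that the stable region behaves independently of the initial data.
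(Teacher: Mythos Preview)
Your proposal has a fundamental structural gap: you assume the plethysm coefficient $\langle s_{\nu^{(M)}} \circ s_{\muS}, s_{\lambda \opluss M\swtp{\kappa}}\rangle$ is itself a (signed) count of plethystic semistandard signed tableaux of a fixed signed weight, and then try to biject these sets directly. The paper has no such interpretation. What Proposition~\ref{prop:plethysticSignedKostkaNumbers} gives is that $\langle s_{\nu} \circ s_{\muS}, e_{\pi^\-}h_{\pi^\+}\rangle = |\PSSYT(\nu,\muS)_{(\pi^\-,\pi^\+)}|$, an \emph{unsigned} count against the twisted symmetric functions $e_{\pi^\-}h_{\pi^\+}$, not against $s_\lambda$. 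The passage from $s_\lambda$ to these products is handled by the Signed Weight Lemma (Lemma~\ref{lemma:SWL}): one works not with a single partition but with an entire twisted interval $\PSeq{M}$ containing $\lambda \oplus M\swtp{\kappa}$, verifies that the support of the plethysm restricted to this interval is captured (condition~(i)), and establishes bijections $|\PSSYT(\nu^{(M)},\muS)_{(\pi^\-,\pi^\+)}| = |\PSSYT(\nu^{(M+1)},\muS)_{(\pi^\-+\kappa^\-,\pi^\++\kappa^\+)}|$ for \emph{all} $\pi$ in the interval (condition~(ii)). Your outline skips this machinery entirely, and without it there is no way to convert a bijection of tableaux into equality of Schur coefficients.

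Your description of the bijection is also off in two ways. First, the filling of the new cells is \emph{not} rigid in the sense you claim: a tableau $T$ may have exceptional and small columns whose top $R$ boxes are not $T_{\swtp{\kappa}}$, and the heart of the argument (Lemmas~\ref{lemma:boundOnExceptionalColumns} and~\ref{lemma:smallColumnAndRowBound}) is to bound the number of such columns independently of $M$, so that for $M$ large some typical column exists next to which a new $T_{\swtp{\kappa}}$ can be inserted. Strong maximality is used precisely here, via Lemma~\ref{lemma:columnIsExceptionalOrBounded}, not to make every column rigid. Second, the $\pm 1$ dichotomy has nothing to do with the conjugation in Example~\ref{ex:GFlip} (which pertains to Theorem~\ref{thm:muStable}); it reflects whether $T_{\swtp{\kappa}}$ is a column-type or row-type family (Definition~\ref{defn:stronglyMaximalSignedWeight}(b)), determining whether one inserts a new column of height $R$ (sign $+1$, so $\nu \mapsto \nu + (1^R)$) or a new row of length $R$ (sign $-1$, so $\nu \mapsto \nu \sqcup (R)$). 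Your description ``$R$ new columns of length $M$'' is backwards.
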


The full versions of both theorems give practical sufficient conditions
for the constant multiplicity in their first parts to be zero.
For instance, as we explain after Example~\ref{ex:dBPW},
the final part of Theorem~\ref{thm:muStableSharp} implies that 
unless 
$(\lambda^\-, \lambda^\+) \unlhd n(\mu^\-, \mu^\+)$, the plethysm coefficient
 $\langle s_\nu \circ s_{\mu \,\oplus\, M(\kappa^\-,\kappa^\+)}$, $s_{\lambda  \,\oplus\, nM(\kappa^\-,\kappa^\+)}
\rangle$ is zero for $M$ sufficiently large. Here
$(\lambda^\-, \lambda^\+)$ and $(\mu^\-, \mu^\+)$ are the $\ell(\kappa^\-)$-decompositions
of $\lambda$ and $\mu$, as defined in Definition~\ref{defn:ellDecomposition}, 
and~$\unlhd$ is the \hbox{$\ell(\kappa^\-)$-signed} dominance order in Definition~\ref{defn:ellSignedDominanceOrder}.


%

In 
Corollaries~\ref{cor:muStableSharpPositive},~\ref{cor:muStableSharpPositiveNonSkew}
and~\ref{cor:nuStableSharpPositiveNonSkew},
we give the corollaries of our two main theorems for the special cases where 
$\kappa^\- = \varnothing$ and $\mus= \varnothing$, showing how the 
explicit bounds and conditions in Theorems~\ref{thm:muStableSharp} and~\ref{thm:nuStableSharp} 
simplify. 
Corollary~\ref{cor:nuStableEmpty} 
and Corollary~\ref{cor:nuStableSingleton} are the cases $\nu = \varnothing$
and $R=1$, respectively,
of Theorem~\ref{thm:nuStable} and are also of significant interest in their own right.


\subsection{Strongly maximal signed weights}
An important motivation for strongly
maximal signed weights is
that if $\muS$ is a skew partition
and $\kappa$ is the lexicographically
maximal partition labelling a Schur function summand of $s_{(1^R)} \circ s_\muS$ then 
$(\varnothing, \kappa)$ is a strongly maximal signed weight of a $\muS$-tableau family of size~$R$. 
We plan to prove this result in a separate paper on signed maximal constituents of plethysms.
Many further examples of strongly maximal signed
weights, with full proofs, are given
in \S\ref{subsec:stronglyMaximalSignedWeightExamples}.
In particular we draw attention to Lemma~\ref{lemma:allTableauFamily}.
This  was motivated by (9) in~\cite{BriandOrellanaRosas} by Briand, Orellana
and Rosas, as we discuss in \S\ref{subsec:earlierWork}.

\subsection{Skew partitions}
It is worth noting that the results on 
plethysms $s_\nu \circ s_{\mu/\mu^\star}$ where
$\mu/\mu^\star$ is a skew partition with $\mu^\star \not= \varnothing$ are entirely novel to this paper:
it is a feature of our method 
that this extension from partitions to skew partitions is mostly routine.
See Examples~\ref{ex:needMaximalGood}
and~\ref{ex:nuStableSingletonSkewPartition}
for examples exploiting this generality.
Remark~\ref{remark:nuSkew} explains why the further extension replacing~$\nu$
with a skew partition is a straightforward corollary of our main theorems.

\subsection{A stronger conjecture}
Theorem~\ref{thm:nuStable} was motivated by Proposition 5.3 in \cite{LawOkitani}, which
in turn was motivated by a conjecture of Bessenrodt, Bowman and Paget \cite[Conjecture 1.2]{BessenrodtBowmanPaget},
that the plethysm coefficients $\langle s_{\nu \,\sqcup\, (1^M)} \circ s_{(2)}, s_{\lambda \oplus
M((1), (1))} \rangle$ are non-decreasing with $M$. A proof of this conjecture appears to require
fundamentally different methods to those used in this paper: we believe it is true and that a proof
will be of wide interest. More generally, we make the following conjecture, which includes
the BBP-conjecture as a special case.

\begin{conjecture}\label{conj:generalizedBBP}
The sequences of plethysm coefficients in Theorems~\ref{thm:muStable} and~\ref{thm:nuStable}
are non-decreasing with respect to $M$.
\end{conjecture}

\subsection{Earlier work}\label{subsec:earlierWork} 
We believe the two main theorems in this paper imply
all the stability results on Schur functions published in the literature. These include the stable version of Foulkes' Conjecture. In this subsection we
survey~\cite{BowmanPaget} by Bowman and Paget,
\cite{BriandOrellanaRosas} by Briand, Orellana and Rosas,
\cite{BrionStability} by Brion, \cite{CarreThibon} by Carr{\'e} and Thibon,
\cite{ColmenarejoStability} by Colmenarejo,~\cite{deBoeckPagetWildon}
by de Boeck, Paget and Wildon, 
 \cite{LawOkitaniStability, LawOkitani} by Law and Okitani,
 \cite{Manivel} by Manivel and \cite{Weintraub} by Weintraub. 
 (Except in the case of~\cite{CarreThibon}, we silently change the notation used by these authors to be consistent, as far as possible,
with this paper.)

\subsubsubsection{\names{Bowman--Paget}Theorem A of \normalsize\cite{BowmanPaget}}
This states
that the plethysm coefficients $\langle s_{(n + N)} \circ s_{(m + M)},$  $s_{\lambda + (mN + nM + MN)}\rangle$
are ultimately constant. 
For $M$ varying this is the special case of Theorem~\ref{thm:muStable}
for $\nu = (n+N)$, $\mu = (m)$
taking $\swtp{\kappa} = \bigl(\varnothing, (1)\bigr)$. The bound from Corollary~\ref{cor:muStableSharpPositiveNonSkew}, applied replacing~$\lambda$ 
with $\lambda + (mN)$,
is $M \ge (n+N-1)m - (\lambda_1 + mN) = (n-1)m - \lambda_1$ which improves
on $M \ge |\lambda| = mn$ in \cite{BowmanPaget}.
For~$N$ varying this is the special
case of Theorem~\ref{thm:nuStable} for $\nu = (n)$, $\mu = (m+M)$,
again with the same choice
of $\swtp{\kappa}$; by 
Lemma~\ref{lemma:maximalAndStronglyMaximalSingletonSemistandardSignedTableauFamilies},
$\bigl( \varnothing, (1) \bigr)$ is a strongly $1$-maximal signed weight.
The bound from Corollary~\ref{cor:nuStableSharpPositiveNonSkew} is
in general worse than $N \ge |\lambda|$  in \cite{BowmanPaget}.
A corollary 
(see \cite[Corollary 9.4]{BowmanPaget})
is that the stable version of Foulkes' Conjecture \cite{Foulkes} holds with equality.
We emphasise that the main contribution of~\cite{BowmanPaget} is to prove
the result using Schur--Weyl duality with the partition algebra, thereby giving an
explicit and \emph{clearly positive} formula for the multiplicities. 
This goes significantly
beyond the results obtainable by the general methods in this paper.

\subsubsubsection{\names{Bowman--Paget--Wildon}Theorem A of \normalsize\cite{BowmanPagetWildon}}
This generalises the Bowman--Paget result 
replacing the one-row partition $(n+N)$ with $\nu + (N)$
for an arbitrary partition $\nu$; very similar remarks apply.

\subsubsubsection{\names{Briand--Orellana--Rosas}Result (7) in \normalsize\cite{BriandOrellanaRosas}}
This states that $\langle s_\nu \circ s_\mu, s_\lambda \rangle 
= \langle s_\nu \circ s_{\mu+(M^\ell)}, s_{\lambda + n(M^\ell)} \rangle$
provided that $\ell(\nu) \le \ell$. This is a weaker version of Theorem~1.2 in \cite{deBoeckPagetWildon} by de Boeck, Paget and Wildon, discussed below.

\subsubsubsection{Result (9) in \normalsize\cite{BriandOrellanaRosas}}
This  states that 
\begin{equation}
\label{eq:BOR}
\langle s_{\nu + M(1^R)} \circ s_\mu, s_{\lambda + M(q^\ell)} \rangle
\end{equation} is constant,
where $R$ is the number of semistandard tableaux of shape $\mu$ with entries from $\{1,\ldots, \ell\}$
and $q = R|\mu|/\ell$.
By Theorem~\ref{thm:nuStable} applied with the strongly $\ell$-maximal
signed weight $\bigl(\varnothing, (q^\ell) \bigr)$ (see Lemma~\ref{lemma:allTableauFamily}) the plethysm coefficient is ultimately constant. In fact Theorem~\ref{thm:nuStable}
 implies the more general
result in which $\mu$ is replaced with an arbitrary skew partition.
The relevant strongly maximal semistandard signed tableau family is, as one would expect from the
statement of~(9), all semistandard tableaux of shape~$\mu$ with entries from $\{1,\ldots, \ell\}$.
Corollary~\ref{cor:nuStableSharpPositiveNonSkew}
can be used to give explicit stability bounds for~\eqref{eq:BOR};
Proposition~\ref{prop:BOR} shows that in many cases of interest, 
stability is immediate.

\subsubsubsection{\names{Brion}Theorem \normalsize\cite[\S 2.1]{BrionStability}} This states
that $\langle s_\nu \circ s_{\mu + M\kappa}, s_{\lambda + nM\kappa}\rangle$ is ultimately
constant. There is a bound implicitly defined using the root system of type~A. 
This is the special case of Theorem~\ref{thm:muStable} taking $\swtp{\kappa} = (\varnothing,
\kappa)$. The bound from Theorem~\ref{thm:muStableSharp} is the same.

\subsubsubsection{Theorem \normalsize\cite[\S 3.1]{BrionStability}} This states
that $\langle s_{\nu + (M)} \circ s_\mu, s_{\lambda + M \mu}\rangle$  
is ultimately constant
with an explicit bound. This is the special case of Theorem~\ref{thm:nuStable}
taking $\swtp{\kappa} = (\varnothing, \mu)$; by 
Lemma~\ref{lemma:maximalAndStronglyMaximalSingletonSemistandardSignedTableauFamilies}
this is a strongly $\ell(\mu)$-maximal signed weight.
Brion's bound improves on the bound from Theorem~\ref{thm:nuStableSharp}
or Corollary~\ref{cor:nuStableSharpPositiveNonSkew}
by using orthogonality in the type A root system.

\subsubsubsection{\names{Carr{\'e}--Thibon}\!\!\!\!\!\!\!} 
We first note that in \cite{CarreThibon} $J$ is a partition
and $Jp$ is, in our notation
$(p) \,\sqcup\, J$. If $J$ has first part $a$
and $p \ge a$ then $(p) \sqcup J = \bigl(J \sqcup (a)\bigr) + (p-a)$, and so, by taking
$p$ sufficiently large, we can
interpret $Jp$ as an addition of $(p-a)$ to the partition $J \sqcup (a)$.

\subsubsubsection{Theorem 4.1 in \normalsize\cite{CarreThibon}} The special case 
(see
the remark after the proof in \cite{CarreThibon})
relevant to plethysm coefficients
is equivalent, by the previous notational remark,
to the theorem in \S 2.1 of Brion \cite{BrionStability}, discussed above.

\subsubsubsection{Theorem 4.2 in \normalsize\cite{CarreThibon}} 
It follows very similarly that the special case relevant to plethysm coefficients
is that $\langle s_{\nu + (M)} \circ s_\mu, s_{\lambda + (M|\mu|)} \rangle$
is ultimately constant. When $\mu = (m)$ this is  a special case
of the theorem in \S 3.1 of Brion~\cite{BrionStability} 
discussed above. When $\mu \not= (m)$ we have $\mu \lhd (m)$ and so
the stable multiplicity is zero by 
the `moreover' part of Theorem~\ref{thm:nuStable} applied
with the strongly maximal signed weight $(\varnothing, \mu)$.
(By Lemma~\ref{lemma:maximalAndStronglyMaximalSingletonSemistandardSignedTableauFamilies} this is a strongly $\ell(\mu)$-maximal signed weight.)

\smallskip
\noindent We remark that \cite{CarreThibon} precedes \cite{BrionStability} and the method of
vertex operators used in \cite{CarreThibon} is completely different to Brion's
geometric arguments. 

\subsubsubsection{\names{Colmenarejo}Theorem 1.1 in \normalsize\cite{ColmenarejoStability}} This states  four stability results.
The first is the special case of the second taking, in the notation of \cite{ColmenarejoStability},
$\pi = (1)$. The remaining three are:
\begin{bulletlist}
\item $\langle s_\nu \circ s_{\mu + M\kappa}, s_{\lambda + nM\kappa} \rangle$ is ultimately
constant. As just seen, this is the special case of Theorem~\ref{thm:muStable} taking $\swtp{\kappa} = \bigl(\varnothing,\kappa\bigr)$.
\item $\langle s_{\nu + (M)} \circ s_\mu, s_{\lambda + M\mu} \rangle $ is ultimately constant.
This is the special case of Theorem~\ref{thm:nuStable} taking $\swtp{\kappa} = (\varnothing, \mu)$;
by Lemma~\ref{lemma:maximalAndStronglyMaximalSingletonSemistandardSignedTableauFamilies}
this is a strongly $\ell(\mu)$-maximal signed weight.
\item $\langle s_{\nu + (M)} \circ s_\mu, s_{\lambda + M(|\mu|)} \rangle$ is ultimately constant.
This is the same as Theorem 4.2 in Carr{\'e} and Thibon \cite{CarreThibon} already discussed.


\end{bulletlist}

\subsubsubsection{\names{deBoeck--Paget--Wildon}Theorem~1.1 in \normalsize\cite{deBoeckPagetWildon}}
This states the equality $\langle s_\nu \circ s_{(M)\,\sqcup\, \mu}, s_{(nM)
\sqcup \lambda} \rangle = \langle s_\nu \circ s_\mu, s_\lambda
\rangle$ provided $M$ is at least the greatest part of~$\mu$. Applying the $\omega$-involution
(see \cite[page 21]{MacDonald} 
or \cite[\S 7.6]{StanleyII})  this becomes
\[ \langle s_\nud \circ s_{\mu' + (1^M)}, s_{\lambda' + (1^{nM})} \rangle = \langle s_\nu \circ s_\mu, s_\lambda\rangle, \]
provided $M \ge \ell(\mu')$, where $\nud = \nu$ if $M$ is even and  $\nud = \nu'$ if $M$ is odd. 
Observe that when $M \ge \ell(\mu')$ we have $\mu' + (1^{M+1}) = \bigl(\mu' \sqcup (1) \bigr) + (1^{M})$ and when $nM \ge \ell(\lambda')$ we have $\lambda' + (1^{n(M+1)}) = \bigl(\lambda' \sqcup (1^M)\bigr) + (1^{nM})$. The plethysm coefficient above is therefore
\begin{equation} \label{eq:dBPW}
\langle s_\nud \circ s_{\mu' \sqcups (1^M) + (1^{\ell(\mu')}) },
s_{\lambda' \sqcups (1^{nM}) + (1^{n\ell(\mu')})} \rangle. \end{equation}
That it is ultimately constant now follows from 
Theorem~\ref{thm:muStable}, taking $\kappa^\- = (1)$, and $\kappa^\+ = \varnothing$ and replacing $\mu$ with 
$\mu' + (1^{\ell(\mu')})$ and $\lambda$ with $\lambda' + (1^{n\ell(\mu')})$.
As we show in Example~\ref{ex:dBPW}, 
 the explicit bounds in Theorem~\ref{thm:muStableSharp}
show that in fact the plethysm coefficient, as stated 
in~\eqref{eq:dBPW},
is immediately constant provided $\ell(\lambda') \le n\ell(\mu')$.

\subsubsubsection{Theorem 1.2 in \normalsize\cite{deBoeckPagetWildon}}
This states that
$\langle s_\nu \circ s_{\mu + M(1^r)}, s_{\lambda  + M(n^r)}\rangle$
is constant for~$M$ greater than an explicit bound. By Theorem~\ref{thm:muStable},
applied with $\kappa^\- = \varnothing$ and $\kappa^\+ = (1^r)$, the plethysm
coefficient is ultimately constant. The 
bound from Theorem~\ref{thm:muStableSharp} is the same, as we show at the end
of \S\ref{sec:muStable}.

\subsubsubsection{\names{Law--Okitani}Proposition 5.3 in \normalsize\cite{LawOkitani}} 
This states that 
$\langle s_{\nu \sqcups (1^M)} \circ s_{(2)},$ $s_{\lambda \oplus M((1),(1))} \rangle$ is ultimately
constant. This is the special case of Theorem~\ref{thm:nuStable} taking $\mu = (2)$ and
$\swtp{\kappa} = \bigl( (1), (1) \bigr)$; by Lemma~\ref{lemma:maximalAndStronglyMaximalSingletonSemistandardSignedTableauFamilies}
$\bigl((1),(1)\bigr)$
 is a strongly $1$-maximal signed weight. 

\subsubsubsection{Theorem 1 in \normalsize\cite{LawOkitaniStability}}
This paper followed \cite{LawOkitani}. An equivalent statement of Theorem~1 is that when $d$ is even
\begin{equation}
\label{eq:LOeven} 
\langle s_{\nu + (M)} \circ s_{(m)}, s_{\lambda \opluss M((1^d),(m-d))} \rangle \end{equation}
is ultimately
constant
and when $d$ is odd
\begin{equation}
\label{eq:LOodd} \langle s_{\nu \,\sqcup\, (1^M)} \circ s_{(m)}, s_{\lambda \opluss M((1^d),(m-d))} \rangle \end{equation}
is ultimately constant. 
This result was briefly known between March 2022
and September 2022 as Wildon's Conjecture: it was an important motivation
for Theorem~\ref{thm:nuStable}, 
and is exemplified in~\S\ref{subsec:cutUpsetForLawOkitani}.
No bounds on $M$ were proved in \cite{LawOkitaniStability}.
These results are unified as the special case of
Theorem~\ref{thm:nuStable} taking $\mu = (m)$ and $\swtp{\kappa} = \bigl((1^d), (m-d)\bigr)$;
by Example~\ref{ex:LawOkitaniSignedWeightsAreStronglyMaximal}(i) this is a strongly $1$-maximal
signed weight.

\subsubsubsection{\names{Manivel}Main result and Theorem 4.3.1 in \normalsize\cite{Manivel}}
This is the same result as Theorem~A in~\cite{BowmanPaget} by Bowman and Paget,
already discussed. We emphasise that the proof
in \cite{Manivel} is by novel geometric methods.

\subsubsubsection{\names{Weintraub}Theorem 0.1 in \normalsize\cite{Weintraub}}
This states that
$\langle s_{\nu + (M)} \circ s_\mu, s_{\lambda + M(|\mu|)} \rangle$ is ultimately constant.
It is the same as Theorem 4.2 in \cite{CarreThibon} and the final result of Colmenarejo \cite{ColmenarejoStability} both discussed above; 
we mention that
Weintraub's proof precedes both these papers and the methods used are different from either.

\subsection{Outline}
This paper is split into the five parts indicated in the table of contents.
Each section is written to be read independently as far as possible.

\subsubsection*{Introduction and overview \emph{(\S 1--2)}}
In \S\ref{sec:overview} we give an overview of the proof: we hope this will
persuade the reader that while the proof is lengthy, because of many minor technical difficulties,
the overall concept of finding stable bijections between certain semistandard signed tableaux 
and between certain plethystic semistandard
signed tableaux is quite simple.

\subsubsection*{Preliminaries \emph{(\S \ref{sec:preliminaryDefinitions}--\ref{sec:twisted})}}
In \S\ref{sec:preliminaryDefinitions} we give basic definitions.
In particular we define plethystic semistandard signed tableaux in 
Definition~\ref{defn:plethysticSemistandardSignedTableau}. The reader should be able to skip this section
and then use it as a reference.
In \S\ref{sec:maximalSignedWeights} we define the strongly
maximal signed weights in  Theorem~\ref{thm:nuStable}.
In \S\ref{sec:symmetricFunctions}
we give background
results on plethysms of symmetric functions.
Finally in \S\ref{sec:twisted} we define the $\ell^\-$-twisted dominance
order in Definition~\ref{defn:ellTwistedDominanceOrder} 
and generalize classical results on Kostka numbers to the twisted case.
This is a key definition novel to this paper.

\vspace*{-3pt}

\subsubsection*{Signed Weight Lemma and stable partition systems \emph{(\S\ref{sec:SWL}--\S\ref{sec:stablePartitionSystems})}} 
In~\S\ref{sec:SWL} we prove the critical Signed Weight Lemma (Lemma~\ref{lemma:SWL}): 
this lemma specifies the overall strategy of the proof of the main theorems and  is motivated
by~\S\ref{sec:overview}. 
To apply the lemma we require the idea of a stable partition system, as defined in 
\S\ref{subsec:stablePartitionSystem}.
We give two motivating examples of stable partition systems 
in~\S\ref{subsec:444to822partitionSystem}
and~\S\ref{subsec:stablePartitionSystemsAsIntervals} and then in~\S\ref{sec:stablePartitionSystems} 
we construct
the stable partition systems used to prove Theorems~\ref{thm:muStable} and~\ref{thm:nuStable}. 
Also in \S\ref{subsec:cutUpsetForLawOkitani}
we show some of the main ideas in the proofs
of Theorems~\ref{thm:muStable} and Theorem~\ref{thm:nuStable} by examples using the three
key results proved by the end of \S\ref{sec:stablePartitionSystems}, namely
\begin{itemize}
\item  Proposition~\ref{prop:plethysticSignedKostkaNumbers}
on plethystic signed Kostka numbers,
stating that $\langle s_\nu \circ s_\muS, e_{\alpha^\-}h_{\alpha^\+}\rangle = |\mPSSYT(\nu,\muS)_{(\alpha^\-,\alpha^\+)}|$;
\item Lemma~\ref{lemma:SWL}, the Signed Weight Lemma;
\item Corollary~\ref{cor:signedIntervalStable}, that intervals for the $\ell^\-$-twisted 
dominance order define
stable partition systems.
\end{itemize}

\vspace*{-3pt}

\subsubsection*{Proof of Theorem~\ref{thm:muStable} \emph{(\S\ref{sec:twistedWeightBoundInner}--\S\ref{sec:muStableSharpPositive})}}
In~\S\ref{sec:twistedWeightBoundInner} we prove Proposition~\ref{prop:twistedWeightBoundInner} giving an upper bound
in the $\ell^\-$-twisted
dominance order on the constituents of an arbitrary plethysm $s_\nu \circ s_\muS$.
This is the final technical preliminary needed to apply Corollary~\ref{cor:signedIntervalStable},
and hence the Signed Weight Lemma (Lemma~\ref{lemma:SWL}), to 
prove Theorem~\ref{thm:muStable} in~\S\ref{sec:muStable}.
We give the important special case of this theorem when all tableaux
have only positive entries in \S\ref{sec:muStableSharpPositive}.

\subsubsection*{Proof of Theorem~\ref{thm:nuStable} 
\emph{(\S\ref{sec:twistedWeightBoundForStronglyMaximalWeight}--\S\ref{sec:nuStableSharpApplications})}}
In \S\ref{sec:twistedWeightBoundForStronglyMaximalWeight} we prove the analogous
upper bound in Corollary~\ref{cor:signedWeightBoundForStronglyMaximalSignedWeight}
on the constituents in the plethysms in Theorem~\ref{thm:nuStable},
and in~\S\ref{sec:nuStable} we prove Theorem~\ref{thm:nuStable}.
In \S\ref{sec:nuStableSharpApplications} we give
many applications of this theorem, including its important
special case when all tableaux have partition shape
and only positive entries.

\subsection{Computer software} {\sc Magma} 
\cite{Magma} code that can be used to verify all of our examples and compute
with the $\ell^\-$-twisted dominance order in Definition~\ref{defn:ellTwistedDominanceOrder} may be
downloaded as part of the arXiv submission of this paper. Example~\ref{ex:17family} is most easily checked
using the second author's Haskell~\cite{Haskell98} code \cite{PZYT}.
Computer algebra is not essential to any of our proofs or  examples.

\section{Overview of proof}\label{sec:overview}
The original Law--Okitani stability result \cite[Proposition 5.3]{LawOkitani},
later generalized in the main theorem of~\cite{LawOkitaniStability},
is that, for any partition $\nu$
and any partition $\lambda$ of $2|\nu|$, the sequence of plethysm coefficients
\begin{equation}\label{eq:LawOkitaniOriginal} 
\langle s_{\nu \sqcups (1^M)} \circ s_{(2)}, s_{\lambda \sqcups (1^M) + (M) } \rangle \end{equation}
is ultimately constant. This is the special case of Theorem~\ref{thm:nuStable} 
for the strongly maximal signed weight $\bigl( (1), (1)\bigr)$ of shape $(2)$, size $1$
and sign $-1$. (This weight is strongly maximal
by Example~\ref{ex:LawOkitaniSignedWeightsAreStronglyMaximal}(i); see \S\ref{subsec:needStronglyMaximal} for motivation for strongly maximal weights.)
Here we use the special case $\nu = (3,1)$ and $\lambda = (6,2)$ of~\eqref{eq:LawOkitaniOriginal} 
that 
\[ \langle s_{(3,1,1^M)} \circ s_{(2)}, s_{(6+M,2,1^M)} \rangle \]
is ultimately constant
to
sketch the overall strategy of the proofs of the two main results in this paper,
indicating why certain steps cannot, we believe, be simplified.
In particular, in \S\ref{subsec:plethysticBijectionOuter} 
we give the bijection on plethystic semistandard signed tableaux (see
Definition~\ref{defn:plethysticSemistandardSignedTableau}) used 
to prove this stability result; it is generalized in Theorem~\ref{thm:nuStableSharp}.
 
\subsection{Elementary-homogeneous products}\label{subsec:elementaryHomogeneousProducts}
The first key idea is \emph{to approximate Schur functions as products of elementary and homogeneous
symmetric functions}. For~\eqref{eq:LawOkitaniOriginal}, we
set $\alpha = \lambda - (1^{\ell(\lambda)})$ and decompose the partition $\lambda \sqcup (1^M) + (M) $
as $(1^{\ell(\lambda) + M}) + \bigl( \alpha \,+\, (M) \bigr)$. It then follows
from Young's rule (see the start of \S\ref{sec:symmetricFunctions})
 that $s_{\lambda \sqcups (1^M) + (M) }$ is a summand of
$e_{(\ell(\lambda)+M)} h_{\alpha + (M)}$. In our specific example, $\lambda = (6,2)$, $\alpha = (5,1)$, and so, when $M=0$, 
the product is
\begin{align}\label{eq:e2h51} 
e_{(2)}h_{(5,1)} &= s_{(6,2)} + s_{(7,1)} + 2s_{(6,1,1)} + s_{(5,2,1)} + s_{(5,1,1,1)}. \\
\intertext{As expected, this has $s_{(6,2)}$ as a summand, but also, of course, some Schur functions labelled by extra partitions.
For general $M \in \N$,~\eqref{eq:e2h51} becomes}
 \label{eq:e2h51M} 
e_{(2+M)}h_{(5+M,1)} &= s_{(6+M,2,1^M)} + s_{(7+M,1,1^M)} + 2s_{(6+M,1,1,1^M)}\nonumber \\
& \hspace*{1in} + s_{(5+M,2,1,1^M)}  + s_{(5+M,1,1,1,1^M)}. \end{align}
Note that the summands in~\eqref{eq:e2h51M}
are in bijection with the summands in~\eqref{eq:e2h51} and the coefficients are independent of $M$.
This points to a potential inductive proof, provided all the partitions in~\eqref{eq:e2h51}
are `inductively smaller' than $(6,2)$ in some sense. However, we 
must consider not just the partitions appearing in~\eqref{eq:e2h51}, but
the new partitions that arise when we apply this `approximation' strategy to them.
For instance, $s_{(5,1,1,1)}$ appears in~\eqref{eq:e2h51}
and $(5,1,1,1) = (1,1,1,1) + (4)$, so we must also consider the product 
\begin{equation}\label{eq:e4h4} e_{(4)}h_{(4)} = s_{(5,1,1,1)} + s_{(4,1,1,1,1)},\end{equation} 
where we see $s_{(4,1,1,1,1)}$ for the first time.
It therefore appears we need an order in which $(6,2)$, $(5,1,1,1)$ and $(4,1,1,1,1)$ 
form a chain. This is a property of the $1$-twisted dominance order,
defined by taking $\ell^\-=1$ 
in Definition~\ref{defn:ellTwistedDominanceOrder}.
The up-set of $(6,2)$ (as defined in \S\ref{subsec:upsetsAndTwistedIntervals}), 
in this order is
\begin{equation}\label{eq:62upset} \begin{split}
 (6,2)^\unlhddotS = &\{(6,2), (5,2,1),  (4,2,1,1),  (3,2,1^3), (2,2,1^4) \} \\
  & \quad \cup \{(7,1), (6,1,1), (5,1,1,1), (4,1^4), (3,1^5), (2,1^6), (1^8) \}. \end{split} \end{equation}
Note that both $(5,1,1,1)$ and $(4,1,1,1,1)$ appear
and that each of the two subsets in the decomposition above is a chain, increasing when read left to right. (Thus `inductively smaller' means `bigger in the $1$-twisted
dominance order'.)
See Figure~\ref{fig:62upset} for the Hasse diagram of the order. 
By Lemma~\ref{lemma:twistedKostkaMatrix}, for every $\sigma \in (6,2)^\unlhddotS$, the summands of 
\smash{$e_{(\ell(\sigma))}h_{\sigma - (1^{\ell(\sigma)})}$} are in $\sigma^\unlhddotS$ and
so in $(6,2)^\unlhddotS$;
for example, 
this is clear for $\sigma = (6,2)$ and $\sigma = (5,1,1,1)$ 
from the products $e_{(2)}h_{(5,1)}$ and $e_{(4)}h_{(4)}$ given 
in~\eqref{eq:e2h51} and~\eqref{eq:e4h4} above.

\begin{remark}
Many other strategies for `approximating' $s_{(6+M,2,1^M)}$ by a product of more tractable
symmetric functions, for example
any strategy using homogeneous symmetric functions alone,
would fail at the point of~\eqref{eq:e2h51M} by giving an expansion with a growing
number of Schur functions, or with non-constant coefficients.
\end{remark}

\subsection{Rough inductive hypothesis}\label{subsec:roughInductiveHypothesis}
We now suppose inductively --- but see \S\ref{subsec:cutUpsetsAreStable} below for a difficulty 
here --- 
that $\langle s_{(3,1,1^M)} \circ s_{(2)}, s_{\sigma  \sqcups (1^M) + (M)} \rangle$ is ultimately constant
for each of the partitions \smash{$\sigma \in (6,2)^\unlhddotS$} except perhaps for $(6,2)$.
 Since stability is known inductively for each summand
of $e_{(2+M)}h_{(5+M,1)}$,
except $s_{(6+M,2,1^M)}$,
to show that the plethysm coefficients
$\langle s_{(3,1,1^M)} \circ s_{(2)}, s_{(6+M,2,1^M)}\rangle$ are ultimately
constant, it suffices to show that
\begin{equation}\label{eq:62eh} 
\langle s_{(3,1,1^M)} \circ s_{(2)}, e_{(2+M)}h_{(5+M,1)} \rangle\end{equation}
is ultimately constant.

\subsection{Plethystic semistandard signed tableaux}\label{subsec:plethysticSemistandardSignedTableaux}
To show that the pleth\opthyphen{}ysm coefficients
$\langle s_{(3,1,1^M)} \circ s_{(2)}, e_{(2+M)}h_{(5+M,1)} \rangle$ in~\eqref{eq:62eh}
are ultimately constant
we need the second key idea: \emph{$s_\nu \circ s_\muS$ is 
the generating function enumerating the
plethystic semistandard signed
tableaux defined in Definition~\ref{defn:plethysticSemistandardSignedTableau}.}
Moreover, by Proposition~\ref{prop:plethysticSignedKostkaNumbers},
 the inner product of $s_\nu \circ s_\muS$ with $e_{\pi^\-}h_{\pi^\+}$ is
the number of plethystic semistandard signed tableaux of signed weight $(\pi^\-,\pi^\+)$,
in the sense of Definition~\ref{defn:signedWeightPlethystic}.
For instance,
\begin{equation}\langle s_{(3,1)} \circ s_{(2)}, e_{(2)}h_{(5,1)} \rangle = 
\bigl|\PSSYT\bigl( (3,1), (2) \bigr)_{((2),(5,1))}\bigr|\label{eq:s31weight}\end{equation}
is the number of plethystic semistandard signed tableaux of shape 
$\bigl((3,1), (2) \bigr)$ and signed weight $\bigl( (2), (5,1)\bigr)$. 
The three such plethystic semistandard signed tableaux are:
\smallskip
\[ \pyoung{1.2cm}{0.7cm}{ {{\young(\oM1), \young(\oM2), \young(11)}, {\young(11)}} }  \spy{18pt}{,}
  \pyoung{1.2cm}{0.7cm}{ {{\young(\oM1), \young(11), \young(11)}, {\young(\oM2)}} }  \spy{18pt}{,}
  \pyoung{1.2cm}{0.7cm}{ {{\young(\oM1), \young(11), \young(12)}, {\young(\oM1)}} }  \spy{18pt}{,}
 \]
where $\mbf{1}$ stands for the negative entry $-1$.
More generally,
\[
\langle s_{(3,1,1^M)} \circ s_{(2)}, e_{(2+M)}h_{(5+M,1)} \rangle = 
\bigl|\PSSYT\bigl (3,1,1^M), (2) \bigr)_{((2+M),(5+M,1)}\bigr|. \]
Thus $\langle s_{(3,1,1^M)} \circ s_{(2)}, e_{(2+M)}h_{(5+M,1)} \rangle$ is ultimately constant if and only if
\[ \bigl| \PSSYT\bigl( (3,1,1^M), (2) \bigr)_{((2+M),(5+M,1))} \bigr| \] 
is ultimately constant. Hence
proving the stability of the plethysm coefficient
 $\langle s_{(3,1,1^M)} \circ s_{(2)}, s_{(6+M,2,1^M)} \rangle$
reduces to the combinatorial
problem of enumerating certain plethystic semistandard signed tableaux. We solve this problem
in  \S\ref{subsec:plethysticBijectionOuter} below
by exhibiting explicit bijections between the sets 
\smash{$\mathrm{PSSYT}\bigl((3,1,1^M), (2)\bigr)_{((2+M),(5+M,1))}$}
for $M$ sufficiently large. (The proof of Theorem~\ref{thm:nuStableSharp}
has the general argument.)
In our specific example, $M=0$ is already sufficiently large and the constant multiplicity is~$3$.

\subsection{Why the inductive step as described fails in general}
This is an honest sketch of the proof, except for one problem.
We saw in \S\ref{subsec:elementaryHomogeneousProducts} 
that we have to consider all the partitions in the up-set $(6+M,2,1^M)^\unlhddotS$, not just
those in the support (see Definition~\ref{defn:supp})  of $e_{(2+M)}h_{(5+M,1)}$.
If all these partitions were of the form $\sigma \sqcup (1^M) + (M) $
for $\sigma \in (6,2)^\unlhddotS$, then nothing new would be needed, and the inductive step would go through.
The problem is that
this is not the case: for instance
\[ (7,2,1)^\unlhddotS = \{ \sigma \sqcup (1) + (1) : \sigma \in (6,2)^\unlhddotS \} \cup \{ (2,2,1^6), (1^{10}) \}\]
where the union is disjoint,
and there is no way to deduce from the inductive assumptions for partitions in $(6,2)^\unlhddotS$
that $\langle s_{(3,1,1^{M+1})} \circ s_{(2)}, s_{(2+M,2,1^6,1^M)} \rangle$
is ultimately constant, as required in the inductive step.

\subsection{Cut up-sets}\label{subsec:cutUpsetsAreStable}
We get around this obstacle to the inductive strategy as presented in 
\S\ref{subsec:roughInductiveHypothesis}
by the third key idea: \emph{we do not need
to consider every partition appearing in the up-set $(6+M,2,1^M)^\unlhddotS$,
only those that appear in the plethysm
\smash{$s_{(3,1,1^M)} \circ s_{(2)}$}}. It follows from the Littlewood--Richardson rule
that  only partitions with at most $4+M$
parts appear in this plethysm, so rather than work with $(6+M,2,1^M)^\unlhddotS$,
we can instead take the `cut' up-set
\begin{align*} \PSeq{M} &= \bigl\{ \sigma \in \Par(8+2M) : \sigma \unrhddot (6+M,2,1^M),\, \ell(\sigma) \le 4+M \bigr\}. 
\intertext{Thus $\PSeq{0} = \{ (6,2), (5,2,1), (4,2,1,1), (7,1), (6,1,1), (5,1,1,1) \}$ and
in general we have}
 \PSeq{M} &= \bigl\{ (6+M,2,1^M), (5+M,2,1,1^M), (4+M,2,1,1,1^M),  \\
  &\qquad\qquad (7+M,1,1^M), (6+M,1,1,1^M), (5+M,1,1,1,1^M) \bigr\}\end{align*}
for each $M \in \N_0$. When $M=1$ the `cut' removes the two partitions $(2,2,1^6)$ and $(1^{10})$
blocking the inductive argument, and in general,
every partition in $\PSeq{M}$ is of the form $\sigma \sqcup (1^M) + (M) $
for $\sigma \in \PSeq{0}$. Note however that $\PSeq{0}$ 
contains $(4,2,1,1)$ and so $\PSeq{0}$ is not contained in the support
of
$e_{(2)}h_{(5,1)}$. Thus we must still consider more partitions than are immediately
required by~\eqref{eq:e2h51}.  


\subsection{Signed Weight Lemma}\label{subsec:overviewSWL}
As we show by proving the Signed Weight Lemma (Lemma~\ref{lemma:SWL}),
after this refinement, the inductive step goes through.
Because
of our use of this critical lemma,
our proofs are not explicitly inductive. Instead, each proof
specifies the relevant way to apply the Signed Weight Lemma, and  verifies its
hypotheses: the most technical part of the argument is captured in the notion of a stable
partition system, as defined in Definition~\ref{defn:stablePartitionSystem}.

\subsection{Twisted dominance order}
The definition of a stable partition system is deliberately quite general. This generality is needed
for other applications of the Signed Weight Lemma (Lemma~\ref{lemma:SWL}) beyond the scope of this paper, and, in any case,
seems to us to be the clearest way to present the proof.
 In practice, the stable partition systems we use
are certain families of intervals for the twisted dominance order on partitions
(see Definition~\ref{defn:ellTwistedDominanceOrder}). For instance $\PSeq{M}$ above
is the interval $[(6,2) \oplus M(1,1), (5,1,1,1) \oplus M(1,1)]_\unlhddotS$ for the $1$-twisted dominance order.
Definition~\ref{defn:ellTwistedDominanceOrder}
is a key definition in this paper; more broadly, the attractive interplay between
the $\ell$-decomposition $\decs{\pi^\-}{\pi^\+}$ defined in Definition~\ref{defn:ellDecomposition}, the partition $\pi$, and the symmetric function
 $e_{\pi^\-}h_{\pi^\+}$ is seen in many results and proofs below, notably
 Lemma~\ref{lemma:twistedKostkaMatrix} and Proposition~\ref{prop:twistedWeightBoundInner}.

\subsection{Counting plethystic tableaux}
The plethystic semistandard signed tableaux 
in \smash{$\PSSYT\bigl( (3), (2) \bigr)_{((1),(4,1))}$} are
\vspace*{2pt}
\begin{equation}\label{eq:PSSYTsEx} \pyoung{1.2cm}{0.7cm}{ {{\young(\oM1), \young(11), \young(12)}}  }  \spy{3pt}{,}
   \pyoung{1.2cm}{0.7cm}{ {{\young(\oM2), \young(11), \young(11)}}  }
\end{equation}
By
Proposition~\ref{prop:plethysticSignedKostkaNumbers} we have
\[ |\PSSYT\bigl( (3), (2) \bigr)_{((1),(4,1))}|= \langle
s_3 \circ s_2 , e_1 h_{(4,1)} \rangle.\]
This is the second key idea  seen in~\S\ref{subsec:plethysticSemistandardSignedTableaux},
showing the tight connection between plethystic semistandard signed
tableaux and symmetric functions.
For instance, since by Young's rule
$e_1 h_{(4,1)} = s_{(6)} + 2s_{(5,1)}
+ s_{(4,2)} + s_{(4,1,1)}$,
we may count these plethystic semistandard signed tableaux algebraically 
by evaluating the inner product displayed above using the
known plethysm $s_3 \circ s_2 = s_{(6)} + s_{(4,2)} + s_{(2,2,2)}$ 
(see for instance \cite[I.8 Example 6(a)]{MacDonald}).

 \subsection{Bijections between plethystic tableaux}\label{subsec:plethysticBijectionOuter}
In \S\ref{subsec:plethysticSemistandardSignedTableaux} we 
claimed that $| \PSSYT\bigl( (3,1,1^M), (2) \bigr)_{((2+M),(5+M,1))} | = 3$
for all $M \in \N_0$ and showed the three tableaux when $M = 0$.
Two of these plethystic semistandard signed tableaux are obtained
from the tableaux in~\eqref{eq:PSSYTsEx}
by inserting $\young(\oM1)$ as a new entry in position $(1,1)$, moving the existing entry down 
to row~$2$. But the plethystic semistandard signed tableau shown in the margin
is not obtained in this way, because by the row semistandard condition
in Definition~\ref{defn:plethysticSemistandardSignedTableau}, $\young(11)$
cannot appear left of $\young(\oM2)$\,.  Generally this insertion map defines
an injection between the sets for $M$ and $M+1$ and,
by the part of the proof of Theorem~\ref{thm:nuStableSharp} dealing with condition (ii)
in  the Signed Weight Lemma (Lemma~\ref{lemma:SWL}),
 it is surjective for $M \ge 1$,\marginpar{\scalebox{0.9}{$\pyoung{1.2cm}{0.7cm}{ {{\young(\oM1), \young(\oM2), \young(11)}, {\young(11)}} }$}}
proving the claimed stability result. For a larger example see Example~\ref{ex:omegaBound411}.




\addtocontents{toc}{\smallskip}
\addtocontents{toc}{\textbf{Preliminaries}}

\section{Partitions, tableaux and plethystic tableaux}\label{sec:preliminaryDefinitions}
In this section we give numbered definitions for the key terms novel to this paper. Other than these, we believe
our notation is standard; 
we hope that the reader will be able to skim this section and 
then treat it as a reference. For
essential preliminaries on symmetric functions see  the
start of~\S\ref{sec:symmetricFunctions}.

\subsubsection*{Weights, compositions and partitions}
A \emph{weight}, also sometimes called a \emph{composition}, is an infinite 
sequence of non-negative integers with finite sum, called its \emph{size}. 
The \emph{length} of a 
\emph{weight} $\alpha$, denoted~$\ell(\alpha)$, is the maximum $\ell$ such that $\alpha_\ell \not=0$.
(We set $\ell(\varnothing) = 0$.) Dually, we often write $a(\alpha)$ for~$\alpha_1$.
A weight is a \emph{partition} if $\alpha$ is non-increasing.
The terms in a weight or partition are called \emph{parts}. 
We always omit the infinite tail of zero parts when writing weights or partitions. 
Let~$\W$ be the set of weights, 
let $\Par$ be the set of partitions, and let $\Par(n)$ be the set of partitions of~$n$.

\subsubsection*{Young diagrams and skew partitions}
We write $[\lambda]$ for the \emph{Young diagram} of a partition $\lambda$, defined by
\[ [\lambda] = \bigl\{ (i, j) : 1 \le i \le \ell(\lambda), 1 \le j \le \lambda_i \bigr\}. \]
The elements of $[\lambda]$ are called \emph{boxes}.
A \emph{skew partition} is a pair of partitions, denoted $\lambda/\lambda^\star$,
such that 
$[\lambda^\star] \subseteq [\lambda]$. The \emph{size} of a skew partition $\lambda/\lambda^\star$,
denoted $|\lambda/\lambda^\star|$,
is $|\lambda| - |\lambda^\star|$. 
We extend the definition of Young diagrams
to skew partitions in the obvious
way, by setting $[\lambda/\lambda^\star] = [\lambda] \backslash
[\lambda^\star]$. We draw Young diagrams in the `English' convention with box $(1,1)$
in the top-left of the page. 
The \emph{conjugate partition} to $\lambda$, denoted $\lambda'$, is the unique partition with Young diagram
$\big\{ (j, i) : (i, j) \in [\lambda] \bigr\}$. For example $(3,2)' = (2,2,1)$.
The conjugate of a skew partition $\mu/\mus$ is $\mu'/\mu_\star'$.

\subsubsection*{Operations on partitions}
The sum and difference
of partitions is defined componentwise by $(\alpha + \beta)_i = \alpha_i + \beta_i$,
and $(\alpha - \beta)_i = \alpha_i - \beta_i$ 
when~$\beta$ is a subpartition of $\alpha$. 
Let $\alpha \sqcup \beta$ be the partition whose multiset of non-zero parts is the disjoint union
of the multisets of non-zero parts of 
$\alpha$ and $\beta$; equivalently $(\alpha \sqcup \beta)' = \alpha' + \beta'$.
We say that $\alpha \sqcup \beta$ is the \emph{join} of $\alpha$ and $\beta$.
For instance,
\[ \yng(3,2)\,,\ \yng(3,3,2,1)\, , \ \yng(5,5,2,1)\, , \ \young(::::\ \ ,:::\ ,:\ \ ,\ ) \]
are the Young diagrams of $(3,2)$, $(3,2) \sqcup (3,1)$, $(3,2) \sqcup (3,1) + (2,2)
$ 
and $(6,4,3,1) \backslash (4,3,1)$, respectively.
As already seen in the statements
of the two main theorems,
we define, for partitions $\gamma$ and $\delta$,
\begin{equation}\label{eq:oplus}
\muS \oplus (\gamma,\delta) = \bigl( (\mu \sqcup \gamma')  +\delta\bigr) / \mus
\end{equation}
with the special case that $\lambda \oplus (\gamma,\delta)
= (\lambda \sqcup \gamma')  + \delta $. 
Note the conjugation of~$\gamma$. (In examples we often omit the parentheses,
writing instead $\lambda \,\sqcup\, \gamma'+ \delta $.) We suggest `$\oplus$' be read as `adjoin'.
For example, $(3,2) \oplus \bigl((2,1,1), (2,2)\bigr) = \bigl((3,2) \sqcup (3,1)\bigr)
+ (2,2) = (3, 3,2,1) + (2,2) = (5,5,2,1)$ was seen above,
and $(3,2) \opluss 2\bigl((2,1,1), (2,2)\bigr) = (7,7,3,2,1,1)$.
Note this agrees with
$(3,2) \oplus \bigl((2,1,1), (2,2) \bigr) \oplus \bigl((2,1,1), (2,2) \bigr)$.
There is one annoyingly technical point, 
seen by comparing
$\varnothing \,\oplus\, \bigl((1),(2) \bigr) = \varnothing \,\sqcup\, (1) \,+\,(2) =(1) \,+\,(2) = (3)$
with $\varnothing + (2) \sqcup (1) = (2) \sqcup (1) = (2,1)$, which we address in the following
definition. 

\begin{definition}\label{defn:large}
Let $\muS$ be a skew partition.
Given $\ell^\-$ and $\ell^\+ \in \N_0$, we say that $\muS$ is
$(\ell^\-, \ell^\+)$-\emph{large} if either $\ell^\- = 0$ or $\ell^\+ = 0$ or
$\mu_{\ell^\+} \ge \ell^\-$.
\end{definition}

It is deliberate that $\mus$ does not enter in the body of Definition~\ref{defn:large}.
Equivalently, $\muS$ is $(\ell^\-, \ell^\+)$-\emph{large} if $(\ell^\+,\ell^\-)$ either
has a zero coordinate or is  a box of $[\mu]$: see Figure~\ref{fig:large} for an example.

\begin{figure}[h!t]
\begin{center}
\vspace*{-15pt}
\begin{tikzpicture}[x=0.6cm, y=-0.6cm]
\fill[pattern = north west lines] (3,2)--(4,2)--(4,3)--(3,3)--(3,2);

\fill[color=lightgray] (0,0)--(3,0)--(3,1)--(1,1)--(1,2)--(0,2)--(0,0);

\draw (0,0)--(6,0); \draw (0,1)--(6,1); \draw (0,2)--(5,2); \draw (0,3)--(5,3);
\draw(0,4)--(2,4);
\draw (0,0)--(0,4); \draw (1,0)--(1,4); \draw (2,0)--(2,4); \draw (3,0)--(3,3);
\draw (4,0)--(4,3); \draw (5,0)--(5,3); \draw (6,0)--(6,1);

\node[left] at (-0.125,2.5) {$\scriptstyle \ell^{\scriptstyle +}=\hskip1.5pt3$};
\node[above] at (3.5,-0.125) {$\scriptstyle \ell^{\scriptstyle -}=\hskip1.5pt4$};
\end{tikzpicture}
\end{center}
\caption{The skew partition $(6,5,5,2)/(3,1)$ above is $(4,3)$-large
in the sense of Definition~\ref{defn:large}
because $(3,4) \in [(6,5,5,2)]$. It is also $(5,3)$-large,
but not $(5,4)$-large.}\label{fig:large}
\end{figure}

By Lemma~\ref{lemma:adjoinToLarge},
joining $\gamma'$ and adding~$\delta$ are commuting operations when applied
to a partition that is $\bigl(\ell(\gamma), \ell(\delta)\bigr)$-large.
For instance $(3,2)$  \emph{is not} $(3,2)$-large, and
$(3,2) + (2,2) \sqcup (3,1) = (5,4,3,1)$ is not equal to $(3,2) \sqcup (3,1) + (2,2) = (5,5,2,1)$. But $(3,3)$ \emph{is} $(3,2)$-large, and applied to this partition the operations
commute.
See Remark~\ref{remark:ellDecompositionLarge} for why
we choose to join first.

\begin{remark}\label{remark:becomesLarge}
Fix partitions $\kappa^\-$ and $\kappa^\+$ and let $\ell^\- = \ell(\kappa^\-)$,
$\ell^\+ = \ell(\kappa^\+)$. 
For any partition $\mu$, the adjoining
map $\mu \mapsto \mu \opluss (\kappa^\-, \kappa^\+)$
increases~$\mu_{\ell^\+}$ by at least $\kappa^\+_{\ell^\+}$
and increases~$\mu_{\ell^\-}'$ by at least $\kappa^\-_{\ell^\-}$.
(The example $\varnothing \oplus \bigl( (1), (2) \bigr)
= (3)$ above shows that `at least' cannot be replaced with `exactly'.)
If $\kappa^\- = \varnothing$ then $\mu$ is already 
$(\ell^\-,\ell^\+)$-large;
otherwise $\mu$  becomes 
$(\ell^\-,\ell^\+)$-large after at most
$\lceil \ell^\+/\kappa^\-_{\ell^\-}\rceil$ 
adjoinings. 
The dual result  holds for $\kappa^\+$, now with 
$\lceil \ell^\-/\kappa^\+_{\ell^\+}\rceil$  adjoinings.
Thus there exists~$\KK$ such that 
$\muS \,\oplus\, \KK\swtp{\kappa}$ is $(\ell^\-,\ell^\+)$-large.
For later use, for instance in the context of Lemma~\ref{lemma:twistedDownsetLarge},
we remark that one further application of the adjoining map gives an
$(\ell^\-+1,\ell^\+)$-large partition.
Setting $\sigmaS = \muS \oplus \KK\swtp{\kappa}$, it follows
from Lemma~\ref{lemma:adjoinToLarge} that for all $M \ge \KK$,
\begin{align*}
\qquad \muS \oplus M\swtp{\kappa} &= \sigmaS \oplus (M-\KK)\swtp{\kappa}  \\
&= \sigmaS \oplus \swtp{\kappa} \opluss \stackrel{M-\KK}{\cdots} \opluss \swtp{\kappa} \end{align*}
\end{remark}

By this remark, 
there is no loss of generality in
assuming in our main theorems that all the partitions involved are large,
and so, in practice, there is rarely any need to worry about whether to add $\kappa^\+$ 
or join $\kappa^{\-\prime}$ 
first in the map
$\lambda \mapsto \lambda \oplus \swtp{\kappa}$. (By~\eqref{eq:oplus}, joining
first is our definition.)
In the important special
case where $\kappa^\- = \varnothing$, this technicality does not arise.

\subsubsection*{Dominance order}
We partially order partitions of the same size
by the \emph{dominance order},
defined as usual by $\kappa \unlhd \lambda$ if and only if $\kappa_1 +\cdots + \kappa_i \le
\lambda_1 + \cdots + \lambda_i$ for all $i$. We  use the obvious extension of the
dominance order to compositions and to 
partitions of different size: in the latter case,
to indicate that the partitions may have
different sizes, we write $\unLHD\!$ rather~than~$\unlhd$.

\subsubsection*{Signed tableaux and signed weights}
We work throughout with tableaux having entries from $\Z \backslash \{0\}$.

\begin{definition}[Signed tableau]\label{defn:signedTableau}
Let $\muS$ be a skew partition.
A \emph{signed tableau} of \emph{shape} $\muS$ is 
a function $t : [\muS] \rightarrow \Z \backslash \{0\}$. If $t(i,j) = x$ then we 
say that $t$ has \emph{entry} $x$ in box $(i,j)$. 
\end{definition}

We write $\YT(\muS)$ for the set of signed tableaux of shape $\muS$.
Recall that $\W$ is the set of weights.

\begin{definition}[Signed weight]\label{defn:signedWeight}
A signed weight is an element of $\W \times \W$. 
\end{definition}

\begin{definition}[Signed weight of a signed tableau]\label{defn:signedWeightTableau}
The \emph{signed weight} of a signed tableau~$t$ 
is the pair $\swtp{\alpha} \in \W \times \W$
where, for each $i \in \N$, 
$\alpha^\-_i$ is the 
number of entries of $t$ equal to $-i$, and
$\alpha^\+_i$ is the number of entries of~$t$ equal to $i$.  
\end{definition}

If a tableau $t$ has only positive entries then its signed
weight is $(\varnothing,\alpha)$ for some weight $\alpha$, and
in this case we say, as usual, that $\alpha$ is the \emph{weight} of $t$
and write $\alpha = \wt(t)$.

\begin{definition}[Sign of a signed tableau]\label{defn:signTableau}
The \emph{sign} of a signed tableau $t$, denoted $\sgn(t)$, 
is $-1$ if $t$ has an odd number of negative
entries and $+1$ if~$t$ has an even number of negative entries.
A signed tableau is \emph{negative} if its sign is $-1$ and \emph{positive}
if its sign is $+1$.
\end{definition}

Equivalently, the sign of a signed tableau of weight $\swtp{\alpha}$ is
$(-1)^{|\alpha^\-|}$.

\subsubsection*{Semistandard signed tableaux}
\marginpar{\qquad\young(::::\ \ ,:::\ ,:\ \ ,\ )}
\marginpar{\ \qquad\young(:::\ ,::\ ,::\ ,:\ ,\ ,\ )}
Recall that a \emph{horizontal strip} is a skew partition whose Young diagram
has at most one box in each column and a \emph{vertical strip} is a skew partition
whose Young diagram has at most one box in each row.
For instance the skew partition $(6,4,3,1) / (4,3,1)$ seen earlier in this section is a horizontal
strip but not a vertical strip, and its conjugate $(4,3,3,2,1,1)/(3,2,2,1)$ is a vertical strip 
but not a horizontal strip. (The diagrams are shown in the margin.)

\begin{definition}[Semistandard signed tableau]\label{defn:semistandardSignedTableau}
Let $t$ be a signed tableau. We say $t$
is \emph{semistandard} if
\begin{defnlist}
\item equal positive entries of $T$
lie in horizontal strips
\item equal negative entries of $T$ lie in vertical strips, 
\item all entries are weakly increasing 
when rows are read left-to-right and columns are read
top-to-bottom 
with respect to  the total order on $\Z \backslash \{0\}$ defined by

\vspace*{-15pt}
\[\qquad -1 \prec -2 \prec \ldots \prec 1 \prec 2 \ldots. \]
\end{defnlist}
\end{definition}

Note that $-1$ is the least element in this order. 
We write $\sSSYT(\muS)$ for the set of all semistandard signed $\muS$-tableaux
and  \smash{$\SSYT(\muS)_{\swtp{\alpha}}$} for the subset of those signed $\muS$-tableaux
of signed weight~$\swtp{\alpha}$.
We omit $\pm$ in the second case since it is clear from the signed weight that signed tableaux
are required. As already seen, we adopt the convention that negative entries are shown in tableaux by bold numbers.
For example, \smash{$\SSYTw{(5,4,3,1)}{(2,2)}{(5,3,1)}$} has size two, containing the two
semistandard signed tableaux 
\[ \young(\oM\tM 111,\oM\tM 22,113,2)\, , \quad
   \young(\oM\tM 111,\oM122,\tM 23,1) \vspace*{2pt} \]
and
\smash{$\SSYTw{(5,4,3,1)}{(3,1)}{(5,3,1)}$}
has a unique semistandard
signed tableau, obtained from the second semistandard
signed tableau above by changing 
the entry of $-2$ in box $(3,1)$ to~$-1$.

\begin{definition}[Signed colexicographic order]
\label{defn:signedColexicographicOrder}
Let $s$ and $t$ be distinct semistandard signed tableaux of the same shape. 
We set $s < t$ if and only if \emph{either} 
\begin{defnlist}
\item[(i)] $\sgn(s) = -1$ and $\sgn(t) = 1$ \emph{or} 
\item[(ii)] $\sgn(s) = \sgn(t)$
and considering the \emphd{largest} entry, $m$ say, that appears in a different position in $s$ and $t$,
in the \emphd{rightmost} column in which the multiplicity of $m$ differs between $s$ and $t$,
the multiplicity is \emphd{less} in $s$ than in $t$.
\end{defnlist}
This is the \emph{signed colexicographic order}.
The \emph{sign-reversed colexicographic order} is defined identically except that if $\sgn(s) = -1$
and $\sgn(t) = 1$ then now $s > t$.
\end{definition}

We emphasise that here `largest entry' is with respect to the order in Definition~\ref{defn:semistandardSignedTableau} in which
$-1 \prec -2 \prec \ldots \prec 1 \prec 2 < \ldots$.
For example, the signed colexicographic order restricted to
semistandard signed tableaux of shape $(2,1)$ having entries from $\{ 1, 2, 3\}$ is
\[ \young(11,2)\spy{0pt}{\,$<$\hspace*{1pt}} \young(11,3)\spy{0pt}{\,$<$\hspace*{1pt}} 
\young(12,2)\spy{0pt}{\,$<$\hspace*{1pt}} \young(12,3)\spy{0pt}{\,$<$\hspace*{1pt}}
\young(22,3)\spy{0pt}{\,$<$\hspace*{1pt}} \young(13,2)\spy{0pt}{\,$<$\hspace*{1pt}}
\young(13,3)\spy{0pt}{\,$<$\hspace*{1pt}} \young(23,3) \spy{0pt}{\, }\]
and the total order on 
$\sSSYT\bigl( (1^2) \bigr)$ is
\[ \scalebox{0.9}{$\begin{aligned}
&\young(\oM,1) < \young(\tM,1) < \young(\dM,1) < \young(\fM,1) < 
\ldots < \young(\oM,2) < \young(\tM,2) <
\young(\dM,2) < \young(\fM,2) < \ldots \\[1pt] &\ldots < \young(\oM,\oM) < \young(\oM,\tM) 
< \young(\tM,\tM) < \young(\oM,\dM) < \young(\tM,\dM) < \young(\dM,\dM) 
< \young(\oM,\fM) < \ldots < \young(1,2) < \young(1,3) < \young(2,3) < \young(1,4) < \ldots .
\end{aligned}$} \]
Changing the order to the sign-reversed colexicographic order, the positive tableaux seen
in the bottom row instead come first,
and the order within each line is unchanged. In either order
we have $\scalebox{0.9}{$\young(\oM,\oM)$} <
\scalebox{0.9}{$\young(\oM,\tM)$}\,$; the greatest entry that has different multiplicity is~$-2$
and it appears only in the tableau that is larger.
More generally, the signed colexicographic order on $(1^m)$-tableaux
with only positive entries agrees with the colexicographic order on $m$-subsets of $\N$, 
whence its name. 
It is notable that the signed colexicographic order could be replaced
with any other total order on semistandard tableaux in which negative tableaux 
(in the sense of Definition~\ref{defn:signTableau}) precede positive tableaux
without changing any of our results: we explain this in Remark~\ref{remark:otherOrder} below
and use this freedom in the proof of Theorem~\ref{thm:nuStable}
(see Definition~\ref{defn:adaptedSignedColexicographicOrder}).

\subsubsection*{Plethystic semistandard signed tableaux}
We can now define our key combinatorial objects.

\begin{definition}[Plethystic signed tableau]\label{defn:plethysticSignedTableau}
A \emph{plethystic signed tableau~$T$} of \emph{outer shape} $\nu$
and \emph{inner shape} $\muS$ is a function $T : [\nu] \rightarrow 
\YT(\muS)$. 
If $T(i,j) = t$ then we 
say that $T$ has \emph{entry} $t$ in box $(i,j)$. 
We call the entries of $T$ \emph{inner tableaux}.
\end{definition}

Let $\PYT(\nu, \muS)$
denote the set of plethystic signed tableaux of outer shape $\nu$ and inner shape $\muS$. For example three elements of $\PYT\bigl((3,2), (2) \bigr)$
are shown below
\[ \pyoung{1.15cm}{0.7cm}{{{\young(1\oM), 
 \young(\oM 2), \young(22)}, {\young(\oM\tM), \young(13)}}}\spy{18pt}{,}
\pyoung{1.15cm}{0.7cm}{{{\young(\oM1), \young(22), \young(12)}, 
{\young(\oM\tM), \young(13)}}}\spy{18pt}{,}
\pyoung{1.15cm}{0.7cm}{{{\young(\oM1), \young(22), \young(22)}, {\young(\oM\tM), \young(13)}}}\spy{18pt}{.}\]
Note that each inner $\muS$-tableau 
in a plethystic signed tableau has a sign defined by Definition~\ref{defn:signTableau}.
For example, the first plethystic signed tableau above has
$\young(1\oM)$\hskip0.5pt, $\young(\oM2)$ as its two negative entries, and so has sign $(-1)^2$
and therefore is positive, in the sense of this definition.
Moreover,
if these inner $\muS$-tableaux are semistandard in the sense 
of Definition~\ref{defn:semistandardSignedTableau}, as in the
second and third examples above (but not the first, because of $\young(1\oM)$\hskip0.5pt),
then they
are totally ordered by the signed and sign-reversed colexicographic orders 
in Definition~\ref{defn:signedColexicographicOrder}. 
We use this 
to lift Definition~\ref{defn:semistandardSignedTableau} almost
verbatim to the plethystic setting.

\begin{definition}[Plethystic semistandard signed tableau]
\label{defn:plethysticSemistandardSignedTableau}
Let $T$ be a pleth\opthyphen{}ystic signed tableau with
semistandard inner tableau entries. We say that $T$
is \emph{semistandard}
if 
\begin{defnlist}
\item equal positive entries of $T$
lie in horizontal strips
\item equal negative entries of $T$ lie in vertical strips, 
\item all entries are weakly increasing 
when rows are read left-to-right and columns are read
top-to-bottom with respect to 
the signed colexicographic order.
\end{defnlist}
 We say that~$T$ is \emph{sign-reversed semistandard}
if the same holds with respect to the sign-reversed colexicographic order.
\end{definition}

Let $\sPSSYT(\nu, \muS)$ and  $\srPSSYT(\nu, \muS)$ denote the sets of all plethystic semistandard
signed tableaux and sign-reversed plethystic semistandard signed tableaux
of outer shape $\nu$ and inner shape $\muS$.
Thus the first two tableaux displayed above are in $\PYT\bigl((3,2), (2)\bigr)$
but not in either of these subsets, because in the first
$\young(1\oM)$ is not
semistandard and in the second $\young(22) > \young(12)$ violates
condition (c) above.
The third tableau is in
$\sPSSYT\bigl((3,2), (2)\bigr)$ but not in $\srPSSYT\bigl((3,2), (2)\bigr)$.

\begin{definition}[Signed weight of a plethystic signed tableau]\label{defn:signedWeightPlethystic}
The \emph{signed weight} of a 
plethystic signed tableau $T$ 
is the sum of the signed weights of its inner tableaux.
\end{definition}

We denote by $\PSSYT(\nu, \muS)_{\swtp{\alpha}}$ 
and $\srPSSYT(\nu, \muS)_{\swtp{\alpha}}$ the subsets of those 
plethystic semistandard signed tableaux
of signed weight $\swtp{\alpha}$.
For instance the signed weights of the 
elements of $\PYT\bigl((3,2), (2)\bigr)$ shown above are
$\bigl( (3,1), (2,3,1) \bigr)$, $\bigl( (2,1), (3,3,1) \bigr)$
and $\bigl( (2,1), (2,4,1) \bigr)$.

The definition of the signed colexicographic
order (Definition~\ref{defn:signedColexicographicOrder}) 
applies to both these subsets, since the inner $\muS$-tableau entries
are totally ordered.
The three elements of  $\PSSYTw{(2,2)}{(3)}{(3)}{(7,2)}$ are, ordered by 
the signed colexicographic order,
\[ \pyoung{1.6cm}{0.75cm}{{{\young(\oM 11), \young(\oM 12)}, {\young(\oM 12), \young(111)}}}
\spy{20pt}{,} \pyoung{1.6cm}{0.75cm}{{{\young(\oM 11), \young(\oM 22)}, {\young(\oM 11), \young(111)}}}
\spy{20pt}{,} \pyoung{1.6cm}{0.75cm}{{{\young(\oM 11), \young(\oM 12)}, {\young(\oM 11), \young(112)}}}
\spy{20pt}{.} \]
For instance, the third plethystic semistandard signed tableau is greater
than the second because the greatest $(3)$-tableau entry of the third,
namely $\young(112)$\hskip1pt, is not in the second.
To explain one feature that may at first seem surprising, note that since
$\young(\oM 11)$\, has negative sign, it may appear multiple times
in the same column of a plethystic semistandard tableau, but it cannot be 
repeated within the same row. See before Remark~\ref{remark:otherOrder} for the analogous example
using sign-reversed plethystic semistandard signed tableaux and also~Example~\ref{ex:GFlip}
for another example showing repeated inner tableaux.

\section{Maximal and strongly maximal signed weights}\label{sec:maximalSignedWeights}
The results and definitions in~\S\ref{subsec:maximalSignedWeights} are needed throughout; the remainder
of this section has the definitions needed in Theorem~\ref{thm:nuStable}.
In \S\ref{subsec:stronglyMaximalSignedWeightExamples},
\S\ref{subsec:stronglyMaximalSignedWeightsTable} and \S\ref{subsec:needStronglyMaximal}
we give motivating examples: these final three subsections are not logically essential.

\subsection{Greatest signed weights}\label{subsec:maximalSignedWeights}
We begin with a partial order on signed weights.
Let $\W_{\ell^\-}$ be the set of weights of length at most $\ell^\- \in \N_0$.

\begin{definition}[$\ell^\-$-Signed dominance order]\label{defn:ellSignedDominanceOrder}
Let $\ell^\- \in \N_0$.
The $\ell^\-$-signed dominance order is the partial order on $\W_{\ell^\-} \!\!\times \W$ defined by
$(\alpha^\-, \alpha^\+) \unlhd (\beta^\-, \beta^\+)$ if 
\[ (\alpha^\-_1, \ldots, \alpha^\-_{\ell^\-}, \alpha^\+_1, \alpha^\+_2, \ldots) \unlhd
   (\beta^\-_1, \ldots, \beta^\-_{\ell^\-}, \beta^\+_1, \beta^\+_2, \ldots). 
\]
\end{definition}

For example we have
$\bigl( (1,1,1), (2,1) \bigr) \unlhd \bigl( (2,1), (3) \bigr)$
in the $3$-signed dominance order
because $(1,1,1,2,1) \,\unlhd\, (2,1,0,3,0)$, whereas
$\bigl( (3), (2,1) \bigr)$ and $\bigl( (2,1), (3) \bigr)$ are incomparable in the $2$-signed dominance
order
since  $(3,0,2,1)$ and $(2,1,3,0)$ are incomparable in the dominance order.
This example should make it clear that no ambiguity arises from using the same symbol $\unlhd$ for
both the dominance and $\ell^\-$-signed dominance order. 
The value of~$\ell^\-$ will always be clear from context;
in all our main theorems, $\ell^\-$ is the length of the partition $\kappa^\-$.

\begin{definition}\label{defn:greatestSignedTableau}
Let $\ell^\- \in \N_0$. Given a skew partition $\tauS$,
let~$t^\-$ be the semistandard signed tableau
with only negative entries defined by putting $\min(\ell^\-, \tau_i - \taus_i)$
entries from $-1,\ldots, -\ell^\-$ into row $i$ of $[\tauS]$. Supposing that $t^\-$ has shape $\sigma/\taus$,
let $t_{\ell^-}(\tauS)$ 
be the semistandard signed tableau of shape $\tauS$ obtained from $t^\-$ by putting
$\tau'_j - \sigma'_j$ entries from $1, 2, \ldots $ into column $j$.
\end{definition}

\begin{definition}[Greatest signed weight]\label{defn:greatestSignedWeight}
Let $\ell^\- \in \N_0$. Given a skew partition $\tauS$
we define the \emph{$\ell^\-$-greatest signed weight} of shape $\tauS$,
denoted $\omega_{\ell^-}(\tauS)$, to be the signed weight of $t_{\ell^\-}(\tauS)$.
\end{definition}

We immediately justify calling $\omega_{\ell^\-}(\tauS)$ `greatest'.
Examples of both definitions are given following this lemma.

\begin{lemma}\label{lemma:greatestSignedWeight}
Let $\tauS$ be a skew partition and let $\ell^\- \in \N_0$.
The tableau $t_{\ell^\-}(\tauS)$ is the greatest signed tableau of shape $\tauS$
when signed weights
are ordered by the $\ell^\-$-signed dominance order. Moreover, 
given any $\tauS$-tableau $t$ with negative entries from $\{-1,\ldots, -\ell^\-\}$
we have, writing $\swt(t)$ for the signed weight of $t$,
\[ (\swt(t)^\-, \swt(t)^\+) \unlhd \bigl(\omega_{\ell^\-}(\tauS)^\-, \omega_{\ell^\-}(\tauS)^\+ \bigr) 
\]
where both $\omega_{\ell^\-}(\tauS)^\-$ and $\omega_{\ell^\-}(\tauS)^\+$ are partitions.
\end{lemma}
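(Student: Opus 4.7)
The plan is to reduce the $\ell^\-$-signed dominance inequality to a sequence of threshold-counting inequalities and then establish each one by bounding the shape of a sub-tableau. First I would verify that both components of $\omega_{\ell^\-}(\tauS)$ are partitions with $\ell(\omega^\-) \le \ell^\-$: by construction $t^\-$ uses only the $\ell^\-$ values $-1,\ldots,-\ell^\-$, and $\omega^\-_k = \#\{i : \tau_i - \taus_i \ge k\}$ is non-increasing in $k$, while dually $\omega^\+_r = \#\{j : \tau'_j - \sigma'_j \ge r\}$ is non-increasing in $r$.

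The key observation is that, by Definition~\ref{defn:ellSignedDominanceOrder}, the inequality $\swt(t) \unlhd \omega_{\ell^\-}(\tauS)$ is equivalent to
\[ \#\{x \in [\tauS] : t(x) \le v\} \le \#\{x \in [\tauS] : t_{\ell^\-}(\tauS)(x) \le v\} \]
for every cutoff $v \in \{-1,\ldots,-\ell^\-\} \cup \N$, where $\le$ is the total order $-1 < -2 < \cdots < 1 < 2 < \cdots$ from Definition~\ref{defn:semistandardSignedTableau}. The heart of the proof is to show $t_{\ell^\-}(\tauS)$ maximizes the number of entries $\le v$ for each such $v$. By semistandardness (applied to the sub-tableau of entries $\le v$), for any $t$ these boxes form a skew shape $\rho^v/\taus$ with $\taus \subseteq \rho^v \subseteq \tau$ a partition, and I would bound $|\rho^v/\taus|$ in two cases.

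For $v = -k$ with $k \le \ell^\-$, the vertical-strip condition on the $k$ distinct negative values $\{-1,\ldots,-k\}$ forces $\rho^v_i - \taus_i \le k$, so $|\rho^v/\taus| \le \sum_i \min(\tau_i - \taus_i, k) = \sum_{j=1}^k \omega^\-_j$, with equality achieved by $t_{\ell^\-}(\tauS)$ by construction. For $v = r > 0$, let $\pi^v$ be the shape of the negative sub-tableau of $t$. The preceding case with $k = \ell^\-$ gives $\pi^v \subseteq \sigma$ for $\sigma_i = \taus_i + \min(\ell^\-,\tau_i - \taus_i)$, and the horizontal-strip condition on positive entries $1,\ldots,r$ forces $(\rho^v)'_j \le (\pi^v)'_j + r \le \sigma'_j + r$. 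Combining with $(\rho^v)'_j \le \tau'_j$ gives
\[ |\rho^v/\taus| \le \sum_j \min(\tau'_j,\sigma'_j + r) - |\taus|, \]
and a short calculation using $\sum_{s=1}^r \omega^\+_s = \sum_j \min(r, \tau'_j - \sigma'_j)$ identifies the right-hand side with $|\omega^\-| + \sum_{s=1}^r \omega^\+_s$, again with equality for $t_{\ell^\-}(\tauS)$. The main subtlety is the nested maximization in this second case: to maximize the number of entries $\le r$, one must \emph{simultaneously} maximize the negative sub-shape $\pi^v$ (to raise the column cap on positives) and then extend positively as far as possible, which is exactly what the two-step construction of $t_{\ell^\-}(\tauS)$ in Definition~\ref{defn:greatestSignedTableau} carries out.
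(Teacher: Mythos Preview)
Your proof is correct and follows essentially the same approach as the paper's: both rest on the observation that $t_{\ell^\-}(\tauS)$ maximizes the cumulative count of entries $\le v$ at every threshold $v$ in the order $-1 < -2 < \cdots < 1 < 2 < \cdots$, which is exactly the content of the $\ell^\-$-signed dominance inequality. The paper states this greedy maximization as ``clear from the construction'' and declares the inequality ``obvious''; you have supplied the explicit shape bounds (row-width cap for negatives, column-height cap for positives) and the identification of the maximizing values with the partial sums of $\omega^\-$ and $\omega^\+$, so your version is a more detailed execution of the same idea rather than a different argument.
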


\begin{proof}
It is clear from the construction of $t_{\ell^\-}(\tauS)$ 
that, amongst all semistandard signed $\tauS$-tableaux,
$t_{\ell^\-}(\tauS)$ greedily maximizes first the number of
$-1$s, then the number of $-2$s, and so on, until all the negative entries
in $\{-1,\ldots, -\ell^\-\}$ are 
placed, and then the number of $1$s, then the number of $2$s, and so on,
until all positive entries are placed. The displayed inequality
is therefore obvious from the definition of the $\ell^\-$-signed dominance order in Definition~\ref{defn:ellSignedDominanceOrder}. By the construction of $t_{\ell^\-}(\tauS)$,
each entry $-k$ for $k \ge 2$ has an entry $-(k-1)$ to its left, and each entry $k$
for $k \ge 2$ has an entry $k-1$ above it. Therefore $\omega_{\ell^\-}(\tauS)^\-$ and $\omega_{\ell^\-}(\tauS)^\+$
 are partitions.
\end{proof}

See Lemma~\ref{lemma:ellDecompositionGreatestSignedWeight} for a strengthening of the final
part of this lemma.

\begin{example}\label{ex:2greatest}
The $2$-greatest tableaux $t_{2}\bigl((6,4,4,1)/\taus\bigr)$ for the four choices 
$\varnothing$, $(1,1)$, $(2,1)$, $(3,3)$ of~$\taus$
are shown below
\[
\young(\oM\tM1111,\oM\tM22,\oM\tM33,\oM) \spy{0pt}{,\ }
\young(:\oM\tM111,:\oM\tM2,\oM\tM13,\oM) \spy{0pt}{,\ } 
\young(::\oM\tM11,:\oM\tM1,\oM\tM12,\oM) \spy{0pt}{,\ }
\young(:::\oM\tM1,:::\oM,\oM\tM11,\oM) \spy{0pt}{.} 
\]
Their greatest signed weights $\omega_{2}\bigl( (6,4,4,1) / \taus \bigr)$
are
$\bigl( (4,3), (4,2,2) \bigr)$, $\bigl( (4,3),$ $(4,1,1) \bigr)$, $\bigl( (4,3), (4,1) \bigr)$ and
$\bigl( (4,2), (3) \bigr)$.  We continue this example in Example~\ref{ex:2greatestWeight}.
\end{example}

In general it is quite fiddly to specify $\omega_{\ell^\-}(\tauS)$ except by the algorithmic
construction above. In the partition case however there is a simple formula, which the reader
will easily guess from the previous example.
We postpone it to~\eqref{eq:greatestSignedWeightPartitionCase} since it is an example of the $\ell^\-$-decomposition
of partitions in \S\ref{sec:twisted}.

The final remark below is not logically essential, but 
will help orient the reader, while addressing one potential confusion.

\begin{remark}\label{remark:greatestSignedTableauIsLeastInSignedColexicographicOrder}
Let $\ell^\- \in \N_0$ and let $\tauS$ be a skew partition.
Recall from Definition~\ref{defn:signedColexicographicOrder} that negative tableaux
precede positive tableaux in the signed colexicographic  order and \emph{vice versa}
in the sign-reversed colexicographic order.
It follows, by a similar argument to Lemma~\ref{lemma:greatestSignedWeight},
that $t_{\ell^\-}(\tauS)$ is 
the least tableau in the signed colexicographic order if
$|\omega_{\ell^\-}(\tauS)^\-|$ is odd, and in the sign-reversed colexicographic order
if $|\omega_{\ell^\-}(\tauS)^\-|$ is even. More generally signed tableaux of \emph{large} signed weight
(in the $\ell^\-$-signed dominance order) are \emph{small} in the sign and sign-reversed colexicographic orders.
\end{remark}

\subsection{Semistandard signed tableau families}\label{subsec:semistandardSignedTableauFamilies}
For Theorem~\ref{thm:nuStable} we must 
extend these ideas to families of semistandard signed tableaux.

\begin{definition}[Semistandard signed tableau families]\label{defn:semistandardSignedTableauFamily}
Let $\tauS$ be a skew partition and let $R \in \N$. 
\begin{defnlistS} 
\item A \emph{row-type semistandard signed tableau family}
of shape $\tauS$ and size~$R$ is the multiset of entries in a plethystic
semistandard signed tableau of outer shape $(R)$ and inner shape $\tauS$.
\item A \emph{column-type semistandard signed tableau family}
of shape $\tauS$ and size $R$ is the multiset of entries in a plethystic
semistandard signed tableau of outer shape $(1^R)$ and inner shape $\tauS$.
\end{defnlistS}
The \emph{signed weight} of a semistandard signed tableau family is the
sum of the signed weights of its $\tauS$-tableau elements.
A tableau family is \emph{singleton} if it has a single element.
\end{definition}

Signed weights are ordered by the $\ell$-signed dominance order
in Definition~\ref{defn:ellSignedDominanceOrder}.

\begin{definition}[Maximal signed weights]\label{defn:maximalSignedWeight}
A semistandard signed tableau family of signed weight $\swtp{\kappa}$
is \emph{maximal} if its signed weight
is maximal in the $\ell(\kappa^\-)$-signed dominance order amongst all semistandard signed tableau families
of its type, shape and size, \emph{considering only} those families whose negative
entries come from $\{-1,\ldots, -\ell(\kappa^\-)\}$.
A \emph{maximal signed weight} is the signed weight of a maximal semistandard signed tableau family.
\end{definition}

For example, the maximal singleton semistandard signed tableau families of shape $(2,2)$ have as their unique elements
the tableaux $t_{\ell^\-}\bigl((2,2)\bigr)$ for $\ell^\- = 2$, $1$ and $0$, shown below:
\begin{equation} \young(\oM\tM,\oM\tM) \spy{0pt}{,\quad}
   \young(\oM1,\oM2) \spy{0pt}{,\quad}
   \young(11,22)\spy{0pt}{.} \label{eq:singletonMaximals22} \end{equation}
Their maximal signed weights are
$\bigl( (2,2), \varnothing \bigr)$, $\bigl( (2), (1,1) \bigr)$
and $\bigl( \varnothing, (2,2) \bigr)$, respectively. 
This shows that `maximal' must be interpreted using the appropriate value of $\ell^\-$:
for instance, while $\bigl( (2), (1,1) \bigr) \unrhd 
\bigl( \varnothing, (2,2) \bigr)$ in the $1$-signed dominance order of Definition~\ref{defn:ellSignedDominanceOrder},
the signed weight $\bigl( \varnothing, (2,2) \bigr)$ is still maximal according
to Definition~\ref{defn:maximalSignedWeight},
because it is compared only with other signed weights of the form $\bigl( \varnothing, \tau^\+)$
using the $0$-signed dominance order.
Note also that the tableau shown in the margin of signed weight $\bigl( (1), (2,1) \bigr)$
\marginpar{\raisebox{0pt}{$\qquad\qquad\ \young(\oM1,12)$}}
is not maximal, because $\bigl( (1), (2,1) \bigr) \lhd \bigl( (2), (1,1) \bigr)$ in the $1$-signed
dominance order; this illustrates that Definition~\ref{defn:maximalSignedWeight}
requires a comparison with tableaux of \emph{both possible signs}.

More generally
Lemma~\ref{lemma:maximalAndStronglyMaximalSingletonSemistandardSignedTableauFamilies}
classifies all singleton maximal semistandard signed tableau families.
In these singleton examples, 
the row/column-type is irrelevant.
We now give an example showing all features of Definition~\ref{defn:maximalSignedWeight}.

\begin{example}\label{ex:maximalSemistandardSignedTableauFamiliesMixedSign}
The five maximal row-type semistandard signed tableau families of shape $(2)$ and size $3$ are
%
%
%
%
\begin{align*}
   & \left\{ \hskip1pt\young(\oM\tM)\spy{0pt}{,\ts} \young(\oM\tM)\spy{0pt}{,\ts} \young(\oM\tM)\hskip1pt \right\}\!, \; 
     \left\{ \hskip1pt\young(\oM1)\spy{0pt}{,\ts} \young(\oM2)\spy{0pt}{,\ts} \young(\oM3)\hskip1pt \right\}\!,  \;
     \left\{ \hskip1pt\young(\oM1)\spy{0pt}{,\ts} \young(\oM2)\spy{0pt}{,\ts} \young(11)\hskip1pt \right\}\!,  \\
   &  \left\{ \hskip1pt\young(\oM1)\spy{0pt}{,\ts} \young(11)\spy{0pt}{,\ts} \young(11)\hskip1pt \right\}\!,  \;
     \left\{ \hskip1pt\young(11)\spy{0pt}{,\ts} \young(11)\spy{0pt}{,\ts} \young(11)\hskip1pt \right\} 
\end{align*}
of signed weights $\bigl(\hskip-0.5pt  (3,3), \varnothing \hskip-0.5pt \bigr)$,\! $\bigl(\hskip-0.5pt  (3), (1,1,1)\hskip-0.5pt  \bigr)$,\! 
$\bigl(\hskip-0.5pt  (2), (3,1) \hskip-0.5pt \bigr)$,\! $\bigl( \hskip-0.5pt (1), (5) \hskip-0.5pt \bigr)$ and \hbox{$\bigl(\hskip-0.5pt \varnothing, (6) \hskip-0.5pt\bigr)$\hskip-1pt,} respectively. 
Note that two of the families have tableaux of both signs and three have a repeated positive tableau.
The seven maximal column-type semistandard signed tableau families of shape $(2)$ and size $3$ are
%
%
%
%
%
%
\begin{align*}
   & \left\{ \hskip1pt\young(\oM\tM)\spy{0pt}{,\ts} \young(\oM\dM)\spy{0pt}{,\ts} \young(\oM\fM)\hskip1pt \right\}\!, \;
      \left\{ \hskip1pt\young(\oM\tM)\spy{0pt}{,\ts} \young(\oM\dM)\spy{0pt}{,\ts} \young(\tM\dM)\hskip1pt \right\}\!, \;    \\
   & \left\{ \hskip1pt\young(\oM1)\spy{0pt}{,\ts} \young(\oM\tM)\spy{0pt}{,\ts} \young(\oM\dM)\hskip1pt \right\}\!, \; 
     \left\{ \hskip1pt\young(\oM1)\spy{0pt}{,\ts} \young(\oM1)\spy{0pt}{,\ts} \young(\oM\tM)\hskip1pt \right\}\!, \; 
     \left\{ \hskip1pt\young(\oM1)\spy{0pt}{,\ts} \young(\oM1)\spy{0pt}{,\ts} \young(\oM1)\hskip1pt \right\}\!,  \; \\
   &   \left\{ \hskip1pt\young(11)\spy{0pt}{,\ts} \young(12)\spy{0pt}{,\ts} \young(22)\hskip1pt \right\} \!, \;
     \left\{ \hskip1pt\young(11)\spy{0pt}{,\ts} \young(12)\spy{0pt}{,\ts} \young(13)\hskip1pt \right\} 
\end{align*}
of signed weights $\bigl( (3,1,1,1), \varnothing \bigr)$, $\bigl( (2,2,2), \varnothing \bigr)$,
$\bigl( (3,1,1), (1) \bigr)$, 
$\bigl( (3,1), (2) \bigr)$, $\bigl( (3), (3) \bigr)$, $\bigl( \varnothing, (3,3) \bigr)$
and $\bigl( \varnothing, (4,1,1) \bigr)$, respectively. Again note that two families have a repeated
negative tableau.
We continue this example in Example~\ref{ex:stronglyMaximalSemistandardSignedTableauFamilies}.
\end{example}


We use Definition~\ref{defn:maximalSignedWeight} at a critical point in
the proof of Lemma~\ref{lemma:columnIsExceptionalOrBounded};
it is also needed in Definition~\ref{defn:stronglyMaximalSignedWeight}
shortly below.

\subsection{Strongly maximal signed weights}\label{subsec:stronglyMaximalSignedWeights}
We define the maximal semistandard signed tableau families in the statement of Theorem~\ref{thm:nuStable}
as follows. Say that a $\tauS$-tableau is \emph{$\ell^\-$-negative greatest}
if it agrees with $t_{\ell^\-}(\tauS)$ in its negative entries.
Let $\max \SM$ denote the maximum integer entry of all the tableaux in a semistandard signed tableau family $\SM$.

\begin{definition}[Strongly maximal]\label{defn:stronglyMaximalSignedWeight}
Let $\tauS$ be a non-empty
skew partition. Let $c^\+ \in \N_0$.
Let $(\kappa^\-, \kappa^\+)$ be a signed weight and set $\ell^- = \ell(\kappa^\-)$.
Let $\epsilon \in \{-1,+1\}$ be the sign of $t_{\ell^\-}(\tauS)$. 
A semistandard signed tableau family $\SM$ of shape $\tauS$
and signed weight $(\kappa^\-,\kappa^\+)$ 
is \emph{strongly $c^\+$-maximal} if 
\begin{defnlist}
\item each $t \in \SM$ is $\ell(\kappa^\-)$-negative greatest;
\item if $\epsilon = +1$ then $\SM$ has column-type; if $\epsilon =-1$ then $\SM$ has row-type;
\item if $\swtp{\phi}$ is the signed weight of a maximal semistandard signed tableau family $\T$
of the same shape, size and type as $\SM$, such that 
each member of $\T$ is $\ell(\kappa^\-)$-negative greatest and $\max \T \le \max \SM$,
then \smash{$\sum_{i=1}^{c^+} \phi^\+_i \le \sum_{i=1}^{c^+} \kappa^\+_i$} 
with equality if and only if $\T = \SM$.
\end{defnlist}
\vspace*{2pt}\noindent
The \emph{sign} of $\SM$ is $\epsilon$.
A signed weight is \emph{strongly $c^\+$-maximal} if it is the signed weight of 
a strongly $c^\+$-maximal semistandard signed tableau family; its \emph{shape} and \emph{sign}
are the common shape and sign of the tableaux in the family and its \emph{size} is the size 
of the family.
\end{definition}

See \S\ref{subsec:needStronglyMaximal} for motivation
for this definition and
Remark~\ref{remark:boundedExceptionalColumns} for how strongly maximal
tableau families are used to define a bijection on plethystic
semistandard signed tableaux.
Later in \S\ref{sec:twistedWeightBoundForStronglyMaximalWeight} we give a running example using the 
strongly $1$-maximal signed weight $(\varnothing, (4,1,1))$ 
of the tableau family $\bigl\{\young(11)\,, \young(12)\,, \young(13)\bigr\}$ found
in Example~\ref{ex:stronglyMaximalSemistandardSignedTableauFamilies}(0) using Example~\ref{ex:maximalSemistandardSignedTableauFamiliesMixedSign};
this running example illustrates the significance of condition (c).

As an immediate example of Definition~\ref{defn:stronglyMaximalSignedWeight},
it is routine to check
that the three singleton tableau families of shape $(2,2)$ in~\eqref{eq:singletonMaximals22} 
are strongly  $0$-, $2$- and $1$-maximal respectively.
The relevant values of $\ell(\kappa^\-)$, specifying the least negative entry,
are $2$, $1$ and $0$ respectively. The final tableau family
is also strongly $2$-maximal.

\begin{lemma}\label{lemma:stronglyMaximalSemistandardSignedTableauFamilyIsUnique}
If $\swtp{\kappa}$ is a strongly maximal signed weight of shape $\muS$ then there is a unique 
semistandard signed tableau family $\SM$ of shape $\muS$ and the same size and type as $\swtp{\kappa}$.
The $\muS$-tableau entries of $\SM$ are distinct and agree in their negative entries.
\end{lemma}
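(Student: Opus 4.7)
My plan is to prove the three assertions separately: the members of $\SM$ agree in their negative entries, they are distinct, and $\SM$ is the unique such family. The first is immediate from condition~(a) of Definition~\ref{defn:stronglyMaximalSignedWeight}: every $t \in \SM$ is $\ell^\-$-negative greatest (with $\ell^\- = \ell(\kappa^\-)$), so agrees with $t_{\ell^\-}(\muS)$ in its negative entries. In particular all members share the common sign $\epsilon = \sgn(t_{\ell^\-}(\muS))$. For distinctness I would then invoke condition~(b): if $\epsilon = +1$ then $\SM$ has column type (outer shape $(1^R)$) with members of positive sign, and by Definition~\ref{defn:plethysticSemistandardSignedTableau} equal positive entries must lie in a horizontal strip of $[(1^R)]$, which admits at most one box. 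The case $\epsilon = -1$ is symmetric, using vertical strips of $[(R)]$.

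For uniqueness, suppose $\SM'$ is any semistandard signed tableau family of shape $\muS$, size $R$, and the same type as $\SM$, with signed weight $\kappa$. Write $\omega^\- = \omega_{\ell^\-}(\muS)^\-$, so that $\kappa^\- = R\omega^\-$. By Lemma~\ref{lemma:greatestSignedWeight} each member of $\SM'$ has negative weight dominated by $\omega^\-$; summing $R$ such weights to obtain $R\omega^\-$ forces each to equal $\omega^\-$ exactly. A brief semistandard argument (using that equal negative entries form vertical strips and that $-1 < -2 < \cdots$ in our order) then pins the negatives of each member of $\SM'$ to the same boxes as in $t_{\ell^\-}(\muS)$, so every member of $\SM'$ is $\ell^\-$-negative greatest. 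Note also that $\max \SM' = \ell(\kappa^\+) = \max \SM$. The plan is now to apply condition~(c) of the strong $c^\+$-maximality of $\SM$ with $\T = \SM'$: provided $\SM'$ is maximal in the sense of Definition~\ref{defn:maximalSignedWeight}, the iff clause forces $\SM' = \SM$.

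The main obstacle is therefore to verify that $\SM'$ is maximal. I would argue this by contradiction: a strictly dominating maximal family $\T^\ast$ (same type, shape, and size, with negatives in $\{-1, \ldots, -\ell^\-\}$) would, by the argument of the previous paragraph applied to $\T^\ast$, be $\ell^\-$-negative greatest with $\swt(\T^\ast)^\- = R\omega^\-$, so $\swt(\T^\ast)^\+ \rhd \kappa^\+$ in partition dominance. Since partition dominance at equal size decreases length, $\max \T^\ast \le \ell(\kappa^\+) = \max \SM$, and so $\T^\ast$ satisfies the hypotheses of condition~(c) of $\SM$. The resulting inequality $\sum_{i=1}^{c^\+} \swt(\T^\ast)^\+_i \le \sum_{i=1}^{c^\+} \kappa^\+_i$, combined with the opposite dominance inequality at $k=c^\+$, forces equality at $c^\+$, and the iff clause of~(c) then yields $\T^\ast = \SM$, contradicting $\swt(\T^\ast) \rhd \kappa$ strictly. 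Hence $\SM'$ is maximal and we conclude $\SM' = \SM$ as planned. The main subtlety is this interplay of Lemma~\ref{lemma:greatestSignedWeight}, the length-decreasing property of partition dominance, and the iff clause of~(c), which converts uniqueness into the maximality of $\SM'$.
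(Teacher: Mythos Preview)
Your proof is correct and follows the paper's approach: agreement of negatives from condition~(a), distinctness from condition~(b), and uniqueness from condition~(c) of Definition~\ref{defn:stronglyMaximalSignedWeight}. The paper dispatches uniqueness in a single phrase (``The uniqueness of $\SM$ is obvious from~(c)''), whereas you correctly identify that applying~(c) to a competing family $\SM'$ requires first verifying its maximality; your detour through a hypothetical strictly dominating maximal $\mathcal{T}^\ast$ (showing it too is $\ell^\-$-negative greatest with $\max \mathcal{T}^\ast \le \max \SM$, then reaching a contradiction via the iff clause of~(c)) is a legitimate and self-contained way to close this gap, which the paper leaves to the reader. One small point: when you say ``by the argument of the previous paragraph applied to $\mathcal{T}^\ast$'', note that for $\mathcal{T}^\ast$ you do not start knowing $\swt(\mathcal{T}^\ast)^\- = R\omega^\-$; rather you obtain it by sandwiching the partial sums between the lower bound from $\swt(\mathcal{T}^\ast) \unrhd \swtp{\kappa}$ and the upper bound from Lemma~\ref{lemma:greatestSignedWeight} applied termwise, which is a minor extra step worth making explicit.
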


\begin{proof}
By (a) in Definition~\ref{defn:stronglyMaximalSignedWeight},
the tableaux in $\SM$ are equal in their negative entries. If the
sign is $+1$ then by (b), $\SM$ has column-type, and since the inner tableaux of sign $+1$
in a plethystic semistandard tableau of shape $(1^R)$ are distinct,
the tableaux in $\SM$ are distinct. The proof is similar if the sign is $-1$.
The uniqueness of $\SM$ is obvious from~(c).
\end{proof}


\begin{example}\label{ex:stronglyMaximalSemistandardSignedTableauFamilies}
Using Lemma~\ref{lemma:stronglyMaximalSemistandardSignedTableauFamilyIsUnique}
we find all strongly maximal signed tableau families and signed weights of shape $(2)$ and size $3$,
considering each possibility for $\ell^\-$, the length of the negative part of the signed weight, in turn.
\begin{itemize}

\item[(0)] Take $\ell^\- = 0$. Then all integer entries are positive and, by (b) the family has column-type.
The two relevant maximal signed weights of shape $(2)$ and size $3$ 
seen in Example~\ref{ex:maximalSemistandardSignedTableauFamiliesMixedSign} are
$\bigl( \varnothing, (3,3) \bigr)$ and $\bigl( \varnothing, (4,1,1) \bigr)$.
Now $\bigl(\varnothing, (4,1,1) \bigr)$ is strongly $1$-maximal,
satisfying (c) because when compared to $\bigl(\varnothing, (3,3)\bigr)$,
we have $(4,1,1)_1 > (3,3)_1$.
Similarly $\bigl(\varnothing, (3,3)\bigr)$ is strongly $1$- and $2$-maximal.
(This is relevant to the end of Example~\ref{ex:stronglyMaximalSemistandardSignedTableauFamilies} below.)
Note that to verify (c),
$\bigl(\varnothing, (3,3) \bigr)$ should not be
compared to $\bigl( \varnothing, (4,1,1)\bigr)$ because $(4,1,1)$ has strictly greater length, corresponding to a larger maximum integer entry.\smallskip

\item[(1)]
Take $\ell^\- = 1$. 
By Lemma~\ref{lemma:stronglyMaximalSemistandardSignedTableauFamilyIsUnique},  a strongly maximal semistandard signed tableau family
of shape $(2)$ and size $3$ has the form
\[     \qquad \left\{ \hskip1pt\young(\oM x)\spy{0pt}{,\ts} \young(\oM \yraised)\spy{0pt}{,\ts} \young(\oM z)\hskip1pt \right\} \]
where, since by (b) the family has row-type, $x < y < z$. Taking $x = 1$, $y=2$ and $z=3$
we obtain $\left\{ \hskip1pt\young(\oM1)\spy{0pt}{,\ts} \young(\oM2)\spy{0pt}{,\ts} \young(\oM3)\hskip1pt \right\}$.
None of the four other maximal row-type weights
$(\phi^\-,\phi^\+)$
of shape $(2)$ and size $3$
seen in Example~\ref{ex:maximalSemistandardSignedTableauFamiliesMixedSign}
are of a family all of whose members are $1$-greatest. Therefore (c) holds
and so the tableau family above is strongly $3$-maximal, of strongly maximal signed 
weight $\bigl((3), (1,1,1)\bigr)$.
It is clear
that this choice of $x$, $y$ and $z$ defines
the unique strongly $3$-maximal signed weight of shape $(2)$ and size $3$.\smallskip

\item[(2)]
There is no  strongly maximal semistandard signed tableau family with $\ell^\- = 2$
because by (a) 
each $(2)$-tableau element is \raisebox{0.5pt}{$\young(\oM\tM)$}\,, but, as observed above, by (b) the three
$(2)$-tableaux in the family
are distinct. In particular, while we
saw in Example~\ref{ex:maximalSemistandardSignedTableauFamiliesMixedSign} that
 $\bigl\{ \young(\oM\tM)\hskip1pt, \hskip1pt\young(\oM\dM)\hskip1pt,
\hskip1pt\young(\tM\dM) \bigr\}$ is a maximal semistandard signed tableau family of shape $(2)$
and size $3$ in the $3$-signed dominance order, it is \emph{not} strongly maximal
in the $3$-signed dominance order.
(If instead $R=1$ then $\bigl\{ \hskip1pt\young(\oM\tM)\hskip1pt \bigr\}$ is strongly $0$-maximal in the $2$-signed dominance order.)

\end{itemize}
\end{example}

Note we do \emph{not} assume in Definition~\ref{defn:stronglyMaximalSignedWeight} that $\SM$
is maximal; instead, as we now show,
 this follows from the three hypotheses, as the reader may have guessed from the previous
example. 
       
\begin{lemma}\label{lemma:stronglyMaximalImpliesMaximal}
Let $\SM$ be a strongly $c^\+$-maximal semistandard signed tableau family
of signed weight $\swtp{\kappa}$. 
Let $\swtp{\psi}$ be the signed weight of a maximal semistandard signed 
tableau family $\SS$ of the same shape, size and type as $\SM$ 
with negative entries from $\bigl\{ -1,\ldots, -\ell(\kappa^\-) \bigr\}$ and such that $\SS \not=\SM$. 
Then 
\begin{align}
|\psi^\-| &\le |\kappa^\-|
\label{eq:stronglyMaximalM}\\
\intertext{with equality if and only if \emph{either} $\max \SS > \max \SM$ \emph{or}}
\textstyle |\psi^\-| + \sum_{i=1}^{c^\+} \psi^\+_i &< \textstyle  |\kappa^\-| + \sum_{i=1}^{c^\+} \kappa^\+_i 
\label{eq:stronglyMaximalP}.
\end{align}
Moreover $\swtp{\kappa}$ is a maximal signed weight in the sense of Definition~\ref{defn:maximalSignedWeight}.
\end{lemma}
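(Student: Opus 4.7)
Write $\ell^\- = \ell(\kappa^\-)$. The proof has three stages corresponding to the three assertions, with the main challenge lying in the second.

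\emph{First} I would prove~\eqref{eq:stronglyMaximalM}. In any semistandard signed $\tauS$-tableau $t$ with negatives drawn from $\{-1,\ldots,-\ell^\-\}$, the vertical-strip condition on equal negatives places at most one copy of each $-k$ per row, and combined with the row-weakly-increasing condition (negatives preceding positives) this packs the negatives of row $i$ into at most $\min(\ell^\-, \tau_i - \taus_i)$ leftmost boxes. Summing over rows, $t$ has at most $|\omega_{\ell^\-}(\tauS)^\-|$ negative entries, with equality attained by every member of $\SM$ by condition~(a) of Definition~\ref{defn:stronglyMaximalSignedWeight}. Summing over $\SS$ gives $|\psi^\-| \le R\,|\omega_{\ell^\-}(\tauS)^\-| = |\kappa^\-|$.

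\emph{Second}, for the equality case assume $|\psi^\-| = |\kappa^\-|$. Every $t \in \SS$ then attains the per-tableau maximum, and the argument above pins down the \emph{positions} of its negatives to coincide with those of $t_{\ell^\-}(\tauS)$, though the negative values within each row need not a priori coincide. The pivotal claim is that maximality of $\SS$ in the $\ell^\-$-signed dominance order actually forces the values to coincide too, so each $t \in \SS$ is $\ell^\-$-negative greatest. I would prove this by constructing $\SS'$ from $\SS$ by replacing, in each tableau $t \in \SS$, the negative entries by the negatives of $t_{\ell^\-}(\tauS)$ in the same boxes, leaving positives fixed; using that $\taus$ is weakly decreasing, this preserves row- and column-semistandardness, and a further plethystic check (signed colex order and strip conditions across the outer shape) shows $\SS'$ is a valid semistandard family of the same shape, size and type. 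Lemma~\ref{lemma:greatestSignedWeight} applied tableau-by-tableau then gives $\psi^\- \unlhd R\omega_{\ell^\-}(\tauS)^\- = \kappa^\-$ in dominance, hence $\swt(\SS')$ dominates $\swtp{\psi}$ in the $\ell^\-$-signed dominance order, strictly unless each $t \in \SS$ was already $\ell^\-$-negative greatest. Maximality of $\SS$ forbids strict dominance, proving the claim. Condition (a) now holds for $\SS$, and (b) holds because $\SS$ has the same type as $\SM$. If $\max \SS > \max \SM$ we are in the first alternative; otherwise (c) applied to $\T = \SS$ gives $\sum_{i=1}^{c^\+} \psi^\+_i \le \sum_{i=1}^{c^\+} \kappa^\+_i$ with equality only when $\SS = \SM$, and the strict inequality combined with $|\psi^\-| = |\kappa^\-|$ yields~\eqref{eq:stronglyMaximalP}.

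\emph{Third}, maximality of $\swtp{\kappa}$ follows by contradiction. Assume $\swtp{\psi} \unrhddot \swtp{\kappa}$, witnessed by some family $\SS$. The first $\ell^\-$ partial-sum inequalities of the dominance give $|\psi^\-| \ge |\kappa^\-|$; combined with~\eqref{eq:stronglyMaximalM} this forces $|\psi^\-| = |\kappa^\-|$, and $\swtp{\psi} \neq \swtp{\kappa}$ together with uniqueness (Lemma~\ref{lemma:stronglyMaximalSemistandardSignedTableauFamilyIsUnique}) gives $\SS \neq \SM$. The second stage now supplies one of two alternatives. If~\eqref{eq:stronglyMaximalP} holds, the partial-sum inequality fails at index $\ell^\- + c^\+$. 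If instead $\max \SS > \max \SM$, then equal total sizes give $|\psi^\+| = |\kappa^\+|$ and $\ell(\psi^\+) \ge \max \SS > \max \SM \ge \ell(\kappa^\+)$ forces $\sum_{i=1}^{\ell(\kappa^\+)} \psi^\+_i < |\kappa^\+|$, failing the partial-sum inequality at index $\ell^\- + \ell(\kappa^\+)$. Either way $\swtp{\psi} \unrhd \swtp{\kappa}$ is contradicted.

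The principal obstacle is the pivotal claim in the second stage: that maximality of $\SS$ promotes a tableau-position agreement with $t_{\ell^\-}(\tauS)$ to a tableau-value agreement. The example $\tauS = (1,1)$ with $\ell^\- = 2$ (where both $t_2((1,1)) = \young(\oM,\oM)$ and $\young(\oM,\tM)$ have two negatives, but only the first is $\ell^\-$-negative greatest) shows that the maximum-count condition alone is insufficient. The swap $\SS \to \SS'$ reduces the issue to a single $\ell^\-$-signed dominance comparison, but verifying that $\SS'$ remains semistandard both at the single-tableau level and across the plethystic outer shape demands care, especially with the vertical/horizontal strip conditions and the signed colexicographic order.
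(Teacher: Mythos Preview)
There is a genuine gap in your Stage~2: the pivotal claim is false, not merely hard to verify. Take $\tauS = (3,1)$ with $\ell^\- = 2$ and $R = 2$. Here $t_2\bigl((3,1)\bigr) = \young(\oM\tM1,\oM)$ has sign $-1$, so families are row-type, and $\SM = \bigl\{\young(\oM\tM1,\oM),\, \young(\oM\tM2,\oM)\bigr\}$ is strongly $c^\+$-maximal with $(\kappa^\-,\kappa^\+) = \bigl((4,2),(1,1)\bigr)$ (condition~(c) is vacuous since $\SM$ is the only $\ell^\-$-negative-greatest family of this type with $\max \le 2$). Now $\SS = \bigl\{\young(\oM\tM1,\oM),\, \young(\oM\tM1,\tM)\bigr\}$ has weight $\bigl((3,3),(2)\bigr)$, so $|\psi^\-| = 6 = |\kappa^\-|$. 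One checks $\SS$ is maximal: any dominating weight $\swtp{\phi}$ must have $|\phi^\-| = 6$ and $\phi^\+ = (2)$, forcing $\phi^\- \in \{(4,2),(3,3)\}$; but $\phi^\- = (4,2)$ requires both tableaux to equal $\young(\oM\tM1,\oM)$, impossible for two distinct negative tableaux in a row. Yet the second member of $\SS$ is not $\ell^\-$-negative greatest, and your swap collapses both members to $\young(\oM\tM1,\oM)$, so $\SS'$ is not a valid row-type family---the obstacle you flag in your final paragraph is real and fatal here. (Your motivating $(1,1)$ example does not actually arise in the lemma's setting: for shape $(1,1)$ every row has one box, so $\omega_{\ell^\-}\bigl((1,1)\bigr)^\-$ has length~$1$ regardless of $\ell^\-$, and the constraint $\ell^\- = \ell(\kappa^\-)$ then forces $\ell^\- = 1$, not~$2$.)

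The paper's argument for the ``moreover'' part does not pass through your pivotal claim. When some $t \in \SS$ is not $\ell^\-$-negative greatest it deduces $\psi^\- \lhd \kappa^\-$ directly (from $\swt(t)^\- \ne \omega_{\ell^\-}(\tauS)^\-$ together with $\swt(s)^\- \unLHD \omega_{\ell^\-}(\tauS)^\-$ for all $s \in \SS$), and a strict partial-sum inequality among the first $\ell^\-$ coordinates already gives $\swtp{\kappa} \not\unlhd \swtp{\psi}$. Your Stage~3 instead routes the ``moreover'' conclusion through the Stage~2 alternatives and so inherits the gap; the paper's route is direct and avoids it.
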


\begin{proof}
Set $\ell^\- = \ell(\kappa^\-)$.
Let $\tauS$ be the shape of $\mathcal{S}$ and $\mathcal{M}$.
Suppose there exists $t \in \SS$ such that $t$ is not $\ell^\-$-negative greatest.
By Lemma~\ref{lemma:greatestSignedWeight}, if $t$ has signed weight
$\swtp{\alpha}$ then $|\alpha^\-|  < |\omega_{\ell^\-}(\tauS)^\-|$
and, by summing over all $t \in \SS$, we see that~\eqref{eq:stronglyMaximalM} holds.
Moreover $\psi^\- \lhd \kappa^\-$ and so $\swtp{\kappa} \notunlhd \swtp{\psi}$,
as required in the final claim.

In the remaining case every element of $\SS$ is $\ell^\-$-negative greatest.
Hence $\psi^\- = \kappa^\-$.
If $\max \SS > \max \SM$ then we need only verify the final claim.
Since $\max \SS = \ell(\psi^\+)$
and $\max \SM = \ell(\kappa^\+)$, we have
\[ |\psi^\-| + \sum_{i=1}^{\ell(\kappa^\+)} \psi^\+_i 
< |\psi^\-| + |\psi^\+| = |\kappa^\-| + |\kappa^\+| = |\kappa^\-| + \sum_{i=1}^{\ell(\kappa^\+)} \kappa^\+_i. \]
Hence, by definition of the $\ell^\-$-signed
dominance order in Definition~\ref{defn:ellSignedDominanceOrder},
we have $\swtp{\kappa} \notunlhd \swtp{\psi}$, as required.
We have now reduced further to the case where $\max \SS \le \max \SM$. 
By (c) in Definition~\ref{defn:stronglyMaximalSignedWeight}, noting that
$|\psi^\-| = |\kappa^\-|$,
we now have~\eqref{eq:stronglyMaximalP}. 
It now follows from the 
definition of the $\ell^\-$-signed
dominance order in Definition~\ref{defn:ellSignedDominanceOrder}, as in the previous
paragraph, that $\swtp{\kappa} \notunlhd \swtp{\psi}$. This completes the proof.
\end{proof}

\begin{remark}\label{remark:stronglyMaximal}
We remark that the converse to this lemma also holds: if~$\SM$ is a maximal semistandard
signed tableau family such that either~\eqref{eq:stronglyMaximalM} or~\eqref{eq:stronglyMaximalP} holds
when $\SM$ is compared with a maximal semistandard signed tableau family $\SS$ 
then all tableaux in~$\SM$ have the same sign and are $\ell^\-$-negative greatest;
given this, \emph{provided}~$\SM$
has the type specified by (b), we have (b) and~\eqref{eq:stronglyMaximalP} implies that~(c)
holds. This gives an equivalent definition of `strongly maximal'; in  this paper we prefer  
Definition~\ref{defn:stronglyMaximalSignedWeight} since, while it has a technical flavour,
examples can easily be given
straight from the definition,
rather than via the argument of Lemma~\ref{lemma:stronglyMaximalImpliesMaximal} and this remark.
\end{remark}

If $\swtp{\kappa}$ is a strongly
maximal signed weight then $\kappa^\-$ and $\kappa^\+$ are partitions.
This is implicitly assumed in the statement of Theorem~\ref{thm:nuStable} 
 because we have only defined the adjoining operation $\oplus$ for partitions.
To prove this fact we use
the Bender--Knuth involution
on semistandard tableaux, of general skew shape, but having only positive entries. 
(For a textbook presentation of the method see
the proof of Theorem 7.10.2 in \cite{StanleyII}.)
We remark that the proof of the following lemma
generalizes --- but with much more work --- to show that \emph{any} maximal signed weight
is a pair of partitions.

\begin{lemma}\label{lemma:stronglyMaximalSignedWeightsArePairsOfPartitions}
If $\swtp{\kappa}$ is a strongly maximal signed weight then $\kappa^\-$ and~$\kappa^\+$ are partitions.
\end{lemma}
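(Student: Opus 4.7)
The plan is to treat $\kappa^\-$ and $\kappa^\+$ separately, with $\kappa^\-$ falling out immediately from condition~(a) of Definition~\ref{defn:stronglyMaximalSignedWeight} and $\kappa^\+$ requiring a Bender--Knuth argument that contradicts the maximality of $\swtp{\kappa}$ supplied by Lemma~\ref{lemma:stronglyMaximalImpliesMaximal}.

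For $\kappa^\-$, let $\SM$ be a strongly $c^\+$-maximal semistandard signed tableau family of shape~$\tauS$ and signed weight $\swtp{\kappa}$, with $|\SM| = R$. Condition~(a) says every $t \in \SM$ is $\ell(\kappa^\-)$-negative greatest, so it contributes exactly $\omega_{\ell(\kappa^\-)}(\tauS)^\-$ to the negative signed weight. Since $\omega_{\ell(\kappa^\-)}(\tauS)^\-$ is a partition by Lemma~\ref{lemma:greatestSignedWeight}, $\kappa^\- = R \cdot \omega_{\ell(\kappa^\-)}(\tauS)^\-$ is again a partition.

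For $\kappa^\+$, I would argue by contradiction, assuming $\kappa^\+_i < \kappa^\+_{i+1}$ for some $i$ and producing a signed tableau family that strictly dominates $\SM$. Because the negative placements across $\SM$ all coincide, the positive parts of the tableaux in $\SM$ all live in a single common skew shape; applying the classical Bender--Knuth involution $BK_i$ to each positive part (and leaving negatives untouched) produces a new multiset $\SS$ of $\tauS$-tableaux, all still $\ell(\kappa^\-)$-negative greatest, whose signed weight $\swtp{\psi}$ satisfies $\psi^\- = \kappa^\-$ and $\psi^\+$ equals $\kappa^\+$ with entries $i$ and $i+1$ interchanged. A short calculation of partial sums then shows $\swtp{\psi} \rhd \swtp{\kappa}$ strictly in the $\ell(\kappa^\-)$-signed dominance order of Definition~\ref{defn:ellSignedDominanceOrder}, contradicting the maximality of $\swtp{\kappa}$ furnished by the moreover part of Lemma~\ref{lemma:stronglyMaximalImpliesMaximal}.

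The delicate step, and the main obstacle, is verifying that $\SS$ is a bona fide semistandard signed tableau family of the same shape, size, and type as $\SM$: one must check that it avoids the forbidden repetitions, namely no repeated positive tableaux for column-type and no repeated negative tableaux for row-type. Here condition~(b) of Definition~\ref{defn:stronglyMaximalSignedWeight} is crucial, since it forces every tableau in $\SM$ to carry the common sign $\epsilon = \sgn(t_{\ell(\kappa^\-)}(\tauS))$ and pairs this sign with the matching family type, which in turn compels the tableaux in $\SM$ to be pairwise distinct; the injectivity of $BK_i$ on positive skew tableaux of the common shape then propagates this distinctness to $\SS$, after which arranging its elements in signed colexicographic order exhibits $\SS$ as a legitimate family of the required type.
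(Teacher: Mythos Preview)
Your proof is correct and matches the paper's approach: both handle $\kappa^\-$ via condition~(a) and Lemma~\ref{lemma:greatestSignedWeight}, and both handle $\kappa^\+$ by applying the Bender--Knuth involution to the distinct positive subtableaux (distinctness coming from~(a) and~(b)) and then invoking the maximality supplied by Lemma~\ref{lemma:stronglyMaximalImpliesMaximal}. The only difference is cosmetic: you phrase the $\kappa^\+$ step as a contradiction while the paper argues directly for each index~$i$.
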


\begin{proof}
Set $\ell^- = \ell(\kappa^\-)$.
Suppose that $\swtp{\kappa}$ has shape $\muS$ and size $R$.
Let $\SM$ be the unique strongly maximal semistandard signed tableau family of signed weight $\swtp{\kappa}$.
By Lemma~\ref{lemma:greatestSignedWeight}, $\omega_{\ell^\-}(\muS)^\-$ is a partition.
By (a) in Definition~\ref{defn:stronglyMaximalSignedWeight} we have
$\kappa^\- = R\omega_{\ell^\-}(\muS)^\-$, and so $\kappa^\-$ is a partition.

Fix $i < \ell(\kappa^\+)$.
Let \smash{$t^\+_{(1)} \ldots, t^\+_{(R)}$} be the subtableaux of skew shape defined by taking
the positive 
entries of each tableau in $\SM$. By (a) and (b) 
in Definition~\ref{defn:stronglyMaximalSignedWeight}, \smash{$t^\+_{(1)}, \ldots, t^\+_{(R)}$ }are distinct
semistandard  tableaux of the same shape.
Applying the Bender--Knuth involution swapping $i$ and $i+1$ to each $t^\+_{(k)}$ gives
distinct semistandard tableaux \smash{$u^\+_{(1)}, \ldots, u^\+_{(R)}$}. Let $\mathcal{U}$
be the semistandard signed tableau family obtained
by replacing the subtableau~\smash{$t^\+_{(k)}$} with \smash{$u^\+_{(k)}$}
in each inner tableau in $\SM$. 
Observe that $\mathcal{U}$ has signed weight $(\kappa^\-, \lambda^\+)$ where
\[ \lambda^\+_k = \begin{cases} \kappa^\+_{i+1} & \text{if $k = i$} 
	\\ \kappa^\+_i & \text{if $k=i+1$} 
	\\ \kappa^\+_k & \text{if $k \not= i, i+1$.}\end{cases} \]
By the `moreover' part of Lemma~\ref{lemma:stronglyMaximalImpliesMaximal},
$(\kappa^\-,\kappa^\+)$ is a maximal signed weight in the $\ell^\-$-signed dominance
order. Comparing it with $(\kappa^\-,\lambda^\+)$, we see that \emph{either} $\kappa^\+_i = \kappa^\+_{i+1}$
and so $\kappa^\+_i = \lambda^\+_i = \kappa^\+_{i+1}$, 
\emph{or} $\kappa^\+ \notunlhd \lambda^\+$ and so $\kappa^\+_i > \kappa^\+_{i+1}$.
This completes the proof.
\end{proof}

We use this lemma later to prove Proposition~\ref{prop:stronglyMaximalSignedWeightsAreEllDecompositions}
and then in the proof of Corollary~\ref{cor:signedWeightBoundForStronglyMaximalSignedWeight}
and in Lemma~\ref{lemma:stablePartitionSystemForNuVarying}.

\begin{remark}\label{remark:asymmetry}
Definition~\ref{defn:stronglyMaximalSignedWeight} 
is deliberately asymmetric with respect to positive and negative entries. The effect of this is seen
most obviously in Example~\ref{ex:stronglyMaximalSemistandardSignedTableauFamilies}(2)
and in Definition~\ref{defn:ellTwistedDominanceOrder} below. This
asymmetry ultimately
reflects our decision in Definition~\ref{defn:ellSignedDominanceOrder} to order 
the negative part of signed weights first.
For this reason, while applying the $\omega$-involution to Theorem~\ref{thm:muStable} 
gives no new results, as we show
in Example~\ref{ex:omegaTwistGivesMore}, this is not the case for Theorem~\ref{thm:nuStable}.
\end{remark}

\subsection{Further 
examples of strongly maximal signed weights}\label{subsec:stronglyMaximalSignedWeightExamples}
This section is not logically essential: it is included  to show that 
Definition~\ref{defn:stronglyMaximalSignedWeight} is not overly restrictive, and so there
is a rich supply of strongly maximal signed weights to which Theorem~\ref{thm:nuStable} may be applied.
(Many more illustrative examples are shown in Table~4.23 in \S\ref{subsec:stronglyMaximalSignedWeightsTable}.)
We begin with singleton semistandard signed tableau families, generalizing the small example
immediately after Definition~\ref{defn:maximalSignedWeight}.

\begin{lemma}\label{lemma:maximalAndStronglyMaximalSingletonSemistandardSignedTableauFamilies}
Let $\tauS$ be a skew partition. 
The maximal singleton semistandard signed tableau families
are precisely $\{ t_{\ell^\-}(\tauS) \}$ for $0 \le \ell^\- \le \max \{ \tau_i - \taus_i : 1 \le i \le
\ell(\tau) \}$. If $c^\+$ is the greatest
positive entry of $t_{\ell^\-}(\tauS)$  then 
$\{ t_{\ell^\-}(\tauS) \}$ is strongly $c^\+$-maximal.
\end{lemma}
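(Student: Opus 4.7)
The plan is to combine Lemma~\ref{lemma:greatestSignedWeight} with a direct verification of the three clauses of Definition~\ref{defn:stronglyMaximalSignedWeight}.

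First I would classify the maximal singleton families. Suppose $\{t\}$ is one such family with signed weight $\swtp{\alpha}$; set $\ell^\- := \ell(\alpha^\-)$, so that $t$ has negative entries drawn from $\{-1,\ldots,-\ell^\-\}$. By Lemma~\ref{lemma:greatestSignedWeight}, $\swt(t) \unlhd \omega_{\ell^\-}(\tauS)$ in the $\ell^\-$-signed dominance order, with $t_{\ell^\-}(\tauS)$ achieving the bound; I would argue that $t_{\ell^\-}(\tauS)$ is in fact the unique semistandard signed $\tauS$-tableau with this signed weight, since the greedy recipe of Definition~\ref{defn:greatestSignedTableau} forces each successive entry into a position uniquely determined by the vertical-strip rule for negatives and the horizontal-strip rule for positives in Definition~\ref{defn:semistandardSignedTableau}. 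Maximality of $\{t\}$ therefore yields $t = t_{\ell^\-}(\tauS)$, while conversely every $\{t_{\ell^\-}(\tauS)\}$ is maximal by the same lemma. The parametrization becomes one-to-one once we restrict to those $\ell^\-$ for which $-\ell^\-$ genuinely occurs in $t_{\ell^\-}(\tauS)$, which happens iff some row of $[\tauS]$ has length at least $\ell^\-$; this yields the range $0 \le \ell^\- \le \max_i (\tau_i - \taus_i)$.

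Next I would verify that $\SM := \{t_{\ell^\-}(\tauS)\}$ is strongly $c^\+$-maximal, where $c^\+$ is the largest positive entry of $t_{\ell^\-}(\tauS)$. Condition (a) of Definition~\ref{defn:stronglyMaximalSignedWeight} holds by construction. Condition (b) is automatic because a singleton has size $R = 1$ and the outer shapes $(1)$ and $(1^1)$ coincide, so any singleton family is simultaneously of row-type and column-type, regardless of its sign. For condition (c), consider any maximal singleton family $\mathcal{T} = \{t'\}$ of shape $\tauS$ whose member is $\ell^\-$-negative greatest and satisfies $\max t' \le c^\+$. By the classification above, $t' = t_{\ell'^\-}(\tauS)$ for some $\ell'^\-$ in the allowed range; requiring the negative entries of $t'$ and of $t_{\ell^\-}(\tauS)$ to coincide forces $\ell'^\- = \ell^\-$, whence $\mathcal{T} = \SM$. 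The inequality in (c) therefore holds with equality precisely when $\mathcal{T} = \SM$.

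The only step with any technical content is the uniqueness of the semistandard signed tableau realizing $\omega_{\ell^\-}(\tauS)$; but this is an obstacle only in presentation, since the greedy placement in Definition~\ref{defn:greatestSignedTableau} leaves no genuine choice at any step.
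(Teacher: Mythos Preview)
Your proof is correct and follows essentially the same route as the paper: invoke Lemma~\ref{lemma:greatestSignedWeight} and then verify clauses (a)--(c) of Definition~\ref{defn:stronglyMaximalSignedWeight}. You are in fact more careful than the paper on two points: you explicitly justify the first sentence of the lemma (the classification of maximal singletons), and you isolate the uniqueness of $t_{\ell^\-}(\tauS)$ as the tableau realising the greatest signed weight, which is what makes the equality clause in (c) go through.

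The only noticeable difference is in the handling of (c). The paper writes down the partial-sum inequality coming directly from $\swt(t) \unlhd \omega_{\ell^\-}(\tauS)$ and declares (c) done; you instead observe that the hypotheses on $\mathcal{T}$ (maximal, $\ell^\-$-negative greatest, bounded maximum entry) already force $\mathcal{T} = \SM$, so the inequality is an equality and the ``if and only if'' is automatic. Both arguments ultimately rest on the same uniqueness fact; yours just makes that reliance visible, whereas the paper leaves the equality case implicit.
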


\begin{proof}
It is obvious that (a) holds in Definition~\ref{defn:stronglyMaximalSignedWeight} and
we stipulate that the singleton family has row-type or column-type according
to the sign of $t_{\ell^\-}(\tauS)$ so that (b) holds.
Finally, by Lemma~\ref{lemma:greatestSignedWeight}, if $t$ is a
$\tauS$-tableau
of signed weight $\swt(t)$ with negative entries from $\{-1,\ldots,-\ell^\-\}$ then
\[ \bigl|\swt(t)^\-\bigr| + \sum_{i=1}^{c^\+} \swt(t)^\+_i \le \bigl|\omega_{\ell^\-}(\tauS)\bigr| 
+ \sum_{i=1}^{c^\+}\omega_{\ell^\-}(\tauS)_i \]
so we have (c).
\end{proof}

In particular, when $\taus = \varnothing$, by taking $\ell^\- = 0$
we find that $(\varnothing, \tau)$ is strongly $\ell(\tau)$-maximal
and by taking $\ell^\- = a(\tau)$ that 
$(\tau', \varnothing)$ is strongly $0$-maximal.
This gives the strongly maximal signed weights mentioned in the introduction.

%


\begin{example}\label{ex:LawOkitaniSignedWeightsAreStronglyMaximal}
Let $m \in \N$ and let $0 \le d \le m$.
The greatest tableau $t_d\bigl((m)\bigr)$
is \[
\raisebox{-3pt}{\begin{tikzpicture}[x=0.5cm,y=-0.5cm, line width=0.5pt] 
\draw(0,0)--(9,0)--(9,1)--(0,1)--(0,0); \draw (1,0)--(1,1); \draw (2,0)--(2,1); \draw (4,0)--(4,1); \draw (5,0) --(5,1);
\draw (6,0)--(6,1); \draw (8,0)--(8,1); \node at (0.5,0.5) {$\oM$}; \node at (1.5,0.5) {$\tM$}; \node at (3,0.5) 
{$\ldots$}; \node at (4.5,0.5) {$\mbf{d}$}; \node at (5.5,0.5) {$1$};
\node at (7,0.5) {$\ldots$}; \node at (8.5,0.5) {$1$}; \end{tikzpicture}\,\raisebox{3pt}{.}}\]
It has signed weight
$\omega_d\bigl( (m) \bigr) = \bigl( (1^d), (m-d) \bigr)$. 

\begin{examplelist}
\item[(i)] By Lemma~\ref{lemma:maximalAndStronglyMaximalSingletonSemistandardSignedTableauFamilies},
$\bigl\{\hskip-0.25pt t_d\bigl((m)\bigr) \hskip-0.7pt\bigr\}$ is a strongly $1$-maximal
semistandard signed tableau family. (To satisfy (b) in Definition~\ref{defn:stronglyMaximalSignedWeight}
we stipulate that it has row-type if $d$ is odd and column-type if $d$ is even.)
This can also be seen directly from Definition~\ref{defn:stronglyMaximalSignedWeight}:
since $t_d\bigl((m)\bigr)$ has leftmost $d$ boxes
\smash{$\raisebox{-4pt}{\begin{tikzpicture}[x=0.5cm,y=-0.5cm, line width=0.5pt] 
\draw(0,0)--(5,0)--(5,1)--(0,1)--(0,0); \draw (1,0)--(1,1); \draw (2,0)--(2,1); \draw (4,0)--(4,1); \draw (5,0) --(5,1); \node at (0.5,0.5) {$\oM$}; \node at (1.5,0.5) {$\tM$}; \node at (3,0.5) 
{$\ldots$}; \node at (4.5,0.5) {$\mbf{d}$};\end{tikzpicture}}$}
it is $d$-negative greatest, and clearly it has the greatest possible
number of $1$s of all such tableaux. Therefore $\bigl( (1^d), (m-d) \bigr)$ is 
a strongly $1$-maximal signed weight. Note this holds even when $d=0$.
By Theorem~\ref{thm:nuStable}, if $\nu$ and $\lambda$ are any partitions, then
$\langle s_{\nu^{(M)}} \circ s_{(m)}, s_{\lambda \sqcups
(d^M) + M(m-d) }\rangle$ is ultimately constant
where $\nu^{(M)} = \nu + (M)$ if $d$ is even and $\nu^{(M)} = \nu \sqcup (1^M)$ if $d$ is odd, proving~\eqref{eq:LOeven} and~\eqref{eq:LOodd} in
\S\ref{subsec:earlierWork};
as discussed earlier,
these results were first proved in \cite{LawOkitaniStability}.
See Proposition~\ref{prop:LawOkitaniBound} 
for explicit bounds deduced from our Theorem~\ref{thm:nuStableSharp}, together
with a sufficient condition for the constant value of the plethysm coefficient to be zero.

\item[(ii)]
Suppose that $d < m$. For $h \in \N$, let 
$u^{(h)}$ be the $(m)$-tableau obtained from $t_d\bigl((m)\bigr)$ 
by changing the rightmost $1$ to $h$.
Thus $t = u^{(1)}$. Fix $R \in \N$. We claim that the tableau family
\[ \qquad
\quad \{ u^{(1)}, \ldots, u^{(R)} \} \]
of shape $(m)$ and size $R$ is strongly $1$-maximal,
of row-type if $d$ is odd and column-type if $d$ is even.
Clearly it satisfies conditions (a) and (b) in Definition~\ref{defn:stronglyMaximalSignedWeight}.
For (c), we observe that the family has the maximum possible
number of entries of $1$ of all families of size~$R$ formed from $d$-negative greatest tableaux.
The corresponding strongly $1$-maximal signed weight of shape $(m)$ and size $R$
is 
\[ \qquad\quad\bigl( (R^d), ((m-d)R - (R-1), 1^{R-1}) \bigr). \]
By Theorem~\ref{thm:nuStable}, if $\nu$ and $\lambda$ are any partitions then
\[ \qquad\quad\langle s_{\nu^{(M)}} \circ s_{(m)}, s_{\lambda \,\oplus\, M((R^d), ((m-d)R- (R-1),1^{R-1}) )} \rangle \]
is ultimately constant,
where $\nu^{(M)} = \nu + (M^R)$ if $d$ is even and $\nu^{(M)} = \nu \sqcup (R^M)$ if $d$ is odd. 
\end{examplelist}
\end{example}

After Corollary~\ref{cor:nuStableEmpty} we give some
notable special cases of (i) and (ii) above in which the plethysm coefficient is constant
for all $M \in \N_0$. See also Example~\ref{ex:LawOkitaniExplicitBounds}
for some explicit stability bounds obtained 
using Proposition~\ref{prop:LawOkitaniBound}.


%
%
%
%
%
%
%
%
%
%
%
%
%

\begin{example}\label{ex:signedTableauFamily}
The plethystic semistandard signed tableau shown in the margin
\marginpar{\raisebox{-1.1in}{$\pyoung{1.25cm}{1.15cm}{ {{\youngL{(\oM1,\oM2)}}, {\youngL{(\oM 1,\oM3)}}, {\youngL{(\oM 1,\oM4)}}} }$}}
of shape $\bigl( (1,1,1), (2,2) \bigr)$ has entries from the semistandard signed $(2,2)$-tableau family 
\[ \SM = \left\{ \, \young(\oM 1,\oM2)\spy{6pt}{,} \young(\oM 1,\oM3)\spy{6pt}{,} \young(\oM 1,\oM4) \,\right\} \]
of size $3$, sign $+1$ and signed weight
 $\bigl( (6), (3,1,1,1) \bigr)$.
Suppose that $\mathcal{T}$ is a column-type
semistandard signed tableau family of size $3$, shape $(2,2)$
in which each member of $\mathcal{T}$ is $1$-negative greatest.
Then each $(2,2)$-tableau in $T$ has two entries of $-1$ in its first column
and since box $(2,2)$ cannot contain either $-1$ or $1$,
the inequality
$\tau^\+_1 \le \kappa_1^\+ = 3$
required by Definition~\ref{defn:stronglyMaximalSignedWeight}(c)
when $c^\+ = 1$ holds.  
Moreover, we have equality if and only if every $(2,2)$-tableau in $\mathcal{T}$ has the
form shown in the margin, and in this case it is easy to see that $\mathcal{T}$ is the family~$\SM$. 
\marginpar{
$\young(\oM1,\oM\star)$}
Therefore $\SM$ is strongly $1$-maximal
and $\bigl( (6), (3,1,1,1) \bigr)$ is a strongly maximal signed weight
of shape $(2,2)$, size~$3$ and sign $+1$.
By Theorem~\ref{thm:nuStable}, $\langle s_{\nu + (M, M, M)} \circ s_{(2,2)}, s_{\lambda \oplus
M((6), (3,1,1,1))} \rangle$ is ultimately constant for any partitions $\nu$ and $\lambda$.
\end{example}

The special case $\mus = \varnothing$ of
the following
lemma was used in \S\ref{subsec:earlierWork} to show that~\eqref{eq:BOR},
taken from \cite[(9)]{BriandOrellanaRosas}, is a special case of Theorem~\ref{thm:nuStable}.

\begin{lemma}\label{lemma:allTableauFamily}
Let $\muS$ be a skew partition. Fix $\ell \in \N$ and 
let $\SM$ be the set of all semistandard $\muS$-tableaux having
entries from $\{1,\ldots, \ell\}$.
The signed weight of $\SM$ is $\bigl(\varnothing, (q^\ell)\bigr)$
where $q = |\SM||\muS|/\ell$; it
is a strongly $c^\+$-maximal signed weight of  sign $+1$
for all $c^\+ \in \{1,\ldots, \ell\}$.
\end{lemma}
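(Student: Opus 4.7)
The plan is to verify directly the three conditions (a), (b), (c) of Definition~\ref{defn:stronglyMaximalSignedWeight}, together with the asserted formula for the signed weight. Throughout, $\kappa^\- = \varnothing$, so setting $\ell^\- := \ell(\kappa^\-) = 0$, every constraint involving negative entries becomes trivial.

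\emph{Signed weight.} To compute the positive part $\phi^\+$ of the signed weight of $\SM$, I would appeal to the Bender--Knuth involution: for each $1 \le i < \ell$, the involution swapping the roles of $i$ and $i+1$ is a bijection $\SM \to \SM$ that exchanges the number of $i$s with the number of $(i+1)$s in every tableau. Summing the weights over $\SM$ gives $\phi^\+_i = \phi^\+_{i+1}$, so $\phi^\+ = (q^\ell)$ for some $q$, and then the identity $\ell q = \sum_i \phi^\+_i = |\SM|\cdot|\muS|$ yields $q = |\SM|\cdot|\muS|/\ell$. (Equivalently, this is the standard symmetry of skew Kostka numbers $K_{\muS,\alpha}$ under permutation of $\alpha$.)

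\emph{Conditions (a) and (b).} Condition (a) holds because $t_0(\muS)$ has no negative entries, so `$0$-negative greatest' imposes nothing, and each $t \in \SM$ satisfies it. For (b), the sign of $t_0(\muS)$ is $\epsilon = +1$, so the family must have column-type. To exhibit $\SM$ as such, I would list its $|\SM|$ distinct elements down a column of shape $(1^{|\SM|})$ in strictly increasing signed colex order. Condition~(a) of Definition~\ref{defn:plethysticSemistandardSignedTableau} is vacuous, since no two boxes of a single column lie in a horizontal strip, condition~(b) is vacuous because all entries are positive, and condition~(c) holds by construction.

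\emph{Condition (c).} This is the substantive step. Let $\T$ be any maximal column-type semistandard signed tableau family of shape $\muS$ and size $|\SM|$ whose members are $0$-negative greatest, hence all-positive, and satisfy $\max \T \le \max \SM = \ell$. The column-type requirement together with condition~(a) of Definition~\ref{defn:plethysticSemistandardSignedTableau} forces the elements of $\T$ to be pairwise distinct; the constraints on entries then make them distinct semistandard $\muS$-tableaux with entries from $\{1,\ldots,\ell\}$, hence members of $\SM$. Since $|\T| = |\SM|$, this forces $\T = \SM$. Consequently, for every $c^\+ \in \{1,\ldots,\ell\}$ the inequality $\sum_{i=1}^{c^\+} \phi^\+_i \le \sum_{i=1}^{c^\+} \kappa^\+_i = c^\+ q$ holds trivially as an equality and the equality-if-and-only-if clause is vacuous. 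I do not anticipate any real obstacle: the only nontrivial input is the Bender--Knuth symmetry used to pin down the signed weight; once that is in hand, (c) collapses to the counting remark that there are exactly $|\SM|$ semistandard $\muS$-tableaux with entries at most $\ell$.
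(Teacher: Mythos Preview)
Your proof is correct and follows essentially the same route as the paper's: both verify (a) and (b) trivially, then dispatch (c) by observing that any column-type family $\T$ of size $|\SM|$ with positive entries at most $\ell$ consists of distinct elements of $\SM$ and hence equals $\SM$, so the comparison in (c) is vacuous. Your explicit invocation of the Bender--Knuth involution for the signed weight is what the paper packages as ``the skew Schur function $s_{\muS}$ is symmetric''.
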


\begin{proof}
Clearly each $t \in \SM$ is $0$-negative greatest and $\SM$ has column-type
since its entries are distinct.
Hence (a) and (b) in Definition~\ref{defn:stronglyMaximalSignedWeight} hold.
Let $R = |\SM|$. If $\T$ is another
tableau family of shape $\muS$ and size $R$ then $\T$ contains
a $\muS$-tableau with maximum entry strictly greater than $\ell$.
Hence $\max \SM < \max \T$, and condition (c) holds vacuously for any permitted $c^\+$. Since the skew Schur function $s_\muS$ is symmetric,
each element of $\{1,\ldots, \ell\}$ appears equally often as
an entry in a tableau $t \in \SM$, and so the signed
weight of $\T$ is $\bigl( \varnothing, (q^\ell) \bigr)$ for some
$q \in \N$. Since each tableau has $|\muS|$
entries, the common multiplicity $q$ is $|\SM||\muS|/\ell$, as claimed.
\end{proof}

The following example shows the usefulness of skew partitions in Theorem~\ref{thm:nuStable}.

\begin{example}\label{ex:needMaximalGood}
Take $\muS = (2,1) / (1)$.
By Definition~\ref{defn:stronglyMaximalSignedWeight},
or alternatively by Lemma~\ref{lemma:maximalAndStronglyMaximalSingletonSemistandardSignedTableauFamilies},
the signed weight $\bigl( \varnothing, (2) \bigr)$ 
of the tableau shown in the margin is strongly $1$-maximal. Since it is defined by a single
semistandard signed tableau of sign $+1$, the size is $1$ and the sign is $+1$.
\marginpar{$\qquad\qquad\quad \young(:1,1)$}
It therefore follows from Theorem~\ref{thm:nuStable} that
$\langle s_{\nu+(M)} \circ s_{(2,1)/(1)}, s_{\lambda + (2M)} \rangle$ 
is ultimately constant, for all
partitions $\nu$ and $\lambda$ such that $2|\nu| = |\lambda|$;
using $s_{(2,1)/(1)} = s_{(2)} + s_{(1,1)} = s_{(1)}^2$ an equivalent formulation
is that $\langle s_{\nu + (M)} \circ s_{(1)}^2, s_{\lambda + (2M)}\rangle$ is ultimately
constant. Since the plethysm product is not distributive over addition in its second component,
this result is already non-trivial to prove by other methods.
\end{example}

The final example in this subsection is included
to give an idea of the rich behaviour of maximal signed weights of large size.
It is instructive but not logically essential, and so we omit far more details than usual.

\enlargethispage{8pt}

\begin{example}\label{ex:maximalSemistandardSignedTableauFamiliesShape2}
As seen earlier in 
Example~\ref{ex:maximalSemistandardSignedTableauFamiliesMixedSign},
the column-type semistandard signed tableau families of shape~$(2)$ and size $3$, namely
\[ \left\{ \hskip1pt\young(11)\spy{0pt}{,\,} \young(12)\spy{0pt}{,\ts} \young(13)\hskip1pt \right\}\!,  \;
   \left\{ \hskip1pt\young(11)\spy{0pt}{,\ts} \young(12)\spy{0pt}{,\ts} \young(22)\hskip1pt \right\}\!,
 \]
are maximal, of signed weights $\bigl( \varnothing, (4,1,1)\bigr)$ and  $\bigl( \varnothing, (3,3) \bigr)$ respectively. Correspondingly $s_{(1^3)} \circ s_{(2)}$ has maximal constituents
$s_{(4,1,1)}$ and $s_{(3,3)}$, and in fact $s_{(1^3)} \circ s_{(2)} = s_{(4,1,1)} + s_{(3,3)}$.
More generally, one can show (see for instance \cite[page 138, Example 6]{MacDonald}
or \cite[\S 8.5]{PagetWildonTwisted}) that
$s_{(1^n)} \circ s_{(2)} = \sum_{\lambda} s_{2[\lambda]}$ where the sum
is over all partitions of $n$ having distinct parts and 
$2[\lambda]$ is the partition whose main diagonal hook lengths are $2\lambda_1, \ldots, 2\lambda_{\ell(\lambda)}$ 
and such that $2[\lambda]_i = \lambda_i + i$ for $1 \le i \le \ell(\lambda)$.
For each such partition $2[\lambda]$
there is a unique maximal column-type semistandard tableau family
of shape $(2)$ and size $R$ and signed weight $(\varnothing, 2[\lambda])$. 
In particular, any two partitions $2[\lambda]$ are either equal or incomparable in the dominance order,
and so every constituent of the plethysm $s_{(1^n)} \circ s_{(2)}$ is \emph{both} maximal and minimal.
Two examples
have already been given and the 
tableau in the margin indicates how
\[ \bigl\{ \hskip1pt \young(11)\spy{0pt}{,} \young(12)\spy{0pt}{,} \young(13)\hskip1pt  \bigr\}
\cup \bigl\{ \hskip1pt  \young(22)\spy{0pt}{,} \young(23)\hskip1pt  \bigr\} \]
is constructed 
from $2[(3,2)]$ (shown below in the margin) by forming the three $(2)$-tableaux in the first set  of
total signed weight $\bigl( \varnothing, (4,1,1) \bigr)$ from the entries
$\{1,1,1,1,2,3\}$
in the hook on the box $(1,1)$ 
and two tableaux in the second set
of total signed  weight $\bigl( \varnothing, (0,3,1) \bigr)$
from the entries $\{2,2,2,3\}$ in the hook on the box $(2,2)$ on the main diagonal boxes.
Summing $(4,1,1)$ and $(0,3,1)$ we obtain
a maximal semistandard tableau family
of shape $(2)$ and size $5$ and signed weight $\bigl( \varnothing, (4,4,2) \bigr)$,
corresponding to the partition~$2[(3,2)]$.%
\marginpar{\raisebox{18pt}{ $\young(1111,2222,33)$}}%
\marginpar{\raisebox{12pt}{ $\young(6\:\:\:,\:4\:\:,\:\:)$}}%
\ We invite
the reader to check that 
if $\alpha$ is a partition of $n$ having distinct parts then the maximal signed weight $(\varnothing, 2[\alpha])$ of
shape $(2)$, size $n$ and column-type is strongly $1$-maximal if and only if
$\alpha \in \{(n), (n-1,1), (n-2,2), (3,2,1)\}$ and strongly $2$-maximal if and only if
$\alpha = (k+1, k-1)$ or $\alpha = (k+1,k)$ where $k = \lfloor n/2 \rfloor$, according to
the parity of $n$. If $\alpha$ is the least distinct parts partition in the lexicographic order 
on partitions of $n$ then $\alpha = (\ell, \ell-1, \ldots, b+2, b,\ldots, 1)$
for some $b$ and one can show that 
the corresponding maximal semistandard
tableau family has strongly $(\ell-1)$-maximal signed weight $(\varnothing, 2[\alpha])$.
(It may also be strongly $c^\+$-maximal for other $c^\+$: for instance
if
$b = \ell$ so that $\alpha$ is $(\ell,\ell-1,\ldots, 1)$ then $2[\alpha] = (\ell+1, \stackrel{\ell}{\ldots},
\ell+1)$ and the maximal semistandard tableau family is the initial segment
of the colexicographic order ending at $\young(\ell\ell)\hspace*{0.5pt}$ and
Lemma~\ref{lemma:allTableauFamily} applies.)
These remarks imply that if $n \le 7$ then all maximal signed weights of shape $(2)$, size $n$
and sign~$+1$
are strongly maximal. When $n=8$, we have $2[5,2,1] = (6,4,4,1,1)$
and $\bigl( \varnothing, (6,4,4,1,1) \bigr) $
is maximal, but comparison with the signed weights from $2[5,3] = (6,5,2,2,1)$
and $2[4,3,1] = (5,5,4,2)$ show that it is not strongly $c^\+$-maximal
for any value of $c^\+$.
\end{example}

\subsection{Tables of strongly maximal signed weights}
\label{subsec:stronglyMaximalSignedWeightsTable}

Table~4.23 overleaf shows column-type strongly maximal signed weights
of shape $\muS$  and size~$R$ with $2 \le R \le 5$. 
(Singleton strongly maximal signed weights of size $1$ are classified in 
Lemma~\ref{lemma:maximalAndStronglyMaximalSingletonSemistandardSignedTableauFamilies}.)
The entries in the column~$c^\+$ show all 
the values for which the weight is strongly $c^\+$-maximal.
The `unsigned' weights with $\ell^\- = 0$
may be used in Corollary~\ref{cor:nuStableSharpPositiveNonSkew}
as well as Theorem~\ref{thm:nuStable}, or its sharp version
Theorem~\ref{thm:nuStableSharp}.

Many further strongly maximal signed weights, including those of row-type,
can be found using the Haskell software \cite{PZYT} mentioned in the introduction:
see the module \texttt{MaximalTableauFamily.hs} for instructions.

\subsection{Why maximal weights are not sufficient}\label{subsec:needStronglyMaximal}
In this subsection we show that, while
Theorem~\ref{thm:nuStable} certainly requires maximal signed weights,
this is not a sufficient hypothesis for this theorem, and so some stronger notion,
such\hskip4pt as\hskip4pt the\hskip4pt strongly\hskip4pt maximal\hskip4pt signed\hskip4pt weights\hskip4pt in\hskip4pt Definition~\ref{defn:stronglyMaximalSignedWeight}
\hbox{is\hskip4pt certainly}

\bigskip

\hspace*{-0.3in}\scalebox{0.8}{\begin{tabular}{cccll}
\toprule 
$\muS$ & $\ell^\-$ & $R$ & $(\kappa^\-, \kappa^\+)$ & $c^\+$\rule[-5pt]{0pt}{11pt} \\  \toprule
(3)    & 0 & 2   & $\swte{\varnothing}{(5,1)}$ & $1$ \\ \midrule
       &   & 3   & $\swte{\varnothing}{(6,3)}$ & $1,2$ \\ 
       &   &     & $\swte{\varnothing}{(7,1,1)}$ & $1$ \\ \midrule
       &   & 4   & $\swte{\varnothing}{(6,6)}$ & $1,2$ \\ 
       &   &     & $\swte{\varnothing}{(8,3,1)}$ & $1$ \\ 
       &   &     & $\swte{\varnothing}{(9,1,1,1)}$ & $1$ \\ \midrule 
       &   & 5   & $\swte{\varnothing}{(8,6,1)}$ & $2$ \\ 
       &   &     & $\swte{\varnothing}{(9,4,2)}$ & $1$ \\ 
       &   &     & $\swte{\varnothing}{(10,3,1,1)}$ & $1$ \\ 
       &   &     & $\swte{\varnothing}{(11,1,1,1,1)}$ & $1$\rule[-5pt]{0pt}{11pt} \\  \toprule
       & 2 & $R$ & $\swte{(2R)}{(1^R)}$ & $1,\ldots, R$  \rule[-5pt]{0pt}{11pt} \\  \toprule
(2,1)  & 0 & 2   & $\swte{\varnothing}{(3,3)}$ & $1,2$ \\
       &   &     & $\swte{\varnothing}{(4,1,1)}$ & $1$ \\ \midrule
       &   & 3   & $\swte{\varnothing}{(5,3,1)}$ & $1$ \\ 
       &   &     & $\swte{\varnothing}{(6,1,1,1)}$ & $1$ \\ \midrule
       &   & 4   & $\swte{\varnothing}{(7,3,1,1)}$ & $1$ \\ 
       &   &     & $\swte{\varnothing}{(8,1,1,1,1)}$ & $1$ \\ \midrule      
       &   & 5   & $\swte{\varnothing}{(7,5,3)}$ & $1$ \\ 
       &   &     & $\swte{\varnothing}{(9,3,1,1,1)}$ & $1$ \\ \toprule
       & 1 & $R$ & $\swte{(2R)}{(1^R)}$  & $1, \ldots, R$\rule[-5pt]{0pt}{11pt} \\  \bottomrule
\end{tabular}}      \qquad
\scalebox{0.8}{
\begin{tabular}{cccll}
\toprule 
$\muS$ & $\ell^\-$ & $R$ & $(\kappa^\-, \kappa^\+)$ & $c^\+$\rule[-5pt]{0pt}{11pt} \\  \toprule  
(4)    & 0 & 2   & $\swte{\varnothing}{(7,1)}$    & $1,2$ \\ \midrule
       &   & 3   & $\swte{\varnothing}{(9,3)}$    & $1,2$ \\ 
       &   &     & $\swte{\varnothing}{(10,1,1)}$ & $1$  \\ \midrule
       &   & 4   & $\swte{\varnothing}{(10,6)}$   & $1,2$ \\ 
       &   &     & $\swte{\varnothing}{(12,3,1)}$ & $1$ \\
       &   &     & $\swte{\varnothing}{(13,1,1,1)}$ & $1$ \\ \midrule
       &   & 5   & $\swte{\varnothing}{(10,10)}$    & $1,2$ \\ 
       &   &     & $\swte{\varnothing}{(14,4,2)}$   & $1$ \\
       &   &     & $\swte{\varnothing}{(15,3,1,1)}$ & $1$ \\ 
       &   &     & $\swte{\varnothing}{(16,1,1,1,1)}$ & $1$ \rule[-5pt]{0pt}{11pt}\\ \toprule  
       & 2 & 2   & $\swte{(2,2)}{(3,1)}$ & $1,2$ \\ \midrule
       &   & 3   & $\swte{(3,3)}{(3,3)}$ & $1,2$ \\ 
       &   &     & $\swte{(3,3)}{(4,1,1)}$ & $1$ \\ \midrule
       &   & 4   & $\swte{(4,4)}{(4,3,1)}$ & $1,2,3$ \\ 
       &   &     & $\swte{(4,4)}{(5,1,1,1)}$ & $1$ \\ \midrule
       &   & 5   & $\swte{(5,5)}{(4,4,2)}$  & $1,2,3$ \\ 
       &   &     & $\swte{(5,5)}{(5,3,1,1)}$ & $1$ \\ 
       &   &     & $\swte{(5,5)}{(6,1,1,1,1)}$ & $1$ \\ \bottomrule
\end{tabular}}

\smallskip
\hspace*{-0.3in}\scalebox{0.8}{\begin{tabular}{cccll}
\toprule 
$\muS$ & $\ell^\-$ & $R$ & $(\kappa^\-, \kappa^\+)$ & $c^\+$\rule[-5pt]{0pt}{11pt} \\  \toprule       
(3,1)  & 0 & 2   & $\swte{\varnothing}{(5,3)}$ & $1,2$ \\ 
       &   &     & $\swte{\varnothing}{(6,1,1)}$ & $1,2$ \\ \midrule
       &   & 3   & $\swte{\varnothing}{(8,3,1)}$ & $1$ \\ 
	   &   &     & $\swte{\varnothing}{(9,1,1,1)}$ & $1$ \\ \midrule
	   &   & 4   & $\swte{\varnothing}{(11,3,1,1)}$ & $1,2$ \\
	   &   &     & $\swte{\varnothing}{(12,1,1,1,1)}$ & $1,2,3$ \\ \midrule
	   &   & 5   & $\swte{\varnothing}{(12,5,3)}$ & $1$ \\ 
	   &   &     & $\swte{\varnothing}{(14,3,1,1,1)}$ & $1$\rule[-5pt]{0pt}{11pt}  \\ \toprule 
       & 1 & 2   & $\swte{(4)}{(3,1)}$ &  $1,2$ \\ \midrule
       &   & 3   & $\swte{(6)}{(3,3)}$ & $1,2,3$ \\ 
       &   &     & $\swte{(6)}{(4,1,1)}$ & $1$ \\ \midrule
       &   & 4   & $\swte{(8)}{(4,3,1)}$ & $1,2,3$ \\ 
       &   &     & $\swte{(8)}{(5,1,1,1)}$ & $1$ \\ \midrule
       &   & 5   & $\swte{(10)}{(4,4,2)}$ & $1,2,3,4$ \\ 
       &   &     & $\swte{(10)}{(5,3,1,1)}$ & $1$ \\
       &   &     & $\swte{(10)}{(6,1,1,1,1)}$ & $1$ \rule[-5pt]{0pt}{11pt} \\ \toprule
(2,2)  & 0 & 2   & $\swte{\varnothing}{(4,3,1)}$ & $1,2,3$ \\ \midrule
       &   & 3   & $\swte{\varnothing}{(6,3,3)}$ & $1$ \\ 
	   &   &     & $\swte{\varnothing}{(5,5,2)}$ & $1$   \\ \midrule
 &   & 4   & $\swte{\varnothing}{(7,5,4)}$ & $1,2$ \\ 
	   &   &     & $\swte{\varnothing}{(8,4,3,1)}$ & $1$ \\ 
	   &   &     & $\swte{\varnothing}{(7,6,2,1)}$ & $2$ \\ 
	   &   &     & $\swte{\varnothing}{(7,5,3)}$ & $3$             
\\ \bottomrule
\end{tabular}}\qquad
\scalebox{0.8}{\begin{tabular}{cccll}
\toprule 
$\muS$ & $\ell^\-$ & $R$ & $(\kappa^\-, \kappa^\+)$ & $c^\+$\rule[-5pt]{0pt}{11pt} \\  \toprule 
(2,2)& 0   & 5   & $\swte{\varnothing}{(8,6,6)}$ & $1,2,3$ \\
	   &   &     & $\swte{\varnothing}{(10,4,4,2)}$ & $1$ \\ 
	   &   &     & $\swte{\varnothing}{(8,8,2,2)}$ & $2$\rule[-5pt]{0pt}{11pt} \\ \toprule        
(2,2)  & 1 & 2   & $\swte{(4)}{(2,1,1)}$ &  $1,2,3$ \\ \midrule
       &   & 3   & $\swte{(6)}{(2,2,2)}$ & $1,2,3$ \\ 
       &   &     & $\swte{(6)}{(3,1,1,1)}$ & $1$ \\ \midrule
       &   & 4   & $\swte{(8)}{(3,2,2,1)}$ & $1,2,3,4$ \\ 
       &   &     & $\swte{(8)}{(4,1,1,1,1)}$ & $1$ \\
       &   & 5   & $\swte{(10)}{(3,3,2,2)}$ & $1,2,3,4$ \\ 
       &   &     & $\swte{(10)}{(4,2,2,1,1)}$ & $1$\rule[-5pt]{0pt}{11pt}  \\ \toprule        
(3,2)/(1)  & 0 & 2   & $\swte{\varnothing}{(6,1,1)}$ & $1$ \\ \midrule
       &   & 3   & $\swte{\varnothing}{(7,5)}$ & $1,2$ \\ 
	   &   &     & $\swte{\varnothing}{(9,1,1,1)}$ & $1$ \\ \midrule
	   &   & 4   & $\swte{\varnothing}{(8,8)}$ & $1,2$ \\
	   &   &     & $\swte{\varnothing}{(12,1,1,1,1)}$ & $1$ \\ \midrule
	   &   & 5   & none & \rule[-5pt]{0pt}{11pt}  \\ \toprule
	   & 1 & 2   & none & \\ \midrule
	   &   & 3   & $\swte{(6)}{(4,2)}$ & $1,2$ \\ \midrule 
	   &   & 4   & $\swte{(8)}{(4,4)}$ & $1,2$ \\ \midrule 
	   &   & 5   & $\swte{(10)}{(6,2,2)}$ & $1$ 
\\ \bottomrule
\end{tabular}}

\smallskip
\begin{center}
\begin{minipage}{4.5in}
\small {\sc Table 4.23}
Column-type strongly $c^\+$-maximal signed weights
for certain shapes $\muS$  and sizes~$R$ with $2 \le R \le 5$.
\end{minipage}
\end{center}
\medskip

\noindent 
required. Again this section is not logically necessary, but we believe it is important to explain
what we \emph{cannot} hope to prove. In the following example we shall need to use
Proposition~\ref{prop:plethysticSignedKostkaNumbers}.

\setcounter{theorem}{23}
\begin{example}\label{ex:needMaximalBad}
Taking $\muS = (2,1) / (1)$ as in Example~\ref{ex:needMaximalGood},
 suppose instead 
we take the non-maximal
signed weight $\bigl( (1), (1) \bigr)$, dominated in the $1$-signed dominance order (see Definition~\ref{defn:ellSignedDominanceOrder}) by $\bigl( (2), \varnothing\bigr)$,
 and, to give the simplest possible example, $\nu = (1)$ and $\lambda = (2)$.
Since this signed weight has sign $-1$ and $(2) \hskip0.5pt\oplus\hskip0.5pt (N-1)\bigl( (1), (1) \bigr) = 
\bigl((2) \hskip0.5pt \sqcup (1^{N-1})\hskip0.5pt  \bigr)+ (N-1)  = (N+1,1^{N-1})$,
the prediction of Theorem~\ref{thm:nuStable} --- wrongly
applied with a weight that is not even maximal --- is that 
$\langle s_{(1^N)} \circ s_{(2,1)/(1)}, s_{(N+1,1^{N-1})}\rangle$
is ultimately constant. 
To see this is false,
let 
$t_{+-}, t_{-+}$ and $t_{++}$ be the three 
 semistandard signed tableaux of shape $(2,1)/(1)$ shown below
\[ \young(:1,\oM)\, , \quad \young(:\oM,1)\, , \quad \young(:1,1)\, .\] 
(Again $\mbf{1}$ stands for $-1$.)
For each $N \in \N_0$ and $L \in \{0,\ldots, N-1\}$
there is a unique 
plethystic semistandard signed tableau of outer shape~$(1^N)$ and inner shape $(2,1)/(1)$
which has $L$ inner tableaux $t_{\+\-}$, $N-1-L$ inner tableaux $t_{-+}$
and a final inner tableau $t_{\+\+}$. (Note that only inner tableaux of negative sign are repeated.)
These are all the plethystic semistandard signed tableaux of signed weight $\bigl( (N+1), (N-1) \bigr)$
and  so $\bigl|\PSSYT\bigl( (1^N), (2,1)/(1) \bigr)_{(N-1),(N+1)}\bigr| = N$. 
By Proposition~\ref{prop:plethysticSignedKostkaNumbers} (Plethystic Signed Kostka Numbers)
it follows that 
\[ \langle s_{(1^N)} \circ s_{(2,1)/(1)}, e_{(N-1)}h_{(N+1)}\rangle = N.\] 
Thus 
condition (ii) in the Signed Weight Lemma (Lemma~\ref{lemma:SWL}) does not hold when the lemma
is applied (as is usual in this paper) with the twisted
symmetric functions defined in Definition~\ref{defn:ellTwistedSymmetricFunction};
this is the first point where the proof can be seen to fail.
Moreover, by a very similar enumeration of plethystic tableaux one can show that
$\langle s_{(1^N)} \circ s_{(2,1)/(1)}, e_{(N-d)}h_{(N+d}\rangle = 0$
for each $d > 1$ and $N \ge d$;
it now follows from the identity 
\[ s_{(N+1,1^{N-1})} = e_{(N-1)}h_{(N+1)} - e_{(N-2)}h_{(N+2)} + \cdots + (-1)^{N-1} h_{(2N)} \]
that 
$\langle s_{(1^N)} \circ s_{(2,1)/(1)}, s_{(N+1,1^{N-1})} \rangle = N$
for all $N \in \N_0$, showing that in fact the plethysm coefficient is not stable.
\end{example}

The non-uniqueness seen in Example~\ref{ex:needMaximalBad}
is completely typical of the non-maximal case, and always leads
to a similar obstruction to our proof strategy; indeed in most such cases, there is no stability result
to be proved. But as the following example shows, mere maximality is not enough.

\begin{example}\label{ex:whyStronglyMaximalSignedWeights}
The two tableau families of shape $(2,1)$, size $4$ and
signed weight $\bigl( \varnothing, (6,4,2) \bigr)$ are
\begin{align*}
\mathcal{S} &= \left\{ \,\young(11,2)\spy{0pt}{,} \young(11,3)\spy{0pt}{,} 
           \young(12,2)\spy{0pt}{,} \young(12,3)\, \right\} \\
\mathcal{T} &=\left\{ \,\young(11,2)\spy{0pt}{,} \young(11,3)\spy{0pt}{,} 
           \young(12,2)\spy{0pt}{,} \young(13,2)\, \right\} .
%
%
\end{align*}
Each of $\mathcal{S}$ and $\mathcal{T}$
is the set of entries of a 
unique plethystic semistandard signed tableau 
of outer shape $(1^4)$ and inner shape $(2,1)$, and so
\smash{$\bigl| \PSSYT\bigl( (1^4), (2,1)
\bigr)_{(\varnothing, (6,4,2))}\bigr| = 2$}.
Moreover, there is no tableau family of shape $(2,1)$ and size $4$ with
signed weight strictly dominating $\bigl( \varnothing, (6,4,2) \bigr)$. (Note that
such a family has only positive entries.)
But, by the uniqueness part of
Lemma~\ref{lemma:stronglyMaximalSemistandardSignedTableauFamilyIsUnique},
the signed weight $\bigl(\varnothing, (6,4,2)\bigr)$ is not strongly maximal.
Theorem~\ref{thm:nuStable} is therefore inapplicable. If, ignoring that one of the hypotheses
fails to hold, we nonetheless
take $\nu = \varnothing$, $\mu = (2,1)$ and $\lambda = \varnothing$,
we wrongly conclude that $\langle s_{(M,M,M,M)} \circ s_{(2,1)}, s_{(6M,4M,2M)} \rangle$
is ultimately constant. 

To see this is false, first note, analogously to Example~\ref{ex:needMaximalBad}, that given $0 \le L \le M$, there
is a unique plethystic semistandard tableau $T_L$ of outer shape $(M,M,M,M)$ whose
first $L$ columns have entries $\mathcal{S}$ and whose final $M-L$ columns have entries
$\mathcal{T}$; the families occur in this order because, as seen 
after Definition~\ref{defn:signedColexicographicOrder}, we have
\[ \young(12,3) \,<\, \young(13,2) \]
in the signed colexicographic order.
The maximal tableau families of shape $(2,1)$, size $4$ and sign $+1$ have weights, in the usual sense for unsigned tableaux, as defined
after Definition~\ref{defn:signedWeight},
$(8,1,1,1,1), (7,3,1,1)$ and $(6,4,2)$. Since $(6,4,2)$ has the
least number of parts, it need not be compared with $(8,1,1,1,1)$ or $(7,3,1,1)$
in Definition~\ref{defn:stronglyMaximalSignedWeight}(c),
and so the \emph{only} reason why $\bigl(\varnothing,
(6,4,2)\bigr)$ fails
to be a strongly maximal signed weight is
that $\mathcal{S}$
and $\mathcal{T}$ have the same weight.
Since the signed weight $\bigl( \varnothing, (6,4,2) \bigr)$ \emph{is} maximal,
\emph{any} tableau family of shape $(2,1)$, size $4$, sign $+1$ and entries from $\{1,2,3\}$
has weight dominated by $\bigl(\varnothing, (6,4,2)\bigr)$. Hence
\begin{equation}\label{eq:whyStrictlyTL} 
\PSSYT\bigl( (1^4), (2,1) \bigr)_{(6M,4M,2M)} = \{ T_L : 0 \le L \le M \} \end{equation}
and $\PSSYT\bigl( (1^4), (2,1) \bigr)_\pi =\varnothing$ if
$\pi \rhd (6M,4M,2M)$.
By the basic result
on Kostka numbers mentioned before Lemma~\ref{lemma:twistedKostkaNumbers}
we have  $s_{(6M,4M,2M)} = h_{(6M,4M,2M)} + f$ where $f$ is a linear combination
of complete homogeneous symmetric functions $h_\pi$ with $\pi \rhd (6M,4M,2M)$.
Hence 
\[ \begin{split} \quad \langle s_{(M,M,M,M)} \circ &s_{(2,1)}, s_{(6M,4M,2M)} \rangle = 
\langle s_{(M,M,M,M)} \circ s_{(2,1)}, h_{(6M,4M,2M)} \rangle \\  
& =\bigl|\PSSYT\bigl( (M,M,M,M), (2,1) \bigr)_{(\varnothing,(6M,4M,2M))}\bigr| = M+1. \end{split} \]
In particular, the multiplicity is unbounded.

The previous paragraph also indicates where the proof of Theorem~\ref{thm:nuStable} breaks down.
Applying Definition~\ref{defn:exceptionalColumnAndRow} with respect
to the signed weight $\bigl(\varnothing, (6,4,2)\bigr)$ --- which according to this definition is illegitimate
as the signed weight is not strongly maximal --- each non-exceptional 
column 
of a plethystic semistandard tableau
$T \in \PSSYT\bigl( (M,M,M,M), (2,1)\bigr)$ may be either $\mathcal{S}$ or $\mathcal{T}$. Thus
there is no canonical tableau family that can be inserted
as the entries in a new column of height $4$ 
 to define a bijection
between the sets of plethystic semistandard signed tableaux for $M$ and $M+1$
and the key result Lemma~\ref{lemma:columnIsExceptionalOrBounded}(i)
fails in this attempt to adapt the proof of Theorem~\ref{thm:nuStable} in~\S\ref{subsec:nuStableProof}.
\end{example}

We remark that an alternative way to see that the conclusion of Theorem~\ref{thm:nuStable}
is false in the previous example uses 
the highest-weight vector methods in~\cite{deBoeckPagetWildon}.
Let $\nabla^\gamma$ denote the Schur functor for the partition $\gamma$, let~$E$
be a $3$-dimensional vector space, let $T_L$ be as defined in the previous example,
and let $F(T_L)$ be as defined in Definition~2.3
of~\cite{deBoeckPagetWildon}.
Generalizing Example~7.5 in~\cite{deBoeckPagetWildon}, for each $L$, the vector
\[ F(T_L) \in \nabla^{(M,M,M,M)} \bigl( \nabla^{(2,1)}(E)\bigr) \]
is highest weight of weight $(6M, 4M, 2M)$. The vectors $F(T_L)$ for $0 \le L \le M$ are linearly
independent because the multisets of semistandard $(2,1)$-tableau entries of each $T_L$ are distinct.
It again follows that
$\langle s_{(M,M,M,M)} \circ s_{(2,1)}, s_{(6M,4M,2M)} \rangle \ge M+1$
for each $M \in \N_0$.

\begin{example}\label{ex:17family}
In the previous example the problem was that there were two semistandard signed tableau families
of the same maximal weight. The other potential problem solved by
Definition~\ref{defn:stronglyMaximalSignedWeight} is seen only in relatively large
examples, such as the following.
Take $\mu = (3)$. There
are three maximal semistandard signed tableau families of shape~$(3)$ 
and size $17$ having
only positive entries,
each obtained from

\medskip
\centerline{ \scalebox{0.9}{$\young(111)\, ,\; \young(112)\, ,\;
\young(122)\, ,\; \young(222)\, ,\; \young(113)\, ,\;  \young(123)\, ,\;
\young(223)\, ,\;  \young(133)\, ,\; \young(233)$ }}

\medskip
\noindent by taking the union with the  eight tableaux shown in the table below.

\medskip
\begin{center}
\begin{tabular}{rl} \toprule 
Signed weight & Extend by  \\ \midrule 
$\bigl( \varnothing, (17,16,11,4,3) \bigr)$  & 
$\begin{matrix}
\scalebox{0.9}{$\young(333)\, , \; \young(114)\, , \; \young(124)\, , \; 
\young(224)\, , \; $} \\
\scalebox{0.9}{$\young(134)\, , \; \young(115)\, , \; \young(125)\, , \; 
\young(225) \phantom{\,,\;} $} \end{matrix} $
\\[10pt]
$\bigl( \varnothing, (18,15,10,5,3) \bigr)$  & 
$\begin{matrix}
\scalebox{0.9}{$\young(114)\, , \; \young(124)\, , \; \young(224)\, , \; 
\young(134)\, , \; $} \\
\scalebox{0.9}{$\young(234)\, , \; \young(115)\, , \; \young(125)\, , \; 
\young(135) \phantom{\,,\;} $} \end{matrix} $
\\[10pt]
$\bigl( \varnothing, (19,14,9,6,3) \bigr)$  & 
$\begin{matrix}
\scalebox{0.9}{$\young(114)\, , \; \young(124)\, , \; \young(224)\, , \; 
\young(134)\, , \; $} \\
\scalebox{0.9}{$\young(144)\, , \; \young(115)\, , \; \young(125)\, , \; 
\young(135) \phantom{\,,\;} $} \end{matrix} $
\\[9pt]
\bottomrule\end{tabular}
\end{center}

%
%
%

\medskip

\noindent That these families \emph{are} maximal can be checked by hand, or more quickly,
using the Haskell software \cite{PZYT} mentioned in the introduction
using \verb!display $! \verb!maximalTableauFamilies ColType Closed 17 (ssyts 5 [3])!.
Let $T$, $U$ and $V$ be the plethystic semistandard signed tableaux
of outer shape $(1^{17})$ and inner shape $3$ having as their 
entries the three families above. From the table above which
lists the $(3)$-tableau entries in the signed colexicographic
order from Definition~\ref{defn:signedColexicographicOrder},
one can see that any plethystic tableau of the form
\begin{equation} \raisebox{-5pt}{\begin{tikzpicture}[line width=0.6pt, x=0.6cm,y=-0.6cm] 
\draw(0,0)--(12,0)--(12,1)--(0,1)--(0,0);
\draw(1,0)--(1,1); \draw(3,0)--(3,1); \draw(4,0)--(4,1); \draw (5,0)--(5,1);
\draw(7,0)--(7,1); \draw(8,0)--(8,1); \draw(9,0)--(9,1);
\draw(11,0)--(11,1);
\node at (0.5,0.5) {$T$}; 
\node at (2,0.5) {$\cdots$};
\node at (3.5,0.5) {$T$}; 
\node at (4.5,0.5) {$U$}; 
\node at (6,0.5) {$\cdots$};
\node at (7.5,0.5) {$U$}; 
\node at (8.5,0.5) {$V$};
\node at (10,0.5) {$\cdots$}; 
\node at (11.5,0.5) {$V$}; 
\end{tikzpicture}} \label{eq:TUV} \end{equation}

\smallskip
\noindent is semistandard. (Here there is a mild abuse of notation: the columns, each of
length $17$ are $T$, $U$ and $V$; these plethystic semistandard tableaux are not themselves
entries in boxes.)
Observe that 
\[ 2(18,15,10,5,3) = (19,14,9,6,3) + (17,16,11,4,3). \]
Thus any two of the $2N$ columns $\young(UU)$ of length $17$ 
in a plethystic semistandard signed
tableau of
outer shape $(2N,\stackrel{17}{\ldots}, 2N)$ and inner shape $(3)$
may be replaced with two columns $\young(TV)$
without changing the weight. (The columns must then be reordered
as in~\eqref{eq:TUV} to respect the semistandard condition).
Hence there are at least $(N+1)$ plethystic semistandard signed tableaux
of outer shape  $(2N,\stackrel{17}{\ldots}, 2N)$ whose  signed weight is $2(18N,15N,10N,$ $5N,3N)$.
Similar arguments to the previous Example~\ref{ex:whyStronglyMaximalSignedWeights} now show
the plethysm coefficients $\langle s_{(M^{17})} \circ s_{(3)}, s_{(18M,15M,10M,5M,3M)} \rangle$
for even $M$
do not stabilise, even though the relevant signed weight is maximal.
\end{example}

We remark that each tableau family in
the previous example is a downset 
for the majorization partial order $\preceq$ 
defined by comparing tableaux entry by entry; this is a necessary, but not in general sufficient, condition for
maximality. For instance the family of weight $(17,16,11,4,3)$ 
is $\young(225)^{\preceqtab} \,\cup\, \young(134)^{\preceqtab} \,\cup\,\young(333)^{\preceqtab}$ with three incomparable maximals in the
dominance order. This leads to an efficient algorithm 
implemented in \cite{PZYT} for 
finding maximal, and so strongly maximal, tableau families.


\section{Symmetric functions and plethystic semistandard signed tableaux}\label{sec:symmetricFunctions}

\subsection{Basic results} \label{subsec:symmetricFunctionsBasics}
We refer the reader to Stanley's textbook
\cite[Ch.~7]{StanleyII} for an introduction to the Hopf algebra
$\Lambda$ of symmetric functions
and to~\cite{LoehrRemmel} for a careful development of plethysm and the
formalism of plethystic substitutions. We define the elementary and homogeneous
symmetric functions $e_\pi$ and~$h_\pi$ for arbitrary weights $\pi \in \W$
(as defined at the start of \S\ref{sec:preliminaryDefinitions})
while Schur functions $s_\lambda$ are labelled by partitions as usual. 
Beyond very basic results, the minimum we require 
is:
\begin{bulletlist}
\item Young's rule (horizontal strip addition) and Pieri's
rule (vertical strip addition) as stated in (7.65) and after Example~7.15.8 in \cite{StanleyII};
\item the coproduct $\Delta$ on $\Lambda$
satisfies $\Delta s_{\lambda/\lambda_\star} = \sum_\tau s_{\tau/\lambda_\star} \otimes s_{\lambda / \tau}$ and 
is compatible with the inner product on $\Lambda$
(see the proof of Lemma~\ref{lemma:twistedKostkaNumbers});
\item the formal definition of substitution by an alphabet with mixed signs, namely
\begin{equation}
\label{eq:negativePositiveSubstitution} 
\qquad f[-x_1,-x_2,\ldots, y_1,y_2,\ldots] = \sum_i f^\-_i[-x_1,-x_2, \ldots] f_i^\+[y_1,y_2,\ldots]\\[-3pt]
\end{equation}
where $\Delta f = \sum_i f_i^\- \otimes f_i^\+$
(this follows from the equation for $s_{\lambda/\nu}[A-B]$ on
page 177 of \cite{LoehrRemmel}, using the result on the coproduct
just mentioned, and that the Schur functions $s_\lambda$ are a basis
for $\Lambda$);
\item  the \emph{negation rule} 
\begin{equation}
\label{eq:negationRule} 
\qquad s_\lambda [-x_1,-x_2, \ldots ] = (-1)^{|\lambda|} s_{\lambda'}[x_1,x_2, \ldots]
\end{equation}
which is a special case of \cite[Theorem~6]{LoehrRemmel};
\item the rule for a general plethystic substitution into a Schur function
given in \cite[Theorem 10]{LoehrRemmel}.
\end{bulletlist}
We shall not state the final rule here, since it is lengthy and
we only need it once, in the proof of Lemma~\ref{lemma:negativePositivePlethysticSpecialisation}; the reader
may then either refer to \cite{LoehrRemmel}, or take it on trust that it has the effect we claim.

\begin{remark}\label{remark:nuSkew}
Let $\nu/\nus$ be a skew partition. By the adjointness relation in Corollary 7.15.4 of \cite{StanleyII}
we have $s_{\nu/\nus} = \sum_\sigma
\langle s_\nu, s_\nus s_\sigma\rangle s_\sigma$ where the sum is over all partitions
$\sigma$ of $|\nu/\nus|$.
Since the plethysm product is linear in its first component, i.e.~$(f + g)\circ h = f \circ h + g \circ h$
for all $f,g,h \in \Lambda$, it follows that for any skew partition $\muS$,
\[ s_{\nu/\nus} \circ s_\muS = \sum_{\sigma} \langle s_\nu, s_\nus s_\sigma \rangle s_\sigma \circ s_\muS \]
with the same condition on the sum. This reduces an arbitrary plethysm
of skew Schur functions to the case dealt with in this paper. On the other hand,
since the plethysm product is \emph{not} linear in its second component
(this is already clear from the negation rule)
there is no further reduction to plethysm products where both factors are Schur functions.
\end{remark}

The following definition is standard.

\begin{definition}\label{defn:supp}
Given a symmetric function $f$ expressed in the Schur
basis as $\sum_{\lambda} c_\lambda s_\lambda$ 
we define the \emph{support} of $f$ by 
$\supp(f) = \{ \lambda \in \Par : c_\lambda \not= 0 \}$.
\end{definition}

For example by~\eqref{eq:e2h51},
we have $\supp(e_{(2)}h_{(5,1)}) = \bigl\{
(6,2), (7,1), (6,1,1),$ $(5,2,1), (5,1,1,1) \bigr\}$.

\subsection{Enumerating semistandard signed tableaux}
Our first result is the twisted generalization of the basic result, see for instance
\cite[(7.30), (7.36)]{StanleyII} that $\langle s_{\beta/\betas}, h_\lambda \rangle$ is the number 
of semistandard tableaux of shape $\beta/\betas$ and weight~$\lambda$.
When $\betas = \varnothing$,
this quantity may also be familiar as the Kostka number $K_{\beta \lambda}$.
In our signed weight notation it is $|\SSYT(\beta/\betas)_{(\varnothing, \lambda)}|$.
(Semistandard signed Young tableaux are defined in Definition~\ref{defn:semistandardSignedTableau}
and their signed weights in Definition~\ref{defn:signedWeightTableau}.)

\begin{lemma}[Twisted Kostka numbers for skew shapes]\label{lemma:twistedKostkaNumbers}
Let $\beta/\betas$ be a skew partition and let $(\gamma^\-,\gamma^\+)$ be a signed weight of size 
$|\beta/\betas|$.
Then 
\[ \langle s_{\beta/\betas}, e_{\gamma^\-}h_{\gamma^\+} \rangle = |\mSSYT(\beta/\betas)_{(\gamma^\-,\gamma^\+)}|.\]
In particular $\beta \in \supp(e_{\gamma^\-}h_{\gamma^\+})$
if and only if\, $\mSSYT(\beta)_{(\gamma^\-,\gamma^\+)}$ is non-empty. 
\end{lemma}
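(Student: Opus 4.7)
The plan is to invoke the Hopf-algebra structure on $\Lambda$. Since multiplication and comultiplication are adjoint with respect to the inner product, we have
\[ \bigl\langle s_{\beta/\betas},\, e_{\gamma^\-} h_{\gamma^\+}\bigr\rangle \;=\; \bigl\langle \Delta s_{\beta/\betas},\, e_{\gamma^\-}\otimes h_{\gamma^\+}\bigr\rangle \;=\; \sum_\tau \bigl\langle s_{\tau/\betas},\, e_{\gamma^\-}\bigr\rangle\, \bigl\langle s_{\beta/\tau},\, h_{\gamma^\+}\bigr\rangle, \]
where $\tau$ ranges over partitions with $[\betas]\subseteq [\tau]\subseteq [\beta]$, using the coproduct formula bulleted at the start of \S\ref{subsec:symmetricFunctionsBasics}.

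Next I would identify each factor combinatorially. The second factor is a classical skew Kostka number: by the result recalled immediately before the lemma,
\[ \bigl\langle s_{\beta/\tau},\, h_{\gamma^\+}\bigr\rangle \;=\; \bigl|\mSSYT(\beta/\tau)_{(\varnothing,\gamma^\+)}\bigr|. \]
For the first factor I would apply the $\omega$-involution, using $\omega h_\pi = e_\pi$ and $\omega s_{\sigma/\sigma^\star} = s_{\sigma'/{\sigma^\star}'}$, to obtain
\[ \bigl\langle s_{\tau/\betas},\, e_{\gamma^\-}\bigr\rangle \;=\; \bigl\langle s_{(\tau/\betas)'},\, h_{\gamma^\-}\bigr\rangle, \]
which counts semistandard tableaux (rows weakly increasing, columns strictly increasing, in the classical sense) of shape $(\tau/\betas)'$ and weight $\gamma^\-$. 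Transposing such a tableau and relabelling each entry $i$ by $-i$ yields a filling of $\tau/\betas$ that is weakly monotone along rows and columns in the order $-1<-2<\cdots$ and in which each value $-i$ occupies a vertical strip. This is exactly the negative-only case of Definition~\ref{defn:semistandardSignedTableau}, so the first factor equals $|\mSSYT(\tau/\betas)_{(\gamma^\-,\varnothing)}|$.

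To match the Hopf-algebra sum to the signed-tableau count I would describe the natural splitting. Given $t\in\mSSYT(\beta/\betas)_{(\gamma^\-,\gamma^\+)}$, the fact that every negative entry precedes every positive entry in the order $-1<-2<\cdots<1<2<\cdots$ forces the cells of $t$ carrying negatives to occupy a subshape $\tau/\betas$ for a unique intermediate partition $\tau$: weak row-monotonicity gives that these cells are left-justified in each row, and weak column-monotonicity applied to the rightmost negative in each row forces $\tau_i\ge \tau_{i+1}$. The restrictions of $t$ to $\tau/\betas$ and to $\beta/\tau$ are then a negative-only and a positive-only semistandard signed tableau of the required signed weights, and this splitting is a bijection onto $\bigsqcup_\tau \mSSYT(\tau/\betas)_{(\gamma^\-,\varnothing)} \times \mSSYT(\beta/\tau)_{(\varnothing,\gamma^\+)}$, matching the Hopf-algebra sum term by term and proving the displayed identity.

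The `in particular' statement follows because both $e_{\gamma^\-}$ and $h_{\gamma^\+}$ expand non-negatively in the Schur basis, so their product is Schur-positive; hence $\beta\in\supp(e_{\gamma^\-}h_{\gamma^\+})$ iff the inner product is strictly positive, which by the main identity happens iff the tableau set is non-empty. The only real obstacle is bookkeeping: one must check carefully that `vertical strip for equal negatives' combined with weak monotonicity in the order $-1<-2<\cdots$ corresponds, after transposition, to the standard SSYT conditions, so that the $\omega$-twist delivers $e_{\gamma^\-}$ rather than $h_{\gamma^\-}$.
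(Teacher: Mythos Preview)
Your proof is correct and follows essentially the same route as the paper's: both use the coproduct--multiplication adjointness to split the inner product as $\sum_\tau \langle s_{\tau/\betas}, e_{\gamma^\-}\rangle\langle s_{\beta/\tau}, h_{\gamma^\+}\rangle$, identify the second factor as an ordinary skew Kostka number, handle the first factor via the $\omega$-involution and the conjugate-and-negate bijection, and then glue the two pieces using the fact that negatives precede positives in the order of Definition~\ref{defn:semistandardSignedTableau}. Your treatment of the `in particular' clause via Schur-positivity of $e_{\gamma^\-}h_{\gamma^\+}$ is slightly more explicit than the paper's one-line remark, but the content is the same.
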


\begin{proof}
The inner product on $\Lambda \otimes \Lambda$ is defined in the natural way by 
linear extension of $\langle f \otimes f', g \otimes g' \rangle = \langle f, g \rangle \langle f', g' \rangle$. In the following two steps we use subscripts to indicate
the relevant inner product.
By the identity $\langle f, gh \rangle_\Lambda = \langle \Delta f, g \otimes h \rangle_{\Lambda \otimes \Lambda}$ we have
\[ \langle s_{\beta/\betas}, e_{\gamma^\-}h_{\gamma^\+} \rangle_\Lambda =
\langle \Delta s_{\beta/\betas}, e_{\gamma^\-} \otimes h_{\gamma^\+} \rangle_{\Lambda \otimes \Lambda}
= \sum_{\tau} \langle s_{\tau/\betas} \otimes s_{\beta / \tau}, e_{\gamma^\-} \otimes h_{\gamma^\+} \rangle_{\Lambda \otimes \Lambda}
\]
where the sum is over all partitions $\tau$ of $|\betas| + |\gamma^\-|$.
The right-hand side is 
$\sum_\tau \langle s_{\tau/\betas}, e_{\gamma^\-} \rangle_\Lambda
\langle s_{\beta / \tau}, h_{\gamma^\+} \rangle_\Lambda$.
By the remark before the proof, the second factor is 
$|\mSSYT(\beta/\tau)_{(\varnothing, \gamma^\+)}|$.
Applying the omega-involution (see \cite[Theorem~7.14.5]{StanleyII})
to the first factor gives 
\[ \langle s_{\tau/\betas}, e_{\gamma^\-} \rangle = 
\langle s_{\tau'/\beta_\star'}, h_{\gamma^\-}
\rangle.\] 
The right-hand side is the number of semistandard tableaux of shape $\tau'/\beta_\star'$ with \emph{positive}
entries of weight $\gamma^\-$, and so equal to $|\mSSYT(\tau/\beta_\star)_{(\gamma^\-,\varnothing)}|$
by the obvious bijection conjugating tableaux and switching signs of
the integer entries.
Since negative entries always precede positive entries in the order in 
Definition~\ref{defn:semistandardSignedTableau}, the pairs of tableaux enumerated
by the two factors are in bijection
with $\SSYT(\beta/\betas)_{(\gamma^\-,\gamma^\+)}$. The final claim is now immediately obvious
on taking $\betas=\varnothing$.
\end{proof}

In the following two lemmas we use the standard notation $y^\gamma$ for $y_1^{\gamma_1}\ldots y_{\ell(\gamma)}^{\gamma_{\ell(\gamma)}}$
and $(-x)^\gamma$ for \smash{$(-x_1)^{\gamma_1} \ldots (-x_{\ell(\gamma)})^{\ell(\gamma)}$},
where $\gamma$ is a composition.

\begin{lemma}\label{lemma:signedMonomials}
Let $f$ be a symmetric function and let $(\alpha^\-,\alpha^\+)$ be a signed weight
of size $\deg f$ where $\alpha^\-$, $\alpha^\+$ are partitions. 
Then $\langle f, e_{\alpha^\-}h_{\alpha^\+} \rangle$ is the 
coefficient of $(-x)^{\alpha^\-}y^{\alpha^\+}$ in $f[-x_1,-x_2,\ldots, y_1,y_2,\ldots ]$.
\end{lemma}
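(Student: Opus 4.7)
The plan is to reduce the claim to the Hopf algebra structure on $\Lambda$ by unfolding $f[-x_1,-x_2,\ldots,y_1,y_2,\ldots]$ through the coproduct. By~\eqref{eq:negativePositiveSubstitution},
\[ f[-x_1,-x_2,\ldots,y_1,y_2,\ldots] \;=\; \sum_i f_i^\-[-x_1,-x_2,\ldots]\,f_i^\+[y_1,y_2,\ldots] \]
with $\Delta f = \sum_i f_i^\-\otimes f_i^\+$, so the target coefficient factors as a sum over $i$ of a pure negative-alphabet coefficient times a pure positive-alphabet one. The whole proof then reduces to identifying each of these univariate coefficients as an inner product.

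For the positive side I would invoke the classical monomial--homogeneous duality $\langle h_\mu,m_\nu\rangle = \delta_{\mu\nu}$: writing $g \in \Lambda$ in the monomial basis as $g = \sum_\mu \langle g,h_\mu\rangle m_\mu$ and recalling that $[y^\nu]m_\mu = \delta_{\mu\nu}$ for partitions $\mu,\nu$, I obtain $[y^{\alpha^\+}]g[y_1,y_2,\ldots] = \langle g,h_{\alpha^\+}\rangle$ whenever $g$ is homogeneous of degree $|\alpha^\+|$. For the negative side I would use the negation rule~\eqref{eq:negationRule}, which extends by linearity to $g[-x_1,-x_2,\ldots] = (-1)^d \omega(g)[x_1,x_2,\ldots]$ for $g$ homogeneous of degree $d$, where $\omega$ is the standard involution $s_\mu \leftrightarrow s_{\mu'}$ (so that $\omega(h_\lambda) = e_\lambda$). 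Combining this with the positive identity applied to $\omega(g)$, and unfolding $(-x)^{\alpha^\-} = (-1)^{|\alpha^\-|} x^{\alpha^\-}$, I would arrive at $[(-x)^{\alpha^\-}] g[-x_1,-x_2,\ldots] = \langle g, e_{\alpha^\-}\rangle$ after the two sign factors cancel and I invoke the self-adjointness of $\omega$.

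Assembling these two identities, the coefficient of $(-x)^{\alpha^\-} y^{\alpha^\+}$ in $f[-x_1,\ldots,y_1,\ldots]$ equals $\sum_i \langle f_i^\-, e_{\alpha^\-}\rangle \langle f_i^\+, h_{\alpha^\+}\rangle$, which by the compatibility of the coproduct with the inner product on $\Lambda$ (as used at the start of the proof of Lemma~\ref{lemma:twistedKostkaNumbers}) collapses to $\langle \Delta f, e_{\alpha^\-} \otimes h_{\alpha^\+}\rangle = \langle f, e_{\alpha^\-} h_{\alpha^\+}\rangle$, yielding the claim. The main delicacy will be the sign bookkeeping in the negative factor: it is crucial that the $(-1)^{|\alpha^\-|}$ from unfolding $(-x)^{\alpha^\-}$ cancels exactly the $(-1)^d$ coming from the negation rule, and this works out only because coefficient extraction forces $d = |\alpha^\-|$ in the surviving homogeneous components.
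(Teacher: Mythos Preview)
Your proof is correct and follows essentially the same route as the paper's: both use~\eqref{eq:negativePositiveSubstitution} to split the substitution through the coproduct, identify the positive-alphabet coefficient via $h$--$m$ duality and the negative-alphabet coefficient via the negation rule, and then reassemble using the compatibility $\langle \Delta f, g\otimes h\rangle = \langle f, gh\rangle$. Your write-up is in fact a bit more explicit than the paper's (which leaves the final Hopf-algebra reassembly and the sign cancellation implicit), but there is no substantive difference in strategy.
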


\begin{proof}
Let $\Delta(f) = \sum_i f_i^\- \otimes f_i^\+$. 
Since $f$ is a symmetric function, so are each $f_i^\-$ and $f_i^\+$.
By \cite[(7.30)]{StanleyII}, 
the complete homogeneous and monomial symmetric functions are dual bases of $\Lambda$.
Hence the coefficient of $y^{\alpha^\+}$ in $f_i^\+[y_1,y_2,\ldots ]$ is
$\langle f^\+_i, h_{\alpha^\+} \rangle$.
Similarly, now also using  the negation rule~\eqref{eq:negationRule}, 
the coefficient of $(-x)^{\alpha^\-}$ in $f_i^\-[-x_1,-x_2,\ldots ]$ is $\langle f_i^\-, e_{\alpha^\-}
\rangle$. The lemma now follows by applying~\eqref{eq:negativePositiveSubstitution}.
\end{proof}

\subsection{Enumerating plethystic semistandard signed tableaux}
We can now extend Lemma~\ref{lemma:twistedKostkaNumbers} (Twisted Kostka Numbers)
 to plethystic signed tableaux.

\begin{lemma}\label{lemma:negativePositivePlethysticSpecialisation}
Let $\nu$ be a partition and let $\muS$ be a skew partition. Then
\[\begin{split} (s_\nu \circ s_\muS)&[-x_1,-x_2,\ldots, y_1,y_2, \ldots ] \\
&= \sum_{(\alpha^\-,\alpha^\+)}
|\mPSSYT(\nu,\muS)_{(\alpha^\-,\alpha^\+)}|\, (-x)^{\alpha^\-}\!y^{\alpha^\+} \end{split} \]
where the sum is over all signed weights $(\alpha^\-,\alpha^\+)$ of size $|\nu|  |\muS|$. 
Moreover, $\bigl|\PSSYTw{\nu}{\muS}{\alpha^\-}{\alpha^\+}\bigr| = \bigl|\srPSSYTw{\nu}{\muS}{\alpha^\-}{\alpha^\+}\bigr|$.

\end{lemma}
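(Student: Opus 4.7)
The plan is to reduce the identity to the general formula for a plethystic substitution into a Schur function given by \cite[Theorem~10]{LoehrRemmel}. First I would use~\eqref{eq:negativePositiveSubstitution}, the negation rule~\eqref{eq:negationRule} and Lemma~\ref{lemma:twistedKostkaNumbers} applied to the skew shape $\muS$ to expand
\[
s_\muS[-x_1,-x_2,\ldots,y_1,y_2,\ldots] \;=\; \sum_{t\in\sSSYT(\muS)} (-x)^{\alpha^\-(t)}\,y^{\alpha^\+(t)},
\]
where $(\alpha^\-(t),\alpha^\+(t))$ is the signed weight of $t$; equivalently, by Lemma~\ref{lemma:signedMonomials}, the coefficient of each monomial enumerates semistandard signed $\muS$-tableaux of a prescribed signed weight.

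Next I would regard $\sSSYT(\muS)$ as an alphabet, totally ordered by the signed colexicographic order of Definition~\ref{defn:signedColexicographicOrder}, where each tableau $t$ carries the monomial $(-x)^{\alpha^\-(t)}\,y^{\alpha^\+(t)}$. The overall sign of this monomial is $(-1)^{|\alpha^\-(t)|} = \sgn(t)$, so the negative letters of the alphabet are precisely the tableaux of negative sign. Applying \cite[Theorem~10]{LoehrRemmel} to this alphabet gives
\[
(s_\nu\circ s_\muS)[-x_1,-x_2,\ldots,y_1,y_2,\ldots] \;=\; \sum_T \prod_{(i,j)\in[\nu]} (-x)^{\alpha^\-(T(i,j))}\,y^{\alpha^\+(T(i,j))},
\]
where $T$ ranges over fillings $[\nu]\to\sSSYT(\muS)$ in which equal positive letters lie in horizontal strips, equal negative letters lie in vertical strips, and letters weakly increase along rows and down columns in the chosen order. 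These are exactly the conditions in Definition~\ref{defn:plethysticSemistandardSignedTableau}, so the fillings are precisely the elements of $\sPSSYT(\nu,\muS)$. Grouping by signed weight (Definition~\ref{defn:signedWeightPlethystic}) yields the first displayed identity.

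For the moreover part, I would rerun exactly the same argument with the sign-reversed colexicographic order in place of the signed one. This order also partitions the alphabet into positive and negative letters (merely swapping which half comes first) and is equally admissible in \cite[Theorem~10]{LoehrRemmel}. The resulting plethystic substitution depends only on the multiset of monomials and signs of the alphabet, not on the chosen total order, so it gives the same symmetric function as before. Comparing coefficients term by term yields $\bigl|\PSSYTw{\nu}{\muS}{\alpha^\-}{\alpha^\+}\bigr| = \bigl|\srPSSYTw{\nu}{\muS}{\alpha^\-}{\alpha^\+}\bigr|$.

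The main obstacle will be checking carefully that the combinatorial output of \cite[Theorem~10]{LoehrRemmel}, phrased there for a general signed alphabet, lines up box-for-box with Definition~\ref{defn:plethysticSemistandardSignedTableau}: in particular that our convention $\sgn(t)=(-1)^{|\alpha^\-(t)|}$ is consistent with the Loehr--Remmel convention that negative letters fill vertical strips and positive letters fill horizontal strips, and that their semistandard order requirement matches ours. Once this dictionary is pinned down, the rest is formal.
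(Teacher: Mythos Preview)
Your proposal is correct and follows essentially the same route as the paper: expand $s_\muS$ at the mixed alphabet as a sum over semistandard signed $\muS$-tableaux (the paper packages this step via Lemmas~\ref{lemma:twistedKostkaNumbers} and~\ref{lemma:signedMonomials}), treat $\sSSYT(\muS)$ as a signed alphabet ordered by the signed colexicographic order, apply the Loehr--Remmel rule for plethystic substitution into $s_\nu$, and identify the resulting fillings with the plethystic semistandard signed tableaux of Definition~\ref{defn:plethysticSemistandardSignedTableau}; the ``moreover'' part is likewise obtained by rerunning with the sign-reversed order. The only point worth double-checking is the precise Loehr--Remmel reference (the paper invokes \cite[Theorem~8]{LoehrRemmel} in the proof rather than Theorem~10), and your anticipated dictionary check is exactly what the paper carries out.
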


\begin{proof}
By Lemma~\ref{lemma:twistedKostkaNumbers} (Twisted Kostka Numbers) 
and Lemma~\ref{lemma:signedMonomials} we have
\[ s_\muS[-x_1,-x_2,\ldots, y_1,y_2,\ldots ] = 
\sum_{(\alpha^\-,\alpha^\+)} |\mSSYT(\muS)_{(\alpha^\-,\alpha^\+)}|\, (-x)^{\alpha^\-} \!y^{\alpha^\+} \]
where the sum is over all signed weights $(\alpha^\-,\alpha^\+)$ of size $|\muS|$.
Therefore 
$(s_\nu \circ s_\muS)[-x_1,-x_2,\ldots, y_1,y_2,\ldots ] = 
s_\nu[\A]$
where the plethystic alphabet~$\A$ is all semistandard tableaux of shape $\muS$ having entries
from $\Z \backslash \{0 \}$ ordered by the signed colexicographic order in Definition~\ref{defn:signedColexicographicOrder}. 
Note this alphabet has formal symbols (i.e.~tableaux) of both positive and negative
sign and that the sign of a semistandard  signed $\muS$-tableau of weight $(\alpha^\-,\alpha^\+)$
from the displayed equation above, namely $(-1)^{|\alpha^\-|}$, agrees with the sign defined by
Definition~\ref{defn:signTableau}. Moreover, negative tableaux are less than positive tableaux.
Therefore, by the definition of general plethystic
substitution~\cite[Theorem~8]{LoehrRemmel},
taking $D$ to be the negative tableaux in $\A$ and $E$ to be the 
positive tableaux in $\A$, 
$S_\nu[\A]$ is the generating function enumerating,
by their signed weight, the 
$\nu$-tableau $T$ having entries from $\A$, such that 
for some subpartition~$\beta$ of $\nu$,
\begin{itemize}
\item[(i)] the negative entries in $T$ form a subtableau of shape $\beta$ and are strictly increasing
along rows and weakly increasing down columns;
\item[(ii)] the positive entries in $T$ form a subtableau of skew shape $\nu / \beta$ and are
weakly increasing along rows and strictly increasing down columns.
\end{itemize}
Since (i) implies that all negative entries of $T$ are in boxes above or left of the positive entries of $T$,
it follows that $T$ is a plethystic semistandard signed tableau of outer shape $\nu$ and inner shape $\muS$,
as defined in Definition~\ref{defn:plethysticSemistandardSignedTableau}.
Moreover, since the weight of a plethystic tableau
is, by Definition~\ref{defn:signedWeightPlethystic}, the sum of the weights of its $\muS$-tableau
entries, the sign attached to each plethystic tableau in $\PSSYT(\nu,\muS)_{(\alpha^\-,\alpha^\+)}$
is $(-1)^{|\alpha^\-|}$. This completes the proof of the displayed equation in the statement
of the lemma. For the second claim,
observe that we could instead order $\A$ by the sign-reversed colexicographic order,
and take~$D$ to be the positive tableaux in $\A$ and $E$ to be the negative tableaux in $\A$.
We then obtain the displayed equation, modified by replacing
$\bigl|\PSSYTw{\nu}{\muS}{\alpha^\-}{\alpha^\+}\bigr|$ with $\bigl|\srPSSYTw{\nu}{\muS}{\alpha^\-}{\alpha^\+}\bigr|$.
\end{proof}

\begin{proposition}[Plethystic Signed Kostka Numbers] 
\label{prop:plethysticSignedKostkaNumbers}
Let $\nu$ be a partition and let $\muS$ be a skew partition.
Let $(\alpha^\-,\alpha^\+)$ be a signed weight of size $|\nu|  |\muS|$.
\[ \langle s_\nu \circ s_\muS, e_{\alpha^\-}h_{\alpha^\+} \rangle = |\mPSSYT(\nu, \muS)_{(\alpha^\-,\alpha^\+)}|.\]
\end{proposition}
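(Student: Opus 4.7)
The plan is to combine Lemma~\ref{lemma:signedMonomials} and Lemma~\ref{lemma:negativePositivePlethysticSpecialisation} essentially directly. First I would reduce to the case where $\alpha^\-$ and $\alpha^\+$ are partitions. Both sides of the claimed identity are invariant under independently reordering the parts of $\alpha^\-$ and $\alpha^\+$: on the left because $e_{\alpha^\-} = \prod_i e_{\alpha^\-_i}$ and $h_{\alpha^\+} = \prod_i h_{\alpha^\+_i}$ depend only on the multisets of parts; on the right because Lemma~\ref{lemma:negativePositivePlethysticSpecialisation} expresses $(s_\nu \circ s_\muS)[-x_1,-x_2,\ldots, y_1, y_2,\ldots]$ as a power series that is symmetric in the $x_i$'s and in the $y_j$'s separately (as $s_\nu \circ s_\muS$ itself is symmetric), so that the coefficient of $(-x)^{\alpha^\-}y^{\alpha^\+}$ depends only on the sorted versions of $\alpha^\-$ and $\alpha^\+$. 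Hence $|\mPSSYT(\nu,\muS)_{(\alpha^\-,\alpha^\+)}|$ depends only on those sorted versions, matching the invariance on the symmetric-function side.

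Assuming now that $\alpha^\-$ and $\alpha^\+$ are partitions, I would apply Lemma~\ref{lemma:signedMonomials} with $f = s_\nu \circ s_\muS$ (a symmetric function of degree $|\nu|\,|\muS|$) to identify
\[ \langle s_\nu \circ s_\muS, e_{\alpha^\-}h_{\alpha^\+}\rangle \]
with the coefficient of $(-x)^{\alpha^\-}y^{\alpha^\+}$ in the plethystic specialization $(s_\nu \circ s_\muS)[-x_1,-x_2,\ldots, y_1,y_2,\ldots]$. By Lemma~\ref{lemma:negativePositivePlethysticSpecialisation} this coefficient is exactly $|\mPSSYT(\nu,\muS)_{(\alpha^\-,\alpha^\+)}|$, which completes the argument.

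I do not anticipate a significant obstacle here: the heavy lifting was done in establishing Lemma~\ref{lemma:negativePositivePlethysticSpecialisation}, where plethystic substitution into a Schur function was carefully matched with the combinatorial definition of plethystic semistandard signed tableaux. What remains is only a duality/coefficient-extraction step (Lemma~\ref{lemma:signedMonomials}) together with the brief symmetry observation above to pass from partition indices to arbitrary signed weights. It is worth noting that this reduction to partitions is essential only for applying Lemma~\ref{lemma:signedMonomials}, as that lemma's statement hypothesizes partition indices; the final identity then extends to arbitrary signed weights for free, since both sides are clearly invariant under rearrangement.
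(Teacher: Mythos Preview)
Your proposal is correct and matches the paper's own proof, which simply states that the result is immediate from Lemma~\ref{lemma:negativePositivePlethysticSpecialisation} and Lemma~\ref{lemma:signedMonomials}. Your additional paragraph reducing to the case where $\alpha^\-$ and $\alpha^\+$ are partitions is a careful touch that the paper leaves implicit, since Lemma~\ref{lemma:signedMonomials} is stated only for partition indices; this reduction is indeed harmless for the reasons you give.
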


\begin{proof}
This is immediate from Lemma~\ref{lemma:negativePositivePlethysticSpecialisation} and
Lemma~\ref{lemma:signedMonomials}.
\end{proof}

The special case $\nu = (1)$ of the proposition just proved
recovers Lemma~\ref{lemma:twistedKostkaNumbers}. Note also that, by the final part
of Lemma~\ref{lemma:negativePositivePlethysticSpecialisation}, Proposition~\ref{prop:plethysticSignedKostkaNumbers}
implies that
\smash{$\langle s_\nu \!\circ\! s_\muS, e_{\alpha^\-}h_{\alpha^\+} \rangle \!=\! \bigl|\srPSSYTw{\nu}{\muS}{\alpha^\-}{\alpha^\+}\bigr|$}.
For example, the three
elements of $\PSSYTw{(2,2)}{(3)}{(3)}{(7,2)}$
were seen after Definition~\ref{defn:plethysticSemistandardSignedTableau}; these may be compared with
the three elements of $\srPSSYTw{(2,2)}{(3)}{(3)}{(7,2)}$ shown below in the sign-reversed colexicographic order
\[            \pyoung{1.6cm}{0.75cm}{{{\young(112), \young(\oM 11)}, {\young(\oM 11), \young(\oM 12)}}}
\spy{20pt}{,} \pyoung{1.6cm}{0.75cm}{{{\young(111), \young(\oM 12)}, {\young(\oM 11), \young(\oM 12)}}}
\spy{20pt}{,} \pyoung{1.6cm}{0.75cm}{{{\young(111), \young(\oM 11)}, {\young(\oM 11), \young(\oM 22)}}}
\spy{20pt}{.} \]



\begin{remark}\label{remark:otherOrder}
The only property of the signed colexicographic order we used in the proof of Lemma~\ref{lemma:negativePositivePlethysticSpecialisation}
was that negative tableaux are always less than positive tableaux. 
We are therefore free to use any other order $\preceq$ that has this property,
obtaining plethystic semistandard signed tableaux defined as in Definition~\ref{defn:plethysticSemistandardSignedTableau}, but 
whose inner tableaux are instead semistandard with respect to $\preceq$. An analogous
remark holds for the sign-reversed colexicographic order. We
use this freedom in the proof of Theorem~\ref{thm:nuStable}
(see Definition~\ref{defn:adaptedSignedColexicographicOrder}). 
\end{remark}

 We end this section with an immediate application.

\subsection{A generalized Cayley--Sylvester formula}\label{subsec:twoRow}
By Proposition~\ref{prop:plethysticSignedKostkaNumbers},
using that $s_{(k-\ell,\ell)}  = h_{(k-\ell,\ell)} - h_{(k-\ell+1,\ell-1)}$,
for any $\ell$ with $1 \le \ell \le k/2$, we have
\begin{equation}
\label{eq:twoRow} 
\begin{split}
\langle &s_\nu \circ s_\muS,  s_{(mn-\ell,\ell)} \rangle \\ &= |\mPSSYT(\nu, \muS)_{(\varnothing, (mn-\ell,\ell))}|
- |\mPSSYT(\nu, \muS)_{(\varnothing, (mn-\ell+1,\ell-1))} | 
\end{split}
\end{equation}
for $1 \le \ell \le mn/2$.
Special cases of~\eqref{eq:twoRow} have appeared throughout the literature on plethysms,
especially in the context of representations of $\SL_2(\C)$. The most important case
occurs when
$\nu = (n)$ and $\mu = (m)$. Observe that an element of
\smash{$\PSSYTw{(n)}{(m)}{\varnothing}{(mn-\ell,\ell)}$}
is determined by the 
number of $2$s in each of its $n$ $(m)$-tableau entries. The corresponding non-negative sequence
of length $n$
may be interpreted as a partition of $\ell$ having at most~$n$ parts, each part having
size at most $m$.
For example when $n=4$ and $m=4$, 
\[ \pyoung{2cm}{0.775cm}{ {{\young(1111), \young(1122), \young(1122), \young(1222)}} }
\spy{9pt}{$\;\longleftrightarrow (3,2,2)$.}\]
This bijection shows that $\langle s_{(n)} \circ s_{(m)}, s_{(mn-\ell,\ell)} \rangle$
is the number of partitions of~$\ell$ contained in an $m \times n$-box \emph{minus}
the number of partitions of \hbox{$\ell-1$} contained in an $m \times n$ box.  This is
one form of the Cayley--Sylvester identity. The bijective proof just given is similar to that
in \cite{GiannelliArchMath}, where it is derived using symmetric group methods.
When $m$ and $n$ are large, and so $\ell \le \min(m,n)$,
this simplifies to the number of partitions of $\ell$ having no parts of size $1$:
see \cite[Proposition 8.4]{BowmanPaget} for an earlier proof of this fact.
For many further applications of~\eqref{eq:twoRow}, and related results such as Stanley's Hook Content Formula, see~\cite{PagetWildonSL2}. 

\section{Twisted dominance order and twisted symmetric functions}\label{sec:twisted}

In this section we define the $\ell^\-$-twisted dominance order and 
twisted symmetric functions. Twisted symmetric functions
interpolate between the homogeneous and
elementary symmetric functions, in an analogous way
(see Remark~\ref{remark:twistedDominanceOrderGeneralizesDominanceOrder})
to the way the $\ell^\-$-twisted dominance orders interpolate between the dominance order and its opposite order.
This is made precise by Lemma~\ref{lemma:twistedKostkaMatrix}.
The $1$-twisted dominance order was seen informally in the overview in \S\ref{sec:overview}.

\subsection{$\ell^\-$-decomposition}
The following definition and notation is shown diagrammatically in Figure~\ref{fig:ellDecomposition}.

\begin{definition}\label{defn:ellDecomposition}
Fix $\ell^\- \in \N_0$. Given a partition $\sigma$, we set $\sigma^\- = (\sigma'_1,\ldots, \sigma'_{\ell^\-})$
and $\sigma^\+ = \sigma - \sigma^{-\prime}$. We say that the ordered pair
 $\decs{\sigma^\-}{\sigma^\+}$
 is the \emph{$\ell^\-$-decomposition
of $\sigma$} and write $\sigma \decMap \decs{\sigma^\-}{\sigma^\+}$.
\end{definition}

\begin{figure}[ht!]
\begin{center}
\begin{tikzpicture}[x=0.47cm,y=-0.47cm]
\newcommand{\xExtra}{2}
\newcommand{\yExtra}{1.5}
\newcommand{\lMx}{-0.5}
\newcommand{\lMy}{-0.5}
\draw (0,0)--(8+\xExtra,0)--(8+\xExtra,1)--(7+\xExtra,1)--(7+\xExtra,2)--(6+\xExtra,2);
\draw (0,0)--(0,7+\yExtra)--(2,7+\yExtra)--(2,6+\yExtra)--(3,6+\yExtra)--(3,5+\yExtra);
\draw (3,5)--(3,4);
\draw[dashed] (3,0)--(3,5);
\draw (3,4)--(5,4)--(5,3)--(6,3)--(6,2)--(6.5,2);
\node at (1.5,3) {$\sigma^{-\prime}$};
\node at (4,1.5) {$\sigma^\+$};
\node at (7.25,2) {$\ldots$};
\node at (3,5.5) {$\vdots$};

\draw (3,4)--(4,4)--(4,3)--(3,3)--(3,4);
\fill[color=gray] (3,4)--(4,4)--(4,3)--(3,3)--(3,4);

\draw[<-] (0,\lMy)--(1.0,\lMy);
\draw[->] (2,\lMy)--(3,\lMy);
\node at (1.55,\lMy-0.05) {$\scriptstyle \ell^\minus$};

\end{tikzpicture}
\end{center}
\caption{The partitions in the $\ell^\minus$-decomposition 
$\decs{\sigma^{\minus}}{\sigma^\+}$ of $\sigma \in \Par$. Note that $\sigma^\minus$ has at most $\ell^\minus$
parts and that $\sigma^\minus_{\ell^\minus} \ge \ell(\sigma^\+)$, so $\sigma$ is $\bigl( \ell(\sigma^\minus)+1,
\ell(\sigma^\+) \bigr)$-large in the sense of Definition~\ref{defn:large},
having $\bigl( \ell(\sigma^\+), \ell(\sigma^\minus) + 1 \bigr)$ as the
shaded box. In particular $\sigma$ is $\bigl( \ell(\sigma^\minus), \ell(\sigma^\+) \bigr)$-large.
\label{fig:ellDecomposition}
}
\end{figure}

The relevant $\ell^\-$ will always be clear from context. 
The $\ell^\-$-decomposition of a partition may be used as a signed weight (see 
Definition~\ref{defn:signedWeight}), but since
this is not always the case, we use angled brackets to make a visual distinction.
For example, the $0$-, $1$-, $2$-, $3$- and $4$-decompositions of $(4,3,3,2,1)$ are $\dec{\varnothing}{(4,3,3,2,1)}$, $\dec{(5)}{(3,2,2,1)}$,
$\dec{(5,4)}{(2,1,1)}$,
 $\dec{(5,4,3)}{(1)}$ and $\dec{(5,4,3,1)}{\varnothing}$; the 
 $2$-decomposition is shown in the margin, with the relevant part lengths marked.
\marginpar{
\begin{tikzpicture}[x=0.5cm, y=-0.5cm]
\node at (-1.1,2.5) {$\smaller{5}$}; \node at (-0.25,2.5) {$\smaller{4}$};
\node at (2.25,-1.6) {$\smaller{2}$};\node at (2.25,-0.75) {$\smaller{1}$};
\node at (2.25,0.1) {$\smaller{1}$};

\node at (0,0) {$\pyoung{0.4cm}{0.4cm}{ {{\youngBox{0.4}{grey}, \youngBox{0.4}{grey}, \ , \ },
{\youngBox{0.4}{grey}, \youngBox{0.4}{grey}, \ }, {\youngBox{0.4}{grey}, \youngBox{0.4}{grey}, \ },
{\youngBox{0.4}{grey}, \youngBox{0.4}{grey}}, {\youngBox{0.4}{grey}}} }$}; \end{tikzpicture}} 

\begin{remark}\label{remark:ellDecompositionLarge}
Not all ordered pairs of partitions are $\ell^\-$-decompositions: in fact
$(\sigma^\-, \sigma^\+)$ 
 is the $\ell^\-$-decomposition of a partition $\sigma$
if and only if \hbox{$\ell(\sigma^\-) \le \ell^\-$} and $\sigma^\-_{\ell^\-} \ge \ell(\sigma^\+)$.
Note that, by~\eqref{eq:oplus}, if these conditions hold then
$\varnothing \oplus (\sigma^\-,\sigma^\+) = \sigma$; this explains our preference
for joining first in the $\oplus$ operation.
Note also that if these conditions hold then the partition $\sigma$ such
that $\sigma \decMap \decs{\sigma^\-}{\sigma^\+}$ is $\bigl(\ell(\sigma^\-)+1, \ell(\sigma^\+)\bigr)$-large
in the sense of Definition~\ref{defn:large}, and hence,
as is sometimes all we need, $\bigl( \ell(\sigma^\-), \ell(\sigma^\+) \bigr)$-large.
This is clear from Figure~\ref{fig:ellDecomposition}.
\end{remark}

\begin{example}\label{ex:ellDecomposition}
Let $\mu$ be a partition.
Following Definition~\ref{defn:greatestSignedTableau},
$t^-$ is the tableau of shape $\mu^\minusprime$
having in row $i$ the first
$\min(\ell^\-,\mu_i)$ entries from $-1,\ldots, -\ell^\-$.
We then obtain 
$t_{\ell^\-}(\mu)$ from $t^-$ by putting $\mu'_j$ entries
from $1,2,\ldots $ into column $j$ for each $j > \ell^\-$.
Therefore row $i$ of $t_{\ell^\-}(\mu)$ 
has the first $\mu_i$ entries \marginpar{$\young(\oM\tM1111,\oM\tM22,\oM\tM33,\oM)$}
from the sequence $-1, \ldots, -\ell^\-$, $i,$ $i, \ldots$ 
ending with infinitely many $i$s. This can be seen in the first tableau in Example~\ref{ex:2greatest}
(repeated in the margin)
in the case when $\ell^\- = 2$
and $\mu = (6,4,4,1)$ and so $\mu^\- = (4,3)$.
The greatest semistandard signed tableau $t_{\ell^\-}(\mu)$
therefore has signed weight
\begin{equation}\label{eq:greatestSignedWeightPartitionCase}
\bigl(\omega_{\ell^\-}(\mu)^\-, \omega_{\ell^\-}(\mu)^\+ \bigr) = 
(\mu^\-, \mu^\+) .
\end{equation}
In particular, if $\ell^\- = 0$ then we have $\omega_{\ell^\-}(\mu) = \mu$;
this is the well-known fact that the greatest weight of a semistandard $\mu$-tableau is $\mu$.
\end{example}

Note that in the previous example, 
$\decs{\mu^\-}{\mu^\+}$ is the $\ell^\-$-decomposition
of the partition $\mu$.
More generally, we have the following lemma which is critical
in \S\ref{sec:twistedWeightBoundInner}; its generalization in
Proposition~\ref{prop:stronglyMaximalSignedWeightsAreEllDecompositions}
is important in~\S\ref{sec:twistedWeightBoundForStronglyMaximalWeight}.

\begin{lemma}\label{lemma:ellDecompositionGreatestSignedWeight}
Let $\tauS$ be a skew partition. 
Then the signed weight 
\[ \bigl(\omega_{\ell^\-}(\tauS)^\-, \omega_{\ell^\-}(\tauS)^\+ \bigr) \]
is the $\ell^\-$-decomposition of a partition.
\end{lemma}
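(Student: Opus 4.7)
By Remark~\ref{remark:ellDecompositionLarge}, a pair of partitions $(\alpha^\-,\alpha^\+)$ is the $\ell^\-$-decomposition of a partition if and only if $\ell(\alpha^\-) \le \ell^\-$ and $\alpha^\-_{\ell^\-} \ge \ell(\alpha^\+)$. I would verify both of these conditions for $\bigl(\omega_{\ell^\-}(\tauS)^\-,\omega_{\ell^\-}(\tauS)^\+\bigr)$ directly from the construction of the greatest signed tableau $t_{\ell^\-}(\tauS)$ in Definition~\ref{defn:greatestSignedTableau}. The length condition is immediate: since the negative entries of $t_{\ell^\-}(\tauS)$ are drawn from $\{-1,\ldots,-\ell^\-\}$, the partition $\omega_{\ell^\-}(\tauS)^\-$ has at most $\ell^\-$ parts. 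The content of the lemma is therefore the inequality $\omega_{\ell^\-}(\tauS)^\-_{\ell^\-} \ge \ell\bigl(\omega_{\ell^\-}(\tauS)^\+\bigr)$, to which the remainder of the proof is devoted. The $\ell^\- = 0$ case is vacuous, so I would assume $\ell^\- \ge 1$.

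The plan is to compute each side as the cardinality of an explicit index set of row indices, and then exhibit an inclusion between these sets. Writing $t^\-$ for the negative part of $t_{\ell^\-}(\tauS)$, which fills row $i$ of $[\tauS]$ from the left with the entries $-1,-2,\ldots,-\min(\ell^\-,\tau_i - \taus_i)$, the multiplicity of $-\ell^\-$ in $t_{\ell^\-}(\tauS)$ is
\[
  \omega_{\ell^\-}(\tauS)^\-_{\ell^\-} = \bigl|\{i : \tau_i - \taus_i \ge \ell^\-\}\bigr|.
\]
Dually, since the positive entries in column $j$ of $t_{\ell^\-}(\tauS)$ are placed top to bottom as $1,2,\ldots,\tau'_j - \sigma'_j$, where $\sigma/\taus$ is the shape of $t^\-$, the maximum positive entry of $t_{\ell^\-}(\tauS)$, and hence $\ell\bigl(\omega_{\ell^\-}(\tauS)^\+\bigr)$, is $\max_{j} (\tau'_j - \sigma'_j)$.

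I would then express $\tau'_j - \sigma'_j$ as the number of rows $i$ such that box $(i,j)$ lies in $[\tauS]$ but does not receive a negative entry, which amounts to the conditions $\taus_i + \ell^\- < j \le \tau_i$. Any such row~$i$ in particular satisfies $\tau_i - \taus_i > \ell^\-$ and so is counted by $\omega_{\ell^\-}(\tauS)^\-_{\ell^\-}$. Taking the maximum over $j$ therefore gives
\[
  \ell\bigl(\omega_{\ell^\-}(\tauS)^\+\bigr) = \max_j (\tau'_j - \sigma'_j) \le \bigl|\{i : \tau_i - \taus_i \ge \ell^\-\}\bigr| = \omega_{\ell^\-}(\tauS)^\-_{\ell^\-},
\]
completing the proof. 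The routine but slightly fiddly step is the bookkeeping in the second paragraph, where one has to be careful about the boundary between rows that fit their full quota of $\ell^\-$ negative entries and those shorter rows that contribute nothing to positive entries at all; this case analysis is the only real obstacle, and everything else follows by direct counting.
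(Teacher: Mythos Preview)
Your proof is correct and follows essentially the same approach as the paper: both reduce to the inequality $\omega_{\ell^\-}(\tauS)^\-_{\ell^\-} \ge \ell\bigl(\omega_{\ell^\-}(\tauS)^\+\bigr)$ via the observation that any row of $t_{\ell^\-}(\tauS)$ containing a positive entry necessarily begins with the full prefix $-1,\ldots,-\ell^\-$ and so contributes an entry of $-\ell^\-$. The only cosmetic difference is that the paper picks one specific column (the leftmost column containing the maximal positive entry $d$) and exhibits $d$ such rows, whereas you bound $\tau'_j - \sigma'_j$ uniformly over all $j$ by the set inclusion; your version is arguably cleaner since it avoids singling out a particular column.
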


\begin{proof}
The greatest semistandard signed tableau $t_{\ell^\-}(\tauS)$
of signed weight $\bigl(\omega_{\ell^\-}(\tauS), \omega_{\ell^\+}(\tauS) \bigr)$
is defined in Definition~\ref{defn:greatestSignedTableau}.
Let $d$ be the length of the partition $\omega_{\ell^\-}(\tauS)^\+$.
If $d=0$ then the result is immediate, so we may suppose that $d \in \N$.
Thus $d$ is the greatest positive entry of $t_{\ell^\-}(\tauS)$.
Choose the leftmost box of $t_{\ell^\-}(\tauS)$ containing~$d$, in position $(i,j)$ say.
By the construction of $t_{\ell^\-}(\tauS)$ in Definition~\ref{defn:greatestSignedTableau},
$t_{\ell^\-}(\tauS)$ has entries $1, \dots, d$ in positions $(i-d+1,j), \ldots, (i,j)$. In particular, each such row has a positive entry.
By the construction  of~$t_{\ell^\-}(\tauS)$
in which negative
entries from $-1,\ldots, \ell^\-$ are placed before 
positive entries, each of the rows $i-d+1, \ldots, i$ of
$t_{\ell^\-}(\tauS)$ begins, after skipping any  boxes 
not considered in $[\tau]$ because 
they are in $[\taus]$, with
\smash{\raisebox{-4.5pt}{\begin{tikzpicture}[x=0.55cm,y=-0.45cm]
\tableauBox{0}{0}{\oM} \tableauBox{1}{0}{\tM} \draw (1,0)--(3,0)--(3,1)--(1,1)--(1,0);
\node at (2,0.5) {$\ldots$}; \tableauBox{4}{0}{$\pmb{\ell}^{\pmb{-}}$}\end{tikzpicture}}}. 
Hence $\omega_{\ell^\-}(\tauS)^\-_{\ell^\-}$, which counts the number of entries of $-\ell^\-$
in $t(\tauS)$, is at least~$d$. Equivalently,
$\omega_{\ell^\-}(\tauS)_{\ell^\-}^\- \ge \ell\bigl(\omega_{\ell^\-}(\tauS)^\+ \bigr)$. 
The lemma follows.
\end{proof}

We have already seen in Lemma~\ref{lemma:stronglyMaximalSignedWeightsArePairsOfPartitions}
that if $\swtp{\kappa}$ is a strongly maximal signed weight then $\kappa^\-$ and $\kappa^\+$ are partitions.
We now build on this to show that one potentially nasty technicality does not arise:
in fact $\decs{\kappa^\-}{\kappa^\+}$ is an $\ell(\kappa^\-)$-decomposition.
This result  generalizes Lemma~\ref{lemma:ellDecompositionGreatestSignedWeight}
since, by Lemma~\ref{lemma:maximalAndStronglyMaximalSingletonSemistandardSignedTableauFamilies},
$\bigl(\omega_{\ell^\-}(\muS)^\-,\omega_{\ell^\-}(\muS)^\+\bigr)$ is the strongly maximal
signed weight of the singleton strongly maximal signed tableau family $\{ t_{\ell^\-}(\muS) \}$.
We use this result in the proof of Lemma~\ref{lemma:omegaKappaEllDecomposition}, part of the
proof of Theorem~\ref{thm:nuStable}.

\begin{proposition}\label{prop:stronglyMaximalSignedWeightsAreEllDecompositions}
Let $\swtp{\kappa}$ be a strongly
maximal signed weight.
Then $\decs{\kappa^\-}{\kappa^\+}$ is a well-defined $\ell(\kappa^\-)$-decomposition
of an $\bigl(\ell(\kappa^\-)+1, \ell(\kappa^\+) \bigr)$-large partition.
\end{proposition}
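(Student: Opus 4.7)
By Lemma~\ref{lemma:stronglyMaximalSignedWeightsArePairsOfPartitions}, both $\kappa^-$ and $\kappa^+$ are partitions; writing $\ell^- = \ell(\kappa^-)$, the length condition in Remark~\ref{remark:ellDecompositionLarge} holds trivially. The plan is to reduce the proposition to the single numerical inequality
\[ \kappa^-_{\ell^-} \ge \ell(\kappa^+), \]
which by Remark~\ref{remark:ellDecompositionLarge} is equivalent to $\dec{\kappa^-}{\kappa^+}$ being an $\ell^-$-decomposition, after which the $(\ell^- + 1, \ell(\kappa^+))$-largeness of the resulting partition follows automatically.

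Let $\SM$ be the unique strongly maximal semistandard signed tableau family realising $\swtp{\kappa}$, of shape $\muS$ and size $R$, given by Lemma~\ref{lemma:stronglyMaximalSemistandardSignedTableauFamilyIsUnique}. Condition~(a) of Definition~\ref{defn:stronglyMaximalSignedWeight} says that each tableau of~$\SM$ agrees in its negative entries with~$t_{\ell^-}(\muS)$; summing signed weights over $\SM$ thus gives $\kappa^- = R\hskip1pt\omega_{\ell^-}(\muS)^-$. Applying Lemma~\ref{lemma:ellDecompositionGreatestSignedWeight} to~$\muS$ yields $\omega_{\ell^-}(\muS)^-_{\ell^-} \ge r$, where $r = \ell(\omega_{\ell^-}(\muS)^+)$, hence the lower bound $\kappa^-_{\ell^-} \ge R r$ on the left-hand side of the target inequality.

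It remains to prove $\ell(\kappa^+) \le R r$; since $R r \le \kappa^-_{\ell^-}$, this gives what we need. Writing $d = \ell(\kappa^+)$, which equals the greatest positive integer appearing as an entry of any tableau in~$\SM$, this is the combinatorial statement $d \le R r$. By condition~(b) of Definition~\ref{defn:stronglyMaximalSignedWeight} together with the analysis of the plethystic semistandard conditions preceding Remark~\ref{remark:otherOrder}, the tableaux in $\SM$ are pairwise distinct, so their positive subtableaux form $R$ distinct semistandard tableaux of the common skew shape $\mu/\sigma$. A direct shift construction---taking $\{t_0, t_0 + 1, \ldots, t_0 + (R-1)\}$, where $t_0 = t_{\ell^-}(\muS)^+$ is the greatest positive subtableau and $+k$ denotes the addition of $k$ to every entry---exhibits $R$ distinct semistandard tableaux of shape $\mu/\sigma$ with maximum entry precisely $r + R - 1 \le R r$. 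Attaching the common negative part of $t_{\ell^-}(\muS)$ produces a valid semistandard signed tableau family~$\T$ of the same shape, size and type as~$\SM$, by the very same check of the plethystic semistandard conditions; condition~(c) of Definition~\ref{defn:stronglyMaximalSignedWeight} applied with~$\T$ (or more precisely, with the maximal family dominating~$\T$ in the $\ell^-$-signed dominance order) then forces $d \le r + R - 1$ via its uniqueness clause, completing the proof.

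The hardest step will be converting condition~(c)---phrased as an inequality on partial sums $\sum_{i=1}^{c^+}\phi^+_i$ together with a uniqueness \emph{equality iff $\T = \SM$} statement---into the combinatorial bound $d \le r + R - 1$. The obstacle is that the shift family~$\T$ is not itself automatically maximal in the $\ell^-$-signed dominance order, so one must first pass to the maximal family dominating its weight and then verify that this family is a bona fide witness against any strongly maximal~$\SM$ with $\max \SM > r + R - 1$. I anticipate the technical core to be a dominance-increasing modification in the spirit of the Bender--Knuth argument used in the proof of Lemma~\ref{lemma:stronglyMaximalSignedWeightsArePairsOfPartitions}.
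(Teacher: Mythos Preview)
Your reduction is correct: everything comes down to showing $\kappa^-_{\ell^-} \ge \ell(\kappa^+)$, and your lower bound $\kappa^-_{\ell^-} \ge Rr$ (with $r = \ell(\omega_{\ell^-}(\muS)^+)$) from condition~(a) and Lemma~\ref{lemma:ellDecompositionGreatestSignedWeight} is fine. The target inequality $d := \ell(\kappa^+) \le r + R - 1$ that you isolate is exactly the one the paper proves. But your proposed route to it---the shift family together with condition~(c) of Definition~\ref{defn:stronglyMaximalSignedWeight}---does not deliver this bound, and I do not see how any Bender--Knuth modification would salvage it.

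The problem is structural. Condition~(c) compares the partial sums $\sum_{i=1}^{c^+}\phi^+_i$ and $\sum_{i=1}^{c^+}\kappa^+_i$ for a test family~$\T$ with $\max \T \le \max \SM$; it says $\SM$ wins on these partial sums. It says nothing that bounds $\max \SM$ from above. Concretely: if, hypothetically, $\SM$ had $d > r + R - 1$, then your shift family~$\T$ (or any maximal family dominating it) satisfies $\max \T \le r + R - 1 < d = \max \SM$, so condition~(c) applies and yields the strict inequality $\sum_{i=1}^{c^+}\phi^+_i < \sum_{i=1}^{c^+}\kappa^+_i$. This is perfectly consistent with $d$ being arbitrarily large; no contradiction arises. (There is also the minor issue that ``the maximal family dominating $\T$'' need not be unique, and passing to a maximal family can increase the maximum entry---but even granting both points, condition~(c) is simply the wrong tool here.)

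The paper's argument works in the opposite direction. Rather than building an external comparison family with small maximum entry, it uses the maximality of $\SM$ itself (Lemma~\ref{lemma:stronglyMaximalImpliesMaximal}) to exhibit, \emph{inside} $\SM$, a chain of distinct tableaux $t_{(r)}, t_{(r+1)}, \ldots, t_{(d)}$ with $t_{(k)}$ containing $k$ as an entry: starting from $t_{(r)} = t_{\ell^-}(\muS)$, each successive tableau is obtained by incrementing a single box, and maximality forces these to lie in $\SM$. This directly gives $R \ge d - r + 1$, whence $\kappa^-_{\ell^-} = R\,\omega_{\ell^-}(\muS)^-_{\ell^-} \ge Rr \ge r + (R-1) \ge r + (d - r) = d$. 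The chain-inside-$\SM$ idea is the missing ingredient in your argument.
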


\begin{proof}
Set $\ell^\- = \ell(\kappa^\-)$.
If $\ell^\- = 0$ then $\kappa^\- =\varnothing$ and the result holds trivially.
Similarly if there are no positive entries then $\kappa^\+ = \varnothing$ and the result is obvious.
Therefore we may assume that $\kappa^\+ \not= \varnothing$. As we noted before this proposition, by Lemma~\ref{lemma:stronglyMaximalSignedWeightsArePairsOfPartitions},
$\kappa^\-$ and~$\kappa^\+$ are partitions.

Let $\muS$ be the shape and let $R$ be the size of $\swtp{\kappa}$.
By~(a) in Definition~\ref{defn:stronglyMaximalSignedWeight}
we have $\kappa^\- = R\omega_{\ell^\-}(\muS)^\-$.
Let $d = \ell\bigl(\omega_{\ell^\-}(\muS)^\+\bigr)$ and let $e = \ell(\kappa^\+)$;
note that $d$ is the greatest positive entry in $t_{\ell^\-}(\muS)$
and $e$ is the greatest positive entry in the unique tableau family
 of shape $\muS$, size~$R$ and signed weight $\swtp{\kappa}$.
 Denote this tableau family $\T$.
 
By maximality (Lemma~\ref{lemma:stronglyMaximalImpliesMaximal}),
$t_{\ell^-}(\muS)$ is an element of $\T$. Again by maximality,
there is a tableau in $\T$ containing $d+1$ obtained 
by incrementing the entry in a single box of $t_{\ell^-}(\muS)$. Repeating
this argument, we see that there exist 
distinct tableaux
$t_{(d)}, t_{(d+1)}, \ldots, t_{(e)} \in \T$ such that, for each $k \in \{d+1,\ldots, e\}$,
the tableau $t_{(k)}$ has $k$ as an entry.
(For instance this can be seen in Example~\ref{ex:LawOkitaniSignedWeightsAreStronglyMaximal}(ii),
by considering the entries in the box $(1,m)$.)
Therefore $R \ge e-d+1$. By Lemma~\ref{lemma:ellDecompositionGreatestSignedWeight},
every tableau in~$\T$ agrees with $t_{\ell^\-}(\muS)$ in its negative
entries. Hence the number of entries of tableaux in~$\T$ 
equal to $\ell^\-$  is $R \omega_{\ell^\-}(\muS)_{\ell^\-}$.
Using this for the first equality below, and recalling that by definition $d = \ell\bigl(\omega_{\ell^\-}(\muS)^\+\bigr)$, we obtain
\[ \kappa^\-_{\ell^\-} \hskip-1pt=\hskip-1pt R \omega_{\ell^\-}(\muS)_{\ell^\-} 
\hskip-1pt\ge\hskip-1pt R\ell(\omega_{\ell^\-}(\muS)^\+) \hskip-1pt=\hskip-1pt Rd\ge
d + (R-1) \ge d + (e-d) \hskip-1pt=\hskip-1pt e. \]
Hence $\kappa^\-_{\ell^-} \ge \ell(\kappa^\+)$ and by Remark~\ref{remark:ellDecompositionLarge},
$\decs{\kappa^\-}{\kappa^\+}$ is a well-defined $\ell(\kappa^\-)$-decomposition
of an $\bigl(\ell(\kappa^\-)+1, \ell(\kappa^\+) \bigr)$-large partition.
\end{proof}

\subsection{The $\ell^\-$-twisted dominance order}
\label{subsec:signedDominance}
The sets used in applications of the critical Signed Weight Lemma (see Lemma~\ref{lemma:SWL} below)
are subsets of intervals for 
a partial order on partitions
defined using the $\ell^\-$-decomposition in Definition~\ref{defn:ellDecomposition}
and the $\ell^\-$-signed dominance order in 
Definition~\ref{defn:ellSignedDominanceOrder}.

\begin{definition}[$\ell^\-$-twisted dominance order]\label{defn:ellTwistedDominanceOrder}
Fix $\ell^\- \in \N_0$. 
The $\ell^\-$-\emph{twisted dominance order} is the partial order defined 
on partitions of the same size by $\pi \unlhddot \sigma$ if and only if
$\decs{\pi^\-}{\pi^\+} \unlhd \decs{\sigma^\-}{\sigma^\+}$,
where $\unlhd$ is the $\ell^\-$-signed dominance order on the set 
$\mathcal{W}_{\ell^\-} \times \mathcal{W}$.
\end{definition}

The value of $\ell^\-$ will always be clear from context.
An example is given following Remark~\ref{remark:twistedDominanceOrderGeneralizesDominanceOrder} below.
In practice we shall often use the following lemma to work with the $\ell^\-$-twisted dominance order.
Recall that $\unLHD$ denotes the dominance order on partitions of arbitrary size.

\begin{lemma}[Characterization of the $\ell^\-$-twisted dominance order]\label{lemma:twistedDominanceOrderOldDefinition}
Let $\pi$ and $\sigma$ be partitions of the same size. 
We have $\pi \unlhddot \sigma$ in the $\ell^\-$-twisted 
dominance order if and only if both
\begin{itemize}
\item[(a)] $\pi^\- \unLHD\, \sigma^\-$ and
\item[(b)] $|\pi^\+| \ge |\sigma^\+|$ and $\pi^\+ \unlhd
\sigma^\+ + (|\pi^\+| - |\sigma^\+|)$.
\end{itemize}
\end{lemma}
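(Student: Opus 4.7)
The plan is to unwind the definitions. By Definition~\ref{defn:ellTwistedDominanceOrder}, the statement $\pi \unlhddot \sigma$ is equivalent to $\dec{\pi^\-}{\pi^\+} \unlhd \dec{\sigma^\-}{\sigma^\+}$ in the $\ell^\-$-signed dominance order, which by Definition~\ref{defn:ellSignedDominanceOrder} unfolds as the dominance inequality
\[ (\pi^\-_1, \ldots, \pi^\-_{\ell^\-}, \pi^\+_1, \pi^\+_2, \ldots) \,\unlhd\, (\sigma^\-_1, \ldots, \sigma^\-_{\ell^\-}, \sigma^\+_1, \sigma^\+_2, \ldots). \]
Since $|\pi| = |\sigma|$, these two compositions have equal total sum, so the dominance condition is exactly the requirement that every partial sum on the left is bounded by the corresponding partial sum on the right. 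I would split these conditions into those indexed by $j \in \{1, \ldots, \ell^\-\}$ (from the negative part) and those indexed by $\ell^\- + k$ for $k \ge 1$ (from the positive part), and match each group to (a) and (b) respectively.

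For the negative group, since both $\pi^\-$ and $\sigma^\-$ have length at most $\ell^\-$ by Notation~\ref{defn:ellDecomposition}, the conditions $\sum_{i=1}^j \pi^\-_i \le \sum_{i=1}^j \sigma^\-_i$ for $1 \le j \le \ell^\-$ together with the vacuous conditions for $j > \ell^\-$ are exactly $\pi^\- \unLHD \sigma^\-$; this gives (a).

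For the positive group, setting $D = |\sigma^\-| - |\pi^\-|$, the $(\ell^\- + k)$-th inequality reads
\[ |\pi^\-| + \sum_{i=1}^{k} \pi^\+_i \,\le\, |\sigma^\-| + \sum_{i=1}^{k} \sigma^\+_i, \]
equivalently $\sum_{i=1}^k \pi^\+_i \le \sum_{i=1}^k \sigma^\+_i + D$. Taking $k = \ell^\-$ in the first group gives $|\pi^\-| \le |\sigma^\-|$, so $D \ge 0$; combined with $|\pi| = |\sigma|$ this yields $D = |\pi^\+| - |\sigma^\+| \ge 0$, which is the first half of (b). The remaining inequalities then read $\sum_{i=1}^k \pi^\+_i \le \sum_{i=1}^k \sigma^\+_i + (|\pi^\+| - |\sigma^\+|)$, which is precisely $\pi^\+ \unlhd \sigma^\+ + (|\pi^\+| - |\sigma^\+|)$, giving the second half of (b). The converse direction reverses these steps: (a) yields the first group of inequalities, and $D = |\pi^\+| - |\sigma^\+| \ge 0$ together with (b) yield the second group.

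The argument is entirely bookkeeping: the only observation that is not completely mechanical is recognising that the constant offset $|\pi^\+| - |\sigma^\+|$ in (b) arises naturally as the cumulative weight absorbed by the negative coordinates, and this is forced by $|\pi| = |\sigma|$. There is no real obstacle.
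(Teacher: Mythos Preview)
Your proof is correct and follows essentially the same approach as the paper's: both unwind the definitions, use $|\pi|=|\sigma|$ to rewrite the offset $|\sigma^\-|-|\pi^\-|$ as $|\pi^\+|-|\sigma^\+|$, and identify the partial-sum inequalities with conditions (a) and (b). The paper compresses this into a single displayed identity and a sentence, but the underlying bookkeeping is identical.
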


\begin{proof}
From the equation 
\[ \sum_{i=1}^{\ell^\-} \sigma_i^\- - \sum_{i=1}^{\ell^\-} \pi_i^\-  = |\sigma^\-| - |\pi^\-| =
(|\sigma| - |\sigma^\+|) - (|\pi| - |\pi^\+|) = |\pi^\+| - |\sigma^\+| \]
we have 
$\sum_{i=1}^{\ell^\-} \pi_i^\- + \sum_{i=1}^k \pi^\+_i \le 
\sum_{i=1}^{\ell^\-} \sigma_i^\- + \sum_{i=1}^k \sigma^\+_i$
if and only if $\sum_{i=1}^k \pi^\+_i \le (|\pi^\+| - |\sigma^\+|) + \sum_{i=1}^k \sigma^\+_i$.
The lemma now follows from the definition of the dominance order and the $\ell^\-$-signed dominance order.
\end{proof}

\begin{remark}\label{remark:twistedDominanceOrderGeneralizesDominanceOrder}
It is obvious from Definition~\ref{defn:ellTwistedDominanceOrder} that
the $0$-twisted dominance order is the ordinary dominance order.
If $\ell^\- \ge p$ then the $\ell^\-$-twisted dominance order on partitions of size $p$
is the reverse of the usual dominance order. Whenever
$\ell^\- \ge 1$, the greatest partition in the $\ell^\-$-signed dominance
order is $(1^p)$.
\end{remark} 

\begin{example}\label{ex:62upset}
In the $1$-twisted dominance order on partitions of $8$, 
the negative component $\sigma^\-$ of each partition $\sigma$ has exactly one part.
Let \hbox{$\sigma^\- = (b)$} where $1 \le b \le 8$. By Lemma~\ref{lemma:twistedDominanceOrderOldDefinition},
$\sigma \unrhddot (6,2)$ if and only if $(b) \,\unRHD (2)$ and
$\sigma^\+ + (6- |\sigma^\+|) \unrhd (5,1)$. 
If $b \le 6$ then 
$\sigma^\+ \in \{ (8-b), (7-b,1) \}$; if $b = 7$ then $\sigma^\+ = (1)$
and if $b =8$ then $\sigma^\+ = \varnothing$. The up-set of $(6,2)$ is
therefore as claimed in~\eqref{eq:62upset} in the overview in \S\ref{sec:overview}.
It is shown in the Hasse diagram in Figure~\ref{fig:62upset}. 
See~\S\ref{subsec:cutUpsetForLawOkitani} for a continuation to the `cut' up-set used 
in \S\ref{subsec:cutUpsetsAreStable}.
\end{example}

\begin{figure}[ht!]
\hspace*{-0.4in}\scalebox{0.9}{\begin{tikzpicture}[x=0.5cm,y=0.4cm]
\newcommand{\leftup}[2]{\draw[thick] (#1,#2)--(#1-1,#2+1.4);}
\newcommand{\rightup}[2]{\draw[thick] (#1,#2)--(#1+1,#2+1);}

\node at (0,0) {$\ss(6,2) \atop \decs{(2)}{(5,1)}$}; 
\leftup{-0.5}{1}
\rightup{0.5}{1}

\begin{scope}[xshift=-0.125cm]
\node at (-2,3.4) {$\ss(5,2,1) \atop \decs{(3)}{(4,1)}$}; 
\node at (2,3) {$\ss(7,1) \atop \decs{(2)}{(6)}$}; 
\leftup{-2}{4.4}
\rightup{-1}{4.4}
\leftup{2}{4}
\end{scope}

\begin{scope}[xshift=-0.5cm]
\node at (-3,6.8) {$\ss(4,2,1,1) \atop \decs{(4)}{(3,1)}$}; 
\node at (1,6.4) {$\ss(6,1,1) \atop \decs{(3)}{(5)}$}; 
\leftup{-3}{7.8}
\rightup{-2}{7.8}
\leftup{1}{7.4}
\end{scope}

\begin{scope}[xshift=-0.75cm]
\node at (-4,10.2) {$\ss(3,2,1,1,1) \atop \decs{(5)}{(2,1)}$}; 
\node at (0,9.8) {$\ss(5,1,1,1) \atop \decs{(4)}{(4)}$}; 
\leftup{-4}{11.2}
\rightup{-3}{11.2}
\leftup{0}{10.8}
\end{scope}

\begin{scope}[xshift=-1cm]
\node at (-5,13.6) {$\ss(2,2,1,1,1,1) \atop \decs{(6)}{(1,1)}$}; 
\node at (-1,13.2) {$\ss(4,1,1,1,1) \atop \decs{(5)}{(3)}$}; 
\rightup{-4}{14.6}
\leftup{-1}{14.2}
\end{scope}

\begin{scope}[xshift=-1.25cm]
\node at (-2,16.6) {$\ss(3,1,1,1,1,1) \atop \decs{(6)}{(2)}$}; 
\draw[thick] (-2,17.6)--(-2,18.8);

\node at (-2,19.8) {$\ss(2,1,1,1,1,1,1) \atop \decs{(7)}{(1)}$}; 
\draw[thick] (-2,20.8)--(-2,22);

\node at (-2,23) {$\ss(1,1,1,1,1,1,1,1) \atop \decs{(8)}{\varnothing}$}; 
\end{scope}

\node at (12,14){
$\setcounter{MaxMatrixCols}{12}
\begin{pNiceMatrix}[first-row,first-col,nullify-dots] &
\rb{(1^8)} & \rb{(2,1^6)} & \rb{(3,1^5)} & \rb{(2,2,1^4)} & \rb{(4,1^4)} & \rb{(3,2,1,1,1)}  &
\rb{(5,1,1,1)} & \rb{(4,2,1,1)} & \rb{(6,1,1)} & \rb{(5,2,1)} & \rb{(7,1)} & \rb{(6,2)}  \\
\decss{(8)}{\varnothing} & 1 & \cdot & \cdot & \cdot & \cdot & \cdot & \cdot & \cdot & \cdot &\cdot & \cdot & \cdot \\
\decss{(7)}{(1)}   & 1 & 1 & \cdot & \cdot & \cdot & \cdot & \cdot & \cdot & \cdot & \cdot &\cdot & \cdot  \\
\decss{(6)}{(2)}   &  0 & 1 & 1 & \cdot & \cdot & \cdot & \cdot & \cdot & \cdot & \cdot & \cdot &\cdot   \\  
\decss{(6)}{(1,1)} &  1 & 2 & 1 & 1 & \cdot & \cdot & \cdot & \cdot & \cdot & \cdot & \cdot & \cdot  \\  
\decss{(5)}{(3)}   &  0 & 0 & 1 & 0 & 1 & \cdot & \cdot & \cdot & \cdot & \cdot & \cdot & \cdot   \\  
\decss{(5)}{(2,1)} &  0 & 1 & 2 & 1 & 1 & 1 & \cdot & \cdot & \cdot & \cdot & \cdot & \cdot  \\  
\decss{(4)}{(4)}   &  0 & 0 & 0 & 0 & 1 & 0 & 1 & \cdot & \cdot & \cdot & \cdot & \cdot   \\  
\decss{(4)}{(3,1)} &  0 & 0 & 1 & 2 & 0 & 1 & 1 & 1 & \cdot & \cdot & \cdot & \cdot  \\  
\decss{(3)}{(5)}   &  0 & 0 & 0 & 0 & 0 & 0 & 1 & 0 & 1 & \cdot & \cdot & \cdot    \\
\decss{(3)}{(4,1)} &  0 & 0 & 0 & 0 & 1 & 0 & 2 & 1 & 1 & 1 & \cdot & \cdot      \\
\decss{(2)}{(6)}   &  0 & 0 & 0 & 0 & 0 & 0 & 0 & 0 & 1 & 0 & 1 & \cdot  \\
\decss{(2)}{(5,1)} &  0 & 0 & 0 & 0 & 0 & 0 & 1 & 0 & 2 & 1 & 1 & 1      
\end{pNiceMatrix}$};
\end{tikzpicture}}

\caption{Hasse diagram of the up-set $(6,2)^\unlhddot$ in the $1$-twisted dominance order on $\Par(8)$,
as seen in \eqref{eq:62upset} in the overview of the proof in \S\ref{sec:overview}.
By Remark~\ref{remark:twistedDominanceOrderGeneralizesDominanceOrder},
this up-set is also the interval $[(6,2),(1^8)]_\unlhddotS$.
The total order $\ledot$ refining $\unlhddot$ is indicated by vertical height.
The matrix with entries $|\SSYT(\sigma)_{(\pi^\minus,\pi^\+)}|$ for $\pi$, $\sigma \in (6,2)^\unlhddotS$ 
relevant to condition (b) in the definition of a stable partition
system (Definition~\ref{defn:stablePartitionSystem})
is shown to the right, with row and column labels ordered by the total order in 
Definition~\ref{defn:twistedTotalOrder}. 
It is lower unitriangular by Lemma~\ref{lemma:twistedKostkaMatrix}. We use $\cdot$ to show a zero 
implied by this lemma.\label{fig:62upset}
}
\end{figure}

For a further example of the twisted dominance order see
Example~\ref{ex:444support}.
In practice, we find the following
informal interpretation, using the standing notation shown in
Figure~\ref{fig:ellDecomposition} is helpful: 
\emph{the partitions larger than $\pi$ in the $\ell^\-$-signed
dominance order are exactly those  obtained
from $\pi$ by a combination of box moves that are either:
down and left within~$\pi^\-$,
up and right within~$\pi^\+$,
or from $\pi^\+$ to~$\pi^\-$}. The final possibility is responsible
for the equalization of sizes in condition (b) in Lemma~\ref{lemma:twistedDominanceOrderOldDefinition}.

In particular we have the analogue of the well-known property of
the normal dominance order that $\alpha \unlhd \beta$ implies $\ell(\alpha) \ge \ell(\beta)$.

\begin{lemma}\label{lemma:signedDominancePositiveLength}
Let $\alpha$ and $\beta$ be partitions.
If $\alpha \unlhddot \beta$
in the $\ell^\-$-twisted dominance order then $\ell(\alpha^\+) \ge \ell(\beta^\+)$.
\end{lemma}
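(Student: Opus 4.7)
The plan is to deduce this directly from the characterization of the $\ell^\-$-twisted dominance order given in Lemma~\ref{lemma:twistedDominanceOrderOldDefinition}, together with the standard fact that the dominance order on partitions of a fixed size reverses the length ordering.

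First I would recall that for partitions $\gamma$ and $\delta$ of the same size, $\gamma \unlhd \delta$ implies $\ell(\gamma) \ge \ell(\delta)$. This is immediate: taking $k = \ell(\gamma)$ in the inequalities $\sum_{i=1}^k \gamma_i \le \sum_{i=1}^k \delta_i$, the left-hand side equals $|\gamma| = |\delta|$, forcing $\delta_i = 0$ for $i > \ell(\gamma)$.

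Next I would apply Lemma~\ref{lemma:twistedDominanceOrderOldDefinition}(b) to $\alpha \unlhddot \beta$, which gives $|\alpha^\+| \ge |\beta^\+|$ and
\[ \alpha^\+ \unlhd \beta^\+ + \bigl(|\alpha^\+| - |\beta^\+|\bigr). \]
Both sides are partitions of $|\alpha^\+|$, so by the preliminary fact
\[ \ell(\alpha^\+) \ge \ell\bigl(\beta^\+ + (|\alpha^\+| - |\beta^\+|)\bigr). \]
The final step is to observe that adding the one-part partition $(|\alpha^\+| - |\beta^\+|)$ to $\beta^\+$ only increases its first part (or, if $\beta^\+ = \varnothing$ and $|\alpha^\+| > |\beta^\+|$, creates a single new part), and hence does not decrease the length: $\ell(\beta^\+ + (|\alpha^\+| - |\beta^\+|)) \ge \ell(\beta^\+)$. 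Combining this with the displayed inequality yields $\ell(\alpha^\+) \ge \ell(\beta^\+)$, as required.

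There is no real obstacle here; the lemma is essentially a bookkeeping consequence of Lemma~\ref{lemma:twistedDominanceOrderOldDefinition}. The only mild subtlety is the equalisation of sizes needed before invoking the classical length-reversal property of dominance, but this is already built into the characterisation of the $\ell^\-$-twisted dominance order in Lemma~\ref{lemma:twistedDominanceOrderOldDefinition}(b).
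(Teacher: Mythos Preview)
Your proof is correct and follows essentially the same approach as the paper: both apply Lemma~\ref{lemma:twistedDominanceOrderOldDefinition}(b) to obtain $\alpha^\+ \unlhd \beta^\+ + (|\alpha^\+| - |\beta^\+|)$ and then invoke the standard fact that dominance between partitions of the same size reverses the length ordering. You have simply made explicit the intermediate inequality $\ell\bigl(\beta^\+ + (|\alpha^\+| - |\beta^\+|)\bigr) \ge \ell(\beta^\+)$, which the paper leaves to the reader.
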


\begin{proof}
By Lemma~\ref{lemma:twistedDominanceOrderOldDefinition}(b) we have
$\alpha^\+ \unlhd \beta^\+ + (|\alpha^\+| - |\beta^\+|)$. Hence by the property of 
the dominance order just mentioned,
$\ell(\alpha^\+) 
\ge \ell(\beta^\+)$.
\end{proof}

\subsection{Twisted symmetric functions and twisted
Kostka numbers}\label{subsec:symmetricFunctionsForSignedWeightLemma}

\begin{definition}[$\ell^\-$-twisted symmetric function]\label{defn:ellTwistedSymmetricFunction}
Fix $\ell^\- \in \N_0$. 
We define the $\ell^\-$-\emph{twisted symmetric function} $g_\pi$ for a partition $\pi$ by
$g_\pi = e_{\pi^\-}h_{\pi^\+}$.
\end{definition}

For example if $\ell^\- = 0$ then $g_\pi = h_{\pi^\+}$, or equivalently, $g_\pi = h_\pi$, and if $\ell^\- \ge a(\pi)$ then
$g_\pi = e_{\pi^\-}$, or equivalently, $g_\pi = e_{\pi'}$. Thus as claimed at the start
of this section, the $\ell^\-$-twisted symmetric functions interpolate between the homogeneous
and elementary symmetric functions.

The following lemma is vital when
verifying condition~(i) in the Signed Weight Lemma (Lemma~\ref{lemma:SWL}).
Example~\ref{ex:444support} following illustrates the iterative part of the proof.
We require Young's rule and Pieri's rule: see references
in \S\ref{subsec:symmetricFunctionsBasics}.
The support of a symmetric function is defined in Definition~\ref{defn:supp}.

\begin{lemma}[Twisted Kostka matrix]\label{lemma:twistedKostkaMatrix}
Let $\pi \in \Par(n)$ have $\ell^\-$-decomposi\opthyphen{}tion $\decs{\pi^\-}{\pi^\+}$
where $\ell(\pi^\-) = \ell^\-$.
If $\sigma \in \supp(e_{\pi^\-}h_{\pi^\+})$ 
then
$\sigma \unrhddot \pi$. Moreover we have \hbox{$\langle e_{\pi^\-}h_{\pi^\+}, s_\pi \rangle = 1$}.
\end{lemma}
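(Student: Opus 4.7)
The plan is to translate both claims into the combinatorics of semistandard signed tableaux via the Twisted Kostka Numbers lemma (Lemma~\ref{lemma:twistedKostkaNumbers}): since $\langle s_\sigma, e_{\pi^\-} h_{\pi^\+}\rangle = |\SSYT(\sigma)_{(\pi^\-,\pi^\+)}|$, the claim $\sigma \in \supp(e_{\pi^\-}h_{\pi^\+})$ means that there exists $t \in \SSYT(\sigma)_{(\pi^\-,\pi^\+)}$, and the multiplicity claim reduces to showing that $|\SSYT(\pi)_{(\pi^\-,\pi^\+)}| = 1$. The workhorse of the argument is the following observation, which I would isolate at the start: in any semistandard signed tableau, an entry equal to $-i$ in position $(r,c)$ forces the $c-1$ entries to its left in row $r$ to be distinct negatives strictly less than $-i$ in the signed order (distinctness comes from the vertical strip condition for equal negatives), hence drawn from $\{-1,\ldots,-(i-1)\}$; this gives $c \le i$. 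In particular every $-i$ lies in one of the first $i$ columns, and no negative entry lies in any column with index exceeding~$\ell^\-$.

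For the first claim I would use the characterization of $\unlhddot$ from Lemma~\ref{lemma:twistedDominanceOrderOldDefinition}, which demands both (a)~$\pi^\- \unLHD \sigma^\-$ and (b)~$|\pi^\+| \ge |\sigma^\+|$ together with $\pi^\+ \unlhd \sigma^\+ + (|\pi^\+|-|\sigma^\+|)$. For (a), fix $j \le \ell^\-$; by the observation above, all entries of $t$ equal to $-1,\ldots,-j$ (totalling $\pi'_1+\cdots+\pi'_j$) lie in the first $j$ columns of $t$, which have at most $\sigma'_1+\cdots+\sigma'_j$ boxes. For (b), rewrite the desired inequality as $|\pi^\-| + \sum_{i=1}^k \pi^\+_i \le |\sigma^\-| + \sum_{i=1}^k \sigma^\+_i$ (using $|\pi|=|\sigma|$); the right-hand side is precisely the number of boxes of $[\sigma]$ in the L-shape consisting of the first $\ell^\-$ columns together with the first $k$ rows. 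It therefore suffices to show that every entry of $t$ that is $\le k$ in the signed order lies in this L-shape. Equivalently, any entry $(r,c)$ with $r>k$ and $c>\ell^\-$ must be a positive integer exceeding $k$: it is positive by the key observation (negative entries cannot appear in column $c>\ell^\-$), and then in column $c$ every entry is positive and, since equal positives form a horizontal strip, strictly increasing; the $r$ positive entries in positions $(1,c),\ldots,(r,c)$ are thus distinct positives bounded by $(r,c)$, forcing $(r,c)\ge r>k$.

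For the moreover clause I would argue by iterated use of the same key observation. Setting $t \in \SSYT(\pi)_{(\pi^\-,\pi^\+)}$, the $\pi'_1$ entries equal to $-1$ must all lie in column $1$ of $[\pi]$, which has exactly $\pi'_1$ boxes; so column $1$ is forced. Inductively, once columns $1,\ldots,i-1$ are filled with $-1,\ldots,-(i-1)$, the $\pi'_i$ entries equal to $-i$ (which can only appear in columns $1,\ldots,i$) must all lie in column $i$, filling it. After $\ell^\-$ such steps the negative part of $t$ coincides with that of $t_{\ell^\-}(\pi)$, and the positive entries fill the skew shape $\pi / \pi^{-\prime}$, whose row lengths are $\pi^\+_1,\pi^\+_2,\ldots$. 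Since the unique semistandard filling of a partition-shaped (or right-translated partition-shaped) diagram by its own weight puts $i$ in every box of row~$i$, uniqueness follows and $t = t_{\ell^\-}(\pi)$, completing the proof. The main obstacle is psychological rather than technical: once one sees that the column-restriction lemma for negative entries is the right pivot, both parts reduce to straightforward counting in the L-shape.
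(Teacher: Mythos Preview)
Your proof is correct and takes a genuinely different route from the paper's. The paper argues directly on the symmetric function side: it expands $e_{\pi^\-}$ by Pieri's rule to get summands $s_\beta$ with $\beta^\- \unrhd \pi^\-$, then iteratively applies Young's rule for each factor $h_{\pi^\+_i}$, tracking at each step which boxes land in $[\gamma^{-\prime}]$ and which land outside, to verify the two conditions of Lemma~\ref{lemma:twistedDominanceOrderOldDefinition}. Uniqueness for $\sigma=\pi$ comes from observing that the iterative sequence is forced. Your approach instead invokes Lemma~\ref{lemma:twistedKostkaNumbers} to reduce everything to the existence and uniqueness of a single tableau in $\SSYT(\sigma)_{(\pi^\-,\pi^\+)}$, and then pivots on the column bound ``$-i$ lies in column $\le i$''. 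This is cleaner combinatorially: condition~(a) becomes a one-line box count, condition~(b) becomes an L-shape containment, and the uniqueness argument identifies the single tableau explicitly as the greatest tableau $t_{\ell^\-}(\pi)$ of Definition~\ref{defn:greatestSignedTableau}, which is a pleasant bonus connection. The trade-off is that your argument depends on Lemma~\ref{lemma:twistedKostkaNumbers}, whereas the paper's Pieri/Young argument is self-contained within the symmetric-function multiplication rules. One small point you leave implicit: the size inequality $|\pi^\+|\ge|\sigma^\+|$ in condition~(b) follows from your part~(a) at $j=\ell^\-$, since $|\pi^\-|\le|\sigma^\-|$ and $|\pi|=|\sigma|$.
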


\begin{proof}
We describe the summands of $e_{\pi^\-}h_{\pi^\+}$ combinatorially.
By Pieri's rule, if $s_\beta \in \supp(e_{\pi^\-})$ then $\beta \unlhd \pi^\minusprime$,
or equivalently, $\beta' \unrhd \pi^\-$. Since $\beta'$ 
has at most $\ell^\-$ parts, we have 
$\beta^\- = \beta'$ and hence $\beta^\- \unrhd \pi^\-$.
Set $k = \ell(\pi^\+)$. The product $s_\beta h_{\pi^\+}$
may be computed by repeated applications of Young's rule: starting with $\gamma(0) = \beta$,
let $s_{\gamma(i+1)}$ be a chosen Schur function summand of \smash{$s_{\gamma(i)} h_{\pi^\+_i}$} for
each $i$ such that $0 \le i < k$.
To find the possible $\gamma(i+1)$,  we fix $b_i^\-$ and $b_i^\+ \in \N_0$ with
$b_i^\- + b_i^\+ = \pi_i^\+$, then 
\begin{itemize}
\item add a horizontal strip of length~$b_i^\-$ to~\smash{$[\gamma(i)^{\minusprime}]$} to obtain $[\gamma(i+1)^{\minusprime}]$;
\item add a horizontal strip of length~$b_i^\+$ to~\smash{$[\gamma(i)] \backslash [\gamma(i+1)^{\minusprime}]$}.
\end{itemize}
Let $\sigma = \gamma(k)$ be 
the Schur function obtained after iteratively applying this procedure
to all parts of $\pi^\+$.
Since $\beta^\- \,\unrhd\, \pi^\-$, and subsequent steps add boxes to each 
$[\gamma(i)^\minusprime]$,
we have $\sigma^\- \unRHD \pi^\-$ 
and condition~(a) in Lemma~\ref{lemma:twistedDominanceOrderOldDefinition} holds.
The horizontal additions to each~successive~$[\gamma(i)^\minusprime]$ in this sequence
used in 
total $|\pi^\+| - |\sigma^\+|$ boxes. 
Moreover, at step $i$, the $b_i^\+$ boxes added to $[\gamma(i)] \backslash [\gamma(i)^{\minusprime}]$ 
 lie in rows $1$ up to $i$
of $[\gamma(i)] \backslash [\gamma(i)^{\minusprime}]$. It follows that
$\sigma^\+$ satisfies 
$\sigma^\+ + (|\pi^\+| - |\sigma^\+|) \,\unrhd\, \pi^\+$. 
This gives~(b) in Lemma~\ref{lemma:twistedDominanceOrderOldDefinition}. Hence, by this
lemma,  $\sigma \unrhddot \pi$.
Finally, if $\sigma = \pi$ then $\beta = \pi^\-$ 
and $\gamma(i) = (\pi^\+_1, \ldots, \pi^\+_i)$ for each $i$.
Since the sequence $\gamma(0), \ldots, \gamma(k)$ is uniquely determined,
we have $\langle e_{\pi^\-}h_{\pi^\+}, s_\pi \rangle = 1$, as required.
\end{proof}

\begin{example}\label{ex:444support}
Take $\ell^\- = 2$
and let $\pi = (4,4,4)$ with $2$-decomposition 
\[ \decs{\pi^\-}{\pi^\+}
= \dec{(3,3)}{(2,2,2)}. \]
The Schur function summands of $e_{\pi^\-}$ are all $s_\beta$ such that $\beta \unlhd (3,3)'$. 
For this example we take $\beta = (2,2,1,1)$.
The partition~$\pi^\+$ specifies  three  Young's rule additions 
of two boxes \raisebox{-1.5pt}{\scalebox{0.8}{\yng(2)}}\hskip2pt. 
The sequence of partitions $\gamma(0)$, $\gamma(1)$, $\gamma(2)$, $\gamma(3)$ 
in the proof is, for one particular
choice of Young's rule additions, $(2,2,1,1)$, $(4,2,1,1)$, 
$(4,3,2,1)$, $(5,3,3,1)$. The final partition
$\sigma = \gamma(3)$ has $2$-decomposition $\dec{(4,3)}{(3,1,1)}$.
\newcommand{\gb}{\youngBox{0.5}{grey}}
\[ \pyoungScaled{0.5cm}{0.5cm}{0.95}{ {{\gb, \gb}, {\gb, \gb}, {\gb}, {\gb}} } 
		\spy{30pt}{$\longmapsto$}
   \pyoungScaled{0.5cm}{0.5cm}{0.95}{ {{\gb, \gb, 1 , 1 }, {\gb, \gb}, {\gb}, {\gb}} } 
  		\spy{30pt}{$\longmapsto$}
   \pyoungScaled{0.5cm}{0.5cm}{0.95}{ {{\gb, \gb, 1 , 1 }, {\gb, \gb, 2 }, {\gb, 2 }, {\gb}} } 
  		\spy{30pt}{$\longmapsto$}		
   \pyoungScaled{0.5cm}{0.5cm}{0.95}{ {{\gb, \gb, 1 , 1, 3}, {\gb, \gb, 2 }, {\gb, 2, 3}, {\gb}} } 
\]
At step $2$ we added one box to $[\gamma(1)^{-'}]$ and one box to $[\gamma(1)^\+]$,
taking $b_2^\- = b_2^\+ = 1$; in the other two steps $b_1^\- = b_3^\- = 0$.
As expected, conditions~(a) and (b) 
in Lemma~\ref{lemma:twistedDominanceOrderOldDefinition} hold,
with $(3,3) \unLHD (4,3)$ and $(2,2,2) \unlhd (3,1,1) + (1)$.
Moreover, 
\[ \dec{(4,3)}{(3,1,1)} \decMap  (5,3,3,1)  \,\unrhddot\, \pi \decMap \dec{(3,3)}{(2,2,2)} \]
as expected from the conclusion of Lemma~\ref{lemma:twistedKostkaMatrix}.
If instead we had chosen $\beta = (1^6)$ then
a possible sequence ending with $\sigma = (4,3,2,1,1,1)$ is
\[ \pyoungScaled{0.5cm}{0.5cm}{0.95}{ {{\gb}, {\gb}, {\gb}, {\gb}, {\gb}, {\gb}} } 
		\spy{50pt}{$\longmapsto$}
   \pyoungScaled{0.5cm}{0.5cm}{0.95}{ {{\gb, 1 , 1 }, {\gb}, {\gb}, {\gb}, {\gb}, {\gb}} } 
  		\spy{50pt}{$\!\!\longmapsto$}
   \pyoungScaled{0.5cm}{0.5cm}{0.95}{ {{\gb, 1 , 1, 2 }, {\gb, 2 }, {\gb}, {\gb}, {\gb} , {\gb}} } 
  		\spy{50pt}{$\!\!\longmapsto$}		
   \pyoungScaled{0.5cm}{0.5cm}{0.95}{ {{\gb, 1 , 1, 2}, {\gb, 2, 3 }, {\gb, 3}, {\gb}, {\gb}, {\gb}} } 
\]
in which $b_1^\- = b_2^\- = b_3^\- = 1$ (since column $2$ grows by one box in each addition)
and correspondingly 
$|\pi^\+| - |\sigma^\+| = 3$.
Again conditions (a) and (b) 
in Lemma~\ref{lemma:twistedDominanceOrderOldDefinition} hold, 
now with $(3,3) \unLHD (6,3)$ and $(2,2,2) \unlhd (2,1) + (3)$
and again the conclusion of Lemma~\ref{lemma:twistedKostkaMatrix} holds
since $\dec{(6,3)}{(2,1)} \decMap (4,3,2,1,1,1) \unrhddot \pi$.
\end{example}

Figure~\ref{fig:62upset} has an example of the matrix $\langle e_{\pi^\-}h_{\pi^\+}, s_\sigma \rangle$
in Lemma~\ref{lemma:twistedKostkaMatrix}.
It is an instructive exercise to show that the many zeros in this matrix correspond to pairs
of partitions incomparable in the $1$-twisted dominance order.
For a further example of the conclusion of 
Lemma~\ref{lemma:twistedKostkaMatrix},
calculation shows
that, cut to partitions of length at most $3$, 
$e_{(3,3)}h_{(3,3)}$ and $e_{(3,3)}h_{(4,1,1)}$  have supports 
\begin{align*}
& \{(5,5,2), (6,4,2), (7,3,2), (8,2,2)\}, \\
& \{ (6,3,3), (6,4,2), (7,3,2), (8,2,2) \}\end{align*}
respectively,
corresponding to the part of the up-sets seen in Figure~\ref{fig:Hasse444and6442}
lying above $(5,5,2)$ and $(6,3,3)$, respectively.

\subsection{Twisted total order}\label{subsec:twistedTotalOrder}
While not logically essential, it is useful to have a total order that makes
the twisted Kostka matrix 
seen in Figure~\ref{fig:62upset} lower-triangular.
These matrices are used in (ii) in the critical Signed Weight Lemma (Lemma~\ref{lemma:SWL}).

\begin{definition}
\label{defn:twistedTotalOrder}
Fix $\ell^\- \in \N_0$. We define the $\ell^\-$-\emph{twisted total order} by
$\pi \ledot \sigma$ if and only if $(\pi^\-, \pi^\+) \le (\sigma^\-, \sigma^\+)$
where $\le$ is the lexicographic order on compositions.
\end{definition} 

Equivalently, $\pi \ledot \sigma$ if and only if
$\pi^\- < \sigma^\-$ or $\pi^\- = \sigma^\-$ and
$\pi^\+ \le \sigma^\+$, where $<$ and $\le$ are the 
lexicographic order on partitions (now possibly of different sizes).
It is easily seen that
$\ledot$ is a total order refining the $\ell^\-$-twisted dominance order.
For example, in the total order on compositions we have
\[ \dec{(3,3)}{(3,2,1)} < \dec{(3,3)}{(3,3)} < \dec{(3,3)}{(4,1,1)} < 
\dec{(3,3)}{(4,2)} \]
and hence in the $2$-twisted total order we have
$(5,4,3) \ltdot (5,5,2) \ltdot (6,3,3)$ $\ltdot (6,4,2)$. (See Figure~\ref{fig:Hasse444and6442}
for the relevant Hasse diagram.)
Moreover 
\[ \dec{(3,3)}{(4,2)} \le \dec{(4,2)}{\pi^\+} \le \dec{(4,3)}{\sigma^\+} \]
and hence
$(6,4,2) \ltdot (2,2,1,1) + \pi^\+ \ltdot (2,2,2,1) + \sigma^\+$
for any partitions $\pi^\+$ of $6$ and $\sigma^\+$ of $5$
with $\ell(\pi^\+) \le 2$ and $\ell(\sigma^\+) \le 3$.


\subsection{Up-sets and twisted intervals}\label{subsec:upsetsAndTwistedIntervals}
For a fixed $\ell^\- \in \N_0$, and partitions $\gamma$,~$\delta$ of the
same size we define the \emph{twisted interval} $[\gamma, \delta]_{\unlhddotS}$ by
\[ [\gamma,\delta]_\unlhddotS = \{ \sigma \in \Par(p) : \gamma \unlhddot \sigma \unlhddot \delta \}.
\]
where $\unlhddot$ is the $\ell^\-$-twisted dominance order. We define
the \emph{up-set} of a partition $\lambda$ of size $p$ by
\[ \lambda^\unlhddotS = \{ \sigma \in \Par(p) : \sigma \unrhddot \lambda \}. \]
As ever, the value of $\ell^\-$ will be clear from context.
Equivalently, by Remark~\ref{remark:twistedDominanceOrderGeneralizesDominanceOrder}, 
$\lambda^\unlhddotS = [\lambda, (p)]_\unlhd$
when $\ell^\- = 0$ and
$\lambda^\unlhddotS =
[\lambda, (1^p)]_\unlhddotS$ when $\ell^\- \ge 1$.


\begin{example}[Length bound recast as an interval]\label{ex:lengthBound}
In the overview in \S\ref{sec:overview} we used (without giving full details) the stable
partition system $(\PSeq{M})_{M \in \N_0}$ defined using the $1$-twisted dominance order by 
\[ \PSeq{M} = 
\bigl\{ \sigma \in \Par(8+2M) : \sigma \unrhddot (6+M,2,1^M),\, \ell(\sigma) \le 4+M \bigr\}. \]
Let $\sigma \in \Par(8+2M)$. Observe that
\[ \ell(\sigma) \le 4+M \iff \sigma^\- \unLHD (4+M) \iff \sigma \unlhddot (5+M,1^{3+M}), \]
where the final implication holds since $(5+M,1^{3+M}) \decMap \bigl\langle (4+M), (4+M) \bigr\rangle$
is the greatest partition in the $1$-twisted dominance order with negative part $(4+M)$.
Therefore an equivalent definition of $\PSeq{M}$ is 
\begin{align*} \PSeq{M} &= [(6+M,2,1^M), (5+M,1^{3+M})]_\unlhddotS \\
&= [(6,2) \oplus M\bigl((1),(1)\bigr), (5,1,1,1) \oplus M\bigl((1),(1)\bigr)]_\unlhddotS
\end{align*}
where $\unlhddot$ is the $1$-twisted dominance order, as claimed in \S\ref{subsec:overviewSWL}.
\end{example}

It is a special feature of the $1$-twisted dominance order that the \emph{only} restriction imposed
by the comparison on negative parts is a bound on the length of the partition. 
See \S\ref{subsec:stablePartitionSystemsAsIntervals} for an extended example more typical of the general case.

\addtocontents{toc}{\smallskip}
\addtocontents{toc}{\textbf{Signed Weight Lemma and stable partition systems}}

\section{Signed Weight Lemma}\label{sec:SWL}

In this section we prove the critical Signed Weight Lemma (Lemma~\ref{lemma:SWL}) and give
the related results and definitions needed to apply it to prove our main theorems.
We end with an extended example.

\subsection{Stable partition systems}\label{subsec:stablePartitionSystem}
We isolate the two more technical hypotheses of the Signed Weight Lemma in the following definition.

\begin{definition}\label{defn:stablePartitionSystem}
A \emph{partition system} 
is a sequence $(\PSeq{M})_{M \in \N_0}$ of sets of partitions
such that all partitions in each $\PSeq{M}$
have the same size, together with
a function $\pmap : \Par \rightarrow \Par$ such that
$\pmap(\PSeq{M}) \subseteq \PSeq{M+1}$ for all $M \in \N_0$. For each $\pi \in \Par$, let $g_\pi$ be a symmetric function of degree $|\pi|$.
We say the partition system is \emph{stable} with respect to the family $g_\pi$ if

\smallskip
\begin{defnlist}
\item $\pmap : \PSeq{M} \rightarrow \PSeq{M+1}$ is a bijection for all $M$ sufficiently large;
\vspace*{1pt}
\item if $M$ is sufficiently large then
$\langle g_\pi, s_\sigma \rangle \!=\! \langle g_{\pmap(\pi)}, s_{\pmap(\sigma)} \rangle$
for all \hbox{$\pi$}, \hbox{$\sigma \in \PSeq{M}$} and moreover,
the matrix\, $\KM(M)$ with rows and columns labelled by $\PSeq{M}$ and entries
$\KM(M)_{\pi\sigma} = \langle g_\pi, s_\sigma \rangle$ is invertible.
\end{defnlist}
If (a) and (b) hold for $M \ge L$ then we say the system is \emph{stable for $M \ge L$}.
Given $k\in \N$, the $k$-\emph{subsystem} of $(\PSeq{M})_{M \in \N_0}$ is 
$(\PSeq{kM})_{M \in \N_0}$ with function $\pmap^k$, i.e.~the $k$-fold composition of $\pmap$.
\end{definition}

Note in particular that the conditions imply that the matrix $K(M)$ is constant for $M$ sufficiently large.
It is routine to check that a $k$-subsystem of a stable partition system for the family $g_\pi$ 
is  a stable
partition system, again for the family $g_\pi$.
The general results we need on stable partition systems are in \S\ref{sec:stablePartitionSystems}.

\begin{example}\label{ex:FoulkesStablePartitionSystem}
Let $g_\pi = h_\pi$ for all $\pi \in \Par$
and let $\pmap : \Par \rightarrow \Par$ be defined by $\pmap(\sigma) = \sigma + (1)$.
Fix a partition $\lambda$ and let 
\[ \PSeq{M} = \bigl\{ \sigma \in \Par(|\lambda| + M) :
\sigma \,\unrhd\, \lambda + (M) \bigr\}.\] 
We claim that the 
sets $\PSeq{M}$ form a stable partition system.
First note that, provided $M$ is sufficiently large, every partition $\mu$ such
that $\mu \unrhd \lambda + (M+1)$ satisfies $\mu_1 > \mu_2$ and so is in the image 
of the map $\pmap$. (Explicitly, it suffices to take $M \ge |\lambda| - 2a(\lambda)$ where
as usual $a(\lambda)$ is the first part of $\lambda$;
this is the bound $L$ from Corollary~\ref{cor:signedIntervalStable};
%
we explain why it applies after this example.)
Hence (a) holds. 
By a special case of Lemma~\ref{lemma:twistedKostkaNumbers} (Twisted Kostka Numbers), 
the matrix 
$\KM(M)$ in condition~(b) 
is the matrix of Kostka numbers: 
\[ \KM(M)_{\pi \sigma} = \langle h_\pi, s_\sigma  \rangle
= |\SSYT(\sigma)_{(\varnothing, \pi)}|\]
 for $\sigma$, $\pi \in \PSeq{M}$.
 Provided $M$ is sufficiently large,
we have $K_{\pi+(1) \hskip1pt\sigma+(1)} = K_{\pi \sigma}$;
 the relevant semistandard tableaux have the form shown below
with $1$s in the shaded region, and so are in bijection by removing
the hatched box and shifting the boxes right of it one position left.
Hence (b) holds. 

\smallskip
\begin{center}
\begin{tikzpicture}[x=0.55cm, y=-0.55cm]
\fill[color=lightgray] (0,0)--(9,0)--(9,1)--(0,1)--(0,0);
\fill[color=lightgray] (11,0)--(13,0)--(13,1)--(11,1)--(11,0);
\draw (0,0)--(9,0); \draw (11,0)--(17,0)--(17,1)--(11,1);
\draw (1,0)--(1,1); \draw (12,0)--(12,1);
\node at (0.5, 0.5) {$1$};
\node at (6.5,0.5) {$1$};
\node at (12.5, 0.5) {$1$};
\node at (10, 0.5) {$\cdots$};
\draw (13,0)--(13,1);
\draw (0,1)--(9,1); 
\draw (7,0)--(7,1); \draw (6,0)--(6,1);
\draw (7,1)--(6,1)--(6,3)--(5,3)--(5,4)--(2,4)--(2,5)--(0,5)--(0,0);
\fill[pattern = north west lines] (6,0)--(7,0)--(7,1)--(6,1)--(6,0);
\node at (15,0.5) {$> 1$};
\node at (3,2.5) {$> 1$};
\end{tikzpicture}
\end{center}

\noindent The argument for (b) is seen in more generality
and detail in the extended example in 
\S\ref{subsec:444to822partitionSystem}.
The proof of Proposition~\ref{prop:signedTableauStable}
shows that for (b) the same bound $L \ge |\lambda| - 2a(\lambda)$
as (a) suffices.
\end{example}

We leave it as an instructive 
exercise to use the 
Signed Weight Lemma with the stable partition system in Example~\ref{ex:FoulkesStablePartitionSystem}
to prove the stability of the plethysm
coefficients $\langle s_{(n+M)} \circ s_{(m)}, s_{\lambda + mM} \rangle$ and
$\langle s_{(n)} \circ s_{(m+M)}, s_{\lambda + nM} \rangle$ in Foulkes' Conjecture.
Of course this also follows from our main theorems; in the context of their proofs, one should think
of $\bigl\{ \sigma \in \Par(mn + M) :
\sigma \,\unrhd\, \lambda + (M) \bigr\}$ as the interval $[\lambda + (M), (|\lambda| + M)]_\unlhd$ for the 
$0$-dominance order. With this interpretation the stability of the partition
system follows from
Corollary~\ref{cor:signedIntervalStable} applied with $\kappa^\+ = (1)$, $\kappa^\- = \varnothing$
and $\omega = (|\lambda|)$,
giving the bound $M \ge L\bigl([\lambda, (|\lambda|)], (1)\bigr) = |\lambda| - 2 a(\lambda)$.
(This is the first bound in the corollary; the remaining three impose no restriction,
as is generally the case when $\ell(\kappa^\-) = 0$. Alternatively
since the interval is `unsigned' one can use Proposition~\ref{prop:intervalStable}.)
See \S\ref{subsec:stablePartitionSystemsAsIntervals}
for a related example where we reinterpret a stable partition system as a sequence of intervals.

\subsection{Signed weight lemma}\label{subsec:signedWeightLemma}
The following key lemma specifies the overall strategy of the proofs of
Theorem~\ref{thm:muStable} and~\ref{thm:nuStable}. 
In this lemma $\decs{\pi^\-}{\pi^\+}$ is the $\ell^\-$-decomposition of
the partition $\pi$ as defined in Definition~\ref{defn:ellDecomposition}. 

\begin{lemma}[Signed Weight Lemma]\label{lemma:SWL}
Let $\nuSeq{M}$ be a sequence
of partitions and let \smash{$\muSSeq{M}$} be a sequence of skew partitions,
indexed by $M \in \N_0$.
Fix $\ell^\- \in \N$. 
Set $g_\pi = e_{\pi^\-}h_{\pi^\+}$ for each $\pi \in \Par$.
Let $\PSeq{M}$ be a stable partition system for $M \ge L$ with respect to the symmetric functions
$g_\pi$ and the function $\pmap : \Par \rightarrow \Par$, such that
the common size of all partitions in $\PSeq{M}$ is $|\nuSeq{M}||\muSSeq{M}|$.
Suppose that
\begin{thmlist}
\item if $M$ is sufficiently large and $\pi \in \PSeq{M}$ then 
\[ \quad \supp(g_\pi) 
\cap \supp(s_\nuSeq{M} \circ s_\muSSeqs{M}) \subseteq \PSeq{M},\]
\item if $M$ is sufficiently large then, for all $\pi \in \PSeq{M}$,
\[\  \bigl|\PSSYTw{\nuSeq{M}}{\muSSeq{M}}{\pi^\-}{\pi^\+}\bigr|
\!=\! \bigl|\PSSYTw{\nuSeq{M+1}}{\muSSeq{M+1}}{\pmap(\pi)^\-}{\pmap(\pi)^\+}\bigr|.
\]
\end{thmlist}
Then, provided $M \ge L$ and $M$ meets the bounds required by \emph{(i)} and~\emph{(ii)},
\[ \langle s_\nuSeq{M} \circ s_\muSSeqs{M}, s_\sigma \rangle = 
\langle s_{\nuSeq{M+1}} \circ s_\muSSeqs{M+1}, s_{\pmap(\sigma)} \rangle \]
for all $\sigma \in \PSeq{M}$.
\end{lemma}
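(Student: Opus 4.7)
My plan is to use linear algebra: express both plethysms in the $(g_\pi)_{\pi \in \PSeq{M}}$ basis restricted to $\PSeq{M}$, use conditions (i) and (ii) to match coefficients, and then invert the matrix $\KM(M)$.

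First, I would fix $M$ large enough so that the stable partition system is stable at both $M$ and $M+1$, and so that the bounds in (i) and (ii) apply. Set
\[ a_\sigma^{(M)} = \langle s_\nuSeq{M} \circ s_\muSSeqs{M}, s_\sigma \rangle, \qquad
   b_\pi^{(M)} = \langle s_\nuSeq{M} \circ s_\muSSeqs{M}, g_\pi \rangle. \]
By Proposition~\ref{prop:plethysticSignedKostkaNumbers} applied to $g_\pi = e_{\pi^\-}h_{\pi^\+}$,
\[ b_\pi^{(M)} = \bigl|\PSSYTw{\nuSeq{M}}{\muSSeq{M}}{\pi^\-}{\pi^\+}\bigr|, \]
so hypothesis (ii) is exactly the statement $b_\pi^{(M)} = b_{\pmap(\pi)}^{(M+1)}$ for all $\pi \in \PSeq{M}$. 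This is the bridge between the two values of $M$.

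Next, I would expand $g_\pi$ in the Schur basis. By definition $\KM(M)_{\pi\sigma} = \langle g_\pi, s_\sigma\rangle$, so for any $\pi$,
\[ b_\pi^{(M)} = \sum_\sigma \KM(M)_{\pi\sigma}\, \langle s_\nuSeq{M} \circ s_\muSSeqs{M}, s_\sigma \rangle. \]
The key observation is that a summand is nonzero only when $\sigma$ lies in both $\supp(g_\pi)$ and $\supp(s_\nuSeq{M} \circ s_\muSSeqs{M})$; by hypothesis (i), for $\pi \in \PSeq{M}$ every such $\sigma$ lies in $\PSeq{M}$. Thus the sum truncates and
\[ b_\pi^{(M)} = \sum_{\sigma \in \PSeq{M}} \KM(M)_{\pi\sigma}\, a_\sigma^{(M)}. \]
Applying the same identity at level $M+1$ with $\pmap(\pi)$ in place of $\pi$, and then substituting $\tau = \pmap(\sigma)$ using that $\pmap\colon \PSeq{M} \to \PSeq{M+1}$ is a bijection (part (a) of Definition~\ref{defn:stablePartitionSystem}), gives
\[ b_{\pmap(\pi)}^{(M+1)} = \sum_{\sigma \in \PSeq{M}} \KM(M+1)_{\pmap(\pi)\,\pmap(\sigma)}\, a_{\pmap(\sigma)}^{(M+1)}. \]
Here I would use part (b) of stability to replace $\KM(M+1)_{\pmap(\pi)\,\pmap(\sigma)}$ with $\KM(M)_{\pi\sigma}$.

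Combining these with hypothesis (ii) yields, for every $\pi \in \PSeq{M}$,
\[ \sum_{\sigma \in \PSeq{M}} \KM(M)_{\pi\sigma}\, a_\sigma^{(M)} \;=\; \sum_{\sigma \in \PSeq{M}} \KM(M)_{\pi\sigma}\, a_{\pmap(\sigma)}^{(M+1)}. \]
Since $\KM(M)$ is invertible by (b), the two vectors $(a_\sigma^{(M)})_{\sigma \in \PSeq{M}}$ and $(a_{\pmap(\sigma)}^{(M+1)})_{\sigma \in \PSeq{M}}$ must coincide, giving $a_\sigma^{(M)} = a_{\pmap(\sigma)}^{(M+1)}$ for all $\sigma \in \PSeq{M}$, which is the desired conclusion.

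The argument is essentially a clean piece of bookkeeping; the one subtle point where things must be handled with care is the truncation of the Schur expansion of $g_\pi$ to indices in $\PSeq{M}$. Without condition (i), the matrix equation would mix coefficients indexed by $\PSeq{M}$ with coefficients indexed by partitions outside $\PSeq{M}$, and the inversion argument would break down. All the genuine difficulty in proving the main theorems is therefore pushed into verifying (i) and (ii) for the specific stable partition systems constructed later, rather than into this lemma itself.
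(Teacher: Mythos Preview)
Your proposal is correct and follows essentially the same approach as the paper: both arguments use hypothesis~(i) to truncate the Schur expansion of $g_\pi$ to $\PSeq{M}$ when paired with the plethysm, use Proposition~\ref{prop:plethysticSignedKostkaNumbers} together with hypothesis~(ii) to match the resulting $b$-vectors at levels $M$ and $M+1$, and then invoke invertibility of $\KM(M)$ and the bijection $\pmap$ from stability. The only cosmetic difference is that the paper explicitly inverts $\KM(M)$ to write $s_\sigma$ as a combination of the $g_\pi$ modulo an error term orthogonal to the plethysm, whereas you leave the inversion to the final step; the linear algebra content is identical.
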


 We hope to convince the reader, 
both by the proofs of our main theorems, 
and the extended examples in \S\ref{subsec:444to822partitionSystem},
\S\ref{subsec:cutUpsetForLawOkitani} and \S\ref{subsec:stablePartitionSystemsAsIntervals}
below
that Lemma~\ref{lemma:SWL} 
is both powerful and practical, and not as technical as it appears at first sight.
In particular we note that
by Lemma~\ref{lemma:twistedKostkaMatrix}, if
$\sigma \in \supp(e_{\pi^\-}h_{\pi^\+})$ then $\sigma \unrhddot \pi$
in the $\ell^\-$-twisted dominance order, and so condition (i)
can be tested in practice when, as usual, $g_\pi$ is the twisted symmetric
function $e_{\pi^\-}h_{\pi^\+}$ in Definition~\ref{defn:ellTwistedSymmetricFunction}.

\begin{proof}[Proof of Lemma~\ref{lemma:SWL}]
To simplify notation 
we set \smash{$\pf{M} = s_\nuSeq{M} \circ s_\muSSeqs{M}$} for $M \in \N_0$.
Let $M \ge L$ be given
and let $\pi \in \PSeq{M}$.
Recall the matrix $\KM(M)$ from  Definition~\ref{defn:stablePartitionSystem}(b).
By hypothesis (i), we have 
\[ g_\pi = \sum_{\tau \in \PSeqs{M}} \KM(M)_{\pi\tau}s_\tau + G_\pi \]
where the symmetric function $G_\pi$ satisfies
$\langle \pf{M}, G_\pi \rangle = 0$.
By~Definition~\ref{defn:stablePartitionSystem}(b)  $\KM(M)$ is invertible, hence for $\sigma \in \PSeq{M}$ we have
\[ \sum_{\pi \in \PSeqs{M}} \KM(M)^{-1}_{\sigma\pi} g_\pi 
= s_\sigma + \sum_{\pi \in \PSeqs{M}} \KM(M)^{-1}_{\sigma \pi} G_\pi. \]
Substituting $g_\pi = e_{\pi^\-}h_{\pi^\+}$ 
we obtain $s_\sigma = \sum_{\pi \in \PSeqs{M}} \KM(M)^{-1}_{\sigma\pi}\,
 e_{\pi^\-}h_{\pi^\+} + E_\sigma$ where, since $E_\sigma$
 is a linear combination of the $G_\pi$, we have
 $\langle \pf{M}, E_\sigma \rangle = 0$.
By this equation for $s_\sigma$
and Proposition~\ref{prop:plethysticSignedKostkaNumbers}
we get
\begin{align}
\langle \pf{M}, s_\sigma \rangle &=
\sum_{\pi \in \PSeqs{M}} \KM(M)^{-1}_{\sigma\pi} 
	\bigl|\PSSYTw{\nuSeq{M}}{\muSSeq{M}}{\pi^\-}{\pi^\+} \bigr|. \label{eq:WSLN}
\intertext{The same argument applies with $M$ replaced with $M+1$
and $\sigma \in \PSeq{M}$ replaced with $\pmap(\sigma) \in \PSeq{M+1}$. Hence we also have}
\langle \pf{M'}, s_{\pmap(\sigma)} \rangle &= 
\sum_{\rho \in \PSeqs{\Mn}} \KM(\Mn)^{-1}_{\pmap(\sigma)\rho} 
	\bigl|\PSSYTw{\nuSeq{\Mn}}{\muSSeq{\Mn}}{\rho^\-}{\rho^\+}\bigr|. \label{eq:WSLNn}
\end{align}
(Here we reduce clutter by writing $M'$ for $M+1$.)
By Definition~\ref{defn:stablePartitionSystem}(a),
the set $\PSeq{M}$ labelling the rows
and columns of $\KM(M)$ and the set $\PSeq{\Mn}$ labelling 
the rows and columns of $\KM(\Mn)$ are in bijection by~$\pmap$. Therefore
we may take~\eqref{eq:WSLNn} and replace each $\rho$ with $\pmap(\pi)$
and the sum over $\rho \in \PSeq{\Mn}$ with
a sum over $\pi \in \PSeq{M}$.
By Definition~\ref{defn:stablePartitionSystem}(b) we have $\KM(M)_{\sigma\pi }
= \KM(\Mn)_{\pmap(\sigma)\pmap(\pi)}$, and so
\smash{$\KM(M)^{-1}_{\sigma\pi} = \KM(\Mn)^{-1}_{\pmap(\sigma)\pmap(\pi)}$}
for all $\pi, \sigma \in \PSeq{M}$.
This matches up the first factors after the sums in the
right-hand sides of~\eqref{eq:WSLN} and~\eqref{eq:WSLNn}, and hypothesis~(ii) immediately
implies the second factors are equal.
Therefore the right-hand sides agree. Comparing the left-hand sides gives the 
Signed Weight Lemma.
\end{proof}

\subsection{A stable partition system defined by a length bound}
\label{subsec:444to822partitionSystem}
We continue in the setting of Example~\ref{ex:444support}, so $\ell^\- = 2$. 
In this subsection we illustrate 
Definition~\ref{defn:stablePartitionSystem} 
by show that the partition system
\begin{equation}
\label{eq:444to822partitionSystem} \PSeq{M} = \bigl\{ \sigma \in \Par(12+4M) : \sigma 
\unrhddot (4+2M,4,4,2^M), \ell(\sigma) \le 3+M \bigr\} \end{equation}
is stable with respect to the injective map
$\pmap : \Par \rightarrow \Par$ defined by
\[ \lambda \stackrel{\pmap}{\longmapsto} \lambda \oplus
\bigl( (1^2), (2) \bigr) = \lambda \sqcup (2) + (2) . \]

Stability is not immediate. Indeed, 
from the Hasse diagrams in Figure~\ref{fig:Hasse444and6442}
we see that $\pmap$ is not surjective when $M=0$: for example,
the partition $(6,6,2,2)$ is  not in its image.
Suppose that $N \ge 2$
and take $\sigma \in \Par(12 + 4N)$. By hypothesis
\[ \sigma \unrhddot (4+2N,4,4,2^N) \decMap
\dec{(3+N,3+N)}{(2+2N,2,2)}. \]
Since $\sigma^\- \unRHD (3+N, 3+N)$ and, by definition of $\PSeq{M}$, we have
$\ell(\sigma) \le 3+N$, we have 
$\ell(\sigma) = 3+N$.
By definition of the $2$-twisted dominance order, we have
$\sigma^\- = (3+N,3+N)$ and hence
$\sigma^\+ \unrhd (2+2N,2,2)$. Since $N \ge 2$,
it follows that $\sigma^\+_1 - \sigma^\+_2 \ge 2$.
Therefore every partition in $\PSeq{N}$ is of the
form $\lambda \oplus \bigl( (1^2), (2) \bigr)$
and so $\pmap$ is bijective for $M \ge 1$.
This verifies condition (a) in the definition of a stable partition system 
(Definition~\ref{defn:stablePartitionSystem}).

\begin{figure}[ht!]
\begin{minipage}{6in}
\hspace*{-0.6in}\begin{tikzpicture}[x=0.5cm,y=1cm]
\node at (0,7) {$\ss(8,2,2) \atop \decs{(3,3)}{(6)}$}; 
\draw[thick] (0,6.1)--(0,6.6);
\node at (0,5.7) {$\ss(7,3,2) \atop \decs{(3,3)}{(5,1)}$};
\draw[thick] (0,4.8)--(0,5.3);
\node at (0,4.35) {$\ss(6,4,2) \atop \decs{(3,3)}{(4,2)}$};
\draw[thick] (-1.7,3)--(-0.5,3.95);
\draw[thick] (1.7,3.4)--(0.5,3.95);
\node at (-2.5,2.5) {$\ss(5,5,2) \atop \decs{(3,3)}{(3,3)}$};
\node at (2.5,3.0) {$\ss(6,3,3) \atop \decs{(3,3)}{(4,1,1)}$};
\draw[thick] (0.5,1.6)--(1.5,2.6);
\draw[thick] (-0.5,1.6)--(-1.5,2.1);
\node at (0,1.2) {$\ss(5,4,3)\atop\decs{(3,3)}{(3,2,1)}$};
\draw[thick] (0,-0.6)--(0,0.75);
\node at (0,-1) {$\ss(4,4,4)\atop\decs{(3,3)}{(2,2,2)}$};
\end{tikzpicture}
\hspace*{-0.2in}
\begin{tikzpicture}[x=0.5cm,y=1cm]
\node at (0,7) {$\ss(10,2,2,2) \atop \decs{(4,4)}{(8)}$};
\draw[thick] (0,6.1)--(0,6.6);
\node at (0,5.7) {$\ss(9,3,2,2) \atop \decs{(4,4)}{(7,1)}$};
\draw[thick] (0,4.8)--(0,5.3);
\node at (0,4.35) {$\ss(8,4,2,2) \atop \decs{(4,4)}{(6,2)}$};

\draw[thick] (-3.9,1.1)--(-2.5,2.1);
\node at (-3.8,0.7) {$\ss\mbf{(6,6,2,2)} \atop \mbf{\decs{(4,4)}{(4,4)}}$};
\draw[thick] (-2,0.3)--(-1.6,0.1);

\draw[thick] (-1.7,3)--(-0.5,3.95);
\draw[thick] (1.7,3.4)--(0.5,3.95);
\node at (-2.5,2.5) {$\ss(7,5,2,2) \atop \decs{(4,4)}{(5,3)}$};
\node at (2.5,3.0) {$\ss(8,3,3,2) \atop \decs{(4,4)}{(6,1,1)}$};
\draw[thick] (0.5,1.6)--(1.5,2.6);
\draw[thick] (-0.5,1.6)--(-1.5,2.1);
\node at (0,1.2) {$\ss(7,4,3,2)\atop\decs{(4,4)}{(5,2,1)}$};
\draw[thick] (0,0.4)--(0,0.8);
\node at (0,0.0) {$\ss\mbf{(6,5,3,2)}\atop\mbf{\decs{(4,4)}{(4,3,1)}}$};
\draw[thick] (0,-0.65)--(0,-0.35);
\node at (0,-1) {$\ss(6,4,4,2)\atop\decs{(4,4)}{(4,2,2)}$};
\end{tikzpicture}
\hspace*{-0.2in}
\begin{tikzpicture}[x=0.5cm,y=1cm]
\node at (0,7) {$\ss(8+2M,2,2,2^M) \atop \decs{(M',M')}{(6+2M)}$};
\draw[thick] (0,6.1)--(0,6.6);
\node at (0,5.7) {$\ss(7+2M,3,2,2^M) \atop \decs{(M',M')}{(5+2M,1)}$};
\draw[thick] (0,4.8)--(0,5.3);
\node at (0,4.35) {$\ss(6+2M,4,2,2^M) \atop \decs{(M',M')}{(4+2M,2)}$};
\draw[thick] (-3.9,1.1)--(-2.5,2.1);
\node at (-5,0.7) {$\ss{(4+2M,6,2,2^M)} \atop {\decs{(M',M')}{(2+2M,4)}}$};
\draw[thick] (-3,0.3)--(-2.6,0.1);

\draw[thick] (-1.7,3)--(-0.5,3.95);
\draw[thick] (1.7,3.4)--(0.5,3.95);
\node at (-3.25,2.5) {$\ss(5+2M,5,2,2^M) \atop \decs{(M',M')}{(3+2M,3)}$};
\node at (2.75,3.0) {$\ss(6+2M,3,3,2^M) \atop \decs{(M',M')}{(4+2M,1,1)}$};
\draw[thick] (0.5,1.6)--(1.5,2.6);
\draw[thick] (-0.5,1.6)--(-1.5,2.1);
\node at (1,1.2) {$\ss(5+2M,4,3,2^M)\atop\decs{(M',M')}{(3+2M,2,1)}$};
\draw[thick] (0,0.4)--(0,0.8);
\node at (0,0.0) {$\ss{(4+2M,5,3,2^M)}\atop{\decs{(M',M')}{(2+2M,3,1)}}$};
\draw[thick] (0,-0.65)--(0,-0.35);
\node at (0,-1) {$\ss(4+2M,4,4,2^M)\atop\decs{(M',M')}{(2+2M,2,2)}$};
\end{tikzpicture}
\end{minipage}
\caption{Hasse diagrams of up-sets in the 
$2$-twisted dominance order. 
The total order $\ledot$ refining $\unlhddot$ defined in Definition~\ref{defn:twistedTotalOrder} 
is indicated by vertical height.
On the left is the up-set of $(4,4,4) \decMap
\dec{(3,3)}{(2,2,2)}$ restricted
to partitions of length at most~$3$. (This is part of the up-set relevant
to Example~\ref{ex:444support} and the following remark.)
This poset maps under $\lambda \mapsto \lambda \opluss \bigl( (1,1), (2) \bigr)$
into the 
up-set of $(6,4,4,2) \decMap
\dec{(4,4)}{(4,2,2)}$ restricted
to partitions of length at most~$4$, shown
in the middle; the two partitions
not in the image of the map are highlighted.
In turn, for each $M \ge 1$, 
the middle poset is in bijection, by iterating this map, with the
up-set of $(4,4,4) \oplus M\bigl((1^2),(2)) = (4+2M,4,4,2^M) \decMap
\dec{(3+M,3+M)}{(2+2M,2,2)}$  cut  to partitions of length at most $M+3$,
as shown on the right. (To save space we write $M'$ for $M+3$.)
\label{fig:Hasse444and6442}\medskip
}
\end{figure}


Continuing we now check condition (b) in this definition.
We have $g_\pi = e_{\pi^\-}h_{\pi^\+}$, where~$\decs{\pi^\-}{\pi^\+}$
is the $\ell^\-$-decomposition of $\pi$ from Definition~\ref{defn:ellDecomposition}.
The key result we need is Lemma~\ref{lemma:twistedKostkaNumbers} (Twisted Kostka Numbers).
 By this
lemma, for $\pi$, $\sigma \in \Par$, we have
$\langle g_\pi, s_\sigma \rangle = |\SSYT(\sigma)_{(\pi^\-,\pi^\+)}|$.
Since $\KM(M)_{\pi \sigma} = \langle g_\pi, s_\sigma \rangle$ by definition,
the matrix $\KM(M)$ is invertible for all $M$
by Lemma~\ref{lemma:twistedKostkaMatrix}, and
it only remains to show, if $M \ge 1$, then there is a bijection 
\begin{equation}
\label{eq:SSYTbijection} \SSYT(\sigma)_{(\pi^\-,\pi^\+)} \rightarrow \SSYT\bigl( \pmap(\sigma) \bigr)_{(\pmap(\pi)^\-,\pmap(\pi)^\+)} \end{equation}
for each pair $\sigma$, $\pi \in \PSeq{M}$.

\medskip
\begin{example}\label{ex:444tableauBijection}
To illustrate why there is a natural bijection for~\eqref{eq:SSYTbijection},
we continue with the stable partition system in \eqref{eq:444to822partitionSystem}
and take \hbox{$M=1$} and 
$\sigma = (7,5,2,2)$ and $\pi = (6,4,4,2)$. Figure~\ref{fig:7522} shows 
the two elements of each of \smash{$\SSYT\bigl( (7,5,2,2) \bigr)_{((4,4),(4,2,2))}$}
and
\smash{$\SSYT\bigl( (9,5,2,2,2) \bigr)_{((5,5),(6,2,2))}$}.
For~\eqref{eq:SSYTbijection}, we want a bijection
\[ \SSYT\bigl( (7,5,2,2) \bigr)_{((4,4),(4,2,2))} \rightarrow 
\SSYT\bigl( (9,5,2,2,2) \bigr)_{((5,5),(6,2,2))}. \]
Observe that, when $M=2$, the two tableaux of shape $(9,5,2,2,2)$ each have a removable 
\raisebox{0.5pt}{$\young(\oM\tM)$} 
in positions $(3,1)$ and $(3,2)$ and two adjacent boxes \raisebox{0.5pt}{$\young(11)$} 
in positions $(1,6)$ and $(1,7)$ in its top row.
Removing these boxes and shifting the remaining boxes in the first column strictly below row $3$ up by one row 
and the remaining boxes in the top row strictly right of column $5$ left by two columns
pairs up the sets of tableaux.
(We admit it might be more natural here to define the bijection by removing
\raisebox{0.5pt}{$\young(\oM\tM)$}  from  positions $(3+M,1)$ and $(3+M,2)$; we
choose the 
complicated specification to agree with the proof of Proposition~\ref{prop:signedTableauStable} (Tableau Stability)
using Lemma~\ref{lemma:signedTableauPositions}(ii).)
Note also that the two tableaux for $M=1$ have
\raisebox{0.5pt}{$\young(13)$} and \raisebox{0.5pt}{$\young(12)$} 
in boxes $(1,6)$ and $(1,7)$, so removing the four boxes from the positions $(3,1), (3,2), (1,6), (1,7)$
hatched in the lower part of Figure~\ref{fig:7522}
gives tableaux of signed weight $\bigl( (3,3), (3,2,1) \bigr)$
and $\bigl( (3,3), (3,1,2) \bigr)$,
not $\bigl( (3,3), (2,2,2) \bigr)$
as required. Correspondingly, the unique element of \smash{$\SSYT\bigl( 5,5,2) \bigr)_{((3,3), (2,2,2))}$}
is as shown in the margin, \marginpar{\qquad \raisebox{36pt}{$\young(\oM\tM112,\oM\tM233,\oM\tM)$}}
so when $\sigma = (5,5,2)$ and $\pi = (4,4,4)$ 
the insertion map is a canonical injection
\[ \SSYT\bigl( (5,5,2) \bigr)_{((3,3),(2,2,2))} \rightarrow 
\SSYT\bigl( (7,5,2,2) \bigr)_{((4,4),(4,2,2))} \]
that is not a bijection.
\end{example}

\begin{figure}[t!]
\begin{center}
\begin{tikzpicture}[x=0.5cm,y=-0.5cm]
\tB{1}{1}{\oM}\tB{1}{2}{\tM}\tB{1}{3}{1}\tB{1}{4}{1}\tB{1}{5}{1}\tB{1}{6}{1}\tB{1}{7}{3}
\tB{2}{1}{\oM}\tB{2}{2}{\tM}\tB{2}{3}{2}\tB{2}{4}{2}\tB{2}{5}{3}
\tB{3}{1}{\oM}\tB{3}{2}{\tM}
\tB{4}{1}{\oM}\tB{4}{2}{\tM}
\end{tikzpicture}
\qquad
\begin{tikzpicture}[x=0.5cm,y=-0.5cm]
\tB{1}{1}{\oM}\tB{1}{2}{\tM}\tB{1}{3}{1}\tB{1}{4}{1}\tB{1}{5}{1}\tB{1}{6}{1}\tB{1}{7}{2}
\tB{2}{1}{\oM}\tB{2}{2}{\tM}\tB{2}{3}{2}\tB{2}{4}{3}\tB{2}{5}{3}
\tB{3}{1}{\oM}\tB{3}{2}{\tM}
\tB{4}{1}{\oM}\tB{4}{2}{\tM}
\end{tikzpicture}
\end{center}
\begin{center}
\begin{tikzpicture}[x=0.5cm,y=-0.5cm]
\tB{1}{1}{\oM}\tB{1}{2}{\tM}\tB{1}{3}{1}\tB{1}{4}{1}\tB{1}{5}{1}\tB{1}{6}{1}\tB{1}{7}{1}\tB{1}{8}{1}
	\tB{1}{9}{3}
\tB{2}{1}{\oM}\tB{2}{2}{\tM}\tB{2}{3}{2}\tB{2}{4}{2}\tB{2}{5}{3}
\tB{3}{1}{\oM}\tB{3}{2}{\tM}
\tB{4}{1}{\oM}\tB{4}{2}{\tM}
\tB{5}{1}{\oM}\tB{5}{2}{\tM}
\fill[pattern = north west lines] (1,4)--(3,4)--(3,5)--(1,5)--(1,4);
\fill[pattern = north west lines] (6,2)--(8,2)--(8,3)--(6,3)--(6,2);
\end{tikzpicture}
\qquad
\begin{tikzpicture}[x=0.5cm,y=-0.5cm]
\tB{1}{1}{\oM}\tB{1}{2}{\tM}\tB{1}{3}{1}\tB{1}{4}{1}\tB{1}{5}{1}\tB{1}{6}{1}\tB{1}{7}{1}\tB{1}{8}{1}
	\tB{1}{9}{2}
\tB{2}{1}{\oM}\tB{2}{2}{\tM}\tB{2}{3}{2}\tB{2}{4}{3}\tB{2}{5}{3}
\tB{3}{1}{\oM}\tB{3}{2}{\tM}
\tB{4}{1}{\oM}\tB{4}{2}{\tM}
\tB{5}{1}{\oM}\tB{5}{2}{\tM}
\fill[pattern = north west lines] (1,4)--(3,4)--(3,5)--(1,5)--(1,4);
\fill[pattern = north west lines] (6,2)--(8,2)--(8,3)--(6,3)--(6,2);
\end{tikzpicture}
\qquad

\end{center}

%
\caption{The two semistandard signed tableaux in the sets
\smash{$\SSYT\bigl( (7,5,2,2) \bigr)_{((4,4),(4,2,2))}$}
and \smash{$\SSYT\bigl( (9,5,2,2,2) \bigr)_{((5,5),(6,2,2))}$}. The hatched
boxes are inserted by the $\Fmap$ insertion map. \label{fig:7522}}
\end{figure}

\bigskip
To generalize this example to arbitrary $M$ it is most convenient to consider
the inverse map.
Fix $N \ge 2$, let $\sigma$, $\pi \in \PSeq{N}$ and let
$t \in \SSYT(\sigma)_{(\pi^\-,\pi^\+)}$. 
We know that $\sigma^\- = \pi^\- = (3+N,3+N)$.
Hence $t$ has $3+N$ entries of $-1$ and $3+N$ entries of $-2$ which, since
negative entries cannot be repeated in a row, must form the first two columns of $t$.
Therefore position $(1,2)$ $t$ contains $\tM$. 
Moreover, by Definition~\ref{defn:stablePartitionSystem}(a), 
$\sigma$ satisfies $\sigma_1 - \sigma_2 \ge 1$
and since $\pi^\- = (4+2N,4,4,2^N)^\-$, it is immediate from Definition~\ref{defn:ellTwistedDominanceOrder}
that 
\[ \pi^\+ \unrhd (4+2N, 4,4,2^N)^\+ = (2+2N,2,2). \]
and so $\pi^\+_1 \ge 2+2N$. Therefore $t$ has at least $2+2N$ entries
of~$1$, necessarily in its first row, and 
we see that boxes \hbox{$(1,6)$} and $(1,7)$
of $t$ both contain~$1$. Removing this \raisebox{1pt}{$\young(11)$} 
and deleting \raisebox{1pt}{$\young(\oM\tM)$} from positions $(3,1)$ and $(3,2)$ and then shifting
boxes left or up (as seen  when $N = 2$)
defines a bijection
$\SSYT(\sigma)_{(\pi^\-,\pi^\+)} \rightarrow \SSYT(f^{-1}(\sigma))_{(f^{-1}(\pi)^\-,f^{-1}(\pi)^\+)}$.

\begin{figure}
\[  
\begin{pNiceMatrix}[first-row,first-col,nullify-dots] & \rb{(10,2,2,2)} & \rb{(9,3,2,2)} & \rb{(8,4,2,2)}
& \rb{(8,3,3,2)} & \rb{(7,5,2,2)} & \rb{(7,4,3,2)} & \rb{(6,6,2,2)} & \rb{(6,5,3,2)} & \rb{(6,4,4,2)} \\
\decss{(4,4)}{(8)}& 1 & \cdot & \cdot & \cdot & \cdot & \cdot & \cdot & \cdot & \cdot \cr
\decss{(4,4)}{(7,1)}       & 1 & 1     & \cdot & \cdot & \cdot & \cdot & \cdot & \cdot & \cdot \cr
\decss{(4,4)}{(6,2)}        & 1 & 1     & 1     & \cdot & \cdot & \cdot & \cdot & \cdot & \cdot \cr
\decss{(4,4)}{(6,1,1)}       & 1 & 2     & 1     & 1     & \cdot & \cdot & \cdot & \cdot & \cdot \cr
\decss{(4,4)}{(5,3)}         & 1 & 1     & 1     & 0     & 1     & \cdot & \cdot & \cdot & \cdot \cr
\decss{(4,4)}{(5,2,1)}       & 1 & 2     & 2     & 1     & 1     & 1     & \cdot & \cdot & \cdot \cr
\decss{(4,4)}{(4,4)}         & 1 & 1     & 1     & 0     & 1     & 0     & 1     & \cdot & \cdot \cr
\decss{(4,4)}{(4,3,1)}       & 1 & 2     & 2     & 1     & 2     & 1     & 1     & 1     & \cdot \cr
\decss{(4,4)}{(4,2,2)}       & 1 & 2     & 3     & 1     & \mbf{2} & 2     & 1     & 1     & 1
\end{pNiceMatrix} \]
\caption{The stable transition matrix $\KM(1)$ in Example~\ref{ex:444matrix} with entries $\KM(1)_{\pi \sigma}
= |\hskip-1pt\SSYT(\sigma)_{(\pi^\minus,\pi^\+)}|$. 
Columns are labelled by the partition~$\sigma$, rows by the $2$-decomposition
(see Definition~\ref{defn:ellDecomposition})
of $\pi$ and are ordered by the total order $\ledot$ (see Definition~\ref{defn:twistedTotalOrder}) 
refining the $2$-twisted dominance order $\unlhddot$ (see 
Definition~\ref{defn:ellTwistedDominanceOrder}).
We use $\cdot$ to denote a zero entry implied by Lemma~\ref{lemma:twistedKostkaMatrix}. The
entry highlighted in bold counting $\SSYT\bigl( (7,5,2,2) \bigr)_{((4,4),(4,2,2))}$ 
is used in Example~\ref{ex:444tableauBijection}.
\label{fig:444matrix}
}
\end{figure}

\begin{example}\label{ex:444matrix}
The stable
transition matrix $\KM(1)$ is shown in Figure~\ref{fig:444matrix} below. It was computed 
using the \textsc{Magma} code available as part of the arXiv submission of this paper using
\hbox{\texttt{TwistedIntervalMatrix(2, [6,4,4,2] :}} \texttt{q := [10,2,2,2]);}.
The entry relevant to Example~\ref{ex:444tableauBijection} is highlighted in bold in the bottom row.
As remarked at the end of \S\ref{subsec:symmetricFunctionsForSignedWeightLemma} 
it is instructive to check that the entries of $0$ correspond
to pairs of partitions incomparable in the $2$-twisted dominance order.
\end{example}

To finish this example,
note that the greatest partition in the $2$-twisted dominance order
(see  Definition~\ref{defn:ellTwistedDominanceOrder})
of $12+4M$ having first two columns of length $M+3$ is $(8+2M,2,2,2^M)$. 
By the definition in~\eqref{eq:444to822partitionSystem},
the least element of $\PSeq{M}$ is $(4+2M,4,4,2^M)$.
Therefore for each $M \in \N_0$ we have
\[ \PSeq{M} = \bigl[(4+2M,4,4,2^M), (8+2M,2,2,2^M)\bigr]_\unlhddotS
\]
and each $\PSeq{M}$ is an interval for the $2$-twisted dominance order.
The stability of $\PSeq{M}$ 
is therefore a special case of Corollary~\ref{cor:signedIntervalStable}.
See \S\ref{subsec:stablePartitionSystemsAsIntervals} for a related example.




%
%

\subsection{Cut up-sets and the plethysm  $\langle s_{(3) + (M)} \circ s_{(4)}, s_{(4,4,4) \oplus M ((1^2), (2)} \rangle$}\label{subsec:cutUpsetForLawOkitani}
The special case of Theorem~\ref{thm:nuStable} for the strongly maximal signed
weights \smash{$\bigl( (1^d), (m-d) \bigr)$} seen in Example~\ref{ex:LawOkitaniSignedWeightsAreStronglyMaximal}(i)
asserts that, if $d$ is even, then the plethysm coefficients
$\langle s_{\nu + (M)} \circ s_{(m)}, s_{\lambda \oplus M((1^d), (m-d))} \rangle$
are ultimately constant.
To prove this using the Signed Weight Lemma, we need a stable
partition system $( \PSeq{M})_{M \in \N_0}$ such that $\lambda \oplus M\bigl( (1^d), (m-d)\bigr) \in \PSeq{M}$
for each $M \in \N_0$.
As we saw in the overview in \S\ref{sec:overview}, we cannot expect to
define $\PSeq{M}$ to be the up-set
\smash{$\bigl(\lambda \oplus M(\kappa^\-,\kappa^\+) \bigr)^{\raisebox{-1pt}{$\unlhddotS$}}$}, 
where $\unlhddot$ is the $d$-twisted
dominance order, because typically the sizes of the up-sets 
grow, ruling out any bijection between them. 
In this subsection we shall see this problem
in the particular case of the plethysm coefficients 
\begin{equation}\label{eq:LawOkitaniExample}
\langle s_{(3) + (M)} \circ s_{(4)}, s_{(4,4,4) \opluss M ((1^2), (2)} \rangle 
\end{equation}
and resolve it using the stable partition system constructed in \S\ref{subsec:444to822partitionSystem}.

\subsubsection*{Up-sets are not stable}
We take
$\ell^\- = 2$ in the Signed Weight Lemma. The $2$-decomposition of a partition $\pi$ (see
Definition~\ref{defn:ellDecomposition}) is defined by
$\pi^\- = (\pi'_1, \pi'_2)$ and
$\pi^\+ = (\pi_1-2,\pi_2-2,\ldots, \pi_r - 2)$, where $r$ is maximal such that $\pi_r > 2$.
For example, if $\pi = (4+2M, 4, 4, 2^M)$ then $\pi^\- = (M+3,M+3)$ and $\pi^\+ = (2+2M,2,2)$.
By Lemma~\ref{lemma:twistedKostkaMatrix}, if $s_\sigma$ is a summand of $e_{\pi^\-}h_{\pi^\+}$
then $\sigma \unrhddot \pi$. Therefore, taking $g_\pi = e_{\pi^\-}h_{\pi^\+}$, 
the up-set of $(4+2M,4,4,2^M)$ in the $2$-twisted dominance
order satisfies condition (i) 
in the Signed Weight Lemma. We cannot 
take the up-sets $(4+2M,4,4,2^M)^\unlhddotS$ as our partition system because,
they are not stable.
Indeed, since $\pi \decMap \bigl\langle (3+M,3+M), (2+2M,2,2) \bigr\rangle$
and $(2^{3+b+M}, 1^{6-2b+2M}) \decMap \bigl\langle (9-b+3M,3+b+M), \varnothing \bigr\rangle$
we have
\[  \pi \unlhddot
(2^{3+b+M}, 1^{9-2b+2M}) \]
for all $b \le 6 + 2M$ 
and hence $\bigl|(4+2M,4,4,2^M)^{\!\unlhddotS}\hskip-1pt\bigr| \ge 6 + 2M$ and the 
sizes of the up-sets 
tend to infinity with $M$. This behaviour, that some `cut' is necessary before a sequence of up-sets
becomes stable, is typical. 

\subsubsection*{Cut up-sets are stable}
To get around the problem 
we use that condition~(i) in the Signed Weight Lemma (Lemma~\ref{lemma:SWL})
does not require that  
$\supp(g_\pi) \subseteq \PSeq{M}$ for all $\pi \in \PSeq{M}$
but instead, since  $\nu = (3)$ and $\muS = (4)$, only
the weaker condition that $\supp(g_\pi) \cap \supp(s_{(3+M)} \circ s_{(4)}) \subseteq \PSeq{M}$
for all $\pi \in \PSeq{M}$.
Since the support of the plethysm $s_{(3+M)} \circ s_{(4)}$ is contained in the support
of \smash{$s_{(4)} \times \stackrel{3+M}{\ldots} \times s_{(4)}$}, 
each partition in $\supp(s_{(3+M)} \circ s_{(4)})$ has at most $3+M$
parts. We therefore only need to consider partitions such as $(4+2M, 4, 4, 2^M)$ for which $\ell(\sigma)
\le 3+M$. This motivates the definition 
\[ \PSeq{M} = (4+2M,4,4,2^M)^\unlhddotS \cap \{ \sigma \in \Par(12+4M) : \ell(\sigma) \le 3+M \} \]
already given in~\eqref{eq:444to822partitionSystem} in an obviously equivalent form.
We saw in \S\ref{subsec:444to822partitionSystem} that $(\PSeq{M})_{M \in \N_0}$ 
is a stable partition system for $M \ge 1$ with respect to 
$\pmap : \PSeq{M} \rightarrow \PSeq{M+1}$ defined by 
$\pmap(\lambda) = \lambda \oplus \bigl( (1,1), (2) \bigr) = \lambda \sqcup (2) +(2) $ and 
the symmetric functions $g_\pi$.

\begin{proof}[Proof that $\langle s_{(3) + (M)} \circ s_{(4)}, s_{(4,4,4) \oplus M ((1^2), (2)} \rangle$
is ultimately constant.]
We shall check conditions (i) and (ii) in the
Signed Weight Lemma (Lemma~\ref{lemma:SWL}).
Let $\pi \in \PSeq{M}$. 
As we saw in \S\ref{subsec:444to822partitionSystem} we have 
$\pi^\- = (3+M,3+M)$. Hence
\begin{align*}
\supp& (g_\pi) \cap \supp (s_{(3+M)} \circ s_{(4)}) \\
&\subseteq \pi^\unlhddotS \cap \{ \sigma \in \Par(12+4M) : \ell(\sigma) \le 3+M \} \\
&\subseteq \{ \sigma \in (4+2M,4,4,2^M)^{\unlhddotS} : \ell(\sigma) \le 3+M \} \\
&= \PSeq{M}
\end{align*}
where the second line uses Lemma~\ref{lemma:twistedKostkaMatrix} on $\supp(g_\pi)$
and the length bound in
the previous paragraph on partitions in $\supp (s_{(3+M)} \circ s_{(4)})$, and the third line follows from
$\pi \unrhddot (4+2M,4,4,2^M)$.
Hence (i) holds for all $M \in \N_0$.

Now fix $M \in \N_0$ with $M \ge 1$ (so meeting the stability bound) 
and $\pi \in \PSeq{M}$. 
For (ii), it suffices to define
a bijection
\[ \begin{split}
\Hmap : &\PSSYTw{(3+M)}{(4)}{(3+M,3+M)}{\pi^\+} \\ &\hspace*{1in}
\rightarrow \PSSYTw{(3+M+1)}{(4)}{(3+M+1,3+M+1)}{\pi^\+ + (2)}. \end{split} \]
Let $T$ be in the codomain of $\Hmap$.
Observe that $T$ has $3+M+1$ integer entries of $-1$.
necessarily lying in distinct $(4)$-tableau entries. A similar argument considering $-2$ now shows
that each inner $(4)$-tableau in $T$ is of the form \raisebox{1pt}{$\young(\oM\tM\rx\ry)$} where $1 \le x \le y$.
Since $\pi \in \PSeq{M}$ we have $\pi \unrhddot (4+2M,4,4,2^M)$ and hence
$\pi^\+ + (2) \unrhd (2+2M + 2,2,2)$. Therefore $a(\pi^\+ + (2)) \ge 4+2M$ and, of the $8+2M$ positions
in the $(4)$-tableau entries of $T$
containing a positive entry, all but four positions contain $1$. In particular, since $M \ge 1$, the
leftmost $(4)$-tableau in $T$ is \raisebox{1pt}{$\young(\oM\tM 11)$}\,. Hence we may
define $\Hmap$ by inserting this inner $(4)$-tableau as a new leftmost inner tableau in a given
plethystic semistandard signed tableau in 
$\PSSYTw{(3+M)}{(4)}{(3+M,3+M)}{\pi^\+}$. 

We have now checked conditions
(i) and (ii) in the Signed Weight Lemma (Lemma~\ref{lemma:SWL}).
We
conclude that $\langle s_{(3) + (M)} \circ s_{(4)}, s_{(4,4,4) \oplus M ((1,1), (2)} \rangle$
is constant for \hbox{$M \ge 1$}.
\end{proof}

Computation shows that the stable multiplicity is in fact $1$.
The stability of this plethysm is a special case of Theorem~\ref{thm:nuStable}
and the map $\Hmap$ is as in the proof of condition (ii)
of the Signed Weight Lemma in the proof of Theorem~\ref{thm:nuStableSharp}.

\begin{example}\label{ex:omegaTwistGivesMore}
Applying the $\omega$-involution to the result just proved, we obtain that
$\langle s_{(3+M)} \circ s_{(1^4)}, s_{(3,3,3,3) \oplus M((2),(1,1))}
\rangle$ is constant
for $M \ge 1$. This is not an instance of Theorem~\ref{thm:nuStable} since,
according to Definition~\ref{defn:stronglyMaximalSignedWeight}, 
the singleton strongly maximal signed tableau families of shape $(1^4)$ have
as their unique elements the tableaux shown in the margin
of signed weights $\bigl( (4), \varnothing \bigr)$ and $\bigl( \varnothing, (1^4) \bigr)$
respectively. Therefore \marginpar{ \ $\young(\oM,\oM,\oM,\oM)\quad \young(1,2,3,4)$} 
$\bigl( (2), (1,1) \bigr)$ is not the signed weight of a strongly maximal signed tableau
family of shape $(1^4)$ and size $1$. This illustrates Remark~\ref{remark:asymmetry}.
\end{example}

\subsection{Stable partition systems as intervals}\label{subsec:stablePartitionSystemsAsIntervals}
A special feature of the stable partition system $\PSeq{M}$ in our running example is that
all the partitions $\pi \in \PSeq{M}$ have the same negative part in their $2$-decomposition,
namely $(3+M,3+M)$.
This was a deliberate choice in order to give a system that was not immediately stable,
but still of manageable size and useful for proving stability results.
To give a more typical example we
suppose that instead of the plethysm coefficients 
$\langle s_{(3) + (M)} \circ s_{(4)}, s_{(4,4,4) \oplus M ((1^2), (2)} \rangle$
in~\eqref{eq:LawOkitaniExample}, we want to prove that
\[ \langle s_{(4)+(M)} \circ s_{(4)}, s_{(6,6,4) \oplus M((1^2),(2))} \rangle \]
is ultimately constant.
Since the Schur functions constituents of $s_{(4+M)} \circ s_{(4)}$ have at
most $4+M$ parts we must relax the length bound in $\PSeq{M}$, and so we now define
\[ \RSeq{M} = \bigl\{ \sigma \in \Par(16+4M) : \sigma \unrhddot (6+2M,6,4,2^M), \ell(\sigma) \le 4+M \bigr\} \]
still working with the $2$-twisted dominance order.
Observe that $\RSeq{0}$ contains $(6,6,4)$, $(7,7,1,1)$, $(6,5,4,1)$,
$(5,5,4,2)$ 
with increasing
 negative parts $(3,3)$, $(4,2)$, $(4,3)$ and $(4,4)$ respectively.  
To show that $(\RSeq{M})_{M \in \N_0}$ is stable
we reinterpret each set $\RSeq{M}$ as an interval for the $2$-twisted dominance
order using the idea seen in Example~\ref{ex:lengthBound}. Observe that 
$\ell(\sigma) \le 4+M$ 
if and only if $\sigma^\- \unlhd (4+M,4+M)$. Since 
$(10+2M,2,2,2,2^M) \decMap \dec{(4+M,4+M)}{(8+2M)}$ is the greatest partition
of $16+4M$
in the $2$-twisted dominance order satisfying this condition, we have
\[ \RSeq{M} = \bigl[(6+2M,6,4,2^M), (10+2M,2,2,2,2^M)\bigr]_\unlhddotS. \] 
The stability of $(\RSeq{M})_{M \in \N_0}$ is then a special case of Corollary~\ref{cor:signedIntervalStable}.
The four bounds in this corollary are $M \ge -2$, $M \ge 1$, $M \ge 2$ and $M \ge 1$, 
respectively, so the stability bound is $M \ge 2$. By this corollary,
this bound is a sufficient condition for $\pmap : \RSeq{M} \rightarrow \RSeq{M+1}$
to be a bijection;
computation using the Magma code mentioned after
Definition~\ref{defn:F} shows that this bound is also necessary:
$|\RSeq{M}| = 40, 57, 60, 60$ for $0 \le M \le 3$.


%

\section{Stable partition systems defined by twisted 
intervals}\label{sec:stablePartitionSystems}

In this section we prove the technical result, Corollary~\ref{cor:signedIntervalStable},
that suitable sequences of intervals in
the $\ell^\-$-twisted dominance order define
stable partition systems. These are the stable partition systems
 we use in the Signed Weight Lemma (Lemma~\ref{lemma:SWL}) to prove Theorems~\ref{thm:muStable} and~\ref{thm:nuStable}.

\subsection{Unsigned intervals}\label{subsec:unsignedIntervals}
Recall from \S\ref{sec:preliminaryDefinitions} 
that  we write $\unLHD$ for the dominance order extended to partitions possibly of different sizes.
Given partitions $\gamma$ and $\delta$ each with at most $\ellp$ parts, 
we define the \emph{unsigned interval} 
$[\gamma,\delta]^\ellpb_\unLHDS$ by
\[ [\gamma,\delta]^\ellpb_\unLHDS = \{ \sigma \in \Par : \gamma \unLHD \sigma \unLHD \delta
\text{ and $\ell(\sigma) \le \ellp$} \}.
\]
Note that unless $|\gamma| \le |\delta|$ the interval is empty.

\begin{remark}\label{remark:intervalNotation}
If $|\gamma| = |\delta|$ and $\ell(\gamma) \le \ellp$  then
\smash{$ [\gamma,\delta]^\ellpb_\unLHDS = \{ \sigma \in \Par : \gamma \unlhd\, \sigma \unlhd \delta \}$}
since any partition $\sigma$ such that $\gamma \unlhd\, \sigma$ satisfies
$\ell(\sigma) \le \ell(\gamma)$; thus in this case we have
\smash{$[\gamma,\delta]^\ellpb_\unLHDS = [\gamma, \delta]_\unlhd$ }
and there is no ambiguity in using this simpler notation.
\end{remark}

In our applications, whenever $|\gamma| < |\delta|$, we shall take $\ellp = \ell^\-$
where $\ell^\-$ is the length of the negative part of the relevant signed weight, and so
the partitions in the unsigned interval $[\gamma,\delta]^\ellpb_\unLHDS$ 
all have at most $\ell^\-$ parts, as in the $\ell^\-$-decomposition
(see Definition~\ref{defn:ellDecomposition}).

\begin{definition}\label{defn:LBound}
Given partitions $\lambda$ and $\omega$ 
with $\lambda \unLHD \omega$ and a non-empty partition~$\kappa$,
each having at most $\ellp$ parts, let $\ell = \ell(\kappa)$ and set
\[ L_k = \frac{2\sum_{i=1}^{k-1} \omega_i + \omega_k + \omega_{k+1} - 2\sum_{i=1}^k \lambda_i}{\kappa_k - \kappa_{k+1}}
\]
for $k$ such that $1 \le k \le \ell$ and $\kappa_k > \kappa_{k+1}$.
Set $L_k = 0$ if $\kappa_k = \kappa_{k+1}$.
Define $\LBound\bigl( [\lambda, \omega]^{\ellpb}_\unLHDS , \kappa \bigr)$ 
to be the maximum of $L_1, \ldots, L_\ell$ if $\ellp > \ell$
and the maximum of $L_1,\ldots, L_{\ell-1}$ and $(|\omega| - |\lambda| - \omega_\ell)/\kappa_\ell$
if $\ellp = \ell$.
Set \smash{$\LBound\bigl( [\lambda, \omega]^\ellpb_\unLHDS, \varnothing \bigr) = 0$}.
\end{definition}

We remark that if $\ell(\lambda) \le \ell$ and $\ell(\omega) \le \ell$ then
$L_\ell = (2|\omega|- 2 |\lambda|  - \omega_\ell)/\kappa_\ell$. Thus
the bound in Definition~\ref{defn:LBound} may in this case
be strictly less
than the maximum of~$L_1,\ldots, L_\ell$.

Let $\kappa$ be a partition with $\ell(\kappa) \le \ellp$.
Since $\alpha \unLHD \beta$ implies $\alpha + \kappa \,\unLHD \beta + \kappa$ for any partitions
$\alpha$, $\beta$,
adding $\kappa$ defines an injective map from \smash{$[\gamma, \delta]^\ellpb_\unLHDS$}
to \smash{$[\gamma+\kappa,\delta+\kappa]^\ellpb_\unLHDS$}.


\begin{proposition}\label{prop:intervalStable}
Let $\lambda$ and $\omega$  be partitions and let $\kappa$ be a non-empty partition,
each having at most $\ellp$ parts.
Let $\pmap : \Par \rightarrow \Par$
be defined by $\pmap(\sigma) = \sigma + \kappa$. Let $M \in \N_0$. 
The injective map 
\[ \pmap : [\lambda + M\kappa, \omega + M\kappa]^\ellpb_\unLHDS
\longhookrightarrow [\lambda + (M+1)\kappa, \omega + (M+1)\kappa]^\ellpb_\unLHDS \]
is bijective provided $M \ge \LBound\bigl( [\lambda, \omega]^\ellpb_\unLHDS, \kappa \bigr)$.
\end{proposition}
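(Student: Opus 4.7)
The map $\sigma \mapsto \sigma + \kappa$ is injective for trivial reasons, and the earlier remark before the proposition states that it sends \smash{$[\lambda + M\kappa, \omega + M\kappa]^\ellpb_\unLHDS$} into \smash{$[\lambda + (M+1)\kappa, \omega + (M+1)\kappa]^\ellpb_\unLHDS$}. Thus the whole content is surjectivity: given $\tau$ in the target interval and setting $\sigma = \tau - \kappa$, I need to verify (a)~$\sigma$ is a partition, (b)~$\ell(\sigma) \le \ellp$, and (c)~$\lambda + M\kappa \unLHD \sigma \unLHD \omega + M\kappa$. Of these, (b) is immediate since $\ell(\sigma) \le \ell(\tau) \le \ellp$, and (c) is a direct translation of the dominance inequalities defining $\tau$'s membership in the target interval, since for every $k$, the partial sums satisfy $\sum_{i=1}^k \sigma_i = \sum_{i=1}^k \tau_i - \sum_{i=1}^k \kappa_i$, and the $(M+1)\kappa$ shift on each side of the ambient bounds leaves behind an $M\kappa$ shift after subtracting $\sum_{i=1}^k \kappa_i$ from both sides.

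The real work is (a). Writing $\ell = \ell(\kappa)$, the condition that $\sigma = \tau - \kappa$ is a partition is equivalent to
\[ \tau_k - \tau_{k+1} \ge \kappa_k - \kappa_{k+1} \quad \text{for $1 \le k \le \ell - 1$} \]
together with $\tau_\ell \ge \kappa_\ell + \tau_{\ell+1}$ (which reduces to $\tau_\ell \ge \kappa_\ell$ when $\ellp = \ell$, since then $\tau_{\ell+1} = 0$), the constraints for indices $k \ge \ell$ being automatic because $\kappa_{k+1} = 0$. For each index $k$ where $\kappa_k > \kappa_{k+1}$, I will combine the upper bound on $\tau_{k+1}$ obtained from the partial-sum inequalities at indices $k$ and $k+1$ with the lower bound on $\tau_k$ obtained from the inequalities at indices $k-1$ and $k$, giving
\[ \tau_k - \tau_{k+1} \ge 2\sum_{i=1}^k \lambda_i - 2\sum_{i=1}^{k-1}\omega_i - \omega_k - \omega_{k+1} + (M+1)(\kappa_k - \kappa_{k+1}). \]
Subtracting $\kappa_k - \kappa_{k+1}$ and solving for $M$ yields exactly the bound $M \ge L_k$ from Definition~\ref{defn:LBound}. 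When $\ellp = \ell$, the last-part condition $\tau_\ell \ge \kappa_\ell$ is handled separately: from $|\tau| \ge |\lambda| + (M+1)|\kappa|$ and $\sum_{i=1}^{\ell-1}\tau_i \le \sum_{i=1}^{\ell-1}\omega_i + (M+1)\sum_{i=1}^{\ell-1}\kappa_i$, subtraction gives $\tau_\ell \ge |\lambda| - \sum_{i=1}^{\ell-1}\omega_i + (M+1)\kappa_\ell$, which forces the threshold $M \ge (|\omega|-|\lambda|-\omega_\ell)/\kappa_\ell$ that appears in the definition of $\LBound$ in the $\ellp = \ell$ case. When $\ellp > \ell$, the same gap inequality at $k = \ell$ (with $\kappa_{\ell+1} = 0$) supersedes this, and so the cut-off $L_\ell$ is already absorbed into the maximum of $L_1, \ldots, L_\ell$.

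Taking the maximum over all $k$ with $\kappa_k > \kappa_{k+1}$ (together with the last-part bound in the boundary case) gives precisely $\LBound\bigl([\lambda,\omega]^\ellpb_\unLHDS, \kappa\bigr)$, and whenever $M$ exceeds this bound, every required gap inequality holds, so $\sigma = \tau - \kappa$ is a partition in the source interval. The single non-mechanical step is identifying the right combination of partial-sum inequalities producing the exact constants in Definition~\ref{defn:LBound}; everything else is bookkeeping, including the trivial case $\kappa = \varnothing$ where the interval map is the identity and $\LBound = 0$.
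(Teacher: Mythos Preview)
Your proof is correct and follows essentially the same argument as the paper's: the same combination of partial-sum inequalities to bound $\tau_k - \tau_{k+1}$, the same separate treatment of the last index when $\ellp = \ell$, and the same verification that $\sigma = \tau - \kappa$ lands in the source interval. Two trivial remarks: the ``constraints for indices $k \ge \ell$'' you call automatic should read $k > \ell$ (you already isolated $k = \ell$), and the closing aside about $\kappa = \varnothing$ is superfluous since the hypothesis assumes $\kappa$ is non-empty.
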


\begin{proof}
Let $\ell = \ell(\kappa)$ and let $N = M+1$. Let $\tau \in 
[\lambda + N\kappa, \omega + N\kappa]^\ellpb_\unLHDS$.
Observe that $\tau$ is of the form $\sigma + \kappa$ for a partition $\sigma$ if and only if
all $\ell$ inequalities in the chain
\begin{equation}
\label{eq:chain} 
\tau_1 - \kappa_1 \ge \tau_2 - \kappa_2 \ge \ldots \ge \tau_\ell - \kappa_\ell \ge \tau_{\ell+1} 
\end{equation}
hold. (For instance if $\tau_k < \kappa_k$ for some $k$ then since $\tau_k - \kappa_k < 0 \le 
\tau_{\ell+1}$, at least one inequality fails to hold.) 
Fix $k \le \ell$. Using the hypotheses
$\tau \unRHD \lambda + N\kappa$ and $\tau \unLHD \omega + N\kappa$ we have
\begin{align} \label{eq:kMle}
\sum_{i=1}^{k-1} \tau_i &\le \sum_{i=1}^{k-1}\omega_i + N\sum_{i=1}^{k-1}\kappa_i, \\
\label{eq:kge}
\sum_{i=1}^{k} \tau_i  & \ge \sum_{i=1}^k \lambda_i + N\sum_{i=1}^k \kappa_i, \\
\label{eq:kPle}
\sum_{i=1}^{k+1} \tau_i  &\le \sum_{i=1}^{k+1} \omega_i + N\sum_{i=1}^{k+1}\kappa_i.
\end{align}
Subtracting~\eqref{eq:kMle} from~\eqref{eq:kge} 
we get $\tau_k \ge -\sum_{i=1}^{k-1} \omega_i + \sum_{i=1}^k \lambda_i + N\kappa_k$ 
and subtracting~\eqref{eq:kge} from~\eqref{eq:kPle} we get
$\tau_{k+1} \le \sum_{i=1}^{k+1} \omega_i - \sum_{i=1}^k \lambda_i + N\kappa_{k+1}$.
Subtracting these two equations in turn, to form the linear combination
$-\eqref{eq:kPle} + 2\eqref{eq:kge} - \eqref{eq:kMle}$, we get
\begin{equation}\label{eq:tauBig} 
\tau_k - \tau_{k+1} \ge -2\sum_{i=1}^{k-1} \omega_i - \omega_k -\omega_{k+1} 
+2 \sum_{i=1}^k \lambda_i + N(\kappa_k - \kappa_{k+1}).
\end{equation}
Recalling that $M = N-1$, we deduce that
\begin{equation}\label{eq:tauBigB} (\tau_k - \kappa_k) -(\tau_{k+1} - \kappa_{k+1}) \ge
B_k + M(\kappa_k -\kappa_{k+1}) \end{equation}
where $B_k = -2\sum_{i=1}^{k-1} \omega_i - \omega_k -\omega_{k+1} + 2\sum_{i=1}^k \lambda_i$.
Note that if $\kappa_k = \kappa_{k+1}$, the inequality $\tau_k - \kappa_k \ge \tau_{k+1} - \kappa_{k+1}$
holds simply because $\tau$ is a partition. Therefore by taking 
$M \ge -B_k / (\kappa_k - \kappa_{k+1})$ for each $k$ such that $\kappa_k > \kappa_{k+1}$,
we deduce from~\eqref{eq:tauBigB} that every inequality in the chain~\eqref{eq:chain}
holds. Hence, provided $M \ge L_1, \ldots, L_\ell$,
we may define $\sigma = \tau - \kappa$, knowing that
$\sigma$ is a well-defined partition. 

If $\ellp = \ell$ then rather than $M \ge L_\ell$, we have only the weaker hypothesis 
that $M \ge (|\omega| - |\lambda| - \omega_\ell)/ \kappa_\ell$. However, in this case
$\ell(\lambda) \le \ell$, $\ell(\tau) \le \ell$ and $\ell(\omega) \le \ell$ and
\[ \begin{split} \tau_\ell = \sum_{i=1}^\ell \tau_i - \sum_{i=1}^{\ell-1} \tau_i  & 
\ge \bigl( \sum_{i=1}^\ell \lambda_i + \sum_{i=1}^\ell N\kappa_i \bigr) -
     \bigl( \sum_{i=1}^{\ell-1} \omega_i + \sum_{i=1}^{\ell-1} N\kappa_i \bigr) 
\\ &\quad = \sum_{i=1}^\ell \lambda_i - \sum_{i=1}^{\ell-1} \omega_i + N\kappa_\ell
= |\lambda| - |\omega| + \omega_\ell + N\kappa_\ell.
\end{split} \]
Hence $\tau_\ell \ge \kappa_\ell$, as we require, 
provided $(N-1)\kappa_\ell \ge |\omega| - |\lambda| - \omega_\ell$.
Therefore in the case $\ellp = \ell$ we may replace $L_\ell$ with
the weaker bound $(|\omega| - |\lambda| - \omega_\ell)/ \kappa_\ell$, 
and again $\sigma$ is a well-defined partition.

It remains to show that \smash{$\sigma \in [\lambda + M \kappa, \omega + M\kappa]^\ellpb_\unLHDS$}.
Since $\tau \,\unRHD  \lambda + (M+1)\kappa$ we have
$\sum_{i=1}^k \tau_i \ge \sum_{i=1}^k \lambda_i + (M+1)\sum_{i=1}^k \kappa_i$ for each $k \in \N$.
Therefore
$\sum_{i=1}^k \sigma_i \ge \sum_{i=1}^k \lambda_i + M\sum_{i=1}^k \kappa_i$
for each $k \in \N$, and hence $\sigma \unRHD\! \lambda + M\kappa$.
Very similarly one shows that $\sigma \unLHD \omega + M\kappa$. 
Finally since \smash{$\tau \in 
[\lambda + N\kappa, \omega + N\kappa]^\ellpb_\unLHDS$} 
we have $\ell(\tau) \le \ellp$, and since $\ell(\kappa) = \ell \le \ellp$,
it follows that $\ell(\sigma) \le \ellp$.
Therefore
\smash{$\sigma \in [\lambda + M\kappa, \omega + M\kappa]^\ellpb_\unLHDS$} is a preimage of $\tau$
under  $\pmap$ and since $\pmap$ is injective, it follows that
$\pmap$ is bijective for 
\smash{$M \ge \LBound\bigl( [\lambda, \omega]^\ellpb_\unLHDS, \kappa \bigr)$}.
\end{proof}

We give one of the smallest examples in which the bound in Definition~\ref{defn:LBound}
and Proposition~\ref{prop:intervalStable} is $2$:
see Examples~\ref{ex:F} and~\ref{ex:unsignedTableauStable}
for cases where two parts of~$\kappa$ agree.

\begin{example}\label{ex:321special}
We take $\lambda = (1,1,1)$, $\omega = (3)$ and $\kappa = (3,2,1)$. Routine calculations show that
the unsigned intervals $[(1,1,1),(3)]_\unlhd$, $[(4,3,2),(6,2,1)]_\unlhd$, $[(7,5,3), (9,4,2)]_\unlhd$ and 
$[(10,7,4), (12,6,3)]_\unlhd$
are as shown below
\[ \left\{ \begin{matrix} (3) \\ (2,1) \\ (1,1,1) \end{matrix} \right\} \hookrightarrow
   \left\{ \begin{matrix} (6,2,1) \\ (5,3,1) \\ \mbf{(5,2,2)} \\ \mbf{(4,4,1)} 
   				\\ (4,3,2) \end{matrix} \right\} \hookrightarrow
   \left\{ \begin{matrix} (9,4,2) \\ \mbf{(9,3,3)} \\ (8,5,2) \\ (8,4,3) \\ (7,6,2)				 
   				\\ (7,5,3) \end{matrix} \right \} \hookrightarrow
   \left\{ \begin{matrix} (12,6,3) \\ (12,5,4) \\ (11,7,3) \\ (11,6,4) \\ (10,8,3) 
   				\\ (10,7,4) \end{matrix} \right \}.
\]
The elements not in the image of the map $\sigma \stackrel{\pmap}{\longmapsto} \sigma + (3,2,1)$ are highlighted. 
Setting $\PSeq{M} = [(1,1,1) + M(3,2,1), (3) + M(3,2,1)]_\unlhd$ we see that $\pmap : \PSeq{2} \rightarrow \PSeq{3}$ is a
bijection. Correspondingly, by Proposition~\ref{prop:intervalStable},
$\pmap : \PSeq{M} \rightarrow \PSeq{M+1}$ is
bijective provided $M \ge \LBound\bigl( [(1,1,1), (3)]_{\unlhd}, (3,2,1)\bigr)$ and the right-hand
side is the maximum of 
$\max \{ \frac{3-2}{3-2}, \frac{6-4}{2-1} \} =
\max \{ 1, 2 \} = 2$ and \hbox{$\frac{3-3-0}{1-0} = 0$}.
\end{example}

\subsection{Twisted intervals}\label{subsec:signedIntervals}
We now extend Proposition~\ref{prop:intervalStable} to the twisted case.
Recall from~\S\ref{subsec:upsetsAndTwistedIntervals}
that for fixed $\ell^\- \in \N_0$, and partitions $\gamma$, $\delta$ of the
same size we defined the twisted interval 
$[\gamma,\delta]_\unlhddotS = \{ \sigma \in \Par(p) : \gamma \unlhddot \sigma \unlhddot \delta \}$,
where $\unlhddot$ is the $\ell^\-$-twisted dominance order.
It is obvious that addition of partitions preserves the dominance order.
By conjugating partitions, 
the same result holds for joining. Despite this, addition
\emph{does not} preserve the $\ell^\-$-twisted dominance order. For instance, taking $\ell^\- = 1$
we have 
\begin{align*} \dec{(1)}{(1)} \decMap (2) &\hbox{$\,\unlhddot\,$} (1,1) \decMap 
\dec{(2)}{\varnothing},\\
\intertext{whereas after adding $(1,1)$, }
\dec{(2)}{(2)} \decMap (3,1) &\hbox{$\,\unrhddot\,$} (2,2) \decMap \dec{(2)}{(1,1)}. \end{align*}
The problem does not arise for addition of $\delta$ when the partitions involved are $\bigl( \ell^\-, \ell(\delta) \bigr)$-large, in the sense of
Definition~\ref{defn:large}. Moreover, joining is better behaved. We establish this
in a series of easy lemmas.

\begin{lemma}\label{lemma:addAndJoinToLarge}
Fix $\ell^\- \in \N_0$. Let $\alpha$, $\gamma$ and $\delta$ be partitions.
\begin{thmlist}
\item If $\alpha$ is $\bigl( \ell^\-, \ell(\delta) \bigr)$-large then
$(\alpha + \delta)^\- = \alpha^\-$ and $(\alpha+\delta)^\+ = \alpha^\+ + \delta$.
\item If $\ell(\gamma) \le \ell^\-$ then $(\alpha \sqcup \gamma')^\- = \alpha^\- + \gamma^\-$
and $(\alpha \sqcup \gamma')^\+ = \alpha^\+$.
\end{thmlist}
\end{lemma}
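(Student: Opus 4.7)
The plan is to prove both parts by a direct calculation of column heights, which is the natural language for the $\ell^-$-decomposition since $\sigma^-$ is literally the first $\ell^-$ column heights of $\sigma$ and $\sigma^+$ records the remaining shape.

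For (i), I would first translate the hypothesis: $\alpha$ being $\bigl(\ell^-,\ell(\delta)\bigr)$-large means $\alpha_{\ell(\delta)}\ge \ell^-$, so every row of $[\alpha]$ of index at most $\ell(\delta)$ already extends at least to column $\ell^-$. Adding $\delta$ only appends boxes to rows $1,\dots,\ell(\delta)$, and by the preceding observation each such added box lies strictly to the right of column $\ell^-$. Hence $(\alpha+\delta)'_i = \alpha'_i$ for all $i \le \ell^-$, giving $(\alpha+\delta)^- = \alpha^-$ immediately from Definition~\ref{defn:ellDecomposition}. The second equality then follows from
\[ (\alpha+\delta)^+ = (\alpha+\delta) - (\alpha+\delta)^{-\prime} = (\alpha+\delta) - \alpha^{-\prime} = \alpha^+ + \delta. \]

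For (ii), I would use the defining identity $(\alpha\sqcup\beta)' = \alpha'+\beta'$ with $\beta = \gamma'$, giving $(\alpha\sqcup\gamma')' = \alpha' + \gamma$. Thus $(\alpha\sqcup\gamma')'_i = \alpha'_i + \gamma_i$ for every $i \in \N$. Since $\ell(\gamma)\le\ell^-$, we have $\gamma_i = 0$ for all $i > \ell^-$, so $(\alpha\sqcup\gamma')'_i = \alpha'_i$ for $i>\ell^-$; reading off the first $\ell^-$ column heights gives $(\alpha\sqcup\gamma')^- = \alpha^- + \gamma$ (with $\gamma$ read as a weight of length at most $\ell^-$, i.e.~as the signed-weight component), while reading what remains after removing these columns gives $(\alpha\sqcup\gamma')^+ = \alpha^+$.

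There is no real obstacle: both parts amount to verifying that the operation in question leaves one of the two regions of the $\ell^-$-decomposition untouched, and this follows in each case from a single-line observation about where boxes can be added. The only thing to keep an eye on is the size condition $\alpha_{\ell(\delta)}\ge\ell^-$ in (i), which one should unpack carefully using Definition~\ref{defn:large} to cover the degenerate cases $\ell^-=0$ and $\ell(\delta)=0$ --- in both of these the statement is trivial, so nothing new is required.
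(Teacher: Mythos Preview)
Your proof is correct and follows essentially the same approach as the paper, which argues pictorially that adding $\delta$ creates no new boxes in the first $\ell^-$ columns of $[\alpha]$ while joining $\gamma'$ creates none outside them. Your use of the identity $(\alpha\sqcup\gamma')'=\alpha'+\gamma$ in part~(ii) is a clean algebraic rendering of the same observation, and your parenthetical correctly identifies that the summand added to $\alpha^-$ is $\gamma$ itself, read as a weight of length at most $\ell^-$.
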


\begin{proof}The most transparent proof uses Young diagrams.
By hypothesis $[\alpha]$ contains the boxes $(i,j)$ for $1 \le i \le \ell(\delta)$
and $1 \le j \le \ell^\-$. Hence
addition of $\delta$ creates no new boxes in the first $\ell^\-$ columns of $\alpha$.
Similarly joining $\gamma'$ creates no new boxes outside the first $\ell^\-$ columns of $\alpha$.
\end{proof}

\begin{lemma}\label{lemma:adjoinToLarge}
Let $\kappa^\-$ and $\kappa^\+$ be partitions. 
If $\alpha$ is a \smash{$\bigl( \ell(\kappa^\-), \ell(\kappa^\+) \bigr)$}-large partition then
in the $\ell(\kappa^\-)$-decomposition of $\alpha \,\oplus\, \swtp{\kappa}$ we have
\smash{$\bigl( \alpha \oplus \swtp{\kappa} \bigr)^\-$} $= \alpha^\- + \kappa^\-$ and
\smash{$\bigl( \alpha \oplus \swtp{\kappa} \bigr)^\+ = \alpha^\+ + \kappa^\+$}.
Moreover, adding and joining to $\alpha$ are commuting operations.
\end{lemma}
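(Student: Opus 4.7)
The lemma is a direct consequence of Lemma~\ref{lemma:addAndJoinToLarge} (applied twice), together with a Young-diagram argument for the commutation claim. Set $\ell^\- = \ell(\kappa^\-)$ and $\ell^\+ = \ell(\kappa^\+)$. Recall from~\eqref{eq:oplus} that $\alpha \oplus \swtp{\kappa} = (\alpha + \kappa^\+) \sqcup \kappa^{\-\prime}$.

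First I would compute the $\ell^\-$-decomposition of $\alpha + \kappa^\+$ by invoking Lemma~\ref{lemma:addAndJoinToLarge}(i) with $\delta = \kappa^\+$; the largeness hypothesis on $\alpha$ is precisely what is required. This gives $(\alpha + \kappa^\+)^\- = \alpha^\-$ and $(\alpha + \kappa^\+)^\+ = \alpha^\+ + \kappa^\+$. Then I would apply Lemma~\ref{lemma:addAndJoinToLarge}(ii) to $\alpha + \kappa^\+$ with $\gamma = \kappa^\-$ (whose length is $\ell^\-$, so the hypothesis $\ell(\gamma) \le \ell^\-$ holds trivially). This gives
\[
\bigl((\alpha + \kappa^\+) \sqcup \kappa^{\-\prime}\bigr)^\- = (\alpha+\kappa^\+)^\- + \kappa^\- = \alpha^\- + \kappa^\-,
\quad
\bigl((\alpha + \kappa^\+) \sqcup \kappa^{\-\prime}\bigr)^\+ = \alpha^\+ + \kappa^\+,
\]
as claimed.

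For the commutation statement, I would argue via Young diagrams. The hypothesis that $\alpha$ is $(\ell^\-, \ell^\+)$-large means $[\alpha]$ contains the full rectangle of boxes $(i,j)$ with $1 \le i \le \ell^\+$ and $1 \le j \le \ell^\-$. Adding $\kappa^\+$ to $\alpha$ only places new boxes in rows $1, \ldots, \ell^\+$, and since each such row already has at least $\ell^\-$ boxes, every new box lies strictly to the right of column $\ell^\-$. On the other hand, joining $\kappa^{\-\prime}$ (equivalently, adding $\kappa^\-$ to the conjugate) only lengthens columns $1, \ldots, \ell^\-$. Hence the box sets added by the two operations are disjoint, and either order of application yields the same Young diagram; this is the commutation statement.

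The argument is essentially a bookkeeping exercise and no genuine obstacle arises. The only point requiring a little care is the implicit assertion that $\alpha + \kappa^\+$ remains $(\ell^\-, \ell^\+)$-large, which holds because adding $\kappa^\+$ can only increase each of the first $\ell^\+$ rows of $\alpha$; this is what permits the two-step application of Lemma~\ref{lemma:addAndJoinToLarge} to be interpreted on a common Young diagram and thus what makes the commutation statement meaningful.
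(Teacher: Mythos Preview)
Your proof is correct and takes essentially the same approach as the paper, which simply says ``This is immediate from Lemma~\ref{lemma:addAndJoinToLarge}.'' You have spelled out the two-step application of that lemma and supplied the Young-diagram argument for commutation that the paper leaves implicit.
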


\begin{proof}
This is immediate from Lemma~\ref{lemma:addAndJoinToLarge}.
\end{proof}

In particular, if $\KK \in \N$ and $\alpha$ is a \smash{$\bigl( \ell(\kappa^\-), \ell(\kappa^\+) \bigr)$}-large partition then $\alpha \oplus (\KK-1)\swtp{\alpha}$ is a partition having
$\ell^\-$-decomposition $\KK\decs{\alpha^\-}{\alpha^\+}$.
We use this remark in the proof of Lemma~\ref{lemma:omegaKappaEllDecomposition}.

\begin{lemma}[Twisted dominance order on large partitions is preserved by adjoining]\label{lemma:signedDominancePreservedByOplusWhenLarge}
Let $\kappa^\-$, $\kappa^\+$ be partitions. Set $\ell^\- = \ell(\kappa^\-)$. 
Suppose that $\alpha$ and $\beta$ are $\bigl( \ell(\kappa^\-), \ell(\kappa^\+) \bigr)$ large.
Then, working in the $\ell^\-$-twisted dominance order, $\alpha \unlhddot \beta$ if and only if
$\alpha \oplus \swtp{\kappa} \unlhddot \beta \oplus \swtp{\kappa}$.
\end{lemma}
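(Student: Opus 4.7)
The plan is to reduce the statement to the characterization of the $\ell^-$-twisted dominance order in Lemma~\ref{lemma:twistedDominanceOrderOldDefinition} and then to a purely dominance-order calculation on the negative and positive components of the $\ell^-$-decomposition.

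First I would invoke Lemma~\ref{lemma:adjoinToLarge}. Since $\alpha$ and $\beta$ are both $\bigl(\ell(\kappa^-), \ell(\kappa^+)\bigr)$-large, the $\ell^-$-decompositions of the adjoined partitions are
\[
\bigl(\alpha \opluss \swtp{\kappa}\bigr) \decMap \dec{\alpha^- + \kappa^-}{\alpha^+ + \kappa^+}, \qquad
\bigl(\beta \opluss \swtp{\kappa}\bigr) \decMap \dec{\beta^- + \kappa^-}{\beta^+ + \kappa^+}.
\]
So by Definition~\ref{defn:ellTwistedDominanceOrder}, the relation $\alpha \oplus \swtp{\kappa} \unlhddot \beta \oplus \swtp{\kappa}$ is equivalent to $\dec{\alpha^- + \kappa^-}{\alpha^+ + \kappa^+} \unlhd \dec{\beta^- + \kappa^-}{\beta^+ + \kappa^+}$ in the $\ell^-$-signed dominance order.

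Next I would apply Lemma~\ref{lemma:twistedDominanceOrderOldDefinition} (Characterization of the $\ell^-$-dominance order) to both sides. On the $\alpha$, $\beta$ side, $\alpha \unlhddot \beta$ becomes: (a) $\alpha^- \unLHD \beta^-$, and (b) $|\alpha^+| \ge |\beta^+|$ and $\alpha^+ \unlhd \beta^+ + (|\alpha^+| - |\beta^+|)$. On the adjoined side, the corresponding conditions read: (a$'$) $\alpha^- + \kappa^- \unLHD \beta^- + \kappa^-$, and (b$'$) $|\alpha^+ + \kappa^+| \ge |\beta^+ + \kappa^+|$ and $\alpha^+ + \kappa^+ \unlhd \beta^+ + \kappa^+ + (|\alpha^+| - |\beta^+|)$ (using that $|\alpha^+ + \kappa^+| - |\beta^+ + \kappa^+| = |\alpha^+| - |\beta^+|$).

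The equivalence (a) $\iff$ (a$'$) and (b) $\iff$ (b$'$) now follows immediately from the elementary fact that for partitions $\pi$, $\sigma$, $\eta$ of appropriate sizes, $\pi \unLHD \sigma$ if and only if $\pi + \eta \unLHD \sigma + \eta$, applied once with $\eta = \kappa^-$ to the negative components and once with $\eta = \kappa^+$ to the positive components. The cancellation of $|\kappa^+|$ in the size comparison in (b$'$) is immediate.

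There is no real obstacle here: the only subtle point is that the identities $(\alpha \oplus \swtp{\kappa})^\pm = \alpha^\pm + \kappa^\pm$ genuinely require the $\bigl(\ell(\kappa^-), \ell(\kappa^+)\bigr)$-largeness hypothesis (as illustrated by the small counterexample $\dec{(1)}{(1)} \unlhddot (1,1)$ versus $\dec{(2)}{(2)} \notunlhddot (2,2)$ given just before the statement), and it is precisely the role of Lemma~\ref{lemma:adjoinToLarge} to handle this. Once that reduction is made, everything else is bookkeeping in the ordinary dominance order.
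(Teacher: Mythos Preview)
Your proposal is correct and follows essentially the same approach as the paper: both invoke Lemma~\ref{lemma:adjoinToLarge} to compute the $\ell^-$-decomposition of the adjoined partitions, then reduce to the translation-invariance of the signed dominance order. The paper is slightly terser, observing directly from Definition~\ref{defn:ellSignedDominanceOrder} (signed dominance is dominance on concatenated sequences) that adding $(\kappa^-,\kappa^+)$ to both sides is obviously order-preserving, whereas you unpack this via Lemma~\ref{lemma:twistedDominanceOrderOldDefinition}; your extra detail is harmless but not needed.
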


\begin{proof}
By Lemma~\ref{lemma:adjoinToLarge} we have
\begin{align}
\label{eq:oplusM}
\bigl( \lambda \oplus (\kappa^\-, \kappa^\+) \bigr)^\- &= \lambda^\- + \kappa^\-, \\
\label{eq:oplusP}
\bigl( \lambda \oplus (\kappa^\-, \kappa^\+) \bigr)^\+ &= \lambda^\+ + \kappa^\+.
\end{align}
Therefore it is
equivalent to show that $\decs{\alpha^\-}{\alpha^\+} \unlhd \decs{\beta^\-}{\beta^\+}$ 
if and only if
$\decs{\alpha^\- + \kappa^\-}{\alpha^\+ + \kappa^\+} \unlhd \decs{\beta^\- + \kappa^\-}{\beta^\+ + \kappa^\+}$, which is obvious.
\end{proof}

\begin{lemma}\label{lemma:downsetIsLarge}
Fix $\ell^\- \in \N_0$ and let $\ell^\+ \in \N_0$.
Let $\omega$ be a $(\ell^\-+1, \ell^\+)$-large partition. If $\pi \,\unlhddotS\, \omega$
then $\pi$ is $(\ell^\-+1, \ell^\+)$-large.
\end{lemma}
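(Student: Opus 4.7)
The plan is short: reduce the $(\ell^-+1, \ell^+)$-large condition to a statement about the length of the positive part of the $\ell^-$-decomposition, then invoke Lemma~\ref{lemma:signedDominancePositiveLength}.

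First, I would dispose of the trivial cases. If $\ell^+ = 0$ or $\ell^- + 1 = 0$, every partition is vacuously $(\ell^-+1,\ell^+)$-large by Definition~\ref{defn:large}, so there is nothing to prove. Hence assume $\ell^+ \ge 1$.

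The key translation is that a partition $\mu$ is $(\ell^-+1, \ell^+)$-large if and only if $\mu_{\ell^+} \ge \ell^- + 1$, which (by conjugation) is equivalent to $\mu'_{\ell^- + 1} \ge \ell^+$. But, recalling Notation~\ref{defn:ellDecomposition}, the length of the positive part in the $\ell^-$-decomposition is precisely $\ell(\mu^+) = \mu'_{\ell^- + 1}$ (this is the index of the last column of $[\mu]$ extending strictly below row $\ell^-$). Thus $\mu$ is $(\ell^-+1, \ell^+)$-large if and only if $\ell(\mu^+) \ge \ell^+$.

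With this reformulation, the hypothesis that $\omega$ is $(\ell^-+1,\ell^+)$-large becomes $\ell(\omega^+) \ge \ell^+$. Since $\pi \unlhddotS \omega$ in the $\ell^-$-twisted dominance order, Lemma~\ref{lemma:signedDominancePositiveLength} yields $\ell(\pi^+) \ge \ell(\omega^+) \ge \ell^+$, and the reformulation then gives that $\pi$ is $(\ell^-+1, \ell^+)$-large, as required. No obstacle here — the lemma is essentially a restatement of Lemma~\ref{lemma:signedDominancePositiveLength} via the dictionary between the large condition and the length of the positive part of the $\ell^-$-decomposition.
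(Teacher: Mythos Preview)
Your proof is correct and follows essentially the same route as the paper: translate the $(\ell^-+1,\ell^+)$-large condition into $\ell(\mu^+)\ge\ell^+$ and then apply Lemma~\ref{lemma:signedDominancePositiveLength}. The paper just cites Remark~\ref{remark:ellDecompositionLarge} for the translation step, whereas you spell it out directly; one small remark is that your parenthetical description of $\mu'_{\ell^-+1}$ is garbled (it is the height of column $\ell^-+1$, i.e.\ the number of rows with more than $\ell^-$ boxes), though the formula $\ell(\mu^+)=\mu'_{\ell^-+1}$ itself is correct.
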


\begin{proof}
By Remark~\ref{remark:ellDecompositionLarge}, a 
partition $\alpha$ is $(\ell^\-+1, \ell^\+)$-large if and only if 
\hbox{$\ell(\alpha^\+) \ge \ell^\+$}.
Therefore $\ell(\omega^\+) \ge \ell^\+$ 
and  since
$\pi \unlhddots \omega$, Lemma~\ref{lemma:signedDominancePositiveLength} implies that
 $\ell(\pi^\+) \ge \ell^\+$. 
\end{proof}

\begin{lemma}\label{lemma:twistedDownsetLarge}
Fix $\ell^\- \in \N_0$ and let $\ell^\+ \in \N_0$.
Let $\lambda$ and $\omega$ be partitions such that $\omega$ is $(\ell^\-+1,\ell^\+)$-large.
If $\pi \in [\lambda, \omega]_\unlhddotS$ then $\pi$ is $(\ell^\-,\ell^\+)$-large. 
In particular $\lambda$ is $(\ell^\-,\ell^\+)$-large.
\end{lemma}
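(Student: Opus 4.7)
The plan is to reduce this lemma directly to the immediately preceding Lemma~\ref{lemma:downsetIsLarge}, using the trivial observation that $(\ell^\-+1,\ell^\+)$-largeness implies $(\ell^\-,\ell^\+)$-largeness. There is really no hard step; the lemma is a mild weakening packaged for convenient use downstream.

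First I would dispatch the degenerate cases. If $\ell^\- = 0$ or $\ell^\+ = 0$, then every partition is $(\ell^\-,\ell^\+)$-large by Definition~\ref{defn:large}, so the conclusion is vacuous. From now on I assume $\ell^\-,\ell^\+ \ge 1$.

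Next, fix $\pi \in [\lambda,\omega]_\unlhddotS$, which in particular means $\pi \unlhddots \omega$. Because $\omega$ is $(\ell^\-+1,\ell^\+)$-large by hypothesis, Lemma~\ref{lemma:downsetIsLarge} applies and yields that $\pi$ is itself $(\ell^\-+1,\ell^\+)$-large. Unwinding Definition~\ref{defn:large}, this says $\pi_{\ell^\+} \ge \ell^\- + 1$, hence $\pi_{\ell^\+} \ge \ell^\-$, which is exactly the assertion that $\pi$ is $(\ell^\-,\ell^\+)$-large.

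Finally, the ``in particular'' clause follows by taking $\pi = \lambda$, which lies in $[\lambda,\omega]_\unlhddotS$ (this is the implicit non-emptiness condition on the interval, equivalent to $\lambda \unlhddots \omega$). The only conceptual point worth highlighting --- and the reason the lemma is stated separately from Lemma~\ref{lemma:downsetIsLarge} --- is that subsequent applications (e.g.\ in combination with Lemma~\ref{lemma:adjoinToLarge} and Lemma~\ref{lemma:signedDominancePreservedByOplusWhenLarge}) need only the weaker $(\ell^\-,\ell^\+)$-largeness in order to manipulate $\ell^\-$-decompositions and the adjoining operation $\oplus$, so the extra unit of slack built into the hypothesis on $\omega$ does no harm.
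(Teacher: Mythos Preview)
Your proof is correct and follows essentially the same route as the paper's: apply Lemma~\ref{lemma:downsetIsLarge} to obtain that $\pi$ is $(\ell^\-+1,\ell^\+)$-large, then observe that this implies $(\ell^\-,\ell^\+)$-large. The paper's version is more terse (a single sentence citing Remark~\ref{remark:ellDecompositionLarge} for the implication), but the argument is identical.
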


\begin{proof}
This is immediate from Lemma~\ref{lemma:downsetIsLarge}
since, as used in Remark~\ref{remark:ellDecompositionLarge}, 
if a partition is $(\ell^\-+1,\ell^\+)$-large then it is $(\ell^\-,\ell^\+)$-large.
\end{proof}

When $\ell^\- \not=0$, the hypothesis in the previous lemma cannot be weakened
to the apparently more natural condition that $\omega$ is $(\ell^\-, \ell^\+)$-large. For example,
taking $\ell^\- = 2$, both $(3,2)$ and $(2,2,1)$ are $(2,2)$-large, but since
\[ (3,2) \decMap \dec{(2,2)}{(1)},\ (3,1,1) \decMap \dec{(3,1)}{(1)},\ (2,2,1) \decMap \dec{(3,2)}{\varnothing}, \]
the twisted interval $[(3,2), (2,2,1)]_\unlhddotS$ for the $2$-twisted dominance order 
contains $(3,1,1)$ which is not $(2,2)$-large. See Remark~\ref{remark:ellDecompositionLarge} for 
one sign that the hypothesis in Lemma~\ref{lemma:twistedDownsetLarge} is in fact the correct one
when $\ell^\- \not= 0$. By Remark~\ref{remark:becomesLarge}, when $\ell^\- \not= 0$, any partition
can be made $\bigl( \ell(\kappa^\-)+1, \ell(\kappa^\+) \bigr)$-large by 
sufficiently many applications of the adjoining map $\lambda \mapsto \lambda
\oplus (\kappa^\-, \kappa^\+)$ so, as usual, any `largeness' assumption are made without loss of generality.

The $\mathrm{L}$ bounds  in the following proposition are defined in Definition~\ref{defn:LBound}.
Remark~\ref{remark:intervalNotation} explains the different notations
for intervals in the dominance order in the first two bounds in the lemma.

\begin{proposition}[Partition Stability]\label{prop:signedIntervalStable}
Let $\kappa^\-$ and $\kappa^\+$ be partitions. Fix $\ell^\- = \ell(\kappa^\-)$ and $\ell^\+ = \ell(\kappa^\+)$.
Let $\omega$ be a partition
 and let $\lambda \unlhddots \omega$ in the $\ell^\-$-twisted
dominance order.
If $\ell^\- \not=0$ then suppose that $\omega$ is $\bigl(\ell^\-+1, \ell^\+\bigr)$-large.
 For each $M \in \N_0$,
there is an injective map of intervals for the $\ell^\-$-twisted
dominance order
\[ \begin{split} 
\pmap : \bigl[\lambda  \,\opluss\, &M(\kappa^\-, \kappa^\+), \omega \opluss M(\kappa^\-,\kappa^\+)\bigr]_\unlhddotS \\
&\hookrightarrow \bigl[\lambda \opluss (M+1)(\kappa^\-,\kappa^\+), \omega 
\opluss (M+1)(\kappa^\-,\kappa^\+)\bigr]_\unlhddotS
\end{split} \]
defined, using the $\ell^\-$-decomposition, 
by $\pmap(\sigma) = \sigma \oplus (\kappa^\-, \kappa^\+)$. This map
is bijective provided $M \ge L$ where $L$ is the maximum of
\begin{bulletlist}
\item $\LBound\bigl([\lambda^\-,\omega^\-]^\ellmb_\unLHDS, \kappa^\-\bigr)$,
\item $\LBound\bigl([\lambda^\+,\omega^\++(|\lambda^\+|-|\omega^\+|)]_\unlhd, \kappa^\+\bigr)$,
\item $\bigl( \omega_1^\+ + \omega_2^\+ - 2\lambda_1^\+ + 2|\lambda^\+| - 2|\omega^\+| 
\bigr)/(\kappa^\+_1-\kappa^\+_2)$,
\item $\bigl( \max( \ell(\lambda^\+), \ell^\+ ) + |\omega^\-| - |\lambda^\-| - \omega^\-_{\ell^\-} \bigr)/
 \kappa^\-_{\ell^\-}$
\end{bulletlist}
where the third is omitted if $\kappa^\+_1 = \kappa^\+_2$ and the fourth is 
omitted if $\kappa^\- = \varnothing$.
\end{proposition}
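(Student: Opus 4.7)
The plan is to pass to the $\ell^\-$-decomposition (Definition~\ref{defn:ellDecomposition}) and reduce the twisted statement to two independent applications of the unsigned Proposition~\ref{prop:intervalStable}, one for the negative component and one for the positive component. Well-definedness and injectivity are quick: since $\omega$ is $(\ell^\-+1,\ell^\+)$-large, so is $\omega \oplus M\swtp{\kappa}$, and by Lemma~\ref{lemma:twistedDownsetLarge} every $\sigma$ in the source interval is $(\ell^\-,\ell^\+)$-large. Lemma~\ref{lemma:adjoinToLarge} then yields $(\sigma \oplus \swtp{\kappa})^\- = \sigma^\- + \kappa^\-$ and $(\sigma \oplus \swtp{\kappa})^\+ = \sigma^\+ + \kappa^\+$, whence $\pmap$ is injective, and Lemma~\ref{lemma:signedDominancePreservedByOplusWhenLarge} places $\pmap(\sigma)$ in the target interval.

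The content is therefore surjectivity. Fix $\tau$ in the target interval and set tentatively $\sigma^\- = \tau^\- - \kappa^\-$ and $\sigma^\+ = \tau^\+ - \kappa^\+$ componentwise. By Lemma~\ref{lemma:twistedDominanceOrderOldDefinition}, $\tau^\-$ lies in the unsigned interval $[\lambda^\-+(M+1)\kappa^\-, \omega^\-+(M+1)\kappa^\-]^\ellmb_\unLHDS$, and applying Proposition~\ref{prop:intervalStable} with $\ellp = \ell^\-$ and partitions $\lambda^\-, \omega^\-, \kappa^\-$ places $\sigma^\-$ in $[\lambda^\-+M\kappa^\-, \omega^\-+M\kappa^\-]^\ellmb_\unLHDS$ once $M$ exceeds the first bound. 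For $\sigma^\+$, the twisted constraints allow $|\tau^\+|$ to vary in $\bigl[|\omega^\+|+(M+1)|\kappa^\+|,\; |\lambda^\+|+(M+1)|\kappa^\+|\bigr]$; letting $c_\tau = |\lambda^\+|+(M+1)|\kappa^\+|-|\tau^\+| \in [0,\, |\lambda^\+|-|\omega^\+|]$ and forming $\tilde{\tau}^\+$ by adding $c_\tau$ to the first part of $\tau^\+$, a direct check shows $\tilde{\tau}^\+$ lies in the fixed-size unsigned interval $[\lambda^\++(M+1)\kappa^\+,\, \omega^\++(|\lambda^\+|-|\omega^\+|)+(M+1)\kappa^\+]_\unlhd$. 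Applying Proposition~\ref{prop:intervalStable} here makes $\tilde{\sigma}^\+ := \tilde{\tau}^\+ - \kappa^\+$ a partition and produces the second bound. To recover $\sigma^\+ = \tilde{\sigma}^\+$ with $c_\tau$ subtracted from the first part as a partition, one additionally needs $\tilde{\sigma}^\+_1 - \tilde{\sigma}^\+_2 \ge c_\tau$; a partial-sum computation in the spirit of the proof of Proposition~\ref{prop:intervalStable}, taking $c_\tau$ at its worst-case value $|\lambda^\+|-|\omega^\+|$, yields exactly the third bound, which is omitted when $\kappa^\+_1 = \kappa^\+_2$ since then $\sigma^\+_1 \ge \sigma^\+_2$ is automatic.

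It remains to verify that $(\sigma^\-,\sigma^\+)$ is the $\ell^\-$-decomposition of an honest partition $\sigma$, which by Remark~\ref{remark:ellDecompositionLarge} is the inequality $\sigma^\-_{\ell^\-} \ge \ell(\sigma^\+)$. Subtracting consecutive partial-sum bounds from $\lambda^\-+(M+1)\kappa^\- \unLHD \tau^\- \unLHD \omega^\-+(M+1)\kappa^\-$ gives $\tau^\-_{\ell^\-} \ge |\lambda^\-|-|\omega^\-|+\omega^\-_{\ell^\-}+(M+1)\kappa^\-_{\ell^\-}$, while Lemma~\ref{lemma:signedDominancePositiveLength} applied to $\lambda \oplus (M+1)\swtp{\kappa} \unlhddot \tau$ forces $\ell(\tau^\+) \le \ell\bigl(\lambda^\++(M+1)\kappa^\+\bigr) = \max(\ell(\lambda^\+), \ell^\+)$. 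Combining with $\sigma^\-_{\ell^\-} = \tau^\-_{\ell^\-} - \kappa^\-_{\ell^\-}$ and $\ell(\sigma^\+) \le \ell(\tau^\+)$, the required inequality rearranges to the fourth bound; when $\kappa^\- = \varnothing$ this is vacuous. With $\sigma$ a genuine partition, Lemma~\ref{lemma:signedDominancePreservedByOplusWhenLarge} applied in reverse puts $\sigma$ in the source interval. The main obstacle is the positive-component analysis: the variable size $|\tau^\+|$ forces the passage to $\tilde{\tau}^\+$, and the worst-case $c_\tau = |\lambda^\+|-|\omega^\+|$ is what produces the factor of $2$ in the third bound, going strictly beyond what a direct invocation of Proposition~\ref{prop:intervalStable} would yield; correctly isolating this contribution at $k=1$ while confirming that the $k \ge 2$ cases are absorbed into the second bound is the delicate bookkeeping step.
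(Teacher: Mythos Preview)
Your proposal is correct and takes essentially the same approach as the paper: decompose into negative and positive components, apply Proposition~\ref{prop:intervalStable} to each (yielding the first two bounds), then handle the variable size of $\tau^\+$ via a first-part shift (your $\tilde{\tau}^\+$, $\tilde{\sigma}^\+$, $c_\tau$ are exactly the paper's $\tau^\+ + (|\lambda^\+|+N|\kappa^\+|-|\tau^\+|)$, $\theta$, and the same shift) to obtain the third bound, and finally verify the $\ell^\-$-decomposition condition $\sigma^\-_{\ell^\-}\ge\ell(\sigma^\+)$ for the fourth. Your treatment of the fourth bound is slightly more direct than the paper's auxiliary $\psi$, but the underlying inequality is identical.
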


\begin{proof}
If $\ell^\- \not=0$ then,
by hypothesis the partitions $\omega \opluss M(\kappa^\-,\kappa^\+)$
and $\omega \opluss (M+1)(\kappa^\-,\kappa^\+)$ 
are $\bigl(\ell^\-+1, \ell^\+\bigr)$-large.
Hence, by Lemma~\ref{lemma:twistedDownsetLarge}, every partition in each
twisted interval 
$\bigl(\ell^\-, \ell^\+\bigr)$-large. If $\ell^\- = 0$
then \emph{any} partition is $\bigl(0, \ell^\+\bigr)$ large.
By the `only if' direction of 
Lemma~\ref{lemma:signedDominancePreservedByOplusWhenLarge} it now follows that the map $\pmap$ 
on these 
twisted intervals
preserves the $\ell^\-$-twisted dominance order. Hence
the image of the left-hand twisted interval under~$\pmap$ is contained in the
right-hand twisted interval. 
Set $N = M +1$ and suppose that $M$ satisfies the inequalities in the proposition.
By Lemma~\ref{lemma:adjoinToLarge} and Lemma~\ref{lemma:twistedDominanceOrderOldDefinition} we have
$\tau \in \bigl[\lambda  \opluss N(\kappa^\-, \kappa^\+), \omega \opluss N(\kappa^\-,\kappa^\+)\bigr]_\unlhddotS$
if and only if
\begin{defnlistE}
\item[(a)] $\lambda^\- + N\kappa^\- \unLHD \tau^\- \unLHD\, \omega^\- + N\kappa^\-$;
\item[(b)(i)] $\lambda^\+ + N\kappa^\+ \unlhd \tau^\+ + \bigl( |\lambda^\+| + N|\kappa^\+| - |\tau^\+| \bigr)$ and $|\tau^\+| \le |\lambda^\+| + N|\kappa^\+|$;
\item[(b)(ii)] $\tau^\+ \unlhd \omega^\+ + N\kappa^\+ + \bigl( |\tau^\+| - |\omega^\+| - N|\kappa^\+| \bigr)$ and $|\omega^\+| + N|\kappa^\+| \le |\tau^\+|$.
\end{defnlistE}
It is easily seen that (b)(i) and (b)(ii) are equivalent to the two conditions
$\lambda^\+ + N\kappa^\+ \unlhd \tau^\+ + \bigl( |\lambda^\+| + N|\kappa^\+| - |\tau^\+| \bigr)
\unlhd \omega^\+ + N\kappa^\+ + \bigl( |\lambda^\+| - |\omega^\+| \bigr)$
and 
\begin{equation}\label{eq:signedIntervalStableSizeBound} 
|\omega^\+| + N|\kappa^\+| \le |\tau^\+| \le |\lambda^\+| + N|\kappa^\+|. \end{equation}
Note that by definition of the $\ell^\-$-decomposition (see Definition~\ref{defn:ellDecomposition}),
$\lambda^\-$ and~$\omega^\-$ have at most $\ell^\-$ parts, where $\ell^\- = \ell(\kappa^\-)$.
Thus (a), (b)(i) and (b)(ii)  hold if and only if~\eqref{eq:signedIntervalStableSizeBound} holds
and
\[ \tau^\- \in [\lambda^\- + N\kappa^\-, \omega^\- + N\kappa^\-]^{\ellmb}_\unLHDS \] 
and
\[ \tau^\+ + \bigl( |\lambda^\+| + N|\kappa^\+| - |\tau^\+| \bigr) \in 
\bigl[\lambda^\+ + N\kappa^\+, \omega^\+ + N\kappa^\+ + (|\lambda^\+|-|\omega^\+|) \bigr]_{\unlhd}.\]

By Proposition~\ref{prop:intervalStable}, the map
\[ [\lambda^\- + (N-1)\kappa^\-, \omega^\- + (N-1)\kappa^\-]^\ellmb_\unLHDS
\,\rightarrow [\lambda^\- + N\kappa^\-, \omega^\- + N\kappa^\-]^\ellmb_\unLHDS \]
defined by adding $\kappa^\-$
is bijective if
$N-1 \ge \LBound\bigl([\lambda^\-,\omega^\-]^\ellmb_\unLHDS, \kappa^\-\bigr)$, as we have assumed.
Similarly, the map
\[ \begin{split}
[\lambda^\+ + (N-1)\kappa^\+, &\omega^\+ + (N-1)\kappa^\+ + (|\lambda^\+| - |\omega^\+|)]_\unlhd \\
&\rightarrow [\lambda^\+ + N\kappa^\+, \omega^\+ + N\kappa^\+ + (|\lambda^\+| - |\omega^\+|)]_\unlhd
\end{split} \]
defined by adding~$\kappa^\+$
is bijective if
$N-1 \ge \LBound\bigl([\lambda^\+,\omega^\++(|\lambda^\+|-|\omega^\+|)]_\unlhd, \kappa^\+\bigr)$,
again as we have assumed.
 Hence there exist unique partitions 
\begin{equation}
\label{eq:signedIntervalStableTauM}
\sigma^\- \in 
[\lambda^\- + (N-1)\kappa^\-, \omega^\- + (N-1)\kappa^\-]^\ellmb_\unLHDS \end{equation}
such that $\tau^\- = \sigma^\- + \kappa^\-$
and 
\begin{equation}
\label{eq:signedIntervalStableTheta} 
\theta \in [\lambda^\+ + (N-1)\kappa^\+, \omega^\+ + (N-1)\kappa^\+ + (|\lambda^\+| - |\omega^\+|)\bigr]_\unlhd \end{equation}
such that 
\begin{equation}
\label{eq:signedIntervalExtra}
\tau^\+ + (|\lambda^\+| + N|\kappa^\+| - |\tau^\+|) = \theta + \kappa^\+.\end{equation}
The unique integer sequence $\sigma^\+$ such that $\sigma^\+ + \kappa^\+ = \tau^\+$ is
\begin{equation}\label{eq:signedIntervalStableTauP}
\sigma^\+ = \theta - (|\lambda^\+| + N|\kappa^\+| - |\tau^\+|) .
\end{equation}
We shall show that $\sigma^\+$ is a partition, provided $N$ is sufficiently large. Suppose first of all that
$\kappa_1^\+ = \kappa_2^\+$. Then by~\eqref{eq:signedIntervalExtra}, $\theta_1 - \theta_2 = 
\tau_1^\+ + (|\lambda^\+| + N|\kappa^\+| - |\tau^\+|) - \tau_2^\+
\ge |\lambda^\+| + N|\kappa^\+| - |\tau^\+|$ and hence by~\eqref{eq:signedIntervalStableTauP},
$\sigma^\+_1 - \sigma^\+_2 \ge 0$,
with no condition on $N$.
Now suppose that $\kappa_1^\+ > \kappa_2^\+$. 
By~\eqref{eq:signedIntervalStableTheta}, we have
\begin{align*}
\theta_1 - \theta_2 &= 2\theta_1 -  (\theta_1  +\theta_2)  \\
& \ge 2\bigl( \lambda^\+_1 + (N-1)\kappa^\+_1 \bigr)  \\
& \qquad - \bigl( \omega^\+_1 + \omega^\+_2 + (N-1)(\kappa^\+_1 + \kappa^\+_2) + (|\lambda^\+| - |\omega^\+|) \bigr) \\
& = 2 \lambda^\+_1 - \omega^\+_1 - \omega^\+_2 + (N-1)(\kappa^\+_1 - \kappa^\+_2) - |\lambda^\+| + |\omega^\+|
\intertext{
 By~\eqref{eq:signedIntervalStableTauP},}
\sigma^\+_1   - \sigma^\+_2 &= \theta_1 - \theta_2  - |\lambda^\+| - N|\kappa^\+| +|\tau^\+|
\\ &\ge \theta_1 - \theta_2 - |\lambda^\+| + |\omega^\+| 
\\ &\ge 2 \lambda^\+_1 - \omega^\+_1 - \omega^\+_2 + (N-1)(\kappa^\+_1 - \kappa^\+_2) - 2|\lambda^\+| + 2|\omega^\+|
\end{align*}
where the middle line follows from the first inequality in~\eqref{eq:signedIntervalStableSizeBound}
that $|\omega^\+|+N|\kappa^\+| \le |\tau^\+|$ and the third line by substituting the expression for
$\theta_1-\theta_2$ just found. 
Hence it suffices if
\[ N-1 \ge \frac{\omega_1^\+ + \omega_2^\+ - 2\lambda_1^\+ + 2|\lambda^\+| - 2|\omega^\+|}{\kappa^\+_1-\kappa^\+_2} \]
which, setting $M = N-1$, is the third condition.


We have now defined partitions $\sigma^\-$ and $\sigma^\+$ such that, 
provided $\decs{\sigma^\-}{\sigma^\+}$ is a well-defined $\ell^\-$-decomposition, 
the partition $\sigma$ defined by $\sigma \decMap \decs{\sigma^\-}{\sigma^\+}$
satisfies $\sigma \oplus (\kappa^\-,\kappa^\+) = \tau$. 
If $\ell^\- = 0$ (or equivalently, $\kappa^\- = \varnothing$) this is immediate, and
similarly it is immediate if $\kappa^\+ = \varnothing$.
We may therefore assume that $\ell^\- \ge 1$ and $\kappa^\+ \not= \varnothing$.
We then require
$\sigma^\-_{\ell^\-} \ge \ell(\sigma^\+)$. 
By~\eqref{eq:signedIntervalStableTauM} we have $\sigma^\- \unLHD \omega^\- + (N-1)\kappa^\-$.
Define an integer sequence $\psi$ by 
$\psi_j = \sigma^\-_j$ for $1 \le j < \ell^\-$ and $\psi_{\ell^\-} = 
\sigma^\-_{\ell^\-} + |\omega^\-| + (N-1)|\kappa^\-| - |\sigma^\-|$. 
Since $\sigma^\- \unLHD \omega^\- + (N-1)\kappa^\-$ by~\eqref{eq:signedIntervalStableTauM},
$\sigma^\-$ is a weight, having non-negative entries.
Moreover, after this equalization of sizes,
we have $\psi \,\unlhd\, \omega^\- + (N-1)\kappa^\-$. 
Since each side has at most $\ell^\-$ parts, it follows from the dominance order that
$\psi_{\ell^\-} \ge \omega^\-_{\ell^\-} + (N-1)\kappa^\-_{\ell^\-}$.
Now using that $|\omega^\-| + (N-1)|\kappa^\-| - |\sigma^\-| = |\omega^\-| + N|\kappa^\-| - |\tau^\-|
\le \bigl(|\omega^\-| + N|\kappa^\-|\bigr) - \bigl(|\lambda^\-| + N|\kappa^\-|\bigr) = |\omega^\-| - |\lambda^\-|$
we obtain
\[ \sigma^\-_{\ell^\-} 
\ge \omega^\-_{\ell^\-} + (N-1)\kappa^\-_{\ell^\-} - ( |\omega^\-| - |\lambda^\-|).\]
By~\eqref{eq:signedIntervalStableTheta} we have
$\theta \unrhd \lambda^\+ + (N-1)\kappa^\+$, and since $\ell(\kappa^\+) = \ell^\+$ we have
\[ \ell(\sigma^\+) = \ell(\theta) \le \max( \ell(\lambda^\+), \ell^\+ ).\]
Therefore, comparing the two previous displayed equations,
 a sufficient condition for $\sigma$ to be well-defined is
\[ \omega_{\ell^\-}^\- + (N-1)\kappa^\-_{\ell^\-} - (|\omega^\-| - |\lambda^\-|) 
\ge \max ( \ell(\lambda^\+), \ell^\+ ).\]
Rearranging and, as before, setting $M=N-1$, this becomes the fourth condition.
\end{proof}

This shows that twisted intervals for the $\ell^\-$-twisted dominance order, defined for suitable
large partitions, satisfy
condition~(i) in the definition of a stable partition system
(Definition~\ref{defn:stablePartitionSystem})
for the 
map $\pmap$ in Proposition~\ref{prop:signedIntervalStable} (Partition Stability).
We remark that the example in \S\ref{subsec:stablePartitionSystemsAsIntervals} shows
one case where the third bound, required in the middle part of the proof, is 
the only bound that is tight
and Example~\ref{ex:F} below shows that the most technical fourth bound may also be 
the only bound that is tight.

\subsection{Positions for tableau stability}
We must now verify condition (b) in the definition of a stable partition system 
(Definition~\ref{defn:stablePartitionSystem}).
The critical positions in tableaux are defined below.
In this section, only the case where $\mus = \varnothing$ is needed: the 
definition is used in full generality in \S\ref{subsec:stablePlethysticTableau} below.
Recall that $a(\lambda)$ denotes the first part of a partition $\lambda$.

\begin{definition}\label{defn:skewPositions}{\ }
Let $\muS$ be a skew partition. 
Let $\kappa^\-$ and $\kappa^\+$ be partitions. Fix $\ell^\- = \ell(\kappa^\-)$
and let $\ell^\+ = \ell(\kappa^\+)$.
For $1 \le r^\- \le \ell^\-$ and $1 \le r^\+ \le \ell^\+$,
\begin{defnlist}
\item the $\rM$-\emph{top position} of $\muS$ is 
$\bigl( \max( \ell(\mus),  \ell(\mu^\+), \ell^\+, \mu^\-_{r^\-+1}),r^\-)$,
\item the $\rM$-\emph{bottom position} of $\muS$ is 
$(k+\kappa^\-_{r^\-}-\kappa^\-_{r^\-+1}, r^\-)$ where
$(k, r^\-)$ is the $\rM$-top position of $\muS$.
\item the $\rP$-\emph{left position} of $\muS$ is  $\bigl( r^\+, \ell^\- + \max(a(\mus), \mu^\+_{r^\+ +1})\bigr)$,
\item the $\rP$-\emph{right position} of $\muS$ is 
$(\rP, k + \kappa^\+_{r^\+} - \kappa^\+_{r^\++1})$ where $(\rP, k)$
is the $\rP$-left position of $\muS$. 
\end{defnlist}
\end{definition}

Note that if $\mu_\rP < \ell^\-$ then $\mu^\+_{\rP+1} = 0$
and so the $\rP$-left position of $\muS$ is $(\rP, \ell^\- + a(\mus))$ and is 
not contained in $[\mu]$. Similarly, if $\mus = \varnothing$ and $\mu^\+ = \varnothing$ and $\kappa^\+ = \varnothing$
then
since $\mu^\-$ has at most $\ell^\-$ parts, the $\ell^\-$-top position
is $(0,\ell^\-)$. We therefore refer to `positions' rather than `boxes'.

\begin{example}
\label{ex:positions}
Take $\kappa^\- = (1,1)$ and $\kappa^\+ = (2)$. 
The map \hbox{$\pmap : \Par \rightarrow \Par$} in
Proposition~\ref{prop:signedIntervalStable} is defined by  $\pmap(\sigma) = 
\sigma \oplus \bigl( (1,1), (2) \bigr) = \sigma \sqcup (2) + (2)$.
The numbers in the 
diagrams below show the $1$-top, $2$-top and $1$-left positions 
in the partitions obtained from $(2)$ and $(1,1)$ by adjoining according 
to~$\pmap$.
Following our usual convention, top positions, relevant to the insertion
of negative entries, are marked by bold numbers.
For instance the $2$-top position is $(1,2)$ in every partition.
The $2$-bottom and $1$-right positions are indicated by shading;
\begin{align*}
&\raisebox{0cm}{\begin{tikzpicture}[x=\smallBoxWidth,y=-\smallBoxWidth]
\pyoungInner{ {{\ ,\ }} }
\tBF{0}{1}{}{optdarkgrey}
\tBF{0}{2}{}{optdarkgrey}
\tBFN{1}{2}{}{grey}
\tBFN{0}{4}{}{verylightgrey}
\node at (1.5,1.5) {$\phantom{\scriptstyle 1/\mbf{1}}$};
\node at (1.5,1.5) {$\scriptstyle \mbf{1}$};
\node at (2.5,1.5) {$\scriptstyle 1/\mbf{2}$};
\end{tikzpicture}}\spy{7pt}{\qquad}
\raisebox{-0.075cm}{
\begin{tikzpicture}[x=\smallBoxWidth,y=-\smallBoxWidth]
\tBF{0}{2}{}{optdarkgrey}
\tBF{1}{1}{}{optdarkgrey}
\tBF{1}{2}{}{grey}
\tBF{0}{4}{}{verylightgrey}
\pyoungInner{ {{\ ,\ ,\ ,\ }, {\ ,\ }} }
\node at (1.5,2.5) {$\scriptstyle \mbf{1}$};
\node at (2.5,1.5) {$\scriptstyle 1/\mbf{2}$};
\end{tikzpicture}}\spy{7pt}{\qquad}
\raisebox{-0.68cm}{
\begin{tikzpicture}[x=\smallBoxWidth,y=-\smallBoxWidth]
\tBF{2}{1}{}{optdarkgrey}
\tBF{0}{2}{}{optdarkgrey}
\tBF{1}{2}{}{grey}
\tBF{0}{4}{}{verylightgrey}
\pyoungInner{ {{\ ,\ ,\ ,\ ,\ ,\ }, {\ ,\ }, {\ ,\ }} }
\node at (2.5,1.5) {$\scriptstyle 1/\mbf{2}$};
\node at (1.5,3.5) {$\scriptstyle \mbf{1}$};
\end{tikzpicture}}\\[6pt]
&\begin{tikzpicture}[x=\smallBoxWidth,y=-\smallBoxWidth]
\tBF{0}{1}{}{optdarkgrey}
\tBFN{0}{2}{}{optdarkgrey}
\tBFN{1}{2}{}{grey}
\tBFN{0}{4}{}{verylightgrey}
\pyoungInner{ {{\ }, {\ }} }
\node at (1.5,1.5) {$\phantom{\scriptstyle 1/\mbf{1}}$};
\node at (1.5,1.5) {$\scriptstyle \mbf{1}$};
\node at (2.5,1.5) {$\scriptstyle 1/\mbf{2}$};
\end{tikzpicture}\spy{7pt}{\qquad}
\raisebox{-0.62cm}{
\begin{tikzpicture}[x=\smallBoxWidth,y=-\smallBoxWidth]
\tBF{0}{2}{}{optdarkgrey}
\tBF{1}{1}{}{optdarkgrey}
\tBFN{1}{2}{}{grey}
\tBF{0}{4}{}{verylightgrey}
\pyoungInner{ {{\ ,\ ,\ ,\ }, {\ }, {\ }} }
\node at (1.5,2.5) {$\scriptstyle \mbf{1}$};
\node at (2.5,1.5) {$\scriptstyle 1/\mbf{2}$};
\end{tikzpicture}}\spy{7pt}{\qquad}
\raisebox{-1.3cm}{\hspace*{2pt}
\begin{tikzpicture}[x=\smallBoxWidth,y=-\smallBoxWidth]
\tBF{2}{1}{}{optdarkgrey}
\tBF{0}{2}{}{optdarkgrey}
\tBF{1}{2}{}{grey}
\tBF{0}{4}{}{verylightgrey}
\pyoungInner{ {{\ ,\ ,\ ,\ ,\ ,\ }, {\ ,\ }, {\ }, {\ }} }
\node at (1.5,3.5) {$\scriptstyle \mbf{1}$};
\node at (2.5,1.5) {$\scriptstyle 1/\mbf{2}$};
\end{tikzpicture}}
\end{align*}
%
Since $\kappa^\-_1 = \kappa^\-_2$ the $1$-top and $1$-bottom positions coincide
in every case. (We shall see in  Definition~\ref{defn:F} that this makes them irrelevant to our application.)
Since $\kappa^\-_2 - \kappa^\-_3 = 1 - 0 = 1$, the $2$-bottom position is
always one position below the $2$-top position and 
since $\kappa^\+_1 - \kappa^\+_2 = 2 - 0 = 2$, the $1$-right position
is always two positions right of the $1$-left position.

For a further example in the general skew case, also showing the behaviour when $\ell(\mu^\+) > \ell(\kappa^\+)$, see Example~\ref{ex:skewPositions}.
\end{example}

\subsection{The $\Fmap$ insertion map on tableaux}
We now show how these positions can be used to define a bijection between semistandard
signed tableaux. We admit the following results
are technical, and so we give two substantial examples.
See also
Example~\ref{ex:FoulkesStablePartitionSystem}
and the end of
\S\ref{subsec:444to822partitionSystem}; the 
bijections in these examples can now be seen
to be instances of $\Fmap$ and its inverse.

\begin{example}\label{ex:F}
Consider the twisted intervals
\[ \PSeq{M} = \bigl[(4,2) \sqcup (2^M) + (2M) , (3,2,1)  \sqcup (2^M) + (2M)\bigr]_\unlhddotS. \]
for the $2$-twisted dominance order.
By Proposition~\ref{prop:signedIntervalStable} (Partition Stability), the map $\pmap : \PSeq{M} \rightarrow \PSeq{M+1}$ defined by $\lambda \mapsto \lambda \oplus \bigl( (1,1), (2) \bigr)$
is bijective for $M \ge 0$. (Note that $(3,2,1)$ is $(3,1)$-large; the four bounds
on $M$ are respectively $M\ge -1$, $M \ge -1$, $M \ge -\mfrac{1}{2}$ and $M \ge 0$.)
This gives a bijection between the row and column labels of the
matrices $\KM(M)$ in condition (b) of a stable partition system 
(Definition~\ref{defn:stablePartitionSystem}), as indicated below. We include
the set $\PSeq{-1}$, defined to be $[(2), (1,1)]_\unlhddotS$ below: even though $(2)$ is not $(3,1)$-large,
the proof of Proposition~\ref{prop:signedIntervalStable} still applies; the bounds on $M$
are now $M \ge 0$, $M \ge 0$, $M \ge 0$ and $M \ge 1$, so the necessary restriction on $M$
comes from the technical final paragraph of the proof.
\[ \hspace*{-6pt}
\begin{pNiceMatrix}[first-row,first-col,nullify-dots] & \rb{(1,1)} & \rb{(2)} \\
\decss{(2)}{\varnothing} & 1 & \cdot \\
\decss{(1,1)}{\varnothing} & 1 & 1 \end{pNiceMatrix}\  \
\begin{pNiceMatrix}[first-row,first-col,nullify-dots] & \rb{(3,2,1)} & \rb{(4,1,1)} & \rb{(4,2)} \\
\decss{(3,2)}{(1)} & 1 & \cdot & \cdot \\
\decss{(3,1)}{(2)}& 1 & 1 & \cdot \\
\decss{(2,2)}{(2)} & 2 & 1 & 1 \end{pNiceMatrix}\  \
\begin{pNiceMatrix}[first-row,first-col,nullify-dots] & \rb{(5,2,2,1)} & \rb{(6,2,1,1)} & \rb{(6,2,2)} \\
\decss{(4,3)}{(3)} & 1 & \cdot & \cdot \\
\decss{(4,2)}{(4)}& 1 & 1 & \cdot \\
\decss{(3,3)}{(4)} & 2 & 1 & 1 \end{pNiceMatrix}
\]
The tableaux enumerated by 
the bottom left matrix entries of $1$, $2$ and $2$ are shown below.

\smallskip
\centerline{
\raisebox{3.025cm}{\begin{tikzpicture}[x=0.55cm,y=-0.55cm] 
\tB{1}{1}{$\mbf{1}$}\tBFN{1}{2}{}{darkgrey}
\tB{2}{1}{$\mbf{2}$}
\end{tikzpicture}} \qquad\qquad
\raisebox{0cm}{
\begin{tikzpicture}[x=0.55cm,y=-0.55cm] 
\begin{scope}[yshift=2.5cm]
\tB{1}{1}{$\mbf{1}$}\tBF{1}{2}{$\mbf{2}$}{darkgrey}\tB{1}{3}{$1$}
\tB{2}{1}{$\mbf{1}$}\tB{2}{2}{$\mbf{2}$}
\tB{3}{1}{$1$}
\end{scope}
\tB{1}{1}{$\mbf{1}$}\tBF{1}{2}{$\mbf{2}$}{darkgrey}\tB{1}{3}{$1$}
\tB{2}{1}{$\mbf{1}$}\tB{2}{2}{$1$}
\tB{3}{1}{$\mbf{2}$}
\end{tikzpicture}}
\quad\raisebox{1cm}{\begin{tikzpicture}[x=0.55cm,y=-0.55cm]
\node at (0,-3) {$\stackrel{\mathcal{F}}{\longmapsto}$};
\node at (0,1.25) {$\stackrel{\mathcal{F}}{\longmapsto}$};
\end{tikzpicture}}\quad
\raisebox{-0.55cm}{
\begin{tikzpicture}[x=0.55cm,y=-0.55cm] 
\begin{scope}[yshift=2.5cm]
\tB{1}{1}{$\mbf{1}$}\tBF{1}{2}{$\mbf{2}$}{darkgrey}\tB{1}{5}{$1$}
\tB{3}{1}{$\mbf{1}$}\tB{3}{2}{$\mbf{2}$}
\tB{4}{1}{$1$}
{\renewcommand{\tableauLineWidth}{1pt}
\tB{1}{3}{$1$}\tBF{1}{4}{$1$}{lightgrey}
\tB{2}{1}{$\mbf{1}$}\tBF{2}{2}{$\mbf{2}$}{grey}}
\fill[pattern = north west lines] (1,3)--(3,3)--(3,4)--(1,4);
\fill[pattern = north east lines] (3,2)--(5,2)--(5,3)--(3,3);
\end{scope}
\tB{1}{1}{$\mbf{1}$}\tBF{1}{2}{$\mbf{2}$}{darkgrey}\tB{1}{5}{$1$}
\tB{3}{1}{$\mbf{1}$}\tB{3}{2}{$1$}
\tB{4}{1}{$\mbf{2}$}
{\renewcommand{\tableauLineWidth}{1pt}
\tB{1}{3}{$1$}\tBF{1}{4}{$1$}{lightgrey}
\tB{2}{1}{$\mbf{1}$}\tBF{2}{2}{$\mbf{2}$}{grey}}
\fill[pattern = north west lines] (1,3)--(3,3)--(3,4)--(1,4);
\fill[pattern = north east lines] (3,2)--(5,2)--(5,3)--(3,3);
\end{tikzpicture}}
}

\smallskip
We saw in Example~\ref{ex:positions} that the $2$-top and $1$-left
positions of $(3,2,1)$ are both $(1,2)$; these positions are shaded 
dark grey in all tableaux.
Insertion of $\young(\oM\tM)$ 
in the two positions $(2,1)$, $(2,2)$ below the $2$-top position,
moving each box in columns $1$ and $2$ one row down,
gives a semistandard tableau. 
Similarly 
insertion of $\young(11)$ into the positions $(1,3)$, $(1,4)$, right of the $1$-left
position, moving each box in row $1$ two columns right, 
again gives a semistandard tableau.
These operations commute. The inverse map is defined 
by deleting $\young(\oM\tM)$ and $\young(11)$ from
the $2$-bottom and $1$-right positions in the $(5,2,2,1)$-tableau; these 
positions are again shaded and the newly inserted boxes which should be deleted
are hatched.
We therefore have a bijection
\[ \SSYT\bigl( (3,2,1) \bigr)_{((2,2),(2))}  \stackrel{\mathcal{F}}{\longrightarrow}
\SSYT \bigl( (5,2,2,1)\bigr)_{((3,3),(4))}. \]
This bijection
establishes, via Lemma~\ref{lemma:twistedKostkaNumbers} (Twisted Kostka Numbers), 
that the bottom-left entries $\langle e_{(2,2)}h_{(2)}, s_{(3,2,1)}  \rangle$ and
$\langle e_{(3,3)}h_{(4)}, s_{(5,2,2,1)} \rangle$ of the final two matrices above are equal.
This bijection is generalized in Definition~\ref{defn:F}: in general $\kappa^\-_r - \kappa^\-_{r+1}$
rows of length~$r$ and $\kappa^\+_r - \kappa^\+_{r+1}$ columns of length $r$ are inserted/deleted.
This feature may be seen in this example: for instance, 
since $\kappa^\-_1 = \kappa^\-_2$, there was no need to consider the $1$-top and $1$-bottom positions.
\end{example}



Generalizing this example,
we now define the insertion map $\mathcal{F}$  in the general skew case; this 
generality is needed later in the proof of
Proposition~\ref{prop:plethysticSignedTableauInnerStable}. Note that when $\sigmas = \varnothing$
then the only hypothesis needed is that~$\sigma$ is $\bigl(\ell(\kappa^\-),\ell(\kappa^\+)\bigr)$-large.
Recall from Definition~\ref{defn:signedTableau} that
 $\YT\bigl( \sigmaS)$ is the set of signed tableaux of shape $\sigmaS$;
 note the tableaux in $\YT\bigl( \sigmaS \bigr)$ are not necessarily semistandard.

\begin{definition}\label{defn:F}
Let $\kappa^\-$ and $\kappa^\+$ be partitions. 
Let $\sigmaS$ be a 
 $\bigl(\ell(\kappa^\-)+a(\sigmas), \ell(\kappa^\+)\bigr)$-large
and $\bigl(\ell(\kappa^\-), \ell(\mus)\bigr)$-large
skew partition.
Define 
\[ \Fmap : \SSYT(\sigmaS) \rightarrow \YT\bigl( \sigmaS \opluss (\kappa^\-,\kappa^\+)\bigr)\] 
by performing (1) then~(2) below:
\begin{itemize}
\item[(1)] starting with $\rM = 1$ and finishing with $\rM = \ell(\kappa^\-)$,
insert $\kappa^\-_\rM - \kappa^\-_{\rM+1}$ new rows each with entries $-1,\ldots, -\rM$,
each with their right-most box immediately below the $\rM$-top position of $\sigma$;\\[-12pt]
\item[(2)] starting with $\rP = 1$ and finishing with $\rP = \ell(\kappa^\+)$,
insert $\kappa^\+_\rP - \kappa^\+_{\rP+1}$ new columns each with entries $1, \ldots, \rP$,
each with their bottom box immediate right of the $\rP$-left position of $\sigma$.
\end{itemize}
If $\kappa^\-_\rM = \kappa^\-_{\rM+1}$ or $\kappa^\+_\rP = \kappa^\+_{\rP+1}$ then
there is nothing to do in that step.
\end{definition}

The partitions $\kappa^\-$ and $\kappa^\+$ will always be clear from context.
It has to be checked that $\Fmap$ is well-defined (meaning that the insertions give a tableau of skew partition shape), 
but as we shall see in Lemma~\ref{lemma:FisWellDefinedAndBumpsWeight},
this is not hard to prove.
Our aim, achieved in Proposition~\ref{prop:signedTableauStable}, is to show that
\[ \Fmap : \SSYTw{\sigma}{\pi^\-}{\pi^\+} \rightarrow 
\SSYTw{\sigma \oplus (\kappa^\-,\kappa^\+)}{\pi^\- + \kappa^\-}{\pi^\+ + \kappa^\+}. \]
is a well-defined bijection for $\sigma$ and $\pi$ suitable elements of a twisted interval
for the $\ell(\kappa^\-)$-twisted dominance order.
Example~\ref{ex:F} shows the special case
where $\sigma = (3,2,1)$ and $\pi = \dec{(2,2)}{(2)} \decMap (4,2)$.

To help guide the reader through the remaining technicalities we
give a further `unsigned' example below.
This
example, like many others in this paper, was created with the help of the Magma
code mentioned in the introduction using
\verb!TwistedIntervalInjectionM(! \verb![], [3,3,1], [2,1,1] : q :=! \verb![4],! 
\verb!NSteps := 2)!;
varying the parameters to \verb![1,1]!, \verb![2]!, \verb![4,2]!,
\verb!q :=! \verb![3,2,1]! gives the bijection in Example~\ref{ex:F}.

\begin{example}\label{ex:unsignedTableauStable}
Take $\kappa^\- = \varnothing$, $\kappa^\+ = (3,3,1)$ so $\ell^\- = 0$ and $\ell^\+ = 3$.
By Definition~\ref{defn:LBound},
$\LBound\bigl([(2,1,1), (4)]_\unlhd, (3,3,1)\bigr) = 1$; the only strictly positive quantity 
comes from the case $k=2$.
(Note that we disregard the case $k=1$ because $\kappa_1 = \kappa_2$.)
Therefore, by Proposition~\ref{prop:intervalStable}, the $\Fmap$ map 
adding $(3,3,1)$ is an injection
\[ [(2,1,1), (4)]_\unlhd   \stackrel{+(3,3,1)}{\verylongrightarrow} [(5,4,2), 
(7,3,1)]_\unlhd
\stackrel{+(3,3,1)}{\verylongrightarrow} [(8,7,3), (10,6,2)]_\unlhd
 \]
and the second map is a bijection.
(We remark that Proposition~\ref{prop:signedIntervalStable} could also be used;
the intervals are then interpreted  for the $0$-twisted
dominance order,
which by Remark~\ref{remark:twistedDominanceOrderGeneralizesDominanceOrder}
is the usual dominance order on partitions, and the additional bounds are, as expected,
irrelevant.)
Figure~\ref{fig:unsignedTableauStable}  shows the
Kostka matrices $\langle h_\pi, s_\sigma \rangle$ for $\pi$, $\sigma$ in each interval;
and several features of the~$\mathcal{F}$ bijection 
\[ \SSYT\bigl((7,3,1)\bigr)_{(\varnothing, (5,4,2))} \stackrel{\mathcal{F}}{\longrightarrow}
\SSYT\bigl( (10,6,2)\bigr)_{(\varnothing, (8,7,3))} \]
establishing the equality $\langle h_{(5,4,2)}, s_{(7,3,1)} \rangle = \langle h_{(8,7,3)},
s_{(10,6,2}\rangle$ of the bottom-left matrix entries of $2$.
\end{example}

\begin{figure}[th!]
\centerline{\hspace*{-0.5in}
\begin{tikzpicture}[x=0.55cm,y=-0.55cm]
\node at (0,0) {\scalebox{0.9}{$\begin{pNiceMatrix}[first-row,first-col,nullify-dots] & \rb{(4)} & \rb{(3,1)} & \rb{(2,2)} & \rb{(2,1,1)} \\
& 1 & \cdot & \cdot & \cdot \\
& 1 & 1     & \cdot & \cdot \\
& 1 & 1     & 1     & \cdot \\
& \mbf{1} & 2     & 1     & 1     
\end{pNiceMatrix}$}};

\node at (9,0) {\scalebox{0.9}{$\begin{pNiceMatrix}[first-row,first-col,nullify-dots] & \rb{(7,3,1)} & \rb{(7,2,2)} & \rb{(6,4,1)} & \rb{(6,3,2)} & \rb{(5,5,1)} & \rb{(5,4,2)}  \\
& 1 & \cdot & \cdot & \cdot & \cdot & \cdot \\
& 1 & 1     & \cdot & \cdot & \cdot & \cdot \\
& 1 & 0     & 1     & \cdot & \cdot & \cdot \\
& 2 & 1     & 1     & 1     & \cdot & \cdot \\
& 1 & 0     & 1     & 0     & 1     & \cdot \\
& \mbf{2}   & 1     & 2     & 1     & 1     & 1 \\
\end{pNiceMatrix}$}};

\node at (19,0) {\scalebox{0.9}{$\begin{pNiceMatrix}[first-row,first-col,nullify-dots] & \rb{(10,6,2)} & \rb{(10,5,3)} & \rb{(9,7,2)} & \rb{(9,6,3)} & \rb{(8,8,2)} & \rb{(8,7,3)}  \\
& 1 & \cdot & \cdot & \cdot & \cdot & \cdot \\
& 1 & 1     & \cdot & \cdot & \cdot & \cdot \\
& 1 & 0     & 1     & \cdot & \cdot & \cdot \\
& 2 & 1     & 1     & 1     & \cdot & \cdot \\
& 1 & 0     & 1     & 0     & 1     & \cdot \\
& \mbf{2}   & 1     & 2     & 1     & 1     & 1 \\
\end{pNiceMatrix}$}};

\begin{scope}[yshift=-1cm,xshift=-0.8cm]
\tBFN{1}{0}{$\scriptstyle 1$}{grey}\tB{1}{1}{}{}\tB{1}{2}{}{}\tB{1}{2}{}\tB{1}{3}{}\tB{1}{4}{}
\tBFN{2}{0}{$\scriptstyle 2$}{grey}\tBFN{2}{2}{}{lightgrey}
\tBFN{3}{0}{$\scriptstyle 3$}{grey}\tBFN{3}{1}{}{lightgrey}
\end{scope}

\begin{scope}[yshift=-1cm,xshift=3.25cm]
\tB{1}{1}{}\tB{1}{2}{}\tB{1}{2}{}\tBF{1}{3}{$\scriptstyle 1$}{grey}\tB{1}{4}{}\tB{1}{5}{}\tB{1}{6}{}\tB{1}{7}{}
\tBF{2}{1}{$\scriptstyle 2$}{grey}\tB{2}{2}{}\tBF{2}{3}{}{lightgrey}
\tBFN{3}{0}{$\scriptstyle 3$}{grey}\tBF{3}{1}{}{lightgrey}
\end{scope}

\begin{scope}[yshift=-1cm,xshift=8.5cm]
\tB{1}{1}{}\tB{1}{2}{}\tB{1}{2}{}\tB{1}{3}{}\tB{1}{4}{}\tB{1}{5}{}\tBF{1}{6}{$\scriptstyle 1$}{grey}\tB{1}{7}{}\tB{1}{8}{} \tB{1}{9}{} \tB{1}{10}{} 
\tB{2}{1}{}\tBF{2}{2}{$\scriptstyle 2$}{grey}\tB{2}{3}{}\tBF{2}{4}{}{lightgrey}
\tB{2}{5}{}\tB{2}{6}{}
\tBFN{3}{0}{$\scriptstyle 3$}{grey}\tBF{3}{1}{}{lightgrey}\tB{3}{2}{}
\end{scope}

\begin{scope}[yshift=-3.25cm,xshift=3.25cm]
\tB{1}{1}{1}\tB{1}{2}{1}\tB{1}{2}{1}\tBF{1}{3}{1}{grey}\tB{1}{4}{1}\tB{1}{5}{1}\tB{1}{6}{2}\tB{1}{7}{$\scriptstyle 3/2$}
\tBF{2}{1}{2}{grey}\tB{2}{2}{2}\tBF{2}{3}{$\scriptstyle 2/3$}{lightgrey}
\tBFN{3}{0}{}{grey}\tBF{3}{1}{3}{lightgrey}
\end{scope}

\begin{scope}[yshift=-3.25cm,xshift=8.5cm]
\tB{1}{1}{1}\tB{1}{2}{1}\tB{1}{2}{1}\tB{1}{3}{1}\tB{1}{4}{1}\tB{1}{5}{1}\tBF{1}{6}{1}{grey}\tB{1}{7}{1}\tB{1}{8}{1} \tB{1}{9}{2} \tB{1}{10}{$\scriptstyle 3/2$} 
\tB{2}{1}{2}\tBF{2}{2}{2}{grey}\tB{2}{3}{2}\tBF{2}{4}{2}{lightgrey}
\tB{2}{5}{2}\tB{2}{6}{$\scriptstyle 2/3$}
\tBFN{3}{0}{}{grey}\tBF{3}{1}{3}{lightgrey}\tB{3}{2}{3}
\fill[pattern = north east lines] (1,2)--(2,2)--(2,5)--(1,5);
\fill[pattern = north east lines] (3,2)--(5,2)--(5,4)--(3,4);

\end{scope}

\end{tikzpicture}}

\caption{Kostka matrices for the intervals
$[(2,1,1), (4)]_\unlhd$, $[(5,4,2), (7,3,1)]_\unlhd$,  
$[(8,7,3), (10,6,2)]_\unlhd$ showing the bijection~$\pmap$
(see Proposition~\ref{prop:signedIntervalStable})
between the two larger intervals defined by adding $(3,3,1)$.
Observe that while $(7,3,1) = (4) + (3,3,1)$ and $(5,4,2) = (2,1,1) + (3,3,1)$,
we have $|\SSYT\bigl((4)\bigr)_{(\varnothing, (2,2,1))}| = 1$
but $|\SSYT\bigl((7,3,1)\bigr)_{(\varnothing, (5,4,2))}\bigr| = 2$
so in the step from the first interval to the second we do not have tableau stability, in the sense of Proposition~\ref{prop:signedTableauStable}, 
\emph{even if we consider
only those partitions in the image of the addition map}.
Below the matrices we show the $1$-, $2$- and $3$-left and $1$-, $2$- and $3$-right position for the partitions $(4), (7,3,1), (10,6,2)$;
note the $1$-left and $1$-right positions coincide.
At the bottom we show
the bijection $\mathcal{F} :\!
\SSYT\bigl((7,3,1)\bigr)_{(\varnothing, (5,4,2))}\stackrel{\mathcal{F}}{\longrightarrow}\SSYT\bigl( (10,6,2)\bigr)_{(\varnothing, (8,7,3))}$
defined by inserting two columns of length $2$ immediately right of the $2$-left position $(2,1)$ and a single column of height $3$ immediately
right of the $3$-left position $(3,0)$, using $2/3$ and $3/2$ to indicate the two boxes that have a choice of entry.
The  shading and hatching conventions are as in Example~\ref{ex:F}. This gives
a bijective proof of the equality of the bottom left entries of $2$ in the
two larger matrices marked in bold.
\label{fig:unsignedTableauStable}
}
\end{figure}

\subsection{Technical lemmas on positions}

In the following lemma we use the bounds
\smash{$\LBound\bigl([\lambda,\omega]^\ellpb_\unLHDS, \kappa\bigr)$}
and \smash{$\LBound\bigl( \bigl[\lambda^\+, \omega^\+ 
+ (|\lambda^\+| - |\omega^\+|)\bigr], \kappa^\+\bigr)_\unlhd$},
defined in Definition~\ref{defn:LBound}. (See Remark~\ref{remark:intervalNotation}
for the difference in notation.) We remark that the bounds in the following lemma 
are the first, second and fourth from Proposition~\ref{prop:signedIntervalStable} (Partition Stability), so
whenever the conditions for this proposition hold, so do the conditions for this lemma.

\begin{lemma}
\label{lemma:signedTableauPositions}
Let $\kappa^\-$ and  $\kappa^\+$ be partitions. Fix $\ell^\- = \ell(\kappa^\-)$
and $\ell^\+ = \ell(\kappa^\+)$.
Let~$\lambda$ and $\omega$ be $(\ell^\-, \ell^\+)$-large partitions
and let $\lambda \unlhddot \omega$ in the $\ell^\-$-twisted dominance order.
Let $L$ be the maximum of the twisted interval bounds
\begin{bulletlist}
\item $\LBound\bigl([\lambda^\-, \omega^\-]^{\ellmb}_\unLHDS, \kappa^\-\bigr)$,
\item $\LBound\bigl( \bigl[\lambda^\+, \omega^\+ 
+ (|\lambda^\+| - |\omega^\+|)\bigr]_\unlhd, \kappa^\+\bigr)$ 
\item \smash{$\bigl( \max( \ell(\lambda^\+), \ell^\+) + |\omega^\-| - |\lambda^\-| - \omega^\-_{\ell^\-} \bigr)/ \kappa^\-_{\ell^\-}$}
\end{bulletlist}
omitting the third if $\kappa^\- = \varnothing$.
Let $\sigma$ and~$\pi$ be partitions in the interval
\[ [\lambda  \opluss M(\kappa^\-, \kappa^\+), \omega \opluss M(\kappa^\-,\kappa^\+)\bigr]_\unlhddotS \]
for the $\ell^\-$-twisted dominance order
such that $\sigma$ is $(\ell^\-,\ell^\+)$-large. Let
$t \in \SSYT(\sigma)_{(\pi^\-,\pi^\+)}$. 
If $M - 1  \ge L$ then
\begin{thmlist}
\item the $\rM$-bottom position of $t$ contains $-\rM$ 
if $\rM < \ell^\-$ and $\kappa^\-_\rM > \kappa^\-_{\rM+1}$;
\item if $\kappa^\- \not= \varnothing$ then the $\ell^\-$-bottom position of $t$ contains $-\ell^\-$; 
\item if $\kappa^\- \not= \varnothing$ and $\kappa^\+ \not= \varnothing$ then
the box $(\ell^\+,\ell^\-)$ of $t$ contains $-\ell^\-$; 
\item the $\rP$-right position of $t$ contains $\rP$ if $\rP < \ell^\+$ and 
$\kappa^\+_\rP > \kappa^\+_{\rP+1}$.
\item the $\ell^\+$-right position of $t$ contains $\ell^\+$.
\end{thmlist}
Moreover if $M \ge L$ then the same results hold replacing `bottom' with `top' and `right' with `left', except that
\begin{thmlist}
\item[\emph{(ii)}] if $\sigma^\+ = \varnothing$ and $\kappa^\+ = \varnothing$ 
then the $\ell^\-$-top position is $(0,\ell^\-)$;
\item[\emph{(iv)}] and \emph{(v)} if $\sigma_{\rP+1} \le \ell^\-$, 
and so the $\rP$-left position
is $(\rP, \ell^\-)$, then it contains a negative entry.
\end{thmlist}
\end{lemma}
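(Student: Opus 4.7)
The plan is to prove each of the five assertions by converting the twisted-interval hypothesis into numerical lower bounds on the parts of $\pi^\-$ and $\pi^\+$ (and $\sigma^\-$, $\sigma^\+$), and then feeding these bounds into the structural rigidity of semistandard signed tableaux. The proof naturally splits into the negative side (i)--(iii), which uses the first and third displayed bounds on $M-1$, and the positive side (iv)--(v), which uses the second bound; the ``moreover'' clause is obtained by running the same arguments with $M$ in place of $M-1$, yielding the analogous claims one row higher or one column further left.

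First I would unpack the interval condition $\sigma,\pi \in [\lambda \opluss M(\kappa^\-,\kappa^\+), \omega \opluss M(\kappa^\-,\kappa^\+)]_\unlhddotS$. Using Lemma~\ref{lemma:adjoinToLarge} on adjoining to large partitions together with Lemma~\ref{lemma:twistedDominanceOrderOldDefinition}, this gives $\lambda^\- + M\kappa^\- \unLHD \pi^\- \unLHD \omega^\- + M\kappa^\-$ (and likewise for $\sigma^\-$), plus the size inequalities $|\omega^\+|+M|\kappa^\+| \le |\pi^\+| \le |\lambda^\+|+M|\kappa^\+|$ and the partial sum conditions for $\pi^\+$. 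From the first and third displayed bounds, and using Lemma~\ref{lemma:signedDominancePositiveLength} to replace $\max(\ell(\lambda^\+),\ell^\+)$ by $\max(\ell(\sigma^\+),\ell^\+)=k$, I can then derive pointwise lower bounds of the form $\pi^\-_{r^\-}\ge k_{r^\-} + \kappa^\-_{r^\-} - \kappa^\-_{r^\-+1}$ (and similarly $\sigma^\-_{r^\-}$ exceeds the same quantity), which match exactly the row coordinate of the $r^\-$-bottom position.

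The third and most delicate step is converting these counting inequalities into the precise entries at the specified boxes. The key structural observation is that in a semistandard signed tableau the negative entries in each row are \emph{distinct} elements of $\{-1,\ldots,-\ell^\-\}$ by the vertical-strip condition, and weakly increasing in our total order, hence strictly increasing in absolute value; in particular a row with $\ell^\-$ negatives must contain exactly $-1,-2,\ldots,-\ell^\-$ at columns $1,\ldots,\ell^\-$, and more generally if $-r^\-$ appears in row $i$ at column $r^\-$ then columns $1,\ldots,r^\-$ of that row must contain $-1,\ldots,-r^\-$. For (ii) I would then argue that the lower bound on $\pi^\-_{\ell^\-}$ together with the partition-shape constraint $c_j\le\sigma'_j$ on the negative region forces the column counts $c_j$ to satisfy $c_{\ell^\-}\ge I:=k+\kappa^\-_{\ell^\-}$, and hence row $I$ has $\ell^\-$ distinct negatives, so $(I,\ell^\-)=-\ell^\-$. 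Part (i) is analogous at column $r^\-<\ell^\-$, now using the gap $\kappa^\-_{r^\-}>\kappa^\-_{r^\-+1}$ to absorb any negatives that might land to the right of column $r^\-$. Part (iii) uses that $\sigma$ is $(\ell^\-,\ell^\+)$-large to ensure the box $(\ell^\+,\ell^\-)$ exists, together with $\pi^\-_{\ell^\-}\ge\ell^\+$ from the bounds.

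Parts (iv) and (v) are dual, using the second displayed bound, the horizontal-strip condition that forces positive entries to be distinct within each column, and the fact that a column with $\ell^\+$ positives above the shape's lower edge must contain $1,2,\ldots,\ell^\+$ in consecutive rows. The main obstacle throughout will be ruling out ``exceptional'' rows (or columns) in which $-\ell^\-$ (resp.\ $\ell^\+$) appears but the row does not contain all of $-1,\ldots,-(\ell^\-)$; the partition condition on $\pi^\-$, combined with the sharp lower bound on $\pi^\-_{\ell^\-}$ and the upper bound $c_j\le\sigma'_j$, leaves no room for such exceptions. Finally, the degenerate sub-cases in the ``moreover'' clause (when $\sigma^\+=\varnothing$ and $\kappa^\+=\varnothing$, or when $\sigma_{r^\++1}\le\ell^\-$) are handled by direct inspection of the positions, noting that the position formulas then refer to row $0$ or to a box already known to be negative.
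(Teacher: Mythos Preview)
Your overall strategy---convert the twisted-interval membership into dominance inequalities on $\pi^\pm$ and $\sigma^\pm$, then count entries---is the same as the paper's. But two of the steps you sketch would not go through as written.

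First, for (i) (and the analogous point in (ii)) you propose to obtain \emph{pointwise} lower bounds $\pi^\-_{r^\-}\ge k_{r^\-}+\kappa^\-_{r^\-}-\kappa^\-_{r^\-+1}$ and then argue that this forces $-r^\-$ into the $r^\-$-bottom position. This does not work: an entry equal to $-r^\-$ can sit in any column $j\le r^\-$, so a large value of $\pi^\-_{r^\-}$ by itself says nothing about the entry at $(R,r^\-)$. What the paper actually uses is the \emph{partial sum} $\sum_{j=1}^{r^\-}\pi^\-_j$, together with the structural fact that every entry in $\{-1,\ldots,-r^\-\}$ lies in columns $1,\ldots,r^\-$ and that in column $r^\-$ the only such entry is $-r^\-$. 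Assuming the bottom position does not contain $-r^\-$ then caps the number of $\{-1,\ldots,-r^\-\}$-entries by $\sum_{j<r^\-}\sigma^\-_j+\sigma^\-_{r^\-+1}-1+(\kappa^\-_{r^\-}-\kappa^\-_{r^\-+1})$, and comparing with the partial-sum lower bound $\sum_{j\le r^\-}\pi^\-_j\ge\sum_{j\le r^\-}\lambda^\-_j+M\sum_{j\le r^\-}\kappa^\-_j$ and the upper bounds on $\sigma^\-$ from $\sigma\unlhddot\omega\oplus M(\kappa^\-,\kappa^\+)$ yields the contradiction with the first $\LBound$ bound. Your phrase ``absorb any negatives that might land to the right of column $r^\-$'' suggests you sense the issue, but the resolution is to work with cumulative counts, not pointwise ones.

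Second, your treatment of (iv)--(v) omits a genuine complication: positive entries from $\{1,\ldots,r^\+\}$ can also sit in the first $\ell^\-$ columns of $t$, strictly below the negative region. The paper first proves (iii)---that $(\ell^\+,\ell^\-)$ contains $-\ell^\-$---precisely to confine those stray positives to rows below $\ell^\+$, and then carries the correction term $|\sigma^\-|-|\pi^\-|$ through the count; this is why the positive-side inequality needs both endpoints of the interval (via condition (b) of Lemma~\ref{lemma:twistedDominanceOrderOldDefinition}) and why $|\lambda^\+|-|\omega^\+|$ appears. Your sketch, which treats the positive side as a straightforward dual of the negative side using only the second $\LBound$ bound, would undercount here.
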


\begin{proof}
First note that, by Lemma~\ref{lemma:adjoinToLarge}, we have 
$\bigl( \lambda \oplus M(\kappa^\-,\kappa^\+) \bigr)^\-
= \lambda^\- + M\kappa^\-$ and three further analogous equations replacing $-$ with $+$
or $\lambda$ with~$\omega$. We also record a key observation on where negative entries lie in $t$:
\begin{itemize}
\item[$(-)$] The negative entries of $t$ lie in the boxes in $[\alpha]$
where $\alpha$ is a subpartition of $\sigma$ such that $a(\alpha) \le \ell^\-$ 
and $|\alpha| = |\pi^\-|$.
\end{itemize}

For (i), there is nothing to prove if $\kappa^\- = \varnothing$.
Let $\rM < \ell^\-$. Since $\sigma$ is $(\ell^\-,\ell^\+)$-large, we have $\sigma^\-_{\rM+1} \ge \ell^\+$.
Hence the $\rM$-top position of $t$ is $(\sigma^\-_{\rM+1},\rM)$.
Suppose for a contradiction that this position
has either a positive entry, or some $-s$ with $-s \succ -\rM$ 
in the order in Definition~\ref{defn:semistandardSignedTableau}, meaning that $s > r$.
In either case, ($-$) implies
that the total number of entries of $t$ in the set $\{-1,\ldots, -\rM\}$
is at most $\sigma^\-_1  + \cdots + \sigma^\-_{\rM-1}+ 
\sigma^\-_{\rM+1}-1$. (At this point we suggest that the reader refers to
Figure~\ref{fig:ellDecompositionTableau} to see ($-$) graphically: it is
also  helpful to note that $\sigma^\-_j = \sigma'_j$
for $1\le j \le \ell^\-$. See  Figure~\ref{fig:ellDecomposition} for
a reminder of this notation.)
On the other hand, $t$ has exactly
$\pi^\-_1 + \cdots + \pi^\-_{\rM-1} + \pi^\-_{\rM}$
such entries. Hence
\begin{equation}
\label{eq:signedTableauPositionsMinus}
\sum_{j=1}^{\rM-1} \sigma^\-_j + \sigma^\-_{\rM+1} > \sum_{j=1}^\rM \pi^\-_j. 
\end{equation}
Using 
\hbox{$\pi \unrhddots \lambda \oplus M(\kappa^\-,\kappa^\+)$} and so, by
 Lemma~\ref{lemma:twistedDominanceOrderOldDefinition}(a), $\pi^\- \unRHD \lambda^\- + M\kappa^\-$
we have $\sum_{j=1}^\rM \pi^\-_j \ge \sum_{j=1}^\rM \lambda^\-_j + M \sum_{j=1}^\rM \kappa^\-_j$. Hence 
\begin{equation}\label{eq:piMinusCorollary}
\sum_{j=1}^{\rM-1} \sigma^\-_j + \sigma^\-_{\rM+1} > \sum_{j=1}^\rM \lambda^\-_j + M \sum_{j=1}^\rM \kappa^\-_j.
\end{equation}
Since $\sigma \in [\lambda  \opluss M(\kappa^\-, \kappa^\+), \omega \opluss M(\kappa^\-,\kappa^\+)\bigr]_\unlhddotS$
we have $\sigma \unlhddots \omega \opluss M(\kappa^\-,\kappa^\+)$ 
and so by Lemma~\ref{lemma:twistedDominanceOrderOldDefinition}(a), $\sigma^\- \unLHD \omega^\- + M\kappa^\-$, we also have
for each $k$,
\begin{equation}\label{eq:sigmaMinus} 
\sum_{j=1}^k \lambda^\-_j + M\sum_{j=1}^k \kappa^\-_j 
\le \sum_{j=1}^k \sigma^\-_j \le \sum_{j=1}^k \omega^\-_j + M\sum_{j=1}^k \kappa^\-_j.
\end{equation}
Take $k = \rM$ in the left-hand inequality in~\eqref{eq:sigmaMinus} 
and $k = \rM+1$ in the right-hand inequality in~\eqref{eq:sigmaMinus}
and subtract to get 
$\sigma^\-_{r+1} \le \sum_{j=1}^{\rM+1} \omega^\-_j  - \sum_{j=1}^\rM \lambda^\-_j  + M\kappa^\-_{\rM+1}$.
Hence by another use of the right-hand inequality in~\eqref{eq:sigmaMinus} taking $k = \rM-1$,
\[ \sum_{j=1}^{\rM-1} \sigma^\-_j + \sigma^\-_{\rM+1} \le
2 \sum_{j=1}^{\rM-1} \omega^\-_j + \omega^\-_\rM + \omega^\-_{\rM+1}
- \sum_{j=1}^\rM \lambda^\-_j + M\sum_{j=1}^{\rM-1} \kappa^\-_j + M\kappa^\-_{\rM+1}. \]
Now \eqref{eq:piMinusCorollary} and the previous inequality imply
\begin{equation}
\label{eq:signedTableauPositionsMinusBound} 
2 \sum_{j=1}^{\rM-1} \omega^\-_j + \omega^\-_\rM + \omega^\-_{\rM+1} - 2\sum_{j=1}^\rM \lambda^\-_j
 >  M(\kappa^\-_\rM - \kappa^\-_{\rM+1}). \end{equation}
Taking $k = \rM$ in the definition of $\LBound\bigl([\lambda^\-,\omega^\-]^\ellmb_\unLHDS,\kappa^\-\bigr)$
in Definition~\ref{defn:LBound} we get 
\smash{$2\sum_{j=1}^{\rM-1} \omega^\-_j + \omega^\-_k + \omega^\-_{k+1} - 2\sum_{j=1}^\rM \lambda^\-_j \le M(\kappa_\rM - \kappa_{\rM+1})$}.
This contradicts~\eqref{eq:signedTableauPositionsMinusBound}. Hence, provided we have the
first condition on $M$ that
\smash{$M \ge$} \smash{$\LBound\bigl([\lambda^\-,\omega^\-]^\ellmb_\unLHDS,\kappa^\-\bigr)$},~(i) holds
for top positions. 

The $\rM$-bottom position lies $\kappa^\-_\rM - \kappa^\-_{\rM+1}$ boxes
below the $\rM$-top position. Supposing similarly that it does not contain $\rM$
we deduce that the total number of entries of~$t$ in the set $\{-1,\ldots, -\rM\}$
is at most \smash{$\kappa^\-_\rM - \kappa^\-_{\rM+1}$} plus 
the left-hand side of~\eqref{eq:signedTableauPositionsMinus}.
Running the 
same argument, using the same inequalities~\eqref{eq:piMinusCorollary} and 
\eqref{eq:sigmaMinus}, 
we
obtain \eqref{eq:signedTableauPositionsMinusBound} with $\kappa^\-_{\rM} -\! \kappa^\-_{\rM+1}$ subtracted from  the right hand side,
which is therefore $(M-1)(\kappa_\rM^\- - \kappa_{\rM+1}^\-)$. 
We then get a contradiction as before from
\smash{$M-1 \ge \LBound\bigl([\lambda^\-,\omega^\-]^\ellmb_\unLHDS,\kappa^\-\bigr)$}. 

For (ii), we may assume that $\kappa^\- \not=\varnothing$; then by Definition~\ref{defn:skewPositions}, the
$\ell^\-$-\emph{top position} of $t$ is 
$\bigl(\max(\ell(\sigma^\+), \ell^\+\bigr), \ell^\-)$.
If $\sigma^\+ = \varnothing$  
then we are in the exceptional case 
at the end of the statement of the lemma; otherwise,
since~$\sigma$ is $(\ell^\-,\ell^\+)$-large, this is a box of $t$.
Suppose for a contradiction that
this box does not contain $-\ell^\-$. The analogue of~\eqref{eq:piMinusCorollary} 
is 
\[ \sum_{j=1}^{\ell^- -1} \sigma^\-_j + \max(\ell(\sigma^\+), \ell^\+) > 
\sum_{j=1}^{\ell^\-}\lambda^\-_j +M\sum_{j=1}^{\ell^-}\kappa_j^\-
= |\lambda^\-| + M|\kappa^\-|.\]
By
\begin{equation}\label{eq:topContradiction} 
\sum_{j=1}^{\ell^\- -1} \sigma^\-_j \le \sum_{j=1}^{\ell^\- -1} \omega^\-_j +  M\sum_{j=1}^{\ell^\- -1} \kappa^\-_j
= |\omega^\-| - \omega^\-_{\ell^\-} + M|\kappa^\-| - M\kappa^\-_{\ell^\-} 
\end{equation}
obtained from the upper bound in~\eqref{eq:sigmaMinus}
we deduce $|\lambda^\-|  -\max(\ell(\sigma^\+), \ell^\+) <
|\omega^\-| - \omega^\-_{\ell^\-} - M\kappa^\-_{\ell^\-}$.
By Lemma~\ref{lemma:signedDominancePositiveLength}, since $\sigma \unrhddots \lambda \oplus M(\kappa^\-, \kappa^\+)$,
we have $\ell(\sigma^\+) \le \ell(\lambda^\+)$. Therefore
\begin{equation} \label{eq:topContradiction2} M\kappa^\-_{\ell^\-} < |\omega^\-| - |\lambda^\-| - \omega^\-_{\ell^\-}   + \max(\ell(\lambda^\+),\ell^\+). \end{equation}
This contradicts the third bound in the statement of this lemma, namely
$M \ge \bigl( \max( \ell(\lambda^\+), \ell^\+ ) + |\omega^\-| - |\lambda^\-| - \omega^\-_{\ell^\-} \bigr) / \kappa^\-_{\ell^\-}$.
This proves (ii) for the top position. The modifications for the $\ell^\-$-bottom position are precisely analogous
to~(i), leading to~\eqref{eq:topContradiction} with $\kappa^\-_{\ell^\-}$ subtracted from the right-hand side,
and~\eqref{eq:topContradiction2} with $M$ replaced by $M-1$, as required. 

Part (iii) follows from (ii) because the $\ell^\-$-top position is $(k,\ell^\-)$ 
where $k \ge \ell^\+$, and since this position contains $-\ell^\-$, so does position $(\ell^\+, \ell^\-)$.
This argument is indicated in the caption to Figure~\ref{fig:ellDecompositionTableau}.

For (iv) and (v), we first note that if $\kappa^\+ = \varnothing$ then there is nothing
to prove. Suppose that $\kappa^\+ \not=\varnothing$.
By (iii) we have
\begin{itemize}
\item[(+)] The positive entries of $t$ in $\{1,2,\ldots, \ell^\+\}$ lie either in boxes in the first $\ell^\-$ columns of~$t$
in rows strictly below row $\ell^\+$, or in boxes $(i,j)$ with $i \le \ell^\+$ and $j > \ell^\-$.
\end{itemize}
This restrictions from $(-)$ and $(+)$ are 
shown diagrammatically in Figure~\ref{fig:ellDecompositionTableau}. 

\begin{figure}[ht!]
\begin{center}\begin{tikzpicture}[x=0.5cm,y=-0.5cm]
\draw(0,0)--(13,0)--(13,1)--(12,1)--(12,1.5);
\draw (5,4)--(8,4)--(8,3)--(8.5,3);
\draw(8,4)--(8,5)--(7.5,5);
\draw(5,6.5)--(6,6.5)--(6,6);
\draw(5,7)--(5,10)--(4,10)--(4,10.5);
\draw(0,0)--(0,13)--(2,13)--(2,12)--(2.5,12);

\draw[thick] (0,0)--(5,0)--(5,8.5)--(3,8.5)--(3,9.5)--(2.5,9.5); 
\draw[thick] (0,0)--(0,11)--(1,11)--(1,10.5);

\node at (2.5,10.5) {$\+$};
\node at (6.25,5.25) {$\+$};
\node at (8,1.5) {$\+$};

\node at (2.5,0.5) {$\ldots$};
\node at (2.5,3.5) {$\ldots$};
\node at (0.5,1.75) {$\vdots$};
\node at (4.5,1.75) {$\vdots$};
\node at (1.6,9.6) {$\iddots$};
\node at (3.25,11) {$\iddots$};
\node at (6.9,5.5) {$\iddots$};
\node at (10,2) {$\iddots$};

\node at (4.5,6.2) {$\bullet$};

\tableauBox{1}{0}{$\oM$}
\tableauBox{5}{0}{$\scriptstyle \pmb{\ell}^{\pmb{-}}$}
\tableauBox{6}{0}{$\scriptstyle 1$}

\tableauBox{1}{3}{$\oM$}
\tableauBox{5}{3}{$\scriptstyle \pmb{\ell}^{\pmb{-}}$}

\newcommand{\xExtra}{2}
\newcommand{\yExtra}{1.5}
\newcommand{\lMx}{-0.5}
\newcommand{\lMy}{-0.5}

\draw[<-] (-0.5,0)--(-0.5,1.5); 
\draw[->] (-0.5,2.5)--(-0.5,4);
\node at (-0.5,2) {$\scriptstyle \ell^\+$};
\draw[<-] (0,-0.5)--(2,-0.5);
\draw[->] (3,-0.5)--(5,-0.5);
\node at (2.5,-0.5-0.05) {$\scriptstyle \ell^\minus$};

\draw[dashed] (5,0)--(5,13);
\draw[dashed] (0,4)--(12,4);

\end{tikzpicture}
\end{center}
\caption{Entries in a tableau $t \in \SSYT(\sigma)_{(\pi^\minus,\pi^\+)}$ 
when~$\sigma$ is $(\ell^-,\ell^\+)$-large 
showing
the conditions $(-)$ and $(+)$ in the proof of Lemma~\ref{lemma:signedTableauPositions}.
The positive entries not in the first~$\ell^{\scriptstyle +}$
rows lie in the regions marked $+$. 
Note that by  (iii) in Lemma~\ref{lemma:signedTableauPositions} 
the box in position $(\ell^{\scriptstyle -},\ell^{\scriptstyle +})$ contains $-\ell^{\scriptstyle -}$. 
We have shown the case where $\ell(\sigma^{\scriptstyle +}) > \ell^{\scriptstyle +}$,
and so the $\ell^{\scriptstyle -}$-top position is $\bigl( \ell(\sigma^{\scriptstyle +}), 
\ell^{\scriptstyle -})$ marked $\bullet$.
By part (ii) of the lemma, when $M$ is sufficiently large, this position also contains 
$-\ell^{\scriptstyle -}$,
and so the first $\ell(\sigma^{\scriptstyle +})$ rows of $t$ are equal in their first $\ell^{\scriptstyle -}$ columns.
The $\ell^{\scriptstyle -}$-bottom position is $\kappa^{\scriptstyle -}_{\ell^{\scriptstyle -}}$ rows below the $\ell^{\scriptstyle -}$-top position.
Observe that since the $\ell^{\scriptstyle -}$-top position is in row $\ell(\sigma^{\scriptstyle +})$, deleting 
a row of length $\ell^{\scriptstyle -}$ strictly below the $\ell^{\scriptstyle -}$-top position and weakly above the $\ell^{\scriptstyle -}$-bottom position
preserves partition shape: this is relevant to the bijection $\mathcal{F}$ defined in Definition~\ref{defn:F}.
\label{fig:ellDecompositionTableau}}
\end{figure}

By $(+)$, there are exactly $|\sigma^\-| - |\pi^\-|$ positive entries in the first $\ell^\-$ columns of $t$.
Let $\rP \le \ell^\+$ and suppose, as we may, that $\kappa_\rP > \kappa_{\rP+1}$.
The  $\rP$-left position of~$t$ is $(\rP, \ell^\- + \sigma^\+_{\rP+1})$.
If $\sigma^\+_{\rP+1} = 0$ then (iii) implies that this 
position contains a negative entry, as required in the exceptional cases for left-positions.
We may therefore assume that $\sigma^\+_{\rP+1} > 0$, so the $\rP$-left position is not in the first $\ell^\-$ columns of~$t$.
Suppose, for a contradiction, that this position does not contain $\rP$.
The total number of entries of~$t$
lying in the set $\{1,\ldots, \rP\}$ is then at most
$|\sigma^\-| - |\pi^\-| + \sigma^\+_1 + \cdots + \sigma^\+_\rP
+ \sigma^\+_{\rP+1} - 1$. On the other hand~$t$ has exactly $\pi^\+_1 + \cdots + \pi^\+_{\rP-1}
+\pi^\+_\rP$ such entries. Hence
\begin{equation}
\label{eq:signedTableauPositionsPlus}
|\sigma^\-| - |\pi^\-| + 
\sum_{i=1}^{\rP-1} \sigma^\+_i + \sigma^\+_{\rP+1} > \sum_{i=1}^\rP \pi^\+_i .
\end{equation}
Now using 
$\pi \unrhddot \lambda \oplus M(\kappa^\-,\kappa^\+)$ and so, by Lemma~\ref{lemma:twistedDominanceOrderOldDefinition}(b), 
$\pi^\+ + \bigl( (|\lambda^\+| + M|\kappa^\+|) - |\pi^\+|\bigr) \unrhd \lambda^\+ + M\kappa^\+$,
we have
\begin{equation}
\label{eq:signedTableauPositionsPlusLowerBoundPi} 
\sum_{i=1}^\rP \pi^\+_i \ge \sum_{i=1}^\rP \lambda^\+_i + M \sum_{i=1}^\rP \kappa^\+_i
- |\lambda^\+| - M |\kappa^\+| + |\pi^\+|. \end{equation}
In exactly the same way, since $\sigma \unrhddot \lambda \oplus M(\kappa^\-, \kappa^\+)$, we have, for each $k \ge 1$,
\begin{equation}
\label{eq:piPlusCorollary}
\sum_{i=1}^k \sigma^\+_i \ge \sum_{i=1}^k \lambda^\+_i + M \sum_{i=1}^k \kappa^\+_i 
- |\lambda^\+| - M |\kappa^\+| + |\sigma^\+| \end{equation}
and using $\sigma \unlhddot \omega \oplus M(\kappa^\-, \kappa^\+)$ and so
$\sigma^\+ \unlhd \omega^\+ + \bigl( |\sigma^\+| - |\omega^\+| - M|\kappa^\+|) + M\kappa^\+$, we have
\begin{equation}
\label{eq:sigmaPlus} 
\sum_{i=1}^k \sigma_i^\+ \le \sum_{i=1}^k \omega_i^\+ + M \sum_{i=1}^k \kappa_i^\+ + |\sigma^\+| - |\omega^\+|
- M |\kappa^\+| \end{equation}
for each $k$. 
Taking $k = \rP$ in~\eqref{eq:piPlusCorollary} 
and $k = \rP+1$ in~\eqref{eq:sigmaPlus}
and subtracting (as before for the negative case) we get
\[ \sigma^\+_{\rP+1} \le 
\sum_{i=1}^{\rP+1} \omega_i^\+ - \sum_{i=1}^\rP \lambda_i^\+ + M\kappa_{\rP+1} + |\lambda^\+| - |\omega^\+|. \]
Hence 
\begin{align*} \sum_{i=1}^{\rP-1} \sigma^\+_i + \sigma^\+_{\rP+1} &\le 
\sum_{i=1}^{\rP-1} \omega_i^\+  + M\sum_{i=1}^{\rP-1} \kappa_i^\+ + |\sigma^\+| - |\omega^\+| - M|\kappa^\+|
\\ & \hspace*{1in} + \sum_{i=1}^{\rP+1} \omega_i^\+ - \sum_{i=1}^\rP \lambda_i^\+ + M\kappa^\+_{\rP+1} + |\lambda^\+| - |\omega^\+| \\
&= 2\hskip-0.5pt \sum_{i=1}^{\rP-1}\hskip-0.5pt  \omega_i^\+ \hskip-0.5pt +\hskip-0.5pt  \omega^\+_\rP 
\hskip-0.5pt +\hskip-0.5pt  \omega^\+_{\rP+1} \hskip-0.5pt -\hskip-0.5pt  
\sum_{i=1}^\rP \lambda^\+_i 
\hskip-0.5pt +\hskip-0.5pt  M\sum_{i=1}^{\rP-1}\hskip-0.5pt  \kappa_i^\+ \hskip-0.5pt +\hskip-0.5pt  M\kappa^\+_{\rP+1} \\
&\hspace*{1in} + |\sigma^\+| + |\lambda^\+| - 2|\omega^\+| - M |\kappa^\+|
\end{align*}
and so, writing $A$ for the right-hand side above,~\eqref{eq:signedTableauPositionsPlus}
and~\eqref{eq:signedTableauPositionsPlusLowerBoundPi} imply
\[ |\pi^\-| - |\sigma^\-| + \sum_{i=1}^\rP \lambda_i^\+ + M\sum_{i=1}^\rP \kappa_i^\+ - |\lambda^\+| - M|\kappa^\+|
+ |\pi^\+| 
< A. \]
We now rearrange and simplify
using $|\sigma^\-| + |\sigma^\+| = |\pi^\-| + |\pi^\+|$ to get
\begin{align} 
M(\kappa^\+_\rP - \kappa^\+_{\rP+1}) &< 2\sum_{i=1}^{\rP-1} \omega^\+_i + \omega^\+_\rP + \omega^\+_{\rP+1} 
- 2 \sum_{i=1}^\rP \lambda^\+_i \nonumber \\ &\hspace*{0.5in} + |\sigma^\+| + 2|\lambda^\+| - 2|\omega^\+| - |\pi^\-| + |\sigma^\-| - |\pi^\+| \nonumber \\
&= 2\sum_{i=1}^{\rP-1} \omega^\+_i + \omega^\+_\rP + \omega^\+_{\rP+1} - 2\sum_{i=1}^\rP \lambda^\+_i + 2|\lambda^\+|
-2 |\omega^\+| . \label{eq:rightFinal} 
\end{align}
But from the hypothesis
$M \ge \LBound\bigl( [\lambda^\+, \omega^\+ + (|\lambda^\+| - |\omega^\+|)]_\unlhd, \kappa^\+ \bigr)$ we have
\begin{equation} \label{eq:topFinal2} 
M \ge \frac{2\sum_{i=1}^{\rP-1} \omega^\+_i + \omega^\+_\rP + \omega^\+_{\rP+1} - 2\sum_{i=1}^\rP \lambda^\+_i
+ 2|\lambda^\+| - 2 |\omega^\+|}{\kappa^\+_\rP - \kappa^\+_{\rP+1}}. 
\end{equation}
Comparing with the previous inequality we get a contradiction.
This proves (iv) and (v) for left positions.
The modifications for (iv) and (v) for right positions are precisely analogous to the negative case; the relevant
quantity to subtract from the left-hand side of~\eqref{eq:rightFinal} is $\kappa^\+_\rP - \kappa^\+_{\rP+1}$, so
we obtain~\eqref{eq:topFinal2} with~$M$ replaced by $M-1$, as already seen twice before. This completes the proof.
%
\end{proof}

For later use in the proof
of Lemma~\ref{lemma:plethysticSignedTableauPositions} 
we give the following lemma in the general skew case.

\begin{lemma}\label{lemma:positions}
Let $\kappa^\-$ and $\kappa^\+$ be partitions. Set $\ell^\- = \ell(\kappa^\-)$
and $\ell^\+ = \ell(\kappa^\+)$.
Let $\sigmaS$ be an $\bigl(\ell^\- + a(\sigmas), \ell^\+ \bigr)$-large
and $\bigl(\ell^\-, \ell(\sigmas) \bigr)$-large
skew partition.
The $\rM$-top position of $\sigmaS$ is
\begin{align*}
&\begin{cases} \bigl( \sigma'_{\rM+1}, \rM \bigr) & \text{if $\rM < \ell^\-$} \\
\bigl( \max( \ell(\sigmas), \ell^\+, \ell(\sigma^\+) ),
\ell^\- \bigr)\hspace*{10pt} & \text{if $\rM = \ell^\-$} \end{cases} \\
\intertext{and the $\rP$-left position of $\sigma$ is}
&\begin{cases}  \bigl( \rP, \sigma_{\rP+1} \bigr) & \text{if $\rP < \ell^\+$} \\
\bigl( \ell^\+, \max(  \ell^\- + a(\sigmas), \sigma_{\ell^\++1}) \bigr) & \text{if $\rP = \ell^\+$} \end{cases}
\end{align*}
Either $\sigmas = \varnothing$ and $\sigma^\+ = \varnothing$ and $\ell^\+ = 0$ or
all top positions are in row $\max(\ell(\sigmas), \ell(\sigma^\+), \ell^\+ )$ of $\sigma$ or further below.
All left positions are in column $\ell^\- + a(\sigmas)$ of $\sigma$ or further right.
\end{lemma}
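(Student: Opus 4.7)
The lemma is essentially a direct unpacking of Definition~\ref{defn:skewPositions} in light of the two largeness hypotheses, so the plan is to reduce each of the four displayed formulas to showing that the $\max$ in the definition collapses to a single argument.

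My first step is to record the two identities that come directly from the $\ell^-$-decomposition (see Definition~\ref{defn:ellDecomposition} and Figure~\ref{fig:ellDecomposition}): for each $i$,
\[ \sigma^{-\prime}_i = \min(\sigma_i, \ell^-), \qquad \sigma^+_i = \max(0, \sigma_i - \ell^-), \]
and consequently $\sigma^-_j = \sigma'_j$ for $1 \le j \le \ell^-$, while $\sigma^-_{\ell^-+1} = 0$ and $\ell(\sigma^+) = \sigma'_{\ell^-+1}$. Together with Remark~\ref{remark:ellDecompositionLarge} these relate the two largeness conditions to inequalities between $\sigma'_j$ and the constants $\ell^+,\ell(\sigma_\star),\ell(\sigma^+)$.

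Next, for the top positions, I substitute into the formula $\bigl(\max(\ell(\sigma_\star),\ell(\sigma^+),\ell^+,\sigma^-_{r^-+1}),r^-\bigr)$ from Definition~\ref{defn:skewPositions}. The case $r^- = \ell^-$ is immediate because $\sigma^-_{\ell^-+1} = 0$. For $r^- < \ell^-$, I need to show $\sigma'_{r^-+1}$ dominates the other three entries. The hypothesis that $\sigma/\sigma_\star$ is $\bigl(\ell^- + a(\sigma_\star),\ell^+\bigr)$-large says (by Definition~\ref{defn:large}) that $\sigma_{\ell^+} \ge \ell^- + a(\sigma_\star) \ge \ell^-$, hence box $(\ell^+,\ell^-)$ lies in $[\sigma]$ and $\sigma'_j \ge \ell^+$ for every $j \le \ell^-$, in particular for $j = r^-+1$. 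The hypothesis that $\sigma/\sigma_\star$ is $(\ell^-,\ell(\sigma_\star))$-large gives likewise $\sigma'_j \ge \ell(\sigma_\star)$ for $j \le \ell^-$. Finally, monotonicity of the conjugate partition and the identity $\ell(\sigma^+) = \sigma'_{\ell^-+1}$ give $\sigma'_{r^-+1} \ge \ell(\sigma^+)$. So the maximum collapses to $\sigma'_{r^-+1}$, as required. The degenerate cases listed after the formulas are precisely when the above largeness forces the relevant quantity to be $0$.

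The left positions are symmetric: the formula reads $\bigl(r^+, \ell^- + \max(a(\sigma_\star),\sigma^+_{r^++1})\bigr)$. When $r^+ = \ell^+$ I substitute $\sigma^+_{\ell^++1} = \max(0,\sigma_{\ell^++1} - \ell^-)$ and split on whether $\sigma_{\ell^++1} \ge \ell^-$ or not, obtaining $\bigl(\ell^+,\max(\ell^- + a(\sigma_\star),\sigma_{\ell^++1})\bigr)$ in both subcases. When $r^+ < \ell^+$, the largeness hypothesis gives $\sigma_{r^++1} \ge \sigma_{\ell^+} \ge \ell^- + a(\sigma_\star)$, so $\sigma^{-\prime}_{r^++1} = \ell^-$, $\sigma^+_{r^++1} = \sigma_{r^++1} - \ell^- \ge a(\sigma_\star)$, and the column index becomes $\ell^- + \sigma^+_{r^++1} = \sigma_{r^++1}$.

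The final two bullet points are essentially free. For top positions, the row index of the $r^-$-top position is at least $\max(\ell(\sigma_\star),\ell(\sigma^+),\ell^+)$ directly by the formula in Definition~\ref{defn:skewPositions}, and this maximum is $0$ only in the excluded degenerate case $\sigma_\star = \varnothing = \sigma^+$ and $\ell^+ = 0$. For left positions, the column index is $\ell^- + \max(a(\sigma_\star),\sigma^+_{r^++1}) \ge \ell^- + a(\sigma_\star)$, again from the formula. No obstacle is expected; the main care is simply keeping track of the three largeness assumptions and the two piecewise definitions of $\sigma^{-\prime}$ and $\sigma^+$.
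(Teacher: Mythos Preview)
Your proposal is correct and follows essentially the same approach as the paper: both arguments directly unpack Definition~\ref{defn:skewPositions} and use the two largeness hypotheses to show that the maxima in the definition collapse. The paper condenses your three separate inequalities $\sigma'_{r^-+1} \ge \ell^+$, $\sigma'_{r^-+1} \ge \ell(\sigma_\star)$, $\sigma'_{r^-+1} \ge \ell(\sigma^+)$ into a single displayed inequality $\sigma^-_{\ell^-} \ge \max(\ell(\sigma_\star),\ell(\sigma^+),\ell^+)$ (together with the dual $\sigma^+_{\ell^+} \ge a(\sigma_\star)$), but the content is identical; your derivation of $\ell(\sigma^+) = \sigma'_{\ell^-+1}$ is what the paper obtains by citing Remark~\ref{remark:ellDecompositionLarge}. (Minor slip: you refer to ``three largeness assumptions'' but there are only two.)
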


\begin{proof}
Since $\sigmaS$ is $\bigl( \ell^\-,\ell(\sigmas) \bigr)$-large, we have $\sigma^\-_{\ell^\-} \ge \ell(\sigmas)$.
Since $\sigmaS$ 
is $\bigl(\ell^\- + a(\sigmas) , \ell^\+\bigr)$-large
we have $\sigma^\-_{\ell^\-} \ge \ell^\+$
and  $\ell^\-  + \sigma^\+_{\ell^\+} \ge \ell^\- + a(\sigmas)$. Moreover,
by Remark~\ref{remark:ellDecompositionLarge}, we have $\sigma_{\ell^\-}^\- \ge \ell(\sigma^\+)$. Summarising
\begin{align}
\sigma^\-_{\ell^\-} &\ge \max( \ell(\sigmas), \ell(\sigma^\+), \ell^\+) \label{eq:ellMinusPart}, \\
\sigma^\+_{\ell^\+} &\ge a(\sigmas). \label{eq:ellPlusPart}
\end{align}
By Definition~\ref{defn:skewPositions}, the $\rM$-top position of $\sigmaS$ is 
\[ \bigl( \max(  \ell(\sigmas), \ell(\sigma^\+), \ell^\+, \sigma^\-_{\rM+1}), 
\rM \bigr) \]
for each $\rM \le \ell^\-$. The claim on the $\ell^\-$-top position is now immediate.
If $\rM < \ell^\-$, then by~\eqref{eq:ellMinusPart},
the maximum defining the row is at least $\sigma^\-_{\ell^\-}$, and so 
the position is $(\sigma^\-_{\rM+1}, \rM)$, as required.
This also proves the claim on the rows of these positions. 
Again by Definition~\ref{defn:skewPositions},
the $\rP$-left position of $\sigmaS$ is 
\[ \bigl( r^\+, \ell^\- + \max(a(\sigmas), \sigma^\+_{r^\+ +1})\bigr) \]
for $r^\+ \le \ell^\+$. If $\rP < \ell^\+$, then by~\eqref{eq:ellPlusPart}, 
the maximum is at least $\sigma^\+_{\ell^\+}$ and so the position
is $\bigl( r^\+, \ell^\- + \sigma^\+_{r^\+ + 1} \bigr)$, which is as required.
If $\sigma^\+_{\ell^\+ + 1} = 0$ then the column of the $\ell^\+$-left position is 
$\ell^\- + a(\sigmas)$, as claimed, while if \smash{$\sigma^\+_{\ell^\+ + 1} > 0$} then
\smash{$\sigma_{\ell^\+ + 1} = \ell^\- + \sigma^\+_{\ell^\+ +1} $} and so
\[ \ell^\- + \max( a(\sigmas) , \sigma^\+_{\ell^\++1} )
= \max( \ell^\- + a(\sigmas), \sigma_{\ell^\+ + 1} \bigr)\] 
as required. This
also proves the claim on the columns of these positions.
\end{proof}

Recall from Definitions~\ref{defn:signedTableau} and~\ref{defn:semistandardSignedTableau} that $\YT(\muS)$
is the set of signed tableaux of shape $\muS$
and $\sSSYT(\muS)$ is the subset of signed semistandard tableaux of shape $\muS$.

\begin{lemma}\label{lemma:FisWellDefinedAndBumpsWeight}
Let $\kappa^\-$ and $\kappa^\+$ be partitions. 
Let $\sigmaS$ be a 
 $\bigl(\ell(\kappa^\-)+a(\sigmas),$ $\ell(\kappa^\+)\bigr)$-large
and $\bigl(\ell(\kappa^\-), \ell(\sigmas)\bigr)$-large
skew partition. 
\begin{thmlist}
\item The map
$\Fmap : \sSSYT(\sigmaS) \rightarrow \YT\bigl( \sigmaS \opluss (\kappa^\-,\kappa^\+)\bigr)$ is well-defined.
\item If $t \in \sSSYT(\sigmaS)$ has signed weight $(\pi^\-,\pi^\+)$ then $\Fmap(t)$
has signed weight $(\pi^\-+ \kappa^\-, \pi^\+ + \kappa^\+)$.
\end{thmlist}
\end{lemma}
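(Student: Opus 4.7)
My plan is to prove (i) and (ii) by directly analyzing what each of the two stages of $\Fmap$ does to the shape and to the signed weight, using Lemma~\ref{lemma:positions} to control exactly where insertions happen.

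For (i), stage (1) inserts, for each $r^\-$ from $1$ up to $\ell(\kappa^\-)$, a block of $\kappa^\-_{r^\-} - \kappa^\-_{r^\-+1}$ rows of length $r^\-$ (below the $r^\-$-top position of $\sigma$). A row of length $r^\-$ contributes exactly one box to each of the columns $1, \ldots, r^\-$, so by telescoping the total number of boxes added to column $j$ (for $j \le \ell(\kappa^\-)$) is
\[
\sum_{r^\- \ge j} \bigl(\kappa^\-_{r^\-} - \kappa^\-_{r^\-+1}\bigr) \;=\; \kappa^\-_j - \kappa^\-_{\ell(\kappa^\-)+1} \;=\; \kappa^\-_j,
\]
which is precisely the length of column $j$ of $(\kappa^\-)'$. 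Columns beyond column $\ell(\kappa^\-)$ are untouched by stage (1), and the $\bigl(\ell(\kappa^\-),\ell(\sigmas)\bigr)$-largeness of $\sigmaS$ ensures that $[\sigmas]$ lies entirely inside the first $\ell(\kappa^\-)$ columns so that the removed boxes of $\sigmas$ remain at their original positions. Hence after stage (1) the shape is $(\sigma \sqcup (\kappa^\-)')/\sigmas$.

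To check that each individual insertion is legal (i.e.\ that the intermediate shape is again a skew partition), I would argue by induction on $r^\-$ with the invariant that after step $r^\-$ the shape is obtained from $\sigmaS$ by extending column $j$ by $\kappa^\-_j - \kappa^\-_{r^\-+1}$ boxes, placed at the bottom, for every $j \le r^\-$. By Lemma~\ref{lemma:positions} the $r^\-$-top position lies in column $r^\-$ (at row $\sigma^\-_{r^\-+1}$ when $r^\- < \ell(\kappa^\-)$, and at or below the maximum of $\ell(\sigmas)$, $\ell(\sigma^\+)$ and $\ell(\kappa^\+)$ when $r^\- = \ell(\kappa^\-)$), so the newly inserted rows lie strictly below $[\sigmas]$ and $[\sigma^\+]$, and the fact that $\kappa^\-$ is a partition is exactly what forces the column lengths to remain weakly decreasing after each step. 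Stage (2) is entirely dual: the same telescoping shows row $i$ (for $i \le \ell(\kappa^\+)$) gains $\kappa^\+_i$ boxes, the $\bigl(\ell(\kappa^\-) + a(\sigmas), \ell(\kappa^\+)\bigr)$-largeness of $\sigmaS$ ensures these additions occur strictly right of $[\sigmas]$, and Lemma~\ref{lemma:adjoinToLarge} confirms that the composite effect of the two stages is to produce the shape $\sigmaS \opluss (\kappa^\-, \kappa^\+)$.

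For (ii) the signed-weight calculation is immediate from the same telescoping. In stage (1), for each $j$ with $1 \le j \le \ell(\kappa^\-)$, an entry $-j$ appears once in every inserted row of length at least $j$, so the total number of new $-j$ entries is $\sum_{r^\- \ge j}(\kappa^\-_{r^\-} - \kappa^\-_{r^\-+1}) = \kappa^\-_j$; no positive entries are added. In stage (2), the dual count gives $\kappa^\+_i$ new entries equal to $i$ for each $i$ with $1 \le i \le \ell(\kappa^\+)$, and no negative entries are added. Hence the signed weight passes from $(\pi^\-, \pi^\+)$ to $(\pi^\- + \kappa^\-, \pi^\+ + \kappa^\+)$.

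The main obstacle will be the bookkeeping around whether the insertion positions (defined with reference to the original $\sigma$) still correspond sensibly to the updated tableau after earlier insertions have pushed boxes down. The two largeness hypotheses on $\sigmaS$ are precisely what guarantee, via Lemma~\ref{lemma:positions}, that each top (respectively, left) position sits on the outer boundary of column $r^\-$ (respectively, row $r^\+$) and lies below (respectively, right of) the regions that could be disturbed by prior insertions; once this is in hand the inductive extension of column/row lengths becomes routine and the two parts of the lemma follow.
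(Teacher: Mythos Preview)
Your approach matches the paper's: both use Lemma~\ref{lemma:positions} to locate the top/left positions and verify that the insertions preserve skew-partition shape, with the paper being terser (it simply notes that each $r^\-$-top position lies immediately above a row of length at most $r^\-$ not meeting $[\sigmas]$, that insertions proceed from bottom to top so earlier ones do not disturb later top positions, and calls (ii) ``obvious'', whereas you give the explicit telescoping count). One small correction: your claim that $\bigl(\ell(\kappa^\-),\ell(\sigmas)\bigr)$-largeness forces $[\sigmas]$ into the first $\ell(\kappa^\-)$ columns is false --- that hypothesis only says $\sigma_{\ell(\sigmas)} \ge \ell(\kappa^\-)$, and indeed in Example~\ref{ex:skewPositions} one has $a(\sigmas)=4>2=\ell(\kappa^\-)$. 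The conclusion you want, that $\sigmas$-boxes are undisturbed, instead follows (as you correctly note in your second paragraph) from all inserted rows lying strictly below row $\ell(\sigmas)$, so the misstatement is inessential to your argument.
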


\begin{proof}
For (i) we must check that the insertions preserve partition shape.
By Lemma~\ref{lemma:positions},
each $\rM$-top position is immediately above a row of length at most $\rM$
not meeting $[\sigmas]$. (Note particularly that this holds when $\rM = \ell^\-$  because
the $\ell^\-$-top position lies in row 
$\max(\ell(\sigmas), \ell(\sigma^\+), \ell^\+)$,
as remarked on in the caption to Figure~\ref{fig:ellDecompositionTableau}.)
The definition of $\Fmap$ in Definition~\ref{defn:F} specifies
that insertions are performed working from bottom to top and 
top positions used for later insertions in (1) are not changed
by earlier insertions in (1).
Therefore the row insertions are well-defined.
By the claim on the rows of the top positions in the lemma, the left positions are
not changed by these insertions.
Therefore the collumn insertions in (2) commute with the row insertions in (1), 
and a similar argument shows
that the row insertions in~(2) are well-defined. Hence $\Fmap$ is well-defined as required in (i).
Part (ii) is obvious from the definition of~$\Fmap$.
\end{proof}

\subsection{Tableau stability}
We now show that $\Fmap$ is bijective in the case relevant to our stable partition system.
We later quote the main part of this proof of the following lemma
in the proof of the extension to the skew case 
in Proposition~\ref{prop:plethysticSignedTableauInnerStable}. (See Remark~\ref{remark:recap}
for the recapped version.) Since this extension has other details
that are somewhat fiddly, we do not attempt to continue the unified exposition.
We remark that, by Lemma~\ref{lemma:downsetIsLarge}, 
the hypothesis that $\omega$ is $\bigl(\ell^\-+1, \ell^\+\bigr)$-large
implies the same condition on $\lambda$.

\begin{proposition}[Tableau Stability]
\label{prop:signedTableauStable}
Let $\kappa^\-$ and $\kappa^\+$ be partitions. Fix $\ell^\- = \ell(\kappa^\-)$ and $\ell^\+ = \ell(\kappa^\+)$.
Let $\omega$ be a partition
 and let $\lambda \unlhddots \omega$ in the $\ell^\-$-twisted
dominance order.
If $\ell^\- \not=0$ then suppose that $\omega$ is $\bigl(\ell^\-+1, \ell^\+\bigr)$-large.
Let $\sigma$ and $\pi$ be partitions in the twisted interval
\[ \bigl[\lambda  \opluss M(\kappa^\-, \kappa^\+), \omega \opluss M(\kappa^\-,\kappa^\+)\bigr]_\unlhddotS \]
for the $\ell^\-$-twisted dominance order.
Provided $M$ is at least the maximum of
\begin{bulletlist}
\item $\LBound\bigl([\lambda^\-,\omega^\-]^\ellmb_\unLHDS, \kappa^\-\bigr)$ 
\item $\LBound\bigl( \bigl[\lambda^\+,\omega^\+ + (|\lambda^\+| - |\omega^\+|) \bigr]_\unlhd, \kappa^\+\bigr)$
\item \smash{$\bigl( \max( \ell(\lambda^\+), \ell^\+) + |\omega^\-| - |\lambda^\-| - \omega^\-_{\ell^\-} \bigr)/ \kappa^\-_{\ell^\-}$}
\end{bulletlist}
the map $\Fmap$ is a well-defined bijection
\[ \Fmap : \SSYTw{\sigma}{\pi^\-}{\pi^\+} \rightarrow 
\SSYTw{\sigma \oplus (\kappa^\-,\kappa^\+)}{\pi^\- + \kappa^\-}{\pi^\+ + \kappa^\+}. \]
\end{proposition}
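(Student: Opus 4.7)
By Lemma~\ref{lemma:FisWellDefinedAndBumpsWeight}, the map $\Fmap$ already has the correct target shape and the correct signed weight $(\pi^\- + \kappa^\-, \pi^\+ + \kappa^\+)$. The two remaining tasks are therefore (A)~verifying that $\Fmap(t)$ is \emph{semistandard}, and (B)~constructing an inverse. The largeness hypothesis on $\omega$ combined with Lemma~\ref{lemma:twistedDownsetLarge} guarantees that every partition in either twisted interval is $(\ell^\-, \ell^\+)$-large, so Lemma~\ref{lemma:signedTableauPositions} applies to $t$ at level~$M$ (giving the top/left positions of $\sigma$) and to any candidate tableau at level~$M+1$ (giving the bottom/right positions of $\sigma \oplus (\kappa^\-,\kappa^\+)$); the three stated bounds on~$M$ are precisely those required.

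\textbf{Step A (semistandardness).} Each insertion in~(1) of Definition~\ref{defn:F} places $\kappa^\-_\rM - \kappa^\-_{\rM+1}$ identical rows $[-1,\ldots,-\rM]$ just below the $\rM$-top position of~$\sigma$; these rows are internally strictly increasing, so it suffices to check the column interfaces. For $\rM < \ell^\-$ with $\kappa^\-_\rM > \kappa^\-_{\rM+1}$, Lemma~\ref{lemma:signedTableauPositions}(i) (top version, using $M \ge L$) says the entry at the $\rM$-top position is $-\rM$; by column semistandardness in~$t$, the entries above in columns $1, \ldots, \rM$ lie in $\{-1,\ldots,-\rM\}$ with $-j$ at the bottom of column $j$, so the inserted row continues column $j$ weakly-increasingly. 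Below the insertion line, the original entries of~$t$ shift down; in columns $1, \ldots, \rM$ these entries are either $-s$ with $s \ge \rM$, or positive, both of which are $\ge -\rM \ge -j$ in the order, hence compatible. The vertical-strip condition for equal negatives holds because each inserted value $-j$ occupies distinct rows. The parallel argument for~(2), using Lemma~\ref{lemma:signedTableauPositions}(iv),(v) for left positions, together with Lemma~\ref{lemma:signedTableauPositions}(iii) to rule out interference from the negative block, shows that the inserted positive columns $[1,\ldots,\rP]$ slot into the tableau weakly-increasingly along rows and strictly along their own columns, preserving the horizontal-strip condition for equal positive entries.

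\textbf{Step B (inverse).} Define $\Gmap$ to be the reverse of $\Fmap$: given $u \in \SSYTw{\sigma \oplus (\kappa^\-,\kappa^\+)}{\pi^\- + \kappa^\-}{\pi^\+ + \kappa^\+}$, first delete $\kappa^\+_\rP - \kappa^\+_{\rP+1}$ columns ending at the $\rP$-right position of $\sigma \oplus (\kappa^\-,\kappa^\+)$ for $\rP = \ell^\+, \ldots, 1$, then delete $\kappa^\-_\rM - \kappa^\-_{\rM+1}$ rows ending at the $\rM$-bottom position for $\rM = \ell^\-, \ldots, 1$. Applying Lemma~\ref{lemma:signedTableauPositions}(i)--(v) in their bottom/right forms at level $M+1$ (which is legitimate because $(M+1) - 1 = M \ge L$) shows that the entry at each $\rP$-right position is exactly $\rP$, at each $\rM$-bottom position is exactly $-\rM$, and the adjacent columns/rows of $u$ to be deleted are forced to be the canonical $[1,\ldots,\rP]$ and $[-1,\ldots,-\rM]$ strips. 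This forces $\Gmap(u)$ to have the correct partition shape $\sigma$ and leaves a semistandard tableau of the correct signed weight.

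\textbf{Step C (bijectivity).} It remains to observe $\Gmap \circ \Fmap = \mathrm{id}$ and $\Fmap \circ \Gmap = \mathrm{id}$. The first is clear from the construction: $\Fmap$ inserts canonical strips at prescribed positions of~$\sigma$, and $\Gmap$ removes strips at positions of $\sigma \oplus (\kappa^\-, \kappa^\+)$ which, by the computation of the bottom/right positions relative to the top/left positions plus the sizes $\kappa^\-_\rM - \kappa^\-_{\rM+1}$ and $\kappa^\+_\rP - \kappa^\+_{\rP+1}$, are precisely those just inserted. The converse follows from Step~B, since the canonical strips that $\Gmap$ identifies in~$u$ are uniquely determined by $u$ and are exactly what $\Fmap$ reinserts. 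The main obstacle throughout is book-keeping: ensuring that the successive insertions/deletions interact correctly (each step does not disturb the positions relevant to later steps), which is handled by the row/column bounds in Lemma~\ref{lemma:positions} guaranteeing that row-insertions in~(1) leave left positions unchanged and column-insertions in~(2) leave top positions unchanged, and analogously for $\Gmap$.
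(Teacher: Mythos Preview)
Your proposal is correct and follows essentially the same approach as the paper: use Lemma~\ref{lemma:FisWellDefinedAndBumpsWeight} for well-definedness and the weight shift, use the top/left cases of Lemma~\ref{lemma:signedTableauPositions} (at level $M$) to verify semistandardness of $\Fmap(t)$, and use the bottom/right cases (at level $M+1$, invoking $(M+1)-1 = M \ge L$) to identify the canonical strips in any $u$ and thereby prove surjectivity. The paper establishes the needed $(\ell^\-,\ell^\+)$-largeness of $\sigma$ via Lemma~\ref{lemma:downsetIsLarge} rather than Lemma~\ref{lemma:twistedDownsetLarge}, and phrases the inverse as a surjectivity argument rather than naming an explicit $\Gmap$, but these are cosmetic differences; the substance is the same.
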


\begin{proof}
Suppose that $\ell^\- \not= 0$. Then,
by Lemma~\ref{lemma:downsetIsLarge} the partitions $\lambda$ and $\sigma$ are both $(\ell^\-+1,\ell^\+)$-large
and so $(\ell^\-,\ell^\+)$-large. If $\ell^\- = 0$
then \emph{any} partition is $\bigl(0, \ell^\+\bigr)$ large.
Hence, by Lemma~\ref{lemma:FisWellDefinedAndBumpsWeight}(i), the map $\Fmap$ is well-defined 
\emph{when defined with codomain $\YT\bigl( \sigmaS \opluss (\kappa^\-,\kappa^\+)$}.

Fix a semistandard signed tableau $t \in \SSYTw{\sigma}{\pi^\-}{\pi^\+}$.
By hypothesis  the three bounds on~$M$ required to apply Lemma~\ref{lemma:signedTableauPositions} all hold, 
and we have just seen the required largeness conditions hold.
Therefore we have properties~(i), (ii), (iii), (iv) and~(v) in this lemma
for top and left position in any $t \in \SSYTw{\sigma}{\pi^\-}{\pi^\+}$.

By~(i) and (ii) for top positions, when we apply the insertion map $\Fmap$,
each of the $\kappa^\-_r - \kappa^\-_{r+1}$ new rows of length $\rM \le \ell^\-$ with entries $-1, \ldots, -\rM$
are inserted below a row of $t$ having $-\rM$ in column $\rM$ and so have 
the same entries in their first~$\rM$ positions.
These row insertions therefore preserve the semistandard condition for columns.

By (iv) and (v) for left positions, each new column of height~$\rP \le \ell^\+$ with entries $1,\ldots, \rP$
is inserted to the right of a column having~$\rP$ in row~$\rP$ and so having the same entries as the inserted
column in its first~$\rP$ positions,
or as a new column $\ell^\- + 1$, immediately to the right of a column having only negative entries.
These column insertions therefore preserve the semistandard condition for rows.

It is clear that the overall effect of these insertions is to 
change the signed weight of $t$ from $\swtp{\pi}$ to $(\pi^\- + \kappa^\-, \pi^\+ 
+\kappa^\+)$.
Hence~$\Fmap$ has image in the set 
\smash{$\SSYTw{\sigma \opluss (\kappa^\-,\kappa^\+)}{\pi^\- + \kappa^\-}{\pi^\+ +\kappa^\+)}$} as claimed,
and so is well-defined.

The map $\Fmap$ is defined by inserting certain rows and columns into fixed positions in a tableau, so 
is clearly injective.

To see that $\Fmap$ is surjective, 
let $u \in \SSYTw{\sigma \opluss (\kappa^\-,\kappa^\+)}{\pi^\- + \kappa^\-}{\pi^\+ +\kappa^\+}$.
Suppose that $\kappa^\-_{\rM} > \kappa^\-_{\rM+1}$. Then, by definition of the 
$\rM$-top position, the row containing the $\rM$-bottom position of $u$, and each
of the $\kappa^\-_{\rM} - \kappa^\-_{\rM+1}$ rows weakly above it (including the row itself)
has length $\rM$.
By~(i) and~(ii) for bottom positions, the $\rM$-bottom position in $u$ contains~$-\rM$;
since boxes in column $\rM$ contain entries at least $-\rM$ 
in the order in Definition~\ref{defn:semistandardSignedTableau}, all entries in this column
are $-\rM$. Therefore all $\kappa^\-_{\rM} - \kappa^\-_{\rM+1}$ rows have the 
form $-1,\ldots, -\rM$.
Deleting these rows and shifting the remaining boxes in lower rows up gives a signed semistandard
tableau because (as remarked at the start of the proof of Lemma~\ref{lemma:FisWellDefinedAndBumpsWeight}), by
Lemma~\ref{lemma:positions},
each $\rM$-top position is immediately above a row of length at most $\rM$
not meeting $[\sigmas]$. 
Therefore this 
deletion undoes the insertion map in (1). Our assumption that $\kappa^\-_\rM > \kappa^\-_{\rM+1}$
is now seen to be without loss of generality, since if equality holds then no rows were inserted.
The argument for right positions and column deletion is very similar:
if $\kappa^\+_{\rP} > \kappa^\+_{\rP+1}$ then the column containing the $\rP$-right
position of $u$ and each of the $\kappa^\+_{\rP} - \kappa^\+_{\rP+1}$ columns weakly left of it  (including the column itself)
has length~$\rP$ and entries  $1,\ldots, \rP$. Deleting these columns undoes
the insertion map in (2). (Here it is obvious that deletion preserves the signed semistandard condition.)
Hence $\Fmap$ is surjective and so
bijective.
\end{proof}

\subsection{Stable partition systems from twisted intervals}

We summarise this section in the following corollary.
Recall that the first two bounds are defined in Definition~\ref{defn:LBound}.
We remark that (as seen
at the start of proof of Proposition~\ref{prop:signedIntervalStable}),
the hypotheses below imply, via
Lemma~\ref{lemma:signedDominancePositiveLength},
that $\lambda$ is $(\ell^\- + 1, \ell^\+)$-large; 
thus adjoining to $\lambda$ behaves as expected from Lemma~\ref{lemma:adjoinToLarge}, and we do no need an explicit
hypothesis that $\lambda$ is suitably large.

\begin{corollary}\label{cor:signedIntervalStable}
Let $\kappa^\-, \kappa^\+$ be partitions. Fix $\ell^\- = \ell(\kappa^\-)$ and $\ell^\+ = \ell(\kappa^\+)$.
Let $g_\pi = e_{\pi^\-}h_{\pi^\+}$
for each $\pi \in \Par$. 
Let $\omega$ be a partition
 and let $\lambda \unlhddots \omega$ in the $\ell^\-$-twisted
dominance order.
If $\ell^\- \not=0$ then suppose that $\omega$ is $\bigl(\ell^\-+1, \ell^\+\bigr)$-large.
Let $L$ be the maximum of the quantities
\begin{bulletlist}
\item $\LBound\bigl([\lambda^\-,\omega^\-]^\ellmb_\unLHDS, \kappa^\-\bigr)$,
\item $\LBound\bigl([\lambda^\+,\omega^\++(|\lambda^\+|-|\omega^\+|)]_\unlhd, \kappa^\+\bigr)$,
\item $\bigl( \omega_1^\+ + \omega_2^\+ - 2\lambda_1^\+ + 2|\lambda^\+| - 2|\omega^\+| 
\bigr)/(\kappa^\+_1-\kappa^\+_2)$,
\item $\bigl( \max( \ell(\lambda^\+), \ell^\+ ) + |\omega^\-| - |\lambda^\-| - \omega^\-_{\ell^\-} \bigr)/
\kappa^\-_{\ell^\-}$
\end{bulletlist}
omitting the third if $\kappa^\+_1 = \kappa^\+_2$ and the fourth if $\kappa^\- = \varnothing$.
Let
\[ \PSeq{M} = \bigl[ \lambda \oplus M(\kappa^\-,\kappa^\+), \omega \oplus M(\kappa^\-, \kappa^\+) \bigr]_\unlhddotS \]
for each $M \in \N_0$. Then $(\PSeq{M} )_{M\in \N_0}$ is a stable partition system with respect to
the map $\pmap : \Par \rightarrow \Par$ defined by
$\pmap(\sigma) = \sigma \oplus (\kappa^\-, \kappa^\+)$
and the twisted symmetric functions $g_\pi$. The system is stable for $M \ge L$.
\end{corollary}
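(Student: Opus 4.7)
The plan is to verify conditions (a) and (b) of Definition~\ref{defn:stablePartitionSystem} separately, using the two main technical results of this section as black boxes. Throughout, let $L$ denote the maximum of the four listed quantities.

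First I would dispose of condition (a). The map $\pmap$ takes $\PSeq{M}$ into $\PSeq{M+1}$ (even without any lower bound on $M$) by the monotonicity statement in Lemma~\ref{lemma:signedDominancePreservedByOplusWhenLarge}, after noting via Lemma~\ref{lemma:twistedDownsetLarge} that every partition in $\PSeq{M}$ is $(\ell^-,\ell^+)$-large, so Lemma~\ref{lemma:adjoinToLarge} applies and $\pmap$ is a well-defined injection. Bijectivity for $M \ge L$ is then exactly Proposition~\ref{prop:signedIntervalStable} (Partition Stability); the four bounds in the corollary are precisely the four bounds that appear in that proposition, so this step is immediate.

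Next I would verify condition (b). By Definition~\ref{defn:ellTwistedSymmetricFunction} and Lemma~\ref{lemma:twistedKostkaNumbers} (Twisted Kostka Numbers), for any partitions $\pi$, $\sigma$ we have
\[ \KM(M)_{\pi\sigma} = \langle g_\pi, s_\sigma \rangle = \bigl|\SSYT(\sigma)_{(\pi^-,\pi^+)}\bigr|. \]
For $\pi, \sigma \in \PSeq{M}$, using Lemma~\ref{lemma:twistedDownsetLarge} once more to see that $\sigma$ is $(\ell^-,\ell^+)$-large, and Lemma~\ref{lemma:adjoinToLarge} to identify $\pmap(\pi)^- = \pi^- + \kappa^-$ and $\pmap(\pi)^+ = \pi^+ + \kappa^+$, Proposition~\ref{prop:signedTableauStable} (Tableau Stability) provides a bijection
\[ \Fmap : \SSYT(\sigma)_{(\pi^-,\pi^+)} \longrightarrow \SSYT\bigl(\pmap(\sigma)\bigr)_{(\pmap(\pi)^-,\pmap(\pi)^+)} \]
precisely under the first, second and fourth bounds of the corollary (the third bound is not needed here, but is allowed). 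This gives $\KM(M)_{\pi\sigma} = \KM(M+1)_{\pmap(\pi)\pmap(\sigma)}$ for all $M \ge L$.

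For the invertibility half of condition~(b), I would order the rows and columns of $\KM(M)$ by the $\ell^-$-twisted total order $\ledot$ of Definition~\ref{defn:twistedTotalOrder}, which refines the $\ell^-$-twisted dominance order. By Lemma~\ref{lemma:twistedKostkaMatrix}, if $\langle g_\pi, s_\sigma\rangle \ne 0$ then $\sigma \unrhddot \pi$, and $\langle g_\pi, s_\pi\rangle = 1$; hence $\KM(M)$ is lower unitriangular in this ordering, and so invertible. This completes the verification of both conditions in Definition~\ref{defn:stablePartitionSystem}, giving stability for $M \ge L$.

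I do not expect a genuine obstacle here: essentially all the difficulty has already been packaged into Proposition~\ref{prop:signedIntervalStable} (combinatorics of partitions in a twisted interval) and Proposition~\ref{prop:signedTableauStable} (combinatorics of the insertion map $\Fmap$ on semistandard signed tableaux). The only minor care needed is to check that the largeness hypotheses required by those two propositions are inherited by arbitrary elements of $\PSeq{M}$ from the hypothesis that $\omega$ is $(\ell^-+1,\ell^+)$-large, which is precisely the content of Lemma~\ref{lemma:twistedDownsetLarge}, and to match the unique bookkeeping of the third bound (only needed for condition~(a)) against the first, second and fourth (needed for both). The triangularity argument for invertibility is a standard consequence of Lemma~\ref{lemma:twistedKostkaMatrix}.
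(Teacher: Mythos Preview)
Your proposal is correct and follows essentially the same route as the paper: condition (a) via Proposition~\ref{prop:signedIntervalStable}, and condition (b) via Proposition~\ref{prop:signedTableauStable} combined with Lemma~\ref{lemma:twistedKostkaNumbers}. Your explicit verification of invertibility via the unitriangularity from Lemma~\ref{lemma:twistedKostkaMatrix} is a detail the paper leaves implicit, and your observation that only the first, second and fourth bounds are needed for Proposition~\ref{prop:signedTableauStable} matches the paper's remark that its third hypothesis is implied by the fourth bound of the corollary.
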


\begin{proof}
We check the two conditions in the definition of a stable partition system 
in Definition~\ref{defn:stablePartitionSystem}.
The four bounds above 
give the hypotheses
required in Proposition~\ref{prop:signedIntervalStable} (Partition Stability). Therefore condition (a)
holds for $M \ge K$.
The hypotheses on $M$ in Proposition~\ref{prop:signedTableauStable} (Tableau Stability)  are the first two bounds above and 
$M \ge \bigl( |\omega^\-| - \omega^\-_{\ell^\-} + \ell^\+ - |\lambda^\-|\bigr) / \kappa^\-_{\ell^\-}$,
which is implied by the fourth bound above. The condition that  $\omega$ is $(\ell^\-+1,\ell^\+)$-large holds by assumption.
Hence $|\mSSYT(\sigma)_{(\pi^\-,\pi^\+)}| = |\mSSYT(\pmap(\sigma))_{(\pmap(\pi)^\-,\pmap(\pi)^\+)}|$ 
for all $\pi$, $\sigma \in \mathcal{P}(M)$ provided $M \ge L$. By Lemma~\ref{lemma:twistedKostkaNumbers} (Twisted Kostka Numbers)
it follows that
$\langle e_{\pi^\-}h_{\pi^\+}, s_\sigma \rangle = \langle e_{\pmap(\pi)^\-}h_{\pmap(\pi)^\+}, s_{\pmap(\sigma)} \rangle$
for all $\pi$, $\sigma \in \mathcal{P}(M)$ provided $M \ge L$. Therefore condition~(b)
holds for~\hbox{$M \ge L$}.  
\end{proof}

\addtocontents{toc}{\smallskip}
\addtocontents{toc}{\textbf{Theorem~\ref{thm:muStable}: inner stability}}

\section{Twisted weight bound for Theorem~\ref{thm:muStable}}\label{sec:twistedWeightBoundInner}
To apply Corollary~\ref{cor:signedIntervalStable} in the proofs of our two main theorems
we need an upper bound in the $\ell^\-$-twisted dominance
order on the constituents of an arbitrary plethysm. 
For instance, in the overview in \S\ref{sec:overview},
we implicitly used (see Example~\ref{ex:lengthBound}) 
the $1$-twisted dominance order with the twisted intervals 
$\bigl[(6,2) \oplus M\bigl((1),(1)\bigr), (5,1,1,1) \oplus M\bigl( (1), 
(1) \bigr)\bigr]_\unrhddot$,
arguing that if $s_\lambda$ is a constituent in $s_{(3,1,1^M)} \circ s_{(2)}$ then
$\lambda \,\unlhddot\, (5,1,1,1) \oplus M\bigl( (1), (1) \bigr)$.
The aim of this section is to prove 
Corollary~\ref{cor:twistedWeightBoundInnerGrowing} which gives the upper bound
we use for Theorem~\ref{thm:muStable}. En route we obtain Proposition~\ref{prop:twistedWeightBoundInner}
which is of independent interest.
We show in Example~\ref{ex:lengthBoundOmega} that, in the case of the plethysm $s_{(3,1,1^M)} \circ s_{(2)}$,
Proposition~\ref{prop:twistedWeightBoundInner} specializes
to give the upper bound obtained earlier by ad-hoc arguments; Example~\ref{ex:cutUpsetForLawOkitani}
shows the connection between our upper bound and the extended example in \S\ref{subsec:cutUpsetForLawOkitani}.

\subsection{Weight large skew partitions}\label{subsec:weightLarge}
There is an analogous technicality to that pointed out before
Definition~\ref{defn:large} about adjoining to partitions.
Recall from Definition~\ref{defn:greatestSignedWeight} and Lemma~\ref{lemma:greatestSignedWeight}
 that $t_{\ell^\-}(\tauS)$ is the semistandard tableau of shape
$\tauS$ of greatest signed weight in the $\ell^\-$-signed dominance order.
By Lemma~\ref{lemma:ellDecompositionGreatestSignedWeight}
the signed weight $\bigl( \omega_{\ell^\-}(\tauS)^\-,  \omega_{\ell^\-}(\tauS)^\+ \bigr)$
of $t_{\ell^\-}(\tauS)$
is the $\ell^\-$-decomposition of a partition.

\begin{definition}\label{defn:weightLarge}
Fix $\ell^\- \in \N_0$.
Let $\tauS$ be a skew partition and 
let $\sigma$ be the partition with 
$\ell^\-$-decomposition $\dec{\omega_{\ell^\-}(\tauS)^\-}{\omega_{\ell^\-}(\tauS)^\+}$.
Let $a \in \N_0$. We say $\tauS$ is 
\begin{defnlist}
\item $(a,\ell^\+)$-\emph{weight large for $\ell^\-$} if 
$\sigma$ is $(a,\ell^\+)$-large,
\item $(\ell^\-,\ell^\+)$-\emph{weight large} if
$\sigma$ is $(\ell^\-,\ell^\+)$-large.
\end{defnlist}
\end{definition}

We state the definition in this form to emphasise the connection with Definition~\ref{defn:large}.
When $\ell^\- \ge 1$, the paragraph after this earlier definition
implies that the skew partition $\tauS$ is $(\ell^\-,\ell^\+)$-weight large
if and only if part~$\ell^\-$ of
$\omega_{\ell^\-}(\tauS)^\-$
is at least $\ell^\+$, or equivalently,
if and only if $t_{\ell^\-}(\tauS)$ has at least $\ell^\+$ entries
of $-\ell^\-$. This is the interpretation we need most often.

\begin{example}
\label{ex:2greatestWeight}
From Example~\ref{ex:2greatest}, where $\ell^\- = 2$, we see that
$(6,4,4,1)$, $(6,4,4,1)/(1,1)$ and $(6,4,4,1)/(2,1)$ 
are $(2,\ell^\+)$-weight large if and only if $\ell^\+ \le 3$ and $(6,4,4,1)/(3,3)$
is $(2,\ell^\+)$-weight large if and only if $\ell^\+ \le 2$.
This is most easily seen using the final  characterisation
just mentioned: for example the tableau $t_2\bigl( (6,4,4,1)/(2,1) \bigr)$ 
shown in the margin evidently has three entries of $-2$.\marginpar{$\quad\young(::\oM\tM11,:\oM\tM1,\oM\tM12,\oM)$}
It is easily checked from the other tableaux in Example~\ref{ex:2greatest}
that $(6,4,4,1)$
and $(6,4,4,1)/(1,1)$  are $(3,3)$-weight large for~$2$ because the relevant partitions
$\sigma$ in Definition~\ref{defn:weightLarge} have $(3,3)$ as a box (or equivalently
the greatest tableaux both have $3$ as an entry)
but $(6,4,4,1)/(2,1)$ is not, because, as the marginal tableau shows,
$3$ is not an entry.  Working directly from Definition~\ref{defn:weightLarge}, we would instead compute
\[ \dec{\omega_{\ell^\-}(\scalebox{0.9}{$(6,4,4,1)/(2,1)$})^\-}
{\omega_{\ell^\-}(\scalebox{0.9}{$(6,4,4,1)/(2,1)$})^\+} =
\dec{(4,3)}{(4,1)} \decMap (6,3,2,1) \]
and note that $(6,3,2,1)$ does not have a box in position $(3,3)$. Thus $(6,4,4,1)/(2,1)$ is $(2,3)$-weight large and $(3,2)$-weight large for $2$
but not $(3,3)$-weight large.
Note also that while $(6,4,4,1)/(3,3)$ is not $(2,3)$-weight large,
it is $(2,3)$-large, since $(6,4,4,1)_{3} = 4 \ge 2$.
\end{example}

We have just seen
that `$(\ell^\-,\ell^\+)$-large' does not imply `$(\ell^\-,\ell^\+)$-weight large'.
The following lemma gives the complete picture.

\begin{lemma}\label{lemma:skewLargeImpliesWeightLargeImpliesLarge}
Let $\tauS$ be a skew partition. 
\begin{thmlist}
\item If $\tauS$ is $(\ell^\- \hskip-0.5pt +  a(\taus), \ell^\+)$-large 
 then $\tauS$ is $(\ell^\-, \ell^\+)$-weight large.
\item If $\tauS$ is $(\ell^\-, \ell^\+)$-weight large then $\tauS$ is $(\ell^\-, \ell^\+)$-large
\end{thmlist}
Moreover, if $\taus = \varnothing$ then the converses also hold.
\end{lemma}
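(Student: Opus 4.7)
The plan is to translate the weight-largeness hypothesis into a statement about row lengths of $\tauS$, using the explicit construction of $t_{\ell^\-}(\tauS)$ in Definition~\ref{defn:greatestSignedTableau}. The trivial cases $\ell^\- = 0$ and $\ell^\+ = 0$ are immediate from the definitions, so we assume $\ell^\-, \ell^\+ \ge 1$. By construction, row~$i$ of $t_{\ell^\-}(\tauS)$ contains the entry $-\ell^\-$ if and only if $\tau_i - \taus_i \ge \ell^\-$; hence $\omega_{\ell^\-}(\tauS)^\-_{\ell^\-}$ equals the number of rows of $[\tauS]$ of length at least $\ell^\-$. Writing $\sigma$ for the partition with $\sigma \decMap \dec{\omega_{\ell^\-}(\tauS)^\-}{\omega_{\ell^\-}(\tauS)^\+}$ as in Definition~\ref{defn:weightLarge}, we have $\sigma'_{\ell^\-} = \omega_{\ell^\-}(\tauS)^\-_{\ell^\-}$. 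Since $\omega_{\ell^\-}(\tauS)^\-$ is a partition by Lemma~\ref{lemma:ellDecompositionGreatestSignedWeight}, the condition $\sigma_{\ell^\+} \ge \ell^\-$ defining $(\ell^\-, \ell^\+)$-largeness of $\sigma$ is equivalent to $\omega_{\ell^\-}(\tauS)^\-_{\ell^\-} \ge \ell^\+$. Thus $\tauS$ is $(\ell^\-, \ell^\+)$-weight large if and only if at least $\ell^\+$ rows of $[\tauS]$ have length $\ge \ell^\-$.

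For (i), suppose $\tauS$ is $(\ell^\- + a(\taus), \ell^\+)$-large, so $\tau_{\ell^\+} \ge \ell^\- + a(\taus)$. Since $\taus_i \le a(\taus)$ for all $i$, for $1 \le i \le \ell^\+$ we have
\[ \tau_i - \taus_i \ge \tau_{\ell^\+} - a(\taus) \ge \ell^\-. \]
Hence the first $\ell^\+$ rows of $[\tauS]$ all have length $\ge \ell^\-$, and $\tauS$ is $(\ell^\-,\ell^\+)$-weight large by the equivalence above.

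For (ii), suppose $\tauS$ is $(\ell^\-,\ell^\+)$-weight large, so there are indices $i_1 < i_2 < \cdots < i_{\ell^\+}$ with $\tau_{i_k} - \taus_{i_k} \ge \ell^\-$. Then $\tau_{i_k} \ge \taus_{i_k} + \ell^\- \ge \ell^\-$ for each $k$, so $\tau$ has at least $\ell^\+$ parts of size at least $\ell^\-$. Since $\tau$ is a partition, this forces $\tau_1, \ldots, \tau_{\ell^\+} \ge \ell^\-$, and in particular $\tau_{\ell^\+} \ge \ell^\-$, as required.

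For the converses when $\taus = \varnothing$, note that $a(\taus) = 0$, so (i) and (ii) together assert that $(\ell^\-,\ell^\+)$-largeness coincides with $(\ell^\-,\ell^\+)$-weight largeness for a partition $\tau$. One direction is already (ii) specialised to $\taus = \varnothing$. For the other, if $\tau_{\ell^\+} \ge \ell^\-$ then $\tau_i = \tau_i - \taus_i \ge \ell^\-$ for $i \le \ell^\+$, giving $\ell^\+$ rows of length at least $\ell^\-$ and hence weight largeness. The only mildly subtle step throughout is the use of $a(\taus)$ in (i) to pass between the row lengths $\tau_i - \taus_i$ and the partition values $\tau_i$; otherwise the argument is a direct unpacking of the definitions.
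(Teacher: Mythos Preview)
Your proof is correct and follows essentially the same route as the paper: both arguments hinge on the characterisation that $\tauS$ is $(\ell^\-,\ell^\+)$-weight large if and only if at least $\ell^\+$ rows of $[\tauS]$ have length at least $\ell^\-$, and then deduce (i) and (ii) by elementary inequalities on row lengths. Your treatment is slightly more explicit in unpacking the equivalence $\sigma_{\ell^\+} \ge \ell^\-$ iff $\sigma'_{\ell^\-} \ge \ell^\+$, whereas the paper appeals to the remark after Definition~\ref{defn:weightLarge} directly; the final ``converses'' paragraph in both proofs is the same observation that $a(\taus)=0$ collapses (i) and (ii) into a biconditional.
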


\begin{proof}
For (i), by hypothesis $[\tau]\backslash [\taus]$ contains all the boxes $(i, a(\tauS) + j)$ for $1 \le i \le \ell^\+$
and $1 \le j \le \ell^\-$. As in the proof of Lemma~\ref{lemma:ellDecompositionGreatestSignedWeight},
rows $1, 2, \ldots, \ell^\+$ of $t_{\ell^\-}(\tauS)$ begin
\smash{\raisebox{-4.5pt}{\begin{tikzpicture}[x=0.55cm,y=-0.45cm]
\tableauBox{0}{0}{\oM} \tableauBox{1}{0}{\tM} \draw (1,0)--(3,0)--(3,1)--(1,1)--(1,0);
\node at (2,0.5) {$\ldots$}; \tableauBox{4}{0}{$\pmb{\ell}^{\pmb{-}}$}\end{tikzpicture}}}.
(These boxes form part of the heavy marked region $[\alpha]$ in Figure~\ref{fig:ellDecompositionSkewTableau}.)
Hence $t_{\ell^-}(\tauS)$ has at least $\ell^\+$ entries of $-\ell^\-$ and so
$\tauS$ is $(\ell^\-,\ell^\+)$-weight large.
For (ii), we have just seen that 
$t_{\ell^\-}(\tauS)$ has at least $\ell^\+$ entries equal to $-\ell^\-$.
Hence there are at least $\ell^\+$ rows of the Young diagram of $\tauS$ having at least $\ell^\-$ boxes.
It easily follows that $(\ell^\+,\ell^\-) \in [\tau]$ and so $\tauS$ is $(\ell^\-,\ell^\+)$-large.
Finally, if $\taus = \varnothing$ then $a(\taus) = 0$ and so (i) and~(ii) are opposite directions of the required
implication.
\end{proof}

We also have the following remark, analogous to Remark~\ref{remark:becomesLarge}.

\begin{remark}\label{remark:becomesWeightLarge}
Fix partitions $\kappa^\-$ and $\kappa^\+$ and let $\ell^\- = \ell(\kappa^\-)$ and $\ell^\+= \ell(\kappa^\+)$.
When $\kappa^\- \not=\varnothing$, each application
of the map $\tauS \mapsto \tauS \oplus (\kappa^\-, \kappa^\+)$ 
inserts $\kappa^\-_{\ell^\-}$ new parts of length $\ell^\-$, and so
increases the number of parts of length at least $\ell^\-$ by at least $\kappa^\-_{\ell^\-}$.
Therefore after $\lceil \ell(\taus) / \kappa^\-_{\ell^\-} \rceil$ steps, the skew partition obtained 
has $(\ell(\taus), \ell^\-)$ as a box. On each subsequent step we insert~$\kappa^\-_{\ell^\-}$ new boxes in column $\ell^\-$ which in the greatest tableau all
contain~$-\ell^\-$. 
Therefore the original skew partition $\tauS$ becomes $(\ell^\-, \ell^\+)$-weight large after
at most $\lceil \ell(\taus) / \kappa^\-_{\ell^\-} \rceil + \lceil \ell^\+ / \kappa^\-_{\ell^\-} \rceil$
steps. Each further step creates at least $\kappa^\+_{\ell^\+}$ new boxes containing $\ell^\+$.
Therefore, when $\kappa^\+ \not=\varnothing$,
for any $a \in \N$, the original skew partition $\tauS$ becomes $(\ell^\- + a, \ell^\+)$-weight large,
meaning that $(\ell^\+, \ell^\- + a)$ is a box of the partition corresponding to the signed
weight of $t_{\ell^\-}\bigl( \tauS \oplus M(\kappa^\-, \kappa^\+) \bigr)$, after
at most $M = \lceil \ell(\taus) / \kappa^\-_{\ell^\-} \rceil 
+ \lceil \ell^\+ / \kappa^\-_{\ell^\-} \rceil
+ \lceil a/ \kappa^\+_{\ell^\+} \rceil$ steps. 
\end{remark}

By this remark, 
there is no loss of generality in
assuming in all the results below that the partitions involved are suitably weight large.

For example, take $\kappa^\- = (2,1,1)$, $\kappa^\+ = (1,1)$ and $\tauS = (1,1,1)/(1,1,1)$,
so that $\ell^\- = 3$ and $\ell^\+ = 2$.
The tableaux in Figure~\ref{fig:becomesWeightLarge} show that three applications of the 
adjoining map
$\tauS \mapsto \tauS \oplus \bigl( (2,1,1), (1,1) \bigr)$
are necessary and sufficient
to obtain a $(3,2)$-weight large skew partition; this skew partition is also $(3,3)$-weight large. One further
application gives a $(3,4)$-weight large skew partition.
As is typically the case, this beats the bound in Remark~\ref{remark:becomesWeightLarge},
which specifies $\lceil 3/1 \rceil + \lceil 2/1 \rceil = 5$ adjoinings.

\begin{figure}[h!t] 
\vspace*{-9pt}
\centerline{\begin{tikzpicture}[x=0.5cm,y=-0.5cm]
\node at (-0.5,-0.65) {$\scriptstyle\langle \varnothing, \varnothing \rangle$};
\node at (-0.5,-1.4) {$\scriptstyle \varnothing$};
\tableauBoxFilled{0}{0}{}{lightgrey} \tableauBoxFilled{0}{1}{}{lightgrey}  \tableauBoxFilled{0}{2}{}{lightgrey}
\end{tikzpicture}\spy{12pt}{$\,\;\mapsto\;\,$}
\raisebox{-27.5pt}{\begin{tikzpicture}[x=0.5cm,y=-0.5cm]
\node at (0.5,-1.4) {$\scriptstyle(3,1,1,1)$};
\node at (0.5,-0.65) {$\scriptstyle\langle (4,1,1), \varnothing \rangle$};
\tableauBoxFilled{0}{0}{}{lightgrey} \tableauBoxFilled{0}{1}{}{lightgrey}  \tableauBoxFilled{0}{2}{}{lightgrey}
\tableauBox{1}{0}{\oM} \tableauBox{1}{1}{\oM} \tableauBox{2}{0}{\tM} 
 \tableauBox{3}{0}{\dM} 
\tableauBox{0}{3}{\oM}
\tableauBox{0}{4}{\oM}
\end{tikzpicture}}\spy{12pt}{$\,\;\mapsto\;\,$}
\raisebox{-55pt}{\begin{tikzpicture}[x=0.5cm,y=-0.5cm]
\node at (1,-1.4) {$\scriptstyle(4,3,1^5)$};
\node at (1,-0.65) {$\scriptstyle\langle (7,2,2), (1) \rangle$};
\tableauBoxFilled{0}{0}{}{lightgrey} \tableauBoxFilled{0}{1}{}{lightgrey}  \tableauBoxFilled{0}{2}{}{lightgrey}
\tableauBox{1}{0}{\oM} \tableauBox{1}{1}{\oM} \tableauBox{2}{0}{\tM} \tableauBox{0}{3}{\oM}
\tableauBox{1}{2}{\oM} \tableauBox{3}{1}{\dM}
\tableauBox{0}{4}{\oM} 
\tableauBox{2}{0}{\tM} \tableauBox{2}{1}{\tM}  \tableauBox{3}{0}{\dM} 
\tableauBox{4}{0}{1}  \tableauBox{0}{5}{\oM}
\tableauBox{0}{6}{\oM}
\end{tikzpicture}}\spy{12pt}{$\,\;\mapsto\;\,$}
\raisebox{-75pt}{\begin{tikzpicture}[x=0.5cm,y=-0.5cm]
\node at (1.5,-1.4) {$\scriptstyle(5,4,2,2,1^5)$};
\node at (1.5,-0.65) {$\scriptstyle\langle (9,4,2),(2,1)\rangle$};
\tableauBoxFilled{0}{0}{}{lightgrey} \tableauBoxFilled{0}{1}{}{lightgrey}  \tableauBoxFilled{0}{2}{}{lightgrey}
\tableauBox{1}{0}{\oM} \tableauBox{1}{1}{\oM} \tableauBox{1}{2}{\oM} \tableauBox{2}{0}{\tM} \tableauBox{0}{3}{\oM}
\tableauBox{0}{4}{\oM}
\tableauBox{2}{0}{\tM} \tableauBox{2}{1}{\tM} \tableauBox{2}{2}{\tM} \tableauBox{3}{0}{\dM} \tableauBox{0}{5}{\oM}
\node[left] at (-0.5,6.75) {$\scriptstyle 2$};
\node at (-0.5,6.5) {$\vdots$};
\tableauBox{0}{7.5}{\oM}\tableauBox{1}{3}{\tM}
\tableauBox{3}{1}{\dM}  \tableauBox{4}{0}{1} \tableauBox{5}{0}{1} \tableauBox{4}{1}{2} 
\end{tikzpicture}}\spy{12pt}{$\,\;\mapsto\;\,$}
\raisebox{-89.5pt}{\begin{tikzpicture}[x=0.5cm,y=-0.5cm]
\node at (1.95,-1.4) {$\scriptstyle(6,5,3,2,2,1^6)$};
\node at (1.95,-0.65) {$\scriptstyle\langle (11,5,3),(3,2)\rangle$};
\tableauBoxFilled{0}{0}{}{lightgrey} \tableauBoxFilled{0}{1}{}{lightgrey}  \tableauBoxFilled{0}{2}{}{lightgrey}
\tableauBox{1}{0}{\oM} \tableauBox{1}{1}{\oM} \tableauBox{1}{2}{\oM} \tableauBox{2}{0}{\tM} \tableauBox{0}{3}{\oM}
\tableauBox{0}{4}{\oM} \tableauBox{5}{0}{1} \tableauBox{4}{1}{2} \tableauBox{2}{3}{\dM}
\tableauBox{2}{0}{\tM} \tableauBox{2}{1}{\tM} \tableauBox{2}{2}{\tM} \tableauBox{3}{0}{\dM} \tableauBox{0}{5}{\oM}
\tableauBox{0}{6}{\oM}
\node[left] at (-0.5,7.75) {$\scriptstyle 3$};
\node at (-0.5,7.5) {$\vdots$};
\tableauBox{0}{8.5}{\oM}\tableauBox{1}{3}{\tM}\tableauBox{1}{4}{\tM}
\tableauBox{3}{1}{\dM}  \tableauBox{4}{0}{1} \tableauBox{5}{0}{1}
 \tableauBox{6}{0}{1}\tableauBox{5}{1}{2}
\end{tikzpicture}}
}\vspace*{-6pt}\caption{The map 
 $\tauS \mapsto \tauS \oplus \bigl( (2,1,1), (1,1) \bigr)$ applied repeatedly to the
skew partition $(1,1,1)/(1,1,1)$, showing 
the tableaux $t_3\bigl( (1,1,1) \oplus M\bigl( (2,1,1), (1,1) \bigr) \bigr)$ 
for $M \in \{0,1,2,3,4\}$. The corresponding `weight' partitions
$\omega_3\bigl( (1,1,1) \,\oplus\, M\bigl( (2,1,1), (1,1) \bigr) \bigr)$
and their $3$-decompositions
are shown above the tableaux. In each subsequent step the weight partition grows
by $\oplus \bigl( (2,1,1), (1,1) \bigr)$; note that this is not the case until the 
weight partition becomes $(3,2)$-large, which is first the case
when it is $(5,4,2,2,1^5)$,
thus the technical nature of Remark~\ref{remark:becomesWeightLarge}.
\label{fig:becomesWeightLarge}
}
\end{figure}

Finally we have the expected analogue of Lemma~\ref{lemma:adjoinToLarge}.

\begin{lemma}\label{lemma:adjoinToWeightLarge}
Let $\kappa^\-$ and $\kappa^\+$ be partitions. Fix $\ell^\- = \ell(\kappa^\-)$.
If $\tauS$ is an $\bigl(\ell(\kappa^\-), \ell(\kappa^\+)\bigr)$-weight large skew partition then,
in the $\ell^\-$-decomposition of 
$\omega_{\ell^\-}\bigl( \tauS \oplus (\kappa^\-, \kappa^\+)\bigr)$
we have
\begin{align*} 
\omega_{\ell^\-}\bigl( \tauS \oplus (\kappa^\-, \kappa^\+) \bigr)^\- &= \omega_{\ell^\-}(\tauS)^\- + \kappa^\- \\[-2pt]
\omega_{\ell^\-}\bigl( \tauS \oplus (\kappa^\-, \kappa^\+) \bigr)^\+ &= \omega_{\ell^\-}(\tauS)^\+ + \kappa^\+ .
\end{align*}
\end{lemma}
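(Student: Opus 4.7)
My plan is to reduce the claim to the partition case, Lemma~\ref{lemma:adjoinToLarge}, via the weight partition. Let $\sigma$ be the partition whose $\ell^-$-decomposition is $\dec{\omega_{\ell^-}(\tauS)^-}{\omega_{\ell^-}(\tauS)^+}$, and let $\tilde\sigma$ be the analogous partition for $\tauS \oplus (\kappa^-, \kappa^+)$. By the weight-large hypothesis, $\sigma$ is $\bigl(\ell(\kappa^-),\ell(\kappa^+)\bigr)$-large, so Lemma~\ref{lemma:adjoinToLarge} gives $(\sigma \oplus (\kappa^-, \kappa^+))^\pm = \sigma^\pm + \kappa^\pm$. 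Therefore the lemma is equivalent to the identity $\tilde\sigma = \sigma \oplus (\kappa^-, \kappa^+)$, and the rest of the proof is devoted to this equality of partitions.

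The plan for the negative part is direct counting. Recall from Definition~\ref{defn:greatestSignedTableau} that $\omega_{\ell^-}(\cdot)^-_k$ equals the number of rows of the skew shape with visible length $\geq k$. Writing $\tilde\tau = (\tau + \kappa^+) \sqcup \kappa^{-\prime}$, the rows of $\tilde\tau/\taus$ are of two types: original rows of $\tau$ (now possibly extended by $\kappa^+_i$ boxes on the right), and new rows from $\kappa^{-\prime}$, each of length at most $\ell^-$. The weight-large hypothesis guarantees, via Lemma~\ref{lemma:skewLargeImpliesWeightLargeImpliesLarge} and an analysis of the sorted order in $\sqcup$, that these new rows sort into positions where they do not disturb the original rows covered by $\taus$. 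The count of new rows of $\kappa^{-\prime}$ of length $\geq k$ equals $\kappa^-_k$ by definition of the conjugate, so $\tilde\omega^-_k = \omega^-_k + \kappa^-_k$, giving the negative half of the required equality.

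For the positive part, the key observation is that in $t_{\ell^-}(\tauS)$ the positive entries in each column~$j > \ell^-$ form a top-to-bottom run $1, 2, \dots, p_j$, and so $\omega_{\ell^-}(\cdot)^+_k$ equals the number of columns with at least $k$ positive entries. The weight-large hypothesis ensures at least $\ell^+$ rows of $\tauS$ are saturated (visible length $\geq \ell^-$), and these are precisely the rows that receive the $\kappa^+$ extensions: the new $\kappa^+_i$ boxes in row $i \leq \ell(\kappa^+)$ lie strictly beyond the negative region. Bookkeeping the contribution of these positive extensions to the column counts $p_j$, and using that $\kappa^+$ is a partition so these increments pile up correctly, shows that $\tilde\omega^+_k = \omega^+_k + \kappa^+_k$ for each $k$.

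The main obstacle is the interaction between $\taus$ and the newly inserted rows from $\kappa^{-\prime}$. Because $\sqcup$ reorders parts in decreasing order, an insertion can in principle shift rows of $\tau$ out from under $\taus$ and thereby change their visible lengths in an unintended way. The delicate point is to show that weight-largeness -- a condition on the signed weight of the greatest tableau rather than directly on the shape -- suffices to rule this out, essentially because it forces enough saturated rows above the insertion level. Once this bookkeeping is established, the argument above goes through cleanly, and combining the negative and positive counts yields $\tilde\sigma = \sigma \oplus (\kappa^-, \kappa^+)$, completing the proof.
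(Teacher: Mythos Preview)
Your reduction to the identity $\tilde\sigma = \sigma \oplus (\kappa^-,\kappa^+)$ via Lemma~\ref{lemma:adjoinToLarge} is the right framework, and is considerably more explicit than the paper's proof, which simply computes the $\ell^-$-decomposition of the \emph{outer} partition $\tilde\tau = \tau \oplus (\kappa^-,\kappa^+)$ using Lemma~\ref{lemma:skewLargeImpliesWeightLargeImpliesLarge}(ii) and Lemma~\ref{lemma:adjoinToLarge} and then asserts ``this implies the two equations''. You also correctly isolate the delicate point: the interaction between $\taus$ and the adjoining operation.

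There is, however, a genuine gap. From weight-largeness you correctly deduce that at least $\ell(\kappa^+)$ rows of $\tauS$ are saturated (visible length $\ge \ell^-$), since this is equivalent to $\omega_{\ell^-}(\tauS)^-_{\ell^-} \ge \ell(\kappa^+)$. But you then assert that these saturated rows are ``precisely the rows that receive the $\kappa^+$ extensions'', namely rows $1,\ldots,\ell(\kappa^+)$. Weight-largeness controls only the \emph{number} of saturated rows, not their positions. Take $\ell^- = 2$, $\tauS = (6,4,4,1)/(3,3)$ and $\kappa^- = \kappa^+ = (1,1)$: here $\omega_2(\tauS) = \bigl((4,2),(3)\bigr)$, the weight partition is $(5,2,1,1)$, and this is $(2,2)$-large, so the hypothesis holds. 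The two saturated rows are rows $1$ and $3$, not $1$ and $2$. Adding $\kappa^+_2 = 1$ to row $2$ creates a new box that, in the greatest tableau of $(7,5,4,2,1)/(3,3)$, carries $-2$ rather than a positive entry. One then computes $\omega_2\bigl((7,5,4,2,1)/(3,3)\bigr) = \bigl((5,4),(4)\bigr)$, whereas the lemma predicts $\bigl((5,3),(4,1)\bigr)$.

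So your bookkeeping for both the positive and negative parts breaks at exactly the obstacle you flagged but did not resolve; and indeed it cannot be resolved under the stated hypothesis, because the statement itself appears to fail for this example. The paper's one-line proof elides the same point. Note that in the actual application (Theorem~\ref{thm:muStableSharp}) the skew partition is additionally assumed $\bigl(\ell^- + a(\mus),\ell^+\bigr)$-large, which \emph{does} force rows $1,\ldots,\ell(\kappa^+)$ to be saturated; under that hypothesis your argument goes through. You might therefore either strengthen the hypothesis, or restrict to the partition case $\mus = \varnothing$ where the issue does not arise.
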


\begin{proof}
By Lemma~\ref{lemma:skewLargeImpliesWeightLargeImpliesLarge}(ii), 
$\tauS$ is $\bigl(\ell(\kappa^\-), \ell(\kappa^\+)\bigr)$-large. 
Hence by Lemma~\ref{lemma:adjoinToLarge} we have
$\bigl(\tau \oplus (\kappa^\-,\kappa^\+) \bigr)^\- = \tau^\- + \kappa^\-$ and
\smash{$\bigl(\tau \oplus (\kappa^\-,\kappa^\+) \bigr)^\+ = \tau^\+ + \kappa^\+$}.
This implies the two equations.
\end{proof}

\subsection{Bounding plethysms by greatest weights}\label{proof:muStablePlethysmBound}

If $\decs{\alpha^\-}{\alpha^\+}$ is an $\ell^\-$-decomposition then so is 
$\decs{n\alpha^\-}{n\alpha^\+}$ for any $n \in \N$. 
Therefore, by Lemma~\ref{lemma:ellDecompositionGreatestSignedWeight},
the following definition is well posed.

\begin{definition}\label{defn:plethysticGreatestSignedWeight}
Let $\ell^\- \in \N_0$ and let $n \in \N_0$. Given a skew partition $\tauS$ 
we define $\omega^{(n)}_{\ell^\-}(\tauS)$ to be the unique partition whose
$\ell^\-$-decomposition is 
$n\dec{\omega_{\ell^\-}(\tauS)^\-}{\omega_{\ell^\-}(\tauS)^\+}$.
\end{definition}

We give examples after
the next proposition, which is the main result in this section,
giving an upper bound in the $\ell^\-$-twisted dominance order (see Definition~\ref{defn:ellTwistedDominanceOrder}) on the 
constituents of an arbitrary plethysm.

\begin{proposition}\label{prop:twistedWeightBoundInner}
Let $\ell^\- \in \N_0$.
Let $\rho$ be a partition of $n$ and let $\tauS$ be a skew partition.
If $s_\pi$ is  a constituent of $s_\rho \,\circ\, s_\tauS$
then \smash{$\pi \unlhddot \omega_{\ell^\-}^{(n)}(\tauS)$}
in the $\ell^\-$-twisted dominance order.
\end{proposition}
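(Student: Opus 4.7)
The plan is to combine three ingredients: the Plethystic Signed Kostka Numbers identity (Proposition~\ref{prop:plethysticSignedKostkaNumbers}), the greatest signed weight bound (Lemma~\ref{lemma:greatestSignedWeight}), and the lower unitriangularity of the twisted Kostka matrix (Lemma~\ref{lemma:twistedKostkaMatrix}).

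The first step is a combinatorial bound on signed weights. Suppose $(\alpha^\-, \alpha^\+)$ is a signed weight of size $n|\tauS|$ with $\ell(\alpha^\-) \le \ell^\-$ and $\mPSSYT(\rho, \tauS)_{(\alpha^\-, \alpha^\+)} \neq \varnothing$. Any plethystic semistandard signed tableau $T$ in this set consists of $n$ inner $\tauS$-tableaux whose signed weights sum to $(\alpha^\-, \alpha^\+)$ by Definition~\ref{defn:signedWeightPlethystic}. The hypothesis $\ell(\alpha^\-) \le \ell^\-$ forces each inner tableau to use negative entries only from $\{-1, \ldots, -\ell^\-\}$, so Lemma~\ref{lemma:greatestSignedWeight} bounds each inner signed weight by $(\omega_{\ell^\-}(\tauS)^\-, \omega_{\ell^\-}(\tauS)^\+)$ in the $\ell^\-$-signed dominance order. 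Since this order is the ordinary dominance order on concatenated compositions (Definition~\ref{defn:ellSignedDominanceOrder}), it is preserved under summation, so
\[
(\alpha^\-, \alpha^\+) \;\unlhd\; n\bigl(\omega_{\ell^\-}(\tauS)^\-, \omega_{\ell^\-}(\tauS)^\+\bigr) = \bigl(\omega_{\ell^\-}^{(n)}(\tauS)^\-, \omega_{\ell^\-}^{(n)}(\tauS)^\+\bigr).
\]
Combining with Proposition~\ref{prop:plethysticSignedKostkaNumbers}: whenever $\langle s_\rho \circ s_\tauS, g_\sigma \rangle \neq 0$, the partition $\sigma$ satisfies $\sigma \unlhddot \omega_{\ell^\-}^{(n)}(\tauS)$ in the $\ell^\-$-twisted dominance order.

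The second step inverts the change of basis. Let $K$ be the matrix over partitions of $n|\tauS|$ with $K_{\sigma\tau} = \langle g_\sigma, s_\tau \rangle$. By Lemma~\ref{lemma:twistedKostkaMatrix}, $K_{\sigma\tau}$ is nonzero only if $\tau \unrhddot \sigma$ and $K_{\sigma\sigma} = 1$, so $K$ is lower unitriangular with respect to any total refinement of the $\ell^\-$-twisted dominance order (such as the one in Definition~\ref{defn:twistedTotalOrder}). Therefore $K^{-1}$ has the same triangular support, yielding
\[
s_\pi \;=\; \sum_{\sigma \unrhddot \pi} (K^{-1})_{\pi\sigma}\, g_\sigma.
\]
Pairing with $s_\rho \circ s_\tauS$ and applying Proposition~\ref{prop:plethysticSignedKostkaNumbers} gives
\[
\langle s_\rho \circ s_\tauS, s_\pi \rangle \;=\; \sum_{\sigma \unrhddot \pi} (K^{-1})_{\pi\sigma}\, \bigl|\mPSSYT(\rho, \tauS)_{(\sigma^\-, \sigma^\+)}\bigr|.
\]

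Finally I conclude by contrapositive. Suppose $\pi \not\unlhddot \omega_{\ell^\-}^{(n)}(\tauS)$. Every $\sigma$ appearing in the sum above satisfies $\sigma \unrhddot \pi$; if additionally $\sigma \unlhddot \omega_{\ell^\-}^{(n)}(\tauS)$, transitivity would give $\pi \unlhddot \omega_{\ell^\-}^{(n)}(\tauS)$, contradicting our assumption. Hence $\sigma \not\unlhddot \omega_{\ell^\-}^{(n)}(\tauS)$ for every $\sigma$ in the sum, and by the first step each term vanishes. Therefore $\langle s_\rho \circ s_\tauS, s_\pi \rangle = 0$, so $s_\pi$ is not a constituent. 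I do not anticipate a serious obstacle: once the notational bookkeeping linking $\ell^\-$-decompositions (Definition~\ref{defn:ellDecomposition}) to the $\ell^\-$-signed dominance order is in place, the proof is a formal consequence of the three cited results, with the only substantive combinatorial input being the additivity of signed weights over the $n$ inner tableaux of a plethystic tableau together with the per-tableau bound of Lemma~\ref{lemma:greatestSignedWeight}.
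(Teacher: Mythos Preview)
Your proof is correct, and it uses the same three ingredients as the paper (Lemma~\ref{lemma:greatestSignedWeight}, Proposition~\ref{prop:plethysticSignedKostkaNumbers}, Lemma~\ref{lemma:twistedKostkaMatrix}), but it takes a longer route than necessary. The paper observes that your first step already finishes the argument once applied to $\sigma = \pi$: since $e_{\pi^\-}h_{\pi^\+} - s_\pi$ is Schur-positive (by Lemma~\ref{lemma:twistedKostkaMatrix} and the Pieri rules) and $s_\rho \circ s_\tauS$ is Schur-positive, the hypothesis $\langle s_\rho \circ s_\tauS, s_\pi\rangle \ge 1$ forces $\langle s_\rho \circ s_\tauS, g_\pi\rangle \ge 1$, hence $\PSSYT(\rho,\tauS)_{(\pi^\-,\pi^\+)} \ne \varnothing$, and your per-tableau summation gives $\pi \unlhddot \omega_{\ell^\-}^{(n)}(\tauS)$ directly. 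Your second and third steps---inverting the transition matrix and arguing by contrapositive---are a valid alternative that avoids any appeal to Schur-positivity, trading that for the (standard) fact that the inverse of a unitriangular matrix has the same triangular support; this is a legitimate, more formal route, but it is not needed here.
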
 

\begin{proof}
By Lemma~\ref{lemma:twistedKostkaMatrix}, $s_\pi$ is a summand of $e_{\pi^\-}h_{\pi^\+}$ with multiplicity $1$.
Hence, using Proposition~\ref{prop:plethysticSignedKostkaNumbers} (Plethystic Signed Kostka Numbers) for
the first equality below, we have
\[ |\PSSYT(\rho, \tauS)_{(\pi^\-,\pi^\+)}\bigr| = 
\langle e_{\pi^\-}h_{\pi^\+}, s_\rho \circ s_\tauS \rangle \ge \langle s_\pi, s_\rho \,\circ\, s_\tauS \rangle \ge  1. \]
Let $T \in \PSSYTw{\rho}{\tauS}{\pi^\-}{\pi^\+}$
and let $t$ be an inner $\tauS$-tableau in $T$.
By Lemma~\ref{lemma:greatestSignedWeight} 
\smash{$\bigl(\omega_{\ell^\-}(\tauS)^\-, \omega_{\ell^\-}(\tauS)^\+\bigr)$} is the greatest weight
(in the $\ell^\-$-signed dominance order on $\W_{\ell^\-} \times \W$ defined in Definition~\ref{defn:ellSignedDominanceOrder}) 
of all signed weights of $\tauS$-tableaux.
Thus, writing $\swt(t)$ for the signed weight of $t$, we have
\[ \bigl( \swt(t)^\-, \swt(t)^\+ \bigr) \unlhd \dec{\omega_{\ell^\-}(\tauS)^\-}{\omega_{\ell^\-}(\tauS)^\+} \]
where we regard either side as a composition,
as in the definition of the $\ell^\-$-twisted dominance order in Definition~\ref{defn:ellTwistedDominanceOrder}.
Hence, summing over all inner $\tauS$-tableaux in $T$, we have
\[ \dec{\pi^\-}{\pi^\+} \unlhd \dec{n\omega_{\ell^\-}(\tauS)^\-}{n\omega_{\ell^\-}(\tauS)^\+}. \]
(Note that $\decs{\pi^\-}{\pi^\+}$ is an $\ell^\-$-decomposition simply because $\pi$ is a partition.)
By definition of the $\ell^\-$-twisted dominance order this inequality holds if and only if
\smash{$\pi \unlhddot \omega^{(n)}_{\ell^\-}(\tauS)$}, as required.
\end{proof}

By Remark~\ref{remark:singletonSimpler}, Proposition~\ref{prop:twistedWeightBoundInner}
also follows from
a special case of Corollary~\ref{cor:signedWeightBoundForStronglyMaximalSignedWeight};
this alternative proof brings in many technicalities
irrelevant to Theorem~\ref{thm:muStable}, and so we much prefer the proof above
which is self-contained to this section.
We pause to give two examples.

\begin{example}\label{ex:lengthBoundOmega}
Fix $\ell^\- = 1$. Take $\rho = (3,1,1^M)$ and $\tauS = (2)$. Then $t_1\bigl( (2) \bigr) 
= \raisebox{0pt}{$\young(\oM1)$}\spy{1pt}{,}$ and so $\omega_1\bigl( (2) \bigr) = \bigl((1),(1)\bigr)$.
Hence
\[ \omega_1^{(4+M)}\bigl( (2) \bigr) \decMap (4+M)\dec{1}{1}  = \dec{(4+M)}{(4+M)} \decMap  (5+M,1^{3+M}).\]
and $\omega_1^{(4+M)}\bigl( (2) \bigr) = (5+M,1^{3+M})$.
Note, as claimed at the start of this section, that the right-hand side is the partition used as the upper bound in \S\ref{subsec:overviewSWL} (see Example~\ref{ex:lengthBound}).
\end{example}

\begin{example}\label{ex:cutUpsetForLawOkitani}
Fix $\ell^\- = 2$.
Taking $\rho = (3+M)$ and $\tauS = (4)$ in Proposition~\ref{prop:twistedWeightBoundInner} we obtain
\[ \supp s_{(3+M)} \circ s_{(4)} \subseteq \bigl\{ 
\lambda \in \Par(12+4M) : \lambda \unlhddot \omega_{2}^{(3+M)}(4) \bigr\} \]
where $\unlhddot$ is the $2$-twisted dominance order.
Since $t_2\bigl( (4) \bigr) = \young(\oM\tM11)$ has signed weight $\bigl((1^2), (2)\bigr)$ we have
\[ \begin{split} \omega_{2}^{(3+M)}\bigl( (3+M) \bigr) &
\decMap (3+M) \dec{(1^2)}{(2)} \\ &\ = \dec{(3+M,3+M)}{(6+2M)} \decMap (8+2M,2^{2+M})\end{split} \]
and so $\omega_2^{(3+M)}\bigl((3+M)\bigr) = 
(8+2M,2^{2+M})$ is an upper bound in the $2$-twisted dominance order
for the constituents of the plethysm $s_{(3+M)} \circ s_{(4)}$. 
In particular, if $s_\sigma$ appears in $s_{(3+M)} \circ s_{(4)}$
then $\ell(\sigma) \le 3+M$, as used earlier in \S\ref{subsec:cutUpsetForLawOkitani}.
Correspondingly, $(8+2M,2^{2+M})$ is the upper bound in the twisted interval defining
the stable partition system $\PSeq{M}$ used in \S\ref{subsec:cutUpsetForLawOkitani}: see
\S\ref{subsec:stablePartitionSystemsAsIntervals} for the interpretation
of this stable partition system using intervals for the $2$-twisted dominance order.
\end{example}

The following corollary is used
in Lemma~\ref{lemma:stablePartitionSystemForMuVarying}
to verify condition~(i) in the Signed Weight Lemma (Lemma~\ref{lemma:SWL}).

\begin{corollary}[Inner Twisted Weight Bound]\label{cor:twistedWeightBoundInnerGrowing}
Let $\rho$ be a partition of $n$.
Let $\kappa^\-$ and $\kappa^\+$ be partitions. Fix $\ell^\- = \ell(\kappa^\-)$ 
and let $\ell^\+ = \ell(\kappa^\+)$.
Let $\muS$ be an $(\ell^\-,\ell^\+)$-weight large
skew partition.
If $s_\sigma$ is a constituent of 
the plethysm $s_\rho  \circ s_{\muS \oplus M(\kappa^\-,\kappa^\+)}$ then
\[ \sigma \unlhddot \omega^{(n)}_{\ell^\-}(\muS) \oplus nM(\kappa^\-, \kappa^\+).\]
\end{corollary}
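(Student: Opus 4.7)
The plan is to apply Proposition~\ref{prop:twistedWeightBoundInner} directly with $\tauS = \muS \oplus M(\kappa^\-, \kappa^\+)$, and then show that the resulting upper bound $\omega^{(n)}_{\ell^\-}\bigl(\muS \oplus M(\kappa^\-,\kappa^\+)\bigr)$ equals $\omega^{(n)}_{\ell^\-}(\muS) \oplus nM(\kappa^\-,\kappa^\+)$. Thus the corollary reduces to the identity
\[
\omega^{(n)}_{\ell^\-}\bigl(\muS \oplus M(\kappa^\-,\kappa^\+)\bigr) = \omega^{(n)}_{\ell^\-}(\muS) \oplus nM(\kappa^\-,\kappa^\+),
\]
which is essentially a bookkeeping statement about how greatest signed weights interact with adjoining.

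First I would use Lemma~\ref{lemma:adjoinToWeightLarge} iteratively. Since $\muS$ is $(\ell^\-, \ell^\+)$-weight large by hypothesis, a single application of that lemma gives that $\omega_{\ell^\-}\bigl(\muS \oplus (\kappa^\-,\kappa^\+)\bigr)^\- = \omega_{\ell^\-}(\muS)^\- + \kappa^\-$ and likewise for the positive part. Moreover $\muS \oplus (\kappa^\-,\kappa^\+)$ is again $(\ell^\-, \ell^\+)$-weight large, since its greatest-weight partition grows from the previous one by adjoining $(\kappa^\-,\kappa^\+)$, and adjoining preserves the relevant largeness (by Remark~\ref{remark:becomesWeightLarge} or directly). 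Hence a routine induction on $M$ yields
\[
\omega_{\ell^\-}\bigl(\muS \oplus M(\kappa^\-,\kappa^\+)\bigr)^\pm = \omega_{\ell^\-}(\muS)^\pm + M\kappa^\pm.
\]

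Next I would translate this into a statement about the partitions themselves via Definition~\ref{defn:plethysticGreatestSignedWeight}. The left-hand side $\omega^{(n)}_{\ell^\-}\bigl(\muS \oplus M(\kappa^\-,\kappa^\+)\bigr)$ has $\ell^\-$-decomposition $\bigl\langle n\omega_{\ell^\-}(\muS)^\- + nM\kappa^\-,\ n\omega_{\ell^\-}(\muS)^\+ + nM\kappa^\+ \bigr\rangle$. On the other hand, $\omega^{(n)}_{\ell^\-}(\muS)$ has $\ell^\-$-decomposition $\bigl\langle n\omega_{\ell^\-}(\muS)^\-, n\omega_{\ell^\-}(\muS)^\+\bigr\rangle$, so the partition $\omega^{(n)}_{\ell^\-}(\muS)$ is $\bigl(\ell^\-,\ell^\+\bigr)$-large (its negative part has length $\ell^\-$ with last entry $n\,\omega_{\ell^\-}(\muS)^\-_{\ell^\-} \ge n\ell^\+ \ge \ell^\+$, using weight-largeness of $\muS$). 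Therefore Lemma~\ref{lemma:adjoinToLarge} applies and gives that $\omega^{(n)}_{\ell^\-}(\muS) \oplus nM(\kappa^\-,\kappa^\+)$ has exactly the same $\ell^\-$-decomposition as the left-hand side, establishing the identity.

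Finally, invoking Proposition~\ref{prop:twistedWeightBoundInner} with $\tauS = \muS \oplus M(\kappa^\-,\kappa^\+)$ gives $\sigma \unlhddot \omega^{(n)}_{\ell^\-}\bigl(\muS \oplus M(\kappa^\-,\kappa^\+)\bigr)$, and the identity above converts this into the desired inequality. The main (though still minor) obstacle is the iterated application of Lemma~\ref{lemma:adjoinToWeightLarge}: I need to confirm that weight-largeness is genuinely preserved at each step, not merely recovered after finitely many adjoinings as in Remark~\ref{remark:becomesWeightLarge}. This is where the hypothesis that $\muS$ is already $(\ell^\-,\ell^\+)$-weight large is essential, since without it the first application of Lemma~\ref{lemma:adjoinToWeightLarge} would fail and the clean $+M\kappa^\pm$ formula would be replaced by an inequality, giving only $\sigma \unlhddot \omega^{(n)}_{\ell^\-}(\muS) \oplus nM(\kappa^\-,\kappa^\+)$ after adjoining enough extra copies on both sides.
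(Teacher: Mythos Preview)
Your proposal is correct and follows essentially the same route as the paper: apply Proposition~\ref{prop:twistedWeightBoundInner} with $\tauS = \muS \oplus M(\kappa^\-,\kappa^\+)$, then use Lemma~\ref{lemma:adjoinToWeightLarge} to identify $\omega^{(n)}_{\ell^\-}\bigl(\muS \oplus M(\kappa^\-,\kappa^\+)\bigr)$ with $\omega^{(n)}_{\ell^\-}(\muS) \oplus nM(\kappa^\-,\kappa^\+)$. One minor simplification: your concern about iterating Lemma~\ref{lemma:adjoinToWeightLarge} and preserving weight-largeness at each step can be sidestepped by applying that lemma once with the partitions $M\kappa^\-$ and $M\kappa^\+$ in place of $\kappa^\-$ and $\kappa^\+$, since $\ell(M\kappa^\pm) = \ell(\kappa^\pm)$ and the hypothesis is unchanged.
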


\begin{proof}
By Proposition~\ref{prop:twistedWeightBoundInner} taking
$\tauS = \muS \oplus M(\kappa^\-,\kappa^\+)$ we have
\begin{equation}
\label{eq:sigmaBound}
\sigma \unlhddot \omega^{(n)}_{\ell^\-}\bigl(\hskip0.5pt\muS \oplus M(\kappa^\-,\kappa^\+)\bigr)
\end{equation}
Since $\muS$ is $(\ell^\-,\ell^\+)$-weight large, by Lemma~\ref{lemma:adjoinToWeightLarge}
we have 
\[ \omega_{\ell^\-}\bigl(\muS \oplus M(\kappa^\-,\kappa^\+) \bigr)
= \bigl(\omega_{\ell^\-}(\muS)^\- + M\kappa^\-,  \omega_{\ell^\-}(\muS)^\+ + M\kappa^\+ \bigr) \]
and so 
\[ \omega_{\ell^\-}^{(n)}\bigl(\muS \oplus  M(\kappa^\-,\kappa^\+) \bigr) =
\omega_{\ell^\-}^{(n)}\bigl( \muS) \oplus nM (\kappa^\-,\kappa^\+). \]
Therefore~\eqref{eq:sigmaBound} is equivalent to
$\sigma \unlhddot \omega^{(n)}_{\ell^\-}(\muS) \oplus nM (\kappa^\-,\kappa^\+)$.
\end{proof}

This result should
be compared to Corollary~\ref{cor:signedWeightBoundForStronglyMaximalSignedWeight},
which gives a more sophisticated bound used in the proof of Theorem~\ref{thm:nuStable}.

\section{Proof of Theorem~\ref{thm:muStable}}\label{sec:muStable}
We begin in \S\ref{subsec:muStableZero}
by proving the second part of this theorem where the stable multiplicity is zero.
We then use the Signed Weight Lemma
(Lemma~\ref{lemma:SWL}) to prove the remaining part of the theorem.
In \S\ref{subsec:stablePartitionSystemMu} we construct a suitable
stable partition system. In \S\ref{subsec:stablePlethysticTableau}
we construct a bijection satisfying~(ii) in the Signed Weight Lemma. Finally
in \S\ref{subsec:muStableProof} we put together all the pieces
proving Theorem~\ref{thm:muStableSharp} which restates Theorem~\ref{thm:muStable}
with an explicit bound.

\subsection{The vanishing case of Theorem~\ref{thm:muStable}}\label{subsec:muStableZero}
We require the following statistic. Recall that $\dec{\lambda^\-}{\lambda^\+}$ denotes the $\ell^\-$-decomposition
of a partition~$\lambda$, as defined in Definition~\ref{defn:ellDecomposition}.

\begin{definition}\label{defn:LZBound}
Let $\lambda$ and $\omega$ be partitions of the same size.
Let $\swtp{\kappa}$ and $\swtp{\eta}$ be signed weights.
Fix $\ell^\- = \max( \ell(\eta^\-), \ell(\kappa^\-) )$.
We define $\LZBound\bigl([\lambda, \omega]_\unlhddotS, \swtp{\kappa}, \swtp{\eta}\bigr)$ to be the minimum of the quantities
\smallskip
\begin{bulletlist}
\item $\displaystyle \frac{\sum_{i=1}^k \omega^\-_i - \sum_{i=1}^k \lambda^\-_i}{ \sum_{i=1}^k
\eta^\-_i - \sum_{i=1}^k \kappa^\-_i }$.\\[1pt]
\item $\displaystyle 
\frac{|\omega^\-| + \sum_{i=1}^k \omega^\+_i - |\lambda^\-| - \sum_{i=1}^k
\lambda^\+_i}{ |\eta^\-| + \sum_{i=1}^k \eta^\+_i - |\kappa^\-| - \sum_{i=1}^k \kappa^\+_i}$
\end{bulletlist}

\smallskip\noindent
taken in each case over those $k$ for which the denominator is strictly positive; if there
are no such $k$, we leave $\LZBound\bigl([\lambda, \omega]_\unlhddotS, \swtp{\kappa}, \swtp{\eta}\bigr)$ undefined.
\end{definition}

If, as in the first case of Theorem~\ref{thm:muStable},
we have $\swtp{\eta} \notunlhd \swtp{\kappa}$
in the $\ell^\-$-signed dominance order (see Definition~\ref{defn:ellSignedDominanceOrder}), then 
it immediately follows from the definition of this order
that $\LZBound\bigl([\lambda, \omega]_\unlhddotS, \swtp{\kappa}, \swtp{\eta}\bigr)$ is defined for
any partitions $\lambda$ and $\omega$.

In the following proposition we prove a 
generalization of
the final part of Theorem~\ref{thm:muStable}, with an explicit bound from Definition~\ref{defn:LZBound}.
Recall from Definition~\ref{defn:plethysticGreatestSignedWeight} that
\smash{$\omega^{(n)}_{\ell^\-}(\muS)$} is the
unique partition whose $\ell^\-$-decomposition is 
$n\dec{\omega_{\ell^\-}(\tauS)^\-}{\omega_{\ell^\-}(\tauS)^\+}$,
where $\dec{\omega_{\ell^\-}(\tauS)^\-}{\omega_{\ell^\-}(\tauS)^\+}$
is the signed weight of the $\ell^\-$-greatest tableau: see Definition~\ref{defn:greatestSignedWeight}
and Lemma~\ref{lemma:greatestSignedWeight}.
See Definitions~\ref{defn:large} and~\ref{defn:weightLarge} for the definitions
of `large' and `weight large'.

\begin{proposition}\label{prop:muStableZero}
For each $M \in \N_0$, let $\nu^{(M)}$ be a partition of $n \in \N$.
Let~$\kappa^\-$ and $\kappa^\+$ be partitions. Let $\ell^\- = \ell(\kappa^\-)$.
Let $\eta^\-$ and $\eta^\+$ be partitions
with $\ell(\eta^\-) \le \ell^\-$ and $|\eta^\-| + |\eta^\+| = |\kappa^\-| + |\kappa^\+|$. 
Let $\ell^\+ = \max(\ell(\kappa^\+), \ell(\eta^\+))$ 
and let $\muS$ be an $(\ell^\-,\ell^\+)$-weight large skew partition.
Let $\lambda$ be a $(\ell^\-,\ell^\+)$-large partition.
If $\swtp{\eta} \notunlhd \swtp{\kappa}$
then 
\[ \bigl\langle s_\nuSeq{M} \circ s_{\muS \opluss M(\kappa^\-, \kappa^\+)}, 
s_{\lambda \opluss nM(\eta^\-, \eta^\+)} \bigr\rangle = 0 \]
for all 
\[ M > \LZBound\bigl([\lambda, \omega_{\ell^\-}^{(n)}(\muS)]_\unlhddotS, \swtp{\kappa},\swtp{\eta}\bigr)\bigl/ n. \]
\end{proposition}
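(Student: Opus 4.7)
The strategy is to use the Inner Twisted Weight Bound (Corollary~\ref{cor:twistedWeightBoundInnerGrowing}), which forces every constituent $s_\sigma$ of $s_{\nu^{(M)}} \circ s_{\muS \opluss M\swtp{\kappa}}$ to satisfy $\sigma \unlhddot \omega_{\ell^\-}^{(n)}(\muS) \opluss nM\swtp{\kappa}$ in the $\ell^\-$-twisted dominance order. It then suffices to exhibit an $M$-bound beyond which
\[ \lambda \opluss nM\swtp{\eta} \notunlhddot \omega_{\ell^\-}^{(n)}(\muS) \opluss nM\swtp{\kappa}, \]
so that $s_{\lambda \opluss nM\swtp{\eta}}$ cannot appear in the plethysm.

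The first task is to compute the $\ell^\-$-decompositions of the two large partitions above. Since $\lambda$ is $(\ell^\-, \ell^\+)$-large and $\ell(\eta^\-) \le \ell^\-$, $\ell(\eta^\+) \le \ell^\+$, iterating Lemma~\ref{lemma:adjoinToLarge} gives $(\lambda \opluss nM\swtp{\eta})^\- = \lambda^\- + nM\eta^\-$ and $(\lambda \opluss nM\swtp{\eta})^\+ = \lambda^\+ + nM\eta^\+$. For the right-hand side, observe that $\muS$ being $(\ell^\-, \ell^\+)$-weight large amounts to $\omega_{\ell^\-}(\muS)^\-_{\ell^\-} \ge \ell^\+$, and so $n\omega_{\ell^\-}(\muS)^\-_{\ell^\-} \ge \ell^\+$ as well; hence the partition $\omega_{\ell^\-}^{(n)}(\muS)$, whose $\ell^\-$-decomposition is $\dec{n\omega_{\ell^\-}(\muS)^\-}{n\omega_{\ell^\-}(\muS)^\+}$, is itself $(\ell^\-, \ell^\+)$-large. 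A second application of Lemma~\ref{lemma:adjoinToLarge} then identifies the negative and positive parts of $\omega_{\ell^\-}^{(n)}(\muS) \opluss nM\swtp{\kappa}$ as $n\omega_{\ell^\-}(\muS)^\- + nM\kappa^\-$ and $n\omega_{\ell^\-}(\muS)^\+ + nM\kappa^\+$, respectively.

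With both decompositions in hand, apply the characterisation in Lemma~\ref{lemma:twistedDominanceOrderOldDefinition}: the $\unlhddot$ relation is equivalent to two families of partial-sum inequalities, one on the negative parts and one on the positive parts (the latter offset by the difference in total sizes). Since $\swtp{\eta} \notunlhd \swtp{\kappa}$ in the $\ell^\-$-signed dominance order, there is an index $k$ for which either $\sum_{i=1}^k (\eta^\-_i - \kappa^\-_i) > 0$ or $|\eta^\-| - |\kappa^\-| + \sum_{i=1}^k (\eta^\+_i - \kappa^\+_i) > 0$; these are precisely the indices with positive denominator in Definition~\ref{defn:LZBound}. If $M > \LZBound\bigl([\lambda, \omega_{\ell^\-}^{(n)}(\muS)]_\unlhddotS, \swtp{\kappa}, \swtp{\eta}\bigr)/n$, then $nM$ strictly exceeds at least one of the quantities in that minimum, and a direct rearrangement---using $\omega_{\ell^\-}^{(n)}(\muS)^\pm = n\omega_{\ell^\-}(\muS)^\pm$---shows that the corresponding partial-sum inequality required by Lemma~\ref{lemma:twistedDominanceOrderOldDefinition} is strictly violated. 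Hence $\lambda \opluss nM\swtp{\eta} \notunlhddot \omega_{\ell^\-}^{(n)}(\muS) \opluss nM\swtp{\kappa}$, contradicting the Inner Twisted Weight Bound and giving the claimed vanishing.

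The argument is essentially a bookkeeping calculation and no genuine obstacle arises. The only care needed is in the second paragraph, where the twofold application of Lemma~\ref{lemma:adjoinToLarge} depends on \emph{both} the largeness of $\lambda$ and the weight-largeness of $\muS$; these are exactly the hypotheses that legitimise the additive rule $(\alpha \oplus \swtp{\beta})^\pm = \alpha^\pm + \beta^\pm$ used throughout the computation.
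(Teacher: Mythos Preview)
Your proof is correct and follows essentially the same approach as the paper: apply the Inner Twisted Weight Bound (Corollary~\ref{cor:twistedWeightBoundInnerGrowing}) to bound any constituent $\sigma$, compute the $\ell^\-$-decompositions via Lemma~\ref{lemma:adjoinToLarge}, and then use Lemma~\ref{lemma:twistedDominanceOrderOldDefinition} to extract the partial-sum inequalities that contradict the assumed bound on $M$. The only cosmetic difference is that the paper passes directly to the signed-weight inequality~$\swtp{\sigma} \unlhd n(\omega_{\ell^\-}(\muS)^\-, \omega_{\ell^\-}(\muS)^\+) + nM\swtp{\kappa}$ using the defining $\ell^\-$-decomposition of $\omega_{\ell^\-}^{(n)}(\muS)$, whereas you first verify that this partition is $(\ell^\-,\ell^\+)$-large and invoke Lemma~\ref{lemma:adjoinToLarge} on that side as well; both routes yield the same inequalities.
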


\begin{proof}
By Corollary~\ref{cor:twistedWeightBoundInnerGrowing},
applied with $\rho = \nuSeq{M}$,
if $s_\sigma$ is a constituent of the plethysm
\smash{$s_\nuSeq{M} \circ s_{\muS \opluss M(\kappa^\-, \kappa^\+)}$}
then 
\smash{$\sigma \unlhddot \omega^{(n)}_{\ell^\-}(\muS) \oplus nM(\kappa^\-, \kappa^\+)$}.
Therefore, by Definition~\ref{defn:plethysticGreatestSignedWeight} and
the definition of the $\ell^\-$-twisted dominance order (see Definition~\ref{defn:ellTwistedDominanceOrder})
we have
\begin{equation}\label{eq:sigmaSignedWeightBound} \swtp{\sigma} \unlhd n \bigl( \omega_{\ell^\-}(\muS)^\-, \omega_{\ell^\-}(\muS)^\+ \bigr) 
+ nM\swtp{\kappa}\end{equation}
in the $\ell^\-$-signed dominance order (see Definition~\ref{defn:ellSignedDominanceOrder}).
Since $\lambda$ is $(\ell^\-,\ell^\+)$-large and $\ell(\eta^\-) \le \ell^\-$
and $\ell(\eta^\+) \le \ell^\+$ by hypothesis, we may apply
Lemma~\ref{lemma:adjoinToLarge} to get that the $\ell^\-$-decomposition of 
$\lambda \oplus nM\swtp{\eta}$ is $\decs{\lambda^\- + nM \eta^\-}{\lambda^\+ + nM\eta^\+}$.
We now substitute $\swtp{\lambda} + nM\swtp{\eta}$ for $\swtp{\sigma}$ in~\eqref{eq:sigmaSignedWeightBound}
to obtain the inequality
\begin{equation}\label{eq:signedEtaBound} \swtp{\lambda} + nM\swtp{\eta} \unlhd 
n \bigl( \omega_{\ell^\-}(\muS)^\-, \omega_{\ell^\-}(\muS)^\+ \bigr)  + nM\swtp{\kappa} \end{equation}
Thus, by Lemma~\ref{lemma:twistedDominanceOrderOldDefinition}(a), we have, for each $k \le \ell^\-$,
\[ \sum_{i=1}^k \lambda^\-_i + nM \sum_{i=1}^k \eta^\-_i \le  \sum_{i=1}^k
n\omega_{\ell^\-}(\muS)^\-_i + nM \sum_{i=1}^k \kappa^\-_i  \]
and so
\[ nM \Bigl(\, \sum_{i=1}^k \eta^\-_i - \sum_{i=1}^k \kappa^\-_i \Bigr)
\le \sum_{i=1}^k n\omega_{\ell^\-}(\muS)^\-_i - \sum_{i=1}^k \lambda^\-_i. \]
Hence, if $\sum_{i=1}^k \eta^\-_i > \sum_{i=1}^k \kappa^\-_i$ then 
$nM$ is at most the relevant bound from the first case of Definition~\ref{defn:LZBound}.
The proof for the second family of bounds is very closely analogous: by~\eqref{eq:signedEtaBound}
and Lemma~\ref{lemma:twistedDominanceOrderOldDefinition}(b) we obtain
\[ \begin{split}
&|\lambda^\-| + \sum_{i=1}^k \lambda^\+_i + nM \sum_{i=1}^k \eta^\+_i \\
&\qquad \le n  |\omega_{\ell^\-}(\muS)^\-| + \sum_{i=1}^k
n\omega_{\ell^\-}(\muS)^\+_i + nM \sum_{i=1}^k \kappa^\+_i + nM\bigl( |\eta^\+| - |\kappa^\+|\bigr)
\end{split} \]
and so using $|\eta^\+| - |\kappa^\+| = -|\eta^\-| + |\kappa^\-|$ we have
\[ \begin{split}
 nM \Bigl(\, \sum_{i=1}^k \eta^\+_i &- \sum_{i=1}^k \kappa^\+_i + |\eta^\-| - |\kappa^\-| \Bigr)\\
 &\quad \le n |\omega_{\ell^\-}(\muS)^\-| + \sum_{i=1}^k n\omega_{\ell^\-}(\muS)^\+_i - |\lambda^\-| - \sum_{i=1}^k \lambda^\+_i \end{split} \]
showing that $nM$ is at most the relevant bound from the second case of Definition~\ref{defn:LZBound}.
\end{proof}

\subsection{Stable partition system for Theorem~\ref{thm:muStable}}
\label{subsec:stablePartitionSystemMu}
The stable partition system
we require to prove the main part of Theorem~\ref{thm:muStable}
again comes from Corollary~\ref{cor:twistedWeightBoundInnerGrowing}.

\begin{lemma}\label{lemma:stablePartitionSystemForMuVarying}
Let $\rho$ be a partition of $n \in \N$.
Let $\kappa^\-$ and $\kappa^\+$ be partitions. Fix $\ell^\- = \ell(\kappa^\-)$
and $\ell^\+ = \ell(\kappa^\+)$.
Let $\muS$ be a skew partition;
if $\ell^\- \not=0$ then suppose
that $\muS$ is $(\ell^\-+1,\ell^\+)$-weight large for $\ell^\-$.
Let $\lambda$ be a partition of $n|\muS|$
such that \smash{$\lambda \,\unlhddot\, \omega^{(n)}_{\ell^\-}(\muS)$}, where $\unlhddot$ is the $\ell^\-$-twisted dominance order.
Let 
\[ \PSeq{M} = [\lambda \oplus nM(\kappa^\-,\kappa^\+), \omega^{(n)}_{\ell^\-}(\muS) 
\oplus nM(\kappa^\-, \kappa^\+)]_{\unlhddotS}. \]
 Then $(\PSeq{M})_{M \in \N_0}$ is a stable partition system for the symmetric functions $g_\pi = e_{\pi^\-}h_{\pi^\+}$.
Moreover, if $\pi \in \PSeq{M}$ and~$s_\sigma$ is a summand of $e_{\pi^\-}h_{\pi^\+}$ appearing
in the plethysm $s_\rho  \circ s_{\muS \oplus M(\kappa^\-,\kappa^\+)}$ then
$\sigma \in \PSeq{M}$.
\end{lemma}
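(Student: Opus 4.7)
The plan is to assemble this lemma from two results already proved, with essentially no new work beyond verifying the hypotheses. The two claims require separate arguments: first, that $(\PSeq{M})_{M \in \N_0}$ is a stable partition system in the sense of Definition~\ref{defn:stablePartitionSystem}, and second, the containment statement for plethysm summands.

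For the stability claim, I would directly invoke Corollary~\ref{cor:signedIntervalStable} (twisted intervals give stable partition systems), applied with lower bound $\lambda$ and upper bound $\omega_{\ell^\-}^{(n)}(\muS)$. The key hypothesis to check is that $\omega_{\ell^\-}^{(n)}(\muS)$ is $(\ell^\-+1, \ell^\+)$-large. By Definition~\ref{defn:plethysticGreatestSignedWeight}, $\omega_{\ell^\-}^{(n)}(\muS)$ has $\ell^\-$-decomposition $n\dec{\omega_{\ell^\-}(\muS)^\-}{\omega_{\ell^\-}(\muS)^\+}$. Since $\muS$ is $(\ell^\-+1, \ell^\+)$-weight large for $\ell^\-$ by hypothesis, Definition~\ref{defn:weightLarge}(a) gives that the partition corresponding to $\dec{\omega_{\ell^\-}(\muS)^\-}{\omega_{\ell^\-}(\muS)^\+}$ is $(\ell^\-+1, \ell^\+)$-large, which by Remark~\ref{remark:ellDecompositionLarge} means $\ell(\omega_{\ell^\-}(\muS)^\+) \ge \ell^\+$; multiplication by $n$ preserves this length, so $\omega_{\ell^\-}^{(n)}(\muS)$ is likewise $(\ell^\-+1, \ell^\+)$-large. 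The inequality $\lambda \unlhddot \omega_{\ell^\-}^{(n)}(\muS)$ is in the hypothesis. Corollary~\ref{cor:signedIntervalStable} then provides a stable partition system $\QSeq{M'} = [\lambda \oplus M'(\kappa^\-,\kappa^\+), \omega_{\ell^\-}^{(n)}(\muS) \oplus M'(\kappa^\-,\kappa^\+)]_\unlhddotS$ with $M' \in \N_0$. Taking the $n$-subsystem as defined in Definition~\ref{defn:stablePartitionSystem}, we obtain $(\PSeq{M})_{M \in \N_0}$ with $\PSeq{M} = \QSeq{nM}$ and stability map $\pmap^n(\sigma) = \sigma \oplus n(\kappa^\-, \kappa^\+)$, matching the intervals in the lemma.

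For the containment claim, let $\pi \in \PSeq{M}$ and suppose $s_\sigma$ is a summand of $e_{\pi^\-}h_{\pi^\+}$ that also appears in $s_\rho \circ s_{\muS \oplus M(\kappa^\-,\kappa^\+)}$. For the upper bound, I would apply Corollary~\ref{cor:twistedWeightBoundInnerGrowing} (Inner Twisted Weight Bound); its hypothesis that $\muS$ is $(\ell^\-, \ell^\+)$-weight large is weaker than the $(\ell^\-+1, \ell^\+)$-weight large hypothesis assumed here, by Lemma~\ref{lemma:skewLargeImpliesWeightLargeImpliesLarge} and monotonicity of largeness. This yields $\sigma \unlhddot \omega_{\ell^\-}^{(n)}(\muS) \oplus nM(\kappa^\-,\kappa^\+)$. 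For the lower bound, Lemma~\ref{lemma:twistedKostkaMatrix} gives $\sigma \unrhddot \pi$ since $s_\sigma \in \supp(e_{\pi^\-}h_{\pi^\+})$, and transitivity together with $\pi \unrhddot \lambda \oplus nM(\kappa^\-,\kappa^\+)$ (from $\pi \in \PSeq{M}$) gives $\sigma \unrhddot \lambda \oplus nM(\kappa^\-,\kappa^\+)$. Combining both bounds places $\sigma$ in the twisted interval $\PSeq{M}$.

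No part of this is expected to be technically difficult; the work of managing the largeness conditions and showing that the stable twisted intervals meet the plethysm support was done in \S\ref{sec:twisted}, \S\ref{sec:stablePartitionSystems} and \S\ref{sec:twistedWeightBoundInner}, and the present lemma is essentially a packaging step that prepares the ingredients for the Signed Weight Lemma in the proof of Theorem~\ref{thm:muStable}. The only point requiring slight care is the subsystem reduction (which is routine since adjoining $(\kappa^\-,\kappa^\+)$ to a $(\ell^\-+1,\ell^\+)$-large partition commutes cleanly by Lemma~\ref{lemma:adjoinToLarge}, so $\pmap^n$ coincides with $\sigma \mapsto \sigma \oplus n(\kappa^\-,\kappa^\+)$).
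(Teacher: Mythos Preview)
Your proposal is correct and follows essentially the same route as the paper: Corollary~\ref{cor:signedIntervalStable} for stability (after checking that $\omega_{\ell^\-}^{(n)}(\muS)$ is $(\ell^\-+1,\ell^\+)$-large via the weight-large hypothesis), then Lemma~\ref{lemma:twistedKostkaMatrix} and Corollary~\ref{cor:twistedWeightBoundInnerGrowing} for the containment claim. Your explicit treatment of the $n$-subsystem step is a detail the paper leaves implicit here (deferring it to the proof of Theorem~\ref{thm:muStableSharp}), but the argument is the same.
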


\begin{proof}
Suppose that $\ell^\- \not=0$.
Then, by the hypothesis that  $\muS$ is $(\ell^\-+1,\ell^\+)$-weight large for $\ell^\-$, the
partition \smash{$\omega_{\ell^\-}(\muS)$ is $(\ell^\-+1,\ell^\+)$}-large
(see Definition~\ref{defn:weightLarge})
and so
\smash{$\omega^{(n)}_{\ell^\-}(\muS)$} is also $(\ell^\-+1,\ell^\+)$-large. 
We therefore may
apply Corollary~\ref{cor:signedIntervalStable} to deduce that the partition system is stable.
For the final claim, by Lemma~\ref{lemma:twistedKostkaMatrix}, we have
$\sigma \unrhddot \pi$, and so,
since $\pi \unrhddot \lambda \oplus nM\swtp{\kappa}$, we have
$\sigma \unrhddot \lambda \oplus nM\swtp{\kappa}$.
By Corollary~\ref{cor:twistedWeightBoundInnerGrowing},
we have \smash{$\sigma \unlhddot \omega^{(n)}_{\ell^\-}(\muS) 
\oplus nM(\kappa^\-, \kappa^\+)$}. Hence $\sigma \in \PSeq{M}$.
\end{proof}

\subsection{Positions for plethystic tableaux: motivating example}
\label{subsec:plethysticTableauGMotivation}
The aim in the next three subsections is to prove
Proposition~\ref{prop:plethysticSignedTableauInnerStable}, the
plethystic analogue of Proposition~\ref{prop:signedTableauStable} (Tableau Stability), 
using the map $\Gmap$ on
plethystic semistandard signed tableaux
defined in Definition~\ref{defn:G}.
We begin with a motivating example
that gives a good idea how to define $\Gmap$ using
the earlier map $\Fmap$ on semistandard tableaux
from Definition~\ref{defn:F}.
We use this example
throughout this section.

\begin{example}\label{ex:22motivation}
The special case of Theorem~\ref{thm:muStable} taking
$\nu = (2)$, $\mu = (2,2)$, $\mus = \varnothing$,
$(\kappa^\-,\kappa^\+) = \bigl((1,1), (1) \bigr)$ and $\lambda = (8-b,b)$ with
$b \in \{2,3,4\}$ is that 
\begin{equation}
\label{eq:plethysticMultiplicityOverview} 
\langle s_{(2)} \circ s_{(2+M,2,2^M)}, s_{(8-b+2M,b,2^{2M})}  \rangle \end{equation}
is ultimately constant. 
In our proof using the Signed Weight Lemma (Lemma \ref{lemma:SWL})  
we must verify condition (ii), that 
\begin{equation}
\label{eq:plethysticWeightOverview}
\bigl| \PSSYT\bigl( (2), (2+M,2,2^M)_{((2+2M,2+2M),(6-b+2M,b-2))} \bigr| \end{equation}
is constant for all $M$ sufficiently large. (Here $\bigl((2+2M,2+2M), (6-b+2M,b-2)\bigr)$
is the $2$-decomposition of $(8-b,b) \oplus 2M\bigl( (1,1), (1) \bigr)$: 
see Definition~\ref{defn:ellDecomposition}.) 
In any
plethystic semistandard tableau $\young(st)$ of signed weight 
$\bigl((2+2M,2+2M),(6-b+2M,b-2)\bigr)$, the inner tableaux $s$ and $t$ have
$2+2M$ entries of  $-1$ and~$-2$ and $6-b+2M$ entries of $1$ between them.
 By the signed semistandard condition in Definition~\ref{defn:plethysticSemistandardSignedTableau},
the entries of~$-1$ and~$-2$ lie in the first two columns.
Thus when $M$ is large, 
both $s$ and $t$ have the form
\[ \begin{tikzpicture}[x=0.55cm,y=-0.55cm] 
\tB{1}{1}{\oM} \tB{1}{2}{\tM} \tB{1}{3}{1} \tB{1}{4}{1} \node at (6,2.5) {$\ldots$};
\tB{1}{7}{1} \tBDF{1}{9}{\scalebox{0.85}{$1,2$}}{lightgrey};
\fill[pattern = north east lines] (3,2)--(4,2)--(4,3)--(3,3)--(3,2);

\tB{2}{1}{\oM} \tB{2}{2}{\tM} 
\fill[pattern = north east lines] (1,3)--(3,3)--(3,4)--(1,4)--(1,3);

\tB{3}{1}{\oM} \tB{3}{2}{\tM} 
\node at (2,5.5) {$\vdots$};
\tB{5.5}{1}{\oM} \tB{5.5}{2}{\tM} 
\draw (1,7.5)--(3,7.5)--(3,9.5)--(1,9.5)--(1,7.5);
\fill[color=lightgrey] (1,7.5)--(3,7.5)--(3,9.5)--(1,9.5)--(1,7.5);
\node at (2,8.5) 
{\scalebox{0.85}{$ \begin{matrix} \mbf{1},\mbf{2} \\ 1,2\end{matrix}$}};

\end{tikzpicture} \]
where the two  shaded regions in the bottom $2$ rows and rightmost $2$ columns
are marked with the possible entries. (As ever negative entries may be repeated in a column, and positive
entries repeated in row.) Clearly, almost all the entries of $s$ and $t$ are determined
by their weight. In particular both~$s$ and~$t$
are obtained from a tableau for the case $M-1$ by
inserting $\young(\oM\tM)$ as a new
complete second row and $\young(1)$ as a new complete third column.
By Definition~\ref{defn:skewPositions}, the $2$-top and $1$-left positions
of $(2+M,2,2^M)$ are both $(1,2)$. 
Thus these insertions are exactly as specified by the map
$\Fmap$ from Definition~\ref{defn:F}. We have shown that provided $M$ is
sufficiently large, the map
from $\PSSYT\bigl( (2), (2+M-1,2,2^{M-1})_{((2+2(M-1),2+2(M-1)),(6-b+2(M-1),b-2))}$
to $\PSSYT\bigl( (2), (2+M,2,2^M)_{((2+2M,2+2M),(6-b+2M,b-2))}$ defined by
\[ \pyoung{0.6cm}{0.6cm}{ {{$s$, $t$}} }\ \, \raisebox{6pt}{$\longmapsto$} 
\raisebox{-0.5pt}{\pyoung{1cm}{0.6cm}{
{{$\Fmap(s)$, $\Fmap(t)$}} }} \]
is surjective (with inverse defined by deleting the hatched boxes and performing
suitable shifts) and so 
the cardinality in~\eqref{eq:plethysticWeightOverview} is ultimately constant.
In fact $\Fmap(s)$ and $\Fmap(t)$ are semistandard provided
that the $2$-top positions of both $s$ and $t$ both contain $-2$. 
As we show in Example~\ref{ex:22positions},
using Lemma~\ref{lemma:plethysticSignedTableauPositions} in
the following subsection, this holds provided $M \ge \max(3, b+2)$,

\end{example}

\subsection{Lemma on positions for plethystic tableaux}\label{subsec:plethysticTableauPositions}
The critical positions are defined in  Definition~\ref{defn:skewPositions}
and were seen in the non-skew case in the previous subsection.
To remind the reader of the general definition we begin with an example 
in the skew case.

\begin{example}\label{ex:skewPositions}
Let $\muS = (7,6,3,1) / (4,3,1)$.
We take $\kappa^\- = (2,1)$ and $\kappa^\+ = (1,1)$ so $\ell^\- = \ell^\+ = 2$.
The diagrams below show the $1$-left, $2$-left, $1$-top and $2$-top positions
in the partitions $\muS \oplus M(\kappa^\-, \kappa^\+)$ for $M \in \{0,1,2\}$.
Following our usual convention, top positions, relevant to the insertion
of negative entries, are marked by bold numbers. The skew partition
$\muS$ is $(a(\mus)+\ell^\-, \ell^\+)$-large as it contains $(2, 6)$,  
and $(\ell^\-, \ell(\mus))$-large as it contains $(3,2)$; all
these positions are contained within $[\mu]$.
Moreover, also as promised by Lemma~\ref{lemma:positions},
the top positions are no higher than row $\max( \ell(\mus), \ell(\mu^\+), \ell^\+ )$ 
and left positions are no further left than column~$\ell^\- + a(\mus)$.
The relevant boxes $(2,6)$ and $(3,2)$ are hatched,
as in Figure~\ref{fig:ellDecompositionSkewTableau}.

\smallskip
\hspace*{-0.5in}
\renewcommand{\d}{0.5}
\newcommand{\yb}{\youngBox{\d}{grey}}
\begin{tikzpicture}[x=\d cm,y=-\d cm]
\pyoungInner{ {{\yb,\yb,\yb,\yb,\ ,\ , \ }, {\yb,\yb,\yb,\ ,\ , \ }, {\yb, \ , \ }, {\ }} }
\node at (1.5,3.5) {$\mbf{1}$};
\node at (2.5,3.5) {$\mbf{2}$};
\node at (6.5,1.5) {$1$};
\node at (6.5,2.5) {$2$};
\fill[pattern = north west lines] (6,2)--(7,2)--(7,3)--(6,3);
\fill[pattern = north west lines] (2,3)--(3,3)--(3,4)--(2,4);
\end{tikzpicture}\spy{7pt}{\quad}
\raisebox{-1cm}{\begin{tikzpicture}[x=\d cm,y=-\d cm]
\pyoungInner{ {{\yb,\yb,\yb,\yb,\ ,\ , \ ,\ }, {\yb,\yb,\yb,\ ,\ , \ ,\ }, {\yb, \ , \ }, {\ , \ }, {\ },
{\ }} }
\node at (1.5,4.5) {$\mbf{1}$};
\node at (2.5,3.5) {$\mbf{2}$};
\node at (7.5,1.5) {$1$};
\node at (6.5,2.5) {$2$};
\fill[pattern = north west lines] (6,2)--(7,2)--(7,3)--(6,3);
\fill[pattern = north west lines] (2,3)--(3,3)--(3,4)--(2,4);
\end{tikzpicture}}\spy{7pt}{\quad}
\raisebox{-2cm}{\begin{tikzpicture}[x=\d cm,y=-\d cm]
\pyoungInner{ {{\yb,\yb,\yb,\yb,\ ,\ , \ ,\ ,\ }, {\yb,\yb,\yb,\ ,\ , \ ,\ ,\ }, {\yb, \ , \ }, 
{\ , \ }, {\ ,\ }, {\ }, {\ }, {\ }} }
\node at (1.5,5.5) {$\mbf{1}$};
\node at (2.5,3.5) {$\mbf{2}$};
\node at (8.5,1.5) {$1$};
\node at (6.5,2.5) {$2$};
\fill[pattern = north west lines] (6,2)--(7,2)--(7,3)--(6,3);
\fill[pattern = north west lines] (2,3)--(3,3)--(3,4)--(2,4);
\end{tikzpicture}}

\smallskip
\noindent We remark that if we changed $\mus = (4,3,1)$ to $(a)$ with $a \in \{0,1,2,3,4\}$ then
the $1$-top and $2$-top positions remain unchanged, because they always lie
in rows weakly below $\max(\ell(\mus), \ell^\+, \ell(\mu^\+))$, and this statistic
remains $3$ since 
$\mu^\+ = (5,4,1)$. The $1$-left position is also unchanged, but the $2$-left position
is now $\bigl(2,\ell^\- + \max(a, \mu^\+_3)\bigr) = \bigl( 2, 2 + \max(a, 1) \bigr)$
which is $(2,3)$ if and only if $a \le 1$.
Note that if $a \le 1$ then
it is possible to insert a column of height $2$ immediately right of column $3$,
and the entries of this column \emph{must} be positive.
\end{example}

\begin{definition}\label{defn:LPBound}
Let $\kappa$, $\mu$ and $\lambda$ be partitions. Let $A \in \N_0$.
We define $\LPBound\bigl(n, \mu \,: \lambda, \kappa \,: A)$
to be $0$ if $\kappa = \varnothing$ and otherwise 
to be the maximum of
\begin{align*}
&\frac{n\sum_{i=1}^\rk \mu_i - \sum_{i=1}^\rk \lambda_i - \mu_\rk + \max(A, \mu_{\rk+1})}{\kappa_\rk - \kappa_{\rk+1}}.
\end{align*}
for $1 \le \rk \le \ell(\kappa)$, omitting any expressions with zero denominator.
\end{definition}

The following lemma is the analogue of Lemma~\ref{lemma:signedTableauPositions}.
The statement, apart from the change of bounds, is very similar, but the proof is somewhat easier, 
apart from some technicalities arising from the skew case,
because the shape is known to be $\muS \oplus M(\kappa^\-,\kappa^\+)$, rather than
an arbitrary partition in an interval for the $\ell^\-$-twisted dominance order.
The relevant positions are defined in Definition~\ref{defn:skewPositions}.

\begin{lemma}
\label{lemma:plethysticSignedTableauPositions}
Let $\kappa^\-, \kappa^\+$ be partitions. Fix $\ell^\- = \ell(\kappa^\-)$
and $\ell^\+ = \ell(\kappa^\+)$.
Let $\muS$ be an $\bigl(\ell^\-+a(\mus), \ell^\+\bigr)$-large and $\bigl(\ell^\-, \ell(\mus)\bigr)$-large
skew partition. 
Let~$\lambda$ and $\omega$ be $(\ell^\-, \ell^\+)$-large partitions of $n|\muS|$.
Let 
\[ \pi \in \bigl[\lambda \oplus nM(\kappa^\-, \kappa^\+), \omega \oplus nM(\kappa^\-,\kappa^\+) \bigr]_\unlhddotS. \] 
Let $T \in \PSSYTw{\rho}{\muS \oplus M(\kappa^\-,\kappa^\+)}{\pi^\-}{\pi^\+}$
and let~$t$ be a $\muS \oplus M(\kappa^\-,\kappa^\+)$-inner tableau of~$T$.
Let $L$ be the maximum of 
\begin{bulletlist}
\item $\LPBound\bigl(n,\mu' : \lambda^\-, \kappa^\- : \max(\ell(\mus), \ell(\mu^\+), \ell^\+\bigr))$ 
\item 
$\LPBound\bigl(n, \mu : \lambda^\+, \kappa^\+ : a(\mus) + \ell^\-\bigr) + |\lambda^\+| - |\omega^\+|$.
\end{bulletlist}
If $M-1 \ge L$ then 
\begin{thmlist}
\item the $\rM$-bottom position of $t$ contains $-\rM$ 
if $\rM < \ell^\-$ and $\kappa^\-_{\rM} > \kappa^\-_{\rM+1}$;
\item the $\ell^\-$-bottom position of $t$ contains $-\ell^\-$; 
\item if $\kappa^\- \not=\varnothing$ and \emph{either} $\kappa^\+\not=\varnothing$ \emph{or} $\mus\not=\varnothing$ \emph{or} $\mu^\+ \not=\varnothing$
 then
the box $\bigl(\max(\ell(\mus),\ell(\mu^\+),\ell^\+),\ell^\-\bigr)$ of $t$ contains a negative entry; 
\item the $\rP$-right position of $t$ contains $\rP$ if $\rP < \ell^\+$ and $\kappa^\+_\rP > \kappa^\+_{\rP+1}$;
\item the $\ell^\+$-right position of $t$ contains $\ell^\+$.
\end{thmlist}
Moreover if $M \ge L$ then the same results hold replacing `bottom' with `top' and `right' with `left', except that 
\begin{thmlist}
\item[\emph{(ii)}] if $\mus = \varnothing$ and $\kappa^\+ = \varnothing$ and $\mu^\+ = \varnothing$ 
then the $\ell^\-$-top position is $(0,\ell^\-)$;
\item[\emph{(iv)}] and \emph{(v)}  if $\mu_{r^\++1} \le \ell^- + a(\mus)$
and so the 
$r^\+$-left-position
is $\bigl(r^\+, \ell^\- + a(\mus)\bigr)$, then it may contain~$-\ell^\-$.
\end{thmlist}
\end{lemma}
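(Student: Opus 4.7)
The plan is to adapt the proof of Lemma~\ref{lemma:signedTableauPositions} to the plethystic setting. The essential novelty is that while that earlier lemma reasoned about a single tableau of shape $\sigma$ (with $\sigma$ only constrained to lie in a twisted interval), here the shape $\muS \opluss M(\kappa^\-,\kappa^\+)$ of each inner tableau is pinned down exactly, so the upper bounds on entry counts become cleaner; but in exchange we must aggregate over all $n$ inner tableaux of $T$ when relating local counts to the global signed weight $(\pi^\-,\pi^\+)$, and we must carry along the skew base $\mus$.

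First I would record the plethystic analogues of the observations $(-)$ and $(+)$ from the proof of Lemma~\ref{lemma:signedTableauPositions}. By Definition~\ref{defn:semistandardSignedTableau}, the negative entries of any inner tableau $t$ lie in the first $\ell^\-$ columns of $[\muS \opluss M(\kappa^\-,\kappa^\+)]$ (excluding $[\mus]$). By part (iii) (proved first, as in the earlier lemma, from (ii)), positive entries in $\{1,\dots,\ell^\+\}$ must lie either in the first $\ell^\-$ columns strictly below row $\max(\ell(\mus),\ell(\mu^\+),\ell^\+)$, or in boxes $(i,j)$ with $i\le\ell^\+$ and $j>\ell^\-$ (again excluding $[\mus]$). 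The location of all top and left positions is pinned down by Lemma~\ref{lemma:positions}, which I would invoke throughout.

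Next I would establish (i), (ii) and (iii) by a counting-and-contradiction argument. Fix $\rM\le\ell^\-$ with $\kappa^\-_\rM>\kappa^\-_{\rM+1}$, and suppose for contradiction that the $\rM$-bottom position of $t$ does not contain $-\rM$. Using $(-)$ together with Lemma~\ref{lemma:positions}, the total number of entries of $t$ lying in $\{-1,\dots,-\rM\}$ is at most
\[
n\sum_{i=1}^\rk \mu_i' - \bigl(\kappa^\-_\rM-\kappa^\-_{\rM+1}\bigr) - \mu'_\rk + \max\!\bigl(\ell(\mus),\ell(\mu^\+),\ell^\+,\mu'_{\rM+1}\bigr)
\]
(for some inner tableau in $T$ that achieves the maximum; the point is that the missing row accounts for exactly $\kappa^\-_\rM-\kappa^\-_{\rM+1}$ fewer boxes than the full skew shape would allow). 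Summing over all $n$ inner tableaux of $T$ (each of which is bounded individually), and comparing with the lower bound
\[
\sum_{i=1}^\rk \pi^\-_i \ge \sum_{i=1}^\rk \lambda^\-_i + nM\sum_{i=1}^\rk\kappa^\-_i
\]
coming from $\pi \unrhddot \lambda\opluss nM(\kappa^\-,\kappa^\+)$ and Lemma~\ref{lemma:twistedDominanceOrderOldDefinition}(a), yields an inequality of the shape $(M-1)(\kappa^\-_\rM-\kappa^\-_{\rM+1}) < B_\rM$ where the numerator $B_\rM$ is precisely that appearing in $\LPBound(n,\mu'\!:\!\lambda^\-,\kappa^\-\!:\!\max(\ell(\mus),\ell(\mu^\+),\ell^\+))$. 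This contradicts the hypothesis $M-1\ge L$. The argument for the $\rM$-top position is identical except that the summand $\kappa^\-_\rM-\kappa^\-_{\rM+1}$ is not subtracted, giving the weaker hypothesis $M\ge L$. Part (ii) is the case $\rM=\ell^\-$, where the relevant row coordinate supplied by Lemma~\ref{lemma:positions} is $\max(\ell(\mus),\ell(\mu^\+),\ell^\+)$. Part (iii) follows formally from (ii) together with the observation that if the $\ell^\-$-bottom position in column $\ell^\-$ contains $-\ell^\-$ then so does every higher box in that column, whenever such a box is available.

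Parts (iv) and (v) are entirely dual and use the second bound. Assuming the $\rP$-right position (resp.\ left position) does not contain $\rP$, I would count entries of $t$ lying in $\{1,\dots,\rP\}$, using $(+)$ and Lemma~\ref{lemma:positions} to get a per-tableau upper bound, then sum over the $n$ inner tableaux and compare with the lower bound
\[
\sum_{i=1}^\rk \pi^\+_i \ge \sum_{i=1}^\rk \lambda^\+_i + nM\sum_{i=1}^\rk\kappa^\+_i - |\lambda^\+| + |\pi^\+|
\]
from Lemma~\ref{lemma:twistedDominanceOrderOldDefinition}(b), together with $|\pi^\+|\ge|\omega^\+|+nM|\kappa^\+|$. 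The extra additive term $|\lambda^\+|-|\omega^\+|$ in the second bound of the lemma is exactly what remains after rearranging, matching Definition~\ref{defn:LPBound}. The row coordinate appearing is now $a(\mus)+\ell^\-$, coming from the $\ell^\+$-left case of Lemma~\ref{lemma:positions}. The exceptional case noted after (iv) and (v) (when the $\rP$-left position lies in column $\ell^\- + a(\mus)$ and so falls in the first $\ell^\-$ columns) is precisely where $(+)$ permits a negative entry, so no contradiction is sought there.

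The main obstacle, as in the earlier lemma, will be the careful bookkeeping of the skew base $\mus$ and the interplay between $\ell(\mus)$, $\ell(\mu^\+)$ and $\ell^\+$ in the row coordinates supplied by Lemma~\ref{lemma:positions}; once the right maxima are threaded through the counting, the inequalities collapse to the form required by Definition~\ref{defn:LPBound}.
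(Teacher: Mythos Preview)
Your overall strategy matches the paper's exactly: adapt the counting argument of Lemma~\ref{lemma:signedTableauPositions} by giving one inner tableau a defective bound and the remaining $n-1$ the generic bound, then compare with the lower bound coming from $\pi\unrhddot\lambda\oplus nM(\kappa^\-,\kappa^\+)$ to obtain a contradiction via Definition~\ref{defn:LPBound}. However, your displayed count is garbled in two respects that would derail the computation if taken literally.

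First, it carries no $M$-dependence. The columns of $\muS\oplus M(\kappa^\-,\kappa^\+)$ have length $\mu'_j+M\kappa^\-_j$ for $j\le\ell^\-$, so the generic per-tableau upper bound on entries in $\{-1,\ldots,-\rM\}$ is $\sum_{j\le \rM}(\mu'_j+M\kappa^\-_j)$, and these $M$-terms must be carried through the upper bound so that they cancel against the $nM\sum_{j\le \rM}\kappa^\-_j$ appearing in the lower bound $\sum_{j\le \rM}\pi^\-_j\ge\sum_{j\le \rM}\lambda^\-_j + nM\sum_{j\le \rM}\kappa^\-_j$. As written, your upper bound is independent of $M$ while the lower bound grows linearly in $M$, so no comparison is possible.

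Second, the asymmetry between the one defective inner tableau $t$ and the $n-1$ others is not made explicit. Only $t$ receives the reduced bound (strictly less than $\sum_{j\le \rM-1}(\mu'_j+M\kappa^\-_j)+\mu'_{\rM+1}+M\kappa^\-_{\rM+1}$ for the top position); the remaining $n-1$ receive the full bound. Your phrasing ``summing over all $n$ inner tableaux (each of which is bounded individually)'' obscures this, and your displayed formula with the prefactor $n$ but only one defect term $-(\kappa^\-_{\rM}-\kappa^\-_{\rM+1})$ suggests you have conflated the two. With these two fixes the argument runs exactly as in the paper, producing
\[
M(\kappa^\-_{\rM}-\kappa^\-_{\rM+1}) < n\sum_{j\le \rM}\mu'_j - \sum_{j\le \rM}\lambda^\-_j - \mu'_{\rM} + \mu'_{\rM+1}
\]
for top positions and the same with $M-1$ on the left for bottom positions, matching the numerator in Definition~\ref{defn:LPBound}. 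The treatment of (iv) and (v) has the same two issues; the identification of the extra term $|\lambda^\+|-|\omega^\+|$ via $|\pi^\+|\ge|\omega^\+|+nM|\kappa^\+|$ is correct.
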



\begin{proof}
Since $\mu$ is $\bigl(\ell^\- + a(\mus), \ell^\+\bigr)$-large, it is  $(\ell^\-,\ell^\+)$-large. Therefore,
by Lemma~\ref{lemma:adjoinToLarge}, we have 
$\bigl( \mu \opluss M(\kappa^\-,\kappa^\+) \bigr)^\-
= \mu^\- + M\kappa^\-$ and 
$\bigl( \mu \opluss M(\kappa^\-,\kappa^\+) \bigr)^\+
= \mu^\+ + M\kappa^\+$.
For use throughout 
the proof we define a subpartition $\alpha$ of $\mu \opluss M(\kappa^\-,\kappa^\+)$ 
containing $\mus$ by
\begin{equation}
\label{eq:alpha}
\alpha_i = \min(\mus_i + \ell^\-, \mu_i) \text{ for $1 \le i \le \ell(\mu) + Ma(\kappa^\-)$}.
\end{equation}
Thus $[\alpha/\mus]$ consists of the first $\ell^\-$ boxes in each row of $\mu/\mus \opluss M(\kappa^\-,\kappa^\+)$,
or the whole row if it has fewer than $\ell^\-$ boxes.
We show $[\alpha/\mus]$ in Figure~\ref{fig:ellDecompositionSkewTableau}. As a final preliminary,
for ease of reference,
we record
the following immediate corollary of Lemma~\ref{lemma:positions}: 
the $\rM$-top position of $\muS \oplus M(\kappa^\-,\kappa^\+)$ is
\begin{align}
\label{eq:top} 
&\begin{cases} \bigl( \mu'_{\rM+1} + M\kappa^\-_{\rM+1}, \rM \bigr)\hspace*{29pt} & \text{if $\rM < \ell^\-$} \\
\bigl( \max( \ell(\mus), \ell(\mu^\+), \ell^\+\bigr), \ell^\- ) & \text{if $\rM = \ell^\-$.} \end{cases} 
\intertext{and the $\rP$-left position of $\muS \oplus M(\kappa^\-,\kappa^\+)$ is }
\label{eq:left}
&\begin{cases}  \bigl( \rP, \mu_{\rP+1} + M\kappa^\+_{\rP+1} \bigr) & \text{if $\rP < \ell^\+$} \\
\bigl( \ell^\+, \max(\ell^\- + a(\mus), \mu_{\ell^\++1}) \bigr) & \text{if $\rP = \ell^\+$}. \end{cases}
\end{align}

\begin{figure}[t!]
\begin{center}\begin{tikzpicture}[x=0.5cm,y=-0.5cm]

\fill[color=grey] (0,0)--(0,6)--(2,6)--(2,3)--(2,2)--(3,2)--(3,1)--(5,1)--(5,0)--(0,0);
\draw[line width=1pt] (0,16)--(1,16)--(1,12)--(2,12)--(2,6)--(4,6)--(4,2)--(5,2)--(5,1)--(7,1)--(7,0)
	--(5,0)--(5,1)--(3,1)--(3,2)--(2,2)--(2,6)--(0,6)--(0,16);
\fill[color=blue!25] (0,16)--(1,16)--(1,12)--(2,12)--(2,6)--(4,6)--(4,2)--(5,2)--(5,1)--(7,1)--(7,0)
	--(5,0)--(5,1)--(3,1)--(3,2)--(2,2)--(2,6)--(0,6)--(0,16);

\draw[dashed] (0,3)--(13,3);
\draw[dashed] (6,0)--(6,15);

\begin{scope}[xshift=-0.8cm]
\draw[<-] (-0.5,0)--(-0.5,1); 
\draw[->] (-0.5,2)--(-0.5,3);
\node at (-0.5,1.5) {$\scriptstyle \ell^\+$};
\end{scope}

\begin{scope}[xshift=-0.2cm]
\draw[<-] (-0.5,0)--(-0.5,2.5); 
\draw[->] (-0.5,3.5)--(-0.5,6);
\node at (-0.5,3) {$\scriptstyle \ell(\mus)$};
\end{scope}

\draw[<-] (0,-0.5)--(1.5,-0.5);
\draw[->] (3.5,-0.5)--(4.9,-0.5);
\node at (2.5,-0.5-0.05) {$\scriptstyle a(\mus)$};

\newcommand{\lMinus}[2]{%
\begin{scope}[xshift=#1cm, yshift=#2cm]
\draw[<-] (0.1,0)--(2.5,0);
\draw[->] (3.5,0)--(5.9,0);
\node at (3,-0.05) {$\scriptstyle \ell^-$};
\end{scope}}

\lMinus{2.5}{0.25}
\lMinus{0}{0.6}
\lMinus{1}{-1.3}

\draw[thick] (3.5,12)--(3.5,13)--(2,13)--(2,15)--(1,15)--(1,16)--(0,16)--(0,6)--(2,6)--(2,3)--(2,2)--(3,2)--(3,1)
	--(5,1)--(5,0)--(11,0);
\begin{scope}[xshift=1cm]
\node at (7,3.5) {$\+$};

\draw (-2,0)--(6,0);
\draw (-2,0)--(-2,6);
\draw (9,0)--(13,0);
\draw (12,2.5)--(12,3)--(8,3)--(8,4)--(7.5,4);
\draw (13,0)--(13,1)--(12,1)--(12,1.5);

\draw[thick] (9,0)--(9,1)--(7,1)--(7,2)--(6,2)--(6,4)--(5,4)--(5,6)--(4,6)--(4,11)--(3,11)--(3,11.5);
\draw[dashed] (9,0)--(9,3);

\draw (9,3)--(8,3)--(8,2)--(9,2);
\fill[pattern = north west lines] (9,3)--(8,3)--(8,2)--(9,2);

\draw (4,6)--(3,6)--(3,5)--(4,5)--(4,6);
\fill[pattern = north west lines] (4,6)--(3,6)--(3,5)--(4,5);

\node at (10,1.5) {$\+$};
\end{scope}

\node at (1.5,12.5) {$\bullet$};


\node at (4,8.5) {\scalebox{0.8}{$[\alpha/\mus]$}};
\node at (1,11) {\scalebox{0.8}{$[\tau/\mus]$}};


\draw[->] (0.5,16.5)--(0,16.5);
\draw[->] (1.35,16.5)--(2,16.5);
\node at (1,16.4) {$\scriptstyle \rM$};

\end{tikzpicture}
\end{center}
\caption{Entries in a tableau \smash{$t \in \SSYT\bigl(\muS \oplus M(\kappa^{\-},\kappa^{\+})\bigr)_{(\pi^\minus,\pi^\+)}$} when
$\muS$ is \smash{$(\ell^-+a(\muS), \ell^\+)$}-large and 
\smash{$(\ell^-, \ell(\mus))$}-large, and so contains the hatched boxes. 
The case $\ell(\mus) > \ell(\mu^\+) > \ell^\+$, in which the skew part
of the partition is most important in determining
the quantity $\max(\ell(\mus),\ell(\mu^{\scriptstyle +}),\ell^{\scriptstyle +})$ defining
the row of the $\ell^{\scriptstyle -}$-top position
is shown. The heavy lines
show the region $[\alpha/\mus]$ defined in~\eqref{eq:alpha}
that contains all negative entries of $t$ in the proof of 
Lemma~\ref{lemma:plethysticSignedTableauPositions}(i).
It contains the region $[\tau/\mus]$ shaded in blue
 which contains all entries of $\{-1,\ldots, -\rM\}$,
under the assumption in this part of the proof
that the $\rM$-top position marked $\bullet$ does not contain $-\rM$.
In the figure we have taken $\rM = 2$. 
\label{fig:ellDecompositionSkewTableau}
}
\end{figure}

For (i), we have $\rM < \ell^\-$ so may suppose $\kappa^\- \not= \varnothing$.  
If $s$ is an arbitrary $\muS \oplus M(\kappa^\-,\kappa^\+)$-inner 
tableau in $T$ then
the total number
of entries of $s$ in the set $\{-1,\ldots,-\rM\}$ is at most $\sum_{j=1}^\rM \mu'_j + M\sum_{j=1}^\rM\kappa^\-_j$.
To simplify some arithmetic steps later, we  write this quantity as
$\DM + \mu'_\rM + M\kappa^\-_\rM$ where
\begin{equation}\label{eq:DM} \DM = \sum_{j=1}^{\rM-1} \mu'_j + M \sum_{j=1}^{\rM-1} \kappa^\-_j \end{equation}
is (as we have  just seen) an upper bound on the number of entries of $s$
in $\{-1,\ldots, -(\rM-1)\}$.
By~\eqref{eq:top}
the $\rM$-top position of $s$ is $\bigl( \mu'_{\rM+1} + M\kappa^\-_{\rM+1}, \rM \bigr)$.
We assume, for a contradiction, that, in \emph{some} tableau $t$ in $T$, this position 
has either a positive entry, or some $-q$ with $-q \succ -\rM$ 
in the order in Definition~\ref{defn:semistandardSignedTableau}, meaning that $q > \rM$.
Define
a subpartition $\tau$ of $\alpha$ by
\begin{equation}
\label{eq:tau}
\tau_i = \begin{cases} \min(\mus_i + \rM, \mu_i) & \text{if $1 \le i < \mu'_{\rM+1} + M\kappa^\-_{\rM+1}$} \\
\min(\mus_i + \rM-1, \mu_i) & \text{if $ i \ge \mu'_{\rM+1} + M\kappa^\-_{\rM+1}$}.\end{cases}
\end{equation}
Thus $[\tau/\mus]$ consists of the first $\rM$ boxes in each row of $\mu/\mus \opluss M(\kappa^\-,\kappa^\+)$,
or the whole row if it has fewer than $\rM$ boxes, \emph{except} that the boxes of $\mu/\mus$
at or below the $\rM$-top position known, by our assumption, not to contain~$-\rM$, are excluded. 
By construction $\tau$ contains $\mus$ and,
by our assumption, all the entries of $t$ in $\{-1,\ldots, -\rM\}$ are contained in $[\tau]$.
(This is the blue shaded region in
Figure~\ref{fig:ellDecompositionSkewTableau}.)
%
Hence using~\eqref{eq:DM}, the total number of entries of $t$ in the set $\{-1,\ldots,-\rM\}$ is
strictly less than
$\DM + \mu'_{\rM+1} + M\kappa^\-_{\rM+1}$.
The $n-1$ tableaux other than $t$ forming $T$ have at most
$(n-1)\DM + (n-1)\bigl( \mu'_\rM + M\kappa^\-_\rM \bigr)$ entries in $\{-1,\ldots,-\rM\}$.
Therefore~$T$ has strictly less than
\begin{align} &(n-1)\DM
 + (n-1) \bigl( \mu'_\rM + M\kappa^\-_\rM \bigr) + \DM + \mu'_{\rM+1} + M\kappa^\-_{\rM+1} \nonumber \\
 &\quad\  \ \ =
 n\sum_{j=1}^\rM \mu'_j - \mu'_\rM + nM \sum_{j=1}^\rM \kappa^\-_j - M\kappa^\-_\rM + \mu'_{\rM+1} + M\kappa^\-_{\rM+1} 
\label{eq:piLessThanMinus} \end{align}
entries in the set $\{-1,\ldots,-\rM\}$.
On the other hand, as $\pi \unrhddot \lambda \,\oplus\, nM(\kappa^\-, \kappa^\+)$,
and $\lambda$ is $(\ell^\-,\ell^\+)$ large, it follows from 
Lemma~\ref{lemma:twistedDominanceOrderOldDefinition}(a) and Lemma~\ref{lemma:adjoinToLarge}
that there are at least
\begin{equation}
\label{eq:piAtLeastMinus} 
\sum_{j=1}^\rM \lambda^\-_j + nM\sum_{j=1}^\rM \kappa^\-_j \end{equation}
entries of $T$ in $\{-1,\ldots,-\rM\}$. From~\eqref{eq:piLessThanMinus} and~\eqref{eq:piAtLeastMinus} we obtain
\begin{equation}
\label{eq:piAtLeastMinusCorollary} n\sum_{j=1}^\rM \mu'_j - \mu'_\rM  + \mu'_{\rM+1} - \sum_{j=1}^\rM \lambda^\-_j > 
M(\kappa^\-_\rM - \kappa^\-_{\rM+1}). \end{equation}
This contradicts the first bound.
Therefore (i) holds for top positions.
For bottom positions we mimic the proof of Lemma~\ref{lemma:signedTableauPositions}, and  run the same argument, 
replacing each $\mu'_{\rM+1}$ with $\mu'_{\rM+1} \hskip0.5pt+\hskip1pt \kappa^\-_\rM - \kappa^\-_{\rM+1}$, and obtain~\eqref{eq:piAtLeastMinusCorollary}
with $\kappa^\-_\rM - \kappa^\-_{\rM+1}$ subtracted from the right-hand side, which therefore becomes $(M-1)(\kappa^\-_\rM - \kappa^\-_{\rM+1})$. 
We then get a contradiction as before from the first bound. 

For (ii), we may again suppose $\kappa^\- \not=\varnothing$; then by~\eqref{eq:top}
the $\ell^\-$-top position of $t$ is
 $\bigl(\max(\ell(\mus), \ell(\mu^\+), \ell^\+\bigr), \ell^\-)$.
We may suppose that either $\mus \not=\varnothing$ or 
$\kappa^\- \not= \varnothing$ or $\mu^\+ \not=\varnothing$, so that this is a box of $\muS$.  
The proof is then almost identical to (i)
using~\eqref{eq:top} to replace every appearance of $\mu'_{\rM+1}$ in the argument 
for (i) with $\max(\ell(\mus), \ell(\mu^\+), \ell^\+ )$.
This also proves (iii), since the hypotheses for (iii) imply that  $\bigl(\max(\ell(\mus), \ell(\mu^\+), \ell^\+\bigr), \ell^\-)$
is a box of~$\mu/\mu^\star$.

Comparing~\eqref{eq:top} and~\eqref{eq:left} for $\rM < \ell^\-$ and $\rP < \ell^\+$, we see
that they are symmetric with respect to conjugation. While a further non-symmetric change is necessary later on,
this indicates the most conceptual way to prove~(iv). Read the proof of (i),
replacing $\mu'$ with $\mu$ and 
$\kappa^\-$ with $\kappa^\+$. Thus the subpartition $\tau$ is now defined by 
\[
\label{eq:tauPlus}
\tau'_j = \begin{cases} \min(\mus'_j + \rP, \mu_j) & \text{if $1 \le j < \mu'_{\rP+1} + M\kappa^\+_{\rP+1}$} \\
\min(\mus'_j + \rP-1, \mu'_j) & \text{if $j \ge \mu'_{\rP+1} + M\kappa^\+_{\rP+1}$}\end{cases}
\]
and the analogue of~\eqref{eq:piLessThanMinus}
is that there are strictly less than
\begin{equation}
\label{eq:piLessThanPlus}
n\sum_{i=1}^\rP \mu_j - \mu_\rP + nM \sum_{i=1}^\rP \kappa^\+_j - M\kappa^\+_\rP 
+ \mu_{\rP+1} + M\kappa^\+_{\rP+1} 
\end{equation}
entries of $T$ in $\{1,\ldots, \rP\}$.
The only non-symmetric change  is that
from $\pi \unrhddot \lambda \oplus nM(\kappa^\-,\kappa^\+)$ and 
Lemma~\ref{lemma:twistedDominanceOrderOldDefinition}(b) we now
get 
\[ \pi^\+ + \bigl(|\lambda^\+| + nM|\kappa^\+| - |\pi^\+| \bigr) \unrhd \lambda^\+ + nM\kappa^\+ \]
where $|\pi^\+| \le |\lambda^\+| + nM|\kappa^\+|$.
We must therefore bring in the upper bound $\pi \unlhddot \omega \oplus nM(\kappa^\-,\kappa^\+)$ to get, 
again by Lemma~\ref{lemma:twistedDominanceOrderOldDefinition}(b),
$|\pi^\+| \ge |\omega^\+| + nM|\kappa^\+|$. Hence
\[|\lambda^\+| + nM|\kappa^\+| - |\pi^\+| \le \bigl(|\lambda^\+| + nM|\kappa^\+|\bigr) - \bigl(|\omega^\+|
+ nM|\kappa^\+|\bigr) = |\lambda^\+| - |\omega^\+|.\] 
The replacement for~\eqref{eq:piAtLeastMinus} is therefore 
that there are at least
\begin{equation}
\label{eq:piAtLeastPlus} \sum_{i=1}^\rP \lambda_i^\+ + nM\sum_{i=1}^\rP \kappa_i^\+ - |\lambda^\+| + |\omega^\+| 
\end{equation}
entries of $T$ in $\{1,\ldots, \rP\}$. From~\eqref{eq:piLessThanPlus} and~\eqref{eq:piAtLeastPlus} we get
\begin{equation}
\label{eq:piFinalPlus} n\sum_{i=1}^\rP \mu_i - \mu_\rP  + \mu_{\rP+1} - \sum_{i=1}^\rP \lambda^\+_i 
+ |\lambda^\+| - |\omega^\+| > 
M(\kappa^\+_\rP - \kappa^\+_{\rP+1}). \end{equation}
This contradicts the second bound. 
The modifications for right positions are precisely analogous to the negative case;
the relevant quantity to subtract from the right-hand side of~\eqref{eq:piFinalPlus} is $\kappa^\+_\rP - \kappa^\+_{\rP+1}$,
and as before we get a contradiction from the second bound.

For (v) we first note that, by~\eqref{eq:left}, the $\ell^\+$-left position of $t$ is
$\bigl( \ell^\+, \max(\ell^\- + a(\mus), \mu_{\ell^\++1})  \bigr)$.
Since $\mu$ is $\bigl(\ell^\- + a(\mus), \ell^\+\bigr)$-large,
we have $(\mu - \mus)_{\ell^\+} \ge \ell^\- + a(\mus) - \mu_{\star \ell^\+} \ge \ell^\-$.
Therefore if this position contains a negative entry, equality holds and the entry is $-\ell^\-$.
In the remaining case, and for the $\ell^\+$-right position, the proof of (iv) adapts routinely.
This completes the proof.
\end{proof}

\begin{example}\label{ex:22positions}
Take $\mu = (2,2)$, $\mus = \varnothing$, $\swtp{\kappa} = \bigl((1,1),(1)\bigr)$ and $\lambda = (8-b,b)
\decMap \dec{(2,2)}{(6-b,b-2)}$ with $b \in \{2,3,4\}$ and $n=2$
as in Example~\ref{ex:22motivation}. 
Since 
$\omega_{2}\bigl((2,2)\bigr) = \bigl((2,2),\varnothing\bigr)$
is the signed weight of the $2$-greatest semistandard signed tableau $t_{2}\bigl((2,2)\bigr)$
shown in the margin, 
\marginpar{ \raisebox{0pt}{\qquad\qquad \young(\oM\tM,\oM\tM)}}
we have 
\[ \omega_2^{(2)}\bigl((2,2)\bigr)
\decMap 2\dec{(2,2)}{\varnothing} = \dec{(4,4)}{\varnothing} \] 
and so we take $\omega = (2,2,2,2)$.
Since $\kappa^\-_1 = \kappa^\-_2$, the important cases of Lemma~\ref{lemma:plethysticSignedTableauPositions}
are (ii) and (v). 
We saw in Example~\ref{ex:22motivation}
that the $2$-top position and $1$-left position are both $(1,2)$,
and so the $2$-bottom position is $(2,2)$ and the $1$-right position
is $(1,3)$.
From (ii) we get that the $2$-bottom position $(1,2)$ contains $-2$ 
in every $(2+M,2,2^M)$-tableau entry in a plethystic semistandard signed tableau of outer shape $(2)$ 
provided 
\begin{align*} M-1 &\ge \mathrm{LP}(2, (2,2) : (2,2), (1,1) : 1) = \frac{2(2\!+\!2) \!-\! (2\!+\!2) \!-\! 2 \!+\! \max(1,0)}{1-0} = 3. \\
\intertext{(Note that (ii) was proved using only the first bound in the statement of
Lemma~\ref{lemma:plethysticSignedTableauPositions}.)
From (v), using that $|\lambda^\+| = 4$ and $|\omega^\+| = 0$,
 we get that the $1$-right position $(1,3)$ contains 
$1$ in every
$(2+2M,2,2^M)$-tableau entry in a plethystic semistandard signed tableau of outer shape $(2)$ 
provided}
M-1 & \ge \mathrm{LP}(2, (2,2) : (6-b, b-2), (1) : 2) + 4 - 0 
\\ & \hspace*{0.85in} \hspace*{-1pt} = \hspace*{-1pt}\frac{2.2 - (6-b) - 2 + \max(2,2)}{1} + 4
= \frac{b-2}{1} + 4 = b+2. \end{align*}
(Again note that (v) was proved using only the second bound in the statement of
Lemma~\ref{lemma:plethysticSignedTableauPositions}.)
We now take $\nu = (2)$. From these bounds it follows that, provided $M-1 \ge \max(3, b+2)$,
the hatched boxes in the diagram in Example~\ref{ex:22motivation}
contain $\young(\oM\tM)$ and $\young(1)$ and the
insertion map 
\[ \begin{split}& \PSSYTw{(2)}{(2+M,2,2^M)}{(2+2M,2+2M)}{(6-b+2M,b-2)} 
\\ & \rightarrow
\PSSYTw{(2)}{(2+M+1,2,2^{M+1})}{(2+2(M+1),2+2(M+1))}{(6-b+2(M+1),b-2)}
\end{split}  \]
is surjective for $M \ge  \max(3, b+2)$,
as stated in Example~\ref{ex:22motivation}.
The table below shows the number of plethystic
semistandard signed tableaux in the domain above
and the stable
values of~$\langle s_{(2)} \circ s_{(2+M,2,2^M)}, s_{(8-b+2M,b,2^{2M})} \rangle $
as in~\eqref{eq:plethysticMultiplicityOverview},
for each $0 \le M \le 3$ and $b \in \{2,3,4\}$.

\medskip
\centerline{
\begin{tabular}{cccccccccc} \toprule  $b$ & $M= 0$ & $M=1$ & $M=2$ & $M=3$ & 
\eqref{eq:plethysticMultiplicityOverview} & $b+2$ & $(\mathrm{LP}, \mathrm{LP}')$ \\ \midrule
2 & 1 & 1 & 1 & 1  & 0 & 4 & $(3,4)$ \\
3 & 4 & 5 & 5 & 5&   0 & 5 & $(3,5)$ \\
4 & 10 & 19 & 20 & 20 & 1 & 6 & $(3,6)$ \\ \bottomrule
\end{tabular}}

%
%
%
%

\medskip
\noindent 
By Proposition~\ref{prop:plethysticSignedTableauInnerStable} below,
the values are constant for the step from $M$ to $M+1$ provided
$M$ is at least each of the bounds from Lemma~\ref{lemma:plethysticSignedTableauPositions}
denoted $\mathrm{LP}$ and $\mathrm{LP}'$ in the table above; we have seen
the first is $3$ for $M \ge 2$ and the second is $b+2$.
Therefore each row is constant for $M \ge \max(3, b\,+\,2)$.
To show a case where the constant value is not yet reached, note
from the column for $M=2$ above that there are 20 plethystic semistandard signed tableaux in
 \smash{$\PSSYTw{(2)}{(4,2,2,2)}{(6,6)}{(6,2)}$}.
\marginpar{\raisebox{1pt}{\scalebox{0.8}{$ \pyoung{2.1cm}{2.1cm}{ {{\youngL{(\oM\tM11,\oM\tM,\oM\tM,11)}, \youngL{(\oM\tM22,\oM\tM,\oM\tM,11)}}} }$}}}
The plethystic semistandard signed tableau in the margin is the unique one 
having~$2$ in the $1$-right box $(1,3)$ of its rightmost inner $(4,2,2,2)$-tableau entry, and so
%
%
%
%
%
%
this plethystic semistandard signed tableau is the unique one not in the
image of the insertion map from the $19$ plethystic tableaux for $M=1$ to the $20$ 
plethystic tableaux for $M=2$.
The insertion map is then surjective at each subsequent step.
The final column in the table above is relevant to Example~\ref{ex:22sharper} below.
\end{example}

\subsection{The $\Gmap$ insertion map on plethystic tableaux}\label{subsec:stablePlethysticTableau}
We now define the plethystic extension of the insertion map $\Fmap$ in Definition~\ref{defn:F}.
Recall from after Definition~\ref{defn:plethysticSignedTableau} 
that $\PYT(\nurho, \muS)$
denotes the set of plethystic 
signed tableaux of shape $\nurho$ having entries from the
set $\YT(\muS)$ of all signed tableaux of shape $\muS$.
Given a plethystic signed tableau $T \in \PYT(\nurho, \muS)$, we define its \emph{conjugate} $T' \in \PYT(\nurho', \muS)$
by $T'(i,j) = T(j,i)$ for $(i,j) \in [\nurho]$. Note that the conjugation is defined with respect to the outer
Young diagram $[\nurho$]; it does not change the shape of the inner tableaux of $T$. 

\begin{definition}\label{defn:G}
Let $\nurho$ be a partition.
Let $\kappa^\-$ and $\kappa^\+$ be partitions. 
Let $\muS$ be an $\bigl(\ell(\kappa^\-) + a(\mus), \ell(\kappa^\+) \bigr)$-large
and $\bigl( \ell(\kappa^\-), \ell(\mus) \bigr)$-large skew partition. 
Let $\nurhod = \nurho$ if $|\kappa^\-|$ is even and let $\nurhod = \nurho'$ if $|\kappa^\-|$ is odd.
We define
\[ \Gmap : \PSSYT(\nurho, \muS) \rightarrow \PYT\bigl( \nurho^\dagger, \muS \oplus (\kappa^\-,\kappa^\+) \bigr) \]
by applying $\Fmap$ to each  inner $\muS$-tableau entry of
 $T \in \PSSYT(\nurho, \muS)$ to
obtain $U \in \PYT\bigl(\nurho, \muS \oplus (\kappa^\-,\kappa^\+) \bigr)$. If $|\kappa^\-|$ is even then we define $\Gmap(T) = U$;
if $|\kappa^\-|$ is odd then we define $\Gmap(T) = U'$.
\end{definition}

By Lemma~\ref{lemma:FisWellDefinedAndBumpsWeight}(i), using the largeness
 hypotheses on $\muS$, the map $\Gmap$ is well-defined.

The following example is relevant to the special
case of Theorem~\ref{thm:muStable}, taking $\nu = (n)$, $\mu = (m)$, 
$\swtp{\kappa} = \bigl((1), \varnothing\bigr)$,  that
\smash{$\langle s_{(n)^{(M)}} \circ s_{(m,1^M)}, s_{\lambda \sqcups (1^{nM})} \rangle$}
is ultimately constant for any partition $\lambda$ of $mn$,
where \smash{$(n)^{(M)} = (n)$} if~$M$ is even and 
\smash{$(n)^{(M)} =(1^n)$} if $M$ is odd. 
In fact, as we mentioned in the discussion
of Theorem~1.1 in \cite{deBoeckPagetWildon} in \S\ref{subsec:earlierWork}, 
provided $\ell(\lambda) \ge n$,
the plethysm coefficient is constant for all $M \in \N_0$. Correspondingly, 
one can check that $\Gmap : \PSSYTw{(n)}{(m,1^M)}{\lambda^\-}{\lambda^\+}
\rightarrow \PYT\bigl( (1^{n}), (m,1^{M+1}) \bigr)$ 
is a bijection onto \smash{$\PSSYTw{(1^n)}{(m,1^{M+1})}{\lambda^\- + (n)}{\lambda^\+}$}.

\begin{example}\label{ex:GFlip}
We take $\nurho = (1^3)$, $\mu = (4)$ and $\lambda = (8,3,1)$.
Take $\ell^\- = 1$ and note that $\lambda$ has $1$-decomposition $\dec{(3)}{(7,2)}$.
The map $\Gmap : \PSSYT\bigl((1^3), (4) \bigr) \rightarrow \PSSYT\bigl(
(3), (4,1) \bigr)$  in Definition~\ref{defn:G} for $\swtp{\kappa} = \bigl((1), \varnothing\bigr)$ 
is shown below on both elements
of $\PSSYTw{(1^3)}{(4)}{(3)}{(7,2)}$:
\begin{align*}
\raisebox{-28pt}{\pyoung{2.05cm}{0.75cm}{ {{ \youngL{(\oM111)} }, \youngL{(\oM112)}, \youngL{(\oM112)} }  } }
&\stackrel{G}{\longmapsto}
\raisebox{-14pt}{\pyoung{2.05cm}{1.09cm}{
 {{ \youngL{(\oM111,\oM)}, {\youngL{(\oM122,\oM)}},  {\youngL{(\oM112,\oM)}} }} } } \\
\raisebox{-28pt}{\pyoung{2.05cm}{0.65cm}{ {{ \youngL{(\oM111)} }, \youngL{(\oM111)}, \youngL{(\oM122)} }  } }
&\stackrel{G}{\longmapsto}
\raisebox{-14pt}{\pyoung{2.05cm}{1.09cm}{
 {{ \youngL{(\oM111,\oM)}, {\youngL{(\oM111,\oM)}},  {\youngL{(\oM122,\oM)}} }} }}.
 \end{align*}
As expected from the fact that $\Gmap$ is a bijection onto
\smash{$\PSSYT\bigl((3),(4,1)\bigr)$}, 
the image is \smash{$\PSSYTw{(3)}{(4,1)}{(6)}{(7,2)}$}.
Note that the conjugation in the outer shape is essential: in each plethystic tableau there is a repeated
inner tableau, and this is permitted because each has sign $-1$ in the plethystic
tableau of outer shape $(1^3)$ and each has sign $+1$ in the plethystic tableaux of outer shape $(3)$. 
Moreover, 
the example
\begin{align*}
\raisebox{-30pt}{\pyoung{2.05cm}{0.75cm}{ {{ \youngL{(\oM111)} }, \youngL{(\oM112)}, \youngL{(\oM\tM11)} }  } }
&\stackrel{G}{\longmapsto}
\raisebox{-15pt}{\pyoung{2.05cm}{1.09cm}{
 {{ \youngL{(\oM111,\oM)}, {\youngL{(\oM122,\oM)}},  {\youngL{(\oM\tM11,\oM)}} }} } }
\end{align*}
shows that the convention that, in the right-hand plethystic semistandard
signed tableau, negative inner tableaux 
are greater than positive inner tabl\opthyphen{}eaux, is essential to make the plethystic tableau semistandard.
%
%
%
%
%
%
%
%
%
%
\end{example}

The following two lemmas and proposition generalize Example~\ref{ex:GFlip}
and show that $\Gmap$ respects 
the semistandard condition on inner tableaux in different positions
in a plethystic semistandard signed tableau,
up to the technical order
reversal required in Lemma~\ref{lemma:FrespectsSemistandard}(iii).
The signed colexicographic order $<$ is defined in
Definition~\ref{defn:signedColexicographicOrder}.

\begin{lemma}
\label{lemma:insertionPreservesSignedColexicographicOrder}
Let $\tauS$ be a skew partition and let $s$ and $t$ be semistandard signed tableaux
of shape $\tauS$ and the same sign such that $s < t$
in the signed colexicographic order.
\begin{thmlist}
\item Let $\tilde{s}$ and $\tilde{t}$ be the signed tableaux of shape $\tau \,\sqcup\, (\rM) / \taus$
obtained from~$s$ and~$t$ by inserting a single new row with entries $-1,\ldots, -\rM$ into each. If~$\tilde{s}$ and $\tilde{t}$ are semistandard
then~$\tilde{s} < \tilde{t}$. 
\item Let $\tilde{s}$ and $\tilde{t}$ be the signed tableaux of shape $\tau + (1^\rP) / \taus$
obtained from~$s$ and~$t$ by inserting a single new column with entries $1, 2, \ldots, \rP$.
If~$\tilde{s}$ and~$\tilde{t}$ are semistandard 
then $\tilde{s} < \tilde{t}$.
\end{thmlist}
\end{lemma}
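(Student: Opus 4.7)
The plan is to transport the signed colexicographic comparison $s<t$ directly through the insertion, exploiting the fact that the inserted row (respectively column) is \emph{identical} in $\tilde s$ and $\tilde t$ and therefore cancels when one computes differences of column multiplicities. First I would verify that $\sgn(\tilde s)=\sgn(\tilde t)$: in~(i) both tableaux gain $\rM$ negative entries, shifting each sign by $(-1)^\rM$; in~(ii) both gain only positive entries. Since $\sgn(s)=\sgn(t)$ by hypothesis, the first clause of Definition~\ref{defn:signedColexicographicOrder} is not in play, and it remains to verify the second clause. Recall that for semistandard signed tableaux of a common shape, an entry $m$ appears in different positions in two such tableaux if and only if the multiplicity of $m$ in some column differs: each column is (weakly or strictly) increasing in the order of Definition~\ref{defn:semistandardSignedTableau}, so its multiset of entries determines the placement.

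For part~(i), the joined row of length $\rM$ contributes exactly one entry $-j$ to column $j$ for each $1\le j\le\rM$ in both $\tilde s$ and $\tilde t$, so for every entry $m$ and every column $c$,
\[ \mathrm{mult}_{\tilde s}(m,c) - \mathrm{mult}_{\tilde t}(m,c) = \mathrm{mult}_{s}(m,c) - \mathrm{mult}_{t}(m,c). \]
Hence the set of pairs $(m,c)$ at which this difference is nonzero, the largest $m$ occurring in such a pair, the rightmost column realising it for that $m$, and the sign of the discrepancy are all unchanged; the relation $s<t$ transports verbatim to $\tilde s<\tilde t$.

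For part~(ii), the added column of height $\rP$ sits in a single column index $c_0$ determined by $\tau$ alone, hence the same for both tableaux, and its insertion shifts every original box in columns $\ge c_0$ one step to the right. The new-column contribution cancels in the difference, so
\[ \mathrm{mult}_{\tilde s}(m,c) - \mathrm{mult}_{\tilde t}(m,c) = \begin{cases} \mathrm{mult}_{s}(m,c) - \mathrm{mult}_{t}(m,c) & \text{if } c < c_0, \\ 0 & \text{if } c = c_0, \\ \mathrm{mult}_{s}(m,c-1) - \mathrm{mult}_{t}(m,c-1) & \text{if } c > c_0. \end{cases} \]
The induced relabelling $c\mapsto c$ for $c<c_0$ and $c\mapsto c+1$ for $c\ge c_0$ is order-preserving on column indices, so the rightmost column of disagreement for the largest differing $m$ transfers correctly and the sign is preserved. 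Hence $s<t$ again yields $\tilde s<\tilde t$.

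The argument is essentially bookkeeping and no serious obstacle is anticipated; the only step that merits care is observing in~(ii) that the inserted column index $c_0$ depends only on $\tau$ and not on the particular entries of $s$ or $t$, so that the column shift is uniform across the comparison.
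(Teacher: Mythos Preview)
Your proof is correct and takes essentially the same approach as the paper: both arguments observe that the inserted row or column contributes identically to $\tilde s$ and $\tilde t$, so column-multiplicity differences are preserved (exactly in~(i), and up to an order-preserving shift of column indices in~(ii)). Your version tracks the definition of the signed colexicographic order (largest differing entry first, then rightmost column for that entry) a bit more explicitly than the paper's, which jumps to the rightmost column where multisets differ; the underlying bookkeeping is the same.
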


\begin{proof}
Let $m$ be the rightmost column  in which the multisets of entries in~$s$ and $t$ differ.
For (i), since a single new entry of~$-k$ is added to column~$k$ for each $k \le \rM$ in both $s$ and $t$, 
it is clear that $m$ is again the rightmost column in which the multisets of entries
in $\tilde{s}$ and $\tilde{t}$ differ. 
Since $\tilde{s}$ and $\tilde{t}$ have the same sign,
the relative order of $\tilde{s}$ and $\tilde{t}$ is determined 
by the multisets $C(s)$ and $C(t)$ of entries of $s$ and $t$ 
in column $m$.
If $m > \rM$ then $C(\tilde{s}) = C(s)$ and $C(\tilde{t}) = C(t)$ and hence the greatest entry
(taken with multiplicity) still lies in $\tilde{t}$, and hence $\tilde{s} < \tilde{t}$.
If $m \le \rM$ then $C(\tilde{s}) = C(s) \cup \{-m\}$ and $C(\tilde{t}) = C(t) \cup \{-m\}$, 
and again $\tilde{s} < \tilde{t}$.
For (ii), suppose that the new column with entries $1, 2, \ldots, \rP$ was inserted 
as column $c$,
moving the existing columns $c, c+1, \ldots $ one box to the right.
Thus if $m \ge c$ then we compare $\tilde{s}$ and $\tilde{t}$ on column $m+1$ and get $\tilde{s} < \tilde{t}$.
Otherwise $m < c$ and since
the inserted column $c$ is equal in $\tilde{s}$ and $\tilde{t}$, we still compare on column $m$ and
again get $\tilde{s} < \tilde{t}$.
\end{proof}

As mentioned earlier, the following lemma re-uses a large part of the 
proof of Proposition~\ref{prop:signedTableauStable} (Tableau Stability)
which we recap at a high level in the following remark.

\begin{remark}\label{remark:recap}
In the proofs of Proposition~\ref{prop:signedTableauStable} above
and Lemma~\ref{lemma:FrespectsSemistandard} and Proposition~\ref{prop:plethysticSignedTableauInnerStable} below
we have sets of tableaux
parametrised by $M \in \N_0$
and a map from objects for $M$ to objects for $M+1$.
There is a bound~$L$ such that the map is
well-defined provided $M \ge L$; this requires control of top- and left-positions.
The map is then clearly injective. The map is surjective onto objects for $N$ provided $N-1 \ge L$;
this requires control of right- and bottom-positions. Thus the map is bijective
from objects for $M$ to objects for $M+1$ is bijective provided $M \ge L$.
\end{remark}

Again we use
the signed colexicographic order $<$ from
Definition~\ref{defn:signedColexicographicOrder}.

\begin{lemma}\label{lemma:FrespectsSemistandard}
Let $\kappa^\-$ and $\kappa^\+$ be partitions. 
Let $\muS$ be an $\bigl(\ell(\kappa^\-) + a(\mus),$ $\ell(\kappa^\+) \bigr)$-large
and $\bigl( \ell(\kappa^\-), \ell(\mus) \bigr)$-large skew partition. 
Let~$\lambda$ and $\omega$ be 
$\bigl(\ell(\kappa^\-), \ell(\kappa^\+)\bigr)$-large partitions of $n|\muS|$.
Let 
\[ \pi \in \bigl[\lambda \oplus nM(\kappa^\-, \kappa^\+), \omega \oplus nM(\kappa^\-,\kappa^\+) \bigr]_\unlhddotS. \] 
Let $T \in \PSSYTw{\nurho}{\muS \oplus M(\kappa^\-,\kappa^\+)}{\pi^\-}{\pi^\+}$
and let~$s$ and $t$ be  $\muS \oplus M(\kappa^\-,\kappa^\+)$-inner tableaux  of~$T$ such that $s < t$.
Suppose that 
$M$ is at least the maximum of
\begin{bulletlist}
\item $\LPBound\bigl(n,\mu' : \lambda^\-, \kappa^\- : \max(\ell(\mus), \ell(\mu^\+), \ell(\kappa^\+)\bigr))$ 
\item 
$\LPBound\bigl(n, \mu : \lambda^\+, \kappa^\+ : a(\mus) + \ell(\kappa^\-)\bigr) + |\lambda^\+| - |\omega^\+|$.
\end{bulletlist}
Then 
\begin{thmlist}
\item We have $\Fmap(s)$, $\Fmap(t) \in \SSYT\bigl( \muS \oplus (M+1)\swtp{\kappa} \bigr)$.
\item If $|\kappa^\-|$ is even then $\Fmap(s)$ and $\Fmap(t)$
have the same sign and~\hbox{$\Fmap(s) < \Fmap(t)$}.
\item If $|\kappa^\-|$ is odd and $s$ and $t$ have the same sign then 
$\Fmap(s)$ and $\Fmap(t)$ have the same sign and~$\Fmap(s) < \Fmap(t)$.
Otherwise $s$ is negative, $t$ is positive, $\Fmap(s)$ is positive, $\Fmap(t)$ is negative, and 
$\Fmap(s) > \Fmap(t)$.
\end{thmlist}
\end{lemma}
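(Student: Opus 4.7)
The plan is to reduce each of the three parts to the two ingredients already established: Lemma~\ref{lemma:plethysticSignedTableauPositions} for semistandardness of the insertions, and Lemma~\ref{lemma:insertionPreservesSignedColexicographicOrder} for the order comparisons. I would handle (i) first, then use a clean sign-tracking argument to dispatch (ii) and (iii) simultaneously by splitting on whether $s$ and $t$ share a sign.

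For (i), I would observe that the two displayed bullet bounds on $M$ are precisely the hypotheses required to apply Lemma~\ref{lemma:plethysticSignedTableauPositions} in its stronger ``$M \ge L$'' form, for every inner tableau of $T$ (in particular for $s$ and for $t$). Consequently, each $\rM$-top position contains $-\rM$ and each $\rP$-left position contains $\rP$, modulo the exceptional cases flagged in parts~(ii) and~(iv)/(v) of that lemma. With these facts the argument in the proof of Proposition~\ref{prop:signedTableauStable} carries over verbatim: the new row of length $\rM$ that $\Fmap$ inserts immediately below an $\rM$-top position shares its first $\rM$ entries with the row above, so columns remain weakly increasing, and dually each new column of height $\rP$ inserted immediately right of a $\rP$-left position preserves the row condition. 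Together with Lemma~\ref{lemma:FisWellDefinedAndBumpsWeight}(i) this gives $\Fmap(s), \Fmap(t) \in \SSYT\bigl(\muS \opluss (M{+}1)\swtp{\kappa}\bigr)$.

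For (ii) and (iii), the key sign observation is that $\Fmap$ adds exactly $|\kappa^\-|$ new negative entries to each inner tableau, so $\sgn(\Fmap(u)) = (-1)^{|\kappa^\-|}\sgn(u)$ for $u \in \{s,t\}$. When $s$ and $t$ have the same sign, so do $\Fmap(s)$ and $\Fmap(t)$ regardless of the parity of $|\kappa^\-|$; in this case I would iteratively apply Lemma~\ref{lemma:insertionPreservesSignedColexicographicOrder}, using part~(i) of that lemma for each of the $\kappa^\-_\rM - \kappa^\-_{\rM+1}$ rows inserted in step~(1) of Definition~\ref{defn:F} and part~(ii) for each of the $\kappa^\+_\rP - \kappa^\+_{\rP+1}$ columns inserted in step~(2), obtaining $\Fmap(s) < \Fmap(t)$ at every intermediate stage and hence at the end. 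When $s < t$ have opposite signs, clause~(i) of Definition~\ref{defn:signedColexicographicOrder} forces $s$ negative and $t$ positive: if $|\kappa^\-|$ is even the signs of $\Fmap(s)$ and $\Fmap(t)$ are preserved and $\Fmap(s) < \Fmap(t)$ by the same clause; if $|\kappa^\-|$ is odd the signs flip, $\Fmap(s)$ becomes positive and $\Fmap(t)$ negative, and hence $\Fmap(t) < \Fmap(s)$, which is the $\Fmap(s) > \Fmap(t)$ of (iii).

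The main obstacle is entirely bookkeeping: one must check that after each intermediate insertion in $\Fmap$ the partially built tableau is still a semistandard signed tableau of skew partition shape, so that Lemma~\ref{lemma:insertionPreservesSignedColexicographicOrder} may be applied iteratively. This follows from the description in Definition~\ref{defn:F} together with the semistandardness of the final tableaux $\Fmap(s)$ and $\Fmap(t)$ secured in~(i), since each intermediate step is obtained from the final tableau by a sequence of row- and column-deletions of the type treated in the surjectivity argument of Proposition~\ref{prop:signedTableauStable}.
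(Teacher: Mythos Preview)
Your proposal is correct and follows essentially the same route as the paper: part~(i) is reduced to Lemma~\ref{lemma:plethysticSignedTableauPositions} plus the argument of Proposition~\ref{prop:signedTableauStable}, and parts~(ii)/(iii) are handled by tracking signs via $\sgn(\Fmap(u)) = (-1)^{|\kappa^\-|}\sgn(u)$ together with iterated application of Lemma~\ref{lemma:insertionPreservesSignedColexicographicOrder} in the equal-sign case and a direct appeal to Definition~\ref{defn:signedColexicographicOrder}(i) in the opposite-sign case. Your final paragraph on the intermediate semistandardness needed for the iteration is a point the paper leaves implicit; note that it is in fact already secured by the forward argument in~(i), since each individual row or column insertion preserves semistandardness once the relevant top/left position contains the correct entry, so the backward deletion argument you sketch is not needed.
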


\begin{proof}
We have all the hypotheses for Lemma~\ref{lemma:plethysticSignedTableauPositions}. Using the
results on left and top positions from this lemma,
the proof of~Proposition~\ref{prop:signedTableauStable} generalizes to show that 
$\Fmap(s)$ and $\Fmap(t)$ are semistandard $\muS \opluss (M+1)\swtp{\kappa}$-tableaux.
The only extra points to note are that, by Lemma~\ref{lemma:positions},
each top position is in row $\ell(\mus)$ or lower, and each left position is in column
$\ell(\kappa^\-) + a(\mus)$ or further right, and hence the inserted columns do not meet
$\mus$, \emph{and}, by (iii) in the lemma, setting $\ell^\- = \ell(\kappa^\-)$,
inserting or deleting a new row with entries $-1, \ldots, -\ell^\-$
immediately below the $\ell^\-$-top position gives a well-defined semistandard signed tableau.
(This is where we need that this position is in row $\ell(\mu^\+)$ or lower,
as remarked on in the caption to Figure~\ref{fig:ellDecompositionTableau} and seen in the proof
of the earlier lemma.) This proves (i).

Suppose that $s$ and $t$ have the same sign.
Then, by Lemma~\ref{lemma:insertionPreservesSignedColexicographicOrder}, applied to each
row and column insertion in turn, we have $\Fmap(s) < \Fmap(t)$. To prove (ii) and (iii) in the
remaining case where $s$ and $t$ have opposite sign, observe that since $s < t$, it follows
immediately from Definition~\ref{defn:signedColexicographicOrder} that $s$ is negative
and~$t$ is positive. If $|\kappa^\-|$ is even then, by Lemma~\ref{lemma:FisWellDefinedAndBumpsWeight}(ii),
$\Fmap$ is sign preserving, and so $\Fmap(s)$ is negative and $\Fmap(t)$ is positive, 
and $\Fmap(s) < \Fmap(t)$, proving (ii).
Finally for~(iii), when $|\kappa^\-|$ is odd then, again by Lemma~\ref{lemma:FisWellDefinedAndBumpsWeight}(ii),
$\Fmap$ is sign reversing, and so~$\Fmap(s)$ is positive and $\Fmap(t)$ is negative, and $\Fmap(s) > \Fmap(t)$.
\end{proof}

We are now ready to prove the analogue of Proposition~\ref{prop:signedTableauStable}
(Tableau Stability). 
Again we re-use part of its proof.
This is the point where we need the sign-reversed colexicographic order on semistandard
signed tableaux, defined in Definition~\ref{defn:signedColexicographicOrder}
and the corresponding set $\PSSYT^\mp$ of sign-reversed plethystic semistandard signed tableaux
defined in Definition~\ref{defn:plethysticSemistandardSignedTableau}
and motivated in~Example~\ref{ex:GFlip}.
Recall, as seen in this example, in $\PSSYT^\mp$, negative inner tableaux
are \emph{greater} than positive inner tableaux. 
The $\mathrm{LP}$ bounds are defined in Definition~\ref{defn:LPBound}.
The map~$\Gmap$ is defined
in Definition~\ref{defn:G}.

\begin{proposition}[Inner stability for plethystic tableaux]
\label{prop:plethysticSignedTableauInnerStable}
Let $\nurho$ be a partition
of $n \in \N$.
Let $\kappa^\-$ and  $\kappa^\+$ be partitions.  Let
$\muS$ be an $\bigl(\ell(\kappa^\-) + a(\mus), \ell(\kappa^\+) \bigr)$-large
and $\bigl( \ell(\kappa^\-), \ell(\mus) \bigr)$-large skew partition.
Let $\nurhod = \nurho$ if $|\kappa^\-|$ is even and let $\nurhod = \nurho'$ if $|\kappa^\-|$ is odd.
Let~$\omega$ be a $(\ell(\kappa^\-), \ell(\kappa^\+))$-large partition and let $\lambda \unlhddots \omega$ 
in the $\ell(\kappa^\-)$-twisted dominance order. Let $\pi$ be a partition in the 
interval
\[  \bigl[\lambda \oplus nM(\kappa^\-, \kappa^\+), \omega \oplus nM(\kappa^\-,\kappa^\+) \bigr]_{\unlhddotS} \]
for the $\ell(\kappa^\-)$-twisted dominance order. If $M$ is at least
\begin{bulletlist}
\item $\LPBound\bigl(n,\mu' : \lambda^\-, \kappa^\- : \max(\ell(\mus), \ell(\mu^\+),
\ell(\kappa^\+)\bigr))$ 
\item  
$\LPBound\bigl(n, \mu : \lambda^\+, \kappa^\+ : a(\mus) + \ell(\kappa^\-)\bigr) + |\lambda^\+| - |\omega^\+|$.
\end{bulletlist}
then the map $\Gmap$ is a well-defined bijection
\[ \begin{split} &\Gmap {}:{}  \PSSYTw{\nurho}{\muS \oplus M\swtp{\kappa}}{\pi^\-}{\pi^\+}
\\[-3pt] & \longrightarrow \begin{cases}
\PSSYT\bigl(\nurhod, \muS \!\oplus\! (M\!+\!1)\swtp{\kappa}\bigr)_{(\pi^\- + n\kappa^\-,\pi^\+ +n\kappa^\+)}
& \text{\!\!\!\!if $|\kappa^\-|$ is even} \\
\PSSYT^\mp\bigl(\nurhod, \muS \!\oplus\! (M\!+\!1)\swtp{\kappa}\bigr)_{(\pi^\- + n\kappa^\-,\pi^\+ +n\kappa^\+)}
& \text{\!\!\!\!if $|\kappa^\-|$ is odd.} \end{cases}\\
 \end{split} \]
\end{proposition}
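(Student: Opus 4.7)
\medskip
\noindent\textbf{Proof plan.} The plan is to reduce the statement to the tableau-level results already established, namely Proposition~\ref{prop:signedTableauStable} (via Lemma~\ref{lemma:FisWellDefinedAndBumpsWeight}) together with the two technical preliminaries Lemma~\ref{lemma:plethysticSignedTableauPositions} and Lemma~\ref{lemma:FrespectsSemistandard}. First I would check that the signed weight transforms correctly: by Lemma~\ref{lemma:FisWellDefinedAndBumpsWeight}(ii), each inner tableau $t$ of $T$ has the signed weight of $\Fmap(t)$ equal to that of $t$ plus $\swtp{\kappa}$, and summing over the $n$ inner tableaux of $T$ gives total signed weight $(\pi^\- + n\kappa^\-, \pi^\+ + n\kappa^\+)$, as required.

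Next I would verify that each individual $\Fmap(t)$ is semistandard. This does \emph{not} follow immediately from Proposition~\ref{prop:signedTableauStable}, because the signed weight of $t$ alone need not lie in a suitable twisted interval; rather, only the aggregate signed weight $\swtp{\pi}$ does. The key input is therefore Lemma~\ref{lemma:plethysticSignedTableauPositions}, which supplies for each inner tableau $t$ of $T$ exactly the information on top, bottom, left, and right positions required to repeat the argument of Proposition~\ref{prop:signedTableauStable}: the row and column insertions performed by $\Fmap$ preserve the semistandard condition precisely because the $r$-top positions already contain $-r$ and the $r$-left positions already contain $r$. For surjectivity I would run the same argument with $M$ replaced by $M+1$, applying $\Fmap^{-1}$ by deleting the rows at the $r$-bottom positions and the columns at the $r$-right positions.

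The outer plethystic semistandard condition on $\Gmap(T)$ is then handled by Lemma~\ref{lemma:FrespectsSemistandard}. When $|\kappa^\-|$ is even, $\Fmap$ preserves sign and, by part (ii) of that lemma, preserves the signed colexicographic order on inner tableaux; hence $\Gmap(T) = U$ lies in $\PSSYT\bigl(\nurho, \muS \oplus (M+1)\swtp{\kappa}\bigr)$ with the correct strip conditions. When $|\kappa^\-|$ is odd, $\Fmap$ reverses sign; part (iii) of the lemma shows that within a sign class the order is preserved, while between sign classes it is reversed. Conjugating the outer shape to $\nurho'$ interchanges rows and columns, so horizontal strips of equal positive inner tableaux become vertical strips of equal negative inner tableaux (and vice versa), exactly matching the definition of $\mathrm{PSSYT}^\mp$ with the sign-reversed colexicographic order. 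Injectivity of $\Gmap$ is immediate, and surjectivity is handled by the argument above combined with inverting the outer conjugation.

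The main obstacle is this last step: coordinating the sign reversal induced by $\Fmap$ with the conjugation of the outer shape so that the semistandard (resp.~sign-reversed semistandard) condition is preserved. The two effects must be shown to commute in exactly the right way, with the horizontal-strip condition for equal positives and the vertical-strip condition for equal negatives interchanged under both operations simultaneously. Once Lemma~\ref{lemma:FrespectsSemistandard} is in hand this is essentially bookkeeping, but it is the only point in the argument where the distinction between $\PSSYT$ and $\PSSYT^\mp$ --- and correspondingly between $\nurho$ and $\nurho^\dagger$ --- plays an essential role, and verifying it carefully is the crux of the proposition.
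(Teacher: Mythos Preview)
Your proposal is correct and follows essentially the same route as the paper: apply Lemma~\ref{lemma:FrespectsSemistandard} (which packages the position information from Lemma~\ref{lemma:plethysticSignedTableauPositions} and the argument of Proposition~\ref{prop:signedTableauStable}) to each inner tableau to get semistandardness, the correct weight, and the order-preservation needed for the outer semistandard condition, then handle the odd case via the conjugation/sign-reversal bookkeeping and surjectivity via the bottom/right positions. The paper makes the odd case slightly more explicit by naming the subpartition $\beta$ of $\nurho$ carrying the negative inner tableaux and tracking where it goes under $\Fmap$ and conjugation, but your description captures the same argument.
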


\begin{proof}
Note that we have all the hypotheses for Lemma~\ref{lemma:FrespectsSemistandard}.
Let $T \in \PSSYTw{\nurho}{\muS \opluss M\swtp{\kappa}}{\pi^\-}{\pi^\+}$.
By this lemma, after applying $\Fmap$ to each inner $\muS \oplus M\swtp{\kappa}$-tableau in $T$
we have a plethystic signed tableau $U$ of shape $\nurho$ having well-defined  entries
from $\SSYT^\pm\bigl(\muS \oplus (M+1)\swtp{\kappa}\bigr)$. 
By Lemma~\ref{lemma:FrespectsSemistandard}(i),
$U$ has signed weight $(\pi^\- + n\kappa^\-, \pi^\+ + n\kappa^\+)$, as required.

Suppose that $|\kappa^\-|$ is even. Then $\Gmap(T) = U$.
Since, by Lemma~\ref{lemma:FrespectsSemistandard}(ii),
$\Fmap$ preserves strict equality in the signed colexicographic order, we have $U \in \PSSYTw{\nurho}{\muS
\oplus M(\kappa^\-,\kappa^\+)}{\pi^\-+n\kappa^\-}{\pi^\+ +n\kappa^\+}$, as required.

Suppose that $|\kappa^\-|$ is odd. Then~$\Gmap(T) = U'$.
Since, by Lemma~\ref{lemma:FrespectsSemistandard}(iii), $\Fmap$ preserves
strict inequality in the signed colexicographic order
on inner tabl\opthyphen{}eaux of the same sign, the conjugate plethystic
 signed tableau $U'$ is semistandard
with respect to inner $\muS \oplus (M+1)\swtp{\kappa}$-tableaux \emph{of the same sign}. Moreover,
since in $T$, equal positive inner tableaux are repeated only in the same
row, and equal negative inner tableaux are repeated only in the same column,
the same holds in $U$, swapping `row' and `column'.
Let $\beta$ be the subpartition of $\nurho$ such that the negative
$\muS \oplus M(\kappa^\-,\kappa^\+)$-tableau entries in $T$ lie in $[\beta]$.
In $U$, since $\Fmap$ is sign reversing, the positive inner $\muS \oplus (M+1)(\kappa^\-,\kappa^\+)$-tableaux are in the boxes in $[\beta]$ and the negative inner $\muS \oplus (M+1)(\kappa^\-,\kappa^\+)$-tableaux
are in the boxes in $[\nurho/\beta]$.
Therefore the conjugate plethystic signed tableau $U'$
is semistandard with respect to the sign-reversed colexicographic order;
that is $U' \in \PSSYT^\mp\bigl(\nurhod, \muS \!\oplus\! (M\!+\!1)\swtp{\kappa}\bigr)_{(\pi^\- + n\kappa^\-,\pi^\+ +n\kappa^\+)}$
as required.

We have now shown that the image $\Gmap(T)$ is in the set specified in the proposition.
Let $t$
be an inner $\muS \oplus (M+1)(\kappa^\-,\kappa^\+)$-tableau entry of~$\Gmap(T)$.
Using the results on right and bottom positions from Lemma~\ref{lemma:plethysticSignedTableauPositions},
and noting that the removed rows are strictly below row $\ell(\mus)$ and the removed columns are strictly to the
right of column $\ell(\kappa^\-) + a(\mus)$, it follows as in the proof of Proposition~\ref{prop:signedTableauStable}
that $t$ is in the image of $\Fmap$. Hence the map $\Gmap$ is surjective.
\end{proof}

We remark that the bounds in Proposition~\ref{prop:plethysticSignedTableauInnerStable}
are typically not optimal.

\begin{example}\label{ex:22sharper}
We continue Example~\ref{ex:22positions}
to show how the general bounds coming ultimately 
from Lemma~\ref{lemma:plethysticSignedTableauPositions}
can be sharpened by considering 
negative and positive entries together. 
In this example we got the bound $M \ge \max(3, b+2)$.
From the diagram in Example~\ref{ex:22motivation}
we see that the entries of $1$ in the two 
 semistandard tableaux $s$ and $t$ forming 
\[ T \in \PSSYT\bigl((2),(2+M,2,2^M)\bigr)_{((2+2M,2+2M),(6-b+2M,b-2))} \]
lie either in the bottom two rows of their first two columns, or in
the $M$ boxes ending their first rows.
At most two $1$s can be in the first two columns of each.
Therefore each of~$s$ and $t$ has at most $2+M$ entries of $1$,
and if the
$1$-right position $(1,3)$ in either~$s$ or~$t$ does not contain~$1$ then
the total number of entries of $1$ in $T$ is at most $(2+M) + 2 = 4+M$. Therefore
$4+N \ge 6-b+2N$ and we deduce that $N \le b-2$. Hence if $M \ge b-1$ 
then the $1$-right position $(1,3)$ in both tableaux $s$ and $t$ contains $1$.
Therefore, provided $M \ge b-1$, and $M \ge 3$ (as we needed
from Lemma~\ref{lemma:plethysticSignedTableauPositions} to ensure 
that the $2$-bottom position $(1,2)$ contains $-2$),
insertion of $\young(1)$ and $\young(\oM\tM)$ defines a surjective map
\[ \begin{split} &\scalebox{0.9}{$\PSSYT\bigl((2),(2+M-1,2,2^{M-1})\bigr)_{((2+2(M-1),2+2(M-1)),
(6-b+2(M-1),b-2))}$}
\\ &\quad\qquad\quad\qquad\longrightarrow \scalebox{0.9}{$\PSSYT\bigl((2),(2+M,2,2^M)\bigr)_{((2+2M,2+2M),
(6-b+2M,b-2))}$}.\end{split} \]
This gives the improved bound $\max(3, b-1)$.
Note that the bound from Proposition~\ref{prop:signedTableauStable} is $M \ge 2$, so this
bound still holds; this bound is relevant in the proof of Theorem~\ref{thm:muStable} following.
\end{example}

\subsection{Proof of Theorem~\ref{thm:muStable}}\label{subsec:muStableProof}
We prove Theorem~\ref{thm:muStable} with an explicit stability bound.
By Remarks~\ref{remark:becomesLarge} and~\ref{remark:becomesWeightLarge} there is no loss
of generality in the `largeness' hypotheses in the theorem. The $\mathrm{L}$ and $\mathrm{LP}$ bounds
are defined in Definitions~\ref{defn:LBound} and~\ref{defn:LPBound}, respectively.
(Remark~\ref{remark:intervalNotation} explains the small difference in notation for the intervals
in the first two bounds.)
As long promised, we use the Signed Weight Lemma (Lemma~\ref{lemma:SWL}) for the main part of the proof.
An example of the six bounds, proving~\eqref{eq:dBPW} in \S\ref{subsec:earlierWork},
is given after the proof.

\begin{theorem}[Signed inner stability with bound]\label{thm:muStableSharp}
Let $\nu$ be a partition of $n \in \N$. Let $\kappa^\-$ and $\kappa^\+$ be partitions.
Fix $\ell^\- = \ell(\kappa^\-)$ and $\ell^\+ = \ell(\kappa^\+)$.
Let $\muS$ be an $\bigl(\ell^\- +  a(\mus), \ell^\+ \bigr)$-large
and $\bigl( \ell^\-, \ell(\mus) \bigr)$-large  skew partition.
If $\ell^\- \not=0$ then suppose also that
$\muS$ is $\bigl(\ell^\- + 1, \ell^\+ \bigr)$-weight 
large for $\ell(\kappa^\-)$.
 Let $\omega$ be the partition \smash{$\omega_{\ell^\-}^{(n)}(\muS)$} of $n|\muS|$ defined
in Definition~\ref{defn:plethysticGreatestSignedWeight}.
Let~$\lambda$ be an~$(\ell^\-,\ell^\+)$-large partition. 
Let~$L$ be the maximum of
\begin{bulletlist}
\item $\LBound\bigl([\lambda^\-,\omega^\-]^\ellmb_\unLHDS, \kappa^\-\bigr)/n$,
\item $\LBound\bigl([\lambda^\+,\omega^\+ + (|\lambda^\+| - |\omega^\+|)]_\unlhd, \kappa^\+ \bigr)/n$
\item $\bigl( \omega_1^\+ + \omega_2^\+ - 2\lambda_1^\+ + 2|\lambda^\+| - 2|\omega^\+| 
\bigr)/n(\kappa^\+_1-\kappa^\+_2)$,
\item \smash{$\bigl( \max( \ell(\lambda^\+), \ell^\+ ) + |\omega^\-| - |\lambda^\-| - \omega^\-_{\ell^\-} \bigr)/
 n\kappa^\-_{\ell(\kappa^\-)}$}.
\item $\LPBound\bigl(n,\mu' : \lambda^\-, \kappa^\- : \max(\ell(\mus), \ell(\mu^\+), \ell(\kappa^\+)\bigr)$ 
\item 
$\LPBound\bigl(n, \mu : \lambda^\+, \kappa^\+ : a(\mus) + \ell(\kappa^\-)\bigr) 
+ |\lambda^\+| - |\omega^\+|$
\end{bulletlist} 
omitting the third if $\kappa^\+_1 = \kappa^\+_2$ and the fourth if $\kappa^\- = \varnothing$. Then
\[ \bigl\langle s_\nuSeq{M} \circ s_{\muS \opluss M(\kappa^\-, \kappa^\+)}, 
s_{\lambda \opluss nM(\kappa^\-, \kappa^\+)} \bigr\rangle \]
is constant for $M \ge L$, where if $|\kappa^\-|$ is even then $\nuSeq{M} = \nu$ for all $M$
and if $|\kappa^\-|$ is odd then $\nuSeq{M} = \nu$ if $M$ is even and $\nuSeq{M} = \nu'$ if $M$ is odd.
Moreover if $\lambda \notunlhddot \omega$ in the $\ell^\-$-twisted dominance order 
then the plethysm coefficient is $0$ for all $M \in \N_0$.
\end{theorem}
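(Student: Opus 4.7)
My plan is to invoke the Signed Weight Lemma (Lemma~\ref{lemma:SWL}) with the stable partition system supplied by Lemma~\ref{lemma:stablePartitionSystemForMuVarying}. Set $\omega = \omega^{(n)}_{\ell^\-}(\muS)$, $g_\pi = e_{\pi^\-}h_{\pi^\+}$, $\muSSeq{M} = \muS \opluss M\swtp{\kappa}$, let $\nuSeq{M}$ be as prescribed in the statement, and
\[ \PSeq{M} = \bigl[\lambda \opluss nM\swtp{\kappa},\, \omega \opluss nM\swtp{\kappa}\bigr]_\unlhddotS, \]
taken in the $\ell^\-$-twisted dominance order, with transition $\pmap(\sigma) = \sigma \opluss n\swtp{\kappa}$.

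The vanishing claim I would settle first. Corollary~\ref{cor:twistedWeightBoundInnerGrowing} shows that every constituent $s_\sigma$ of $s_\nuSeq{M} \circ s_\muSSeqs{M}$ satisfies $\sigma \unlhddot \omega \opluss nM\swtp{\kappa}$. Since $\lambda$ and $\omega$ are both $(\ell^\-,\ell^\+)$-large, Lemma~\ref{lemma:signedDominancePreservedByOplusWhenLarge} applied with the signed weight $(nM\kappa^\-, nM\kappa^\+)$ gives $\lambda \unlhddot \omega$ if and only if $\lambda \opluss nM\swtp{\kappa} \unlhddot \omega \opluss nM\swtp{\kappa}$. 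Hence the hypothesis $\lambda \notunlhddot \omega$ forces $\lambda \opluss nM\swtp{\kappa} \notunlhddot \omega \opluss nM\swtp{\kappa}$ for every $M \in \N_0$, and the plethysm coefficient vanishes identically.

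For the stability claim I would verify both hypotheses of Lemma~\ref{lemma:SWL} for $M \ge L$. Lemma~\ref{lemma:stablePartitionSystemForMuVarying} establishes that $(\PSeq{M})_{M \in \N_0}$ is a stable partition system for $g_\pi$; internally this applies Corollary~\ref{cor:signedIntervalStable} with the effective step $n\swtp{\kappa}$, so its four interval thresholds translate into the first four bullet bounds of the present theorem, each divided by $n$ because replacing $\kappa$ by $n\kappa$ scales every denominator in Definition~\ref{defn:LBound} by $n$. The closing sentence of Lemma~\ref{lemma:stablePartitionSystemForMuVarying} delivers condition~(i) of the Signed Weight Lemma at once. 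For condition~(ii), Proposition~\ref{prop:plethysticSignedTableauInnerStable}, whose two bounds are precisely the last two bullets, supplies for each $\pi \in \PSeq{M}$ a bijection from $\PSSYTw{\nuSeq{M}}{\muSSeq{M}}{\pi^\-}{\pi^\+}$ onto $\PSSYTw{\nuSeq{M+1}}{\muSSeq{M+1}}{\pi^\- + n\kappa^\-}{\pi^\+ + n\kappa^\+}$ when $|\kappa^\-|$ is even, and onto its sign-reversed counterpart when $|\kappa^\-|$ is odd, with the two cardinalities equated by the final sentence of Lemma~\ref{lemma:negativePositivePlethysticSpecialisation}.

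The main subtlety, already absorbed into Definition~\ref{defn:G} and Proposition~\ref{prop:plethysticSignedTableauInnerStable}, is that when $|\kappa^\-|$ is odd the map $\mathcal{G}$ both conjugates the outer shape and flips the semistandard convention, which is exactly what is required to accommodate the alternation of $\nuSeq{M}$ between $\nu$ and $\nu'$; the four parity combinations of $|\kappa^\-|$ and $M$ then fall into line through Lemma~\ref{lemma:negativePositivePlethysticSpecialisation}. With conditions~(i) and~(ii) in hand, Lemma~\ref{lemma:SWL} yields
\[ \bigl\langle s_\nuSeq{M} \circ s_\muSSeqs{M}, s_{\lambda \opluss nM\swtp{\kappa}} \bigr\rangle = \bigl\langle s_\nuSeq{M+1} \circ s_\muSSeqs{M+1}, s_{\lambda \opluss n(M+1)\swtp{\kappa}} \bigr\rangle \]
for every $M \ge L$, completing the proof of Theorem~\ref{thm:muStableSharp}.
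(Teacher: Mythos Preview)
Your proof is correct and follows essentially the same route as the paper's: both set up the twisted-interval system from Lemma~\ref{lemma:stablePartitionSystemForMuVarying}, handle the vanishing case via Corollary~\ref{cor:twistedWeightBoundInnerGrowing} together with the order-preservation Lemma~\ref{lemma:signedDominancePreservedByOplusWhenLarge}, extract the first four bounds from Corollary~\ref{cor:signedIntervalStable}, and verify condition~(ii) of the Signed Weight Lemma via Proposition~\ref{prop:plethysticSignedTableauInnerStable} plus the $\PSSYT/\PSSYT^\mp$ equality from Lemma~\ref{lemma:negativePositivePlethysticSpecialisation}. The only cosmetic difference is that you apply Corollary~\ref{cor:signedIntervalStable} directly with step $n\swtp{\kappa}$ (so the $1/n$ factors arise from scaling the denominators in Definition~\ref{defn:LBound}), whereas the paper applies it with step $\swtp{\kappa}$ and then passes to the $n$-subsystem $\QSeq{M}=\PSeq{nM}$; both yield the same bounds.
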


\begin{proof} We apply the Signed Weight Lemma (Lemma~\ref{lemma:SWL}).
For $M \in \N_0$ set 
\[ \PSeq{M} = \bigl[\lambda \oplus M(\kappa^\-, \kappa^\+), 
\omega_{\ell^\-}^{(n)}(\muS) \oplus M(\kappa^\-, \kappa^\+)\bigr]_\unlhddotS. \]
If $\lambda \notunlhddot \omega_{\ell^\-}^{(n)}(\muS)$ then 
Lemma~\ref{lemma:adjoinToLarge}
implies that
\[\lambda \oplus M\swtp{\kappa} \notunlhddot \omega_{\ell^\-}^{(n)}(\muS) \oplus M\swtp{\kappa} \] 
for all $M \in \N_0$
and hence, 
by  Proposition~\ref{prop:twistedWeightBoundInner},
$ \langle s_\nuSeq{M} \circ s_{\muS \opluss M(\kappa^\-, \kappa^\+)},$
$s_{\lambda \opluss nM(\kappa^\-, \kappa^\+)} \rangle = 0$ for all $M \in \N_0$. Thus 
all the plethysm coefficients are zero, as claimed in the final part of the statement.

We may therefore assume that \smash{$\lambda \unlhddot \omega_{\ell^\-}^{(n)}(\muS)$}.
If $\ell^- \not =0$ then
by the hypothesis that $\muS$ is $(\ell^\-+1,\ell^\+)$-weight large,
the partition \smash{$\omega_{\ell^\-}^{(n)}(\muS)$} is $(\ell^\-+1,\ell^\+)$-large, which is
the other hypothesis required in
 Corollary~\ref{cor:signedIntervalStable}. 
Hence, by this corollary,
$(\PSeq{M})_{M \in \N_0}$ is a stable partition system
with respect to the map $\pi \mapsto \pi \oplus \swtp{\kappa}$ and the twisted symmetric
functions $g_\pi = e_{\pi^\-}h_{\pi^\+}$.
We take the subsystem $(\QSeq{M})_{M \in \N_0}$ where
$\QSeq{M} = \PSeq{nM}$. Up to the factor $1/n$, the first four bounds in our hypotheses
are those in Corollary~\ref{cor:signedIntervalStable}. Therefore $\QSeq{M}$ is stable for
$M \ge L$.

We are now ready to verify the conditions in the Signed Weight Lemma (Lemma~\ref{lemma:SWL})
taking $\nu^{(M)}$ as already defined, $\muSSeq{M} = \muS \oplus M(\kappa^\-,\kappa^\+)$,
and $\QSeq{M}$ as our stable partition system.
Since $\lambda \oplus nM \swtp{\kappa} \in \QSeq{M}$,
this implies the theorem.

\subsubsection*{Condition \emph{(i)} in the Signed Weight Lemma}
By Lemma~\ref{lemma:stablePartitionSystemForMuVarying}
the stable partition system $\QSeq{M}$ satisfies condition (i) of the Signed Weight Lemma (Lemma~\ref{lemma:SWL})
for the plethysms $s_{\nuSeq{M}} \circ s_{\muS \opluss M(\kappa^\-, \kappa^\+)}$.

\subsubsection*{Condition \emph{(ii)} in the Signed Weight Lemma}
We must verify that
\begin{equation}
\label{eq:muPSSYTStable} \begin{split}&\bigl|\PSSYTw{\nuSeq{M}}{\muSSeq{M}}{\pi^\-}{\pi^\+}\bigr|\\
& \qquad\qquad = \bigl|\PSSYTw{\nuSeq{M+1}}{\muSSeq{M+1}}{\pi^\-+n\kappa^\-}{\pi^\+ + n\kappa^\+}\bigr|
\end{split} \end{equation}
for all $M \in \N_0$.
By hypothesis $\muS$ is $\bigl(\ell(\kappa^\-) + a(\mus), \ell(\kappa^\+) \bigr)$-large
and $\bigl( \ell(\kappa^\-), \ell(\mus) \bigr)$-large as required in Proposition~\ref{prop:plethysticSignedTableauInnerStable}.
The final two bounds on $M$ in the statement
are those required by Proposition~\ref{prop:plethysticSignedTableauInnerStable}.
Fix $M \in \N$ at least these bounds and let $\nurho = \nu^{(M)}$.
By this proposition
we have
\[ \begin{split} &\bigl|\PSSYTw{\nurho}{\muS \opluss M(\kappa^\-,\kappa^\+}{\pi^\-}{\pi^\+} \bigr| \\ & = 
\begin{cases}
   \bigl|\PSSYTw{\nurhod}{\muS \opluss (M\!+\!1)(\kappa^\-,\kappa^\+)}{\pi^\-+n\kappa^\-}{\pi^\++n\kappa^\+} \bigr| & \text{\!\!if $|\kappa^\-|$ is even} \\
   \bigl|\srPSSYTw{\nurhod}{\muS \opluss (M\!+\!1)(\kappa^\-,\kappa^\+)}{\pi^\-+n\kappa^\-}{\pi^\++n\kappa^\+} \bigr| & \text{\!\!if $|\kappa^\-|$ is odd} \end{cases}
    \end{split}
\]
for all $\pi \in \QSeq{M}$. 
If $|\kappa^\-|$ is even then $\nuSeq{M+1} = \nurhod = \nurho = \nuSeq{M}$ and we have~\eqref{eq:muPSSYTStable}.
Otherwise we
use the final part of Lemma~\ref{lemma:negativePositivePlethysticSpecialisation} 
to obtain
\[ \begin{split} & \bigl|\srPSSYTw{\nurho'}{\muS \oplus (M+1)(\kappa^\-,\kappa^\+}{\pi^\-+n\kappa^\-}{\pi^\++n\kappa^\+}\bigr|
\\ &\qquad = \bigl|\PSSYTw{\nurho'}{\muS \oplus (M+1)(\kappa^\-,\kappa^\+}{\pi^\-+n\kappa^\-}{\pi^\++n\kappa^\+}\bigr|, 
\end{split}\]
and since $\nurho'= \nuSeq{M+1}$ and $\nurho = \nuSeq{M}$, we again get~\eqref{eq:muPSSYTStable}.
Therefore the stable partition system $\QSeq{M}$ satisfies condition (ii) 
of the Signed Weight Lemma.
\end{proof}

\begin{example}\label{ex:dBPW}
We use Theorem~\ref{thm:muStableSharp}
to show that the plethysm coefficients 
\smash{$ \langle s_\nud \circ s_{\mu' \sqcups (1^M) + (1^{\ell(\mu')})},
s_{\lambda' \sqcups (1^{nM}) + (1^{n\ell(\mu')})} \rangle$}
in~\eqref{eq:dBPW} relevant to \cite[Theorem~1.1]{deBoeckPagetWildon}
are constant for all $M \ge 0$.
 (All we need concerning $\nud$ is that
it is a partition of $n$.)
Take $\kappa^\- = (1)$ and $\kappa^\+ = \varnothing$
in Theorem~\ref{thm:muStableSharp}
and  replace $\nu$ in the theorem with $\nud$
and $\mu$ in the theorem with $\mu' + (1^{\ell(\mu')})$.
Since $\ell^\- = 1$, the negative part of a partition $\alpha$
is simply $(\ell(\alpha))$ and the $1$-decomposition is
\begin{equation}\label{eq:oneDecomposition} \dec{(\ell(\alpha))}{(\alpha_1-1,\ldots, \alpha_k-1)}
\end{equation}
where $k$ is greatest such that $\alpha_k \ge 2$.
 Therefore  the negative
part of $\lambda' + (1^{n\ell(\mu')})$ is
$\max(\ell(\lambda'), n\ell(\mu'))$. Denote this quantity $P$.
Since the 
greatest signed tableau $t_1(\mu' + (1^{\ell(\mu')}))$ of shape~$\mu' + (1^{\ell(\mu')})$ defined
in Definition~\ref{defn:greatestSignedTableau}  has $\ell(\mu')$ entries of $-1$,
we have $\omega^\- = (n\ell(\mu'))$. Therefore 
the first 
bound in Theorem~\ref{thm:muStableSharp}
is 
\[ \L\bigl( [(P), n\ell(\mu')]^{(1)}_\unLHDS, (1)\bigr)
= \frac{ n\ell(\mu') - P - n\ell(\mu')}{n}  = -\frac{P}{n} < 0. \]
(Note that the exceptional case in Definition~\ref{defn:LBound} applies, giving
a smaller bound than that obtained by using $L_1$.)
Since $\kappa^\+ = \varnothing$, the second and third bounds vanish. 
Now observe that, by~\eqref{eq:oneDecomposition},
if $\ell(\lambda') < n\ell(\mu')$ then
the positive part of $\lambda' + (1^{n\ell(\mu')})$ 
is $\lambda'$, while if $\ell(\lambda') \ge n\ell(\mu')$ 
then the positive part has length at most $\ell(\lambda')$.
Therefore $\ell(\lambda^\+) \le \ell(\lambda')$ and the fourth bound
  is at most
\[ \frac{\max( \ell(\lambda'), 0) + n\ell(\mu') - P- n\ell(\mu')}{n}
\le \frac{\ell(\lambda') - P}{n} 
\le 0. \]
Since the positive part of $\mu' + (1^{\ell(\mu')})$ is $\mu'$,
the fifth bound is
\[\begin{split} &\mathrm{LP}\bigl(n, (\mu' + (1^{\ell(\mu')}))' : (P), (1) : \max(0, 
\ell(\mu'),0 ) \bigr) \\
&\qquad\qquad =
\frac{n\ell(\mu') - P - \ell(\mu') + \max(\ell(\mu'),\ell(\mu'),0)}{1-0} 
\le 0. \end{split} \]
Since $\kappa^\+ = \varnothing$, the sixth bound vanishes.
Hence, as claimed earlier in \S\ref{subsec:earlierWork},
the plethysm coefficient is constant for all $M \in \N_0$.
Finally, note that a semistandard signed tableau of shape $\mu' + (1^{\ell(\mu')} )$ can have
at most $\ell(\mu')$ entries of $-1$. 
Therefore if $\ell(\lambda') > n \ell(\mu')$, and so $P > n \ell(\mu')$,
 the
set $\PSSYTw{\nud}{\mu + (1^{\ell(\mu')})}{(\ell(\lambda'))}{\pi^\+}$
is empty for any partition $\pi^\+$. Correspondingly, since 
$\ell(\lambda') > n \ell(\mu')$ implies that
$\lambda \notunlhddot \omega$ in the $1$-twisted dominance order,
 it follows from the final part of
Theorem~\ref{thm:muStableSharp} that
the plethysm coefficient vanishes when $M=0$. Since this is its constant value,
it vanishes for all $M \in \N_0$.
\end{example}

We end this section with a generalization of the final part of the example above.
Observe that when $\mus = \varnothing$, the greatest signed weight $\omega_{\ell^\-}(\mu)$
is simply the $\ell^\-$-decomposition of $\mu$
and so it is immediate
from Definition~\ref{defn:plethysticGreatestSignedWeight} that the partition
\smash{$\omega^{(n)}_{\ell^\-}(\mu)$} has $\ell^\-$-decomposition $n\langle \mu^\-, \mu^\+ \rangle$.
Hence, by the definition of the $\ell$-twisted dominance order 
in Definition~\ref{defn:ellTwistedDominanceOrder}
we have $\lambda \unlhddot \omega^{(n)}_{\ell^\-}(\mu)$ if and only if
$(\lambda^\-,\lambda^\+) \unlhd n(\mu^\-, \mu^\+)$, where $\unlhd$ is the 
$\ell$-signed dominance order of Definition~\ref{defn:ellSignedDominanceOrder}.
Therefore, the final part of Theorem~\ref{thm:muStableSharp}
implies that, unless 
$(\lambda^\-, \lambda^\+) \unlhd n(\mu^\-, \mu^\+)$, 
\[ \langle s_\nu \circ s_{\mu \,\oplus\, M(\kappa^\-,\kappa^\+)}, s_{\lambda  \,\oplus\, nM(\kappa^\-,\kappa^\+)}
\rangle = 0 \]
 for all $M \in \N_0$. This justifies the remark after Theorem~\ref{thm:muStable} in the introduction.

\section{The positive case of Theorem~\ref{thm:muStable}}\label{sec:muStableSharpPositive}
In this section we state the case of Theorem~\ref{thm:muStableSharp} when 
$\kappa^\- = \varnothing$, and then the still more special case where $\mus = \varnothing$;
as we saw in the survey in \S\ref{subsec:earlierWork}, 
this special case implies many of the
stability results on plethysm coefficients in the literature.
Moreover, as we 
mentioned in Remark~\ref{remark:asymmetry},
by applying the $\omega$ involution, these special cases easily
imply the analogous special cases where $\kappa^\+ = \varnothing$.
The $\mathrm{L}$ and $\mathrm{LP}$ bounds
are defined in Definitions~\ref{defn:LBound} and~\ref{defn:LPBound}, respectively.
By  Definition~\ref{defn:signedWeightTableau},
$\wt(t)$ is the positive part of the 
signed weight of a tableau having only positive integer
entries. 

\begin{corollary}\label{cor:muStableSharpPositive}
Let $\nu$ be a partition of $n \in \N$. Let $\kappa$ be a partition
and let $\mu/\mus$ be a $\bigl( a(\mus), \ell(\kappa) \bigr)$-large 
skew partition.
Let~$\lambda$ be a partition of $n |\mu/\mus|$ with
$\ell(\lambda) \ge \ell(\kappa)$.
Let $t$ be the semistandard
tableau of shape $\mu/\mus$ having 
$1$, $2$, \ldots, $\mu_j' - \mu_{\star j}'$ as its entries
in column~$j$, for each $j \le a(\mu)$.
Set $\omega = n\wt(t)$. 
Then
$\langle s_\nu \circ s_{\mus + M\kappa}, s_{\lambda + nM\kappa} \rangle$ 
is constant for 
\[ M \ge \max \Bigl( \frac{\LBound\bigl( [\lambda, \omega]_\unlhd, \kappa\bigr)}{n},
\LPBound\bigl(n, \mu : \lambda, \kappa : a(\mus) \bigr) \Bigr) \]
If $\lambda \,\notunlhd\, \omega$ then the plethysm coefficient is $0$ for all $M \in \N_0$.
Moreover if $\eta \,\notunlhd \kappa$ then
$\langle s_\nu \circ s_{\muS + M\kappa}, s_{\lambda + nM\eta} \rangle = 0$ 
for all $M > L$ where $L$ is the minimum of
\[ \frac{\sum_{i=1}^k \omega_i - \sum_{i=1}^k \lambda_i}{n \bigl( \sum_{i=1}^k
\eta_i - \sum_{i=1}^k \kappa_i \bigr)} \]
taken over those $k$ such that the denominator is strictly positive.
\end{corollary}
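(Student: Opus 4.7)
The plan is to derive the corollary as the special case of Theorem~\ref{thm:muStableSharp} (together with Proposition~\ref{prop:muStableZero} for the `moreover' clause) obtained by setting $\kappa^\- = \varnothing$, so that $\ell^\- = 0$ and $\ell^\+ = \ell(\kappa)$. With this choice, Definition~\ref{defn:ellDecomposition} gives $\alpha^\- = \varnothing$ and $\alpha^\+ = \alpha$ for every partition $\alpha$, so the $0$-twisted dominance order agrees with the ordinary dominance order on partitions of the same size (Remark~\ref{remark:twistedDominanceOrderGeneralizesDominanceOrder}). In particular, $\lambda^\+ = \lambda$, $\mu^\+ = \mu$, and $\omega^\+ = \omega$. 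By Definition~\ref{defn:greatestSignedTableau}, the greatest semistandard signed tableau $t_0(\mu/\mus)$ has only positive entries, with $1, 2, \ldots, \mu'_j - \mu'_{\star j}$ in column $j$; thus $t_0(\mu/\mus)$ is precisely the tableau $t$ in the statement, and $\omega_0(\mu/\mus) = \wt(t)$. Consequently $\omega_0^{(n)}(\mu/\mus) = n \wt(t) = \omega$.

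Next I would verify that the largeness hypotheses of Theorem~\ref{thm:muStableSharp} hold. The $(\ell^\- + a(\mus), \ell^\+)$-large condition becomes $(a(\mus), \ell(\kappa))$-large, which is assumed. The $(\ell^\-, \ell(\mus))$-large condition becomes $(0, \ell(\mus))$-large, which is vacuous. For the $(\ell^\- + 1, \ell^\+)$-weight large condition, Definition~\ref{defn:weightLarge} requires $\wt(t)$ to be $(1, \ell(\kappa))$-large, which is implied by the stronger hypothesis that $\wt(t)$ is $(a(\mus)+1, \ell(\kappa))$-large stated in the corollary (this stronger form is what is actually needed for the $\LPBound$ bound to be valid in the form given).

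Now I would match each of the six bounds in Theorem~\ref{thm:muStableSharp} against the two bounds in the corollary. The first bound involves $\L\bigl([\varnothing, \varnothing]^{(0)}_\unlhd, \varnothing\bigr)/n = 0$, and the fourth and fifth bounds vanish because $\kappa^\- = \varnothing$ (the fifth vanishes by Definition~\ref{defn:LPBound}). Since $|\lambda^\+| = |\omega^\+|$, the second bound simplifies to $\L([\lambda, \omega]_\unlhd, \kappa)/n$, and the sixth to $\LPBound(n, \mu : \lambda, \kappa : a(\mus))$, matching the two displayed bounds in the corollary. The third bound equals $L_1$ from Definition~\ref{defn:LBound} (after using $|\lambda^\+| = |\omega^\+|$), and so is subsumed by the second. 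Theorem~\ref{thm:muStableSharp} then yields the stability assertion and, via its final clause, that $\lambda \notunlhd \omega$ forces the plethysm coefficient to vanish for all $M \in \N_0$.

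For the last assertion, I would apply Proposition~\ref{prop:muStableZero} with $\kappa^\- = \eta^\- = \varnothing$. The first family of quantities in Definition~\ref{defn:LZBound} has zero denominator and contributes nothing; the second family collapses, using $|\lambda^\-| = |\omega^\-| = 0$ and $|\eta^\-| = |\kappa^\-| = 0$, to
\[ \frac{\sum_{i=1}^k \omega_i - \sum_{i=1}^k \lambda_i}{\sum_{i=1}^k \eta_i - \sum_{i=1}^k \kappa_i} \]
taken over $k$ for which the denominator is strictly positive (which is non-empty precisely because $\eta \notunlhd \kappa$). Dividing by $n$ as in the statement of Proposition~\ref{prop:muStableZero} yields the bound $L$ in the corollary. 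There is no real obstacle here: the entire argument is bookkeeping to check that the specializations line up. The only mildly delicate point is confirming that the strengthened weight-large hypothesis suffices in the sixth bound, which I would verify by a direct inspection of Definition~\ref{defn:LPBound} and Lemma~\ref{lemma:skewLargeImpliesWeightLargeImpliesLarge}.
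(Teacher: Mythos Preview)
Your proposal is correct and follows essentially the same approach as the paper: specialize Theorem~\ref{thm:muStableSharp} with $\kappa^\- = \varnothing$, identify $t_0(\mu/\mus)$ with the tableau $t$ so that $\omega_0^{(n)}(\mu/\mus) = n\wt(t) = \omega$, verify that the six bounds collapse to the two displayed (the first, fourth and fifth vanish, the third is the $k=1$ term inside the second), and then apply Proposition~\ref{prop:muStableZero} with $\kappa^\- = \eta^\- = \varnothing$ for the final clause. Your aside that the strengthened weight-large hypothesis is needed for the $\LPBound$ bound is not quite accurate (the sixth bound specializes directly), but this does not affect the argument.
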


\begin{proof}
The final part is immediate from Proposition~\ref{prop:muStableZero} applied with 
$\kappa^\- = \eta^\- = \varnothing$ and $\kappa^\+ = \kappa$, $\eta^\+ = \eta$.
For the main part
we apply Theorem~\ref{thm:muStableSharp} with $\kappa^\- = \varnothing$ and $\kappa^\+ = \kappa$.
Note that \smash{$n \wt(t) = \omega^{(n)}_0(\muS)$} is the partition~$\omega$ in Theorem~\ref{thm:muStableSharp}.
Since $\ell^- = 0$, the only largeness condition
in the theorem that has any force is that $\muS$ is $(a(\mus), \ell(\kappa))$-large.
Of the six bounds, the first is $0$,
the second simplifies to $\LBound([\lambda,\omega]_\unlhd, \kappa)$,
the third to $(\omega_1+\omega_2 - 2\lambda_1)/n(\kappa_1-\kappa_2)$ which
is either one of the bounds contributing to $\LBound([\lambda,\omega]_\unlhd, \kappa)$,
or ignored because $\kappa_1 = \kappa_2$, the fourth and fifth bounds are $0$ 
and the sixth simplifies to $\LPBound\bigl(n, \mu : \lambda, \kappa : a(\mus) \bigr)$.
\end{proof}

\begin{corollary}\label{cor:muStableSharpPositiveNonSkew}
 Let $\nu$ be a partition of $n \in \N$. Let $\kappa$ be a partition and let $\mu$
be a partition of $m$ and let $\lambda$ be a partition of $mn$.
Then
$\langle s_\nu \circ s_{\mu + M\kappa}, s_{\lambda + nM\kappa} \rangle$ 
is constant for $M$ at least the maximum of
\[ \frac{n\sum_{i=1}^r \mu_i - \sum_{i=1}^r \lambda_i - \mu_r + \mu_{r+1}}{\kappa_r-\kappa_{r+1}} \]
for $1 \le r \le \ell(\kappa)$, where any terms with zero denominator are ignored.
If $\lambda \notunlhd n\mu$ 
then the plethysm coefficient is $0$ for all $M \in \N_0$.
Moreover if $\eta \,\notunlhd \kappa$ then
$\langle s_\nu \circ s_{\mu + M\kappa}, s_{\lambda + nM\eta} \rangle = 0$ 
for all $M > L$ where $L$ is the minimum of
\[ \frac{n\sum_{i=1}^k \mu_i - \sum_{i=1}^k \lambda_i}{n \bigl( \sum_{i=1}^k
\eta_i - \sum_{i=1}^k \kappa_i \bigr)} \]
taken over those $k$ such that the denominator is strictly positive.

\end{corollary}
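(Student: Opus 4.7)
The plan is to derive Corollary~\ref{cor:muStableSharpPositiveNonSkew} as the specialization $\mus = \varnothing$ of Corollary~\ref{cor:muStableSharpPositive}. Setting $\mus = \varnothing$ gives $a(\mus) = 0$, and the tableau $t$ of Corollary~\ref{cor:muStableSharpPositive} becomes the tableau with entries $1, 2, \ldots, \mu'_j$ down each column $j$, so $\wt(t) = \mu$ and $\omega = n\mu$. The required largeness of $\wt(t)$ reduces to $\ell(\mu) \ge \ell(\kappa)$, which holds by hypothesis.

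With these substitutions the stability bound in Corollary~\ref{cor:muStableSharpPositive} is the maximum of $\LBound\bigl([\lambda, n\mu]_\unlhd, \kappa\bigr)/n$ and $\LPBound(n, \mu : \lambda, \kappa : 0)$. Unpacking Definition~\ref{defn:LPBound} with $A = 0$, the second agrees term-by-term with the bound in the statement of Corollary~\ref{cor:muStableSharpPositiveNonSkew}. The main task is therefore to show that this $\LPBound$ dominates $\LBound\bigl([\lambda, n\mu]_\unlhd, \kappa\bigr)/n$. For each $r$ with $\kappa_r > \kappa_{r+1}$, after clearing the common positive denominator $\kappa_r - \kappa_{r+1}$, a short rearrangement reduces the required inequality to
\[ (n-2)\Bigl( n\sum_{i=1}^r \mu_i - \sum_{i=1}^r \lambda_i \Bigr) \ge 0. \]
I may assume $\lambda \unlhd n\mu$, since otherwise the plethysm coefficient already vanishes for all $M$ by the first moreover clause of Corollary~\ref{cor:muStableSharpPositive} applied with $\omega = n\mu$; the right-hand factor is then nonnegative. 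For $n \ge 2$ the first factor is nonnegative, and for $n = 1$ the statement is trivial since $\nu = (1)$ forces $s_\nu \circ s_{\mu + M\kappa} = s_{\mu + M\kappa}$ and the coefficient is $\delta_{\lambda,\mu}$. The endpoint contribution $k = \ell(\kappa)$ in Definition~\ref{defn:LBound} is handled by the same inequality.

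The two vanishing statements are then direct reductions. The first ($\lambda \notunlhd n\mu$ forces the coefficient to be zero) is the $\omega = n\mu$ case of the corresponding clause of Corollary~\ref{cor:muStableSharpPositive}. The second, involving $\eta \notunlhd \kappa$, follows by substituting $\omega = n\mu$ into the ratio appearing in the final moreover clause of Corollary~\ref{cor:muStableSharpPositive}, which recovers the stated bound on $M$ exactly. The main obstacle is the dominance inequality above; once it is verified the rest is a transparent substitution into Corollary~\ref{cor:muStableSharpPositive}.
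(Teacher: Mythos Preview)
Your proposal is correct and follows essentially the same approach as the paper: specialize Corollary~\ref{cor:muStableSharpPositive} to $\mus=\varnothing$, identify the $\LPBound$ term as the stated bound, and show it dominates $\LBound\bigl([\lambda,n\mu]_\unlhd,\kappa\bigr)/n$ via the inequality $\tfrac{n-2}{n}\bigl(n\sum_{i=1}^r\mu_i-\sum_{i=1}^r\lambda_i\bigr)\ge 0$, which holds once one assumes $\lambda\unlhd n\mu$ and treats $n=1$ separately. The two vanishing clauses are then immediate substitutions, exactly as you describe.
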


\begin{proof}
We apply Corollary~\ref{cor:muStableSharpPositive}. 
By Lemma~\ref{lemma:skewLargeImpliesWeightLargeImpliesLarge}, 
since $\mus = \varnothing$, the only largeness condition in this corollary holds
trivially.
Note that, again since $\mus = \varnothing$,
we have $\wt(t) = \mu$ and so the partition $\omega$ in the corollary is simply $n\mu$,
and so by part of the corollary,
if $\lambda \notunlhd n\mu$ then
 the plethysm coefficients are zero.
If $n=1$ then the plethysm coefficient is $\langle s_{\mu + M\kappa}, s_{\lambda + M\kappa}\rangle$, 
which is obviously constant.
When $n \ge 2$, the bound $M \ge \LPBound\bigl(n, \mu : \lambda, \kappa : 0 \bigr)$ 
(see Definition~\ref{defn:LPBound}) is equivalent to
\[ M(\kappa_k-\kappa_{k+1}) \ge n\sum_{i=1}^k \mu_i - \sum_{i=1}^k \lambda_i - \mu_k + \mu_{k+1} \]
for each $1 \le k \le \ell(\kappa)$. Using that $\omega = n\mu$, the bound we require, namely
that $M \ge \mathrm{L}([\lambda, \omega]_\unlhd, \kappa)/n$ is equivalent to
\[ M(\kappa_k-\kappa_{k+1}) \ge \frac{1}{n}\Bigl(
2\sum_{i=1}^{k-1} n\mu_i + n\mu_k +  n\mu_{k+1} - 2\sum_{i=1}^k \lambda_i \Bigr) \]
again for each $1 \le k \le \ell(\kappa)$. Fixing $k$,
the difference of the two right-hand sides is
\[ (n-2)\sum_{i=1}^{k-1}\mu_i + (n-2)\mu_k - \frac{n-2}{n} \sum_{i=1}^k \lambda_i 
= \frac{n-2}{n} \sum_{i=1}^k (n \mu_i - \lambda_i ) 
\]
which is non-negative because $n\mu \,\unrhd\, \lambda$. 
%
Therefore the hypotheses of this corollary imply that
$M \ge \mathrm{L}([\lambda, \omega]_\unlhd, \kappa)/n$, as required to apply
Corollary~\ref{cor:muStableSharpPositive}. The final claim of this corollary
is immediate from Corollary~\ref{cor:muStableSharpPositive}.
\end{proof}

We remark that if $\kappa = (1^R)$ then by one further specialization we obtain
that $\langle s_\nu \circ s_{\mu + (M^R)}, s_{\lambda + n(M^R)} \rangle$ is constant
for $M \ge n \sum_{i=1}^R \mu_i - \sum_{i=1}^R \lambda_i - \mu_R + \mu_{R+1}$.
This recovers Theorem~1.2 in \cite{deBoeckPagetWildon}.

\addtocontents{toc}{\smallskip}
\addtocontents{toc}{\textbf{Theorem~\ref{thm:nuStable}: outer stability}}

\section{Twisted weight bound for Theorem~\ref{thm:nuStable}}
\label{sec:twistedWeightBoundForStronglyMaximalWeight}
This section is the analogue of \S\ref{sec:twistedWeightBoundInner},
culminating in Corollary~\ref{cor:signedWeightBoundForStronglyMaximalSignedWeight}, 
the analogue of Corollary~\ref{cor:twistedWeightBoundInnerGrowing}, giving
an upper bound (in a sense made precise in the corollary) on the
constituents $s_\sigma$ of the plethysm $s_{\nu^{(M)}} \circ s_\muS$
such that $\sigma \unrhddot \lambda \oplus M\swtp{\kappa}$ in the $\ell(\kappa^\-)$-twisted
dominance order. 
Here, 
as in  Theorem~\ref{thm:nuStable},
$\nu^{(M)} = \nu + (M^R)$ if the strongly maximal signed
weight $\swtp{\kappa}$ has sign $+1$
and \smash{$\nu^{(M)} = \nu \sqcups (R^M)$} if the strongly maximal signed
weight $\swtp{\kappa}$ has sign $-1$.
We outline the strategy of the proof in \S\ref{subsec:overviewSignedWeightBoundOuter}
after the essential preliminaries in the following subsection.

\subsection{Adapted signed colexicographic order for a strongly maximal signed weight}
By Lemma~\ref{lemma:stronglyMaximalSemistandardSignedTableauFamilyIsUnique},
if $\swtp{\kappa}$ is a strongly maximal signed weight 
then there is a unique
semistandard signed tableau family of signed weight $\swtp{\kappa}$
of the same shape, size and type as $\swtp{\kappa}$.
By Definition~\ref{defn:stronglyMaximalSignedWeight}
the sign of $\swtp{\kappa}$ is the common sign of the tableaux in this family;
in the sense of Definition~\ref{defn:semistandardSignedTableauFamily},
the family has row-type if the common sign is $-1$ and column-type if the common sign is $+1$.
Recall that plethystic semistandard signed tableaux were
defined in Definition~\ref{defn:plethysticSemistandardSignedTableau};
in this definition the
inner tableaux are ordered by the signed colexicographic order (see
Definition~\ref{defn:signedColexicographicOrder}).

\begin{notation}\label{notation:stronglyMaximalTableauFromWeight}
Let  $\swtp{\kappa}$ 
be a strongly maximal signed weight of sign~$\epsilon$.
Let $\SMw{\kappa}$ be the unique semistandard signed tableau family of signed weight $\swtp{\kappa}$.
Let $\TMw{\kappa}$ be the unique
plethystic semistandard tableau of outer shape $\rho$ and inner shape
$\muS$ having as its entries the elements of $\SMw{\kappa}$,
where $\rho = (1^R)$ if $\epsilon = +1$ and $\rho = (R)$ if $\epsilon = -1$.
\end{notation}

As we mentioned in Remark~\ref{remark:otherOrder}, in this section we use the 
freedom in Lemma~\ref{lemma:negativePositivePlethysticSpecialisation}
to define plethystic semistandard signed tableaux using an order on their inner tableaux
adapted to the relevant strongly maximal semistandard signed tableau family.

\begin{definition}[Adapted colexicographic order]\label{defn:adaptedSignedColexicographicOrder}
Let $\swtp{\kappa}$ be a strongly maximal signed weight of sign $\epsilon$.
Let $\le$ be the signed colexicographic order if $\epsilon = -1$
and the sign-reversed colexicographic order if $\epsilon = +1$.
The $\swtp{\kappa}$-\emph{adapted colexicographic order}, denoted $\le_\kappa$,
is the total order on semistandard signed tableaux of shape $\muS$
defined by $s \le_\kappa t$ if $s \in \SMw{\kappa}$ and $t\not\in \SMw{\kappa}$; 
in the remaining cases where either both or neither
of $s$ and~$t$ are in $\SMw{\kappa}$, we set $s \le_\kappa t$ if and only
if $s \le t$. 
\end{definition}


Thus the
elements in $\SMw{\kappa}$ always come first in the adapted colexicographic order for 
$\swtp{\kappa}$, and thanks
to the choice of the signed colexicographic order when $\swtp{\kappa}$ has sign $-1$
(and so the elements of $\SMw{\kappa}$ are all negative)
and the sign-reversed colexicographic order when the $\swtp{\kappa}$ has sign $+1$
(and so the elements of $\SMw{\kappa}$ are all positive),
the next greatest elements are the remaining semistandard signed $\muS$-tableaux
of the same sign as those in $\SMw{\kappa}$.

\begin{remark}\label{remark:TAdapted}
The  plethystic semistandard signed tableau
$\TMw{\kappa}$ defined in Definition~\ref{notation:stronglyMaximalTableauFromWeight}
(in which the signed colexicographic order is used to order inner tableau entries) is 
$\swtp{\kappa}$-adapted; in fact, since all its inner
tableau entries
have the same sign, it is semistandard
for any of our orders on semistandard signed tableaux.
\end{remark}

These observations can easily be  verified in following two examples.

\begin{example}\label{ex:adaptedColexicographicOrder411}
Let $(\kappa^\-,\kappa^\+)  = \bigl( \varnothing, (4,1,1) \bigr)$ of shape $(2)$, size $3$
and sign $+1$.
This is the strongly maximal signed weight
seen in Example~\ref{ex:stronglyMaximalSemistandardSignedTableauFamilies},
for which the unique semistandard signed tableau family is
$\SMw{\kappa} = \left\{ \hskip1pt\young(11)\spy{0pt}{,\ts} \young(12)\spy{0pt}{,\ts} \young(13)\hskip1pt \right\}$ of column-type.
In the adapted colexicographic order for $\kappa$ we have
\[ \scalebox{0.95}{$\young(11)\spy{0pt}{\,$<_\kappa$\hspace*{1pt}} 
\young(12)\spy{0pt}{\,$<_\kappa$\hspace*{0.1pt}} \young(13)\spy{0pt}{\,$<_\kappa$\hspace*{0.1pt}}
\young(22)\spy{0pt}{\,$<_\kappa$\hspace*{0.1pt}} \young(23)\spy{0pt}{\,$<_\kappa$\hspace*{0.1pt}} \!\!\ldots\!\!
\spy{0pt}{\,$<_\kappa$\hspace*{0.1pt}}\young(\oM1)\spy{0pt}{\,$<_\kappa$\hspace*{0.1pt}} \young(\oM2)\spy{0pt}{\,$<_\kappa$\hspace*{0.1pt}} \!\!\ldots$}
\]
whereas in the sign-reversed colexicographic order, we have
\[ \scalebox{0.95}{$ \young(11)\spy{0pt}{\,$<$\hspace*{1pt}} 
\young(12)\spy{0pt}{\,$<$\hspace*{1pt}} \young(22)\spy{0pt}{\,$<$\hspace*{1pt}}
\young(13)\spy{0pt}{\,$<$\hspace*{1pt}} \young(23)\spy{0pt}{\,$<$\hspace*{1pt}} \!\!\ldots\!\!
\spy{0pt}{\,$<$\hspace*{1pt}}\young(\oM1)\spy{0pt}{\,$<$\hspace*{1pt}} \young(\oM2)\spy{0pt}{\,$<$\hspace*{1pt}}\!\! \ldots $}\ .
\]
\end{example}




\begin{example}\label{ex:singletonAdapted}
By Lemma~\ref{lemma:maximalAndStronglyMaximalSingletonSemistandardSignedTableauFamilies},
if $\swtp{\kappa}$ is the signed weight of a strongly $c^\+$-maximal 
singleton semistandard signed tableau family of shape $\muS$
then the family is $\{ t_{\ell(\kappa^\-)}(\muS) \}$; since by Remark~\ref{remark:greatestSignedTableauIsLeastInSignedColexicographicOrder},
 $t_{\ell(\kappa^\-)}(\muS)$
 is the unique least semistandard signed tableau in the signed colexicographic order if
$\epsilon = -1$ and in the sign-reversed colexicographic order if $\epsilon = +1$,
in this special case the adapted order agrees with the usual order.
\end{example}

\begin{definition}\label{defn:adaptedPlethysticSemistandardSignedTableau}
Given a strongly maximal signed weight $\swtp{\kappa}$, 
let  $\PSSYT_\kappa(\rho, \muS)$ 
be the set of all plethystic semistandard
of outer shape $\rho$ and inner shape $\muS$
with negative entries from $\bigl\{-1,\ldots,-\ell(\kappa^\-)\bigr\}$, defined
as in Definition~\ref{defn:plethysticSemistandardSignedTableau}, but
using the $\swtp{\kappa}$-adapted colexicographic order to order the inner $\muS$-tableaux.
We say that such plethystic semistandard signed tableaux are $\swtp{\kappa}$-\emph{adapted}.
We write $\PSSYT_\kappa(\rho, \muS)_{(\pi^\-,\pi^\+)}$ for the adapted plethystic
semistandard signed tableaux in $\PSSYT_\kappa(\rho, \muS)$ whose signed weight is $(\pi^\-,\pi^\+)$.
\end{definition}

This is the obvious extension of the notation in Definitions~\ref{defn:plethysticSignedTableau}
and~\ref{defn:signedWeightPlethystic}.


\subsection{Overview and running example}\label{subsec:overviewSignedWeightBoundOuter}
Fix a strongly maximal signed weight $\swtp{\kappa}$.
We shall substantially simplify the exposition in this section and from \S\ref{subsec:exceptionalColumnBound}
onwards by
stating and proving all results only in the case when 
$\swtp{\kappa}$ has sign $+1$.
The modifications for sign $-1$ are  routine and are given briefly
in \S\ref{subsec:negativeSignAndCombined} at the end of this section.

\begin{remark}\label{remark:boundedExceptionalColumns}
The underlying principle in this section and the next is
\emph{provided $M$ is sufficiently large,
every $\swtp{\kappa}$-adapted plethystic semistandard signed tableau in 
$\PSSYTwk{\kappa}{\nu + M(1^R)}{\muS}{\lambda^\- + M\kappa^\-}{\lambda^\+ + M\kappa^\+}$ 
has the elements of $\SMw{\kappa}$, which form the inner tableaux of the
plethystic semistandard signed
tableau $\TMw{\kappa}$, in the top $R$ positions of almost all its columns}.
In Definition~\ref{defn:exceptionalColumnAndRow} we say that such columns are `typical'.
In particular, as we make precise in Corollary~\ref{cor:boundOnExceptionalColumns},
the number of columns whose top $R$ positions contain semistandard signed tableaux whose
total signed weight is \emph{not}
dominated in the $\ell^\-$-signed dominance order by $\swtp{\kappa}$ is bounded \emph{independently}
of~$M$.
\end{remark}

We see this principle in the first running example begun below, proving
the special case of Theorem~\ref{thm:nuStable} that
$\langle s_{(2,1) + M(1,1,1)} \circ s_{(2)},  s_{(4,2) + M(4,1,1)}\rangle$
is ultimately constant.

\begin{example}\label{ex:stronglyMaximalSignedWeightSWLii411}
Let $\swtp{\kappa}  = \bigl(\varnothing, (4,1,1)\bigr)$ be the strongly maximal signed
weight of the column-type tableau family
$\left\{ \,\young(11)\spy{0pt}{,\ts} \young(12)\spy{0pt}{,\ts} \young(13)\, \right\}$
in Example~\ref{ex:adaptedColexicographicOrder411} of shape $(2)$, size $3$ and sign $+1$.
To apply the Signed
Weight Lemma (Lemma~\ref{lemma:SWL})
to prove that
$\langle s_{(2,1) + M(1,1,1)} \circ s_{(2)},  s_{(4,2) + M(4,1,1)}\rangle$
is ultimately constant,
it is natural to look for a stable partition system $\bigl(\PSeq{M})_{M \in \N_0}$
such that $(4,2) + M(4,1,1) \in \PSeq{M}$ for each $M \in \N_0$ and, for condition~(ii), such that
\[ |\PSSYTwk{(\varnothing, (4,1,1))}{(2,1)+M(1,1,1)}{(2)}{\varnothing}{\pi}|\]
 is ultimately constant for all $\pi \in \PSeq{M}$. Note
that here the first subscript refers to adapted plethystic semistandard
signed tableaux (in the sense of Definition~\ref{defn:adaptedPlethysticSemistandardSignedTableau}),
whose inner $(2)$-tableaux are ordered according to the $\bigl(\varnothing, (4,1,1)\bigr)$-sign-reversed colexicographic order. In this case however,
since there are no negative entries, the distinction between the sign-reversed colexicographic
order and the usual colexicographic order is irrelevant.

In the special case where $\pi = (4,2) + M(4,1,1)$ the $\bigl(\varnothing, (4,1,1)\bigr)$-adapted
plethystic semistandard signed tableaux 
in this set are, when $M=0$,
\[ \pyoung{1.2cm}{0.7cm}{ {{\young(11), \young(11)}, {\young(22)}} }  \spy{18pt}{,}\quad
   \pyoung{1.2cm}{0.7cm}{ {{\young(11), \young(12)}, {\young(12)}} }  
\]
and, when $M=1$,
\[ \scalebox{0.8}{$\pyoung{1.2cm}{0.7cm}{ {{\young(11), \young(11), \young(11)}, 
                                           {\young(12), \young(12)}, {\young(23)}} }  \spy{26pt}{,}
\pyoung{1.2cm}{0.7cm}{ {{\young(11), \young(11), \young(11)}, {\young(12), \young(13)}, {\young(22)}} }  \spy{26pt}{,}
\pyoung{1.2cm}{0.7cm}{ {{\young(11), \young(11), \young(11)}, {\young(12), \young(22)}, {\young(13)}} }  \spy{26pt}{,}
\pyoung{1.2cm}{0.7cm}{ {{\young(11), \young(11), \young(12)}, {\young(12), \young(12)}, {\young(13)}} } 
\spy{26pt}{.}$}  
\]
 There are four plethystic semistandard signed tableaux when $M=2$.
 They are obtained by inserting the 
tableau $\TMw{\kappa}$ shown in the margin as a new first column into each of the four plethystic
semistandard signed tableaux for $M=1$. Note that since $\young(13) <_\kappa \young(22)$ in the
$\bigl(\varnothing,(4,1,1)\bigr)$-adapted colexicographic order, this preserves the semistandard condition even when we insert into the second plethystic semistandard
tableau for $M=1$.
\marginpar{ \qquad\qquad
\raisebox{0.1cm}{\scalebox{0.8}{$\quad\ \pyoung{1.2cm}{0.7cm}{ {{\young(11)}, {\young(12)}, {\young(13)}} }$}}}
\end{example}

In the previous example we saw that 
inserting $\TMw{\kappa}$ as a new column of height $R$ in a $\swtp{\kappa}$-adapted
plethystic semistandard signed tableau
of weight $\lambda \oplus M\swtp{\kappa}$ 
give a bijection
establishing hypothesis (ii) in the Signed Weight Lemma (Lemma~\ref{lemma:SWL}).
But still it is not obvious how to choose $\PSeq{M}$, or that the same bijection
will work when $\lambda \oplus M\swtp{\kappa}$ is replaced with an arbitrary 
$\pi$ in the relevant $\PSeq{M}$. We continue the example to show
one difficulty, circumvented using the final result
of this section (see  Corollary~\ref{cor:signedWeightBoundForStronglyMaximalSignedWeight}).

\begin{example}\label{ex:stronglyMaximalSignedWeightSWLi411}
Since the greatest partition $(6)$ in the dominance order is obviously an upper bound for
the constituents of $s_{(2,1)} \circ s_{(2)}$, Example~\ref{ex:stronglyMaximalSignedWeightSWLii411}
suggests we might apply the Signed
Weight Lemma (Lemma~\ref{lemma:SWL}) with the stable
partition system
\[ \begin{split} 
\bigl[(4,2){}+{}& M(4,1,1), (6)+M(4,1,1)\bigr]_\unlhd \\
&\quad = \bigl\{(4,2) + M(4,1,1), (5,1) + M(4,1,1), (6) + M(4,1,1) \bigr\}. \end{split} \]
for $M \in \N_0$.
(Here $\unlhd$ is the usual dominance order: 
by Remark~\ref{remark:twistedDominanceOrderGeneralizesDominanceOrder}, this is the
$0$-twisted dominance order, so we have $\ell^\- = 0$ and the symmetric functions in the lemma are $h_\pi$ for $\pi \in \Par$.)
However condition (i) in the Signed Weight Lemma fails when $M=1$:
we have $(8,3,1) \in [(4,2)+(4,1,1), (6) + (4,1,1)]_\unlhd$ and since $(8,4) \unrhd (8,3,1)$ 
and $s_{(8,4)}$ is a constituent of $s_{(3,2,1)} \circ s_{(2)}$
--- for instance, this follows from the generalized Cayley--Sylvester formula~\eqref{eq:twoRow} in~\S\ref{subsec:twoRow}
--- 
we have
\[ s_{(8,4)} \in \supp(h_{(8,3,1)}) \meet \supp (s_{(3,2,1)} \circ s_{(2)} ). \]
But $(8,4) \not\in [(4,2)+(4,1,1), (6) + (4,1,1)]_\unlhd$ since $(8,4)$ and $(10,1,1)$ are incomparable.
It might seem that the problem is that our
chosen upper bounds $(6) + M(4,1,1)$ are too small to contain
all partitions in the support of $s_{(2,1)+M(1,1,1)} \circ s_{(2)}$. However, one can show using
Theorem 1.5 of \cite{deBoeckPagetWildon} that the 
maximal constituents of $s_{(2,1) + M(1,1,1)} \circ s_{(2)}$
are precisely the partitions
\begin{equation}
\label{eq:stronglyMaximalSignedWeightSWLi}
\bigl\{ (5,1) + (M-a) (4,1,1) + a(3,3) : 0 \le a \le M \bigr\} 
\end{equation}
and since $(4,1,1)$ and $(3,3)$ are incomparable in the dominance order, 
there is \emph{no} stable partition system of intervals 
\begin{equation}
\label{eq:unstablePartitionSystem}
[\lambda + (M-S)(4,1,1), \omega + (M-S)(4,1,1)]_\unlhd 
\end{equation}
with $\lambda$ and $\omega$ partitions of $6S$
that contains
all the maximal constituents of $s_{(2,1) + M(1,1,1)} \circ s_{(2)}$ 
for all~$M \ge S$, or even
for all $M$ sufficiently large.
(Beginning with partition of $6S$ gives ample freedom 
to avoid technical
issues to do with `largeness', in the sense of Definition~\ref{defn:large} and Definition~\ref{defn:weightLarge}, so this is not the problem.)
Perhaps surprisingly, \emph{we conclude that it is essential to use the lower bound as well}.
And because a plethystic semistandard signed tableau
of shape $(2,1) + M(1,1,1)$ and signed weight $\bigl( \varnothing,
(4,2) + M(4,1,1)\bigr)$ may have an exceptional column in the sense of Definition~\ref{defn:exceptionalColumnAndRow} (see Example~\ref{ex:exceptionalColumnsGeneralWeight411}),
the stable partition system we define has to start with partitions of $12$.
(This corresponds to taking $S=1$ in~\eqref{eq:unstablePartitionSystem}.)
Since $(4,2) + (4,1,1) = (8,3,1)$ 
we therefore consider the intervals
\begin{equation}\label{eq:Kintervals} \bigl[ (8,3,1) + N(4,1,1), (12) + N(4,1,1) \bigr]_\unlhd \end{equation}
for $N \in \N_0$. 
If $\pi \in \bigl[(8,3,1) + N(4,1,1), (12) + N(4,1,1)\bigr]_\unlhd$
and 
\[ \sigma \in \supp(h_\pi) \cap \supp (s_{(2,1) + (N+1)(1,1,1)} \circ s_{(2)}) \]
then,
by Lemma~\ref{lemma:twistedKostkaMatrix} applied to $\mathrm{supp}(h_\pi)$ we have
\begin{align}
\label{eq:sigma831atLeast} \sigma \,&\unrhd\, \pi \,\unrhd\, (8,3,1) + N(4,1,1)
\intertext{
and, by~\eqref{eq:stronglyMaximalSignedWeightSWLi},
we have}
 \sigma &\unlhd (9,2,1) + (N-a)(4,1,1) +a(3,3) \nonumber
\\ & \hspace*{1.5in} = (9-a,2+2a,1-a) + N(4,1,1) \label{eq:sigma831atMost} \end{align}
for some $a$ with $0 \le a \le N$.
If $a=0$ then, by~\eqref{eq:sigma831atMost},  $\sigma \,\unlhd\, (9,2,1) + N(4,1,1) \unlhd (12) + N(4,1,1)$. 
Similarly if $a=1$ then $\sigma \unlhd (8,4) + N(4,1,1) \unlhd (12) + N(4,1,1)$,
and, despite involving the problematic partition $(8,4)$, thanks to our choice in~\eqref{eq:Kintervals},
$\sigma$ is in the interval for all $N \in \N_0$.
Finally if $a \ge 2$ then we must have $N \ge 1$ and 
we get $\sigma_1\le 9-a + 4N$
which, using the \emph{lower bound} on the intervals in~\eqref{eq:Kintervals}
--- justified by~\eqref{eq:sigma831atLeast} obtained using Lemma~\ref{lemma:twistedKostkaMatrix}
--- 
that $\sigma \unrhd (8,3,1) + N(4,1,1)$,
contradicts that $\sigma_1 \ge 8 + 4N$.
 Therefore  condition~(i) in the Signed Weight Lemma (Lemma~\ref{lemma:SWL})
holds for all $N \in \N_0$. We note that this 
contradiction 
was obtained by comparing in the dominance order
just on the first part, and correspondingly, $(4,1,1)$ is a strongly $1$-maximal
signed weight.
\end{example}

We continue this example in Example~\ref{ex:exceptionalColumns411}.

\subsection{Exceptional columns and rows}\label{subsec:exceptionalColumnsAndRows}
We define the \emph{signed weight} of a subset $\mathcal{B}$ of the boxes
of a plethystic semistandard tableau to be the sum of the weights
of the inner tableaux in $\mathcal{B}$.
In the following definition we use the $\ell^\-$-signed dominance order on the set $\W_{\ell^\-} \times \W$
defined in Definition~\ref{defn:ellSignedDominanceOrder}
to compare $\swtp{\phi}$ and $\swtp{\kappa}$.
Adapted plethystic semistandard signed tableaux are defined in 
Definition~\ref{defn:adaptedPlethysticSemistandardSignedTableau}.

\begin{definition}\label{defn:exceptionalColumnAndRow} 
Let $(\kappa^\-, \kappa^\+)$ be a strongly
$c^\+$-maximal signed weight of shape $\muS$ and size $R$.
Let $T$ be a $\swtp{\kappa}$-adapted
plethystic semistandard signed tableau of inner shape $\muS$. 
When $\swtp{\kappa}$ has sign $+1$, we say that a column
of~$T$ of height at least $R$ whose top $R$ boxes
have signed weight $(\phi^\-, \phi^\+)$ is
\emph{small} if $\swtp{\phi} \lhd \swtp{\kappa}$,
\emph{typical} if the top $R$ boxes
in the column form the plethystic semistandard signed tableau $T_{(\kappa^\-,\kappa^\+)}$
and \emph{exceptional} if $\swtp{\phi} \notunlhd \swtp{\kappa}$.
In the latter case we say the column is
\begin{defnlist}\vspace*{1pt}
\item \emph{large-exceptional} if
$\ell(\phi^\+) > \ell(\kappa^\+)$;
\item \emph{negative-exceptional} if $|\phi^\-| < |\kappa^\-|$;
\item \emph{positive-exceptional} if $|\phi^\-| + \sum_{i=1}^{c^\+} \phi^\+_i < |\kappa^\-| + \sum_{i=1}^{c^\+} \kappa^\+_i$
.\vspace*{1pt}
\end{defnlist}
When $\swtp{\kappa}$ has sign $-1$ we 
make the analogous definitions replacing `column' with `row', now considering the leftmost $R$ boxes in the row.
\end{definition}

The relevant strongly maximal signed weight $\swtp{\kappa}$ in this definition
will always be clear from the context. 
We shall prove in  Lemma~\ref{lemma:columnIsExceptionalOrBounded}(i)
that a column is either small, typical or exceptional, and 
in Lemma~\ref{lemma:columnIsExceptionalOrBounded}(ii)
that an exceptional column is either large-exceptional, negative-exceptional or positive-exceptional.
Note the latter three cases are not mutually exclusive: in fact any combination of them may hold. 

\begin{remark}\label{remark:singletonExceptional}
If
 $R=1$ then, by Lemma~\ref{lemma:maximalAndStronglyMaximalSingletonSemistandardSignedTableauFamilies},
the unique strongly maximal semistandard
signed tableau family of shape $\muS$ is $\{t_{\ell^\-}(\muS)\}$. 
By Lemma~\ref{lemma:greatestSignedWeight},
$\{t_{\ell^\-}(\muS)\}$ has the greatest signed weight,
in
the $\ell^\-$-signed dominance order on all $\muS$-tableaux with
entries from $\{-1,\ldots,-\ell^\-\} \cup \N$. 
Therefore in the notation of Definition~\ref{defn:exceptionalColumnAndRow},
we always have $\swtp{\phi} \unlhd \swtp{\kappa}$ where $\swtp{\kappa} = 
\bigl(\omega_{\ell^\-}(\muS)^\-, \omega_{\ell^\-}(\muS)^\+ \bigr)$ is the signed
weight of $t_{\ell^\-}(\muS)$, and so there are no
exceptional columns or rows. It is instructive to see how the remaining results in this
\marginpar{
\scalebox{0.7}{\quad\, \pyoungAnnotated{1.2cm}{0.7cm}{ {{\young(11), \young(11), \young(11)}, 
                                           {\young(12), \young(12)}, {\young(23)}} }{{\!\!small}}
                                           } }%
\marginpar{ \scalebox{0.7}{\!\! \pyoungAnnotated{1.2cm}{0.7cm}{ {{\young(11), \young(11), \young(11)}, {\young(12), \young(13)}, {\young(22)}} }{{\quad positive-e}} }}%
\marginpar{ \scalebox{0.7}{\quad \pyoungAnnotated{1.2cm}{0.7cm}{ {{\young(11), \young(11), \young(11)}, {\young(12), \young(22)}, {\young(13)}} }{{typical}}   }}%
\marginpar{ \scalebox{0.7}{\quad \pyoungAnnotated{1.2cm}{0.7cm}{ {{\young(11), \young(11), \young(12)}, {\young(12), \young(12)}, {\young(13)}} }{{typical}} }}%
section specialize to easy corollaries of Lemma~\ref{lemma:greatestSignedWeight} in
this case: we summarise the situation in Remark~\ref{remark:singletonSimpler} at the 
end of this section.
\end{remark}

\begin{example}\label{ex:exceptionalColumns411}
Take the strongly $1$-maximal signed weight $\bigl(\varnothing, (4,1,1)\bigr)$. 
Of the four $(\varnothing, (4,1,1))$-adapted
plethystic semistandard signed tableaux in the set
 $\PSSYTwk{(\varnothing, (4,1,1))}{(3,2,1)}{(2)}{\varnothing}{(8,3,1}$
shown at the end of Example~\ref{ex:stronglyMaximalSignedWeightSWLii411}, 
and repeated in the margin for ease of reference, the 
first column of the first tableau 
has signed weight $\bigl( \varnothing, (3,2,1)\bigr) \unlhd (\varnothing, (4,1,1)\bigr)$
so is small. The
first column of the
second has signed weight $\bigl(\varnothing, (3,3)\bigr)$, which is incomparable with 
$\bigl(\varnothing, (4,1,1)\bigr)$; this column is 
therefore
exceptional and since
the sums on the left- and right-sides of~(c) are $3$ and~$4$ respectively
it is positive-exceptional. 
The final two
tableaux each have first column $T_{(\varnothing, (4,1,1))}$ of
signed weight $\bigl(\varnothing, (4,1,1) \bigr)$; these two columns are typical.
The boxes not in the column of height $3$ are not classified
by Definition~\ref{defn:exceptionalColumnAndRow}.
\end{example}

This example is continued in Example~\ref{ex:exceptionalColumnsGeneralWeight411}.
We now give a further example to show the full generality
of Definition~\ref{defn:exceptionalColumnAndRow}.

\begin{example}\label{ex:exceptionalColumns22and31}
Let $(\kappa^\-,\kappa^\+) = \bigl( (2,2), (3,1) \bigr)$.
By Example~\ref{ex:LawOkitaniSignedWeightsAreStronglyMaximal}(ii), 
this is the strongly $1$-maximal weight of the column-type tableau family 
$\mathcal{M}_{((2,2),(3,1))}$ of shape $(4)$, size $2$ and sign $+1$ 
shown below 
\[ \left\{\hskip1pt\young(\oM\tM11)\, ,\ \young(\oM\tM12) \hskip1pt\right\} \,. \]
The special
case $\nu = (2,1)$ and $\muS = (4)/\varnothing$ of Theorem~\ref{thm:nuStable} is 
that the plethysm coefficients
$\langle s_{(2,1) + (M, M)} \circ s_{(4)}, s_{\lambda \sqcups (2^M) + M(3,1)}
\rangle$ are ultimately constant for all partitions~$\lambda$ of $12$.
First we take $\lambda = (8,3,1)$ with $2$-decomposition $\dec{(3,2)}{(6,1)}$.
There are three $((2,2),(3,1))$-adapted
plethystic semistandard tableaux in 
$\PSSYTwAdapted{(2,1)}{(4)}{(3,2)}{(6,1)}{((2,2),(3,1))}$, namely
\[ \scalebox{0.7}{$\pyoungAnnotated{2.0cm}{0.7cm}{ {{\young(\oM\tM11), \young(\oM\tM11)}, 
{\young(\oM112)}}}{{\text{small}}}  \spy{20pt}{\scalebox{1.4}{,}}\
\pyoungAnnotated{2.0cm}{0.7cm}{ {{\young(\oM\tM11), \young(\oM\tM12)}, {\young(\oM111)}} }{{\text{negative-e}}} \spy{20pt}{\scalebox{1.4}{,}}\
\pyoungAnnotated{2.0cm}{0.7cm}{ {{\young(\oM\tM11), \young(\oM111)}, {\young(\oM\tM12)}} }{{\text{\!\!\!\!\!typical}}} \spy{20pt}{\scalebox{1.4}{.}}$}
\]
The first column of the first has signed weight $\bigl((2,1),(4,1)\bigr)$ which is dominated
in the signed dominance order
by $((2,2),(3,1))$. Therefore it is small.
The first column of the second has signed weight $\bigl((2,1),(5)\bigr)$
so is not small, but instead is negative-exceptional, being deficient in negative
entries. In the third the first column is typical. Since the second columns are singleton,
they are not classified by Definition~\ref{defn:exceptionalColumnAndRow}.
Growing by $\nu \mapsto \nu + (1,1)$ and $\lambda \mapsto \lambda \oplus \bigl((2,2), (3,1)\bigr)$
as in Theorem~\ref{thm:nuStable}, we find that
the four 
plethystic semistandard signed tableaux in the set 
$\PSSYTwAdapted{(3,2)}{(4)}{(5,4)}{(9,2)}{((2,2),(3,1)}$ are 

\smallskip
\centerline{\scalebox{0.7}{$\pyoungAnnotated{2.0cm}{0.7cm}{ {{\young(\oM\tM11), \young(\oM\tM11), \young(\oM\tM11)}, {\young(\oM\tM12), \young(\oM112)}} }{{\!\!\!\!\!\!\!\!\!\text{typical}, \text{small}}} \spy{20pt}{\scalebox{1.4}{,}}
\pyoungAnnotated{2.0cm}{0.7cm}{ {{\young(\oM\tM11), \young(\oM\tM11), \young(\oM\tM12)}, {\young(\oM\tM12), \young(\oM111)}} }{{\text{\!\!\!\!\!\!\!\!\!typical}, \text{\!\!\!negative-e}}}  \spy{20pt}{\scalebox{1.4}{,}}
\pyoungAnnotated{2.0cm}{0.7cm}{ {{\young(\oM\tM11), \young(\oM\tM11), \young(\oM\tM11)}, {\young(\oM\tM22), \young(\oM111)}} }{{\text{\!\!\!\!\!\!\!\!\!small}, \text{\!\!\!negative-e}}}   \spy{20pt}{\scalebox{1.4}{,}}
\pyoungAnnotated{2.0cm}{0.7cm}{ {{\young(\oM\tM11), \young(\oM\tM11), \young(\oM111)}, {\young(\oM\tM12), \young(\oM\tM12)}} }{{\text{\!\!\!\!\!\!typical}, \text{\!\!\!\!\!\!typical}}}  
\spy{20pt}{\scalebox{1.4}{.}}$}}
\smallskip

\noindent We remark that in fact
\[ |\PSSYTwAdapted{(2,1)+M(1,1)}{(4)}{(3+2M,2+2M)}{(6+3M,1+M)}{((2,2),(3,1)}| = 4 \]
for all $M \ge 1$; a bijective proof is given by insertion of the 
plethystic semistandard signed tableau shown in the margin
corresponding to $\mathcal{M}_{((2,2),(3,1))}$ as a new first column;
this is the $\mathcal{H}$ map in the proof of Theorem~\ref{thm:nuStableSharp}.
\marginpar{\qquad\qquad\scalebox{0.7}{\pyoung{2.0cm}{0.7cm}{ {{\young(\oM\tM11)}, {\young(\oM\tM12)}} }}}
(In this case, unlike Example~\ref{ex:stronglyMaximalSignedWeightSWLii411},
the $\bigl((2,2),(3,1)\bigr)$-adapted colexicographic order
defined in Definition~\ref{defn:adaptedSignedColexicographicOrder}
coincides with the usual sign-reversed colexicographic order of 
Definition~\ref{defn:signedColexicographicOrder}, and so using
\emph{either} order, the insertion map preserves semistandardness.)
Computation by computer algebra shows that the
constant value of the plethysm coefficient is $2$.

In this case there are no large-exceptional columns because the maximum positive
entry permitted by the signed weight $\bigl((3+2M,2+2M),(6+3M,1+M)\bigr)$,
namely $2$,
is also the length of the positive part of the strongly maximal weight, namely 
$\ell\bigl((3,1)\bigr) = 2$.

\enlargethispage{6pt}
We now extend $\mathcal{M}_{((2,2),(3,1))}$ to the strongly $1$- and $2$-maximal
tableau family still of shape $(4)$ and sign $+1$ but now of size $3$ 
and signed weight $\bigl((3,3), (3,3)\bigr)$ shown below.
\[ \left\{\hskip1pt\young(\oM\tM11)\, ,\ \young(\oM\tM12)\, ,\ \young(\oM\tM22) \hskip1pt\right\} \,. \]
Its weight appears in the bottom right column of Table 4.23 for $\ell^\- = 2$.
In order to show the large-exceptional case in an example of manageable size,
we choose to regard this weight as strongly $1$-maximal.
The three plethystic semistandard signed tableaux in 
$\PSSYTwAdapted{(3,3)}{(3,3)}{(2,1,1)}{(4)}{((4,4),(5,2,1))}$ are 
\[ \scalebox{0.7}{$\pyoungAnnotated{2.0cm}{0.7cm}{ {{\young(\oM\tM11), \young(\oM\tM11)}, 
{\young(\oM\tM22)}, {\young(\oM\tM13)}}}{{\text{small}}}  \spy{27pt}{\scalebox{1.4}{,}}\
\pyoungAnnotated{2.0cm}{0.7cm}{ {{\young(\oM\tM11), \young(\oM\tM11)}, 
{\young(\oM\tM12)}, {\young(\oM\tM23)}}}{{\text{small}}}
 \spy{27pt}{\scalebox{1.4}{,}}\
\pyoungAnnotated{2.0cm}{0.7cm}{ {{\young(\oM\tM11), \young(\oM\tM12)}, 
{\young(\oM\tM12)}, {\young(\oM\tM13)}}}{{\text{large-e}}}
\spy{27pt}{\scalebox{1.4}{.}}$}
\]
The final plethystic semistandard signed tableau has first column of signed weight $\bigl((3,3),(4,1,1)\bigr)$.
This signed weight is not dominated by $\bigl((3,3), (3,3)\bigr)$ so it is not small,
and since the negative parts agree it is not negative-exceptional.
Since $|(3,3)| + 4 \not< |(3,3)| + 3$ it is not positive-exceptional (this is the relevant
comparison because of our choice that $c^\+ = 1)$.
But since it has an entry of $3$, it is large-exceptional.
We remark that since the negative part of the signed weight $((4,4),(5,2,1))$
is $(4,4)$, every inner tableau has the form $\young(\oM\tM\cdot\cdot)$\hskip1pt;
this explains the absence of negative-exceptional columns in this case. 



\end{example}

\subsection{Exceptional column bound}\label{subsec:exceptionalColumnBound}
The aim of this subsection is to prove Lemma~\ref{lemma:boundOnExceptionalColumns}, 
making the underlying principle
in Remark~\ref{remark:boundedExceptionalColumns}  precise. As promised earlier, to simplify
the exposition, from now on we assume that the strongly maximal signed weight has sign $+1$.
See \S\ref{subsec:negativeSignAndCombined} for the modifications for sign $-1$.

In the next lemma it is important to note that while the strongly maximal
signed weights defined in Definition~\ref{defn:stronglyMaximalSignedWeight} 
are of tableau families having all elements of the same sign
(by (b) the sign is $+1$ for column-type and $-1$ for row-type),
the comparison in Definition~\ref{defn:maximalSignedWeight}
is with all families of the relevant shape and size, with negative
entries from the prescribed set 
$\bigl\{-1,\ldots, -\ell(\kappa^\-)\bigr\}$ --- see the italicised
end to the paragraph after Definition~\ref{defn:maximalSignedWeight}.
We remind the reader that, by Remark~\ref{remark:TAdapted},
$T_{(\kappa^\-,\kappa^\+)}$ is semistandard in the
$\swtp{\kappa}$-adapted colexicographic order.

\begin{lemma}\label{lemma:columnIsExceptionalOrBounded}
Let $(\kappa^\-, \kappa^\+)$ be a strongly
maximal signed weight of shape $\muS$, size $R$ and sign $+1$.
Let $T$ be a $\swtp{\kappa}$-adapted plethystic semistandard signed tableau of inner shape $\muS$. 
Let $\swtp{\phi}$ be the signed weight of the top $R$ boxes in a column of $T$.
\begin{thmlist}
\item If $\swtp{\phi} = \swtp{\kappa}$ then the top $R$ boxes
in the column form the plethystic semistandard signed tableau $T_{(\kappa^\-,\kappa^\+)}$.
\item If $\swtp{\phi} \notunlhd \swtp{\kappa}$,
then the column is large-exceptional, negative-exceptional or positive-exceptional.
\end{thmlist}
\end{lemma}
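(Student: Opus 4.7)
The plan is to argue \emph{(i)} by uniqueness and \emph{(ii)} by contradiction using Lemma~\ref{lemma:stronglyMaximalImpliesMaximal}.

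For \emph{(i)}, I would argue that the top $R$ boxes of the column, read top-to-bottom, constitute a plethystic semistandard signed tableau of outer shape $(1^R)$ and inner shape $\muS$ with respect to the $\swtp{\kappa}$-adapted colexicographic order; hence their entries form a column-type semistandard signed tableau family of shape $\muS$, size $R$, and signed weight $\swtp{\kappa}$. Lemma~\ref{lemma:stronglyMaximalSemistandardSignedTableauFamilyIsUnique} then forces this family to equal $\SMw{\kappa}$ as a multiset. Since $\swtp{\kappa}$ has sign $+1$, all entries are positive and (by the same lemma) distinct, so their arrangement in a single column is uniquely determined by strict increase in the adapted order; on positive tableaux the adapted order coincides with the signed colexicographic order used to define $\TMw{\kappa}$ (cf.\ Remark~\ref{remark:TAdapted}), so the top $R$ boxes must form $\TMw{\kappa}$.

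For \emph{(ii)}, assume toward a contradiction that none of the three conditions holds, so $\ell(\phi^\+) \le \ell(\kappa^\+)$, $|\phi^\-| \ge |\kappa^\-|$, and $|\phi^\-| + \sum_{i=1}^{c^\+} \phi^\+_i \ge |\kappa^\-| + \sum_{i=1}^{c^\+} \kappa^\+_i$. The top $R$ boxes form a column-type semistandard signed tableau family $\mathcal{S}$ of signed weight $\swtp{\phi}$ with negative entries in $\{-1,\ldots,-\ell^\-\}$, where $\ell^\- = \ell(\kappa^\-)$. I would then select a \emph{maximal} such family $\T$ (of the same shape, size, and type) whose signed weight $\swtp{\psi}$ dominates $\swtp{\phi}$ in the $\ell^\-$-signed dominance order. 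If $\T = \SMw{\kappa}$ then $\swtp{\phi} \unlhd \swtp{\kappa}$, directly contradicting the hypothesis, so $\T \ne \SMw{\kappa}$ and Lemma~\ref{lemma:stronglyMaximalImpliesMaximal} applies to give $|\psi^\-| \le |\kappa^\-|$.

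Taking the $\ell^\-$th prefix sum in $\swtp{\phi} \unlhd \swtp{\psi}$ yields $|\phi^\-| \le |\psi^\-|$, so combined with $|\phi^\-| \ge |\kappa^\-| \ge |\psi^\-|$ the three quantities are equal. Given $|\phi^\-| = |\psi^\-|$, unwinding the definition of the $\ell^\-$-signed dominance order shows that $\phi^\+$ and $\psi^\+$ are partitions of a common size satisfying $\phi^\+ \unlhd \psi^\+$; in particular $\ell(\psi^\+) \le \ell(\phi^\+) \le \ell(\kappa^\+)$, i.e.\ $\max \T \le \max \SMw{\kappa}$. The equality clause of Lemma~\ref{lemma:stronglyMaximalImpliesMaximal} then rules out the first alternative and forces $|\psi^\-| + \sum_{i=1}^{c^\+} \psi^\+_i < |\kappa^\-| + \sum_{i=1}^{c^\+} \kappa^\+_i$; combining with $\sum_{i=1}^{c^\+} \phi^\+_i \le \sum_{i=1}^{c^\+} \psi^\+_i$ (from $\phi^\+ \unlhd \psi^\+$) contradicts the failure of positive-exceptionality.

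The step requiring the most care is the unwinding of $\swtp{\phi} \unlhd \swtp{\psi}$ once $|\phi^\-| = |\psi^\-|$, which must yield both $|\phi^\+| = |\psi^\+|$ and $\phi^\+ \unlhd \psi^\+$ as partitions of this common size (the length inequality then follows). Beyond this bookkeeping, the argument reduces to a direct application of Lemma~\ref{lemma:stronglyMaximalImpliesMaximal} to the extremising family $\T$.
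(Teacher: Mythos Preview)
Your proof is correct and follows essentially the same strategy as the paper: part (i) via the uniqueness in Lemma~\ref{lemma:stronglyMaximalSemistandardSignedTableauFamilyIsUnique}, part (ii) by passing to a maximal family $\T$ dominating $\swtp{\phi}$ and invoking Lemma~\ref{lemma:stronglyMaximalImpliesMaximal}. The paper argues (ii) directly (assume only not large-exceptional, deduce negative- or positive-exceptional) rather than by contradiction, and absorbs the step $\max \T \le \max \SM$ into the phrase ``since the column is not large-exceptional''; your version makes this step explicit, which is arguably clearer. One minor slip: $\phi^\+$ is only a composition, not necessarily a partition, but the inequality $\ell(\psi^\+) \le \ell(\phi^\+)$ you need still follows from $\phi^\+ \unlhd \psi^\+$ as compositions of a common size.
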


\begin{proof}
Part (i) follows from the uniqueness
of the plethystic semistandard signed tableau family corresponding to a 
strongly maximal signed weight, proved in Lemma~\ref{lemma:stronglyMaximalSemistandardSignedTableauFamilyIsUnique}.
For (ii) let $\swtp{\phi} \notunlhd \swtp{\kappa}$ and suppose that the column is not large-exceptional.
Thus $\ell(\phi^\+) \le \ell(\kappa^\+)$.
Let $\swtp{\psi}$ be a maximal signed weight in the dominance order on $\W_{\ell(\kappa^\-)} \times \W$ 
of a column-type
semistandard signed tableau family of shape $\muS$ and size $R$
such that $\swtp{\phi} \unlhd \swtp{\psi}$,
as in Definition~\ref{defn:maximalSignedWeight}.  By hypothesis, $\swtp{\psi} \not= \swtp{\kappa}$.
Since the column is not large-exceptional, by
Lemma~\ref{lemma:stronglyMaximalImpliesMaximal},
either~\eqref{eq:stronglyMaximalM} holds and
we have
\begin{align*}
|\phi^\-| \le |\psi^\-| < |\kappa^\-| \\
\intertext{and the column is negative-exceptional or~\eqref{eq:stronglyMaximalP} 
holds and} 
\textstyle |\phi^\-| + \sum_{i=1}^{c^\+} \phi^\+_i \le
|\psi^\-| + \sum_{i=1}^{c^\+} \psi^\+_i &< \textstyle  |\kappa^\-| + \sum_{i=1}^{c^\+} \kappa^\+_i 
\end{align*}
and the column is positive-exceptional.
\end{proof}

To state our bound on the number of exceptional columns we need the  statistics on partitions
in the following two definitions.

\newcounter{defn:BBoxes}
\setcounter{defn:BBoxes}{\value{theorem}}
\begin{definition}\label{defn:BBoxes}
Given a partition $\nu$ and $R \in \N$, we define
$B_R(\nu) = |\nu| - R\nu_R$.
\end{definition}

In the general setting of
\S\ref{subsec:negativeSignAndCombined} this statistic is $B_R^\+(\nu)$,
and $B_R^\-(\nu)$ is
the row version of Definition~\ref{defn:BBoxes}
needed when $(\kappa^\-,\kappa^\+)$ has sign $-1$.

Equivalently $B_R(\nu)$ is the number of boxes 
 $(i,j)$ of $[\nu]$
such that either $i > R$ or $\nu'_j < R$;
these are precisely the boxes 
not in the top $R$ positions of a column
of height at least $R$. 
We shall use many times below that 
\begin{equation}\label{eq:BRconstant}
B_R\bigl(\nu + M(1^R) \bigr)= B_R(\nu)
\end{equation} for all $M \in \N_0$, as can be seen from Figure~\ref{fig:BR}.

\begin{figure}[h!]

\begin{center}\begin{tikzpicture}[x=0.5cm, y=-0.5cm, line width=0.5pt]
\begin{scope}[xshift = 1cm]
\fill[color = lightgrey] (9,0)--(14,0)--(14,1)--(13,1)--(13,1.5)--
(11,1.5)--(11,3)--(9,3)--(9,0);
\end{scope}

\fill[color = lightgrey] (0,4)--(4,4)--(3,4)--(3,5)--(2,5)--(2,5.5)
--(1,5.5)--(1,7)--(0,7)--(0,4);
\draw (0,0)--(4.5,0); \draw (5.5,0)--(15,0); 
\draw (11,4)--(6,4); 
\draw (7,4)--(5.5,4);
\draw (4,4)--(4.5,4);
\draw[dashed, line width=0.25pt] (4,0)--(4,4);

\begin{scope}[xshift = 1cm]
\draw (9,0)--(14,0)--(14,1)--(13,1)--(13,1.5);
\draw (11,2.5)--(11,3)--(9,3)--(9,4)--(7,4);
\node at (11.75,2) {\rotatebox{90}{$\scriptstyle \ddots$}};
\end{scope}

\draw (4,4)--(3,4)--(3,5)--(2,5)--(2,5.5);
\draw (1,6.5)--(1,7)--(0,7)--(0,0);
\draw[dashed, line width=0.25pt] (0,4)--(3,4);
\node at (1.25,6) {\rotatebox{90}{$\scriptstyle \ddots$}};
\node at (5,4) {$\scriptstyle \ldots$};
\node at (5,0) {$\scriptstyle \ldots$};

\draw[<-] (-0.5,0)--(-0.5,1.5);
\draw[->] (-0.5,2.5)--(-0.5,4.2);
\node at (-0.5, 2) {$\scriptstyle R$};

\draw[<-] (0,-0.5)--(1.5,-0.5);
\draw[->] (2.5,-0.5)--(3.8,-0.5);
\node at (2, -0.5) {$\scriptstyle \nu_R$};

\draw[<-] (4,-0.5)--(7,-0.5);
\draw[->] (8,-0.5)--(11,-0.5);
\node at (7.5, -0.5) {$\scriptstyle M$};

\end{tikzpicture}
\end{center}

\caption{The partition $\nu + M(1^R)$ with the boxes
not in the top~$R$ positions of a column of height at least~$R$
shaded. These are the boxes counted by
$B_R\bigl(\nu + M(1^R)\bigr)$. The diagram
also shows that $B_R\bigl(\nu + M(1^R)\bigr) = B_R(\nu)$
and that we can visualize the boxes added by the summand $M(1^R)$ as lying
in columns $\nu_R+1$, \ldots, $\nu_R + M$.
\label{fig:BR}
}
\end{figure}

Note also that 
if $T$ is a plethystic semistandard signed tableau
of outer shape $\nu$ and inner shape $\muS$ then, by
Lemma~\ref{lemma:greatestSignedWeight}, the contribution from the
boxes counted by $B_R(\nu)$ in this tableau to the signed weight of $T$ is at most
\begin{equation}\label{eq:BRtoSignedWeight}
 B_R(\nu)  \bigl(\omega_{\ell^\-}(\tauS)^\-, \omega_{\ell^\-}(\tauS)^\+ \bigr)
\end{equation}
where $\bigl(\omega_\ell^\-(\muS), \omega_\ell^\+(\muS) \bigr)$ is the
greatest signed weight of Definition~\ref{defn:greatestSignedWeight}.

\begin{definition}\label{defn:AStatistics}
Given a skew partition $\muS$ and $\ell^\- \in \N_0$ and $c^\+ \in \N_0$, we define
\smash{$\AmuSm = |\omega_{\ell^\-}(\muS)^\-|$}
and \smash{$\AmuSp = |\omega_{\ell^\-}(\muS)^\-|$ +} $\sum_{i=1}^{c^\+} \omega_{\ell^\-}(\muS)^\+_i$.
\end{definition}

Note that $\AmuSm$ is the number of negative entries in the $\ell^\-$-negative
greatest tableau $t_{\ell^\-}(\mu/\mus)$
of signed weight
$\bigl(\omega_{\ell^\-}(\muS)^\-, \omega_{\ell^\-}(\muS)^\+ \bigr)$;
as we saw in Lemma~\ref{lemma:greatestSignedWeight}
this 
is the greatest signed weight in the $\ell^\-$-signed dominance order (see
Definition~\ref{defn:ellSignedDominanceOrder})
of a semistandard signed tableau of shape $\muS$. 
In particular, no semistandard signed tableau of shape~$\muS$ 
with entries from $\bigl\{-1,\ldots,-\ell^\- \bigr\}$ can
have more than $\AmuSm$ negative entries.
The set $ \PSSYTwk{\kappa}{\nu + M(1^R)}{\muS}{\pi^\-}{\pi^\+}$
in the following lemma
is defined in Definition~\ref{defn:adaptedPlethysticSemistandardSignedTableau}.

\begin{lemma}[Bound on exceptional columns]\label{lemma:boundOnExceptionalColumns}
Let $\nu$ be a partition. 
Let $(\kappa^\-, \kappa^\+)$ be a strongly
$c^\+$-maximal signed weight of shape $\muS$, size $R$ and sign $+1$.
Fix $\ell^\- = \ell(\kappa^\-)$.
Let $\swtp{\lambda}$ and $\swtp{\pi} \in \W_{\ell^\-} \times \W$ be signed weights.
Let \[ T \in \PSSYTwk{\kappa}{\nu + M(1^R)}{\muS}{\pi^\-}{\pi^\+}.\]
If $\swtp{\pi} \unrhd \swtp{\lambda} + M\swtp{\kappa}$
then $T$ has at most

\smallskip
\begin{thmlist}
\item \smash{$\sum_{i=\ell(\kappa^\+)+1}^{\ell(\lambda^\+)}
\lambda^\+_i$} large-exceptional columns;\\[-12pt]
\item $B_R(\nu)\AmuSm + \nu_R |\kappa^\-| 
- |\lambda^\-|$  
negative-exceptional columns;
\item $B_R(\nu)\AmuSp + \nu_R \bigl( |\kappa^\-| + \sum_{i=1}^{c^\+} \kappa^\+_i
\bigr) - \bigl(|\lambda^\-| +\sum_{i=1}^{c^\+} \lambda^\+_i\bigr)$ positive-exceptional columns
that each are neither large-exceptional nor negative-exceptional.
\end{thmlist}
\end{lemma}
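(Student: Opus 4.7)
The plan is to prove each of (i), (ii), (iii) by combining the dominance hypothesis $\swtp{\pi}\unrhd\swtp{\lambda}+M\swtp{\kappa}$ at an appropriate position with a complementary upper bound on the same partial sum of $(\pi^\-,\pi^\+)$. The upper bound is assembled by adding contributions from (a) the top $R$ boxes of the $\nu_R+M$ columns of $T$ of height at least $R$, and (b) the remaining boxes, of which there are $B_R(\nu+M(1^R))=B_R(\nu)$ by~\eqref{eq:BRconstant} (see also Figure~\ref{fig:BR}). Contributions of type~(b) are controlled universally by Lemma~\ref{lemma:greatestSignedWeight}: each inner tableau $t$ has $|\swt(t)^\-|\le\AmuSm$ and $|\swt(t)^\-|+\sum_{i=1}^{c^\+}\swt(t)^\+_i\le\AmuSp$. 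Contributions of type~(a) rely on Proposition~\ref{prop:stronglyMaximalSignedWeightsAreEllDecompositions} (which gives $R\AmuSm=|\kappa^\-|$) together with Definition~\ref{defn:stronglyMaximalSignedWeight}(c) and Lemma~\ref{lemma:columnIsExceptionalOrBounded}.

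For~(i), I take the partial sum $\sum_{i>\ell(\kappa^\+)}\pi^\+_i$; since $\kappa^\+_i=0$ for $i>\ell(\kappa^\+)$ and $|\pi|=|\lambda+M\kappa|$, a routine rearrangement of the hypothesis at position $(\ell(\kappa^\-),\ell(\kappa^\+))$ yields $\sum_{i>\ell(\kappa^\+)}\pi^\+_i\le\sum_{i=\ell(\kappa^\+)+1}^{\ell(\lambda^\+)}\lambda^\+_i$, and each large-exceptional column contains at least one entry with value $>\ell(\kappa^\+)$ among its top $R$ boxes. For~(ii), the hypothesis at position $\ell(\kappa^\-)$ gives $|\pi^\-|\ge|\lambda^\-|+M|\kappa^\-|$; on the tableau side each column of height $\ge R$ contributes at most $|\kappa^\-|$ to $|\pi^\-|$ from its top $R$ boxes, strictly so for each negative-exceptional column, while the remaining boxes contribute at most $B_R(\nu)\AmuSm$ in total. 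Rearranging against the lower bound yields the stated bound on~$n_{ne}$.

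For~(iii) I apply the same template at position $(\ell(\kappa^\-),c^\+)$. The key non-routine step is that when a column is neither negative-exceptional nor large-exceptional, its top $R$ tableau family $\T$ satisfies $\phi^\-=\kappa^\-$ (so each $t\in\T$ is $\ell^\-$-negative greatest, forcing equality in Lemma~\ref{lemma:greatestSignedWeight}) and $\max\T\le\ell(\kappa^\+)=\max\SM$; condition~(c) of Definition~\ref{defn:stronglyMaximalSignedWeight} then yields $\sum_{i=1}^{c^\+}\phi^\+_i\le\sum_{i=1}^{c^\+}\kappa^\+_i$, with equality only when $\T=\SM$. Thus any such column contributes at most $|\kappa^\-|+\sum_{i=1}^{c^\+}\kappa^\+_i$ to the partial sum at $(\ell(\kappa^\-),c^\+)$, strictly so precisely when the column is positive-exceptional but not of the negative or large type, and the template closes.

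The hard part will be (iii), for two reasons. First, Definition~\ref{defn:stronglyMaximalSignedWeight}(c) is stated only for \emph{maximal} signed tableau families, so I need to promote it to arbitrary column-type families $\T$ with $\phi^\-=\kappa^\-$ and $\max\T\le\max\SM$; the plan is to dominate any such $\T$ in the $\ell^\-$-signed dominance order by some maximal $\T^\star$ which, because the upper bound $|\phi^{\star\-}|\le R\AmuSm=|\kappa^\-|$ together with $\swt(\T)\unlhd\swt(\T^\star)$ forces $\phi^{\star\-}=\kappa^\-$, inherits both hypotheses and so falls under~(c) directly. Second, a column that is negative-exceptional or large-exceptional can a priori contribute more than $|\kappa^\-|+\sum_{i=1}^{c^\+}\kappa^\+_i$ to the partial sum at $(\ell(\kappa^\-),c^\+)$, so the aggregate excess from such columns must be absorbed by using the dominance hypothesis at further positions in conjunction with the already-established bound~(ii).
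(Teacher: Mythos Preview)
Your plan for (i), (ii) and the main line of (iii) is exactly the paper's approach: split the boxes of $T$ into the $B_R(\nu)$ ``leftover'' boxes and the top $R$ of the $\nu_R+M$ tall columns, bound the former universally via Lemma~\ref{lemma:greatestSignedWeight}, bound the latter column by column (with a unit deficit at each counted exceptional column), and play this against the partial-sum lower bound from the dominance hypothesis.

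Your ``hard part (a)'' is unnecessary, and your proposed resolution of it has a loose end. You do not need to promote an arbitrary family to a maximal one and re-apply Definition~\ref{defn:stronglyMaximalSignedWeight}(c). For a small or typical column, $\swtp{\phi}\unlhd\swtp{\kappa}$ directly yields $|\phi^\-|+\sum_{i\le c^\+}\phi^\+_i\le|\kappa^\-|+\sum_{i\le c^\+}\kappa^\+_i$; for an exceptional column that is neither large- nor negative-exceptional, Lemma~\ref{lemma:columnIsExceptionalOrBounded}(ii) (which already carries out the promotion-to-maximal step internally) says it \emph{is} positive-exceptional, and then the strict inequality is just Definition~\ref{defn:exceptionalColumnAndRow}(c). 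This is precisely what the paper does. Moreover, your promotion argument does not obviously preserve $\max\T^\star\le\max\SM$, so your planned invocation of~(c) might not even apply.

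Your ``hard part (b)'' is a genuine subtlety and you are right to flag it. At this point the paper simply writes ``the analogous inequalities are therefore\ldots'' and records the upper bound $B_R(\nu)A^\+(\muS)+(\nu_R+M)\bigl(|\kappa^\-|+\sum_{i\le c^\+}\kappa^\+_i\bigr)-E^\+(T)$, as though every tall column contributed at most $|\kappa^\-|+\sum_{i\le c^\+}\kappa^\+_i$. But a negative-exceptional column can contribute strictly more: with $\mu=(4)$, $\ell^\-=2$, $R=2$, $\swtp{\kappa}=\bigl((2,2),(3,1)\bigr)$, $c^\+=1$, the family $\bigl\{\young(\oM\tM11),\,\young(\oM111)\bigr\}$ has $|\phi^\-|+\phi^\+_1=3+5=8>7$. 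So neither the paper's displayed upper bound nor the sentence in your proposal that ``any such column contributes at most $|\kappa^\-|+\sum_{i=1}^{c^\+}\kappa^\+_i$'' is justified once one accounts for \emph{all} tall columns. Your suggested fix---absorbing the aggregate excess using bound~(ii) and dominance at further positions---is only a gesture: to make it precise you would, for instance, bound each offending column's excess by $R\hskip1pt A^\+(\muS)-|\kappa^\-|-\sum_{i\le c^\+}\kappa^\+_i$ and multiply by the (i)$+$(ii) bound on the number of such columns, which yields a weaker constant than the one stated in~(iii). In practice the lemma is only consumed through Definition~\ref{defn:exceptionalColumnBound} and Corollary~\ref{cor:boundOnExceptionalColumns}, where the additional largeness hypothesis on~$\lambda$ constrains the situation, but as a standalone proof of~(iii) both your outline and the paper's leave this point open.
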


\begin{proof}
Consider the integer entries of the inner $\muS$-tableaux in $T$. Exactly
\smash{$\sum_{i=\ell(\kappa^\+)+1}^{\ell(\lambda^\+)} \pi^\+_i$} of these entries
are strictly greater than $\ell(\kappa^\+)$.
Since $\swtp{\pi} \unrhd \swtp{\lambda} + M\swtp{\kappa}$, we have
\[ \sum_{i=\ell(\kappa^\+)+1}^{\ell(\lambda^\+)} \pi^\+_i
\le \sum_{i=\ell(\kappa^\+)+1}^{\ell(\lambda^\+)} \lambda^\+_i.\\[1pt]\] 
Therefore
there at most \smash{$\sum_{i=\ell(\kappa^\+)+1}^{\ell(\lambda^\+)} \lambda^\+_i$} such
entries. Now (i) follows since, 
by Definition~\ref{defn:exceptionalColumnAndRow}(a),
each large-exceptional column has at least one of them.
By Lemma~\ref{lemma:columnIsExceptionalOrBounded}, each 
remaining exceptional column is either positive-exceptional or negative-exceptional.

To prove (ii) and (iii),
we consider the integer entries of $T$ in the sets $\{-1,\ldots, -\ell^\-\}$
and $\{-1,\ldots, -\ell^\-,1,\ldots, c^\+ \}$, respectively,
and the inner $\muS$-tableaux in which they lie.
There are $B_R(\nu)$ such entries not lying in the top~$R$ boxes of a column having at least $R$ boxes.
By the remark immediately before this lemma,
the $\muS$-tableaux in these boxes have between them,
at most $B_R(\nu)\AmuSm$ entries in  $\{-1,\ldots, -\ell^\-\}$.
Moreover, by Lemma~\ref{lemma:greatestSignedWeight},
each such $\muS$-tableau has
signed weight bounded above (in the $\ell^\-$-signed dominance order in Definition~\ref{defn:ellSignedDominanceOrder}) 
by $\bigl(\omega_{\ell(\kappa^\-)}(\muS)^\-, \omega_{\ell(\kappa^\-)}(\muS)^\+\bigr)$, and so there are
at most $B_R(\nu)\AmuSp$ entries in
$\{-1,\ldots, -\ell^\-,$ $1,\ldots, c^\+\}$ in these $\muS$-tableaux.
Each remaining $\muS$-tableau entry lies in the top~$R$ boxes of a column of~$T$ having at least $R$ boxes.
As can be seen from Figure~\ref{fig:BR}, there are $\nu_R + M$ such columns. 

For (ii), suppose that $E^\-(T)$ of these columns
are negative-exceptional. In a non-negative-exceptional column whose top $R$ entries
have signed weight $(\phi^\-,\phi^\+)$, we have, by Definition~\ref{defn:exceptionalColumnAndRow}(b),
$|\phi^\-| = |\kappa^\-|$.
Hence the $\muS$-tableaux in
the top $R$ rows of the non-negative-exceptional columns have, 
between them, exactly
\smash{$\bigl(\nu_R + M - E^\-(T)\bigr)|\kappa^\-|$} 
entries in $\{-1,\ldots, -\ell^\-\}$.
By \eqref{eq:stronglyMaximalM} in Lemma~\ref{lemma:stronglyMaximalImpliesMaximal},
in each negative-exceptional column
there are at most  $|\kappa^\-| - 1$
entries in $\{-1,\ldots, -\ell^\-\}$.
Summing these bounds gives
\begin{align*} 
\sum_{i=1}^{\ell^\-} \pi^\-_i &\le B_R(\nu) \AmuSm
+ \bigl(\nu_R + M -E^\-(T)\bigr)|\kappa^\-| 
+ E^\-(T) \bigl( |\kappa^\-| - 1 \bigr) 
\\[-6pt]
&= B_R(\nu)\AmuSm + (\nu_R + M) |\kappa^\-| 
- E^\-(T).
\end{align*}
On the other hand, since $\swtp{\pi} \unrhd \swtp{\lambda} + M(\kappa^\-,\kappa^\+)$ we 
have 
\[ |\pi^\-| \ge |\lambda^\-| + M |\kappa^\-|. \]
Combining the two displayed inequalities and cancelling 
$M|\kappa^\-|$ 
we get
\[ E^\-(T) \le B_R(\nu)\AmuSm
 + \nu_R |\kappa^\-| - |\lambda^\-| 
 \]
as required.

For (iii), suppose there are $E^\+(T)$ exceptional columns of
$T$ of height at least~$R$ that
are not  large-exceptional and not negative-exceptional. 
By Lemma~\ref{lemma:columnIsExceptionalOrBounded}(ii) these columns
are positive-exceptional.
Let $(\phi^\-,\phi^\+)$ be the signed weight of such a column.
Using \eqref{eq:stronglyMaximalP} in Lemma~\ref{lemma:stronglyMaximalImpliesMaximal}
and Definition~\ref{defn:exceptionalColumnAndRow}(c),
we have $|\phi^\-| + \sum_{i=1}^{c^\+} \phi^\+_i
< |\kappa^\-| + \sum_{i=1}^{c^\+} \kappa^\+_i$.
The analogous inequalities are therefore
\begin{align*} 
|\pi^\-| + \sum_{i=1}^{c^\+} \pi^\+_i &\le
B_R(\nu)\AmuSp + (\nu_R + M) \bigl( |\kappa^\-| + \sum_{i=1}^{c^\+} \kappa^\+_i 
\bigr) - E^\+(T) \\
\intertext{and}
|\pi^\-| + \sum_{i=1}^{c^\+} \pi^\+_i &\ge |\lambda^\-| + \sum_{i=1}^{c^\+}\lambda^\+_i + 
M\bigl( |\kappa^\-| +  \sum_{i=1}^{c^\+} \kappa^\+_i \bigr).\end{align*}
Combining these two 
inequalities and cancelling $M \bigl( |\kappa^\-| + \sum_{i=1}^{c^\+} \kappa^\+_i \bigr)$
we get
\[ E^\+(T) \le B_R(\nu)\AmuSp+ \nu_R\bigl( |\kappa^\-| + \sum_{i=1}^{c^\+} \kappa^\+_i
\bigr) - |\lambda^\-| - \sum_{i=1}^{c^\+} \lambda^\+_i \]
again as required.
\end{proof}

Motivated by this result we make the following definition. The statistics
$B_R(\nu)$, $\AmuSm$ and $\AmuSp$ are defined in Definitions~\ref{defn:BBoxes} and~\ref{defn:AStatistics}.

\newcounter{defn:exceptionalColumnBound}
\setcounter{defn:exceptionalColumnBound}{\value{theorem}}
\begin{definition}\label{defn:exceptionalColumnBound}
Let $\nu$ be a non-empty partition.
Let $(\kappa^\-, \kappa^\+)$ be a strongly
$c^\+$-maximal signed weight of shape $\muS$, size $R$ and sign $+1$.
Let $\lambda$ be a partition of $|\nu||\muS|$. 
Fix $\ell^\- = \ell(\kappa^\-)$. Define
\begin{align*}
E^\-  &= \textstyle
B_R(\nu)\AmuSm  + \nu_R  |\kappa^\-| 
- |\lambda^\-|, 
\\
E^\+ &=  \textstyle
B_R(\nu)\AmuSp + \nu_R\bigl( |\kappa^\-| + \sum_{i=1}^{c^\+} \kappa^\+_i
\bigr) - \bigl( |\lambda^\-| +\sum_{i=1}^{c^\+} \lambda^\+_i\bigr) \\
\intertext{and if $R \ge 2$,}
&\hspace*{-0.195in}E_{c^\+\hskip-1pt,(\kappa^\-,\kappa^\+)}(\nu, \muS : \lambda) = 
\max\bigl( E^\- + E^\++
\textstyle \sum_{i=\ell(\kappa^\+)+1}^{\ell(\lambda^\+)} \lambda^\+_i,0\bigr).\end{align*}
If $R =1$ we instead define $E_{c^\+\hskip-1pt,(\kappa^\-,\kappa^\+)}(\nu, \muS : \lambda)=0$.
\end{definition}

Note that 
since  $R = (|\kappa^\-| + |\kappa^\+|)/|\muS|$, there is no need
to state $R$ explicitly in the definition of 
$E_{c^\+\hskip-1pt,(\kappa^\-,\kappa^\+)}(\nu, \muS : \lambda)$. 

\begin{corollary}\label{cor:boundOnExceptionalColumns}
Let $\nu$ be a partition. 
Let $\swtp{\kappa}$ be a strongly $c^\+$-maximal signed weight of shape $\muS$, size $R$ and sign $+1$.
Let $\lambda$ be an 
$\bigl(\ell(\kappa^\-),$ $\ell(\kappa^\+)\bigr)$-large partition.
Suppose that $\pi \unrhddot \lambda \oplus M\swtp{\kappa}$ in the $\ell(\kappa^\-)$-twisted
dominance order.
If $T \in \PSSYT_\kappa(\nu + M(1^R), \muS)_{(\pi^\-,\pi^\+)}$ then
$T$ has at most $E_{c^\+\hskip-1pt,(\kappa^\-,\kappa^\+)}(\nu, \muS : \lambda)$
exceptional columns.
\end{corollary}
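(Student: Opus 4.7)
\emph{Proof proposal.} The plan is to deduce this corollary directly from Lemma~\ref{lemma:boundOnExceptionalColumns} together with Lemma~\ref{lemma:columnIsExceptionalOrBounded}(ii); the only non-routine step is the translation between the $\ell^\-$-twisted dominance order on partitions and the $\ell^\-$-signed dominance order on signed weights. If $R=1$ then by Remark~\ref{remark:singletonExceptional} no column of $T$ is exceptional, matching $E_{c^\+,\swtp{\kappa}}(\nu,\muS:\lambda) = 0$; so I would assume henceforth that $R \ge 2$.

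Set $\ell^\- = \ell(\kappa^\-)$. Since $\lambda$ is $\bigl(\ell^\-,\ell(\kappa^\+)\bigr)$-large, Lemma~\ref{lemma:adjoinToLarge} gives that the $\ell^\-$-decomposition of $\lambda \oplus M\swtp{\kappa}$ equals $\dec{\lambda^\- + M\kappa^\-}{\lambda^\+ + M\kappa^\+}$. Unwinding Definition~\ref{defn:ellTwistedDominanceOrder}, the twisted hypothesis $\pi \unrhddot \lambda \oplus M\swtp{\kappa}$ is then equivalent to the signed-weight inequality $\swtp{\pi} \unrhd \swte{\lambda^\-}{\lambda^\+} + M\swtp{\kappa}$ in the $\ell^\-$-signed dominance order on $\W_{\ell^\-}\times\W$. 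This is precisely the hypothesis of Lemma~\ref{lemma:boundOnExceptionalColumns} applied with its parameter $\swtp{\lambda}$ taken to be $\swte{\lambda^\-}{\lambda^\+}$.

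Quoting the three parts of that lemma then yields that $T$ has at most $\sum_{i=\ell(\kappa^\+)+1}^{\ell(\lambda^\+)} \lambda^\+_i$ large-exceptional columns, at most $E^\-$ negative-exceptional columns, and at most $E^\+$ positive-exceptional columns that are neither large- nor negative-exceptional, where $E^\-$ and $E^\+$ are as in Definition~\ref{defn:exceptionalColumnBound}. By Lemma~\ref{lemma:columnIsExceptionalOrBounded}(ii), every exceptional column of $T$ lies in at least one of these three classes, so summing the three bounds produces an upper bound on the total number of exceptional columns. Since that count is a non-negative integer, I may clip the sum by taking its maximum with $0$, recovering exactly $E_{c^\+,\swtp{\kappa}}(\nu,\muS:\lambda)$ and closing the main argument.

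The only subtle point I expect --- and the main obstacle --- is the special convention $E_{c^\+,\swtp{\kappa}}(\varnothing,\muS:\varnothing) = -1$ imposed by the last clause of Definition~\ref{defn:exceptionalColumnBound}. The generic formula returns $0$ in this configuration, and the number of exceptional columns of any tableau is non-negative, so the value $-1$ lies strictly below every literal bound one could assert. It therefore appears to be a convention flagging vacuity for later use (notably in Corollary~\ref{cor:signedWeightBoundForStronglyMaximalSignedWeight}) rather than a genuine column-count bound in this corollary; I would clarify this at the point of application and confirm that the argument above is unaffected since it never relies on the overridden special value.
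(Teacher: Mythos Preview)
Your proposal is correct and follows essentially the same route as the paper: use Lemma~\ref{lemma:adjoinToLarge} to compute the $\ell^\-$-decomposition of $\lambda\oplus M\swtp{\kappa}$, translate the twisted-dominance hypothesis into the signed-dominance hypothesis of Lemma~\ref{lemma:boundOnExceptionalColumns}, and invoke that lemma (with Remark~\ref{remark:singletonExceptional} for $R=1$). Your explicit appeal to Lemma~\ref{lemma:columnIsExceptionalOrBounded}(ii) to justify that the three exceptional classes are exhaustive is a useful clarification that the paper leaves implicit.

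Your closing observation about the special value $E_{c^\+,\swtp{\kappa}}(\varnothing,\muS:\varnothing)=-1$ is well taken: the paper's own proof does not address this edge case either, and as you suspect the $-1$ is a bookkeeping convention engineered so that later formulae (notably the definition of $\omega$ in Corollary~\ref{cor:signedWeightBoundForStronglyMaximalSignedWeight}, where one needs $\nu_R^\pm - E - 1 = 0$) come out correctly when $\nu=\varnothing$; it is not a literal column-count bound. In the applications the $\nu=\varnothing$ case is always treated separately, so no contradiction arises downstream.
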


\begin{proof}
By Lemma~\ref{lemma:adjoinToLarge} 
the $\ell(\kappa^\-)$-decomposition of $\lambda$ is 
$\decs{\lambda^\- + M\kappa^\-}{\lambda^\+ + M\kappa^\+}$.
Therefore $\pi \unrhddot \lambda \oplus M\swtp{\kappa}$
is equivalent, by the definition
of the $\ell(\kappa^\-)$-twisted dominance order in Definition~\ref{defn:ellTwistedDominanceOrder}, to
$\swtp{\pi} \unrhd ( \lambda^\- + M\kappa^\-, 
\lambda^\+ + M\kappa^\+)$.
If
$R \ge 2$
the corollary is now immediate from Lemma~\ref{lemma:boundOnExceptionalColumns}, given
the definition of $E_{c^\+\hskip-1pt,(\kappa^\-,\kappa^\+)}(\nu, \muS : \lambda)$ 
in Definition~\ref{defn:exceptionalColumnBound}. When $R=1$ it follows
from Remark~\ref{remark:singletonExceptional}. 
\end{proof}

See Example~\ref{ex:22and31bound}
for the bound in this corollary in the running example
of the `signed' case begun in Example~\ref{ex:exceptionalColumns22and31}.

\begin{example}\label{ex:exceptionalColumnsGeneralWeight411}
Continuing our running `unsigned' example
(see Examples~\ref{ex:stronglyMaximalSignedWeightSWLii411},
~\ref{ex:stronglyMaximalSignedWeightSWLi411} and \ref{ex:exceptionalColumns411}),
let $\swtp{\kappa}$ be the strongly $1$-maximal signed weight 
$\bigl( \varnothing, (4,1,1) \bigr)$ of shape $(2)$, size $3$ and sign $+1$.
(Note that this size can be computed directly from $\bigl( \varnothing, (4,1,1) \bigr)$
knowing the shape
using
the remark immediately after Definition~\ref{defn:exceptionalColumnBound}.)
Thus $\muS = (2)/\varnothing$ and $R =3$. 
We have $\ell^\- = 0$ and $c^\+ = 1$.
Generalizing
slightly, to show more clearly the effect of columns
of height at least $R$ in $\nu$, 
let $\nu = (2,1) + \KC(1,1,1)$. The relevant statistics are $B_3\bigl( (2,1) 
+ \KC(1,1,1) \bigr) = 3$,
and, since $\omega_{0}\bigl( (2) \bigr)
= (2)$ corresponding to the greatest tableau \raisebox{0.5pt}{$\young(11)$}\,, we have $\AmuSm = 0$ and
 $\AmuSp = 2$. 
The quantities $E^\-$ and $E^\+$ in Definition~\ref{defn:exceptionalColumnBound}
are 
\begin{align*}
E^\- &= 0, \\
E^\+ &= 3 \times 2 + 4\KC - 0 - \lambda_1 = 6 + 4\KC - \lambda_1
\end{align*}
and so $E_{1,(\varnothing,(4,1,1))}\bigl( (2,1), (2) \hspace*{-2pt}:\hspace*{-2pt} \lambda \bigr) = 0 + 6 +4\KC - \lambda_1 + \lambda_4 + \cdots $ 
for any partition $\lambda$ of $6 + 4\KC$.
Taking $\KC=0$ and $\lambda = (4,2)$ as earlier we have
$E_{1,(\varnothing,(4,1,1))}\bigl( (2,1), (2) \hspace*{-2pt}:\hspace*{-2pt} (4,2) \bigr) = 2$
and so, by Lemma~\ref{lemma:boundOnExceptionalColumns},
a $\bigl(\varnothing, (4,1,1) \bigr)$-adapted
plethystic semistandard signed tableaux lying in the set  
\[ \PSSYT_{(\varnothing,(4,1,1))}\bigl((2,1) + M(1,1,1), (2)\bigr)_{(\varnothing,\pi^\+)} \]
where $\pi^\+ \,\unrhd\, (4,2) + M(4,1,1)$ may have at most two exceptional
columns. For a general $\KC$, we note that
if $\lambda = (4,2) + \KC(4,1,1)$ then $\lambda_1 = 4\KC + 4$ and
so
\[ E_{1,(\varnothing,(4,1,1))}\bigl( (2,1) + \KC(1,1,1), (2) \hspace*{-2pt}:\hspace*{-2pt} (4,2) + \KC(4,1,1) \bigr) = 2 \]
giving the same bound; this is expected from the proof
of Lemma~\ref{lemma:boundOnExceptionalColumns}, because 
the contribution $(4,1,1)$ to $\lambda$ \emph{can only} come
from a typical column, equal to the plethystic semistandard
signed tableau shown in the margin.
\marginpar{
\raisebox{0.1cm}{\scalebox{0.8}{$\ \pyoung{1.2cm}{0.7cm}{ {{\young(11)}, {\young(12)}, {\young(13)}} }$}}}
Of course if~$\KC$ is large and $\lambda$ is not of this special
form then there may be many more exceptional columns.

We now show by a direct argument that 
if $\pi^\+ \,\unrhd (4,2) + M(4,1,1)$ and $T \in
\PSSYT_{(\varnothing,(4,1,1))}\bigl((2,1) + M(1,1,1), (2)\bigr)_{(\varnothing,\pi^\+)}$
then $T$ has at most one exceptional column. 
Thus as is usually the case, the bound from Corollary~\ref{cor:boundOnExceptionalColumns} is not optimal.
The key observation is that each typical column contain four $1$s as entries of
its inner $(2)$-tableaux, and since the maximum positive entry is $3$
and there are no negative entries,
a column that is not typical, i.e.~not equal to the plethystic
semistandard signed tableau $T_{(\varnothing,(4,1,1))}$ shown
earlier in the margin, has at most three $1$s. 
The three boxes in $T$ not in columns of length $3$ 
contribute at most five $1$s.
(This can  easily be seen from the tableaux in
Example~\ref{ex:stronglyMaximalSignedWeightSWLii411}.)
Therefore if there are $N$ non-typical columns,
$T$ has at most $4M - N + 5$ entries of $1$.
On the other hand,
since $\pi^\+ \unrhd (4,2) + M(4,1,1)$, we have $\pi^\+_1 \ge 4 + 4M$.
Therefore $4M-N+5 \ge 4 + 4M$ and so $N \le 1$.
Since exceptional columns are non-typical, this implies
there is at most one exceptional column, as claimed,
and moreover, this exceptional column is positive-exceptional.
Since the signed weight of the column
is not dominated by $(\varnothing, (4,1,1)\bigr)$,
it is necessarily equal to the plethystic semistandard
signed tableau $T_{(\varnothing, (3,3))}$ shown in the margin,
\marginpar{ \qquad\qquad \raisebox{0.1cm}{\scalebox{0.8}{$\ \pyoung{1.2cm}{0.7cm}{ {{\young(11)}, {\young(12)}, {\young(22)}} }$}}}
defined by the strongly $2$-maximal semistandard signed tableau family
of signed weight $\bigl( \varnothing, (3,3) \bigr)$.
We continue this line of argument in Example~\ref{ex:omegaBound411}.
\end{example}

%



We finish this example in Example~\ref{ex:omegaBound411} below.

\subsection{Signed weight bound in the $\ell^\-$-signed dominance order}
We now turn the bound on the number of exceptional columns in Lemma~\ref{lemma:boundOnExceptionalColumns}
into an upper bound on signed weights in the $\ell^\-$-signed dominance order in
Definition~\ref{defn:ellSignedDominanceOrder}.
We continue to simplify the exposition by assuming the strongly maximal signed
weight has sign $+1$. 
See \S\ref{subsec:negativeSignAndCombined} for the modifications for sign $-1$.
Recall from Definition~\ref{defn:greatestSignedWeight}
and Lemma~\ref{lemma:greatestSignedWeight} 
that $\bigl(\omega_{\ell^\-}(\muS)^\-, \omega_{\ell^\-}(\muS)^\+ \bigr)$ 
is the greatest signed weight in the $\ell^\-$-signed dominance order 
of a semistandard signed tableau of shape $\muS$.
The set $\PSSYTwk{\kappa}{\nu + M(1^R)}{\muS}{\pi^\-}{\pi^\+}$
is defined in Definition~\ref{defn:adaptedPlethysticSemistandardSignedTableau}.
Small columns were defined in Definition~\ref{defn:exceptionalColumnAndRow}
and the statistic $B_R(\nu)$ in Definition~\ref{defn:BBoxes}.
An example is given after the proposition.

\begin{proposition} 
\label{prop:signedWeightBoundForStronglyMaximalWeight}
Let $\nu$ be a partition
Let $(\kappa^\-, \kappa^\+)$ be a strongly
$c^\+$-maximal signed weight of shape $\muS$, size~$R$ and sign $+1$.
Fix $\ell^\- = \ell(\kappa^\-)$. Let $\muS$ be a skew partition.
Set 
$E = E_{c^\+\hskip-1pt,(\kappa^\-,\kappa^\+)}(\nu, \muS : \lambda)$.
Let $M \in \N_0$. 
Suppose that $T \in \PSSYTwk{\kappa}{\nu + M(1^R)}{\muS}{\pi^\-}{\pi^\+}$
where
 \[ \swtp{\pi} \,\unrhd\, \swtp{\lambda} + M\swtp{\kappa} \]
in the $\ell^\-$-signed dominance order.
Suppose that $T$ has $\nSmall$ small columns and that their 
top $R$ boxes have
signed weights $\swtp{\phi_1}, \ldots, \swtp{\phi_\nSmall}$.
If $M \ge - \nu_R + \nSmall + E$,
\begin{align*}  \swtp{\pi} \,&\unlhd\,  \bigl(B_R(\nu) + ER \bigr)
\bigl( \omega_{\ell^\-}(\muS)^\-, \omega_{\ell^\-}(\muS)^\+ \bigr) \\
&\qquad + \swtp{\phi_1} + \cdots + \swtp{\phi_\nSmall} + 
\bigl( \nu_R - \nSmall - E + M \bigr) \swtp{\kappa}. 
\intertext{and
if $M \ge E - \nu_R$, the weaker bound}
\swtp{\pi} \,&\unlhd\,  \bigl(B_R(\nu) \!+ \!ER \bigr) 
\bigl( \omega_{\ell^\-}(\muS)^\-, \omega_{\ell^\-}(\muS)^\+ \bigr)\! +\!
\bigl( \nu_R \hskip-0.5pt-\hskip-0.5pt E \hskip-0.5pt+ \hskip-0.5pt M \bigr) \swtp{\kappa}  \end{align*}
also holds.
\end{proposition}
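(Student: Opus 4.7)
The plan is to decompose the signed weight $\swtp{\pi}$ as a sum of contributions from disjoint parts of the plethystic tableau $T$, bound each contribution using the structural results already established, and then massage the resulting inequality into the two claimed forms by exploiting that any individual inner $\muS$-tableau has signed weight dominated by $\bigl(\omega_{\ell^\-}(\muS)^\-,\omega_{\ell^\-}(\muS)^\+\bigr)$.

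First I would classify the columns of $\nu+M(1^R)$: there are exactly $\nu_R+M$ columns of height at least $R$, and by Lemma~\ref{lemma:columnIsExceptionalOrBounded}, each such column is typical (the top $R$ boxes equal $T_{(\kappa^\-,\kappa^\+)}$, signed weight $\swtp{\kappa}$), small (signed weight $\swtp{\phi_i}\lhd\swtp{\kappa}$), or exceptional. Denote by $e$ the number of exceptional columns; then by Corollary~\ref{cor:boundOnExceptionalColumns} we have $e\le E$, and since these three types are mutually exclusive and exhaustive, the number of typical columns equals $(\nu_R+M)-\nSmall-e$, automatically non-negative. Next, by Lemma~\ref{lemma:greatestSignedWeight}, any inner $\muS$-tableau of $T$ has signed weight at most $\bigl(\omega_{\ell^\-}(\muS)^\-,\omega_{\ell^\-}(\muS)^\+\bigr)$ in the $\ell^\-$-signed dominance order. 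Summing over the $B_R(\nu)$ boxes lying outside the top $R$ positions of all tall columns, and independently over the $R$ inner tableaux of each exceptional column, while small and typical columns contribute exactly $\swtp{\phi_1}+\cdots+\swtp{\phi_\nSmall}$ and $((\nu_R+M)-\nSmall-e)\swtp{\kappa}$ respectively, yields
\begin{align*}
\swtp{\pi}\,\unlhd\,{} & B_R(\nu)\bigl(\omega_{\ell^\-}(\muS)^\-,\omega_{\ell^\-}(\muS)^\+\bigr) + eR\bigl(\omega_{\ell^\-}(\muS)^\-,\omega_{\ell^\-}(\muS)^\+\bigr) \\
& + \swtp{\phi_1}+\cdots+\swtp{\phi_\nSmall} + \bigl((\nu_R+M)-\nSmall-e\bigr)\swtp{\kappa}.
\end{align*}

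To pass to the first claimed bound, I would replace $e$ by $E$ using the key sub-inequality $\swtp{\kappa}\unlhd R\bigl(\omega_{\ell^\-}(\muS)^\-,\omega_{\ell^\-}(\muS)^\+\bigr)$, which itself follows by applying Lemma~\ref{lemma:greatestSignedWeight} to each of the $R$ inner $\muS$-tableaux of the unique strongly maximal family $\SMw{\kappa}$ of signed weight $\swtp{\kappa}$. Since $E-e\ge 0$, the signed weight $(E-e)\bigl(R(\omega_{\ell^\-}(\muS)^\-,\omega_{\ell^\-}(\muS)^\+) - \swtp{\kappa}\bigr)$ is non-negative and may be added to the right-hand side, producing the desired bound with $E$ in place of $e$. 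The hypothesis $M\ge -\nu_R+\nSmall+E$ is precisely what ensures the coefficient $\nu_R-\nSmall-E+M$ of $\swtp{\kappa}$ in the resulting expression is non-negative, so that the right-hand side is a bona fide element of $\mathcal{W}_{\ell^\-}\times\mathcal{W}$.

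For the weaker bound, I would use that each small column has $\swtp{\phi_i}\lhd\swtp{\kappa}$ by Definition~\ref{defn:exceptionalColumnAndRow}, so $\swtp{\phi_1}+\cdots+\swtp{\phi_\nSmall}\unlhd \nSmall\swtp{\kappa}$; combining this with the first bound collapses the $\swtp{\phi_i}$ terms into the $\swtp{\kappa}$ coefficient and produces $(\nu_R-E+M)\swtp{\kappa}$, which requires only the weaker hypothesis $M\ge E-\nu_R$ for non-negativity. The only real technical point is keeping the directions of the $\ell^\-$-signed dominance order consistent throughout; everything else is bookkeeping on the column decomposition, so I do not expect a serious obstacle beyond verifying the key sub-inequality $\swtp{\kappa}\unlhd R\bigl(\omega_{\ell^\-}(\muS)^\-,\omega_{\ell^\-}(\muS)^\+\bigr)$.
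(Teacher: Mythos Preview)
Your proof is correct and follows essentially the same approach as the paper: decompose the signed weight of $T$ into contributions from the $B_R(\nu)$ boxes outside the top $R$ of tall columns, the exceptional columns, the small columns, and the typical columns; bound the first two pieces by $\bigl(\omega_{\ell^\-}(\muS)^\-,\omega_{\ell^\-}(\muS)^\+\bigr)$ per inner tableau via Lemma~\ref{lemma:greatestSignedWeight}; then use the key sub-inequality $\swtp{\kappa}\unlhd R\bigl(\omega_{\ell^\-}(\muS)^\-,\omega_{\ell^\-}(\muS)^\+\bigr)$ to replace the actual exceptional count $e$ by its upper bound $E$. The paper orders the two conclusions the other way round (deriving the weaker bound from the stronger at the outset) and treats the case $R=1$ separately, but your unified argument covers that case as well since then $E=e=0$.
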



\begin{proof}
The second bound on $\swtp{\pi}$ follows easily from the first since,
by the definition of small in Definition~\ref{defn:exceptionalColumnBound},
we have $\swtp{\phi_i} \unlhd \swtp{\kappa}$ for all $i$, and so
the first bound is maximized when $\nSmall=0$.
It remains to prove the first bound. 

If $R=1$ then 
by  Remark~\ref{remark:singletonExceptional},
$\swtp{\kappa} = \bigl(\omega_{\ell^\-}(\muS)^\-, \omega_{\ell^\-}(\muS)^\+ \bigr)$
and there are no exceptional columns.
The claim is therefore that 
\[ \begin{split} (\pi^\-,\pi^\+) \unlhd \bigl( B_1(\nu) + \nu_1 - \nSmall + M \bigr)
 &\bigl(\omega_{\ell^\-}(\muS)^\-, \omega_{\ell^\-}(\muS)^\+ \bigr)
\\ & \qquad + \swtp{\phi_1} + \cdots + \swtp{\phi_\nSmall}. \end{split}\]
By Lemma~\ref{lemma:greatestSignedWeight},
each box that is not the topmost box in one of the 
$\nSmall$ non-small columns of~$T$
contributes at most  $\bigl(\omega_{\ell^\-}(\muS)^\-, \omega_{\ell^\-}(\muS)^\+ \bigr)$ to the signed weight of $T$. 
There are $B_1\bigl( \nu + (M) \bigr) + \nu_1 -\nSmall + M$ such
boxes. Since by~\eqref{eq:BRconstant}, we have
$ B_1\bigl(\nu + (M) \bigr) = B_1(\nu)$,
the first bound now follows.

\begin{figure}[b]
\begin{center}\begin{tikzpicture}[x=0.5cm, y=-0.5cm, line width=0.5pt]
\begin{scope}[xshift = 1cm]
\fill[color = lightgrey] (9,0)--(14,0)--(14,1)--(13,1)--(13,1.5)--
(11,1.5)--(11,3)--(9,3)--(9,0);
\end{scope}

\begin{scope}[xshift = -2cm]
\fill[pattern = north west lines] (10,0)--(12.5,0)--(12.5,4)--(10,4)--cycle;
\fill[color = grey] (12.5,0)--(15,0)--(15,4)--(12.5,4)--cycle;

\draw[<-] (10.1,-0.5)--(10.85,-0.5);
\draw[->] (11.65,-0.5)--(12.4,-0.5);
\node at (11.25, -0.5) {$\scriptstyle \nSmall$};
\begin{scope}[yshift = -0.1cm]{
\fill[color = white] (10.2,1.1)--(12.2,1.1)--(12.2,1.9)--(10.2,1.9)--cycle;
\node at (11.25, 1.5) {\small small};
\fill[color = white] (13.0,1.1)--(14.5,1.1)--(14.5,1.9)--(13.0,1.9)--cycle;
\node at (13.75, 1.55) {\small exc.};
}
\end{scope}

\draw[<-] (12.6,-0.5)--(13.35,-0.5);
\draw[->] (14.05,-0.5)--(15,-0.5);
\node at (13.75, -0.5) {$\scriptstyle e$};

\end{scope}

\node at (3, 1.775) {\small typical};

\fill[color = lightgrey] (0,4)--(4,4)--(3,4)--(3,5)--(2,5)--(2,5.5)
--(1,5.5)--(1,7)--(0,7)--(0,4);
\draw (0,0)--(4.5,0); \draw (5.5,0)--(15,0); 
\draw (11,4)--(6,4); 
\draw (7,4)--(5.5,4);
\draw (4,4)--(4.5,4);
\draw[dashed, line width=0.25pt] (6,0)--(6,4);
\draw[dashed, line width=0.25pt] (8.5,0)--(8.5,4);
\draw[dashed, line width=0.25pt] (11,0)--(11,4);

\begin{scope}[xshift = 1cm]
\draw (9,0)--(14,0)--(14,1)--(13,1)--(13,1.5);
\draw (11,2.5)--(11,3)--(9,3)--(9,4)--(7,4);
\node at (11.75,2) {\rotatebox{90}{$\scriptstyle \ddots$}};
\end{scope}

\draw (4,4)--(3,4)--(3,5)--(2,5)--(2,5.5);
\draw (1,6.5)--(1,7)--(0,7)--(0,0);
\draw[dashed, line width=0.25pt] (0,4)--(3,4);
\node at (1.25,6) {\rotatebox{90}{$\scriptstyle \ddots$}};
\node at (5,4) {$\scriptstyle \ldots$};
\node at (5,0) {$\scriptstyle \ldots$};

\draw[<-] (-0.5,0)--(-0.5,1.5);
\draw[->] (-0.5,2.5)--(-0.5,4.2);
\node at (-0.5, 2) {$\scriptstyle R$};

\draw[<-] (0,-0.5)--(0.9,-0.5);
\draw[->] (5,-0.5)--(5.9,-0.5);
\node at (3, -0.5) {$\scriptstyle \nu_R + M - e - \nSmall$};

\end{tikzpicture}
\end{center}
\caption{A plethystic semistandard signed tableau of outer
shape $\nu + M(1^R)$ showing the contributions to the signed
weight identified in the proof of Proposition~\ref{prop:signedWeightBoundForStronglyMaximalWeight}.
Each of the $B_R(\nu) + eR$ shaded boxes 
has an inner $\muS$-tableau whose contribution is
bounded by $\bigl( \omega_{\ell^{\scriptstyle -}}(\muS)^{\scriptstyle -}, 
\omega_{\ell^{\scriptstyle -}}(\muS)^{\scriptstyle +} \bigr)$.
The hatched small columns
contribute $(\phi_j^{\scriptstyle -},
\phi_j^{\scriptstyle +})$ for $1 \le j \le \nSmall$.
The white boxes are in typical columns, the top $R$ boxes in
each contributing $(\kappa^{\scriptstyle -},
\kappa^{\scriptstyle +})$. (It is possible
that some of the~$e$ exceptional columns
appear to the left of the $\nSmall$ small columns.)
Note that as~$M$ varies, all 
but a constant number
of boxes are in typical columns and so 
their signed weight (per column) is bounded by the stronger bound
$(\kappa^{\scriptstyle -}, \kappa^{\scriptstyle +})$
rather than the weaker bound (per box) from
$\bigl( \omega_{\ell^{\scriptstyle -}}(\muS)^{\scriptstyle -}, 
\omega_{\ell^{\scriptstyle -}}(\muS)^{\scriptstyle +} \bigr)$.
\label{fig:signedWeightBoundForStronglyMaximalSignedWeight}}
\end{figure}

Now suppose that $R \ge 2$.
Suppose that $T$ has $e$ exceptional columns,
so $T$ has $eR$ boxes in the top $R$ rows of exceptional columns.
By~\eqref{eq:BRconstant} and \eqref{eq:BRtoSignedWeight} and
Lemma~\ref{lemma:greatestSignedWeight}, these boxes, together with
the $B_R\bigl( \nu + M(1^R) \bigr) = B_R(\nu)$ boxes
not in the top $R$ positions of a column of length~$R$,
contribute at most
\begin{equation}\label{eq:exceptionalBoxes} \bigl(B_R(\nu) + eR \bigr)
\bigl( \omega_{\ell^\-}(\muS)^\-, \omega_{\ell^\-}(\muS)^\+ \bigr) \end{equation}
to the signed weight of $T$.
Each of the $\nu_R\hskip1pt-\hskip1pt \nSmall\hskip1pt - \hskip1pt e \hskip1pt+\hskip1ptM$ columns
of height at least $R$ that is both non-exceptional  
and non-small is typical (see Lemma~\ref{lemma:columnIsExceptionalOrBounded})
and so contributes $\swtp{\kappa}$ to the signed weight of $T$.
There are also $\nSmall$ small columns which contribute $\swtp{\phi_1} + \cdots + \swtp{\phi_\nSmall}$.
Taken together these columns therefore contribute
\begin{equation}\label{eq:nonexceptionalBoxes} \bigl( \nu_R + M - e - \nSmall \bigr) \swtp{\kappa}
+ \swtp{\phi_1} + \cdots + \swtp{\phi_\nSmall} 
\end{equation}
to the signed weight of $T$.
This is shown diagrammatically in Figure~\ref{fig:signedWeightBoundForStronglyMaximalSignedWeight}.

The sum of~\eqref{eq:exceptionalBoxes} and~\eqref{eq:nonexceptionalBoxes}
is an upper bound on the signed weight of~$T$.
Since 
 $(\kappa^\-,\kappa^\+)$ is the signed weight of a tableau of outer shape $(1^R)$
and inner shape $\muS$, it follows,
again by Lemma~\ref{lemma:greatestSignedWeight}, that
\[ \swtp{\kappa} \unlhd R\bigl( \omega_{\ell^\-}(\muS)^\-, \omega_{\ell^\-}(\muS)^\+ \bigr). \]
We conclude that (for fixed $\nSmall$), 
this upper bound 
is maximized in the $\ell(\kappa^\-)$-signed dominance order
when $e$ is as large as possible. By Lemma~\ref{lemma:boundOnExceptionalColumns}
we have $e \le  E_{c^\+\hskip-1pt,(\kappa^\-,\kappa^\+)}(\nu, \muS : \lambda)$.
Therefore we obtain an upper bound on $\swtp{\pi}$ by substituting $E$
for $e$ in the sum of \eqref{eq:exceptionalBoxes} and~\eqref{eq:nonexceptionalBoxes}.
(Note that by hypothesis $\nu_R  - \nSmall - E + M \ge 0$ so the right-hand side is
a valid signed weight, having non-negative entries.)
This proves the first bound.
\end{proof}


\begin{example}\label{ex:22and31bound} 
As in Example~\ref{ex:exceptionalColumns22and31} we
take the strongly $1$-maximal signed weight $\bigl( (2,2), (3,1)\bigr)$
of shape $(4)$, size $2$ and sign $+1$, and take $\nu = (2,1)$.
We have $B_R(\nu) = B_2\bigl((2,1)\bigr) = 1$.
Since $\bigl( \omega_{\ell^\-}\bigl((4)\bigr)^\-, \omega_{\ell^\-}\bigl((4)\bigr)^\+ \bigr)
= \bigl((1,1), (2)\bigr)$,
and from Definition~\ref{defn:AStatistics} we have
$A^\-\bigl((4)\bigr) = 2$, $A^\+\bigl((4)\bigr) = 4$ 
and hence Definition~\ref{defn:exceptionalColumnBound} gives
\begin{align*} E^\- &= 1 . 2 + 1 . 4 - |\lambda^\-| = 6 -|\lambda^\-|, \\ 
E^\+ &= 1 . 4 + 1 . (4+3) -
\bigl( |\lambda^\-| + \lambda^\+_1 \bigr) =
11 - |\lambda^\-| - \lambda^\+_1.
\end{align*}
Therefore if $\lambda$ is a partition of $12$ having exactly $k$ parts of size at least $2$, 
\begin{align*} E_{1,((2,2),(3,1))} &\bigl((2,1), (4) : \lambda \bigr) \\ &= 
17 - 2|\lambda^\-| - (\lambda_1 - 2) + \lambda^\+_3 + \cdots 
\\ &= 17 - 2|\lambda^\-| - (\lambda_1 - 2) + (\lambda_3-2) + \cdots + (\lambda_k - 2),
\end{align*}
or $0$ is this is negative.
We denote this quantity by $E$ as usual.

The plethystic semistandard signed tableaux relevant to the plethysm coefficients
$\langle s_{(2,1)+M(1,1)} \circ s_{(4)}, s_{\lambda \oplus M((2,2), (3,1)} \rangle$
for $M \in \N_0$ lie 
in the set $\PSSYTwk{((2,2), (3,1))}{(2,1) + M(1^3)}{(4)}{\pi^\-}{\pi^\+}$.
Since $\nu_R = \nu_2 = 1$,
the weaker bound from Proposition~\ref{prop:signedWeightBoundForStronglyMaximalWeight}
is 
that if $M \ge E-1$ and $(\pi^\-,\pi^\+) \unrhd (\lambda^\-,\lambda^\+) + M\bigl((2,2),(3,1)\bigr)$
then
\[
 (\pi^\-,\pi^\+) \unlhd (1 + 2E) \bigl( (1,1), (2) \bigr) + (1-E + M) \bigl( (2,2), (3,1) \bigr)
\]
in the $2$-signed dominance order.
In the first case in the earlier Example~\ref{ex:exceptionalColumns22and31}
when $\lambda = (8,3,1)$ with $2^\-$-decomposition $\dec{(3,2)}{(6,1)}$ we have
Then $E^\- = 1$, $E^\+ = 0$ and $E = 1$ and so the upper bound
in the $2$-signed dominance order is that
if $(\pi^\-,\pi^\+) \unrhd \bigl((3,2), (6,1)\bigr) + M\bigl((2,2), (3,1)\bigr)$ then
\begin{align*}
(\pi^\-,\pi^\+) &\unlhd 3\bigl( (1,1), (2)\bigr) + M \bigl((2,2), (3,1) \bigr) \\
&= \bigl( (3,3), (6) \bigr) + M\bigl((2,2), (3,1)\bigr) 
\end{align*}
for $M \ge 0$.
If instead
$\lambda = (6,3,3)$ with $2^\-$-decomposition $\dec{(3,3)}{(4,1,1)}$ then
$E^\- = 0$, $E^\+ = 1$ but because the proof of Lemma~\ref{lemma:boundOnExceptionalColumns}
allows, as it must in general, one exceptional column for each 
integer entry in $\{3,4,\ldots \}$, of which there are $\lambda^\+_3 = 1$, we have 
$E = 0+1+1 = 2$ and the upper bound
in the $2$-signed dominance order is that
if $(\pi^\-,\pi^\+) \unrhd \bigl((3,3), (4,1,1)\bigr) + M\bigl((2,2), (3,1)\bigr)$ then
\begin{align*}
(\pi^\-,\pi^\+) &\unlhd (1 + 2.2)\bigl( (1,1), (2)\bigr) + (1-2+M) \bigl((2,2), (3,1) \bigr) \\
&= \bigl( (3,3), (7, -1) \bigr) + M\bigl((2,2), (3,1)\bigr)
\end{align*}
for $M \ge 1$, where just for this inequality,
 to facilitate comparison, we allow a negative entry in what
would normally be a signed weight.
Note in each case that the upper bound is conditional on the lower bound, as we saw
is necessary in Example~\ref{ex:stronglyMaximalSignedWeightSWLi411}.
\end{example}

We conclude this example in Example~\ref{ex:22and31final}
in which small columns are also relevant.

\subsection{Signed weight bound in the $\ell$-twisted dominance order}
We are now almost ready to prove Corollary~\ref{cor:signedWeightBoundForStronglyMaximalSignedWeight};
it is the analogue of 
Proposition~\ref{prop:twistedWeightBoundInner} and Corollary~\ref{cor:twistedWeightBoundInnerGrowing}.
First though we must address the technical point that to apply Corollary~\ref{cor:signedIntervalStable} when $\ell^\- \not= 0$,
we require an $(\ell(\kappa^\-)+1, \ell(\kappa^\+)\bigr)$-large partition as the upper bound.

\begin{lemma}\label{lemma:omegaKappaEllDecomposition}
Let $\swtp{\kappa}$ be a strongly maximal signed weight. 
Fix $\ell^\- = \ell(\kappa^\-)$. Let $\muS$ be a skew partition. Given
any $\FC \in \N$ and $\GD \in \N_0$, the pair
\[ \FC\decs{\kappa^\-}{\kappa^\+}  + \GD\dec{\omega_{\ell^\-}(\muS)^\-}{\omega_{\ell^\-}(\muS)^\+} \]
is the $\ell^\-$-decomposition of the $(\ell^\-+1, \ell(\kappa^\+)\bigr)$-large partition
\[ \kappa \oplus (\FC-1)\swtp{\kappa} \oplus \GD\bigl( \omega_{\ell^\-}(\muS)^\-,\omega_{\ell^\-}(\muS)^\-
\bigr)
\]
where $\kappa$ is the partition with $\ell^\-$-decomposition $\decs{\kappa^\-}{\kappa^\+}$.
\end{lemma}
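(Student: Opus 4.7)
The plan is to prove the lemma by induction on $(\FC-1)+\GD$, reducing everything to two ingredients already in place: Proposition~\ref{prop:stronglyMaximalSignedWeightsAreEllDecompositions}, which handles the base case $\FC=1$, $\GD=0$, and Lemma~\ref{lemma:adjoinToLarge}, which computes the $\ell^-$-decomposition of an adjoining whenever the relevant largeness hypothesis holds. Write $L^- = \omega_{\ell^-}(\muS)^-$ and $L^+ = \omega_{\ell^-}(\muS)^+$ for brevity, so that the claim is that $\kappa \oplus (\FC-1)\swtp{\kappa} \oplus \GD(L^-,L^+)$ is $(\ell^-+1, \ell(\kappa^+))$-large with $\ell^-$-decomposition $\FC\dec{\kappa^-}{\kappa^+} + \GD\dec{L^-}{L^+}$. (Here I interpret the second occurrence of $\omega_{\ell^-}(\muS)^-$ in the statement as the evident typo for $\omega_{\ell^-}(\muS)^+$.)

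Before starting the induction I would record two preliminary facts. First, by Lemma~\ref{lemma:ellDecompositionGreatestSignedWeight} the pair $(L^-,L^+)$ is itself an $\ell^-$-decomposition, so $\ell(L^-) \le \ell^-$ and $L^-_{\ell^-} \ge \ell(L^+)$. Second, I claim $\ell(L^+) \le \ell(\kappa^+)$: by the argument in the proof of Proposition~\ref{prop:stronglyMaximalSignedWeightsAreEllDecompositions}, the unique strongly maximal tableau family $\SM$ of weight $\swtp{\kappa}$ contains $t_{\ell^-}(\muS)$, whose greatest positive entry is $\ell(L^+)$; since every entry of every tableau in $\SM$ is at most $\max\SM = \ell(\kappa^+)$, the inequality follows.

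For the induction, the base case follows immediately from Proposition~\ref{prop:stronglyMaximalSignedWeightsAreEllDecompositions}. For the inductive step, suppose a partition $\alpha$ is $(\ell^-+1,\ell(\kappa^+))$-large with the expected $\ell^-$-decomposition. Since $\ell(\kappa^-) = \ell^-$, $\alpha$ is certainly $(\ell(\kappa^-), \ell(\kappa^+))$-large, and since $\ell(L^-) \le \ell^-$ and $\ell(L^+) \le \ell(\kappa^+)$, $\alpha$ is also $(\ell(L^-), \ell(L^+))$-large. Thus Lemma~\ref{lemma:adjoinToLarge} applies in either case, and the $\ell^-$-decomposition of $\alpha \oplus \swtp{\kappa}$ or $\alpha \oplus (L^-,L^+)$ is obtained by componentwise addition, which is exactly what is required. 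To close the induction I would verify that largeness persists: writing $\dec{\beta^-}{\beta^+}$ for the new decomposition and $(\delta^-,\delta^+)$ for the adjoined pair, the inequality
\[ \beta^-_{\ell^-} = \alpha^-_{\ell^-} + \delta^-_{\ell^-} \ge \ell(\alpha^+) + \ell(\delta^+) \ge \ell(\alpha^+ + \delta^+) = \ell(\beta^+) \]
shows $\dec{\beta^-}{\beta^+}$ is a valid $\ell^-$-decomposition, and $\ell(\beta^+) \ge \ell(\alpha^+) \ge \ell(\kappa^+)$ shows (via the characterization in Remark~\ref{remark:ellDecompositionLarge} and Lemma~\ref{lemma:downsetIsLarge}) that the resulting partition is $(\ell^-+1,\ell(\kappa^+))$-large.

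The only genuinely nontrivial step is the preliminary inequality $\ell(L^+) \le \ell(\kappa^+)$; everything else is routine bookkeeping built on Lemma~\ref{lemma:adjoinToLarge}. This inequality is what allows the $(L^-,L^+)$ adjoinings to be interleaved freely with the $\swtp{\kappa}$ adjoinings without losing the large hypothesis needed to keep invoking Lemma~\ref{lemma:adjoinToLarge} at each step.
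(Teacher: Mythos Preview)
Your approach matches the paper's: both start from Proposition~\ref{prop:stronglyMaximalSignedWeightsAreEllDecompositions} and iterate Lemma~\ref{lemma:adjoinToLarge}. You are more careful than the paper in isolating the inequality $\ell(L^+)\le\ell(\kappa^+)$ needed to justify the $(L^-,L^+)$ adjoinings; the paper's proof simply asserts that Lemma~\ref{lemma:adjoinToLarge} applies.

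However, your argument for that inequality---that $t_{\ell^-}(\muS)\in\SM$---tacitly assumes that $\muS$ is the \emph{shape} of the strongly maximal signed weight $\swtp{\kappa}$. This is not stated in the lemma, and without it the lemma is actually false: take $\swtp{\kappa}=\bigl((1),(1)\bigr)$ of shape $(2)$ and $\muS=(2,2)$, so that $(L^-,L^+)=\bigl((2),(1,1)\bigr)$. With $K=W=1$ one computes $(2)\oplus\bigl((2),(1,1)\bigr)=(3,1,1,1)$, whose $1$-decomposition is $\bigl\langle(4),(2)\bigr\rangle$, whereas the claimed pair is $\bigl\langle(3),(2,1)\bigr\rangle$. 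Every use of the lemma in the paper (Corollary~\ref{cor:signedWeightBoundForStronglyMaximalSignedWeight}, Lemma~\ref{lemma:stablePartitionSystemForNuVarying}) has $\muS$ equal to the shape of $\swtp{\kappa}$, so this is a missing hypothesis in the statement rather than a flaw in your method---but you should make the assumption explicit.

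Two minor points. When adjoining $(L^-,L^+)$, Lemma~\ref{lemma:adjoinToLarge} literally computes the $\ell(L^-)$-decomposition, and $\ell(L^-)$ may be strictly less than $\ell^-$; it is cleaner to cite Lemma~\ref{lemma:addAndJoinToLarge} directly with $\ell^-$ fixed, which gives exactly what you need once $\alpha$ is $(\ell^-,\ell(L^+))$-large and $\ell(L^-)\le\ell^-$. And your citation of Lemma~\ref{lemma:downsetIsLarge} for largeness persistence is unnecessary: your chain $\ell(\beta^+)\ge\ell(\alpha^+)\ge\ell(\kappa^+)$ already suffices, since a partition with $\ell^-$-decomposition $\langle\beta^-,\beta^+\rangle$ is $(\ell^-{+}1,\ell(\kappa^+))$-large exactly when $\ell(\beta^+)\ge\ell(\kappa^+)$.
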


\begin{proof}
Proposition~\ref{prop:stronglyMaximalSignedWeightsAreEllDecompositions} states that
$\decs{\kappa^\-}{\kappa^\+}$ is the $\ell^\-$-decomposition of 
an $\bigl(\ell^\- + 1, \ell(\kappa^\-)\bigr)$-large partition and so~$\kappa$ is well-defined. By Remark~\ref{remark:ellDecompositionLarge},~$\kappa$
is $\bigl(\ell(\kappa^\-),\ell(\kappa^\+)\bigr)$-large. 
Since the parts of $\dec{\omega_{\ell^\-}(\muS)^\-}{\omega_{\ell^\-}(\muS)^\+}$
are partitions by Lemma~\ref{lemma:greatestSignedWeight}, the
same holds for
\[ \FC\dec{\kappa^\-}{\kappa^\+} + \GD\dec{\omega_{\ell^\-}(\muS)^\-}{\omega_{\ell^\-}(\muS)^\+}. \]
By Lemma~\ref{lemma:adjoinToLarge} and the following
remark, this pair is the $\ell^\-$-decomposition
of the partition 
$\kappa \oplus (\GD-1)\swtp{\kappa} \oplus \GD\bigl( \omega_{\ell^\-}(\muS)^\-,\omega_{\ell^\-}(\muS)^\-
\bigr)$. This partition is $\bigl(\ell^\- + 1, \ell(\kappa^\+)\bigr)$-large because $\kappa$ is.
\end{proof}

Since, by part of Remark~\ref{remark:ellDecompositionLarge} we have $\varnothing \oplus (\kappa^\-,\kappa^\+) = \kappa$, one could replace $\kappa \oplus (\FC-1)\decs{\kappa^\-}{\kappa^\+}$
in the statement of this lemma with $\varnothing \oplus \FC\decs{\kappa^\-}{\kappa^\+}$;
we prefer the lemma as stated, since then it does not depend on the technical point
that we join first in the adjoin operation defined in~\eqref{eq:oplus}.

As some motivation for the definition of $\omega$ in
the following corollary,
 we recall from Example~\ref{ex:stronglyMaximalSignedWeightSWLi411}
that to get suitable upper bounds
in a stable partition system when $\lambda = (4,1,1)$
and $\swtp{\kappa} = \bigl(\varnothing, (4,1,1) \bigr)$ we
had to begin with $\lambda \oplus \swtp{\kappa}$, rather than~$\lambda$,
because there could be a single exceptional column.
The partition $\kappa$
is well defined by Proposition~\ref{prop:stronglyMaximalSignedWeightsAreEllDecompositions}.
%

\newcounter{savedthm}\setcounter{savedthm}{\value{theorem}}

\begin{corollary}[Outer Twisted Weight Bound]
\label{cor:signedWeightBoundForStronglyMaximalSignedWeight}
Let $\kappa$ be a strongly $c^\+$-maximal weight of shape $\muS$, size $R$ and sign $+1$.
Fix $\ell^\- = \ell(\kappa^\-)$.
Let $\nu$ be a partition and set $\nu^{(M)} = \nu + (M^R)$ for $M \in \N_0$.
Let~$\lambda$ be an $\bigl(\ell^\-,\ell(\kappa^\+)\bigr)$-large partition
of $|\muS||\nu|$.
Let $\kappa$ be the unique partition with $\ell^\-$-decomposition 
$\decs{\kappa^\-}{\kappa^\+}$.
Set $E = E_{c^\+\hskip-1pt,(\kappa^\-,\kappa^\+)}(\nu, \muS : \lambda)$.
Define  
\[ \omega = \begin{cases} \kappa \oplus (B_R(\nu) + ER) \bigl( \omega_{\ell^\-}(\muS)^\-,\omega_{\ell^\-}(\muS)^\+
\bigr) \\ 
\kappa \oplus (B_R(\nu) + ER) \bigl( \omega_{\ell^\-}(\muS)^\-,\omega_{\ell^\-}(\muS)^\+
\bigr) \oplus (\nu_R - E - 1) \swtp{\kappa} \end{cases} \] 
choosing the case according to whether $E \ge \nu_R$ or $E < \nu_R$.
Then $\omega$ is an $\bigl(\ell^\-+1,\ell(\kappa^\+)\bigr)$-large partition
of 
\[ \begin{cases}  |\lambda| + (E-\nu_R + 1)Rm & \text{if $E \ge \nu_R$} \\
 |\lambda| & \text{if $E < \nu_R$.} \end{cases} \]
Suppose that $\sigma \unrhddot \lambda \oplus M\bigl(\kappa^\-,\kappa^\+\bigr)$
in the $\ell^\-$-twisted dominance order.
If~$s_\sigma$ is a constituent of the plethysm $s_{\nu^{(M)}} \circ s_\muS$
then
\[ \sigma \unlhddot  \begin{cases} \omega \oplus \bigl(-(E-\nu_R + 1) + M\bigr) \swtp{\kappa}
& \text{if $E \ge \nu_R$} \\
\omega \oplus M\swtp{\kappa} & \text{if $E < \nu_R$} \end{cases} \]
for all $M \in \N_0$ such that $M > E - \nu_R$.
\end{corollary}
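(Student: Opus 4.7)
The plan is to extract a plethystic semistandard signed tableau realising $s_\sigma$, apply Proposition~\ref{prop:signedWeightBoundForStronglyMaximalWeight} to bound its signed weight, and then reinterpret the conclusion as an inequality in the $\ell^\-$-twisted dominance order via Lemma~\ref{lemma:omegaKappaEllDecomposition}. First I would verify the claims on $\omega$. The degenerate case $\nu = \varnothing$ forces $\lambda = \varnothing$, $E = -1$ and $\nu_R = 0$, so we are in the branch $E < \nu_R$ with $\omega = \varnothing$, and the assertion reduces to the observation that the only constituent $s_\sigma$ of $s_{(M^R)} \circ s_\muS$ with $\sigma \unrhddot M\kappa$ is $\sigma = M\kappa$. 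Otherwise Lemma~\ref{lemma:omegaKappaEllDecomposition}, applied with $\GD = B_R(\nu) + ER$ and with $\FC = 1$ in the case $E \ge \nu_R$ or $\FC = \nu_R - E$ in the case $E < \nu_R$, shows that $\omega$ is a well-defined $(\ell^\- + 1, \ell(\kappa^\+))$-large partition whose $\ell^\-$-decomposition is
\[\FC\dec{\kappa^\-}{\kappa^\+} + \GD\dec{\omega_{\ell^\-}(\muS)^\-}{\omega_{\ell^\-}(\muS)^\+}.\]
The size of $\omega$ then follows from $|\kappa| = R|\muS|$ and $B_R(\nu) = |\nu| - R\nu_R$ by direct calculation.

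Next, suppose $s_\sigma$ is a constituent of $s_{\nu^{(M)}} \circ s_\muS$ with $\sigma \unrhddot \lambda \oplus M\swtp{\kappa}$. By Lemma~\ref{lemma:twistedKostkaMatrix} and Proposition~\ref{prop:plethysticSignedKostkaNumbers}, the set $\PSSYT(\nu^{(M)}, \muS)_{(\sigma^\-, \sigma^\+)}$ is non-empty; by Remark~\ref{remark:otherOrder}, we may re-order the inner $\muS$-tableaux using the $\swtp{\kappa}$-adapted colexicographic order, so $\PSSYT_\kappa(\nu + (M^R), \muS)_{(\sigma^\-, \sigma^\+)}$ contains a tableau $T$. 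Since $\lambda$ is $(\ell^\-, \ell(\kappa^\+))$-large, Lemma~\ref{lemma:adjoinToLarge} gives the $\ell^\-$-decomposition of $\lambda \oplus M\swtp{\kappa}$ as $\dec{\lambda^\- + M\kappa^\-}{\lambda^\+ + M\kappa^\+}$, so by Definition~\ref{defn:ellTwistedDominanceOrder} the hypothesis $\sigma \unrhddot \lambda \oplus M\swtp{\kappa}$ is equivalent to $\swtp{\sigma} \unrhd \swtp{\lambda} + M\swtp{\kappa}$ in the $\ell^\-$-signed dominance order. The hypothesis $M > E - \nu_R$ of the corollary forces $M \ge E - \nu_R$, which is exactly the bound required by the weaker part of Proposition~\ref{prop:signedWeightBoundForStronglyMaximalWeight}.

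Applying that weaker bound yields
\[\swtp{\sigma} \unlhd (B_R(\nu) + ER)\bigl(\omega_{\ell^\-}(\muS)^\-, \omega_{\ell^\-}(\muS)^\+\bigr) + (\nu_R - E + M)\swtp{\kappa}\]
in the $\ell^\-$-signed dominance order. By the verification in the first paragraph together with Lemma~\ref{lemma:adjoinToLarge}, the right-hand side is precisely the $\ell^\-$-decomposition of $\omega \oplus (M - E + \nu_R - 1)\swtp{\kappa}$ when $E \ge \nu_R$, and of $\omega \oplus M\swtp{\kappa}$ when $E < \nu_R$. A direct size check confirms that both $\sigma$ and this partition have size $(|\nu| + MR)|\muS|$, so Definition~\ref{defn:ellTwistedDominanceOrder} converts the $\ell^\-$-signed dominance inequality on weights into the claimed $\ell^\-$-twisted dominance inequality on partitions. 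The main technical obstacle is the bookkeeping linking the three layers—plethystic tableaux, signed weights under $\ell^\-$-signed dominance, and partitions under $\ell^\-$-twisted dominance—and the case split $E \ge \nu_R$ versus $E < \nu_R$. This split is forced by the requirement that the coefficient of $\swtp{\kappa}$ in the bound be non-negative (so that the right-hand side corresponds to an honest partition); the definition of $\omega$ is arranged so that in either case the conclusion takes the uniform form $\omega \oplus K\swtp{\kappa}$ with $K \ge 0$.
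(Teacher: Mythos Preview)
Your proposal is correct and follows essentially the same route as the paper: verify the properties of $\omega$ via Lemma~\ref{lemma:omegaKappaEllDecomposition}, produce a plethystic semistandard signed tableau of signed weight $(\sigma^\-,\sigma^\+)$ via Lemma~\ref{lemma:twistedKostkaMatrix} and Proposition~\ref{prop:plethysticSignedKostkaNumbers}, translate the twisted-dominance hypothesis on $\sigma$ into a signed-dominance hypothesis via Lemma~\ref{lemma:adjoinToLarge}, apply the weaker bound of Proposition~\ref{prop:signedWeightBoundForStronglyMaximalWeight}, and translate back. Your explicit invocation of Remark~\ref{remark:otherOrder} to pass from $\PSSYT$ to $\PSSYT_\kappa$ is a point the paper leaves implicit, and your remark on why the case split is forced (non-negativity of the $\swtp{\kappa}$ coefficient) is a helpful gloss.
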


\begin{proof}
Let $m = |\muS|$ and let $n = |\nu|$. By Definition~\ref{defn:BBoxes} we have
\[ B_R(\nu) + ER = \begin{cases} |\nu| + (E - \nu_R)R & \text{if $E \ge \nu_R$} \\
 |\nu| - (\nu_R - E)R & \text{if $E < \nu_R$.} \end{cases} \]
Hence, using that $|\kappa^\-| + |\kappa^\+| = Rm$ and $|\omega_{\ell^\-}(\muS)^\-| + |\omega_{\ell^\-}(\muS)^\+|
= m$, we obtain
\[ |\omega| = \begin{cases} Rm + (|\nu|  + (E-\nu_R)R)m  & \text{if $E \ge \nu_R$}
\\ Rm - (\nu_R - E)Rm + mn + (\nu_R - E - 1)Rm = mn  & \text{if $E < \nu_R$}\end{cases} \]
which, since $|\lambda| = |\nu|Rm$, shows that the size of $\omega$ is as claimed. By Lemma~\ref{lemma:omegaKappaEllDecomposition},
$\omega$ is $\bigl(\ell(\kappa^\-)+1,\ell(\kappa^\+)\bigr)$-large.
Note 
that, by Lemma~\ref{lemma:omegaKappaEllDecomposition},
\begin{equation}
\label{eq:omegaDecomposition}\begin{split} \decs{\omega^\-}{\omega^\+} = \kappa &+
\KK\decs{\kappa^\-}{\kappa^\+} 
\\ &\quad + (B_R(\nu) + ER)\dec{\omega_{\ell^\-}(\muS)^\-}{\omega_{\ell^\-}(\muS)^\+} \end{split}
\end{equation}
where $\KK = 0$ if $E \ge \nu_R$ and $\KK = \nu_R - E - 1$ otherwise.
We now follow part of the proof of
Proposition~\ref{prop:twistedWeightBoundInner}.
By Lemma~\ref{lemma:twistedKostkaMatrix}, $s_\sigma$ is a direct summand of $e_{\sigma^\-}h_{\sigma^\+}$.
Hence, by Proposition~\ref{prop:plethysticSignedKostkaNumbers}
we have
\[ |\PSSYT\bigl(\nu + M(1^R), \muS\bigr)_{(\sigma^\-,\sigma^\+)}\bigr| = 
\langle e_{\sigma^\-}h_{\sigma^\+}, s_{\nu +M(1^R)} \circ s_\muS \rangle \ge 1. \]
By hypothesis $\sigma \unrhddot \lambda \oplus M\bigl(\kappa^\-,\kappa^\+\bigr)$.
Hence, by the definition of the $\ell^\-$-twisted dominance
order in Definition~\ref{defn:ellTwistedDominanceOrder},
the largeness assumption on $\lambda$ and Lemma~\ref{lemma:adjoinToLarge}, we have
\[ \decs{\sigma^\-}{\sigma^\+} \unrhd \decs{\lambda^\-}{\lambda^\+} + M\swtp{\kappa}. \]
Suppose that $E \ge \nu_R$.
Then applying the weaker second bound 
in Proposition~\ref{prop:signedWeightBoundForStronglyMaximalWeight} 
to the hypothesis $\PSSYTwk{\kappa}{\nu + M(1^R)}{\muS}{\sigma^\-}{\sigma^\+} \not=\varnothing$
we obtain
\begin{align*} \swtp{\sigma} &\unlhd (\kappa^\-,\kappa^\+) +  \bigl(B_R(\nu) + ER \bigr)
\bigl(\omega_{\ell^\-}(\muS)^\-,\omega_{\ell^\-}(\muS)^\+\bigr)  \\
&\hspace*{2in} + \bigl( \nu_R - E + M-1\bigr) \swtp{\kappa} \\
&= (\omega^\-,\omega^\+) + \bigl( \nu_R - E + M-1\bigr) \swtp{\kappa}
\end{align*}
for $M > E - \nu_R$.
By~\eqref{eq:omegaDecomposition} this is equivalent to
\[ \sigma \,\unlhddot\, \omega \oplus \bigl( \nu_R - E + M-1\bigr) \swtp{\kappa}, \]
as required. The proof in the remaining case $E < \nu_R$ is entirely analogous,
now using Proposition~\ref{prop:signedWeightBoundForStronglyMaximalWeight} to get
\begin{align*} \swtp{\sigma} &\unlhd (\nu_R - E)\swtp{\kappa} + \bigl(B_R(\nu) + ER \bigr)
\dec{\omega_{\ell^\-}(\muS)^\-}{\omega_{\ell^\-}(\muS)^\+} \\ &
\hspace*{3.4in}  + M\swtp{\kappa} \end{align*}
where again, since we are in the case $\nu_R -E \ge 1$,
the first two summands are the $\ell^\-$-decomposition of the partition $\omega$.
\end{proof}
%

\begin{remark}\label{remark:singletonSimpler}
If $R=1$ then $E=0$ and, as we show in \S\ref{subsec:singletonStronglyMaximalSignedWeights},
the conclusion of
Corollary~\ref{cor:signedWeightBoundForStronglyMaximalSignedWeight}
reduces to \smash{$\sigma \unlhd \omega^{(n)}_{\ell^\-}(\muS)$}
where \smash{$\omega^{(n)}_{\ell^\-}(\muS)$}
is the partition
with $\ell^\-$-decomposition
\smash{$n\langle\omega_{\ell^\-}(\muS)^\-, \omega_{\ell^\-}(\muS)^\+\rangle$}
 defined in Definition~\ref{defn:plethysticGreatestSignedWeight}.
The special case of the corollary is therefore logically
equivalent to Proposition~\ref{prop:twistedWeightBoundInner}. 
This should be expected, because as seen in Example~\ref{ex:singletonAdapted}
using Lemma~\ref{lemma:maximalAndStronglyMaximalSingletonSemistandardSignedTableauFamilies}, 
the unique strongly maximal signed weight of shape $\muS$
and size $1$ is the signed weight of the greatest
semistandard signed tableau $t_{\ell^\-}(\muS)$, namely
\smash{$(\omega_{\ell^\-}(\muS)^\-, \omega_{\ell^\-}(\muS)^\+)$}.
\end{remark}

The upper bound in Corollary~\ref{cor:signedWeightBoundForStronglyMaximalSignedWeight}
is usually far from tight.

\begin{example}\label{ex:omegaBound411}
We finish our first `unsigned' running example 
(see Examples~\ref{ex:stronglyMaximalSignedWeightSWLii411}, \ref{ex:stronglyMaximalSignedWeightSWLi411},
\ref{ex:exceptionalColumns411} and~\ref{ex:exceptionalColumnsGeneralWeight411}),
with the strongly $1$-maximal signed weight
$\bigl(\varnothing, (4,1,1)\bigr)$ of shape $(2)$ and size $3$,
so $\muS = (2) / \varnothing$.
As in
Example~\ref{ex:exceptionalColumnsGeneralWeight411}
we take $\nu = (2,1) + \KC(1,1,1)$ and $\lambda = (4,2) + \KC(4,1,1)$.
We saw in this example that $B_R(\nu) = B_3\bigl((2,1) + \KC(1,1,1)\bigr) = 3$
and $E_{1,(\varnothing,(4,1,1))}\bigl( (2,1)+\KC(1,1,1), (2) 
; (4,2) + \KC(4,1,1) \bigr) = 2$ for all $\KC \in \N_0$.
Since $\kappa = \kappa^\+ = (4,1,1)$ we have  $\ell(\kappa^\-) = 0$.
Since
 $\omega_0\bigl((2)\bigr)^\- = \varnothing$ and $\omega_0\bigl((2)\bigr)^\+ = (2)$,
the partition~$\omega$ in 
Corollary~\ref{cor:signedWeightBoundForStronglyMaximalSignedWeight} is
therefore
\[ \omega =
\begin{cases} (4,1,1) + (3 + 2. 3)(2) \\
 (4,1,1) + (3 + 2. 3)(2)
+ (\KC-3)(4,1,1)\end{cases} \]
choosing the case according to whether $2 \ge \KC$ or $2 < \KC$.
Remembering that the $0$-twisted dominance order is simply the usual dominance order,
the conclusion of the corollary is that if $\sigma \unrhd (4,2) + 
\KC(4,1,1) + M(4,1,1)$ 
and $\PSSYT_\kappa\bigl((2,1) + (\KC+M)(1,1,1) , (2) \bigr)_{(\varnothing,\sigma)} \not= \varnothing$ then
\begin{align}\label{eq:upperBound411} 
\sigma &\unlhd \begin{cases} (4,1,1) + 9(2) + (\KC-3+M)(4,1,1) \\
(4,1,1) + 9(2) + (\KC-3)(4,1,1) + M(4,1,1) \end{cases} \nonumber \\
&= (22,1,1) + (\KC+M-3)(4,1,1) \end{align}
for all $M > 2 - \KC$. (The unification of the two
cases is expected for the same reason mentioned in Example~\ref{ex:exceptionalColumnsGeneralWeight411} 
that columns of signed weight $\bigl( \varnothing, (4,1,1)\bigr)$ are typical.)
We verify this bound directly when $\KC =0$. Recall from after Definition~\ref{defn:signedWeightTableau}
that $\wt(t)$ is the positive part of the 
signed weight of a tableau having only positive integer
entries. We saw in Example~\ref{ex:exceptionalColumnsGeneralWeight411} 
that if $\pi^\+ \unrhd (4,2) + M(4,1,1)$ then
there is at
most one non-typical column in any $T \in \PSSYT_\kappa\bigl((2,1) + (M,M,M) , (2) \bigr)_{(\varnothing,
\pi^\+)}$, and this column is large-exceptional. Hence $T$ has the form
\[ \begin{tikzpicture}[x=1.2cm,y=-0.75cm]
 \pyoungInner{ {{\young(11)}, {\young(12)}, {\young(13)}} }
 \draw (2,1)--(2.5,1); \draw (3,1)--(3.5,1);
 \draw (2,4)--(2.5,4); \draw (3,4)--(3.5,4);
 \node at (2.75,2.5) {$\ldots$};
\begin{scope}[xshift=3cm,yshift=0cm]
 \pyoungInner{ {{\young(11),\young(11)}, {\young(12),\young(12)}, {\young(13),$t$}} }
\end{scope} 
\begin{scope}[xshift=5.4cm,yshift=0cm]
 \pyoungInner{ {{\young(11),$u$}, {$v$}} }
\end{scope} 
   \end{tikzpicture}
\]
where, by counting $1$s as in Example~\ref{ex:exceptionalColumnsGeneralWeight411}, we require
$4M+1 + \wt(t)_1 + \wt(u)_1 + \wt(v)_1 \ge 4M+4$.
Similarly, by counting entries in $\{1,2\}$ and $\{1,2,3\}$, we obtain
the necessary and sufficient condition
$\wt(t) + \wt(u) + \wt(v) \unrhd (3, 2, 1)$. If there 
is an exceptional column then $t = \young(22)$ and $u = \young(11)$ and \emph{either}
$v = \young(12)$ \emph{or}
$v = \young(13)$.
If $v = \young(12)$ then $T$ has weight $(M-1)(4,1,1) + (3,3) + (5,1) = (16,6,2) + (M-3)(4,1,1)$
and if $v = \young(13)$ then, very similarly $T$ has weight $(16,5,3) + (M-3)(4,1,1)$.
Otherwise there are two cases:
\begin{itemize}
\item[(a)] $t = \young(13)$ and either $u = \young(12)$\,, $v = \young(12)$ or $u = \young(11)$\,, $v = \young(22)$\,;\smallskip
\item[(b)] $t = \young(23)$ and $u = \young(11)$ and $v = \young(12)$\,;
\end{itemize}
in which, once again, 
$T$ has weight $(M-1)(4,1,1) + (4,1,1) + (4,2) = (M-1)(4,1,1) + (3,2,1) + (5,1) = (16,5,3) + (M-3)(4,1,1)$.
Thus, as expected from the remark before this example,
the upper bound~\eqref{eq:upperBound411} is easily met.
\end{example}

See Example~\ref{ex:411bound} 
for the stable plethysm from the example above.
For a further `signed' example of Corollary~\ref{cor:signedWeightBoundForStronglyMaximalSignedWeight},
used in the context of the proof of Theorem~\ref{thm:nuStable},
see Example~\ref{ex:22and31final}, which continues
the running example in Examples~\ref{ex:exceptionalColumns22and31} and~\ref{ex:22and31bound}.

\subsection{Results for both signs}\label{subsec:negativeSignAndCombined}
We now give the analogous definitions and a combined final result applicable
to strongly maximal signed weights of either sign.
We have already defined exceptional rows in 
Definition~\ref{defn:exceptionalColumnAndRow}.
In the following definition $B^\+_R(\nu)$ is the same as $B_R(\nu)$ 
defined in Definition~\ref{defn:BBoxes}.

\setcounter{theoremBoth}{\value{defn:BBoxes}}
\begin{definitionBoth}\label{defnBoth:BBoxes}
\textbf{(Both signs.)}
Given a partition $\nu$, $R \in \N$ and a sign $\pm 1$, 
let $B^\+_R(\nu)$ be the number of boxes $(i,j)$ of $[\nu]$
such that either $i > R$ or $\nu_j' < R$ and
let $B^\-_R(\nu)$ be the number of boxes $(i,j)$ of $[\nu]$
such that either $j > R$ or $\nu_i < R$.
\end{definitionBoth}

In the following definition
$E_{c^\+\hskip-1pt,(\kappa^\-,\kappa^\+)}(\nu, \muS : \lambda)$
is as already defined in Definition~\ref{defn:exceptionalColumnBound}
if $\swtp{\kappa}$ has sign $+1$.
For ease of reference we recall
from Definition~\ref{defn:AStatistics}
that we have defined \smash{$\AmuSm = |\omega_{\ell^\-}(\muS)^\-|$} and 
\smash{$\AmuSp = |\omega_{\ell^\-}(\muS)^\-|$ + $\sum_{i=1}^{c^\+} \omega_{\ell^\-}(\muS)^\+_i$}.

\setcounter{theoremBoth}{\value{defn:exceptionalColumnBound}}
\begin{definitionBoth}\label{defnBoth:exceptionalColumnBoundBoth}
\textbf{(Both signs.)}
Let $\nu$ be a partition.
Let $(\kappa^\-, \kappa^\+)$ be a strongly
$c^\+$-maximal signed weight of shape $\muS$ and size $R$
where $R \ge 2$.
Let $\lambda$ be a partition of $|\nu||\muS|$. 
Fix $\ell^\- = \ell(\kappa^\-)$.
Set $\nu_R^\+ = \nu_R$ and $\nu_R^\- = \nu'_R$. Define
\begin{align*}
E^\-  &= \textstyle
B^\pm_R(\nu)\AmuSm  + \nu_R^\pm  |\kappa^\-| 
- |\lambda^\-|, 
\\
E^\+ &=  \textstyle
B^\pm_R(\nu)\AmuSp + \nu_R^\pm \bigl( |\kappa^\-| + \sum_{i=1}^{c^\+} \kappa^\+_i
\bigr) -|\lambda^\-| -\sum_{i=1}^{c^\+} \lambda^\+_i
\end{align*}
where the sign in the four appearances of $\pm$ is the sign of $\swtp{\kappa}$.
Define 
\[ \textstyle E_{c^\+\hskip-1pt,(\kappa^\-,\kappa^\+)}(\nu, \muS : \lambda) = \max\bigl( E^\- + E^\++
\sum_{i=\ell(\kappa^\+)+1}^{\ell(\lambda^\+)} \lambda^\+_i, 0\bigr).\]
If $R =1$ we instead define $E_{c^\+\hskip-1pt,(\kappa^\-,\kappa^\+)}(\nu, \muS : \lambda)=0$.
\end{definitionBoth}

Again we remind the reader that the partition $\kappa$ in the following corollary
is well-defined by Proposition~\ref{prop:stronglyMaximalSignedWeightsAreEllDecompositions}.

\setcounter{theoremBoth}{\value{savedthm}}
\begin{corollaryBoth}[Outer Twisted Weight Bound]\label{cor:twistedWeightBoundForStronglyMaximalWeightCombined}
\textbf{\emph{(Both signs.)}}
 Let $\nu$ be a partition.
Let $\swtp{\kappa}$ be a strongly $c^\+$-maximal signed weight of shape $\muS$ and size~$R$.
Fix $\ell^\- = \ell(\kappa^\-)$.
Let $\lambda$ be an $\bigl(\ell^\-,\ell(\kappa^\+)\bigr)$-large partition
of $|\nu||\muS|$.
Set $\nu_R^\+ = \nu_R$ and $\nu_R^\- = \nu'_R$. 
Let 
\begin{align*}
\nu^{(M)} = &\begin{cases} \nu + M(1^R) & \text{if $\swtp{\kappa}$ has sign $+1$} \\
    \nu \sqcup (R^M) & \text{if $\swtp{\kappa}$ has sign $-1$.}\end{cases}\end{align*}
Set $ E = E_{c^\+\hskip-1pt,(\kappa^\-,\kappa^\+)}(\nu, \muS : \lambda)$.
Suppose that $\pi \unrhddot \lambda \oplus M\swtp{\kappa}$ in the $\ell^\-$-twisted
dominance order and that 
\smash{$T \in \PSSYT_\kappa(\nu^{(M)}, \muS)_{(\pi^\-,\pi^\+)}$}.
Throughout  $\pm$ is the sign of $\swtp{\kappa}$.
\begin{thmlist}
\item If $\swtp{\kappa}$ has sign $+1$ then
$T$ has at most $E$ 
exceptional columns and if $\swtp{\kappa}$ has sign $-1$ then~$T$ 
has at most $E$ 
exceptional rows.
\item Let $M \ge -\nu_R^\pm + E$. If $T$ has
 $\nSmall$ small columns  whose
top $R$ boxes have signed weights $\swtp{\phi_1}, \ldots, \swtp{\phi_\nSmall}$ 
\emph{(}when $\swtp{\kappa}$ has sign $+1$\emph{)}
or $\nSmall$ small rows whose
leftmost $R$ boxes have
signed weights $\swtp{\phi_1}, \ldots, \swtp{\phi_\nSmall}$ \emph{(}when $\swtp{\kappa}$ has sign $-1$\emph{)}
then
\begin{align*} \swtp{\pi} \,&\unlhd\, \bigl(B^\pm_R(\nu) + ER \bigr)
\bigl( \omega_{\ell^\-}(\muS)^\-, \omega_{\ell^\-}(\muS)^\+ \bigr) \\
&\quad + \swtp{\phi_1} + \cdots + \swtp{\phi_\nSmall} + 
\bigl( \nu^\pm_R - \nSmall - E + M \bigr) \swtp{\kappa}
\intertext{in the $\ell^\-$-signed dominance order 
for $M \ge -\nu^\pm_R + \nSmall + E$ and the weaker bound}
\swtp{\pi} &\unlhd \bigl(B^\pm_R(\nu) + ER \bigr)\bigl( \omega_{\ell^\-}(\muS)^\-, \omega_{\ell^\-}(\muS)^\+ \bigr) \\ &\hspace*{1.94in} +
\bigl( \nu^\pm_R - E + M \bigr) \swtp{\kappa} \end{align*}
for $M \ge - \nu^\pm_R +E$ also holds.
\item Let $\kappa$ be the unique partition with $\ell^\-$-decomposition 
$\decs{\kappa^\-}{\kappa^\+}$.
Define
\[ \omega = \begin{cases} \kappa \oplus (B^\pm_R(\nu) + ER) \bigl( \omega_{\ell^\-}(\muS)^\-,\omega_{\ell^\-}(\muS)^\+
\bigr) \\ 
\kappa \oplus (B^\pm_R(\nu) + ER) \bigl( \omega_{\ell^\-}(\muS)^\-,\omega_{\ell^\-}(\muS)^\+
\bigr) \oplus (\nu_R^\pm - E - 1) \swtp{\kappa} \end{cases} \] 
choosing the case according to whether $E \ge \nu_R^\pm$ or $E < \nu_R^\pm$.
Then~$\omega$ is an $\bigl(\ell^\-+1, \ell(\kappa^\+)\bigr)$-large partition
of  $|\lambda| + SR|\muS|$ where
\[ S = \begin{cases} E - \nu_R^\pm + 1 & \text{if $E \ge \nu_R^\pm$} \\
0 & \text{if $E< \nu_R^\pm$.} \end{cases} \]
Suppose that $\sigma \unrhddot \lambda \oplus M\bigl(\kappa^\-,\kappa^\+\bigr)$
in the $\ell^\-$-twisted dominance order.
If $s_\sigma$ is a constituent of the plethysm $s_{\nu^{(M)}} \circ s_\muS$
then
\[ \sigma \unlhddot 
\begin{cases} \omega \oplus{}  \bigl(-(E - \nu_R^\pm + 1) + M\bigr) \swtp{\kappa}
& \text{if $E \ge \nu_R^\pm$} \\
\omega \oplus{}  M\swtp{\kappa} & \text{if $E < \nu_R^\pm$} \end{cases} \]
for all $M \in \N_0$ such that $M > E - \nu_R^\pm$. 
\end{thmlist}
\end{corollaryBoth}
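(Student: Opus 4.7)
\medskip
\noindent\textbf{Proof plan.} The sign $+1$ case of all three parts is already established: part~(i) is Lemma~\ref{lemma:boundOnExceptionalColumns}, the bound in~(ii) is Proposition~\ref{prop:signedWeightBoundForStronglyMaximalWeight}, and~(iii) is Corollary~\ref{cor:signedWeightBoundForStronglyMaximalSignedWeight}. The task is therefore to extend these three results to the sign $-1$ case, in which $\swtp{\kappa}$ is row-type and $\nu^{(M)} = \nu \sqcup (R^M)$. I would proceed as follows.

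\medskip
\noindent\textbf{Step 1 (Set-up and duality).} Fix a strongly $c^\+$-maximal signed weight $\swtp{\kappa}$ of sign $-1$. By Definition~\ref{defn:stronglyMaximalSignedWeight}(b) and Lemma~\ref{lemma:stronglyMaximalSemistandardSignedTableauFamilyIsUnique}, the unique family $\SMw{\kappa}$ is row-type, so the plethystic tableau $\TMw{\kappa}$ has outer shape $(R)$ and its $R$ inner $\muS$-tableaux are negative. Under the $\swtp{\kappa}$-adapted colexicographic order the elements of $\SMw{\kappa}$ are precisely the minimal $R$ inner tableaux, and by the repetition rules of Definition~\ref{defn:plethysticSemistandardSignedTableau} these are repeated only in columns. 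Consequently, in any $T \in \PSSYT_\kappa(\nu^{(M)},\muS)_{(\pi^\-,\pi^\+)}$, the natural analogue of a typical column for sign $+1$ is a typical \emph{row} of length at least $R$ whose leftmost $R$ inner tableaux form $\TMw{\kappa}$. This is exactly the symmetric situation codified in Definition~\ref{defn:exceptionalColumnAndRow}, with column-based statistics replaced by their row-based counterparts $B_R^\-(\nu)$ and $\nu_R^\- = \nu'_R$.

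\medskip
\noindent\textbf{Step 2 (Part (i): bound on exceptional rows).} The proof of Lemma~\ref{lemma:boundOnExceptionalColumns} is row/column symmetric in its combinatorial content, given the symmetric form of Definitions~\ref{defn:exceptionalColumnAndRow} and~\ref{defnBoth:BBoxes}. I would repeat that argument verbatim but count entries by row rather than by column: the large-exceptional count again comes from entries exceeding $\ell(\kappa^\+)$, which is unchanged; the negative- and positive-exceptional counts use the inequalities~\eqref{eq:stronglyMaximalM} and~\eqref{eq:stronglyMaximalP} of Lemma~\ref{lemma:stronglyMaximalImpliesMaximal}, which are stated for an arbitrary strongly maximal family regardless of sign. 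The $B_R^\-(\nu)$ boxes outside the leftmost $R$ positions of a row of length at least $R$ together with the $\nu_R^\- + M$ rows of length at least $R$ partition the outer shape $\nu \sqcup (R^M)$, and using $B_R^\-\bigl(\nu \sqcup (R^M)\bigr) = B_R^\-(\nu)$, the inequalities analogous to those in the proof of Lemma~\ref{lemma:boundOnExceptionalColumns} yield the claimed $E^\-$ and $E^\+$ bounds; together with the large-exceptional count this gives~(i).

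\medskip
\noindent\textbf{Step 3 (Part (ii): signed weight bound).} I would then repeat the proof of Proposition~\ref{prop:signedWeightBoundForStronglyMaximalWeight} with rows replacing columns. Lemma~\ref{lemma:greatestSignedWeight} gives, as before, that any $\muS$-inner tableau contributes at most $\bigl(\omega_{\ell^\-}(\muS)^\-,\omega_{\ell^\-}(\muS)^\+\bigr)$ to the signed weight, so the $B_R^\-(\nu) + ER$ boxes outside the leftmost $R$ positions of non-exceptional rows contribute at most $\bigl(B_R^\-(\nu) + ER\bigr)\bigl(\omega_{\ell^\-}(\muS)^\-,\omega_{\ell^\-}(\muS)^\+\bigr)$. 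The remaining $\nu_R^\- + M - e - d$ typical rows contribute exactly $\swtp{\kappa}$ each, and the $d$ small rows contribute $\swtp{\phi_1} + \cdots + \swtp{\phi_d}$. The signed-dominance upper bound obtained by summing these contributions is maximised by taking $e = E$, exactly as in the sign $+1$ proof, and the analogue of Figure~\ref{fig:signedWeightBoundForStronglyMaximalSignedWeight} (reflected across the main diagonal) gives the first bound; the second weaker bound follows since each $\swtp{\phi_j} \unlhd \swtp{\kappa}$.

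\medskip
\noindent\textbf{Step 4 (Part (iii): twisted dominance bound).} This follows from~(ii) exactly as Corollary~\ref{cor:signedWeightBoundForStronglyMaximalSignedWeight} followed from Proposition~\ref{prop:signedWeightBoundForStronglyMaximalWeight}. I would use Lemma~\ref{lemma:omegaKappaEllDecomposition} to verify that the displayed $\omega$ is $(\ell^\-+1,\ell(\kappa^\+))$-large and has the claimed $\ell^\-$-decomposition, then use Proposition~\ref{prop:plethysticSignedKostkaNumbers} and Lemma~\ref{lemma:twistedKostkaMatrix} to translate the hypothesis $\sigma \unrhddot \lambda \oplus M\swtp{\kappa}$ and the conclusion of~(ii) into an inequality in the $\ell^\-$-twisted dominance order. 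The case split $E \ge \nu_R^\-$ versus $E < \nu_R^\-$ is handled identically, using Lemma~\ref{lemma:adjoinToLarge} to distribute the adjoinings between $\swtp{\kappa}$ and $\bigl(\omega_{\ell^\-}(\muS)^\-,\omega_{\ell^\-}(\muS)^\+\bigr)$.

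\medskip
\noindent\textbf{Main obstacle.} The combinatorics is genuinely symmetric, so there is no deep difficulty; the only delicate point is that the adapted colexicographic order for sign $-1$ (Definition~\ref{defn:adaptedSignedColexicographicOrder}) uses the signed colexicographic order rather than the sign-reversed one, so literal conjugation of tableaux does not carry a sign $+1$ adapted plethystic tableau to a sign $-1$ one. For this reason I expect the cleanest presentation is to redo the proofs with rows replacing columns, as sketched above, rather than attempt a formal conjugation argument; the bookkeeping for positive-exceptional rows, where the test involves the mixed quantity $|\phi^\-| + \sum_{i=1}^{c^\+} \phi^\+_i$, is the step that most needs care to ensure the $\ell^\-$-signed dominance order (which orders $\phi^\-$ before $\phi^\+$) is used correctly throughout.
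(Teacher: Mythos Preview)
Your proposal is correct and follows essentially the same approach as the paper: for sign $+1$ you cite the already-proved results (the paper cites Corollary~\ref{cor:boundOnExceptionalColumns} rather than Lemma~\ref{lemma:boundOnExceptionalColumns} for (i), but this is just the translation from the twisted-dominance hypothesis via Lemma~\ref{lemma:adjoinToLarge}), and for sign $-1$ you redo the column-based arguments with rows, which is exactly what the paper means by ``precisely analogous for sign $-1$, using the modified definitions''. Your observation that a literal conjugation argument is awkward because the adapted order switches between the signed and sign-reversed colexicographic orders is well taken and matches the paper's decision to simply rerun the proofs rather than deduce one sign from the other.
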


\begin{proof}
In (iii) the size of $\omega$ and that $\omega$ is $\bigl(\ell^\-+1, \ell(\kappa^\+)\bigr)$-large
follow exactly 
as in Corollary~\ref{cor:signedWeightBoundForStronglyMaximalSignedWeight}; note that  the
sign of $(\kappa^\-,\kappa^\+)$ is irrelevant to this first part of the proof.
Part (i) is proved in Corollary~\ref{cor:boundOnExceptionalColumns}  when $\swtp{\kappa}$ has sign $+1$;
the proof is precisely analogous for sign $-1$, using the modified definitions above and the
obvious modifications of Lemma~\ref{lemma:boundOnExceptionalColumns} and
Remark~\ref{remark:singletonExceptional}. 
When the sign is $+1$,
(ii) is the stronger bound in Proposition~\ref{prop:signedWeightBoundForStronglyMaximalWeight},
and (iii) is Corollary~\ref{cor:signedWeightBoundForStronglyMaximalSignedWeight}.
Again in all three cases the proof is precisely analogous for sign $-1$.
\end{proof}

\section{Proof of Theorem~\ref{thm:nuStable}}\label{sec:nuStable}
In this section we prove Theorem~\ref{thm:nuStable} using the Signed Weight Lemma (Lemma~\ref{lemma:SWL}).
We begin with the second part of the theorem where the stable multiplicity is zero in
\S\ref{subsec:nuStableZero}.
In \S\ref{subsec:stablePartitionSystemNu} we construct a suitable stable partition system.
Then in \S\ref{subsec:boxMoves} we prove a final preliminary lemma on the length of signed weights,
closely analogous to a well known result on the length of vectors in the Type A root system.
Then finally in \S\ref{subsec:nuStableProof} we prove Theorem~\ref{thm:nuStableSharp}
which restates Theorem~\ref{thm:nuStable} with an explicit bound.

\subsection{The vanishing case of Theorem~\ref{thm:nuStable}}\label{subsec:nuStableZero}
Recall from Definition~\ref{defn:LZBound}
that $\LZBound\bigl( [\lambda, \omega]_\unlhddotS, \swtp{\kappa}, \swtp{\eta}\bigr)$
is defined whenever $\swtp{\eta} \notunlhd \swtp{\kappa}$, and so in particular,
whenever $\swtp{\eta} \unrhd \swtp{\kappa}$. (Here $\unrhd$ is the signed
dominance order of Definition~\ref{defn:ellSignedDominanceOrder}).
See Definition~\ref{defnBoth:exceptionalColumnBoundBoth}
in~\S\ref{subsec:negativeSignAndCombined}
for the definition of $E_{c^\+\hskip-1pt,(\kappa^\-,\kappa^\+)}(\nu, \muS : \lambda)$ in
its `both signs' version.

\begin{proposition}\label{prop:nuStableZero}
Let $\nu$ be a partition.
Let $\swtp{\kappa}$ be a strongly maximal signed weight of size~$R$, shape $\muS$ and 
sign $\epsilon$.
Fix $\ell^\- = \ell(\kappa^\-)$.
Let $\eta^\-$ and $\eta^\+$ be partitions
with $\ell(\eta^\-) \le \ell^\-$ and $|\eta^\-| + |\eta^\+| = |\kappa^\-| + |\kappa^\+|$. 
Let $\ell^\+ = \max\bigl( \ell(\kappa^\+), \ell(\eta^\+) \bigr)$.
Let~$\lambda$ be an $(\ell^\-,
\ell^\+)$-large partition of $|\nu||\muS|$.
Set $\nu^{(M)} = \nu + (M^R)$
if~$\swtp{\kappa}$ has sign $+1$ and $\nu^{(M)} = \nu \sqcups (R^M)$ if $\swtp{\kappa}$ has sign $-1$. 
Set $\nu_R^\+ = \nu_R$ and $\nu_R^\- = \nu_R'$ and let $\pm$ be the sign of $\swtp{\kappa}$.
Set $ E = E_{c^\+,(\kappa^\-,\kappa^\+)}(\nu, \muS : \lambda)$.
If $\swtp{\kappa} \lhd \swtp{\eta}$ then 
\[ \bigl\langle s_{\nu^{(M)}} \circ s_\muS, 
s_{\lambda \opluss M\swtp{\eta}} \bigr \rangle = 0 \]
for all 
\[ M > \begin{cases} \LZBound\bigl([\lambda \oplus (E\!-\! \nu_R^\pm\! +\!1) \swtp{\eta}, 
\omega]_\unlhddotS, \swtp{\kappa},
\swtp{\eta}\bigr) \!+\! (E\!-\!\nu_R^\pm \!+\!1) \\
\LZBound\bigl([\lambda, \omega]_\unlhddotS, \swtp{\kappa}, \swtp{\eta}\bigr) \end{cases}
 \]
choosing the case according to whether $E \ge \nu^\pm_R$ or $E < \nu^\pm_R$,
where $\omega$ is the relevant partition for the two cases taken from
Corollary~\ref{cor:twistedWeightBoundForStronglyMaximalWeightCombined}.
\end{proposition}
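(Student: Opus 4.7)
The plan is to mirror the argument of Proposition~\ref{prop:muStableZero}, but using the sharper upper bound from Corollary~\ref{cor:twistedWeightBoundForStronglyMaximalWeightCombined}(iii) in place of Corollary~\ref{cor:twistedWeightBoundInnerGrowing}. Suppose for a contradiction that $s_{\lambda \opluss M\swtp{\eta}}$ is a constituent of $s_{\nu^{(M)}} \circ s_\muS$. Because $\swtp{\kappa} \lhd \swtp{\eta}$ in the $\ell^\-$-signed dominance order, and because $\lambda$ is $(\ell^\-,\ell^\+)$-large so that Lemma~\ref{lemma:adjoinToLarge} computes $\ell^\-$-decompositions as expected, a componentwise comparison of partial sums shows that $\lambda \opluss M\swtp{\eta} \unrhddot \lambda \opluss M\swtp{\kappa}$. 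Hence the hypothesis of Corollary~\ref{cor:twistedWeightBoundForStronglyMaximalWeightCombined}(iii) is met with $\sigma = \lambda \opluss M\swtp{\eta}$.

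Write $S = E - \nu_R^\pm + 1$ in the case $E \ge \nu_R^\pm$ and $S = 0$ in the case $E < \nu_R^\pm$, so that the upper bound from the corollary reads $\lambda \opluss M\swtp{\eta} \unlhddot \omega \opluss (M-S)\swtp{\kappa}$. Setting $\tilde{\lambda} = \lambda \opluss S\swtp{\eta}$ and rearranging the adjoining via Lemma~\ref{lemma:adjoinToLarge}, this is equivalent to $\tilde{\lambda} \opluss (M-S)\swtp{\eta} \unlhddot \omega \opluss (M-S)\swtp{\kappa}$. From here the remainder of the argument proceeds exactly as in the proof of Proposition~\ref{prop:muStableZero}: expanding both sides in $\ell^\-$-decompositions via Lemma~\ref{lemma:twistedDominanceOrderOldDefinition} and comparing the first $k$ partial sums produces two families of inequalities, one of the shape $(M-S)\bigl(\sum_{i=1}^k \eta^\-_i - \sum_{i=1}^k \kappa^\-_i\bigr) \le \sum_{i=1}^k \omega^\-_i - \sum_{i=1}^k \tilde{\lambda}^\-_i$ and the obvious $\+$-counterpart. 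Since $\swtp{\kappa} \lhd \swtp{\eta}$ forces $\swtp{\eta} \notunlhd \swtp{\kappa}$, at least one denominator in Definition~\ref{defn:LZBound} is strictly positive, and as soon as $M - S > \LZBound\bigl([\tilde{\lambda}, \omega]_\unlhddotS, \swtp{\kappa}, \swtp{\eta}\bigr)$ the corresponding inequality fails, a contradiction. Re-adding $S$ yields the two cases of the stated bound.

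The hard part will be purely bookkeeping rather than conceptual. One must keep track of which partitions are $(\ell^\-, \ell^\+)$-large so that Lemma~\ref{lemma:adjoinToLarge} applies at every rearrangement of $\oplus$, verify that the partition $\omega$ supplied by Corollary~\ref{cor:twistedWeightBoundForStronglyMaximalWeightCombined}(iii) is itself $(\ell^\-+1,\ell(\kappa^\+))$-large (so that twisted intervals ending at $\omega$ behave as expected, cf.~Lemma~\ref{lemma:twistedDownsetLarge}), and handle the two cases $E \ge \nu_R^\pm$ and $E < \nu_R^\pm$ in parallel to confirm that the same piecewise formula emerges. A minor but non-trivial check is that $\tilde{\lambda} = \lambda \opluss S\swtp{\eta}$ genuinely lies in the domain of $\LZBound$ (that is, that $\tilde{\lambda} \unlhddot \omega$), which follows from the corollary once the hypotheses have been verified at the initial value $M = S$.
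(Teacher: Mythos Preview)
Your proposal is correct and takes essentially the same approach as the paper: apply Corollary~\ref{cor:twistedWeightBoundForStronglyMaximalWeightCombined}(iii) with $\sigma = \lambda \opluss M\swtp{\eta}$ (using $\swtp{\eta} \rhd \swtp{\kappa}$ to verify the lower-bound hypothesis), then run the partial-sum comparison from Proposition~\ref{prop:muStableZero}, absorbing the shift $S = E - \nu_R^\pm + 1$ into the lower endpoint in the case $E \ge \nu_R^\pm$. Your write-up is, if anything, slightly more explicit than the paper's about why the corollary's hypothesis $\sigma \unrhddot \lambda \oplus M\swtp{\kappa}$ holds; the one caveat is that the ``domain'' check $\tilde{\lambda} \unlhddot \omega$ you flag is not actually needed, since Definition~\ref{defn:LZBound} makes sense for arbitrary $\lambda$ and $\omega$ of the same size.
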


\begin{proof}
By Corollary~\ref{cor:signedWeightBoundForStronglyMaximalSignedWeight}(iii), if $s_\sigma$ is a
constituent of $s_{\nu^{(M)}} \circ s_\muS$ such that $\sigma \unrhddot \lambda \oplus M\swtp{\kappa}$
then 
\[ \sigma \unlhddot 
\begin{cases} \omega \oplus{}  \bigl(-(E - \nu_R^\pm + 1) + M\bigr) \swtp{\kappa}
& \text{if $E \ge \nu_R^\pm$} \\
\omega \oplus{}  M\swtp{\kappa} & \text{if $E < \nu_R^\pm$} \end{cases} \]
for all $M \in \N_0$ such that $M > - \nu_R^\pm + E$. 
Since $\swtp{\eta} \rhd \swtp{\kappa}$ we may apply this result taking 
$\sigma = \lambda \oplus M\swtp{\eta}$.
 In the second case, when $E < \nu_R^\pm$, the proposition then follows
by an argument very closely analogous to the proof of Proposition~\ref{prop:muStableZero}; the analogue of~\eqref{eq:sigmaSignedWeightBound} is that
if $\sigma \decMap \decs{\sigma^\-}{\sigma^\+}$ then
\begin{align*}
 (\sigma^\-, \sigma^\+) 
&\unlhd 
(\kappa^\-, \kappa^\+) + (B_R^\pm(\nu) + ER) (\omega_{\ell^\-}(\muS)^\-,
\omega_{\ell^\-}(\muS)^\+) \\ &\hspace*{1.4in} + (\nu_R^\pm - E -1)(\kappa^\-,\kappa^\+)
  + M (\kappa^\-, \kappa^\+) \end{align*}
and so, substituting $(\lambda^\-,\lambda^\+) + M(\eta^\-,\eta^\+)$
for $(\sigma^\-,\sigma^\+)$, as it justified by
Lemma~\ref{lemma:adjoinToLarge} since $(\eta^\-, \eta^\+) \unrhd (\kappa^\-,\kappa^\+)$,
the analogue of~\eqref{eq:signedEtaBound} is
\[ 
\begin{split}
(\lambda^\-, \lambda^\+) + M(\eta^\-, \eta^\+)\, \unlhd\, &
(B_R^\pm(\nu) + ER) (\omega_{\ell^\-}(\muS)^\-,
\omega_{\ell^\-}(\muS)^\+)
\\ & \qquad\qquad\quad + (\nu_R^\pm - E) (\kappa^\-,\kappa^\+)
 + M(\kappa^\- ,\kappa^\+) 
 \end{split}
 \]
where since $E < \nu_R^\pm$, we add $M(\kappa^\-, \kappa^\+)$ to a 
well-defined signed weight. The application of the inequalities
is then exactly as before, giving
a contradiction whenever $M >
\LZBound\bigl([\lambda, \omega]_\unlhddotS, \swtp{\kappa}, \swtp{\eta}\bigr)$.
In the first case, when $E \ge \nu_R^\pm$, we note
that because of the shift in $M$, we now require
\[ - (E-\nu_R^\pm +1) + M\ge 
\LZBound\bigl([\lambda \oplus (E- \nu_R^\pm +1) \swtp{\eta}, \omega]_\unlhddotS, \swtp{\kappa},
\swtp{\eta}\bigr)\] 
where $\omega$ is now defined by the first case in 
Corollary~\ref{cor:signedWeightBoundForStronglyMaximalSignedWeight}.
The argument is otherwise the same.
\end{proof}

\subsection{Stable partition system for Theorem~\ref{thm:nuStable}}\label{subsec:stablePartitionSystemNu}
The following lemma is the analogue of Lemma~\ref{lemma:stablePartitionSystemForMuVarying}.
Again we remind the 
reader that the statistic
 statistic $E_{c^\+\hskip-1pt,(\kappa^\-,\kappa^\+)}(\nu, \muS : \lambda)$ is defined in 
Definition~\ref{defnBoth:exceptionalColumnBoundBoth}. Here we also use
$B_R^\pm(\nu)$ from Definition~\ref{defnBoth:BBoxes}; 
each definition is given in its `both signs' versions in~\S\ref{subsec:negativeSignAndCombined}.
The partition $\kappa$ in the following lemma is well-defined by
Proposition~\ref{prop:stronglyMaximalSignedWeightsAreEllDecompositions}.

\begin{lemma}\label{lemma:stablePartitionSystemForNuVarying}
Let $\nu$ be a partition.
Let $\swtp{\kappa}$ be a strongly $c^\+$-maximal signed weight of shape $\muS$ and size $R$.
Fix $\ell^\- = \ell(\kappa^\-)$.
Let $\lambda$ be an $\bigl(\ell^\-+1,\ell(\kappa^\+)\bigr)$-large partition of $|\nu||\muS|$.
Let $\nu_R^\pm = \nu_R$ if $\swtp{\kappa}$ has sign $+1$ and let $\nu_R^\pm = \nu'_R$ if $\swtp{\kappa}$
has sign $-1$.
Define
\[ \nu^{(M)} = \begin{cases} \nu + M(1^R) & \text{if $\swtp{\kappa}$ has sign $+1$} \\
    \nu \sqcup (R^M) & \text{if $\swtp{\kappa}$ has sign $-1$.}\end{cases} \]
Set $E = E_{c^\+\hskip-1pt,(\kappa^\-,\kappa^\+)}(\nu, \muS : \lambda)$.
Let $\kappa$ be the unique partition with $\ell^\-$-decomposition 
$\decs{\kappa^\-}{\kappa^\+}$.
Define 
\[ \omega = \begin{cases} \kappa \oplus (B^\pm_R(\nu) + ER) \bigl( \omega_{\ell^\-}(\muS)^\-,\omega_{\ell^\-}(\muS)^\+
\bigr) \\ 
\kappa \oplus (B^\pm_R(\nu) + ER) \bigl( \omega_{\ell^\-}(\muS)^\-,\omega_{\ell^\-}(\muS)^\+
\bigr) \oplus (\nu_R^\pm - E - 1) \swtp{\kappa} \end{cases} \]
choosing the case according to whether $E \ge \nu_R^\pm$ or $E < \nu_R^\pm$. 
If $E \ge \nu_R^\pm$ then set $S = -\nu_R^\pm + E + 1$ 
and set $\PSeq{M} = \varnothing$
for $M < S$ and 
\begin{align*} \PSeq{M} &= 
\bigl[\lambda \oplus M\swtp{\kappa}, \omega \oplus ( M-\Shift)\swtp{\kappa} \bigr]_\unlhddotS
\intertext{if $M \ge \Shift$. 
If $E < \nu_R^\pm$ then set $S=0$ and set}
\PSeq{M} &= \bigl[\lambda \oplus M\swtp{\kappa}, \omega \oplus M\swtp{\kappa} \bigr]_\unlhddotS
\end{align*}
for all $M \in \N_0$. 
Then $(\PSeq{M})_{M \in \N_0}$ is a stable partition system with respect
to the map $\pi \mapsto \pi \oplus \swtp{\kappa}$ and the 
twisted symmetric functions $g_\pi = e_{\pi^\-}h_{\pi^\+}$,
stable for $M \ge \Shift + K$ where $K$ 
is the maximum of
\begin{bulletlist}
\item $\LBound\bigl([\lambda^\- + \Shift\kappa^\-,\omega^\-]^{\scriptscriptstyle
\ell^\-}_\unLHDS, \kappa^\-\bigr)$,
\item $\LBound\bigl([\lambda^\+ + \Shift\kappa^\+,\omega^\++(|\lambda^\+| + \Shift|\kappa^\+|
-|\omega^\+|)]_\unlhd, \kappa^\+\bigr)$,
\item $\bigl( \omega_1^\+ + \omega_2^\+ - 2\lambda_1^\+ -2\Shift\kappa_1^\+ + 2|\lambda^\+| +2\Shift|\kappa^\+| 
- 2|\omega^\+|  \bigr)/(\kappa^\+_1-\kappa^\+_2)$,
\item $\bigl( \max( \ell(\lambda^\+), \ell(\kappa^\+) ) + |\omega^\-| - |\lambda^\-|
- \Shift|\kappa^\-| - \omega^\-_{\ell^\-} \bigr)/
\kappa^\-_{\ell^\-}$.
\end{bulletlist}
and zero.
Moreover, if \smash{$\pi \in \PSeq{M}$} and~$s_\sigma$ is a summand of $e_{\pi^\-}h_{\pi^\+}$ appearing
in the plethysm \smash{$s_{\nu^{(M)}}   \circ s_\muS$} then
$\sigma \in \PSeq{M}$.
\end{lemma}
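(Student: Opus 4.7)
The plan is to reduce both cases ($E \ge \nu_R^\pm$ and $E < \nu_R^\pm$) to a single application of Corollary~\ref{cor:signedIntervalStable}, and then to deduce the ``moreover'' claim from the Outer Twisted Weight Bound in Corollary~\ref{cor:twistedWeightBoundForStronglyMaximalWeightCombined}. The idea is that when $E \ge \nu_R^\pm$, the interval
\[ \PSeq{M} = [\lambda \oplus M\swtp{\kappa}, \omega \oplus (M-\Shift)\swtp{\kappa}]_\unlhddotS \]
with $\Shift = E - \nu_R^\pm + 1$ can be rewritten by setting $M' = M - \Shift$ and $\lambda' = \lambda \oplus \Shift \swtp{\kappa}$ as $[\lambda' \oplus M'\swtp{\kappa}, \omega \oplus M'\swtp{\kappa}]_\unlhddotS$; when $E < \nu_R^\pm$ we simply take $\lambda' = \lambda$ and $M' = M$. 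In either case $|\lambda'| = |\omega|$ (by the size computation in Corollary~\ref{cor:twistedWeightBoundForStronglyMaximalWeightCombined}(iii)), and so every interval in the family has the form required by Corollary~\ref{cor:signedIntervalStable}.

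The first step is to check the hypotheses of that corollary for $\lambda'$ and $\omega$. Corollary~\ref{cor:twistedWeightBoundForStronglyMaximalWeightCombined}(iii) states that $\omega$ is $\bigl(\ell^\-+1, \ell(\kappa^\+)\bigr)$-large. The comparison $\lambda' \unlhddot \omega$ in the $\ell^\-$-twisted dominance order needs to be verified once: in the case $E \ge \nu_R^\pm$ this will follow by adjoining $\Shift \swtp{\kappa}$ to a comparison on the unshifted pieces via Lemma~\ref{lemma:signedDominancePreservedByOplusWhenLarge}, while the relation $\lambda \unlhddot \omega$ itself is a direct consequence of the description of $\omega$ from Lemma~\ref{lemma:omegaKappaEllDecomposition} (every summand is a non-negative multiple of signed weights bounded by the greatest signed weight $\omega_{\ell^\-}(\muS)$). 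The four bullet bounds listed in the statement are exactly the four bounds in Corollary~\ref{cor:signedIntervalStable} applied to $\lambda'$ in place of $\lambda$, since in the $\ell^\-$-decomposition $\lambda'^\- = \lambda^\- + \Shift \kappa^\-$ and $\lambda'^\+ = \lambda^\+ + \Shift \kappa^\+$; so the stability of the partition system for $M \ge \Shift + K$ is immediate from Corollary~\ref{cor:signedIntervalStable}.

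For the final ``moreover'' claim, let $\pi \in \PSeq{M}$ and suppose $s_\sigma$ is a summand of $e_{\pi^\-}h_{\pi^\+}$ appearing in $s_{\nu^{(M)}} \circ s_\muS$. Lemma~\ref{lemma:twistedKostkaMatrix} gives $\sigma \unrhddot \pi \unrhddot \lambda \oplus M\swtp{\kappa}$, which is the lower endpoint of $\PSeq{M}$ and also the hypothesis of Corollary~\ref{cor:twistedWeightBoundForStronglyMaximalWeightCombined}(iii). Applying that corollary yields $\sigma \unlhddot \omega \oplus (M-\Shift)\swtp{\kappa}$ (respectively $\sigma \unlhddot \omega \oplus M\swtp{\kappa}$ in the case $E < \nu_R^\pm$), which is the upper endpoint of $\PSeq{M}$; hence $\sigma \in \PSeq{M}$.

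The main obstacle is purely bookkeeping: ensuring that the shift $\Shift$, which is introduced precisely because the greatest permissible number of exceptional columns can exceed $\nu_R^\pm$ (forcing us to start the intervals above $\lambda$), lines up with the hypothesis $M \ge \Shift + K$ on stability and with the upper-bound half of the moreover claim. Once the reduction $\lambda \rightsquigarrow \lambda'$, $M \rightsquigarrow M-\Shift$ is in place, no new arguments are needed beyond invoking the earlier corollaries; the only care required is to check that the numerator $|\lambda^\+| + \Shift|\kappa^\+| - |\omega^\+|$ appearing in the second bullet is non-negative (equivalently, that $\lambda'^\+ \unlhd \omega^\+ + (|\lambda'^\+| - |\omega^\+|)$ in the ordinary dominance order), which follows from $\lambda' \unlhddot \omega$ via Lemma~\ref{lemma:twistedDominanceOrderOldDefinition}(b).
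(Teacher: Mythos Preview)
Your approach is essentially the same as the paper's: reduce to Corollary~\ref{cor:signedIntervalStable} after shifting by $\Shift$, and deduce the ``moreover'' claim from Lemma~\ref{lemma:twistedKostkaMatrix} together with Corollary~\ref{cor:twistedWeightBoundForStronglyMaximalWeightCombined}(iii). The paper treats the two cases $E \ge \nu_R^\pm$ and $E < \nu_R^\pm$ separately but in exactly the way you describe, applying the corollary to $\lambda \oplus \Shift\swtp{\kappa}$ and $\omega$ and then translating the stability bound back by~$\Shift$.

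There is one genuine error in your write-up: your claimed verification that $\lambda' \unlhddot \omega$ always holds is wrong. The partition $\lambda$ is an arbitrary $\bigl(\ell^\-+1,\ell(\kappa^\+)\bigr)$-large partition of $|\nu||\muS|$, and there is no reason it must lie below $\omega$ in the $\ell^\-$-twisted dominance order. Indeed Theorem~\ref{thm:nuStableSharp} explicitly singles out the case $\lambda \oplus \Shift\swtp{\kappa} \notunlhddot \omega$ as the one where the plethysm coefficient vanishes. Your proposed justification via Lemma~\ref{lemma:omegaKappaEllDecomposition} only describes the structure of $\omega$; it says nothing about~$\lambda$. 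Fortunately this does not break the lemma: if $\lambda' \notunlhddot \omega$ then every $\PSeq{M}$ (for $M \ge \Shift$) is empty, and an empty partition system is trivially stable with the ``moreover'' claim vacuous. The paper does not attempt to verify $\lambda' \unlhddot \omega$ at all; it simply applies Corollary~\ref{cor:signedIntervalStable}, implicitly relying on this triviality when the hypothesis of that corollary fails. You should do the same rather than assert a comparison that need not hold.
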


\begin{proof}
By Corollary~\ref{cor:twistedWeightBoundForStronglyMaximalWeightCombined}(iii) in its
`both signs' version in \S\ref{subsec:negativeSignAndCombined}, $\omega$
is an $\bigl(\ell^\-+1,\ell(\kappa^\+)\bigr)$-large partition,
of size $|\lambda| + SR|\muS|$. 
If $E < \nu_R^\pm$ then it is immediate from 
Corollary~\ref{cor:signedIntervalStable} applied  with $\lambda$ and~$\omega$
that the partition system $(\PSeq{M})_{M\in \N_0}$ is stable, and since $\Shift = 0$, the bounds above defining $K$
are exactly the bounds
defined in the statement of this lemma.
If $E \ge \nu_R^\pm$ then $S = E- \nu_R^\pm + 1$ and  we instead
apply the corollary to the partitions $\lambda \oplus S\swtp{\kappa}$ and $\omega$.
Since, by Lemma~\ref{lemma:adjoinToLarge} we have $\bigl(\lambda \oplus \Shift\swtp{\kappa}\bigr)^\- = \lambda^\-
\oplus \Shift\kappa^\-$, and so on, the result from Corollary~\ref{cor:signedIntervalStable}
is that the partition system $(\PSeq{N+S})_{N\in \N_0}$ is stable for $N \ge K$, where again~$K$
is as defined in the statement of this lemma.
Since $K \ge 0$, it follows that 
$(\PSeq{M})_{M\in\N_0}$ is stable for $M \ge S + K$.
(The reader may easily check that Definition~\ref{defn:stablePartitionSystem} permits any finite number of the
sets $\PSeq{M}$ to be empty.)
For the `moreover' part of the result,
first note that by  Lemma~\ref{lemma:twistedKostkaMatrix},
$\sigma \unrhddot \pi$. By 
Corollary~\ref{cor:twistedWeightBoundForStronglyMaximalWeightCombined}(iii)
we have $\sigma \unlhddot \omega \oplus ( M -\Shift) \swtp{\kappa}$.
Hence $\sigma \in \PSeq{M}$.
This completes the proof.
\end{proof}

\subsection{Box moving bound}\label{subsec:boxMoves}

Let $\V_{\ell^\-} \times \V$ be the abelian group generated by the set $\W_{\ell^\-} \times \W$ of
signed weights. Identifying $\bigl( (\alpha^\-_1,\ldots,\alpha^\-_{\ell^\-}), (\alpha_1^\+,\alpha_2^\+,
\ldots) \bigr)$ $\in \V_{\ell^\-} \times \V$ with $\bigl( \alpha_1^\-, \ldots, \alpha^\-_{\ell^\-}, \alpha^\+_1,\alpha^\+_2, \ldots)$
as in the definition of the $\ell^\-$-signed dominance
order on $\W_{\ell^\-} \times \W$ (see Definition~\ref{defn:ellSignedDominanceOrder})
we define  $\epsilon^{(j)} \in \V_{\ell^\-} \times \V$ for each $j \in \N$ by
\[ \epsilon^{(j)}_i = \begin{cases} 1 & \text{if $i=j$} \\ -1 & \text{if $i=j+1$} \\ 0 & \text{otherwise.}
\end{cases} \]
For example if $\ell^\- = 2$ then $\epsilon^{(1)} = \bigl( (1,-1), \varnothing \bigr)$,
$\epsilon^{(2)} = \bigl( (0,1), (-1) \bigr)$, $\epsilon^{(3)} = \bigl( (0,0), (1,-1) \bigr)$, and so on.
Observe that the $\epsilon^{(j)}$ for $j \in \N$ form a $\Z$-basis for the 
subgroup of $\V_{\ell^\-} \times \V$ of elements of
sum~$0$. We say that $(\gamma^\-,\gamma^\+) \in \V_{\ell^\-} \times \V$ is \emph{root-positive}
if, under this identification,
 it is a linear combination of the $\epsilon^{(j)}$ with non-negative coefficients.
Given a root-positive element  $(\beta^\-,\beta^\+) \in \V_{\ell^\-} \times \V$
expressed uniquely in the $\epsilon^{(j)}$ basis as
\[ (\beta^\-,\beta^\+) = \sum_{j \in \N} b_j \epsilon^{(j)}, \]
we define the \emph{root-length} of $(\beta^\-,\beta^\+)$, denoted
\smash{$||(\beta^\-,\beta^\+)||$},
by 
\begin{equation} ||(\beta^\-,\beta^\+)|| = \sum_j b_j. \label{eq:rootLength} \end{equation}
Note the sum in~\eqref{eq:rootLength} is a 
finite sum of non-negative integers.
In practice, it is more often helpful 
to think of each $\epsilon^{(j)}$ as defining a single box move
as in the remark following Example~\ref{ex:62upset}.

\begin{lemma}\label{lemma:normRespectsSignedDominance}
Fix $\ell^\- \in \N$. Let $\pi$, $\sigma$ and $\tau$ be partitions
such that $\pi \unlhddot \sigma \unlhddot \tau$. Then
$\tau - \sigma$ is root-positive and $||\swtp{\tau}-\swtp{\sigma}|| \le ||\swtp{\tau}- \swtp{\pi}||$.
\end{lemma}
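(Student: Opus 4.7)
The plan is to recognize this as the standard fact that $\ell^\-$-signed dominance corresponds exactly to root-positivity in a type-$A$-like setup, then combine with the trivial additivity of root-length on root-positive elements.

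First I would make explicit the bijection between root-positive elements of $\V_{\ell^\-} \times \V$ of coordinate-sum zero and pairs comparable in the $\ell^\-$-signed dominance order. Identifying $\swtp{\alpha} \in \V_{\ell^\-} \times \V$ with the concatenated integer sequence $(\alpha^\-_1,\ldots,\alpha^\-_{\ell^\-},\alpha^\+_1,\alpha^\+_2,\ldots)$, the defining formula $\epsilon^{(j)}_i = \delta_{ij} - \delta_{i,j+1}$ immediately gives that any zero-sum $\alpha \in \V_{\ell^\-} \times \V$ has the unique expansion $\alpha = \sum_j b_j\epsilon^{(j)}$ with $b_j = \alpha_1 + \cdots + \alpha_j$ (all but finitely many of these partial sums are $0$, so the sum is finite). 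Applying this to $\alpha = \swtp{\tau} - \swtp{\sigma}$, which has zero coordinate-sum since $|\tau| = |\sigma|$, the coefficient $b_j$ becomes exactly the difference between the $j$th partial sums of the concatenations for $\tau$ and $\sigma$. Thus $b_j \ge 0$ for all $j$ is, by Definition~\ref{defn:ellTwistedDominanceOrder}, precisely the condition $\sigma \unlhddot \tau$. The hypothesis $\sigma \unlhddot \tau$ therefore gives at once that $\swtp{\tau} - \swtp{\sigma}$ is root-positive.

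For the inequality, I would write
\[
\swtp{\tau} - \swtp{\pi} = (\swtp{\tau} - \swtp{\sigma}) + (\swtp{\sigma} - \swtp{\pi}),
\]
and observe that both summands on the right are root-positive: the first by the paragraph above applied to $\sigma \unlhddot \tau$, the second likewise applied to $\pi \unlhddot \sigma$. Since root-length $||\cdot||$ is by definition the sum of coefficients in the $\epsilon^{(j)}$-expansion, and a sum of root-positive vectors is root-positive with coefficients adding componentwise, $||\cdot||$ is additive on root-positive elements. Hence
\[
||\swtp{\tau} - \swtp{\pi}|| = ||\swtp{\tau} - \swtp{\sigma}|| + ||\swtp{\sigma} - \swtp{\pi}||,
\]
and dropping the non-negative second term on the right yields the required bound.

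I do not foresee a real obstacle: the content is just the classical equivalence of dominance with positivity in the type-$A$ simple-root basis, transcribed to the infinite concatenated sequences used for signed weights, together with the observation that root-length is manifestly additive when no cancellation can occur. The one point to be careful about is confirming that the expansion $\alpha = \sum_j b_j \epsilon^{(j)}$ is well-defined and finite for $\alpha \in \V_{\ell^\-} \times \V$ of zero sum, which is immediate since the partial sums vanish from some point on.
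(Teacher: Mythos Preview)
Your proposal is correct and takes essentially the same approach as the paper: both arguments reduce to the standard Type~A fact that dominance of concatenated weights is equivalent to non-negativity of the $\epsilon^{(j)}$-coefficients, then use additivity of these coefficients on the decomposition $\swtp{\tau}-\swtp{\pi} = (\swtp{\tau}-\swtp{\sigma}) + (\swtp{\sigma}-\swtp{\pi})$. You are slightly more explicit than the paper in writing out the partial-sum formula $b_j = \alpha_1+\cdots+\alpha_j$, which the paper simply cites as a standard result, but the logic is identical.
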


\begin{proof}
By Definition~\ref{defn:ellTwistedDominanceOrder} we have $\swtp{\sigma} \unlhd \swtp{\tau}$
in the $\ell^\-$-signed dominance order on $\W_{\ell^\-} \times \W$. 
By Definition~\ref{defn:ellSignedDominanceOrder} this is the usual
dominance order on concatenated weights. Hence by a standard
result on the dominance order familiar from the Type~A root system, 
which also follows from the remark above about single box moves, 
$(\tau^\-- \sigma^\-, \tau^\+-\sigma^\+)$ is root-positive. 
Let
\begin{align*}
(\sigma^\- - \pi^\-, \sigma^\+-\pi^\+) &= \textstyle \sum_{j \in \N} b_j \epsilon^{(j)} \\
(\tau^\- - \sigma^\-, \tau^\+-\sigma^\+) &= \textstyle \sum_{j \in \N} c_j \epsilon^{(j)}\end{align*}
where $b_j, c_j \ge 0$ for each $j$.
Now $(\tau^\- - \pi^\-, \tau^\+-\pi^\+) = \sum_{j \in \N} (b_j + c_j) \epsilon^{(j)}$ 
and since $b_j + c_j \ge c_j$ the remaining claim follows.
\end{proof}


The $\ell^\-$-signed dominance order 
on the set $\W_{\ell^-} \!\times \W$
of signed weights used in the following lemma
is defined in Definition~\ref{defn:ellSignedDominanceOrder}.

\begin{lemma}\label{lemma:positiveLowerBound}
Let $(\phi_i^\-,\phi_i^\+)$ for $1 \le i \le \nSmall$
and $(\kappa^\-,\kappa^\+)$ be signed weights of the same size such that
$\swtp{\phi_i} \lhd \swtp{\kappa}$ in the $\ell^\-$-signed dominance order
for each $i$. Then $\nSmall\swtp{\kappa} - \sum_{i=1}^\nSmall (\phi_i^\-,\phi_i^\+) $
is root-positive and
\[ \bigl|\bigl| \nSmall(\kappa^\-,\kappa^\+) - \sum_{i=1}^\nSmall (\phi_i^\-,\phi_i^\+)  \bigr|\bigr|
\ge \nSmall. \]
\end{lemma}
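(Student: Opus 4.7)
The plan is to reduce to the singleton case and then sum. First I would recall the standard fact from Type A root systems: for any two integer sequences of equal total sum, comparability in the dominance order is equivalent to the difference being a non-negative integer combination of the simple root vectors $\epsilon^{(j)}$. Because the $\ell^\-$-signed dominance order on $\W_{\ell^\-} \times \W$ is, by Definition~\ref{defn:ellSignedDominanceOrder}, just the usual dominance order on concatenated sequences $(\alpha^\-_1, \ldots, \alpha^\-_{\ell^\-}, \alpha^\+_1, \alpha^\+_2, \ldots)$, and because signed weights of the same size produce concatenations of equal total sum, this standard fact applies verbatim in the present setting. (The same reduction is already invoked in the proof of Lemma~\ref{lemma:normRespectsSignedDominance}.)

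Next, I would apply this fact to each index separately. Fixing $i$, the hypothesis $(\phi_i^\-,\phi_i^\+) \lhd (\kappa^\-, \kappa^\+)$ and the equality of sizes give a presentation $(\kappa^\-,\kappa^\+) - (\phi_i^\-, \phi_i^\+) = \sum_{j \in \N} c_{i,j}\epsilon^{(j)}$ with every $c_{i,j} \ge 0$. Strictness forces $(\phi_i^\-,\phi_i^\+) \ne (\kappa^\-,\kappa^\+)$, and since the $\epsilon^{(j)}$ are $\Z$-linearly independent (they form a basis of the subgroup of elements of sum zero), not all $c_{i,j}$ can be zero, so $\sum_j c_{i,j} \ge 1$. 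By~\eqref{eq:rootLength} this says exactly that $\|(\kappa^\-,\kappa^\+) - (\phi_i^\-,\phi_i^\+)\| \ge 1$.

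Finally, I would add up: because expansion in the $\epsilon^{(j)}$-basis is $\Z$-linear, both root-positivity and root-length are additive. Hence
\[
d(\kappa^\-,\kappa^\+) - \sum_{i=1}^d (\phi_i^\-,\phi_i^\+) = \sum_{j \in \N}\Bigl(\,\sum_{i=1}^d c_{i,j}\Bigr)\,\epsilon^{(j)}
\]
is root-positive, and its root-length is $\sum_{i=1}^d \sum_{j \in \N} c_{i,j} = \sum_{i=1}^d \|(\kappa^\-,\kappa^\+) - (\phi_i^\-,\phi_i^\+)\| \ge d$, as required. There is no genuine obstacle; the only point to handle with care is the reduction from the $\ell^\-$-signed dominance order to the classical dominance order on integer sequences, which is however immediate from Definition~\ref{defn:ellSignedDominanceOrder}.
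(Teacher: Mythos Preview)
Your proof is correct and follows essentially the same approach as the paper: both write each difference $(\kappa^\-,\kappa^\+)-(\phi_i^\-,\phi_i^\+)$ as a non-negative integer combination of the $\epsilon^{(j)}$, use strictness to force at least one nonzero coefficient per $i$, and sum. The only cosmetic difference is that the paper cites Lemma~\ref{lemma:normRespectsSignedDominance} for root-positivity of each summand, whereas you re-derive it directly from the reduction to the ordinary dominance order on concatenated sequences.
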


\begin{proof}
By Lemma~\ref{lemma:normRespectsSignedDominance}, $\swtp{\kappa} -\swtp{\phi}_i$ is root-positive. 
Write $(\kappa^\- - \phi_i^\-,\kappa^\+-\phi_i^\+) = \sum_{j \in \N} b_{ij} \epsilon^{(j)}$
where $b_{ij} \in \N_0$ for all $i$ and $j$. For each $i$, at least one coefficient
$b_{ij}$ is non-zero, and so
\[ \nSmall\swtp{\kappa} - \sum_{i=1}^\nSmall (\phi_i^\-,\phi_i^\+)  = \sum_{j \in \N}  \sum_{i = 1}^\nSmall b_{ij} 
\epsilon^{(j)} \] 
where the  sum of all the coefficients is at least $\nSmall$.
\end{proof}

\subsection{Small columns and rows}
Let $\swtp{\kappa}$ be a strongly $c^\+$-maximal signed weight
of shape $\muS$
and size $R$. We use `/' to distinguish the cases when $\swtp{\kappa}$ has sign $+1$/$-1$.
Recall from Definition~\ref{defn:exceptionalColumnAndRow}
that in each  non-exceptional column/row of a plethystic semistandard signed tableau $T$ 
of inner shape $\muS$
\emph{either}
the top/leftmost $R$ boxes
in the column/row form the plethystic semistandard signed tableau $T_{(\kappa^\-,\kappa^\+)}$,
\emph{or} the column/row is small having signed weight $\swtp{\phi}$ such that $\swtp{\phi} \lhd \swtp{\kappa}$
 in the $\ell(\kappa^\-)$-signed dominance order
on set $\W_{\ell^\-} \!\!\times \W$
defined in Definition~\ref{defn:ellSignedDominanceOrder}.
The bound $E_{c^\+\hskip-1pt,(\kappa^\-,\kappa^\+)}(\nu, \muS : \lambda)$
in the following lemma 
is defined in Definition~\ref{defnBoth:exceptionalColumnBoundBoth} and the statistic $B_R^\pm(\nu)$ is defined in Definition~\ref{defnBoth:BBoxes},
each in their `both signs' version
in~\S\ref{subsec:negativeSignAndCombined}.

\begin{lemma}\label{lemma:smallColumnAndRowBound}
Let $\nu$ be a partition. 
Let $(\kappa^\-,\kappa^\+)$ be a strongly $c^\+$-maximal signed weight of shape $\muS$ 
and size $R$.
Fix $\ell^\- = \ell(\kappa^\-)$. Let $\lambda$ be a
 $\bigl(\ell^\-, \ell(\kappa^\+)\bigr)$-large partition of $|\muS||\nu|$.
Let $E = E_{c^\+\hskip-1pt,(\kappa^\-,\kappa^\+)}(\nu, \muS : \lambda)$.
Let $\nu_R^\pm = \nu_R$ if $\swtp{\kappa}$ has sign $+1$ and let $\nu_R^\pm = \nu'_R$ if $\swtp{\kappa}$
has sign~$-1$.
Let $M \in \N_0$ and set
\[ \nu^{(M)} = \begin{cases} \nu + M(1^R) & \text{if $\swtp{\kappa}$ has sign $+1$} \\
    \nu \sqcup (R^M) & \text{if $\swtp{\kappa}$ has sign $-1$.}\end{cases} \]
Let $T \in \PSSYT_\kappa(\nu^{(M)}, \muS)_{(\pi^\-,\pi^\+)}$ be a $\swtp{\kappa}$-adapted
plethystic
semistandard signed tableau
where $\swtp{\pi} \,\unrhd\, \swtp{\lambda} + M\swtp{\kappa}$
in the $\ell^\-$-signed dominance order on $\W_{\ell^\-} \!\!\times \W$.
Set 
 \[ D\! =\! \bigl|\bigl|  \bigl(B^\pm_R(\nu) + ER\bigr)
 \bigl( \omega_{\ell^\-}(\muS)^\-\!, \omega_{\ell^\-}(\muS)^\+ \bigr) 
 + (\nu_R^\pm - E) \swtp{\kappa}
- \swtp{\lambda} \bigr|\bigr| \]
where in this equation the root-length is of a root-positive element.
If $M \in \N_0$ and $M \ge D + E - \nu_R^\pm$ then 
 $T$ has at most $D$ small columns/rows.
\end{lemma}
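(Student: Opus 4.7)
The plan is to use the actual count $e$ of exceptional columns/rows in $T$ (rather than the upper bound $E$) in the argument behind Proposition~\ref{prop:signedWeightBoundForStronglyMaximalWeight}, compare the resulting dominance upper bound on $\swtp{\pi}$ with the hypothesised lower bound $\swtp{\lambda} + M\swtp{\kappa}$, and finally pass to root-lengths via Lemma~\ref{lemma:positiveLowerBound}. Both signs of $\swtp{\kappa}$ can be handled simultaneously using the `both signs' conventions of \S\ref{subsec:negativeSignAndCombined}, so I describe the plan in the column (sign $+1$) case only.

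Concretely, I would first revisit the proof of Proposition~\ref{prop:signedWeightBoundForStronglyMaximalWeight}: before the maximisation step that replaces $e$ by $E$, the proof already establishes
\[ \swtp{\pi} \,\unlhd\, \bigl(B^\pm_R(\nu) + eR\bigr)\bigl(\omega_{\ell^\-}(\muS)^\-,\omega_{\ell^\-}(\muS)^\+\bigr) + \bigl(\nu_R^\pm + M - e - \nSmall\bigr)\swtp{\kappa} + \sum_{i=1}^\nSmall \swtp{\phi_i}, \]
where $e$ is the \emph{actual} number of exceptional columns of $T$, with no hypothesis on $M$ required: the coefficient of $\swtp{\kappa}$ is non-negative because typical, small and exceptional columns partition the columns of $T$ of length at least $R$, of which there are $\nu_R^\pm + M$ in total. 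Denote the right-hand side by $Y$ and write $\omega_\ast = \bigl(\omega_{\ell^\-}(\muS)^\-, \omega_{\ell^\-}(\muS)^\+\bigr)$. A direct rearrangement then yields the key identity
\[ X \,=\, \bigl(Y - \swtp{\lambda} - M\swtp{\kappa}\bigr) \,+\, (E-e)\bigl(R\omega_\ast - \swtp{\kappa}\bigr) \,+\, \Bigl(\nSmall\swtp{\kappa} - \sum_{i=1}^\nSmall \swtp{\phi_i}\Bigr), \]
where $X$ is the element whose root-length is $D$.

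Each of the three summands on the right is root-positive: the first since $Y \unrhd \swtp{\pi} \unrhd \swtp{\lambda} + M\swtp{\kappa}$ by hypothesis, the second since $e \le E$ (Lemma~\ref{lemma:boundOnExceptionalColumns}) and $\swtp{\kappa} \unlhd R\omega_\ast$ (Lemma~\ref{lemma:greatestSignedWeight}), and the third by Lemma~\ref{lemma:positiveLowerBound} applied to the small columns of $T$, for which $\swtp{\phi_i} \lhd \swtp{\kappa}$ by definition. In particular this also confirms the root-positivity of $X$ implicit in the definition of $D$. Since root-length is additive on sums of root-positive elements, discarding the first two summands and applying Lemma~\ref{lemma:positiveLowerBound} once more to the third gives
\[ D \,=\, ||X|| \,\ge\, \bigl|\bigl|\,\nSmall\swtp{\kappa} - \textstyle\sum_{i=1}^\nSmall \swtp{\phi_i}\,\bigr|\bigr| \,\ge\, \nSmall, \]
as required. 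The role of the hypothesis $M \ge D + E - \nu_R^\pm$ is to guarantee $\nu_R^\pm + M - E \ge D$, making the conclusion $\nSmall \le D$ consistent with the range of $\nSmall$ for which Proposition~\ref{prop:signedWeightBoundForStronglyMaximalWeight} in its stated form applies at the extreme value $\nSmall = D$. The main step requiring care, which I do not expect to cause real difficulty, is verifying that the intermediate bound displayed above really does emerge from the proof of Proposition~\ref{prop:signedWeightBoundForStronglyMaximalWeight} with the actual $e$ in place of $E$; the remaining algebra is routine and carries through uniformly in the sign of $\swtp{\kappa}$.
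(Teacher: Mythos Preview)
Your argument is correct and uses the same ingredients as the paper's proof: the column-structure upper bound on $\swtp{\pi}$ from Proposition~\ref{prop:signedWeightBoundForStronglyMaximalWeight}, the hypothesised lower bound $\swtp{\lambda}+M\swtp{\kappa}$, and Lemma~\ref{lemma:positiveLowerBound}. The organisation differs slightly. The paper works with the bound $E$ throughout, defining $\swtp{\sigma}$ and $\swtp{\tau}$ as in Corollary~\ref{cor:twistedWeightBoundForStronglyMaximalWeightCombined}(ii) and sandwiching the single quantity $||\swtp{\tau}-\swtp{\pi}||$ between $\nSmall$ (via $\swtp{\pi}\unlhd\swtp{\sigma}\unlhd\swtp{\tau}$ and Lemma~\ref{lemma:positiveLowerBound}) and $D$ (via $\swtp{\lambda}+M\swtp{\kappa}\unlhd\swtp{\pi}\unlhd\swtp{\tau}$). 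You instead keep the \emph{actual} exceptional count $e$ and decompose the element $X$ of root-length $D$ directly into three root-positive summands. Your route has the mild advantage that the coefficient $\nu_R^\pm+M-e-\nSmall$ of $\swtp{\kappa}$ is automatically non-negative (it counts typical columns), so the hypothesis $M\ge D+E-\nu_R^\pm$ is not needed for your core inequality $\nSmall\le D$; in the paper's arrangement, the stated condition $M\ge -\nu_R^\pm+\nSmall+E$ on the stronger bound in Proposition~\ref{prop:signedWeightBoundForStronglyMaximalWeight} has to be reconciled with the unknown $\nSmall$. One small point: for $e\le E$ you should cite Corollary~\ref{cor:boundOnExceptionalColumns} (or its `both signs' form) rather than Lemma~\ref{lemma:boundOnExceptionalColumns}.
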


\begin{proof}
For ease of notation set $\swtp{\omega} =  \bigl( \omega_{\ell^\-}(\muS)^\-, \omega_{\ell^\-}(\muS)^\+ \bigr)$.
Suppose that $T$ has exactly $\nSmall$ small columns/rows. 
The bound
in the `both signs' version of
Corollary~\ref{cor:twistedWeightBoundForStronglyMaximalWeightCombined}(ii)
states that, in the $\ell^\-$-signed dominance order, $(\pi^\-,\pi^\+)
\unlhd (\sigma^\-,\sigma^\+)$ where
\[
 \swtp{\sigma} =\bigl(B^\pm_R(\nu) + ER\bigr)\swtp{\omega} + 
\sum_{i=1}^\nSmall (\phi^\-_i,\phi^\+_i) + (\nu_R^\pm - E + M - \nSmall) \swtp{\kappa} \]
for $M \ge -\nu_R^\pm + d + E$
and the weaker bound in the same result is that $\swtp{\pi}\unlhd \swtp{\tau}$ where
\[ \swtp{\tau} = \bigl(B^\pm_R(\nu) + ER\bigr)\swtp{\omega} +
(\nu_R^\pm - E + M ) \swtp{\kappa}\]
for $M \ge E - \nu_R^\pm$.
Applying Lemma~\ref{lemma:normRespectsSignedDominance} to $\swtp{\pi} \unlhd
\swtp{\sigma} \swtp{\tau}$ 
 we obtain
\begin{equation}\label{eq:Dupper} \bigl|\bigl|\swtp{\tau} - \swtp{\pi} \bigr|\bigr|
\ge \bigl| \bigl| \nSmall \swtp{\kappa} - \sum_{i=1}^\nSmall \phi_i \bigr|\bigr|
\ge \nSmall .\end{equation}
where the final inequality uses
Lemma~\ref{lemma:positiveLowerBound}.
On the other hand, applying Lemma~\ref{lemma:normRespectsSignedDominance}
to the chain of inequalities
\[ \swtp{\lambda} + M\swtp{\kappa} \unlhd \swtp{\pi} \unlhd
\swtp{\tau} \]
valid for $M \in \N_0$ with $M \ge - \nu_R^\pm + d + E$ we get
\begin{align}
\bigl|\bigl|  &\swtp{\tau} - \swtp{\pi} \bigr|\bigr| \nonumber \\ 
&\le   \bigl|\bigl| \bigl(B^\pm_R(\nu) + ER\bigr)\swtp{\omega} +
(\nu_R^\pm - E + M ) \swtp{\kappa} \nonumber \\
& \hspace*{2.5in} - \bigl( \swtp{\lambda} + M\swtp{\kappa} \bigr) \bigr|\bigr|\nonumber
\\
&=  \bigl|\bigl|  \bigl(B^\pm_R(\nu) + ER\bigr)\swtp{\omega} + (\nu_R^\pm - E) \swtp{\kappa}
- \swtp{\lambda} \bigr|\bigr|. \label{eq:Dlower}
\end{align}
By the same lemma, the right-hand side is the root-length of a root-positive element.
Combining~\eqref{eq:Dupper} and~\eqref{eq:Dlower} we obtain
the required bound $d \le D$, valid for all 
$M \in \N_0$ with $M \ge D + E - \nu_R^\pm$.
\end{proof}

\begin{example}\label{ex:22and31final}
Continuing Examples~\ref{ex:exceptionalColumns22and31} and~\ref{ex:22and31bound}
we take the strongly $1$-maximal signed weight $\bigl( (2,2), (3,1)\bigr)$,
defined by the
maximal plethystic semistandard tableau family
$\bigl\{\, \young(\oM\tM11)\,,\ \young(\oM\tM12)\, \bigr\}$ of 
size $2$ and sign $+1$,
and consider the plethysm coefficients
$\langle s_{(2,1)+M(1,1)} \circ s_{(4)}, s_{\lambda \oplus M((2,2), (3,1))} \rangle$
for $M \in \N_0$. The partition $\kappa$ with $2$-decomposition
$\dec{(2,2)}{(3,1)}$ is $(5,3)$. 
Let $E = E_{1,((2,2),(3,1))}\bigl( (2,1), (4) : \lambda)$.
Note that $B_R(\nu) = B_2\bigl((2,1)\bigr) = 1$.
If $E \ge \nu_2 = 1$ then the first
case for $\omega$ in Corollary~\ref{cor:signedWeightBoundForStronglyMaximalSignedWeight} and
 Lemma~\ref{lemma:stablePartitionSystemForNuVarying}
 applies and 
 so the partition $\omega$ 
 is
\[ \omega = (5,3) \oplus  (1 + 2E) \bigl((1,1),(2) \bigr) = (7+4E, 3, 2^{1+2E}). 
\]
If instead $E = 0$ then the second case for $\omega$ in these results applies,
but now $\nu_R^\pm - E - 1  = 1 - 0 - 1 = 0$, and so 
the second case defines exactly the same partition.
Similarly, in either case, the statistic~$S$ in
 Lemma~\ref{lemma:stablePartitionSystemForNuVarying} is $-\nu_2 + E+ 1 = -1 + E + 1 =  E$
 and so the stable partition system from this lemma is
\[ \PSeq{M} = 
\bigl[\lambda \oplus M\bigl((2,2), (3,1)\bigr), \omega \oplus (M - E) \bigl((2,2), (3,1)\bigr)\bigr]_\unlhddotS \]
for $M \ge E$. As a small check, note that $\lambda$ is a partition of $12$
and $\omega$ is a partition of $12 + 8E$, and so the sizes of the partitions
defining the interval for the $2$-twisted dominance order agree, as they must.
In the remainder of this example we take $\lambda = (8,3,1)$, the first of the cases
in the earlier examples; note that $\lambda$ 
is $\bigl(\ell(\kappa^\-)+1, \ell(\kappa^\+) \bigr) =
(3,2)$-large as required by
Lemma~\ref{lemma:stablePartitionSystemForNuVarying}.
We show all the ideas in the proof of Theorem~\ref{thm:nuStableSharp} 
by checking the conditions for the Signed Weight Lemma (Lemma~\ref{lemma:SWL}).

\subsubsection*{The stable partition system for $\lambda = (8,3,1)$ explicitly}
We saw in Example~\ref{ex:22and31bound}
that $E=1$, and so
$\omega = (11,3,2^3) = (8,2,2) \oplus \bigl((2,2), (3,1)\bigr)$.
It is routine to check using the definition of the $2$-twisted
dominance order in Definition~\ref{defn:ellTwistedDominanceOrder} that
the partition system $(\PSeq{M})_{M \in \N_0}$ 
is
\[ \PSeq{M} = \bigl\{ (8,3,1), (9,2,1), (7,3,2), (8,2,2) \bigr\} \oplus M\bigl((2,2), (3,1)\bigr) \]
for $M \ge 1$,
where the notation indicates that $M\bigl((2,2), (3,1)\bigr)$ is adjoined to all
four partitions in the given set. 
The bounds in Lemma~\ref{lemma:stablePartitionSystemForNuVarying}
are $-2$, $-1$, $-3$ and $-1$ and so $K = \max(-2,-1,-3,-1,0) = 0$.
The shift $S$ in this lemma is $-\nu_R^\pm +E + 1 = -1 + 1 + 1 = 1$.
Therefore, Lemma~\ref{lemma:stablePartitionSystemForNuVarying} states that
$(\PSeq{M})_{M \in \N_0}$ is stable for $M \ge 1$.
Here the bound is tight, since, by the definition
in Lemma~\ref{lemma:stablePartitionSystemForNuVarying}, we have
$\PSeq{0} = \varnothing$.

\subsubsection*{Condition \emph{(ii)} in the Signed Weight Lemma for $\lambda = (8,3,1)$}
We start with~(ii) because the calculations are helpful for (i).
We saw in Example~\ref{ex:exceptionalColumns22and31}
that 
\[ \bigl|\PSSYTwAdapted{(2,1)+M(1,1)}{(4)}{(3+2M,2+2M)}{(6+3M,1+M)}{((2,2),(3,1)}\bigr| = 4 \]
for all $M \ge 1$, giving condition (ii) in the Signed Weight Lemma
(Lemma~\ref{lemma:SWL}) for the partitions obtained by adjoining to $(8,3,1) \decMap \dec{(3,2)}{(6,1)}$.
It is routine to check by similar arguments
using the $2$-decompositions $\dec{(3,2)}{(7)}$, $\dec{(3,3)}{(5,1)}$ and $\dec{(3,3)}{(6)}$
of the three larger partitions in $\PSeq{0}$ that
the corresponding sets of plethystic semistandard signed tableaux for these 
partitions have sizes $1$, $1$ and $0$ for all $M \ge 0$.
However, rather than use this ad-hoc argument, we take the opportunity to motivate the
relevant part of the proof
of Theorem~\ref{thm:nuStableSharp}.
Let $\decs{\pi^\-}{\pi^\+}$ be the $2$-decomposition of one of the four partitions
in $\PSeq{M}$. Then the map defined
by inserting the 
plethystic semistandard signed tableau \marginpar{\qquad\ \scalebox{0.9}{\pyoung{2.0cm}{0.7cm}{ {{\young(\oM\tM11)}, {\young(\oM\tM12)}} }}}
shown in the margin as a new typical first column
 into a plethystic semistandard tableau
in $\PSSYT_{((2,2),(3,1))}\bigl((2,1)+M(1,1),(4)\bigr)_{(\pi^\-,\pi^\+)}$
is surjective, and so bijective, if and only if
every plethystic semistandard signed tableaux in
\smash{$\PSSYT_{((2,2),(3,1))}\bigl((2,1)+(M+1)(2,2),$} \smash{$(4)\bigr)_{(\pi^\-+(3,1),\pi^\++(2))}$}
has at least one typical column, i.e.~one equal to the tableau in the margin. Since $E=1$, each such~$T$ has at most one exceptional column.
By Lemma~\ref{lemma:smallColumnAndRowBound}, with $(\lambda^\-,\lambda^\+) = \bigl((3,2), (6,1)\bigr)$,
$T$ has at most
\begin{align*}
 \bigl|\bigl| (1 + 2.1) \bigl( (1,1), (2) \bigr) +{}& (1-1) \bigl( (2,2), (3,1) \bigr) 
- \bigl((3,2), (6,1) \bigr) \bigr| \bigr|
\\ &= \bigl|\bigl| \bigl( (3,3), (6) \bigr) - \bigl( (3,2), (6,1) \bigr) \bigr| \bigr|
\\ &= \bigl|\bigl| \bigl( (0,1), (0,-1) \bigr) \bigr|\bigr| 
\\ &= \bigl|\bigl| \epsilon^{(2)} + \epsilon^{(3)} \bigr|\bigr| 
\\ &= 2
\end{align*}
small columns. 
By Lemma~\ref{lemma:columnIsExceptionalOrBounded},
every column that is not small or exceptional is typical. Since $E= 1$, there is at most
one exceptional column, and so
the insertion map is surjective for $M \ge 3$. In fact, as seen
in Example~\ref{ex:exceptionalColumns22and31} when $(\pi^\-,\pi^\+) = (8,3,1) + M\bigl((1,1),(2)\bigr)$,
and as follows from the ad-hoc calculation earlier in this paragraph, the insertion map
is surjective for all $M \ge 1$.

As a further illustration we remark that if instead $\lambda = (6,3,3)$ then $E = 2$,
as seen in the second part of Example~\ref{ex:22and31bound}, and the
bound on the number of small columns from Lemma~\ref{lemma:smallColumnAndRowBound}
becomes $\bigl|\bigl| (1+2.2)\bigl((1,1), (2)\bigr) + (1-2)\bigl((2,2), (3,1)\bigr) - \bigl((3,3),(4,1,1)\bigr)\bigr|\bigr|
 = ||(0,0,3,-2,-1)|| = 4$. 

\subsubsection*{Condition \emph{(i)} in the Signed Weight Lemma for $\lambda = (8,3,1)$}
As promised by the final
claim in Lemma~\ref{lemma:stablePartitionSystemForNuVarying},
 if $\sigma \unrhddot (8,3,1) \oplus M\bigl((2,2), (3,1)\bigr)$ 
and $s_\sigma$ appears in $s_{(2,1) + M(1,1)} \circ s_{(4)}$ then
$\sigma$ is one of the four partitions in $\PSeq{M}$;
in fact, since there are no plethystic semistandard signed tableaux of
signed weight 
$\bigl((3,3), (6) \bigr) + M\bigl((1,1),(2)\bigr)$
equal to the $2$-decomposition of the upper bound $(8,2,2) \oplus M\bigl((2,2), (3,1)\bigr)$,
only the first three partitions listed in $\PSeq{M}$ appear.

\subsubsection*{Conclusion}
Using computer algebra one may obtain the constant values of
\smash{$\langle s_{(2,1) + M(1,1)} \circ s_{(4)}, s_{\sigma \oplus M((2,2),(3,1))} \rangle$}
for $\sigma \in \PSeq{0}$; they 
are $2$, $1$,~$1$ and~$0$, attained for $M \ge 1$ when $\sigma = \lambda = (8,3,1)$
 and $M \ge 0$ when $\sigma = (9,2,1), (7,3,2)$ or $(8,2,2)$.
We shall see below in Example~\ref{ex:22and31postTheorem} that
the bound from Theorem~\ref{thm:nuStableSharp} is $M \ge 2$ 
for $(8,3,1)$.

%
%
%
%
%


\end{example}

We mention that since the insertion map inserts a new column of 
height~$R$, it is a new first column if and only if $\ell(\nu) \le R$,
and otherwise it must become a new column $\nu_{R+1}+1$.
This is the main feature of the general positive sign case
not seen in the previous example; in the negative sign case
we instead insert a new row, and a similar remark applies.


\subsection{Proof of Theorem~\ref{thm:nuStable}}\label{subsec:nuStableProof}
The `moreover' part of Theorem~\ref{thm:nuStable} has already been proved
in Proposition~\ref{prop:nuStableZero}.
The next theorem proves the main part of Theorem~\ref{thm:nuStable}  with an explicit stability bound.
Note that by Remark~\ref{remark:becomesLarge}  
there is no loss
of generality in the `largeness' hypotheses in the theorem. 
The greatest signed weight $\bigl( \omega_{\ell^\-}(\muS)^\-,\omega_{\ell^\-}(\muS)^\+)$
is  defined in Definition~\ref{defn:greatestSignedWeight}
and strongly $c^\+$-maximal signed weights are defined in 
Definition~\ref{defn:stronglyMaximalSignedWeight}.
The $\mathrm{L}$ bounds
are defined in Definition~\ref{defn:LBound}. 
(Remark~\ref{remark:intervalNotation} explains the small difference in notation for the intervals
in the first two bounds.)
The statistics $B_R(\nu)$ and
$E_{c^\+\hskip-1pt,(\kappa^\-,\kappa^\+)}(\nu, \muS : \lambda)$ are defined in 
Definition~\ref{defnBoth:exceptionalColumnBoundBoth} in its `both signs' version in~\S\ref{subsec:negativeSignAndCombined}. The `shift' $S$ below
was first seen in Example~\ref{ex:stronglyMaximalSignedWeightSWLi411} and then
in the definition of $\omega$ in its continuation in Example~\ref{ex:omegaBound411}.

\begin{theorem}\label{thm:nuStableSharp}
Let $\swtp{\kappa}$ be a strongly $c^\+$-maximal signed weight of a semistandard tableau family
of shape $\muS$ and size $R$. Fix $\ell^\- = \ell(\kappa^\-)$.
Let~$\nu$ be a partition and let~$\lambda$ be
an $\bigl(\ell^\-,\ell(\kappa^\+)\bigr)$-large partition of $|\nu||\muS|$.
Set $\nu_R^\+ = \nu_R$ and $\nu_R^\- = \nu'_R$ and let $\pm$ denote the sign of $\kappa$. 
Set $\nu^{(M)} = \nu \sqcups (R^M)$ if $\swtp{\kappa}$ has sign $-1$ and $\nu^{(M)} = \nu + (M^R)$
if $\swtp{\kappa}$ has sign $+1$. Set  $E = E_{c^\+\hskip-1pt,(\kappa^\-,\kappa^\+)}(\nu, \muS : \lambda)$
and 
\[ \Shift = \begin{cases} E - \nu_R^\pm + 1 & \text{if $E \ge \nu_R^\pm$} \\
0 & \text{if $E < \nu_R^\pm$.} \end{cases}\]
Set
\[ D = \bigl|\bigl|  \bigl(B^\pm_R(\nu) + ER\bigr) \bigl( \omega_{\ell^\-}(\muS)^\-, \omega_{\ell^\-}(\muS)^\+ \bigr) 
+ (\nu_R^\pm - E) \swtp{\kappa}
- \swtp{\lambda} \bigr|\bigr|. \]
Let $\kappa$ be the unique partition with $\ell^\-$-decomposition 
$\langle\kappa^\-,\kappa^\+\rangle$.
Define  
\[ \omega = \begin{cases} \kappa \oplus (B^\pm_R(\nu) + ER) \bigl( \omega_{\ell^\-}(\muS)^\-,\omega_{\ell^\-}(\muS)^\+
\bigr) \\ 
\kappa \oplus (B^\pm_R(\nu) + ER) \bigl( \omega_{\ell^\-}(\muS)^\-,\omega_{\ell^\-}(\muS)^\+
\bigr) \oplus (\nu_R^\pm - E - 1) \swtp{\kappa} \end{cases} \]
choosing the case according to whether $E \ge \nu_R^\pm$ or $E < \nu_R^\pm$.
Let $L$ be the maximum of
\begin{bulletlist}
\item $\Shift + \LBound\bigl([\lambda^\- + \Shift\kappa^\-,\omega^\-]^{\scriptscriptstyle \ell^-}_\unLHDS, \kappa^\-\bigr)$,
\item $\Shift + \LBound\bigl([\lambda^\+ + \Shift\kappa^\+,\omega^\++(|\lambda^\+| + \Shift|\kappa^\+|
-|\omega^\+|)]_\unlhd, \kappa^\+\bigr)$,
\item $\Shift + \bigl( \omega_1^\+ + \omega_2^\+ - 2\lambda_1^\+ -2\Shift\kappa_1^\+ + 2|\lambda^\+| +2\Shift|\kappa^\+| 
- 2|\omega^\+|  \bigr)/(\kappa^\+_1-\kappa^\+_2)$,
\item \smash{$\Shift + \bigl( \max( \ell(\lambda^\+), \ell(\kappa^\+) ) + |\omega^\-| - |\lambda^\-|
- \Shift|\kappa^\-| - \omega^\-_{\ell^\-} \bigr)/
\kappa^\-_{\ell(\kappa^\-)}$}
\item \smash{$D + E- \nu^\pm_R + \nu^\pm_{R+1}$},
\end{bulletlist}
omitting the third if $\kappa_1^\+ = \kappa_2^\+$ and the fourth if $\kappa^\- = \varnothing$.
Then
\[ \bigl\langle s_{\nu^{(M)}} \circ s_\muS, 
s_{\lambda \opluss M\swtp{\kappa}} \bigr \rangle \]
is constant for $M \ge L$. Moreover if $\lambda \oplus S\swtp{\kappa} \notunlhddot\, \omega$ in the 
$\ell^\-$-twisted
dominance order then the plethysm
coefficient is $0$ for all $M \in \N_0$.
\end{theorem}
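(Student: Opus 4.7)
\medskip

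\textbf{Proof proposal for Theorem~\ref{thm:nuStableSharp}.}

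The plan is to apply the Signed Weight Lemma (Lemma~\ref{lemma:SWL}) to the stable partition system $(\PSeq{M})_{M \in \N_0}$ constructed in Lemma~\ref{lemma:stablePartitionSystemForNuVarying}, whose typical $M$th set is the twisted interval $[\lambda \oplus M\swtp{\kappa}, \omega \oplus (M - \Shift)\swtp{\kappa}]_\unlhddotS$ (with appropriate shifting by $\Shift$ when $E \ge \nu_R^\pm$). The first four bounds in the statement of the theorem are precisely the four bounds supplied by Lemma~\ref{lemma:stablePartitionSystemForNuVarying}, so for $M$ at least the maximum of these, the partition system is stable with respect to the twisted symmetric functions $g_\pi = e_{\pi^\-}h_{\pi^\+}$. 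The \emph{moreover} clause follows immediately from Corollary~\ref{cor:twistedWeightBoundForStronglyMaximalWeightCombined}(iii): if $\lambda \oplus \Shift\swtp{\kappa} \notunlhddot \omega$, then no $\sigma$ with $\sigma \unrhddot \lambda \oplus M\swtp{\kappa}$ lies below the upper bound, so no constituent of $s_{\nu^{(M)}} \circ s_\muS$ has the required form.

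For condition (i) of the Signed Weight Lemma (that the summands of $g_\pi$ contributing to the plethysm stay inside $\PSeq{M}$), I would invoke the \emph{moreover} part of Lemma~\ref{lemma:stablePartitionSystemForNuVarying}: if $\pi \in \PSeq{M}$ and $s_\sigma$ is a constituent of the plethysm with $\sigma \in \supp(g_\pi)$, then $\sigma \unrhddot \pi \unrhddot \lambda \oplus M\swtp{\kappa}$ by Lemma~\ref{lemma:twistedKostkaMatrix}, and by the twisted weight bound in Corollary~\ref{cor:twistedWeightBoundForStronglyMaximalWeightCombined}(iii) we also get $\sigma \unlhddot \omega \oplus (M-\Shift)\swtp{\kappa}$.

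The substance of the argument is condition (ii): constructing, for each $\pi \in \PSeq{M}$, a bijection
\[ \Hmap : \PSSYT_\kappa(\nu^{(M)}, \muS)_{(\pi^\-,\pi^\+)} \rightarrow \PSSYT_\kappa(\nu^{(M+1)}, \muS)_{(\pi^\- + \kappa^\-, \pi^\+ + \kappa^\+)}. \]
The map $\Hmap$ inserts the canonical plethystic semistandard signed tableau $\TMw{\kappa}$ (whose inner $\muS$-tableaux are the elements of $\SMw{\kappa}$) as a new column of height $R$ in position $\nu_{R+1}+1$ when $\swtp{\kappa}$ has sign $+1$, and as a new row of length $R$ in the analogous position when $\swtp{\kappa}$ has sign $-1$; this position is forced so that the result is semistandard for the $\swtp{\kappa}$-adapted colexicographic order (Definition~\ref{defn:adaptedSignedColexicographicOrder}), under which the inner tableaux of $\TMw{\kappa}$ come first. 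Injectivity is clear from the construction. Surjectivity is equivalent to showing every target tableau contains at least one \emph{typical} column/row in the sense of Definition~\ref{defn:exceptionalColumnAndRow}, because a typical column/row can then be deleted and the remaining columns/rows shifted to produce the preimage.

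The main obstacle is surjectivity, which is where the fifth bound on $M$ is needed. For a target tableau $T$, Corollary~\ref{cor:twistedWeightBoundForStronglyMaximalWeightCombined}(i) bounds the number of exceptional columns/rows of $T$ by $E$, and Lemma~\ref{lemma:smallColumnAndRowBound} bounds the number of small columns/rows by $D$. By Lemma~\ref{lemma:columnIsExceptionalOrBounded}, every column of $T$ of height $\ge R$ (resp.\ row of length $\ge R$) is typical, small, or exceptional. The number of such columns/rows available for insertion into position $\nu^\pm_{R+1}+1$ or later in $\nu^{(M+1)}$ is $\nu^\pm_R - \nu^\pm_{R+1} + M + 1 - \Shift$, and demanding that this exceed $E + D$ yields the bound $M \ge E + D - \nu^\pm_R + \nu^\pm_{R+1}$ (after absorbing the $\Shift$ into the other bounds). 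The delicate point, which I expect to require the most careful bookkeeping, is to check that insertion at the chosen position really preserves the $\swtp{\kappa}$-adapted semistandard condition in both signs --- in particular, when the outer shape $\nu^{(M+1)}$ is conjugated as in Definition~\ref{defn:G}, and when some of the existing columns/rows to the right of the insertion site happen to have small signed weight. Applying Remark~\ref{remark:otherOrder} and the adapted order throughout should let this go through uniformly, as already illustrated for both the unsigned and signed cases in Examples~\ref{ex:omegaBound411} and~\ref{ex:22and31final}.
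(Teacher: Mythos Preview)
Your proposal is correct and matches the paper's approach: apply the Signed Weight Lemma to the stable partition system of Lemma~\ref{lemma:stablePartitionSystemForNuVarying}, verify condition~(i) via its `moreover' clause, and verify condition~(ii) by the column/row insertion map $\Hmap$ together with the exceptional and small bounds from Corollary~\ref{cor:twistedWeightBoundForStronglyMaximalWeightCombined}(i) and Lemma~\ref{lemma:smallColumnAndRowBound}. Your identification of the insertion position as $\nu^\pm_{R+1}+1$ is exactly right (see the remark after Example~\ref{ex:22and31postTheorem}).

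Two small points to clean up. First, the shift $\Shift$ plays no role in deriving the fifth bound: the count of columns of height exactly $R$ in $\nu^{(M+1)}$ is $\nu^\pm_R - \nu^\pm_{R+1} + M + 1$ without any $\Shift$, and requiring at least $\nu^\pm_{R+1}+1$ typical columns (so that column $\nu^\pm_{R+1}+1$ is itself typical and hence deletable) gives $M \ge E + D - \nu^\pm_R + \nu^\pm_{R+1}$ directly. Second, there is no outer-shape conjugation here --- the map $\Gmap$ of Definition~\ref{defn:G} belongs to Theorem~\ref{thm:muStableSharp}, not this one. The $\swtp{\kappa}$-adapted order already makes the entries of $T_{\swtp{\kappa}}$ the $R$ smallest inner tableaux, so insertion is automatically semistandard on the right regardless of whether the neighbouring column is small; the only genuine constraint is on the left, and that is exactly what the typical-column count handles.
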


\begin{proof} 
We apply the Signed Weight Lemma (Lemma~\ref{lemma:SWL}) to the stable partition system 
\[ \PSeq{M} = \bigl[\lambda \oplus M\swtp{\kappa}, \omega \oplus (M-S)\swtp{\kappa} \bigr]_\unlhddotS\]
defined in Lemma~\ref{lemma:stablePartitionSystemForNuVarying}.
The intervals are, as ever, for the $\ell^\-$-twisted dominance order.
By hypothesis $\lambda$ is $\bigl(\ell^\-,\ell(\kappa^\+)\bigr)$-large.
By Corollary~\ref{cor:twistedWeightBoundForStronglyMaximalWeightCombined},
$\omega$ is an $\bigl(\ell(\kappa^\-)+1, \ell(\kappa^\+)\bigr)$-large partition
of  $|\lambda| + SR|\muS|$. Again by this corollary, if $\sigma$ is a partition of $|\lambda| + MR|\muS|$,
such that $\sigma \unrhddot \lambda \oplus M\swtp{\kappa}$ such that
$s_\sigma$ is a constituent of the plethysm $s_{\nu^{(M)}} \circ s_\muS$ then
$\sigma \unlhddot \omega \oplus (M-S)\swtp{\kappa}$. 
But by Lemma~\ref{lemma:adjoinToLarge}
and the `if' direction of Lemma~\ref{lemma:signedDominancePreservedByOplusWhenLarge},
if $\lambda \oplus S\swtp{\kappa} \,\notunlhddot\, \omega$ then, writing
\[ \lambda \oplus M\swtp{\kappa} = \lambda \oplus S\swtp{\kappa} \oplus (M-S)\swtp{\kappa},
 \] 
we have 
\[ \lambda \oplus M\swtp{\kappa} \hskip0.5pt\notunlhddot\, \omega \oplus (M-S)\swtp{\kappa}. \]
Hence  if $\lambda \,\oplus\, S\swtp{\kappa} \,\,\hbox{$\notunlhddot$}\, \omega$ 
then $\langle s_{\lambda \,\oplus\, (\kappa^\-,\kappa^\+)}, s_{\nu^{(M)}} \circ s_\muS \rangle = 0$ for all $M  \ge S$. This proves the final claim in the theorem. Moreover,
we may now assume that, for all $M \ge S$, the twisted interval $\PSeq{M}$ is non-empty.

\subsubsection*{Condition \emph{(i)} in the Signed Weight Lemma}
By Lemma~\ref{lemma:stablePartitionSystemForNuVarying}
the stable partition system $\PSeq{M}$ satisfies condition (i) of the Signed Weight Lemma (Lemma~\ref{lemma:SWL})
for the plethysms $s_{\nu^{(M)}} \circ s_\muS$.

\subsubsection*{Condition \emph{(ii)} in the Signed Weight Lemma}
Let $M \in \N_0$ and let $\pi \in \PSeq{M}$.
Recall that $\PYT(\nu, \muS)$ denotes the
set of plethystic signed tableaux of shape $\nu$ having entries from the set $\YT(\muS)$
of signed tableaux of shape $\muS$.
Let $\rho = (1^R)$ if $\swtp{\kappa}$ has sign $+1$ and let $\rho = (R)$ if $\swtp{\kappa}$ has sign $-1$.
Recall from Definition~\ref{notation:stronglyMaximalTableauFromWeight}
that $T_{\swtp{\kappa}}$ is the unique plethystic
semistandard signed tableau 
of size $R$, outer shape $\rho$, inner shape $\muS$ and signed weight $\swtp{\kappa}$.
By Remark~\ref{remark:TAdapted}, it remains semistandard in the 
$\swtp{\kappa}$-adapted colexicographic order.
 Define 
\[ \Hmap : \PSSYT_\kappa(\nu^{(M)}, \muS) \rightarrow \PYT(\nu^{(M+1)},\muS) \]
on $T \in \PSSYT_\kappa(\nu^{(M)}, \muS)$
by inserting $T_{\swtp{\kappa}}$ as a new column immediately
after column $\nu_R$ of $T$ when $\swtp{\kappa}$ has sign $+1$
and as a new row immediately after row $\nu'_R$ of $T$ when $\swtp{\kappa}$
has sign $-1$. Since $T_{\swtp{\kappa}}$ has semistandard entries, all $\muS$-tableau entries
in the image are semistandard.

Suppose that $M \ge D + E - \nu^\pm_R + \nu^\pm_{R+1}$. (The reason for adding $\nu^\pm_{R+1}$ to
the bound from Lemma~\ref{lemma:smallColumnAndRowBound} will be seen shortly.)
By Corollary~\ref{cor:twistedWeightBoundForStronglyMaximalWeightCombined}(i),~$T$ has at most~$E$ exceptional columns/rows.
By Lemma~\ref{lemma:smallColumnAndRowBound}, using that $\lambda \oplus S(\kappa^\-,\kappa^\+)
\unlhddot \omega$,
the bound $D$ is well-defined (i.e.~we take the
root-length of a root-positive element) and $T$ has at most~$D$
small columns/rows. By Definition~\ref{defn:exceptionalColumnAndRow},
a column/row is either exceptional, typical or small.
Since there are $M + \nu^\pm_R$ columns/rows of $T$ of height at least $R$,
there are at least $M + \nu^\pm_R - D - E$ typical columns, in which 
the top/leftmost $R$ entries form the plethystic semistandard signed tableau $T_\swtp{\kappa}$.
(This  requires our use of the $\swtp{\kappa}$-adapted colexicographic order
to order the inner $\muS$-tableau entries of $T$:
see Figure~\ref{fig:signedWeightBoundForStronglyMaximalSignedWeight}.)
Therefore if $M + \nu^\pm_R - D -E \ge \nu^\pm_{R+1}$, the map~$\Hmap$ inserts
$T_{\swtp{\kappa}}$ as a new column/row immediately to the right/below
an identical column/row. (Note that this
condition implies $M \ge - \nu_R^\pm + D + E$, 
and since $D$ is an upper bound for the number of small columns,
the hypothesis on $M$ in Corollary~\ref{cor:twistedWeightBoundForStronglyMaximalWeightCombined}
is satisfied.)
Hence $\Hmap$ is a well-defined bijection
for $M \ge D + E - \nu^\pm_R + \nu^\pm_{R+1}$.
\end{proof}

\begin{example}\label{ex:22and31postTheorem}
In the final part
of the running example in Examples~\ref{ex:exceptionalColumns22and31}, \ref{ex:22and31bound}
and~\ref{ex:22and31final} using the strongly $1$-maximal signed
weight $\bigl((2,2),(3,1)\bigr)$ we saw
that $\langle s_{(2,1) + M(1,1)} \circ s_{(4)}, s_{\lambda \oplus M((2,2), (3,1))} \rangle$
is ultimately constant for each $\lambda \in \bigl\{(8,3,1), (9,2,1), (7,3,2), (8,2,2)\bigr\}$.
To illustrate Theorem~\ref{thm:nuStableSharp}
we find an explicit bound for $(8,3,1)$.
In this context we have $R=2$, $B_2\bigl((2,1)) = 1$,
$E=1$ and $\bigl(\omega_{2}\bigl((4))^\+, \omega_{2}\bigl((4))^\- \bigr) = ((1,1),(2))$
and we saw that the bound $D$ on the number of small columns is $2$.
The fifth bound in Theorem~\ref{thm:nuStableSharp}
is therefore $1 + 2 - 1  + 0 = 2$. We saw earlier that the other bounds are respectively $-2, -1,
-3$ and $-1$, and so the overall bound is $2$.
We also saw that when
$\lambda = (8,3,1)$ the constant value is attained for $M=1$,
so in this case the bound from Theorem~\ref{thm:nuStableSharp}
is not sharp. We remark that if instead $\lambda = (8,2,2) \decMap \dec{(3,3)}{(6)}$ then $E=0$, $S = 0$
and $\omega = (5,3) \oplus \bigl((1,1), (2,2)\bigr) = (7,3,2) \decMap \dec{(3,3)}{(5,1)}$ and
so we are in the final case of the theorem where $\lambda \notunlhddot \omega$ in the $2^\-$-twisted
dominance order (in fact $\lambda \unrhddot \omega$), and so the plethysm coefficient is $0$ for all $M \in \N_0$.
\end{example}

\subsection{Extending Theorem~\ref{thm:nuStableSharp} to the case
$\nu = \varnothing$}
Unless $\ell^\- = 0$,
this  theorem cannot be applied directly when $\nu = \varnothing$
because of the hypothesis that~$\lambda$, which has size $|\muS||\nu|$,
is $(\ell^\-,\ell(\kappa^\+)$-large. Since this is a case to which we would
want our methods to apply, we show how this restriction may easily be circumvented.

\begin{corollary}\label{cor:nuStableEmpty}
Let $\swtp{\kappa}$ be a strongly $c^\+$-maximal signed weight of shape 
$\muS$ and size $R$. Fix $\ell^\- = \ell(\kappa^\-)$
and let $\kappa^{(N)}$ be the unique
partition with $\ell^\-$-decomposition $N\langle \kappa^\-,\kappa^\+\rangle$.
If $\swtp{\kappa}$ has sign $-1$ then
\[ \langle s_{(R^M)} \circ s_\muS, s_{\kappa^{(N)}} \rangle = 1 \]
for all $N \in \N_0$ and if $\swtp{\kappa}$ has sign $+1$ then
the same holds replacing $(R^N)$ with $(N^R)$.
\end{corollary}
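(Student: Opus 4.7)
The plan is to prove two assertions that, in tandem, yield the corollary: (a) $\bigl|\PSSYTwk{\kappa}{\nu^{(M)}}{\muS}{M\kappa^\-}{M\kappa^\+}\bigr| = 1$, and (b) if $s_\sigma$ is a constituent of $s_{\nu^{(M)}} \circ s_\muS$ with $\sigma \unrhddot \kappa^{(M)}$ in the $\ell^\-$-twisted dominance order, then $\sigma = \kappa^{(M)}$. Combining (a) with Proposition~\ref{prop:plethysticSignedKostkaNumbers} and Remark~\ref{remark:otherOrder} will give $\langle s_{\nu^{(M)}} \circ s_\muS, e_{M\kappa^\-}h_{M\kappa^\+}\rangle = 1$; expanding this inner product via Lemma~\ref{lemma:twistedKostkaMatrix}, which writes $e_{M\kappa^\-}h_{M\kappa^\+} = s_{\kappa^{(M)}} + \sum_{\sigma \rhddot \kappa^{(M)}} c_\sigma s_\sigma$ with non-negative integer coefficients $c_\sigma$, and using (b) to eliminate the remaining terms, will yield $\langle s_{\nu^{(M)}} \circ s_\muS, s_{\kappa^{(M)}}\rangle = 1$.

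I would reduce both (a) and (b) to a common structural claim: any $T \in \PSSYT_\kappa(\nu^{(M)}, \muS)$ whose signed weight $\swtp{\pi}$ satisfies $\swtp{\pi} \unrhd M\swtp{\kappa}$ in the $\ell^\-$-signed dominance order must consist of $M$ copies of $T_{(\kappa^\-,\kappa^\+)}$ placed side by side (sign $+1$) or stacked vertically (sign $-1$); in particular $\swtp{\pi} = M\swtp{\kappa}$, so $\pi = \kappa^{(M)}$. To prove the claim I would first specialize Lemma~\ref{lemma:boundOnExceptionalColumns} (or, in the sign $-1$ case, part (i) of Corollary~\ref{cor:twistedWeightBoundForStronglyMaximalWeightCombined}) to $\nu = \lambda = \varnothing$: every term in the three bounds on exceptional columns/rows contains a factor of $B_R(\varnothing) = 0$, $\nu_R^{\pm} = 0$, or $|\lambda^{\pm}| = 0$, so all three bounds vanish and $T$ has no exceptional columns/rows at all. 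Lemma~\ref{lemma:columnIsExceptionalOrBounded} then classifies each of the $M$ columns/rows of length $R$ as either typical (with signed weight $\swtp{\kappa}$, uniquely filled by $T_{(\kappa^\-,\kappa^\+)}$) or small (with signed weight $\swtp{\phi_i} \lhd \swtp{\kappa}$).

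Writing $d$ for the number of small columns/rows, the identity $\swtp{\pi} = (M-d)\swtp{\kappa} + \sum_{i=1}^d \swtp{\phi_i}$ combined with $\swtp{\pi} \unrhd M\swtp{\kappa}$ gives $\sum_i \swtp{\phi_i} \unrhd d\swtp{\kappa}$, whereas Lemma~\ref{lemma:positiveLowerBound} asserts that $d\swtp{\kappa} - \sum_i \swtp{\phi_i}$ is a root-positive element of root-length at least $d$. These two inequalities together force $d = 0$, so every column/row is typical and $T$ is uniquely determined. Applying the claim with $\swtp{\pi} = M\swtp{\kappa}$ will prove (a); applying it with $\swtp{\pi} = \swtp{\sigma}$ for any constituent $s_\sigma$ with $\sigma \unrhddot \kappa^{(M)}$, using Proposition~\ref{prop:plethysticSignedKostkaNumbers} to supply the required nonempty tableau set from non-vanishing of the plethysm coefficient, will prove (b). The hard part I anticipate is the careful specialization of Lemma~\ref{lemma:boundOnExceptionalColumns} at the degenerate inputs $\nu = \lambda = \varnothing$; this sidesteps both the $E = -1$ bookkeeping convention in Definition~\ref{defn:exceptionalColumnBound} and the possible mismatch between $\varnothing \oplus M\swtp{\kappa}$ and the canonical partition $\kappa^{(M)}$ that arises whenever $\varnothing$ fails to be $(\ell^\-, \ell(\kappa^\+))$-large.
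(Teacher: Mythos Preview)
Your approach is correct and genuinely different from the paper's. The paper deduces the corollary from the main Theorem~\ref{thm:nuStableSharp}: with $\nu = \lambda = \varnothing$ the convention $E = -1$ puts us in the case $E < \nu_R^\pm$, the partition $\omega$ is $\varnothing$, and the five bounds collapse to $0$ apart from the fourth, which is then eliminated by inspecting the proof of Lemma~\ref{lemma:stablePartitionSystemForNuVarying} and observing that the stable partition system is the singleton $\{\kappa^{(M)}\}$; the constant value is read off at $M=0$.

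Your argument bypasses the stability machinery entirely. You work directly with the adapted plethystic tableaux for each fixed $M$, showing that every column (or row) of height $R$ in a tableau of signed weight $\unrhd M\swtp{\kappa}$ is forced to be typical: the three exceptional-column bounds in Lemma~\ref{lemma:boundOnExceptionalColumns} vanish termwise when $\nu = \lambda = \varnothing$, and the root-length argument via Lemma~\ref{lemma:positiveLowerBound} then rules out small columns. Combined with unitriangularity of the twisted Kostka matrix (Lemma~\ref{lemma:twistedKostkaMatrix}), this pins down both the inner product with $e_{M\kappa^\-}h_{M\kappa^\+}$ and the vanishing of higher constituents, yielding the plethysm coefficient directly. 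The gain is that you avoid the $E=-1$ bookkeeping, the implicit inspection of the partition-system proof, and any largeness hypotheses on $\varnothing$; the cost is that you re-derive by hand a special case of what the Signed Weight Lemma packages. Both routes are short once the structural lemmas are in place; yours is the more self-contained.
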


\begin{proof}
We apply Theorem~\ref{thm:nuStableSharp} with 
$\swtp{\kappa}$ and $\nu = (R)$ if $\swtp{\kappa}$ has sign $-1$
and $\nu = (1^R)$ if $\swtp{\kappa}$ has sign $+1$
and $\lambda = \kappa$.
Note that, by Remark~\ref{remark:ellDecompositionLarge},
$\kappa$ is $\bigl(\ell(\kappa^\-)+1, \ell(\kappa^\+)\bigr)$-large,
as required in this theorem.
In the notation of the theorem,
$\nu^{(M)} = (R^{M+1})$ if $\swtp{\kappa}$ has sign $-1$
and $\nu^{(M)} = ((M+1)^R)$ if $\swtp{\kappa}$ has sign $+1$.
Moreover $\lambda^{(M)} = \kappa \oplus M(\kappa^\-,\kappa^\+) = 
\kappa^{(M-1)}$. 
Therefore the theorem states that
the plethysm coefficients in the corollary are
constant for all $M$ at least the bound in the theorem.
Since $\kappa = \lambda$, and $B_R^\pm(\nu) = 0$
we have $E^\- = E^\+ = 0$ and hence $E=0$
in Definition~\ref{defnBoth:exceptionalColumnBoundBoth}.
Therefore the case $E < \nu_R^\pm$ applies and $S=0$. Again using
that $B_R^\pm(\nu) = 0$, we have $\omega = \kappa$.
It is now easily seen that the first two
in Theorem~\ref{thm:nuStableSharp} are $0$.
The third, using
that $S=0$,
is $(\kappa_1^\+ + \kappa_2^\+ -2\kappa_1^\+ + 2|\kappa^\+| - 2|\kappa^\+|)/
(\kappa_1^\+-\kappa_2^\+) = -1$, unless $\kappa_1^\+ = \kappa_2^\+$, in which
case this bound is disregard.
The fourth is $(\ell(\kappa^\+) - \kappa^\-_{\ell^\-}) / \kappa^\-_{\ell^\-}$
which is non-positive because $\kappa^-_{\ell^\-} = \kappa'_{\ell^\-} \ge
\kappa'_{\ell^\-+1} = \ell(\kappa^\+)$.
The fifth is $0$ since $D$ is the root-length of the zero weight. 
Therefore the constant value is attained for $M=0$,
proving the corollary for $N \in \N$. When $\kappa$ has sign $-1$,
since $(\kappa^\-,\kappa^\+)$ is a strongly maximal signed weight
$T_{(\kappa^\-,\kappa^\+)}$ is the
unique element of the set 
$\PSSYT\bigl( (R), \muS \bigr)_{(\kappa^\-,\kappa^\+)}\bigr)$,
and so, Proposition~\ref{prop:plethysticSignedKostkaNumbers}
and Lemma~\ref{lemma:twistedKostkaMatrix},
using again that $(\kappa^\-,\kappa^\+)$ is strongly maximal, we have
\[ \langle s_{(R)} \circ s_\muS, s_\kappa \rangle
= \langle s_{(R)} \circ s_\muS, e_{\kappa^\-}h_{\kappa^\+}\rangle
=  \bigl|\PSSYT\bigl( (R), \muS \bigr)_{(\kappa^\-,\kappa^\+)}\bigr| = 1. \]
Therefore the constant value is $1$.
Finally,
since
$s_\varnothing \circ s_\muS = 1$ (the unit element in
the ring of symmetric functions),
the plethysm coefficient is constant for all $N \in \N_0$.
\end{proof}

For example, we saw in
Example~\ref{ex:LawOkitaniSignedWeightsAreStronglyMaximal}(i)
that $\bigl((1^d), (m-d)\bigr)$ is a strongly $1$-maximal signed
weight of shape $(m)$ and sign $(-1)^d$. 
The unique partition $\kappa^{(M)}$ with $d$-decomposition
$M\dec{(1^d)}{(m-d)}$ is $(d^M) + (M(m-d))$ and so Corollary~\ref{cor:nuStableEmpty} implies that if $d$ is odd then
\[ \langle s_{(1^M)} \circ s_{(m)}, s_{(d^M) + M(m-d)} \rangle
= 1 \]
for all $M \in \N_0$ and the same holds replacing $(1^M)$ with $(M)$
if $d$ is even. The analogous stability result, which we believe
is even less obvious, obtained from 
the case $R = 2$ of 
Example~\ref{ex:LawOkitaniSignedWeightsAreStronglyMaximal}(ii)
is that if $d$ is odd then
\[ \langle s_{(2^M)} \circ s_{(m)}, s_{(d^{2M}) + M(2m-2d-1,1)} \rangle
= 1 \] 
for all $M \in \N_0$, and the same holds replacing $(2^M)$ with $(M,M)$ 
if $d$ is even.
For an example of the corollary in the case of skew partitions see
Example~\ref{ex:nuStableSingletonSkewPartition}.


\section{Applications of Theorem~\ref{thm:nuStable}}
\label{sec:nuStableSharpApplications}

\subsection{Theorem~\ref{thm:nuStable} for singleton strongly maximal signed weights}\label{subsec:singletonStronglyMaximalSignedWeights}
We saw in Lemma~\ref{lemma:maximalAndStronglyMaximalSingletonSemistandardSignedTableauFamilies}
that the signed weight 
$(\omega_{\ell^\-}(\muS)^\-, \omega_{\ell^\-}(\muS)^\+)$ 
of the greatest semistandard signed tableau $t_{\ell^\-}(\muS)$ in Definitions~\ref{defn:greatestSignedTableau} and~\ref{defn:greatestSignedWeight}
is a strongly $c^\+$-maximal signed weight where $c^\+ = \ell(\omega_{\ell^\-}(\muS)^\+)$
is the greatest positive entry appearing in $t_{\ell^\-}(\muS)$.
In this subsection we give the special case of 
Theorem~\ref{thm:nuStableSharp} for such strongly maximal weights, which we call \emph{singleton}.
The $\LBound$ bounds below are defined in Definition~\ref{defn:LBound};
see Remark~\ref{remark:intervalNotation} for the reason
for the difference in the notation for intervals
in the first two bounds below.
Recall that $a(\lambda)$ denotes the first part of a partition $\lambda$.

\begin{corollary}\label{cor:nuStableSingleton}
Let $\nu$ be a  partition of $n$ and let
$\muS$ be a skew partition.
Fix $\ell^\- \in \N_0$ and set
$(\kappa^\-,\kappa^\+) =
\bigl(\omega_{\ell^\-}(\muS)^\-,$ $\omega_{\ell^\-}(\muS)^\+ \bigr)$.
Let~$\lambda$ be an $(\ell^\-,\ell(\kappa^\+))$-large partition of $|\nu||\muS|$.
Let $\nu^{(M)} = \nu \sqcup (1^M)$ if $|\kappa^\-|$ is odd
and $\nu^{(M)} = \nu + (M)$ if $|\kappa^\-|$ is even.
Then
\[ \langle s_{\nu^{(M)}} \circ s_\muS, s_{\lambda\, \oplus\,
M(\kappa^\-,\kappa^\+) } \rangle \]
is constant for all $M \ge L$ where $L$ is the maximum
of 
\begin{bulletlist}
\item $\LBound\bigl([\lambda^\-, n\kappa^\-]^\ellmb_\unLHDS, \kappa^\- \bigr)$,
\item $\LBound\bigl([\lambda^\+, n\kappa^\+ + ( |\lambda^\+| - n |\kappa^\+| ) ]_{\unlhd}, \kappa^\+ \bigr)$,
\item $\bigl(n\kappa^\+_1 + n\kappa^\+_2 - 2\lambda^\+_1 + 2|\lambda^\+| - 2n|\kappa^\+|\bigr)/ (\kappa^\+_1 - \kappa^\+_2)$,
\item $\bigl(\max( \ell(\lambda^\+), \ell(\kappa^\+) ) + n |\kappa^\-| - |\lambda^\-| - n\kappa^\-_{\ell(\kappa^\-)}
\bigr) / \kappa^\-_{\ell(\kappa^\-)} $,
\item $\bigl| \bigl| n(\kappa^\- , \kappa^\+) - (\lambda^\-, \lambda^\+) \bigr|\bigr| - 
\nu^\pm_1 + \nu^\pm_2$,
\end{bulletlist}
omitting the third if $\kappa^\+_1 = \kappa^\+_2$ and the fourth if $\ell^\- = 0$ and so $\kappa^\- = \varnothing$.
Moreover if $\lambda \notunlhddot \omega^{(n)}(\muS)$ 
then the plethysm coefficient
is $0$ for all $M \in \N_0$.
\end{corollary}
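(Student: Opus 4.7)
The plan is to derive this as a direct specialization of Theorem~\ref{thm:nuStableSharp} applied to a singleton strongly maximal signed weight. First I would invoke Lemma~\ref{lemma:maximalAndStronglyMaximalSingletonSemistandardSignedTableauFamilies} to confirm that $\swtp{\kappa} = (\omega_{\ell^-}(\muS)^-, \omega_{\ell^-}(\muS)^+)$ is indeed strongly $c^+$-maximal of size $R = 1$ with sign $(-1)^{|\kappa^-|}$ by Definition~\ref{defn:signTableau}; this sign matches the dichotomy in the definition of $\nu^{(M)}$, which coincides exactly with the prescription $\nu + (M^R)$ versus $\nu \sqcup (R^M)$ given in Theorem~\ref{thm:nuStableSharp}.

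With $R = 1$, Definition~\ref{defn:exceptionalColumnBound} forces the exceptional-column statistic to be $E = 0$. Assuming $\nu \ne \varnothing$ (the edge case is already handled by Corollary~\ref{cor:nuStableEmpty}), one has $0 = E < \nu_1^\pm = \nu_R^\pm$, placing us in the branch `$E < \nu_R^\pm$' of Theorem~\ref{thm:nuStableSharp} so that the shift $S$ vanishes. Substituting $B_1^\pm(\nu) = n - \nu_1^\pm$ from Definition~\ref{defnBoth:BBoxes} into the formula for the upper partition and collapsing $\kappa \oplus (n-\nu_1^\pm)\swtp{\kappa} \oplus (\nu_1^\pm - 1)\swtp{\kappa}$ via Lemma~\ref{lemma:adjoinToLarge}, I would identify $\omega$ as the partition whose $\ell^-$-decomposition is $n\dec{\kappa^-}{\kappa^+}$, which by Definition~\ref{defn:plethysticGreatestSignedWeight} equals $\omega^{(n)}_{\ell^-}(\muS)$. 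Hence $\omega^- = n\kappa^-$ and $\omega^+ = n\kappa^+$.

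The remaining task is simply to match bounds. With $S = 0$ and these expressions for $\omega^\pm$, the first four bounds of Theorem~\ref{thm:nuStableSharp} reduce verbatim to the first four bounds of the corollary. For the fifth, the root-length statistic $D$ simplifies with $R=1$, $E=0$ and $B_1^\pm(\nu) = n - \nu_1^\pm$ to $\bigl\|(n - \nu_1^\pm)\swtp{\kappa} + \nu_1^\pm \swtp{\kappa} - \swtp{\lambda}\bigr\| = \bigl\|n\swtp{\kappa} - \swtp{\lambda}\bigr\|$, so $E + D - \nu_R^\pm + \nu_{R+1}^\pm$ matches the fifth bound of the corollary. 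The vanishing clause $\lambda \oplus S\swtp{\kappa} \notunlhddot \omega$ of the theorem then reads as $\lambda \notunlhddot \omega^{(n)}(\muS)$, yielding the `moreover' assertion.

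The hard part will be purely bookkeeping: ensuring that each specialization under $R=1$, $E=0$, $S=0$ is carried out consistently across all the intertwined quantities in the rather intricate statement of Theorem~\ref{thm:nuStableSharp}, and that the identification $\omega = \omega^{(n)}_{\ell^-}(\muS)$ is a genuine equality of partitions rather than merely an equality up to some shift by $\swtp{\kappa}$. Remark~\ref{remark:singletonSimpler} provides a useful sanity check, as it already asserts informally that the $R=1$ case of Corollary~\ref{cor:signedWeightBoundForStronglyMaximalSignedWeight} collapses to this simpler bound.
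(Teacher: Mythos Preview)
Your proposal is correct and follows essentially the same route as the paper's proof: both apply Theorem~\ref{thm:nuStableSharp} with the singleton strongly maximal weight from Lemma~\ref{lemma:maximalAndStronglyMaximalSingletonSemistandardSignedTableauFamilies}, observe that $R=1$ forces $E=0$ (or $E=-1$ when $\nu=\varnothing$), land in the $E<\nu_R^\pm$ branch with $S=0$, identify $\omega$ with $\omega^{(n)}_{\ell^-}(\muS)$ via the collapse $\kappa\oplus(n-1)\swtp{\kappa}$, and then read off the five bounds. The only cosmetic difference is that the paper treats $\nu=\varnothing$ uniformly by noting $E=-1<0=\nu_R^\pm$ rather than deferring to Corollary~\ref{cor:nuStableEmpty}.
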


\begin{proof}
By Lemma~\ref{lemma:maximalAndStronglyMaximalSingletonSemistandardSignedTableauFamilies},
$\bigl(\kappa^\-, \kappa^\+\bigr)$
 is the strongly $\ell\bigl(\omega_{\ell^\-}(\muS)^\+\bigr)$-maximal 
 signed weight of the singleton tableau
family $\{ t_{\ell^\-}(\muS) \}$ of shape $\muS$. 
(Note this holds even if $t_{\ell^\-}(\muS)$ has only negative entries,
in which case the positive part of the signed weight is $\varnothing$.)
Since $t_{\ell^\-}(\muS)$
has $|\kappa^\-|$ negative entries, its sign
is $(-1)^{|\kappa^\-|}$.
Since the tableau family has size $R = 1$, Definition~\ref{defn:exceptionalColumnBound}
states that $E = 0$. Therefore the case $E < \nu_R^\pm$ of
Theorem~\ref{thm:nuStableSharp} applies. 
The partition $\kappa$ in Theorem~\ref{thm:nuStableSharp}
is the unique partition with 
$\ell^\-$-decomposition $\dec{\omega_{\ell^\-}(\muS)^\-}{\omega_{\ell^\-}(\muS)^\+}$. 
(This $\ell^\-$-decomposition is 
well-defined by Lemma~\ref{lemma:ellDecompositionGreatestSignedWeight}.)
We have
\[ B_R^\pm (\nu) = n - a(\nu^\pm). \]
The upper bound partition $\omega$ in Theorem~\ref{thm:nuStableSharp} is 
\[ 
\kappa \oplus \bigl( n - a(\nu^\pm) \bigr)
(\kappa^\-,\kappa^\+)
\oplus \bigl( a(\nu^\pm) - 0 - 1 \bigr)
(\kappa^\-, \kappa^\+) = 
\kappa \oplus (n-1)(\kappa^\-, \kappa^\+) \]
with $\ell^\-$-decomposition $n\decs{\kappa^\-}{\kappa^\+} =
n\dec{\omega_{\ell^\-}(\muS)^\-}{\omega_{\ell^\-}(\muS)^\+}$. 
Thus in all cases
$\omega$ is the partition \smash{$\omega^{(n)}_{\ell^\-}(\muS)$}
defined in Definition~\ref{defn:plethysticGreatestSignedWeight}.
Similarly, we have
\begin{align*} D &= \bigl| \bigl| \bigl(n - a(\nu^\pm) \bigr) (\kappa^\-, \kappa^\+) + \bigl( a(\nu^\pm) - 0)
\bigr) (\kappa^\-, \kappa^\+) - (\lambda^\-, \lambda^\+) \bigr| \bigr| \\ &=
\bigl| \bigl| n(\kappa^\- , \kappa^\+) - (\lambda^\-, \lambda^\+) \bigr|\bigr|. \end{align*}
Since we are in the case $E < \nu_R^\pm$, we have $S=0$.
It is now very easily seen 
that the 
five bounds in Theorem~\ref{thm:nuStableSharp}  simplify as claimed.
The corollary, including the final claim that
if $\lambda \notunlhddot \omega^{(n)}(\muS)$ 
then the plethysm coefficient
is $0$ for all $M \in \N_0$, now follows from this theorem.
\end{proof}


Note that the condition for the plethysm coefficient to vanish
is the same as the one in Remark~\ref{remark:singletonSimpler}.
The following example
illustrates the skew partition case of
Corollary~\ref{cor:nuStableSingleton}.

\begin{example}\label{ex:nuStableSingletonSkewPartition}
Take $\ell^\- = 2$ and $\muS = (4,2)/(1)$.
The tableau $t_2\bigl( (4,2)/(1) \bigr)$ is as shown in the margin and correspondingly
\marginpar{\qquad\raisebox{-3pt}{\ \ \young(:\oM\tM1,\oM\tM)}}
\[  \bigl( \omega_2\bigl( (4,2)/(1) \bigr)^\-, \omega_2\bigl( (4,2)/(1) \bigr)^\+ \bigr)
= \bigl( (2,2), (1) \bigr). \]
By Lemma~\ref{lemma:maximalAndStronglyMaximalSingletonSemistandardSignedTableauFamilies},
this is a strongly $1$-maximal signed weight of shape $(4,2)/(1)$, size $1$ and sign $+1$.
Let $\nu$ be a partition of $n$ and let $\lambda$ be a $(2,1)$-large partition of $5n$.
By Corollary~\ref{cor:nuStableSingleton} the plethysm coefficients
\[ \langle s_{\nu + (M)} \circ s_{(4,2)/(1)}, s_{\lambda \,\oplus\, M((2,2), (1))}\rangle \]
are constant for $M$ at least the bound in this corollary, and the constant value is $0$
unless $\lambda \unlhddot \omega^{(n)}\bigl((4,2)/(1)\bigr) = (n+2,2^{2n-1}) \decMap \dec{(2n,2n)}{(n)}$.
Note that $s_{(4,2)/(1)} = s_{(4,1)} + s_{(3,2)}$ is not a single Schur function, so, as in Example~\ref{ex:needMaximalGood},
this result needs the generality of skew partitions.
Taking $\nu = (1,1)$, the table below shows
values for the inner product for varying partitions~$\lambda$ 
of $12$ (shown decreasing
in the $2$-twisted dominance order),
together with the bound from the corollary.

\smallskip
\begin{center}
\begin{tabular}{cccccccc} \toprule 
$\lambda$ & $0$ & $1$ & $2$ & $3$ & $4$ & $5$ & bound \\ \midrule
$(4,2,1^4)$ & $0$ & $0$ & $0$ & $0$ & $0$ & 0 & 0 \\
$(3,3,2,2)$ & $1$ & $1$ & $1$ & $1$ & $1$ & $1$ & 1 \\
$(4,4,2)$ & $1$ & $12$ & $19$ & $22$ & $22$ & $22$ & 5 \\
$(8,1,1)$ & $1$ & $9$ & $17$ & $17$ & $17$ & $17$ & 5 \\
$(8,2)$ & $0$ & $7$ & $15$ & $16$ & $16$ & $16$ & 6 \\ \bottomrule
\end{tabular}
\end{center}


\smallskip
\noindent
In the first case $(4,2,1,1,1) \decMap \dec{(5,2)}{(2)}$ is incomparable
with $(4,2,2,2) \decMap \dec{(4,4)}{(2)}$ in the $2$-twisted dominance order,
and so the constant multiplicity is $0$.
In each remaining case,
 the fifth bound, $\bigl|\bigl| 2( (2,2), (1) ) - (\lambda^\-,\lambda^\+)\bigr|\bigr|$
is the largest.
For instance
$s_{(1+M,1)} \circ (s_{(4,1)} + s_{(3,2)}), s_{(4+M,4,2^{2M+1})} \rangle = 22$
for all \hbox{$M \ge 3$}. Similar calculations
by computer algebra using the bound from Corollary~\ref{cor:nuStableSingleton} show
that
\begin{align*}
\langle s_{(1+M,1)} \circ s_{(4,1)}, s_{(4+M,4,2^{2M+1})} \rangle &= 0 \quad \text{for all $M \ge 0$}\\
\langle s_{(1+M,1)} \circ s_{(3,2)}, s_{(4+M,4,2^{2M+1})} \rangle &= 7 \quad \text{for all $M \ge 3$;}
\end{align*}
this illustrates the failure of the plethysm product to be 
distributive over addition in its second component.
\end{example}

\begin{remark}\label{remark:nuStableSingletonPartitionCase}
If $\mus = \varnothing$ then, by~\eqref{eq:greatestSignedWeightPartitionCase},
we have $\bigl( \omega_{\ell^\-}(\mu)^\- , \omega_{\ell^\-}(\mu)^\+) =
(\mu^\-, \mu^\+)$ and we may replace $\kappa^\-$ with $\mu^\-$ and $\kappa^\+$ with $\mu^\+$ in all the expressions in Corollary~\ref{cor:nuStableSingleton}.
 
\end{remark}

 We use this remark in \S\ref{subsec:hookExplicitBounds} and 
 \S\ref{subsec:LawOkitaniExplicitBounds}
  below.
Combining Corollary~\ref{cor:nuStableEmpty}
with Remark~\ref{remark:nuStableSingletonPartitionCase}, it follows that, for
any fixed $\ell^\-$-decomposition $(\mu^\-,\mu^\+)$, if $|\mu^\-|$ is odd then
\[ \langle s_{(1^M)} \circ s_\mu, s_{M(\mu^\-,\mu^\+)} \rangle = 1 \]
for all $M \in \N_0$; if $|\mu^\-|$ is even then the same
holds replacing $(1^M)$ with~$M$. 

The following corollary is
the case $\kappa^\- = \varnothing$. For ease of reference
we recall that the greatest tableau $t_{\ell^\-}(\muS)$
is defined in Definition~\ref{defn:greatestSignedTableau}, the $\LBound$ bound
in Definition~\ref{defn:LBound}
and the root-length $||\alpha||$ in~\eqref{eq:rootLength}.

\begin{corollary}\label{cor:nuStableUnsignedSingleton}
Let $\nu$ be a partition of $n$, let
$\muS$ be a skew partition, and let~$\lambda$ be a partition of $|\nu||\muS|$. 
Let $\kappa$ be the positive part of the signed weight of the greatest tableau $t_0(\muS)$.
Then
\[ \langle s_{\nu + (M)} \circ s_\muS, s_{\lambda + M\kappa} \rangle \]
is constant for all $M \in \N_0$ such that $M \ge L$, where $L$ is the maximum
of $\LBound\bigl( [\lambda, n\kappa]_\unlhd, \kappa\bigr)$
and $||n\kappa - \lambda|| - \nu_1 + \nu_2$.
\end{corollary}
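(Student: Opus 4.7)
The plan is to deduce this from Corollary~\ref{cor:nuStableSingleton} by specialising to $\ell^- = 0$ and simplifying. With $\ell^- = 0$, Definition~\ref{defn:greatestSignedTableau} forbids negative entries in $t_0(\muS)$, so the signed weight of the greatest tableau is $(\varnothing, \kappa)$ where $\kappa$ is a partition, and by Lemma~\ref{lemma:maximalAndStronglyMaximalSingletonSemistandardSignedTableauFamilies} this is a strongly $\ell(\kappa)$-maximal signed weight of shape $\muS$, size $1$ and sign $+1$. Since $|\kappa^-| = 0$ is even, Corollary~\ref{cor:nuStableSingleton} gives $\nu^{(M)} = \nu + (M)$, matching the statement of Corollary~\ref{cor:nuStableUnsignedSingleton}.

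First I would verify the largeness hypothesis: $\lambda$ must be $(\ell^-, \ell(\kappa^+))$-large, which reduces to $(0, \ell(\kappa))$-large; by Definition~\ref{defn:large} this holds automatically when $\ell^- = 0$. Next I would simplify the five bounds in Corollary~\ref{cor:nuStableSingleton} one by one. The first bound involves $\kappa^- = \varnothing$ and is therefore $0$ by the final sentence of Definition~\ref{defn:LBound}. The fourth bound is omitted since $\kappa^- = \varnothing$. For the second bound, since $\lambda^- = \varnothing$ and $\lambda^+ = \lambda$ when $\ell^- = 0$, and since $|\lambda| = n|\muS| = n|\kappa|$, the interval $[\lambda, n\kappa + (|\lambda| - n|\kappa|)]_\unlhd$ collapses to $[\lambda, n\kappa]_\unlhd$, giving $\LBound([\lambda, n\kappa]_\unlhd, \kappa)$. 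For the fifth bound, again using $\lambda^- = \varnothing$ and $\nu_R^\pm = \nu_1$, $\nu_{R+1}^\pm = \nu_2$ since $R = 1$ and the sign is $+1$, it becomes $||n\kappa - \lambda|| - \nu_1 + \nu_2$.

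The remaining step is to show that the third bound in Corollary~\ref{cor:nuStableSingleton} is subsumed by the $\LBound$ bound. Taking $k = 1$ inside $\LBound([\lambda, n\kappa]_\unlhd, \kappa)$ according to Definition~\ref{defn:LBound} yields the quantity $L_1 = (n\kappa_1 + n\kappa_2 - 2\lambda_1)/(\kappa_1 - \kappa_2)$ whenever $\kappa_1 > \kappa_2$, which is exactly the third bound after the simplifications $|\lambda^+| = n|\kappa^+|$ and $\lambda^+_1 = \lambda_1$ valid in the unsigned case. Hence the third bound need not be listed separately, and the maximum $L$ reduces to just $\LBound([\lambda, n\kappa]_\unlhd, \kappa)$ and $||n\kappa - \lambda|| - \nu_1 + \nu_2$ as claimed.

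There is no essential obstacle here: the argument is a routine unpacking of definitions. The only step requiring any care is confirming the $|\lambda| = n|\kappa|$ identity (which follows because $s_\muS$ and $s_{t_0(\muS)}$ have the same total degree $|\muS|$), and observing that the $(0, \ell(\kappa))$-largeness hypothesis and the vanishing of the first and fourth bounds all follow trivially from $\ell^- = 0$. The conclusion then follows immediately from Corollary~\ref{cor:nuStableSingleton}.
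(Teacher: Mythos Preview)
Your proposal is correct and follows essentially the same approach as the paper's proof: both apply Corollary~\ref{cor:nuStableSingleton} with $\ell^- = 0$ and then simplify the five bounds, noting that the first vanishes because $\kappa^- = \varnothing$, the fourth is omitted, the second and fifth become the two bounds in the statement, and the third is the $k=1$ term inside the $\LBound$ computation and hence redundant. Your write-up supplies more detail (the largeness check, the identity $|\lambda| = n|\kappa|$) than the paper's terse version, but the argument is the same.
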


\begin{proof}
Apply Corollary~\ref{cor:nuStableSingleton}
taking $\ell^\- = 0$. Thus
$\bigl(\omega_0(\muS)^\-, \omega_0(\muS)^\+ \bigr)
= (\varnothing, \kappa)$
and  the first bound is ignored.
The second is $\LBound\bigl( [\lambda, n\kappa]_\unlhd, \kappa\bigr)$,
the third is the case $k=1$ in Definition~\ref{defn:LBound}, and so is implied by the second.
The fourth bound is again one that should be ignored, 
and the fifth becomes $||n\kappa - \lambda|| - \nu_1 + \nu_2$. 
\end{proof}

\subsection{Explicit bounds for hook stability}
\label{subsec:hookExplicitBounds}
By Lemma~\ref{lemma:maximalAndStronglyMaximalSingletonSemistandardSignedTableauFamilies},
if $1 \le d \le m$ then
$\bigl((d),(m-d)\bigr)$ is a strongly $1$-maximal signed weight of shape $(m-d+1,1^{d-1})$,
corresponding to the singleton tableau family 
$\bigl\{t_1\bigl((m-d+1,1^{d-1})\bigr)\bigr\}$. For instance 
$t_1\bigl((3,1,1,1)\bigr)$ is as shown in the margin.
\marginpar{\young(\oM11,\oM,\oM,\oM)}

\begin{proposition}\label{prop:hookBound}
Let $\nu$ be a partition of $n \in \N$ and let $1 \le d \le m$.
Let $\nu^{(M)} = \nu + (M)$ if $d$ is even and let
$\nu^{(M)} = \nu \sqcup (1^M)$ if $d$ is odd. If $\lambda$ is a partition of $mn$
with $1$-decomposition $\dec{(\ell(\lambda))}{\lambda^\+}$ then
\[ \langle s_{\nu^{(M)}} \circ s_{(m-d+1,1^{d-1})}, s_{\lambda \,\sqcup\, (1^{dM}) + M(m-d) } \rangle \]
is constant for all $M \ge L$ where $L$ is the maximum of
\begin{bulletlist}
\item $\bigl(|\lambda^\+| - 2\lambda^\+_1)/(m-d)$,
\item $\bigl( 2 |\lambda^\+| - 2\lambda^\+_1 - n(m-d) \bigr)/(m-d)$,
\item $\bigl| \bigl| \bigl( (nd), (n(m-d))  \bigr) - \bigl((\ell(\lambda), \lambda^\+\bigr) \bigr|\bigr|
-\nu_1 + \nu_2$.
\end{bulletlist}
Moreover if $\lambda \notunlhddot (1^{nd}) + (n(m-d))$ in the $1$-twisted dominance order
then the plethysm coefficient is $0$ for all $M \in \N_0$.
\end{proposition}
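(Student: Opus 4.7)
\medskip
\noindent
\textbf{Proof proposal for Proposition~\ref{prop:hookBound}.}
The plan is to deduce this result as a direct application of Corollary~\ref{cor:nuStableSingleton} (the singleton case of Theorem~\ref{thm:nuStableSharp}) taking $\ell^\- = 1$ and $\muS = (m-d+1, 1^{d-1})$, and then simplify each of the five bounds in that corollary to match the three bullets in the statement. First I would observe that the greatest tableau $t_1\bigl((m-d+1,1^{d-1})\bigr)$ has its first column filled with $d$ entries of $-1$ and its first row completed by $m-d$ entries of $1$, so its signed weight is $\bigl((d),(m-d)\bigr)$; by Remark~\ref{remark:nuStableSingletonPartitionCase} (or directly by Lemma~\ref{lemma:maximalAndStronglyMaximalSingletonSemistandardSignedTableauFamilies}) this is the strongly $1$-maximal signed weight attached to this hook, of sign $(-1)^d$. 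This identifies the outer partition $\nu^{(M)}$ in the proposition with the one in the corollary, and a direct expansion of the adjoining operation gives $\lambda \oplus M\bigl((d),(m-d)\bigr) = \lambda + M(m-d) \sqcups (1^{dM})$, matching the target Schur function.

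Next I would evaluate the five bounds in Corollary~\ref{cor:nuStableSingleton} with $\kappa^\- = (d)$, $\kappa^\+ = (m-d)$ and $\lambda^\- = (\ell(\lambda))$. For the first bullet $\LBound\bigl([(\ell(\lambda)), (nd)]^{(1)}_\unLHDS, (d)\bigr)$, since $\ellp = \ell(\kappa^\-) = 1$ the special case of Definition~\ref{defn:LBound} applies and yields $(nd - \ell(\lambda) - nd)/d = -\ell(\lambda)/d \le 0$. The fourth bullet, with $\kappa^\-_{\ell(\kappa^\-)} = d$ and $\omega^\-_{\ell^\-} = nd$, similarly reduces to $\bigl(\max(\ell(\lambda^\+),1) - \ell(\lambda)\bigr)/d \le 0$. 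Both of these are therefore trivially satisfied and can be dropped. The second and third bullets give the nontrivial bounds: applying Definition~\ref{defn:LBound} with $\omega_1 = |\lambda^\+|$, $\omega_2 = 0$, $\kappa^\+_1 = m-d$, $\kappa^\+_2 = 0$, the second bullet reduces to $(|\lambda^\+| - 2\lambda^\+_1)/(m-d)$, while the third bullet reduces to $\bigl(n(m-d) - 2\lambda^\+_1 + 2|\lambda^\+| - 2n(m-d)\bigr)/(m-d) = (2|\lambda^\+| - 2\lambda^\+_1 - n(m-d))/(m-d)$, matching the first two listed bounds. The fifth bullet is just $\bigl|\bigl|n(\kappa^\-,\kappa^\+) - (\lambda^\-,\lambda^\+)\bigr|\bigr| - \nu_1 + \nu_2$ (with $\nu$ interpreted according to the sign, i.e.\ conjugated when $d$ is odd, as in the corollary), which is precisely the third listed bound.

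Finally, for the vanishing claim I would identify $\omega^{(n)}_1\bigl((m-d+1,1^{d-1})\bigr)$ as the unique partition with $1$-decomposition $n\dec{(d)}{(m-d)} = \dec{(nd)}{(n(m-d))}$; reconstructing this partition (first column of height $nd$, then a row of length $n(m-d)$ adjoined to row $1$) gives $(n(m-d)+1, 1^{nd-1}) = (1^{nd}) + (n(m-d))$. The ``moreover'' part of Corollary~\ref{cor:nuStableSingleton} then gives the stated vanishing whenever $\lambda \notunlhddot (1^{nd}) + (n(m-d))$ in the $1$-twisted dominance order.

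The main obstacle, though essentially routine, is simply the careful bookkeeping of the exceptional branch in Definition~\ref{defn:LBound} when $\ellp = \ell(\kappa^\pm)$: both $\kappa^\-$ and $\kappa^\+$ have length $1$, so one must be careful that the ``$L_\ell$ special case'' is applied consistently and that the degenerate case $d = m$ (where $\kappa^\+ = \varnothing$ and bullets one and two of the proposition involve division by $m-d = 0$) is handled by noting that $\kappa^\+ = \varnothing$ forces the relevant $\LBound$ terms to vanish via the final clause of Definition~\ref{defn:LBound}, so the corresponding bullets are simply omitted as in the convention of Corollary~\ref{cor:nuStableSingleton}.
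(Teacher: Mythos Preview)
Your proposal is correct and follows essentially the same route as the paper's proof: apply Corollary~\ref{cor:nuStableSingleton} with $\ell^\-=1$, $\muS=(m-d+1,1^{d-1})$ and $(\kappa^\-,\kappa^\+)=\bigl((d),(m-d)\bigr)$, then simplify the five bounds, showing the first and fourth are non-positive and that the second, third and fifth become the three listed bounds. Your added remark on the degenerate case $d=m$ and your explicit note that $\nu_1,\nu_2$ should be read as $\nu^\pm_1,\nu^\pm_2$ (i.e.\ conjugated when $d$ is odd) are details the paper glosses over, but they do not change the argument.
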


\begin{proof}
We take the singleton strongly maximal weight $(\kappa^\-,\kappa^\+) = \bigl((d),(m-d)\bigr)$ 
in Corollary~\ref{cor:nuStableSingleton},
together with  $\muS = (m-d+1,1^{d-1})$ and $\ell^\- = 1$.
Since $\lambda$ is non-empty, it is $(1,1)$-large, as required in this corollary.
By Definition~\ref{defn:ellDecomposition},
 the $1$-decomposition of the partition $\lambda$
is $\dec{(\ell(\lambda))}{(\lambda_1-1,\ldots,\lambda_{b}-1)}$,
where $b$ is maximal such that $\lambda_b \ge 2$.
Hence, by Definition~\ref{defn:LBound}, the first bound in Theorem~\ref{thm:nuStableSharp}
is $(nd - \ell(\lambda) - nd)/d$, which is non-positive.
(Note the case where
$\ell(\lambda^\-) \le \ell^\-$ applies since $\lambda^\-$ has at most one part.) 
Similarly since $\ell(\kappa^\+) = 1$ 
the second bound is $(n(m-d) + |\lambda^\+| - n(m-d) - 2\lambda^\+_1)/(m-d)$
which simplifies to the first bound above. 
(Again this ignores a potentially stronger bound if $\ell(\lambda^\+) \le 1$.)
The third and fifth bounds in the corollary simplify to the final two bounds
above. The fourth bound is 
$\bigl( \max(\ell(\lambda^\+), 1) + nd - \ell(\lambda) - nd\bigr)/d
= \bigl( \max(\ell(\lambda^\+), 1) - \ell(\lambda)\bigr)/d$
which, since $\lambda^\+$ has length $b \le \ell(\lambda)$, is non-positive.
Since the partition $\omega^{(n)}\bigl((m-d+1,1^{d-1})$ with $1$-decomposition
$n\decs{\kappa^\-}{\kappa^\+} = n\dec{(d)}{(m-d)}$ is $(1^{nd}) + \bigl(n(m-d)\bigr)$,
the result now follows from Corollary~\ref{cor:nuStableSingleton}.
\end{proof}

For example, taking $\nu = (2,1)$, $m=4$ and $d=2$ we find that
\[ \langle s_{(2+M,1)} \circ s_{(3,1)}, s_{\lambda \sqcups (1^{2M}) + (2M)} \rangle\]
is ultimately constant, and zero unless $\lambda \unlhddot (7,1^5)$ in
the $1$-twisted dominance order. (This is equivalent to the condition
$\ell(\lambda) \le 6$.) The case
$\lambda = (4,3,3,2)$, for which the sequence
of plethysm coefficients is $2$, $16$, $31$, $33$, $33$, \ldots 
is illustrative. Here $\lambda^\+ = (3,2,2,1)$ and so
the bounds from Proposition~\ref{prop:hookBound}
are $(8 - 6)/(4-2) = 1$, $\bigl(2 \times 8 - 2 \times 3 - 3(4-2)\bigr)/(4-2) = 4/2 = 2$
and finally $11 - 2 + 1 = 10$, since
\[ \bigl|\bigl| \bigl((6), (6)\bigr) - \bigl((4), (3,2,2,1)\bigr) \bigr|\bigr|
= \bigl|\bigl| \bigl( (2), (3,-2-2,-1) \bigr)\bigr|\bigr| = 2 + 5 +3 + 1 = 11. \]
Therefore $\langle s_{(2+M,1)} \circ s_{(3,1)}, s_{(4+2M,3,3,2,1^{2M})}\rangle = 33$
for all $M \ge 10$.

\subsection{Explicit bounds for Law--Okitani stability}
\label{subsec:LawOkitaniExplicitBounds}

Using Corollary~\ref{cor:nuStableSingleton} and Remark~\ref{remark:nuStableSingletonPartitionCase}, and
very similar arguments to the proof of Proposition~\ref{prop:hookBound},
we can give the first explicit bounds for the stability
result discussed in \S\ref{subsec:earlierWork} due to Law and Okitani
\cite{LawOkitani}, and a sufficient condition for
the stable plethysm coefficient to be zero.
We exclude the case $d=0$
because it is a special case of 
Corollary~\ref{cor:nuStableUnsignedSingleton},
and the case $d=m$ because it reduces to the case $d=0$ by applying the $\omega$ involution.

\begin{proposition}\label{prop:LawOkitaniBound}
Let $\nu$ be a partition of $n$ and let $1 \le d < m$.
Let $\nu^{(M)} = \nu + (M)$ if $d$ is even and let
$\nu^{(M)} = \nu \sqcup (1^M)$ if $d$ is odd. If $\lambda$ is a partition of $mn$
such that $a(\lambda) \ge d$ having
 $d$-decomposition $\decs{\lambda^\-}{\lambda^\+}$,
 then
\[ \langle s_{\nu^{(M)}} \circ s_{(m)}, s_{\lambda \,\sqcup\, (d^M) + M(m-d)} \rangle \]
is constant for all $M \ge L$ where $L$ is the maximum of
\begin{bulletlist}
\item $n(d-1) - |\lambda^\-|$,
\item $\bigl(|\lambda^\+| - 2\lambda^\+_1\bigr)/(m-d)$,
\item $\bigl( 2 |\lambda^\+| - 2\lambda^\+_1 - n(m-d) \bigr)/(m-d)$,
\item $\max(1, \ell(\lambda^\+)) + n(d-1) - |\lambda^\-|$,
\item $\bigl| \bigl| \bigl( (n^d) , ( n(m-d))  \bigr) - (\lambda^\-, \lambda^\+) \bigr|\bigr|
-\nu_1 + \nu_2$.
\end{bulletlist}
Moreover if $\lambda \notunlhddot (d^n) + (n(m-d))$ in the $d$-twisted dominance order
then the plethysm coefficient is $0$ for all $M \in \N_0$.
\end{proposition}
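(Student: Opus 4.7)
The plan is to derive this proposition as a specialization of Corollary~\ref{cor:nuStableSingleton}, applied with the non-skew partition $\muS = (m)/\varnothing$ and $\ell^\- = d$. By Example~\ref{ex:LawOkitaniSignedWeightsAreStronglyMaximal}(i) the greatest tableau $t_d\bigl((m)\bigr)$ has signed weight $\bigl((1^d),(m-d)\bigr)$, and this is a strongly $1$-maximal signed weight of shape $(m)$, size $1$ and sign $(-1)^d$. With this choice $(\kappa^\-,\kappa^\+) = \bigl((1^d),(m-d)\bigr)$, so $|\kappa^\-| = d$ and the dichotomy in Corollary~\ref{cor:nuStableSingleton} between $\nu^{(M)} = \nu + (M)$ and $\nu^{(M)} = \nu \sqcup (1^M)$ recovers precisely the case distinction in the statement. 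Moreover, since $(1^d)' = (d)$, adjoining gives $\lambda \oplus M\bigl((1^d),(m-d)\bigr) = \lambda + M(m-d) \sqcup (d^M)$, matching the partition appearing in the inner product.

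For the ``moreover'' clause, I would identify the upper bound partition from Corollary~\ref{cor:nuStableSingleton}. The partition $\omega^{(n)}_d\bigl((m)\bigr)$ defined by Definition~\ref{defn:plethysticGreatestSignedWeight} has $d$-decomposition $n\dec{(1^d)}{(m-d)} = \dec{(n^d)}{(n(m-d))}$, and reassembling the columns of height $n$ and the row extension of length $n(m-d)$ gives the partition $(d^n) + (n(m-d))$. Hence the final part of Corollary~\ref{cor:nuStableSingleton} translates exactly to the stated vanishing condition $\lambda \unlhddot (d^n) + (n(m-d))$ in the $d$-twisted dominance order.

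For the bound $L$, I would work through the five bounds in Corollary~\ref{cor:nuStableSingleton} in turn. The first is $\LBound\bigl([\lambda^\-,(n^d)]^{(d)}_\unLHDS,(1^d)\bigr)$; since $\kappa^\- = (1^d)$ has all parts equal we have $L_k = 0$ for $k<d$, and the case $\ellp = \ell = d$ of Definition~\ref{defn:LBound} contributes the single term $(nd - |\lambda^\-| - n)/1 = n(d-1) - |\lambda^\-|$. The second bound is $\LBound$ applied with $\kappa^\+ = (m-d)$ to the interval whose upper endpoint is $(n(m-d)) + (|\lambda^\+|-n(m-d)) = (|\lambda^\+|)$; since $\kappa^\+$ has only one part, only $L_1$ contributes, giving $(|\lambda^\+| - 2\lambda^\+_1)/(m-d)$. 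The third bound is $(n(m-d) - 2\lambda^\+_1 + 2|\lambda^\+| - 2n(m-d))/(m-d) = (2|\lambda^\+| - 2\lambda^\+_1 - n(m-d))/(m-d)$. The fourth, using $\ell(\kappa^\+) = 1$, $|\kappa^\-| = d$ and $\kappa^\-_d = 1$, simplifies to $\max(\ell(\lambda^\+),1) + n(d-1) - |\lambda^\-|$. The fifth bound from the corollary is $\bigl\|\bigl((n^d),(n(m-d))\bigr) - (\lambda^\-,\lambda^\+)\bigr\| - \nu^\pm_1 + \nu^\pm_2$, where $\pm$ is the sign of $\kappa$, and the $+$ case $d$ even recovers the stated expression; for $d$ odd one reads $\nu^\pm_i = \nu'_i$ as in the ``signed'' convention of Theorem~\ref{thm:nuStableSharp}.

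The proof is therefore essentially a bookkeeping exercise: each of the five bounds in Corollary~\ref{cor:nuStableSingleton} collapses to an explicit expression thanks to the very regular shapes $\kappa^\- = (1^d)$ and $\kappa^\+ = (m-d)$, and no genuinely new ideas are required beyond those already developed. The main technical care will be in handling the boundary case $\ellp = \ell$ in the first bound, and the alternative $L_\ell$ formula in Definition~\ref{defn:LBound}, to confirm that the algebraically simplified expression $n(d-1) - |\lambda^\-|$ is indeed the maximum over all $k$. Checking this, together with verifying that the exclusions $d = 0$ (covered instead by Corollary~\ref{cor:nuStableUnsignedSingleton}) and $d = m$ (where $m-d = 0$ forces degenerate denominators) account for the restriction $1 \le d < m$, will complete the argument.
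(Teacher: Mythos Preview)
Your proposal is correct and follows essentially the same approach as the paper: both apply Corollary~\ref{cor:nuStableSingleton} with $\muS=(m)$, $\ell^\-=d$ and $(\kappa^\-,\kappa^\+)=\bigl((1^d),(m-d)\bigr)$, then simplify each of the five bounds using the special shapes of $\kappa^\-$ and $\kappa^\+$, and identify $\omega^{(n)}_d\bigl((m)\bigr)=(d^n)+(n(m-d))$ for the vanishing clause. Your observation about the $\nu^\pm$ in the fifth bound is a point the paper glosses over by calling the specialization ``routine''.
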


\begin{proof}
By Example~\ref{ex:LawOkitaniSignedWeightsAreStronglyMaximal}(i),
$\bigl((1^d), (m-d)\bigr)$ is a strongly $1$-maximal signed weight
of shape $(m)$, size $1$ and sign $(-1)^d$. 
We take this as $(\kappa^\-,\kappa^\+)$ in Corollary~\ref{cor:nuStableSingleton},
together with  $\muS = (m)$ and $\ell^\- = d$. The condition that $\lambda$
is $(\ell^\-,\ell(\kappa^\+))$-large is equivalent to $a(\lambda) \ge d$.
Observe that  $\kappa^\-_k -\kappa^\-_{k+1}$ is non-zero
only when $k = d$. Hence, by Definition~\ref{defn:LBound}, the first bound in Theorem~\ref{thm:nuStableSharp}
is $(nd - |\lambda^\-| - n)/1$, which
simplify to the first bound above. (Note that the case where
$\ell(\lambda^\-) \le \ell^\-$ applies.) Similarly since $\ell(\kappa^\+) = 1$ 
the second bound is $(n(m-d) + |\lambda^\+| - n(m-d) - 2\lambda^\+_1)/(m-d)$
which again simplifies as shown. 
(Again, as in the earlier proof
of Proposition~\ref{prop:hookBound},
this ignores a potentially stronger bound if $\ell(\lambda^\+) \le 1$.)
The third, fourth and fifth bounds are routine specializations 
of the bounds in the corollary. Since the partition 
$\omega^{(n)}\bigl((m))$ with $d$-decomposition
$n\decs{\kappa^\-}{\kappa^\+} = n\decs{(1^d)}{(m-d)}$ is $(d^n) + n(m-d)$,
the result now follows from Corollary~\ref{cor:nuStableSingleton}.
\end{proof}

\begin{example}\label{ex:LawOkitaniExplicitBounds}
We take $m=4$, $d=3$ and $\nu = (2,1)$.
The table below shows
values of $\langle s_{(2,1^{M+1})} 
\circ s_{(4)}, s_{\lambda \sqcups (3^M) + (M)}\rangle$
for small values of $M$ for varying partitions $\lambda$ of $12$,
shown decreasing in the $2$-twisted dominance order,
together with the bound from the corollary.

\smallskip
\begin{center}
\begin{tabular}{ccccccccc} \toprule 
$\lambda$ & $0$ & $1$ & $2$ & $3$ & $4$ & $5$ & $6$ & bound \\ \midrule
$(5,3,3,1)$ & $0$ & $0$ & $0$ & $0$ & $0$ & 0 & 0& 0 \\
$(6,3,3)$ & $0$ & $0$ & $0$ & $0$ & $0$ & $0$ & 0 &0 \\
$(7,3,2)$ & $1$ & $1$ & $1$ & $1$ & $1$ & $1$ & 1 & 0 \\
$(7,4,1)$ & $1$ & $2$ & $2$ & $2$ & $2$ & $2$ & 2 & 3 \\
$(7,5)$ & $1$ & $4$ & $5$ & $6$ & $6$ & $6$ & 6   & 7 \\
$(6,6)$ & $0$ & $2$ & $5$ & $6$ & $7$ & $7$ & 7  & 8\\ \bottomrule
\end{tabular}
\end{center}


\smallskip
\noindent
In the first case $(5,3,3,1) \decMap \dec{(4,3,3)}{(2)}$ is 
greater than 
the upper bound $(6,3,3) \decMap \dec{(3,3,3)}{(3)}$ in the $3$-twisted dominance order,
and so the constant multiplicity is $0$.
For $(7,5)$ the constant multiplicity is indeed $6$, as can be
checked by using computer algebra to compute the next three values,
or using the generalized Cayley--Sylvester formula in~\eqref{eq:twoRow}.
Thus $\langle s_{(2,1^{M+1})} \circ s_{(4)}, s_{(7+M,5,3^M)} \rangle = 6$
for $M \ge 3$.
It is worth noting that we can obtain further information about the
\emph{same} plethysm $s_{(2,1^{M+1})} \circ s_{(4)}$ by instead taking
$d=1$ in Proposition~\ref{prop:LawOkitaniBound}, now using the 
strongly $1$-maximal signed weight $\bigl((1), (3)\bigr)$.
For instance the proposition
implies that $\langle s_{(2,1^{M+1})} \circ s_{(4)}, s_{(7+3M,5,1^M)} \rangle = 6$
for $M \ge 4$; in fact the constant value is attained for $M \ge 3$.
Similarly
$\langle s_{(2,1^{M+1})} \circ s_{(4)}, s_{(6+3M,6,1^M)} \rangle = 8$
for $M \ge 5$; now the constant value is attained for $M \ge 4$.
These results and bounds may be verified using the Magma code
mentioned in the introduction.
\end{example}




\subsection{The positive non-skew case of Theorem~\ref{thm:nuStable}}\label{subsec:nuStableSharpPositive}
In this section we specialize Theorem~\ref{thm:nuStableSharp} in two ways at once by assuming that 
$\kappa^\- = \varnothing$ and $\mus = \varnothing$. 
(Taken separately, these specializations do not lead
to simplifications significant enough to be worth recording.)
We begin by giving the special case of Definition~\ref{defn:maximalSignedWeight}
and Definition~\ref{defn:stronglyMaximalSignedWeight} since the latter
simplifies greatly in this case.
Recall that $\max \T$ denotes the maximum integer entry of a family of tableaux with integer entries.

\begin{definition}\label{defn:stronglyMaximalSignedWeightPositive}
Let $\mu$ be a non-empty partition and let $R \in \N$.
A family $\SM$ of $R$ distinct semistandard $\mu$-tableaux with entries from $\N$ 
of weight $\kappa$ is
\emph{maximal} if $\kappa$ is maximal in the dominance order  amongst
all such families. It is \emph{strongly} $c$-maximal if whenever
$\phi$ is the weight of a maximal family $\T$ such that $\max \T \le \max \SM$ then
either $\T = \SM$ or
\smash{$\sum_{i=1}^c \phi_i < \sum_{i=1}^c \kappa_i$}.
\end{definition}

It is clear that $\kappa$ is a strongly $c$-maximal weight if and only if
$(\varnothing, \kappa)$ is a strongly $c$-maximal signed weight in the sense 
of Definition~\ref{defn:stronglyMaximalSignedWeight}. We give
examples in \S\ref{subsec:nuStableSharpPositiveNonSkewExamples}.
In the following corollary,
the $\mathrm{L}$ bound is defined in Definition~\ref{defn:LBound}.
The unsigned analogue of the~$\LZBound$ bound in Definition~\ref{defn:LZBound} is 
defined by specializing this definition: given
partitions $\lambda$ and $\omega$ of the same size, and partitions $\eta$ and $\kappa$,
we define $\LZBound\bigl([\lambda, \omega]_\unlhddotS, \kappa, \eta\bigr)$ to to be the minimum
of the quantities
\begin{bulletlist}
\item $\displaystyle \frac{\sum_{i=1}^k \omega_i - \sum_{i=1}^k \lambda_i}{ \sum_{i=1}^k
\eta_i - \sum_{i=1}^k \kappa_i }$.\\[1pt]
\end{bulletlist}
taking those $k$ for which the denominator is strictly positive. In the corollary
$\kappa \lhd \eta$ so the minimum is well-defined.

\begin{corollary}\label{cor:nuStableSharpPositiveNonSkew}
Let $\nu$ be a partition of $n$, let $\mu$ be a partition of $m$ and 
let~$\lambda$ be a partition of $mn$.
Let $\kappa$ be a strongly $c$-maximal weight of shape $\mu$ and size $R$.
Set $E = 0$ if $R=1$ and otherwise set $E$ to the maximum of 
\[ B_R(\nu) \sum_{i=1}^c \mu_i + \nu_R\sum_{i=1}^c \kappa_i - \sum_{i=1}^c \lambda_i 
+ \sum_{i=\ell(\kappa)+1}^{\ell(\lambda)} \lambda_i \]
and zero.
Set $D = \bigl|\bigl|  (B_R(\nu) + ER)\mu + (\nu_R - E)\kappa - \lambda \bigr|\bigr|$.
Let $L$ be the maximum of
\[ \begin{cases}
E-\nu_R + \LBound\bigl( [\lambda + (E-\nu_R)\kappa, (B_R(\nu) + ER)\mu ]_\unlhd, \kappa \bigr) &
\text{if $E \ge \nu_R$} \\
 \LBound\bigl( [\lambda,  (B_R(\nu) + ER)\mu + (\nu_R-E)\kappa ]_\unlhd, \kappa \bigr) &\text{if $E < \nu_R$}
\end{cases} \]
and $D+E- \nu_R + \nu_{R+1}$. Then
$\langle s_{\nu + (M^R)} \circ s_\mu, s_{\lambda + M\kappa} \rangle$
is constant for $M \ge L$.
Moreover if $\eta$ is a partition of $MR$ such that $\kappa \lhd \eta$ then
$\langle s_{\nu + (M^R)} \circ s_\mu, s_{\lambda + M\eta} \rangle = 0$
for all
\[ M > \begin{cases}
E-\nu_R + \LZBound\bigl( [\lambda + (E-\nu_R)\eta, (B_R(\nu) + ER)\mu]_\unlhddotS, \kappa, \eta \bigr) &
\text{if $E \ge \nu_R$} \\
 \LZBound\bigl( [\lambda,  (B_R(\nu) + ER)\mu + (\nu_R-E)\kappa]_\unlhddotS, \kappa, \eta 
 \bigr) &\text{if $E < \nu_R$.}
\end{cases} \]
\end{corollary}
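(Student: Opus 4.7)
The plan is to derive the corollary by applying Theorem~\ref{thm:nuStableSharp} and Proposition~\ref{prop:nuStableZero} with $\kappa^\- = \varnothing$, $\kappa^\+ = \kappa$, $\mus = \varnothing$, $\ell^\- = 0$, so that the strongly maximal signed weight $(\varnothing, \kappa)$ has sign $+1$ and $\nu^{(M)} = \nu + (M^R)$, and then to simplify each of the statistics and bounds appearing there.

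First I would compute the specialized statistics. Since $\ell^\- = 0$ and $\mus = \varnothing$, the greatest signed weight $\bigl(\omega_{\ell^\-}(\muS)^\-, \omega_{\ell^\-}(\muS)^\+\bigr)$ of Definition~\ref{defn:greatestSignedWeight} reduces to $(\varnothing, \mu)$, and hence $\AmuSm = 0$ while $\AmuSp = \sum_{i=1}^c \mu_i$. Plugging into Definitions~\ref{defnBoth:BBoxes} and~\ref{defnBoth:exceptionalColumnBoundBoth} yields $E^\- = 0$ and the stated formula for $E^\+$, hence for $E$. The partition denoted $\kappa$ in Theorem~\ref{thm:nuStableSharp} as the unique partition with $0$-decomposition $\dec{\varnothing}{\kappa}$ coincides with the $\kappa$ of the corollary, and the adjoining operation $\oplus$ collapses to ordinary addition. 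A short check then gives $\omega = \kappa + (B_R(\nu) + ER)\mu$ when $E \ge \nu_R$ and $\omega = (B_R(\nu) + ER)\mu + (\nu_R - E)\kappa$ when $E < \nu_R$, with sizes matching $|\lambda| + SR|\mu|$ in either case. Expanding the root-length $D$ using that all negative components vanish recovers the formula in the corollary.

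Next I would simplify the five bounds of Theorem~\ref{thm:nuStableSharp}. Bound~1 reduces to~$S$ and bound~4 is omitted, both because $\LBound(\cdot, \varnothing) = 0$ by Definition~\ref{defn:LBound}; bound~3 is the $k=1$ instance of the $L_k$ terms contributing to bound~2 and is therefore subsumed. The core simplification is for bound~2, where I would invoke the identity
\[ \LBound([\lambda + \kappa, \omega + \kappa]_{\unlhd}, \kappa) = \LBound([\lambda, \omega]_{\unlhd}, \kappa) - 1, \]
obtained by a direct calculation showing that each $L_k$ numerator in Definition~\ref{defn:LBound} decreases by exactly $\kappa_k - \kappa_{k+1}$ (and similarly for the $\ellp = \ell$ case). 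Applied with $S = E - \nu_R + 1$ in case~1, bound~2 becomes
\[ S + \LBound\bigl([\lambda + S\kappa, (B_R(\nu) + ER)\mu + \kappa]_{\unlhd}, \kappa\bigr) = (E - \nu_R) + \LBound\bigl([\lambda + (E-\nu_R)\kappa, (B_R(\nu) + ER)\mu]_{\unlhd}, \kappa\bigr), \]
exactly matching quantity (a); in case~2, where $S = 0$ and $|\omega| = |\lambda|$, it reduces directly to $\LBound\bigl([\lambda, (B_R(\nu) + ER)\mu + (\nu_R - E)\kappa]_{\unlhd}, \kappa\bigr)$. Bound~5 translates verbatim to $D + E - \nu_R + \nu_{R+1}$. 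For the vanishing \emph{moreover} clause I would invoke Proposition~\ref{prop:nuStableZero} with $\eta^\- = \varnothing$ and $\eta^\+ = \eta$: the first family of quantities in Definition~\ref{defn:LZBound} has zero denominators and drops out, and the second family gives the stated $\LZBound$ expressions after the same shift identity is used in case~1.

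The main obstacle will be the bookkeeping around case~1, where the shift $S = E - \nu_R + 1$ must be tracked carefully through each bound. In particular one must check that bound~1 of the theorem, which reduces to $S$, is absorbed by quantity~(b) of the corollary: this holds whenever $D + \nu_{R+1} \ge 1$, and in the degenerate case $D = 0$ with $\nu_{R+1} = 0$ the stable partition system constructed in Lemma~\ref{lemma:stablePartitionSystemForNuVarying} collapses to a singleton, so constancy is automatic from $M = S$ onwards. Verifying that this edge case is handled correctly, together with the arithmetic of the shift identity, is the only genuinely nontrivial step.
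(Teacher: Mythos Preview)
Your proposal is correct and follows essentially the same route as the paper's own proof: specialise Theorem~\ref{thm:nuStableSharp} and Proposition~\ref{prop:nuStableZero} with $\kappa^\- = \varnothing$, $\mus = \varnothing$, $\ell^\- = 0$, observe that $(\omega_0(\mu)^\-,\omega_0(\mu)^\+) = (\varnothing,\mu)$, read off $E$, $D$ and $\omega$, note that bound~3 is the $k=1$ term of bound~2 and bound~4 is void, and then apply the shift identity $\LBound([\alpha+\kappa,\beta+\kappa]_\unlhd,\kappa) = \LBound([\alpha,\beta]_\unlhd,\kappa)-1$ to bring bound~2 into the form stated in the corollary. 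The paper's proof does exactly this, in the same order, invoking the same shift identity explicitly.

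One remark: your treatment of the residual bound $S$ coming from bound~1 is actually more careful than the paper's, which simply lists $S$ among the simplified bounds and then asserts that ``$L$ is as claimed'' after the shift identity, without spelling out why $S$ itself is dominated by one of the two quantities retained in the corollary. Your observation that $D + E - \nu_R + \nu_{R+1} \ge S$ whenever $D + \nu_{R+1} \ge 1$, together with a separate argument for the degenerate case, is a reasonable way to close this; the paper leaves it implicit.
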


\begin{proof}
We apply Theorem~\ref{thm:nuStableSharp} with $\kappa^\- = \varnothing$ and $\kappa^\+ = \kappa$.
Thus $\ell^\- = 0$ and $\bigl(\omega_{\ell^\-}(\muS)^\-, \omega_{\ell^\-}(\muS)^\+)  = (\varnothing,
\mu)$
by~\eqref{eq:greatestSignedWeightPartitionCase}
and the following remark. The sign of $(\varnothing, \kappa^\+)$ is $+1$
so the definitions given in the main part of \S\ref{sec:twistedWeightBoundForStronglyMaximalWeight} apply
and $\nu^\+ = \nu$ in the statement of the theorem.
It is easily seen from 
Definition~\ref{defn:exceptionalColumnBound} 
that $E_{c,(\varnothing,\kappa)}(\nu, \mu : \lambda)$ is $E$
as stated in the corollary and similarly from Theorem~\ref{thm:nuStableSharp}
using that $\omega_0(\mu)^\- = \varnothing$ and
\[ \omega = \begin{cases} (B_R(\nu) + ER)\mu + \kappa & \text{if $E \ge \nu_R$} \\
(B_R(\nu) + ER)\mu + (\nu_R -E)\kappa & \text{if $E < \nu_R$} \end{cases} \]
that $D$ is as stated.
Since $|\lambda| + SR|\mu| =  
|\lambda^\+| + S|\kappa^\+| = |\omega| = |\omega^\+|$,
where the second equality uses
Corollary~\ref{cor:twistedWeightBoundForStronglyMaximalWeightCombined}(iii) and
that $\LBound\bigl( [\varnothing, \varnothing]_\unlhd, \varnothing \bigr) = 0$,
the bounds defining~$L$ in this theorem simplify to
$S$, $S + \LBound\bigl([\lambda + S\kappa, \omega]_\unlhd, \kappa\bigr)$,
$S + (\omega_1 + \omega_2- 2\lambda_1 - 2S\kappa_1)/(\kappa_1-\kappa_2)$,
$0$, and $D+E - \nu_R + \nu_{R+1}$, respectively.
If $\kappa_1 = \kappa_2$ then the third quantity should be disregarded;
otherwise it is one of the lower bounds appearing in
Definition~\ref{defn:LBound} defining 
$L\bigl( [\lambda + S\kappa, \omega], \kappa\bigr)$.
Finally we slightly simplify $\LBound\bigl( [\lambda + S\kappa, \omega], \kappa\bigr)$
using the lemma that $\LBound\bigl( [\alpha + \kappa, \beta + \kappa]_\unlhd,\kappa\bigr)
= L\bigl( [\alpha,\beta]_\unlhd,\kappa \bigr) - 1$ to show that $L$ is as
claimed in the statement of the corollary. 
The proof of the `moreover' part is very similar, using the bounds
from Proposition~\ref{prop:nuStableZero} with the same specializations
and the same simplification of the $\mathrm{LZ}$ bound.
\end{proof}

We note that, again using Theorem~\ref{thm:nuStableSharp},
the plethysm coefficient is zero
if $\lambda + S\swtp{\kappa} \notunlhd \omega$,
where $S = E - \nu_R + 1$ if $E \ge \nu_R$ and $S=0$ otherwise,
and $\omega$ is as defined in the proof above.

\subsection{Examples of Corollary~\ref{cor:nuStableSharpPositiveNonSkew}}
\label{subsec:nuStableSharpPositiveNonSkewExamples}
In Examples~\ref{ex:maximalSemistandardSignedTableauFamiliesMixedSign}
and~\ref{ex:stronglyMaximalSemistandardSignedTableauFamilies},
we saw that $(4,1,1)$ and $(3,3)$ are the two strongly maximal
weights of shape $(2)$ and size $3$
and that the corresponding semistandard tableau families are
\[  \left\{ \hskip1pt \young(11)\spy{0pt}{,\ts} \young(12)\spy{0pt}{,\ts} \young(13)\hskip1pt  \right\}  \!, \;
   \left\{ \hskip1pt \young(11)\spy{0pt}{,\ts} \young(12)\spy{0pt}{,\ts} \young(22)\hskip1pt  \right\}  \]
respectively. 

\begin{example}\label{ex:411bound}
In the running example using the strongly $1$-maximal weight $(4,1,1)$ of shape $(2)$,
size $3$ and sign $+1$ completed
in Example~\ref{ex:omegaBound411} we saw that
if $\nu = (2,1) + C(1,1,1)$ and $\lambda = (4,2) + C(4,1,1)$ then $E=2$;
this can now be computed more simply using the
formula in Corollary~\ref{cor:nuStableSharpPositiveNonSkew}. The quantity $D$
in this corollary is 
\[ \bigl|\bigl| (3+2.3)(2) + (C-2)(4,1,1) - \bigl((4,2) +C(4,1,1) \bigr) \bigr|\bigr| 
= \bigl|\bigl| (6,-4,-2) \bigr|\bigr| = 8, \]
independent of the value of $C$. Exploiting similar cancellation we have,
for $C = 0$ or $C = 1$,
\[ \begin{split} 
\LBound\bigl( \bigl[ (4,2) +C(4,1,1) + (2-{}&{}C)(4,1,1), (3 + 2.3)(2) \bigr]_\unlhd, (4,1,1)\bigr) \\
&\ = \LBound\bigl( \bigl[ (12,4,2), (18) \bigr]_\unlhd, (4,1,1) \bigr) = 0.\end{split} \]
Now taking $C=0$, Corollary~\ref{cor:nuStableSharpPositiveNonSkew} implies that
\[ \langle s_{(2,1) + M(1,1,1)} \circ s_{(2)}, s_{(4,2) + M(4,1,1)} \rangle \] 
is constant
for $M \ge 10$; the two bounds are respectively $2$ and $10$.
In fact it follows from the enumeration of plethystic semistandard tableaux
in the running example that the plethysm coefficient is constant for $M \ge 2$;
the constant value is $2$.
\end{example}

It is routine to give a similar example using the strongly $2$-maximal weight 
$(3,3)$. This gives a special case of Proposition~\ref{prop:BOR} below.
The proof of this proposition
is a good example of how stronger bounds than the generic bounds in our main
theorems can be obtained by ad-hoc reasoning.
Note that the assumption $\ell(\mu) \le \ell$ is without loss of generality,
since this condition is necessary for there to be a semistandard $\mu$-tableau with entries
from $\{1,\ldots, \ell\}$.

\begin{proposition}\label{prop:BOR}
Fix $\ell \in \N$ and let $\mu$ be a  partition with $\ell(\mu) \le \ell$. Let $R$ be
the number of semistandard tableaux of shape $\mu$ with entries from $\{1,\ldots,\ell\}$.
Set $q = R|\mu|/\ell$. Then for any partitions $\nu$ and $\lambda$ with $\ell(\nu) < R$
and $\ell(\lambda) \le \ell$,
\[ \langle s_{\nu + M(1^R)} \circ s_\mu, s_{\lambda + M(q^\ell)} \rangle \]
is constant for $M \in \N_0$. 
\end{proposition}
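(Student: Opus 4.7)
The plan is to establish the stronger statement
\[ \langle s_{\nu + M(1^R)} \circ s_\mu, s_{\lambda + M(q^\ell)} \rangle = \langle s_\nu \circ s_\mu, s_\lambda \rangle \]
for every $M \in \N_0$, which yields the proposition at once. The strategy is to restrict the plethysm to exactly $\ell$ variables $x_1, \ldots, x_\ell$, taking full advantage of both hypotheses: $\ell(\nu) < R$ (which bounds the outer shape) and $\ell(\lambda) \le \ell$ (which keeps $\lambda + M(q^\ell)$ within the range detected by $\ell$-variable specialisation).

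First I would recall the Weyl bialternant identity: for any $N$ and any partition $\rho$ with $\ell(\rho) \le N$,
\[ s_{\rho + M(1^N)}(y_1, \ldots, y_N) = (y_1 \cdots y_N)^M \, s_\rho(y_1, \ldots, y_N). \]
Under the $\ell$-variable specialisation, $s_\rho \circ s_\mu$ equals $s_\rho$ evaluated at the multiset $(y_T : T \in \mathcal{M})$, where $\mathcal{M}$ is the strongly maximal semistandard $\mu$-tableau family of Lemma~\ref{lemma:allTableauFamily} and $y_T = x^{\wt(T)}$. The hypothesis $\ell(\nu) < R$ ensures $\ell(\nu + M(1^R)) \le R$, so the bialternant identity applies with $N = R$ and $\rho = \nu + M(1^R)$. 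The key combinatorial input, which is precisely the content of Lemma~\ref{lemma:allTableauFamily}, is that
\[ \prod_{T \in \mathcal{M}} y_T = \prod_{i=1}^\ell x_i^{\sum_T m_i(T)} = (x_1 \cdots x_\ell)^q, \]
by the symmetry of $s_\mu$. Combining these yields
\[ (s_{\nu + M(1^R)} \circ s_\mu)(x_1, \ldots, x_\ell) = (x_1 \cdots x_\ell)^{qM}\, (s_\nu \circ s_\mu)(x_1, \ldots, x_\ell). \]

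To finish I would expand both sides in the Schur basis of symmetric polynomials in $\ell$~variables and apply the bialternant identity once more, this time with $N = \ell$, in the form $(x_1 \cdots x_\ell)^{qM} s_\tau(x_1, \ldots, x_\ell) = s_{\tau + M(q^\ell)}(x_1, \ldots, x_\ell)$ valid for partitions $\tau$ with $\ell(\tau) \le \ell$. Matching coefficients then delivers the stronger identity for every such $\tau$; specialising to $\tau = \lambda$ concludes the proof. The only point meriting care is that distinct tableaux in $\mathcal{M}$ may yield coinciding monomials (when a Kostka number $K_{\mu\alpha}$ exceeds $1$), but this is harmless because $s_\rho(y_1, \ldots, y_R)$ depends only on the multiset of its arguments; I do not anticipate any further obstacle.
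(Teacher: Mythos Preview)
Your proof is correct and takes a genuinely different route from the paper's. The paper deduces the proposition from its general machinery (Corollary~\ref{cor:nuStableSharpPositiveNonSkew}, itself a specialisation of Theorem~\ref{thm:nuStableSharp}): it identifies the family $\mathcal{T}$ of all semistandard $\mu$-tableaux with entries from $\{1,\ldots,\ell\}$ as strongly $\ell$-maximal via Lemma~\ref{lemma:allTableauFamily}, computes the exceptional-column bound $E=0$ from the hypotheses $\ell(\nu)<R$ and $\ell(\lambda)\le\ell$, and then argues ad hoc that the column-insertion map $\mathcal{H}$ in the proof of Theorem~\ref{thm:nuStableSharp} is surjective for every $M\ge 0$ because \emph{every} column of height $R$ with inner entries bounded by $\ell$ is automatically of the required form (since $\mathcal{T}$ exhausts all such tableaux). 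Your argument bypasses all of this: you specialise to $\ell$ variables, use the bialternant identity once on the outer shape (with $N=R$) and once on the constituents (with $N=\ell$), and match Schur coefficients in $\ell$ variables. This is considerably more elementary and self-contained, and it yields the explicit equality $\langle s_{\nu + M(1^R)} \circ s_\mu, s_{\lambda + M(q^\ell)} \rangle = \langle s_\nu \circ s_\mu, s_\lambda \rangle$ directly. The paper's route has the virtue of exhibiting the result as an instance of its general stability framework and of illustrating how the generic bounds can be sharpened by ad-hoc reasoning; yours has the virtue of needing nothing beyond the definition of plethystic substitution and a classical Schur polynomial identity.
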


\begin{proof}
Let $\mathcal{T}$ be the strongly maximal signed tableau family  consisting
of all semistandard $\mu$-tableaux with entries from $\{1,\ldots, \ell\}$.
(Thus there are no negative entries.)
By Lemma~\ref{lemma:allTableauFamily} its weight is the
strongly $\ell$-maximal weight
$(q^\ell)$ of shape $\mu$ and size $R$.
Since $\ell(\nu) < R$ we have $B_R(\nu) = |\nu|$ and since $\ell(\lambda) \le \ell$,
the exceptional column bound $E$ in the corollary is $|\nu||\mu| - |\lambda| = 0$.
The bounds in Corollary~\ref{cor:nuStableSharpPositiveNonSkew}
 are therefore 
 \[ \LBound\bigl( [\lambda, |\nu|\mu]_{\unlhd}, (q^\ell)
\bigr) = (|\nu||\mu| - |\lambda| - |\nu|\mu_\ell)/q  - |\nu|\mu_\ell /q < 0 \] 
and 
$\bigl|\bigl|\, |\nu|\mu - \lambda\, \bigr|\bigr|$. Inspection of the proof of
Theorem~\ref{thm:nuStableSharp} shows that the second bound is needed to ensure 
that the insertion map $\mathcal{H}$, defined by inserting a new column of height $R$ into a plethystic
semistandard tableau with entries from the tableau family $\mathcal{T}$, is surjective.
But since $\mathcal{T}$ contains \emph{all} tableaux of shape $\mu$, \emph{any} column
of height $R$ in a plethystic semistandard signed tableau having $\mu$-tableau entries
from $\{1,\ldots, \ell\}$ is of this special form. Therefore the plethysm coefficient is immediately
constant.
\end{proof}

\begin{example}\label{ex:BOR}
Take $\mu = (2,1)$ and $\ell = 3$ and the strongly maximal semistandard tableau family of all 
$(2,1)$-tableaux with entries from $\{1,2,3\}$ relevant to the famous eightfold way adjoint
representation of $\mathrm{SU}_3(\C)$ (see \cite[page 179]{FultonHarrisReps}), 
containing the $8$ tableaux shown below
\[ \young(11,2)\, , \ \young(12,2)\, , \ \young(11,3)\, , \ \young(12,3)\, , 
\ \young(13,2)\, , \ \young(13,3)\, , \ \young(22,3)\, , \ \young(23,3)\, . \]
The corresponding $3$-strongly maximal weight is $(8,8,8)$. Applying Proposition~\ref{prop:BOR} 
we find that
\[ \langle s_{\nu + M(1^8)} \circ s_{(2,1)}, s_{\lambda + M(8,8,8)} \rangle \]
is constant for $M \in \N_0$,
whenever $\nu$ and $\lambda$ are partitions with $\ell(\nu) < 8$ and $\ell(\lambda) \le 3$.
For example, taking $\nu = (2)$ and $\lambda = (3,2,1)$, 
the stable value of the plethysm coefficient is $1$.
\end{example}

Many further examples of non-obvious stability results can be given
using the strongly maximal weights found in 
Example~\ref{ex:maximalSemistandardSignedTableauFamiliesShape2} and 
Table 4.23 in~\S\ref{subsec:stronglyMaximalSignedWeightsTable}.

\tocless{\section*{Acknowledgements}}

The second author thanks Martin Forsberg Conde and Eoghan McDowell
for very helpful conversations when he visited OIST, Japan
in November 2022 sponsored by the Theoretical Sciences Visiting Programme. He thanks OIST for its support. Both authors thank {\'A}lvaro Guti{\'e}rrez and Stacey Law
for helpful comments on an earlier version of this paper. The second author was supported by
the Heilbronn Institute for Mathematical Research.

\bigskip
\addtocontents{toc}{\smallskip}
\renewcommand{\MR}[1]{\relax}
\def\cprime{$'$} \def\Dbar{\leavevmode\lower.6ex\hbox to 0pt{\hskip-.23ex
  \accent"16\hss}D} \def\cprime{$'$}
\providecommand{\bysame}{\leavevmode\hbox to3em{\hrulefill}\thinspace}
\providecommand{\MR}{\relax\ifhmode\unskip\space\fi MR }
\providecommand{\MRhref}[2]{%
  \href{http://www.ams.org/mathscinet-getitem?mr=#1}{#2}
}
\providecommand{\href}[2]{#2}
\def\cprime{$'$} \def\Dbar{\leavevmode\lower.6ex\hbox to 0pt{\hskip-.23ex
  \accent"16\hss}D} \def\cprime{$'$}
\providecommand{\bysame}{\leavevmode\hbox to3em{\hrulefill}\thinspace}
\providecommand{\MR}{\relax\ifhmode\unskip\space\fi MR }
\providecommand{\MRhref}[2]{%
  \href{http://www.ams.org/mathscinet-getitem?mr=#1}{#2}
}
\providecommand{\href}[2]{#2}


\begin{thebibliography}{10}

\bibitem{BessenrodtBowmanPaget}
Christine Bessenrodt, Chris Bowman, and Rowena Paget, \emph{The classification
  of multiplicity-free plethysms of {S}chur functions}, Trans. Amer. Math. Soc.
  \textbf{375} (2022), 5151--5194.

\bibitem{Magma}
Wieb Bosma, John Cannon, and Catherine Playoust, \emph{The {M}agma algebra
  system. {I}. {T}he user language}, J. Symbolic Comput. \textbf{24} (1997),
  no.~3-4, 235--265, Computational algebra and number theory (London, 1993).
  \MR{MR1484478}

\bibitem{BowmanPaget}
Christopher Bowman and Rowena Paget, \emph{The partition algebra and plethysm
coefficients I: Stability and Foulkes' Conjecture}, J. Alg. \textbf{655} (2024), 110--138.

\bibitem{BowmanPagetWildon}
Christopher Bowman, Rowena Paget and Mark Wildon, \emph{The partition algebra and
the plethysm coefficients II: Ramified plethysm},
 Adv. Math. \textbf{462} (2025) 110090.

\bibitem{BriandOrellanaRosas}
Emmanuel Briand, Rosa Orellana, and Mercedes Rosas, \emph{Rectangular
  symmetries for coefficients of symmetric functions}, Electron. J. Combin.
  \textbf{22} (2015), Paper 3.15, 18. \MR{3386516}

\bibitem{BrionStability}
Michel Brion, \emph{Stable properties of plethysm: on two conjectures of
  {F}oulkes}, Manuscripta Math. \textbf{80} (1993), 347--371. \MR{MR1243152
  (95c:20056)}

\bibitem{CarreThibon}
Christophe Carr{\'e} and Jean-Yves Thibon, \emph{Plethysm and vertex
  operators}, Adv. Appl. Math. \textbf{13} (1992), 390--403.

\bibitem{ColmenarejoStability}
Laura Colmenarejo, \emph{Stability properties of the plethysm: a combinatorial
  approach}, Discrete Math. \textbf{340} (2017), 2020--2032. \MR{3648227}

\bibitem{deBoeckPagetWildon}
Melanie de~Boeck, Rowena Paget, and Mark Wildon, \emph{Plethysms of symmetric
  functions and highest weight representations}, Trans. Amer. Math. Soc.
  \textbf{374} (2021), 8013--8043.

\bibitem{Foulkes}
H.~O. Foulkes, \emph{Concomitants of the quintic and sextic up to degree four
  in the coefficients of the ground form}, J. London Math. Soc. \textbf{25}
  (1950), 205--209. \MR{MR0037276 (12,236e)}

\bibitem{FultonHarrisReps}
William Fulton and Joe Harris, \emph{Representation theory, a first course},
  Graduate Texts in Mathematics (Readings in Mathematics), vol. 129, Springer,
  1991.

\bibitem{GiannelliArchMath}
Eugenio Giannelli, \emph{On the decomposition of the {F}oulkes module}, Arch.
  Math. (Basel) \textbf{100} (2013), 201--214. \MR{3032652}

\bibitem{LawOkitani}
Stacey Law and Yuji Okitani, \emph{On plethysms and {S}ylow branching
  coefficients}, Algebr. Comb. \textbf{6} (2023), 321--357.

\bibitem{LawOkitaniStability}
\bysame, \emph{Some stable plethysms}, Proc. Amer. Math. Soc. \textbf{151}
  (2023), 4557--4564.

\bibitem{LoehrRemmel}
Nicholas~A. Loehr and Jeffrey~B. Remmel, \emph{A computational and
  combinatorial expos\'e of plethystic calculus}, J. Algebraic Combin.
  \textbf{33} (2011), no.~2, 163--198. \MR{2765321 (2012c:05344)}

\bibitem{MacDonald}
I.~G. Macdonald, \emph{Symmetric functions and {H}all polynomials}, second ed.,
  Oxford Mathematical Monographs, The Clarendon Press Oxford University Press,
  New York, 1995, With contributions by A. Zelevinsky, Oxford Science
  Publications. \MR{MR1354144 (96h:05207)}

\bibitem{Manivel}
L.~Manivel, \emph{An extension of the {C}ayley-{S}ylvester formula}, European
  J. Combin. \textbf{28} (2007), no.~6, 1839--1842. \MR{2340387}

\bibitem{PagetWildonTwisted}
Rowena Paget and Mark Wildon, \emph{Minimal and maximal constituents of twisted
  {F}oulkes characters}, J. Lond. Math. Soc. \textbf{93} (2016), 301--318.

\bibitem{PagetWildonGeneralized}
\bysame, \emph{Generalized {F}oulkes modules and maximal and minimal
  constituents of plethysms of {S}chur functions}, Proc. Lond. Math. Soc. (3)
  \textbf{118} (2019), 1153--1187. \MR{3946719}

\bibitem{PagetWildonSL2}
Rowena Paget and Mark Wildon, \emph{Plethysms of symmetric functions and
  representations of $\mathrm{SL}_2(\mathbf{C})$}, Algebr. Comb. \textbf{4}
  (2021), 27--68.

\bibitem{Haskell98}
Simon {Peyton Jones} et~al., \emph{The {Haskell} 98 language and libraries: The
  revised report}, J. Funct. Prog.~\textbf{13} (2003), no.~1,
  0--255, \url{http://www.haskell.org/definition/}.

\bibitem{StanleyII}
Richard~P. Stanley, \emph{Enumerative combinatorics. {V}ol. 2}, Cambridge
  Studies in Advanced Mathematics, vol.~62, Cambridge University Press,
  Cambridge, 1999, With a foreword by Gian-Carlo Rota and appendix 1 by Sergey
  Fomin. \MR{MR1676282 (2000k:05026)}

\bibitem{StanleyPositivity}
\bysame, \emph{Positivity problems and conjectures in algebraic combinatorics},
  Mathematics: frontiers and perspectives, Amer. Math. Soc., Providence, RI,
  2000, pp.~295--319. \MR{1754784 (2001f:05001)}

\bibitem{Weintraub}
Steven~H. Weintraub, \emph{Some observations on plethysms}, J. Algebra
  \textbf{129} (1990), 103--114. \MR{1037395 (91b:20022)}

\bibitem{PZYT}
Mark Wildon, \emph{Plethystic semistandard signed tableaux: Haskell code for
  enumeration and straightening}, Available from
  \url{http://www.ma.rhul.ac.uk/~uvah099/}, October 2025.

\end{thebibliography}
\end{document}